\documentclass{NP-IV}
\usepackage[latin1]{inputenc}
\usepackage[T1]{fontenc}
\usepackage{pgf}
\frenchspacing

\usepackage{amsmath}
\usepackage{amscd}
\usepackage{mathrsfs}
\usepackage[all]{xy}
\usepackage{graphicx}
\usepackage{pstricks} 
\usepackage{amsfonts}   
\usepackage{pict2e}   
\usepackage{makeidx}
\usepackage[colorlinks=true,linkcolor=red,citecolor=red]{hyperref}

\usepackage{amssymb}
\usepackage{latexsym}
\usepackage{bold-extra}

\renewcommand{\b}{\mathcal{B}}
\newcommand{\Bs}{\ensuremath{\mathscr{B}}}
\newcommand{\bs}[1]{\boldsymbol{#1}}

\newcommand{\chidr}{\chi^{\phantom{1}}_{\mathrm{dR}}}

\newcommand{\GS}{\mathfrak{G}}

\newcommand{\pres}{\Psi}
\newcommand{\RR}{\bs{\mathcal{R}}}

\renewcommand{\H}{\mathscr{H}}

\newcommand{\Hdr}{\mathrm{H}_{\mathrm{dR}}}
\newcommand{\hdr}{\mathrm{h}_{\mathrm{dR}}}

\newcommand{\M}{\mathrm{M}}

\newcommand{\lr}[1]{\langle#1\rangle}

\newcommand{\N}{\mathrm{N}}

\newcommand{\Q}{\mathrm{Q}}
\renewcommand{\O}{\mathscr{O}}

\renewcommand{\P}{\mathrm{P}}
\newcommand{\Prod}{\mathcal{P}}
\newcommand{\R}{\mathcal{R}}

\newcommand{\SF}{{S_{\Fs}}}

\newcommand{\sm}[1]{\begin{smallmatrix}#1\end{smallmatrix}}
\newcommand{\is}[1]{i_{#1}^{\mathrm{sol}}}

\newcommand{\simto}{\stackrel{\sim}{\to}}

\newcommand{\U}{\bs{\mathrm{U}}}

\newcommand{\wt}[1]{#1}

\DeclareMathOperator\rk{rank}

\newcommand\alg{\mathrm{alg}}

\newcommand\wti{\widetilde}

\newcommand{\eps}{\varepsilon}

\newcommand{\Cs}{\mathscr{C}}

\newcommand\Fs{\mathscr{F}}
\newcommand\Gs{\mathscr{G}}

\newcommand\Ts{\mathscr{T}}

\newcommand\Rc{\mathcal{R}}

\newcommand\NN{\mathbb{N}}
\newcommand\ERRE{\mathbb{R}}

\newcommand{\E}[2]{\ensuremath{\mathbb{A}^{#1,\mathrm{an}}_{#2}}}

\newcommand\wc{{\mkern 2mu\cdot\mkern 2mu}}
\newcommand\va{|\wc|}

\newcommand\wKa{\widehat{K^\alg}}

\newcommand{\comment}[1]{
\noindent{\red \framebox{\framebox{\framebox{
\begin{minipage}{450pt}#1
\end{minipage}}}}}
}

\newcommand{\smallcomment}[1]{
{\red \framebox{\framebox{\framebox{
#1}}}}
}

\newcommand{\comm}[1]{
\noindent{\magenta \framebox{\framebox{\framebox{
\begin{minipage}{450pt}#1
\end{minipage}}}}}
}

\usepackage{amsthm}
\swapnumbers

\makeatletter
\def\swappedhead#1#2#3{%
  \thmname{#1}\;%
  \thmnumber{\@upn{\the\thm@headfont#2\@ifnotempty{#1}}}%
  \thmnote{\,{\the\thm@notefont(#3)}}{.~}}
\makeatother

\newtheoremstyle{dotless-thm}
  {10pt}
  {10pt}
  {\itshape}
  {}
  {\bfseries}
  {}
  {.0em}
  {}
\theoremstyle{dotless-thm}

\newtheorem{theorem}{\textbf{Theorem}}[subsection]
\newtheorem{thm-intro}{\textbf{\textsc{Theorem}}}
\newtheorem{rk-intro}[thm-intro]{\textbf{\textsc{Remark}}}
\newtheorem{cor-intro}[thm-intro]{\textbf{\textsc{Corollary}}}
\newtheorem{proposition}[theorem]{\textbf{Proposition}}
\newtheorem{lemma}[theorem]{\textbf{Lemma}}
\newtheorem{corollary}[theorem]{\textbf{Corollary}}
\newtheorem{definition}[theorem]{\textbf{Definition}}
\newtheorem{remark}[theorem]{\textbf{Remark}}

\newtheorem{hypothesis}[theorem]{\textbf{Hypothesis}}
\newtheorem{setting}[theorem]{\textbf{Setting}}

\newtheorem{notation}[theorem]{\textbf{Notation}}
\numberwithin{equation}{section}

\title[Convergence Newton 
polygons III : global decomposition]{
The convergence Newton polygon of a $p$-adic 
differential equation III : global decompositions}

\author{J\'er\^ome Poineau}
\email{jerome.poineau@unicaen.fr}
\address{Laboratoire de mathématiques 
Nicolas Oresme
Université de Caen, BP 5186,
F-14032 Caen Cedex}

\author{Andrea Pulita}
\email{andrea.pulita@univ-grenoble-alpes.fr}
\address{Univ. Grenoble Alpes, CNRS, IF, 38000 Grenoble, France.}

\date{\today}

\subjclass{Primary 12h25; Secondary 14G22}

\keywords{$p$-adic differential equations, Berkovich spaces, Radius 
of convergence, Newton polygon, spectral radius, decomposition}

\begin{abstract}
In this paper we deal with locally free 
$\O_X$-modules with connection over a 
Berkovich curve $X$ and the radii of convergence  
of their solutions as functions on $X$.
As a main result, we prove local and global 
decomposition theorems of such objects according to 
the radii of 
convergence of their solutions. 
\end{abstract}

\begin{document}
\maketitle

\begin{center}
Version of \today
\end{center}

\makeatletter
\renewcommand\tableofcontents{%
    \subsection*{\contentsname}%
    \@starttoc{toc}%
    }
\makeatother

\begin{small}
\setcounter{tocdepth}{3} \tableofcontents
\end{small}

\setcounter{section}{0}

\section*{\textsc{Introduction}}
\addcontentsline{toc}{section}{\textsc{Introduction}}

Following an original idea of Bernard Dwork, if a differential 
equation has two 
solutions with different radii of convergence, 
then it should correspond to a decomposition of the 
equation. If the equation is given in a cyclic basis by 
a differential operator, this decomposition corresponds to 
a factorization of the operator 
into two operators of lower order. 
Decomposition theorems provide an insight about the 
nature of indecomposable objects and  
constitute a central tool in the 
classification of 
$p$-adic differential equations.  
As an example, they are a crucial step to 
obtain the $p$-adic local monodromy theorem \cite{An}, 
\cite{Ked}, \cite{Me}. 
On the other hand, they will also be one of the key 
ingredients needed to establish local and global index 
results \cite{NP-V, NP-VI}, generalizing \cite{Ch-Me-IV}. 

The main contributions to the decomposition results 
are due to Robba \cite{Ro-I}, \cite{Robba-Hensel-original},
\cite{Robba-Hensel},
Dwork-Robba \cite{Dw-Robba}, 
Christol-Mebkhout \cite{Ch-Me-III}, \cite{Ch-Me-IV}, and Kedlaya 
\cite{Kedlaya-book-2}, \cite{Kedlaya-draft}. 
We also recall \cite[p. 97-107]{Correspondance-Malgrange-Ramis}, 
\cite{Levelt}, and \cite{Ramis-Devissage-Gevrey} 
for the decomposition by the 
\emph{formal} slopes of a differential equation 
over the field of power series $K((T))$, which we prove 
to be indeed a decomposition by the radii of convergence 
over a trivially valued base field and, in this sense, they can be seen as a consequence of probably prior results by \cite{Ro-I} (cf. Section \ref{Formal differential equations}). Finally we 
quote \cite{Andre-Slope-Filtration} offering a Tannakian 
approach to this problem for categories admitting 
a notion of \emph{slopes filtrations} satisfying some 
formal properties. 

If our differential equation is defined on a curve, the radii 
of convergence of the solutions can change from a point 
to another of the curve, therefore when we speak about 
the radii we mean the \emph{function defined on the 
whole curve} associating to a point of the curve the 
values of the radii of the solutions at that point.  
In all the quoted contexts, the decomposition results are 
of local nature and the decompositions are obtained over 
small domains of the curve, where the radii are all affine 
functions, so that their value at a point 
or their slopes are the only essential datum of the function.
There indeed are very few examples of global nature 
of such decomposition theorems by the radii 
(mainly on annuli or disks), and all with restrictive 
assumptions.\footnote{Probably, the most important 
example of global nature is given in 
\cite[Ch.12]{Kedlaya-book-2}, 
where a global decomposition is obtained 
over open disks for differential equations 
whose radii turn out to be globally constant (cf. also 
\cite[Theorem 5.4.2]{Kedlaya-draft} for a finer result).} 
A large part of the literature is indeed 
devoted to the following two cases: 
differential equations defined over a germ of punctured 
disk, or over the Robba ring. In the language of Berkovich 
curves this corresponds respectively to a germ of 
segment out of a rational point, or a germ of segment 
out of a point of type $2$ or $3$. 

We present here a general 
decomposition theorem of a global nature in the framework of 
Berkovich quasi-smooth curves, that works without any technical assumption 
(solvability, exponents, Frobenius, harmonicity, de Rham cohomology, ...).

A theoretical point of a crucial importance is the definition itself 
of the radii of convergence. 
The former definition of radii relates them to 
the spectral norm of the connection. 
This (partially) fails in Berkovich geometry,  
because there are solutions converging more than the 
natural bound prescribed by the spectral norm. In 
particular, the spectral definition does not exists at 
points of type $1$. 
Here, we deal with a more geometric definition of the 
radii due to F. Baldassarri in \cite{Balda-Inventiones}, following the ideas of \cite{Dv-Balda}. 
Baldassarri-Di Vizio improve the former definition by introducing 
\emph{over-solvable radii}, 
i.e. radii that are larger than the spectral bound (cf. 
\eqref{eq : spectral - solvable - oversolvable}).  
These radii are not intelligible in terms of spectral norm even if the 
point is of type $2$, $3$, or $4$. 
Moreover they ``\emph{normalize}'' the usual spectral radii of 
convergence, with respect to a semi-stable formal model of the curve. 
They are also able to prove the continuity of the 
smallest radius. 
In \cite{NP-II} we introduce in that 
picture the notion of \emph{weak triangulation} as a 
substitute of Baldassarri's semi-stable model (cf. \cite{Balda-Inventiones}). 
In fact 
such a semi-stable 
model produces a triangulation 
(see \cite{Duc} for instance).\footnote{The notion of weak triangulation will evolve 
in our next papers 
into the notion of \emph{pseudo-triangulation}, 
which permits to better describe the idea of 
finiteness of a curve (cf. 
\cite{NP-IV}). 
Namely, finite curves are those 
admitting a finite pseudo-triangulation and a finite 
number of connected components, and we will show that 
they are characterized by the fact that their de Rham 
cohomology is finite dimensional (cf. \cite{NP-VI}).}
This notion permits to generalize the definition of the 
radii to a larger class of curves, and 
it has the advantage for us of being completely within 
the framework of Berkovich curves. 
Exploiting this point of view, we have proved in 
\cite{NP-I} and \cite{NP-II} that there exists a 
locally finite graph outside which all 
the radii are  locally constant, 
as firstly conjectured by Baldassarri (cf. \cite[Introduction]{Balda-Inventiones}).  
This proves that there are 
relatively  few numerical invariants of the equation encoded in the 
radii of convergence. The finiteness theorem have been 
proved in \cite{NP-II} by recreating the notion of \emph{generic disk} 
in the framework of Berkovich curves, as in the very original point of 
view of Bernard Dwork and Philippe Robba. 
The global decomposition theorem presented here
enlarges the picture, it makes evident that Baldassarri-Di Vizio's idea for the 
definition of the radii is the good one, and gives to it a more operative 
meaning.\\
\if{
Here we add another result to the picture by providing a global 
decomposition theorem that makes evident that Baldassarri's 
setting is the good one, and gives to it a more operative meaning.
}\fi

\if{
With this in mind in this paper we obtain our global 
decomposition by \emph{augmenting} and then \emph{gluing} a local 
decomposition over the local ring $\O_{X,x}$ by the radii. 
Such a decomposition generalizes that obtained by 
Dwork and Robba in a Berkovich neighborhood of a point of the affine 
line \cite{Dw-Robba}. \\
}\fi

We now come more specifically into the content of the paper. Let 
$(K,|.|)$ be a complete ultrametric valued field of characteristic $0$. 
Let $X$ be a quasi-smooth $K$-analytic curve, in the sense of 
Berkovich theory\footnote{Quasi-smooth means that $\mathrm{dim}_{\H(x)} \Omega^1_X\otimes\H(x)
= \mathrm{dim}_x X$ (cf. \cite[3.1.11]{Duc}). 
This notion
corresponds to the notion called ``rig-smooth'' in the 
rigid analytic setting.}. We assume, 
without loss of generality, that $X$ is connected. 
Let $\Fs$ be a locally free $\O_X$-module of finite rank $r$ 
endowed with an integrable connection $\nabla$. 

In \cite{NP-II} we  
explained how to associate to each point $x\in X$ the so called  
\emph{convergence Newton polygon of $\Fs$ at $x$}. Its 
slopes are the logarithms of the radii of 
convergence $\R_{S,1}(x,\Fs)\leq\cdots\leq\R_{S,r}(x,\Fs)$ of a 
conveniently chosen basis of solutions of $\Fs$ at $x$ (cf. Section \ref{multiradius}). 
Here $S$ is a weak-triangulation.

Following \cite{NP-I}, we then define for all 
$i\in\{1,\ldots,r=\mathrm{rank}(\Fs)\}$ a 
\emph{locally finite} graph $\Gamma_{S,i}(\Fs)$, as the locus of 
points that do not admit a virtual open disk in $X-S$ 
as a neighborhood on which 
$\R_{S,i}(-,\Fs)$ is constant (cf. Section 
\ref{section controlling graphs}). 

We say that the index $i$ separates 
the radii (globally over $X$) if, for all $x\in X$, one has 
\begin{equation}
\R_{S,i-1}(x,\Fs)\;<\;\R_{S,i}(x,\Fs)\;.
\end{equation}
More suggestively, this means that the 
convergence Newton polygon of $\Fs$ at $x$ 
has a break over $i$. The following theorem asserts that, 
if this break maintains over all the points of $X$, then the 
differential equation itself decomposes.
\begin{thm-intro}[cf. Theorem \ref{MAIN Theorem} and Proposition 
\ref{Prop. : independence on S}]
\label{Intro : Thm : Main}
If the index $i$ separates the radii of $\Fs$, 
then there exists a sub-differential 
equation
$(\Fs_{\geq i},\nabla_{\geq i})\subseteq(\Fs,\nabla)$ of rank 
$r-i+1$ together with an exact sequence
\begin{equation}\label{eq : sequence intro}
0\to\Fs_{\geq i}\to\Fs\to\Fs_{<i}\to 0
\end{equation}
such that for all $x\in X$ one has 
\begin{equation}
\R_{S,j}(x,\Fs)\;=\;\left\{\begin{array}{lcl}
\R_{S,j}(x,\Fs_{<i})&\textrm{ if }&j=1,\ldots,i-1\bigskip\\
\R_{S,j-i+1}(x,\Fs_{\geq i})&\textrm{ if }&j=i,\ldots,r\;.\\
\end{array} \right.
\end{equation}
Moreover $\Fs_{\geq i}$ is independent on $S$, in the sense that
if $i$ separates the radii of $\Fs$ with respect to another weak-triangulation $S'$, then the resulting sub-object $\Fs_{\geq i}$ is the 
same.
\end{thm-intro}
The decomposition results  
in the quoted literature 
are all \emph{direct sum} decompositions of local 
nature. Up to our knowledge, 
there are no examples of non-direct 
decompositions.
However, in the global setting the above decomposition 
can be non-direct. 
In Section \ref{An explicit counterexample.} 
we provide an explicit example where 
\eqref{eq : sequence intro} does not split, and in Section \ref{Conditions to have a direct sum decomposition} 
we provide criteria to guarantee that $\Fs_{\geq i}$ 
is a direct summand of $\Fs$. More precisely we have the following
\begin{thm-intro}[cf. Theorems 
\ref{Thm : 5.7 deco good direct siummand} and 
\ref{Thm : criterion self direct sum}]
\label{Intro : Thm : direct sum}
In each one of the following two situations $\Fs_{\geq i}$ is a direct 
summand of $\Fs$:
\begin{enumerate}
\item The index $i$ separates the radii of $\Fs^*$ and of $\Fs$, and  
$(X,S)$ is either different from a virtual open disk with empty 
triangulation, or, if $X$ is a virtual open disk $D$ with empty 
triangulation, there exists at least one point $x\in D$ such that 
one of $\R_{\emptyset,i-1}(x,\Fs)$ or $\R_{\emptyset,i-1}(x,\Fs^*)$ 
is spectral non-solvable at $x$.

\item One has $\Bigl(\Gamma_{S,1}(\Fs)\cup\cdots\cup
\Gamma_{S,i-1}(\Fs)\Bigr)\subseteq\Gamma_{S,i}(\Fs)$.
\end{enumerate}
In both cases $(\Fs^*)_{\geq i}$ is isomorphic to 
$(\Fs_{\geq i})^*$, and it is 
also a direct summand of $\Fs^*$.
\end{thm-intro}
In our next paper 
\cite[Section 2]{NP-IV} 
we will provide an operative description of 
the controlling graphs, which can be used in 
particular to check the assumptions of 
Theorem \ref{Intro : Thm : direct sum}. 

It is worth pointing out the fact that the above theorems 
establish an link between \emph{algebraic} 
properties of the connection and \emph{topological} 
properties of the radii of the 
solutions, such as the relative position of their controlling 
graphs.
\if{$\Fs_{\geq i}$ is a direct 
summand of $\Fs$ is a completely algebraic 
notion that is controlled by the condition ii) of Theorem 
\ref{Intro : Thm : direct sum} which is completely 
topological.}\fi 
Moreover, 
condition ii) of Thm. \ref{Intro : Thm : direct sum}  
implies i), and it has the advantage that it involves 
only the radii of $\Fs$. Nevertheless i) is more natural, 
and general, by the following reason. If one is allowed 
to choose arbitrarily the pseudo-triangulation, 
then in order to guarantee the existence of 
$\Fs_{\geq i}$ one has to choose it as small as 
possible, and the same is true for condition i). 
On the other hand, to fulfill ii) one is induced to choose 
it quite large. 
For more precise statements see Remarks 
\ref{Remark : changing tr F_S,I} and 
\ref{Remark : bad condition}.
%
%

An interesting Corollary is the following:

\begin{thm-intro}[cf. \protect{Corollary \ref{Prop : Crossing points shsh}}]\label{cor-intro : filtration intro}
Let $\Fs$ be a differential equation over $X$. 
There exists a locally finite subset $\mathfrak{F}$ of $X$ such that, if 
$Y$ is a connected component of $X-\mathfrak{F}$, then:
\begin{enumerate}
\item For all $i< j$ one has either 
$\R_{S,i}(y,\Fs)=\R_{S,j}(y,\Fs)$ for all $y\in Y$, or 
$\R_{S,i}(y,\Fs)<\R_{S,j}(y,\Fs)$ for all $y\in Y$. 
\item Let $1=i_1<i_2<\ldots<i_h$ be the indexes separating the 
global radii $\{\R_{S,i}(-,\Fs)\}_i$ over $Y$. Then one has a filtration
\begin{equation}\label{eq: intro: - filtration}
0\;\neq\;(\Fs_{|Y})_{\geq i_h}\;\subset\;
(\Fs_{|Y})_{\geq i_{h-1}}\;\subset\;
\cdots\;\subset\;(\Fs_{|Y})_{\geq i_1}\;=\;\Fs_{|Y}\;,
\end{equation}
such that the rank of $(\Fs_{|Y})_{\geq i_k}$ is $r-i_k+1$ and its 
solutions at each point of $Y$ are the solutions of $\Fs$ with radius 
larger than $\R_{S,i_k}(-,\Fs)$.
\end{enumerate}
\end{thm-intro}
Note that we do not endow $Y$ with a 
weak-triangulation and that the radii of the Theorem 
\ref{cor-intro : filtration intro} are 
those of $\Fs$ viewed as an equation over $X$ endowed with $S$. 
In section \ref{Crossing points and global decomposition.} 
we also provide conditions making \eqref{eq: intro: - filtration} 
a graduation.

A direct corollary of the above results is the 
Christol-Mebkhout 
decomposition over the Robba ring \cite{Ch-Me-III} 
(cf. Section \ref{Some interesting particular cases (annuli).}). 
Moreover, our global decomposition result 
will be applied to differential 
equations over elliptic curves to prove that they always 
globally decompose,  because their radii are constant functions \cite[Corollary 3.3.12]{NP-IV}.

\if{
is the following 
classification result:

\begin{thm-intro}[cf. Cor. \ref{Cor Ell curves}]
Assume that $X$ is either 
a Tate curve, 
or that $p\neq 2$ and $X$ is an elliptic curve with good reduction. 
Consider a pseudo-triangulation $S$ of $X$ formed by an individual point. 
Let $\Fs$ be a differential equation over $X$ of rank $r$. Then
\begin{enumerate}
\item For all $i=1,\ldots,r$ the radius $\R_{S,i}(-,\Fs)$ 
is a constant function on $X$, and $\Gamma_{S,i}(\Fs)=\Gamma_S$;
\item One has a direct sum decomposition as 
\begin{equation}
\Fs\;=\;\bigoplus_{0<\rho\leq 1}\Fs^{\rho}\;,
\end{equation}
where $\R_{S,j}(-,\Fs^{\rho})=\rho$ for all $j=1,\ldots,
\mathrm{rank}\,\Fs^{\rho}$.
\end{enumerate}
\end{thm-intro}

}\fi

The proof of the existence of $\Fs_{\geq i}$ 
(cf. Theorem \ref{MAIN Theorem}) is obtained as follows. 
Firstly, in section \ref{Robba-deco}, we prove a local decomposition
theorem by the spectral radii for differential modules over 
the field $\H(x)$, of a point $x$ of type $2$, $3$, or $4$ of a 
Berkovich curve. This generalizes to curves the classical  
Robba's decomposition theorem, originally proved 
for points of type $2$ of the affine line (cf. \cite{Ro-I}).
Then, in section \ref{Dwork-Robba (local) decomposition}, 
we descends that decomposition to $\O_{X,x}\subseteq \H(x)$, i.e. to a neighborhood of $x$ in $X$. 
This is a generalization to curves of  Dwork-Robba's 
decomposition result, originally proved for points of 
type $2$ of the affine line (cf. \cite{Dw-Robba}). 
The language of generic disks introduced in \cite{NP-II} 
permits to extend smoothly these proofs to curves, 
up to minor implementations.
Such local decompositions are also present in 
\cite{Kedlaya-draft}, 
where one makes everywhere a systematic use of the spectral norm 
of the connection, following \cite{Kedlaya-book-2}. 
Methods involving spectral norms work thanks to the Hensel 
factorization theorem of \cite{Robba-Hensel}, 
and \cite[Lemme 1.4]{Ch-Dw}. 
We presents here the same local decomposition results 
as a consequence of the original, and slight more 
geometric, techniques of 
\cite{Ro-I} and \cite{Dw-Robba}.

Now, Robba's and Dwork-Robba's local decompositions 
(over $\H(x)$ and $\O_{X,x}$ respectively) take into 
account only spectral radii (at the points of type $2$, 
$3$, and $4$). One of the reasons is that 
over-solvable radii are truncated by the localization 
process (cf. Section \ref{Local nature of spectral radii.}). 
A consequence of this truncation is that Dwork-Robba's 
local decompositions do not glue (and no global decomposition is given 
in literature, as we already mentioned).
In order to deal with this issue, we \emph{augment} the Dwork-Robba decomposition 
of the stalk $\Fs_x$, for all $x\in X$, by taking in account the decomposition 
of the trivial submodule of $\Fs_x$ 
coming from the existence of solutions converging in a disk 
containing $x$, i.e. taking in account over-solvable radii. 
This augmented decomposition of $\Fs_x$ 
glues without any obstructions, 
and it provides the existence of $\Fs_{\geq i}$. 
In this gluing process, 
Theorem \ref{Intro : Thm : Main} actually 
uses in a crucial way the continuity of the radii 
$\R_{S,i}(-,\Fs)$ (cf. proof of Proposition 
\ref{Prop : extended by continuity}). 
Implicitly, we also use the 
local finiteness of $\Gamma_{S,i}(\Fs)$, 
since for $i\geq 2$ the continuity is an indirect 
consequence of the finiteness (cf. \cite{NP-I} and \cite{NP-II}).

\textbf{Structure of the paper.}
In Section \ref{Secn1} we give some basic notations about differential modules.
In Section \ref{Radii and filtrations by the radii} 
we define the radii and their graphs together with 
their elementary properties.
In Section \ref{Robba-deco} we give Robba's local decomposition 
by the spectral radii over $\H(x)$, for a point of type $2$, $3$, or $4$. 
In Section \ref{Dwork-Robba (local) decomposition} 
we descend that decomposition to the local ring 
$\O_{X,x}\subseteq\H(x)$ following Dwork-Robba's original 
techniques.
In Section \ref{Proof of Main Th} we obtain the global decomposition 
Theorem \ref{MAIN Theorem}, and the criteria 
to have a direct sum decomposition (cf. Theorems 
\ref{Thm : 5.7 deco good direct siummand} and 
\ref{Thm : criterion self direct sum}).
In Section \ref{An explicit counterexample.} we provide explicit 
counterexamples of the basic pathologies of over-solvable radii 
(incompatibility with duality, and exact sequence, a link between 
super-harmonicity property and presence of Liouville numbers, by 
means of the Grothendieck-Ogg-Shafarevich formula). 
In Appendix \ref{Comparison with the general definition of radii} we
discuss the definition of the radius.

\textbf{Acknowledgments.}
We wish to thank Yves Andr\'e, Francesco Baldassarri, Gilles Christol, 
Kiran S. Kedlaya, Adriano Marmora, Zoghman Mebkhout, and 
Nobuo Tsuzuki for helpful comments.
\vspace{1cm}
\section{Definitions and notations}\label{Secn1}
Let $(K,\va)$ be a ultrametric complete valued field of characteristic $0$. Denote by $\wti K :=\{x\in K,|x|\leq 1\}/\{x\in K,|x|< 1\}$ its residue field and by~$p$ be the characteristic exponent of the latter (either~1 or a prime number). Let~$K^\alg$ be an algebraic closure of~$K$. The absolute value~$\va$ on~$K$ extends uniquely to it. We denote by $(\wKa,\va)$ its completion.
By a complete valued field extension of $K$ we mean an isometric embedding $K\to L$ of complete valued fields. 

\subsection{Affine line, disks and annuli.}
\label{Section : disks and annulii}
Let~$\E{1}{K}$ be the Berkovich affine line with coordinate~$T$. 
Let~$L$ be a complete valued extension of~$K$, \emph{i.e.} a ring morphism from~$K$ to a complete valued field~$L$ that is an isometry. 
Let $c\in L$. We set 
\begin{eqnarray}
D_{L}^+(c,R) &\;=\;& 
\big\{x\in \E{1}{L}\, \big|\, |(T-c)(x)|\le R\big\}\;,\quad\qquad\qquad 
R\geq 0\\
D_{L}^-(c,R) &\;=\;& \big\{x\in \E{1}{L}\, \big|\, |(T-c)(x)|<R\big\}
\;,\qquad\qquad \quad R>0\label{eq : open disk}\\
C_{L}^+(c;R_{1},R_{2}) &\;=\;& 
\big\{x\in \E{1}{L}\, \big|\, R_{1}\le |(T-c)(x)|\le R_{2}\big\}\;, 
\qquad R_2\geq R_1\geq 0\\
C_{L}^-(c;R_{1},R_{2}) &\;=\;& \big\{x\in \E{1}{L}\, \big|\, 
R_{1} < |(T-c)(x)| < R_{2}\big\}\;.\qquad R_2>R_1> 0
\label{eq : punctured disk is an annulus}
\end{eqnarray} 
\begin{definition}\label{def:modulusannulus}
Let $C$ be an open or closed annulus. 
If $C= C^\pm(c;R_{1},R_{2})$ in some coordinate, we define the \emph{modulus} of~$C$ to be
$\mathrm{Mod}(C) := \frac{R_{2}}{R_{1}} \in [1,+\infty].$
It is independent of the chosen coordinate (cf. \cite[3.6.23.2]{Duc}). A similar definition can be given if the annulus is semi-open.
\end{definition}

If $D\subseteq \E{1}{L}$ is an open disk, 
we denote by $\O(D)$ (resp. $\b(D)$) the ring of 
\emph{analytic} (resp. \emph{bounded analytic}) functions on $D$. If $D =D_L^{-}(c,R)$ is open, then 
\begin{eqnarray}
\O(D)&\;:=\;&\Bigl\{\sum_{n\geq 0}a_n(T-c)^n,
\; a_n\in L, \; \lim_{n}|
a_n|\rho^n=0,\;\forall\;\rho<R\Bigr\}\;,\label{eq :O(D)}\\
\b(D)&\;:=\;&\Bigl\{\sum_{n\geq 0}a_n(T-c)^n,
\;a_n\in L,\;
 \sup_{n}|a_n|R^n<+\infty\Bigr\}\;.\label{eq :B(D)}
\end{eqnarray}
We write $\O_L(D)$ and $\b_L(D)$ instead of 
$\O(D)$ and $\b(D)$ respectively when we want to specify the base 
field. 

The ring $\b_L(D)$ is a Banach algebra with respect to the sup-norm 
$\|.\|_D$ on $D$. If $D=D_L^-(c,R)$, this 
norm will  be denoted by 
$|.|_{c,R}$. For $f\in\b(D_L^-(c,R))$, one has 
\begin{equation}\label{eq : sup-norm}
|f|_{c,R}=
\sup_{n\geq 0} \Bigl(\Bigl|\frac{1}{n!}\Bigl(\Bigl(\frac{d}{dT}\Bigr)^n(f)\Bigr)(c)\Bigr|_L\cdot R^n\Bigr).
\end{equation}
This norm turns out to be multiplicative. Hence, it defines 
a point of the Berkovich line 
$\mathbb{A}^{1,\mathrm{an}}_K$. We will denote it 
by $x_{c,R}$. 

More generally, for every complete valued field extension 
$L/K$ and every pair~$(t,R)$, with $t\in L$ and $R\geq 0$, the map that sends a polynomial $f\in K[T]$ to 
\begin{equation}\label{norm crho}
|f|_{t,R}\;:=\;
\sup_{n\geq 0} \Bigl(\Bigl|\frac{1}{n!}\Bigl(\Bigl(\frac{d}{dT}\Bigr)^n(f)\Bigr)(t)\Bigr|_L\cdot R^n\Bigr).
\end{equation} 
defines a point of 
$\mathbb{A}^{1,\mathrm{an}}_K$. We will denote it by $x_{t,R}$.

Let $c\in K$. The ring $\O(D^-_K(c,R))$ is endowed with 
the family of norms $\{x_{c,\rho}\}_{\rho<R}$. It is then a Fr\'echet space.

\begin{definition}\label{Radius of a point.} 
The \emph{radius} of a point $x\in\mathbb{A}^{1,\mathrm{an}}_{K}$ is the real number
\begin{equation}\label{eq : radius of a point jtq}
r(x)\;:=\;
\max\Bigl\{r\geq 0\textrm{ such that there exist $L/K$ and $t\in L$ satisfying }x=x_{t,r}\Bigr\}
\end{equation} 
(see also~\cite[4.2]{Ber}).
\end{definition}
Remark that the radius~$r(x)$ depends on the coordinate~$T$. It 
is invariant by algebraic extensions of~$K$, but not by 
arbitrary extensions. We refer to  \cite[Section 1]{NP-I} for additional properties of $r(x)$. This radius coincides with the infimum of the radii 
of the virtual open disks containing~$x$, however the 
above definition will be more meaningful over general 
curves.

\subsubsection{Bounded derivations.}
\label{Section: Bounded Omega 1}
We maintain the above notations. 
Let $L/K$ be a complete valued extension of $K$. Let $T$ be a coordinate on the affine line $\mathbb{A}^{1,\mathrm{an}}_L$.
Let $t\in L$ and $\rho>0$ and 
consider the $L$-rational open disk $D:=D^-(t,\rho)$.
The structure of the 
Berkovich's spectrum of $(\b_L(D),\|.\|_{D})$ is 
actually richer than expected (cf. \cite{Esct}). 
It seems natural to expect that 
the $\b_L(D)$-module 
of continuous differentials 
$\widehat{\Omega}^1_{\b_L(D)/L}$ is not one-dimensional, though no proofs exist to 
our knowledge. This is related to the fact that the 
polynomials are not dense in $(\b_L(D),\|.\|_D)$ 
(cf. proof of Proposition \ref{Prop: cont der on b} below).

\begin{hypothesis}
In the sequel of this paper, we will 
always consider derivations $d$ on $\b_L(D)$ of 
the form $f(T)\frac{d}{dT}$, where $T$ is an $L$-rational coordinate 
of $D$ and $f$ is invertible in $\b_L(D)$.
\end{hypothesis}

Endow $\O_L(D)$ with the normoid structure defined by the 
family of norms $\{|.|_{t,\rho'}\}_{\rho'<\rho}$.
In the framework of normoid spaces it makes sense to 
speak of bounded endomorphisms of $\O(D)$ 
(cf. \cite{Banachoid}). Namely, 
$\varphi:\O_L(D)\to \O_L(D)$ is bounded if, 
for all $\rho'<\rho$, there exists $\rho''<\rho$ 
and a real constant $C>0$ such that $|\varphi(f)|_{t,\rho'}\leq C|f|_{t,\rho''}$, for all $f\in\O_L(D)$.

\begin{proposition}\label{Prop: cont der on b}
The following hold:
\begin{enumerate}
\item\label{i Prop: cont der on b} for all $\rho'\leq\rho$ and all $f\in\b_L(D)$ we 
have $|\frac{d}{dT}f|_{t,\rho'}\leq(\rho')^{-1}
|f|_{t,\rho'}$.
\item\label{ii Prop: cont der on b} Any bounded 
derivation $d$ of $\O_L(D)$ is of the 
form $f\frac{d}{dT}$, with $f\in\O_L(D)$. Moreover, if 
$d$ generates the space of bounded derivations of $\O_L(D)$, then $f$ 
is invertible and $f,f^{-1}\in\b_L(D)$. In particular, $\b_L(D)$ is then globally stabilized by $d$ and $d$ is a bounded endomorphism of $\b_L(D)$.
\item\label{iii Prop: cont der on b} Let $(A,\|.\|)$ be a normed ring and $A\to \b_L(D)$ 
be an injective ring homomorphism such that the two 
norms $\|.\|$ and $\|.\|_D$ are equivalent on $A$. Let 
$d:\b_L(D)\to\b_L(D)$ be a derivation of the form 
$f\frac{d}{dT}$, with $f,f^{-1}\in\b_L(D)$. If 
$d(A)\subseteq A$, then $d$ is bounded on $A$.
\end{enumerate}
\end{proposition}
\begin{proof}
All the statements are classical, 
see for instance \cite{Ch-Ro, Ch}. Let us prove just item 
\eqref{ii Prop: cont der on b}. 
By continuity, $d$ is determined by its values on 
the ring of polynomials $L[T-t]$, which is dense in 
$\O_L(D)$. By $L$-linearity and Leibniz 
rule, we have $d(\sum_{n\geq 0}a_n(T-t)^n)=
\sum_{n\geq 0}a_n\cdot n\cdot (T-t)^{n-1}d(T-t)$ and 
we see that $f=d(T-t)$. If $d$ generates the space of 
bounded derivations, then $f$ has to be invertible 
because $d/dT$ is also a generator of this space. 
The claim then follows from the fact that invertible 
functions are bounded.
\end{proof}

\subsection{Curves and weak triangulations}

In the rest of the text, we will work over a quasi-smooth 
$K$-analytic curve $X$
in the sense of Berkovich. For every $x\in X$ we denote 
by $\O_{X,x}$ (resp. $\mathscr{M}_{X,x}\subset\O_{X,x}$) the ring of 
analytic functions on an unspecified neighborhood of $x$ 
(resp. the ideal of $\O_{X,x}$ of functions that are 
zero at $x$), and by $\H(x)$ the completion of the 
valued field $\O_{X,x}/\mathscr{M}_{X,x}$ with respect to the absolute value induced by $x$.

Recall that 
its structure may be described using the semi-stable 
reduction theorem (see \cite{Semi-Stable-red-thm}) or, 
equivalently, Ducros's 
notion of triangulation (see \cite{Duc}).

\begin{definition}
A virtual open disk is a connected analytic space that becomes isomorphic to a disjoint union of open disks after base change to $\widehat{K^{\mathrm{alg}}}$.

A virtual open annulus is a connected analytic space that becomes isomorphic to a disjoint union of open annuli whose orientations are invariant under $\mathrm{Gal}(\wKa/K)$ after base change to $\widehat{K^{\mathrm{alg}}}$.

\end{definition}

As in \cite{NP-II}, we will use a slight generalization of triangulation that we recall here.

\begin{definition}
A subset $S$ of $X$ is said to be a \emph{weak triangulation} of $X$ if 
\begin{enumerate}
\item $S$ is locally finite and only contains points of type 2 or 3;
\item any connected component of $X - S$ is a virtual open disc or annulus.
\end{enumerate}
\end{definition}
Weak triangulations always exist \cite{NP-II}.
\begin{definition}\label{Def. skeleton}
The \emph{skeleton}~$\Gamma_{C}$ of a virtual open annulus~$C$ is the set of points that have no neighborhoods isomorphic to a virtual open disk. It is homeomorphic to an open segment.

The \emph{skeleton}~$\Gamma_{S}$ of a weak triangulation~$S$ of~$X$ is the union of $S$ and the skeletons of the connected components of $X - S$ that are virtual annuli. It is a locally finite graph.
\end{definition}

Let us finally introduce special neighborhoods of points. 

\begin{definition}\label{def:starshapedopen}
A connected open neighborhood $U$ of a point $x\in X$ will be 
called \emph{star-shaped} if
\begin{enumerate}
\item\label{def:starshapedopen-i} $U$ is a virtual open disk containing $x$, 
if $x$ is of type $1$ or $4$;
\item\label{def:starshapedopen-ii} $U$ is an open subset of $X$ such 
that $\{x\}$ is a weak triangulation of 
$U$, if $x$ is of type $2$ or $3$.
\end{enumerate}

In case \eqref{def:starshapedopen-i}, we define the \emph{canonical weak triangulation}~$S_{U}$ of $U$ to be the empty set, and 
its \emph{pointed skeleton}~$\Gamma_{S_{U}}$ to be 
the open segment connecting $x$ to the boundary of the 
disk $U$.

In case \eqref{def:starshapedopen-ii}, we define the 
\emph{canonical weak triangulation} of~$U$ to be $S_{U} : =\{x\}$ 
and its \emph{pointed skeleton} to be $\Gamma_{S_U}$.
\end{definition}

\begin{definition}\label{Def : star-shaped}
Let $x\in X$. Let~$Y$ be an 
affinoid neighborhood of~$x$ in~$X$. Denote by~$\partial Y$ its boundary in the sense of Berkovich (cf. \cite[Definition~2.5.7]{Ber}). 

We say that~$Y$ is a \emph{star-shaped} affinoid neighborhood of~$x$ in~$X$ if the connected component $U$ of $Y-\partial Y$ containing $x$ is a 
star-shaped open neighborhood of $x$ in $X$, 
and the other connected 
components of $Y-\partial Y$ are all virtual open disks. 

We define the \emph{canonical weak triangulation} of $Y$ to be $S_{Y} := S_{U} \cup \partial Y$, and its \emph{pointed skeleton} to be $\Gamma_{S_{Y}} := \Gamma_{S_{U}} \cup \partial Y$.
\end{definition}

%
%
%
\begin{notation} \label{Notation branch - germ - direction}
Let $x \in X$. Since the curve~$X$ has a tree structure, it makes sense to speak of segments in~$X$, hence of germs of segments out of~$x$, defined as equivalent classes. We define the degree $\mathrm{deg}(b)$ of a germ of segment $b$ of $X$ to be the number of germs of segments in $X_{\wKa}$ over~$b$.


If $x$ is of type~2 or~3, then any germ of segment~$b$ out of~$x$ may be represented by the skeleton of a virtual annulus, hence admits a canonical metric. As a result, for any function $F \colon X \to \ERRE$, the notion of slope~$\partial_bF$ of~$F$ along~$b$ makes sense. For $F$ sufficiently regular, we may also define a notion of laplacian 
\begin{equation}
dd^c F:=
\sum_{b}\mathrm{deg}(b)\partial_b F,
\end{equation}
where $b$ runs through the set of germs of segments out of~$x$.

We refer to \cite[Section 1.5]{NP-IV} for the precise definitions.

\end{notation}

\subsection{Differential  modules and trivial submodules} 
\label{section : Differential  modules and trivial submodules}

Let $A$ be a differential ring, \textit{i.e.} a commutative ring endowed with a non-zero derivation $d:A\to A$.
\begin{remark}\label{Remark : bidual}
We recall that if $\M$ is a finitely generated $A$-module, 
then $\M$ is a projective $A$-module 
if, and only if, $\M$ is locally free over $A$ and its rank 
function is locally constant over $Spec(A)$ with respect 
to the Zariski topology (cf. \cite[II, \S 5.2, Theorem 1]{BouAlgCom}).
In this case,  $\M$ is canonically isomorphic to its bi-dual $\M^{**}$ (cf. \cite[Chapter II, \S 2, n.8, Corollary 4]{Bou-Alg-1-3}).
\end{remark}

\begin{definition}\label{Def.diff.mod}
A \emph{differential module} $(\M,\nabla)$ over $(A,d)$ 
is a finitely generated projective $A$-module
together with a connection $\nabla:\M\to\M$, i.e. a 
$\mathbb{Z}$-linear map satisfying the Leibniz rule 
$\nabla(am)=d(a)m+a\nabla(m)$ for all $a\in A$, 
$m\in\M$. Morphisms of differential modules are 
$A$-linear maps commuting with the connections.
\end{definition}
Denote by $A\langle d\rangle$ the Weyl algebra of 
differential polynomials. As an additive group one has 
$A\langle d\rangle=\oplus_{n\geq 0}A\circ d^n$, and 
the multiplication $\circ$ is the unique one satisfying 
$a\circ d=d\circ a+ d(a)$, for all $a\in A$. 
A differential module over $A$ is naturally an 
$A\langle d\rangle$-module, where the multiplication by $d$ in $\M$ is the map $\nabla$. Reciprocally, 
an $A\langle d\rangle$-module which is 
finitely generated and projective as $A$-module is a differential module. 
Morphisms between differential modules coincide with $A\langle d\rangle$-linear 
morphisms. The $A$-linear algebra constructions on $\M$ such as $-\otimes_A-$ or $\mathrm{Hom}_A(-,-)$ 
are naturally 
$A\langle d\rangle$-modules 
(see for instance \cite[Section 5.3]{Kedlaya-book-2}). As an example the dual module 
$\M^*:=\mathrm{Hom}_A(\M,A)$ is endowed with the 
connection  
$(\nabla^*(\alpha))(m)=\alpha\circ\nabla-d\circ\alpha$, where $\alpha:\M\to A$; 
and $\M\otimes_A\N$ is endowed with the connection 
\begin{equation}\label{eq : tensor product connection}
\nabla'\;:=\;\nabla\otimes\mathrm{Id}_{\N}+\mathrm{Id}_{\M}\otimes\nabla\;.
\end{equation}
For every $A\langle d\rangle$-modules $\M,\N$ we set
\begin{eqnarray}
\Hdr^0(\M,\N)&\;:=\;&\mathrm{Ker}(\nabla':
\M\otimes_A\N\to\M\otimes_A\N)\;,\\
\Hdr^1(\M,\N)&\;:=\;&\mathrm{Coker}(\nabla':
\M\otimes_A\N\to\M\otimes_A\N)\;.
\end{eqnarray}

Notice that $\Hdr^i(\M,\N)\cong \Hdr^i(\N,\M)$.
We often write $\Hdr^i(\M):=\Hdr^i(\M,A)$, if no 
confusion is possible. 
The following lemmas are some of the reasons why 
we mainly deal with 
$A\langle d\rangle$-modules that are  
projective as $A$-modules.
\begin{lemma}\label{Lemma : H^0 exact}
Let $E:0\to\N\to\M\to\Q\to 0$ be an  exact sequence of 
$A\langle d\rangle$-modules and $\mathrm{P}$ an $A\langle d\rangle$-module. If $\mathrm{Tor}_1^A(\mathrm{P},\Q)=0$, we have a long exact sequence 
\begin{equation}\label{eq: long exact H^i}
0\to\Hdr^0(\N,\mathrm{P})\to\Hdr^0(\M,\mathrm{P})\to\Hdr^0(\Q,\mathrm{P})\to
\Hdr^1(\N,\mathrm{P})\to\Hdr^1(\M,\mathrm{P})\to\Hdr^1(\Q,\mathrm{P})\to0.
\end{equation}
In particular, if $\mathrm{P}$ is a flat $A$-module, the functor 
$\N\mapsto\Hdr^0(\mathrm{P},\N)\cong\Hdr^0(\N,\mathrm{P})$ is left exact, while $\N\mapsto\Hdr^1(\mathrm{P},\N)\cong\Hdr^1(\N,\mathrm{P})$ is right exact.
\end{lemma}
\begin{proof}
Since $\mathrm{Tor}^1_A(\mathrm{P},\Q)=0$, the sequence 
$E\otimes_AP$ remains exact. Then, the snake lemma 
applied to the diagram $\nabla:E\otimes_A\mathrm{P}
\to 
E\otimes_A\mathrm{P}$ gives the desired sequence. 
\end{proof}
\begin{lemma}\label{Lemma: H^0(M,-) left exact}
Let  $\M$ be a differential module over $A$ (cf. Definition \ref{Def.diff.mod}), 
then for every $A\langle d\rangle$-module $\N$
we have a functorial isomorphism
\begin{equation}\label{eq : omega = hom}
\Hdr^0(\M,\N)\;\cong\;\mathrm{Hom}_{A\langle d\rangle}(\M^*,\N)\;.
\end{equation}
\end{lemma}
\begin{proof}
We have $\M\cong\M^{**}$. Moreover, $\M^*$ is projective of finite type too and we have 
$\mathrm{Hom}_A(\M^*,\N)\cong\M^{**}\otimes_A\N
\cong\M\otimes_A\N$ 
(cf. \cite[Remark 5.3.4]{Kedlaya-book-2}). Hence 
$\mathrm{Hom}_{A\langle d\rangle}(\M^*,\N)=\Hdr^0(\mathrm{Hom}_A(\M^*,\N))=
\Hdr^0(\M\otimes_A\N)=\Hdr^0(\M,\N)$.
\end{proof}

Let $(A',d')$ be another ring with derivation, and let 
$A\to A'$ be a ring morphism commuting with the 
derivations. In this situation, we say that $A'$ is a 
differential ring over $A$. 
Sometimes, with an abuse, we will denote 
$d'$ by the same symbol $d$.
Let $\M$ be an $A\lr{d}$-module. The scalar extension of 
$\M$ is the $A'\langle d'\rangle$-module 
$A'\langle d'\rangle\otimes_{A\langle d\rangle}\M$ and 
it coincides with $\M\otimes_AA'$ endowed 
with the connection 
$\nabla':=\nabla\otimes\mathrm{Id}_{A'}+
\mathrm{Id}_{\M}\otimes d'$. 
A \emph{solution} of $\M$ with values 
in $A'$ is an element in the kernel of 
$\nabla'$:
\begin{equation}\label{eq : convention on solutions-}
\Hdr^0(\M,A')
\;:=\;\mathrm{Ker}(\nabla':
\M\otimes_AA'\to\M\otimes_AA')\;.
\end{equation}
The kernel of the derivation 
$\Hdr^0(A',A')=A'^{d'=0}:=\mathrm{Ker}(d':A'\to A')$ is 
 a sub-ring of $A'$. The group $\Hdr^0(\M,A')$ 
is naturally an sub-$A'^{d'=0}$-module of $\M$, 
and the rule $\M\mapsto \Hdr^0(\M,A')$ is a covariant 
functor. 
If $A'^{d'=0}$ is a field and if the dimensions are finite, 
we set 
$\hdr^i(\M,A'):=\mathrm{dim}_{A'^{d'=0}}\Hdr^i(\M,A')$ 
and 
\begin{equation}
\chidr(\M,A')\;:=\;\hdr^0(\M,A')-\hdr^1(\M,A')\;.
\end{equation}

Let  $\M$ be a $A\lr{d}$-module. 
The $A^{d=0}$-module $\Hdr^0(\M,A)$ can be seen as an 
$A^{d=0}\langle d\rangle$-module, where the 
multiplication by $d$ is given by the zero map. In this case, we have $A\langle d\rangle\otimes_{A^{d=0}\langle d\rangle}\Hdr^0(\M,A)\cong A\otimes_{A^{d=0}}\Hdr^0(\M,A)$, where $A\otimes_{A^{d=0}}\Hdr^0(\M,A)$ is endowed with the connection $\nabla(a\otimes v):=d(a)\otimes v$. 
The canonical arrow
\begin{equation}\label{eq : j_M}
j_{\M}\;:\;A\otimes_{A^{d=0}}\Hdr^0(\M,A)\longrightarrow\M \;,
\end{equation}
defined by $j_{\M}(a\otimes v):=a\cdot v\in\M$, is a 
morphism of $A\langle d\rangle$-modules which is 
functorial in $\M$. 
We denote by 
\begin{equation}\label{def : M_A}
\M_A
\end{equation} 
the image of $j_{\M}$.
If $f:\M\to\N$ is a morphism of $A\langle d\rangle$-modules, then $f(\M_A)\subseteq\N_A$ 
and the rule $\M\mapsto\M_A$ is a functor.
If $\N\to\M\to\P$ is exact, then $\N_A\to\M_A\to\P_A$ 
is a complex (i.e. the image of $\N_A$ in $\M_A$ lies in 
the kernel of $\M_A\to\P_A$).

\begin{proposition}\label{Proposition : M_A minimal}
Let $\M$ be an $A\lr{d}$-module. Then $\M_A$ is the smallest sub-$A\lr{d}$-module of $\M$ satisfying 
\begin{equation}\label{eq : Hdr0(M_A,A)=Hdr0(M,A)}
\Hdr^0(\M_A,A)\; =\; \Hdr^0(\M,A)\;.
\end{equation}  
In particular $(\M_A)_A=\M_A$.
\end{proposition}
\begin{proof}
Let $\N\subset \M$ be a submodule such that the natural injection (cf. Lemma \ref{Lemma : H^0 exact}) $\Hdr^0(\N,A)\simto \Hdr^0(\M,A)$ is an equality. By functoriality we have an isomorphism 
$\Hdr^0(\N,A)\otimes_{A^{d=0}} A\simto \Hdr^0(\M,A)\otimes_{A^{d=0}} A$ and a morphism $i:\N_A\to\M_A$ commuting with the canonical morphisms $j_{\N}$ and $j_{\M}$. Since $j_{\M}$ is surjective onto $\M_A$, 
then $i$ is surjective too. On the other hand, the composite map $\N_A\to\N\to\M$ is injective, and hence $i$ is injective too. It follows that 
$\M_A=\N_A\subset\N$, which proves that $\M_A$ is minimum as required.
\end{proof}
\begin{definition}
We say that the $A\lr{d}$-module $\M$ is \emph{trivial} 
if $j_{\M}$ is an isomorphism. 
We say that $\M$ is \emph{trivialized by $A'$} if $\M\otimes_AA'$ is 
trivial as a differential module over $A'$. 
\end{definition}
\begin{remark}\label{Remark : Rank H^0 < rank M}
If $\Hdr^0(\M,A)$ is a free $A^{d=0}$-module $\Hdr(\M,A)\cong (A^{d=0})^I$, 
then $\M$ is trivial if, and only if, it is isomorphic, as 
$A\lr{d}$-module, 
to $A^I$ with the connection 
$\nabla((a_i)_{i\in I})=(d(a_i))_{i\in I}$. 
If $\M$ is moreover a differential module, the set $I$ is finite, because in this case $\M$ is of finite type by definition. 
\end{remark}
\begin{remark}\label{rk : trivial implies surjective nabla}
Assume that the derivation $d:A\to A$ is a surjective map. Then for 
every trivial $A\lr{d}$-module $\M$ the connection $\nabla:\M\to\M$ 
is surjective too. Indeed, so is the connection 
$d\otimes \mathrm{Id}$ of $A\otimes_{A^{d=0}}\Hdr^0(\M,A)$.
\end{remark}
\begin{lemma}[\protect{\cite[Lemmas 5.3.3 and 5.3.4]{Kedlaya-book-2}}]
\label{Lemma : Ext^1=H^1}
Let $\M$ and $\N$ be two $A\lr{d}$-modules.
Denote by $\mathrm{Ext}^1(\M,\N)$ the Yoneda extension group of 
exact sequences $0\to\N\to\P\to\M\to 0$. If $\M$ is 
a differential module (cf. Definition \ref{Def.diff.mod}), then 
$\mathrm{Ext}^1(\M,\N)\simto \Hdr^1(\M^*\otimes\N,A)$.\hfill$\Box$
\end{lemma}

\begin{lemma}\label{Lemma :devisage}
Let $E:0\to\N\to\M\to\Q\to 0$ be an exact sequence of 
$A\lr{d}$-modules. Let $A'$ be a differential ring over $A$ and assume that $\Q$ as an $A$-module satisfies $\mathrm{Tor}_1^A(\Q,A')=0$. Then, the following hold.
\begin{enumerate}
\item\label{i Lemma :devisage} The sequence $\Hdr^0(E,A')$ is left exact, 
moreover 
$\Hdr^0(\N,A')=\Hdr^0(\M,A')\cap(\N\otimes_AA')$. 

\item\label{ii Lemma :devisage} If the connection $\nabla':\N\otimes_AA'\to\N\otimes_AA'$ is surjective (i.e. $\Hdr^1(\N,A')=0$), 
then the sequence $\Hdr^0(E,A')$ is also right exact. In particular, this 
holds if the derivation $d:A'\to A'$ is surjective 
and $\N$ is trivialized by $A'$ (cf. Remark \ref{rk : trivial implies surjective nabla}). 

\item\label{iii Lemma :devisage} 
Assume that the derivation 
$d:A'\to A'$ is surjective, and that 
$\N$ and $\Q$ are both trivialized by $A'$. 
Then $\M$ is trivialized by $A'$ too. 
\end{enumerate}
\end{lemma}
\begin{proof}
Let us first assume $A=A'$. Item \eqref{i Lemma :devisage} follows easily from \eqref{eq: long exact H^i}.

\eqref{ii Lemma :devisage} If $\nabla:\N\to\N$ is 
surjective, we have $\Hdr^1(\N)=0$ and the claim follows again from \eqref{eq: long exact H^i}. 
If $d:A\to A$ is surjective and $\N$ is trivial, 
the connection $\nabla:\N\to\N$ is 
surjective by Remark \ref{rk : trivial implies surjective nabla}. 

\eqref{iii Lemma :devisage} By item ii) and the right exactness of tensor product, the sequence $A\otimes_{A^{d=0}}\Hdr^0(E,A)$ is right exact. Hence, the five lemma applied to the diagram
$j_{E}:A\otimes_{A^{d'=0}}\Hdr^0(E,A)\to E$ 
implies that the middle map $j_{\M}$ is an isomorphism, and $\M$ is trivial. 
%

Now, if $A\neq A'$, then the condition  
$\mathrm{Tor}_1^A(\Q,A')=0$ guarantee that 
$E\otimes_AA'$ is exact. Therefore, 
all items can be proved after base change to $A'$ and 
without loss of generality we may assume $A=A'$.
\end{proof}

\if{
\begin{remark}
Let $A'$ be a differential ring over $A$. 
With the notation of Lemma \ref{Lemma :devisage}, if 
$\mathrm{Tor}_1^A(\Q,A')=0$, then $E\otimes_AA'$ is 
exact, and hence $\Hdr^0(E,A')$ remains left exact by \eqref{eq: long exact H^i}. 
Under this condition the statements i), ii) and iii) of 
Lemma \ref{Lemma :devisage}
hold replacing the word ``$\nabla:\N\to\N$ is epi'' by 
``$\nabla':\N\otimes A'\to\N\otimes A'$ is epi''; ``\emph{trivial}'' by ``\emph{trivialized by $A'$}''; 
and ``\emph{$d$ is surjective}'' by 
``\emph{$d'$ is surjective}''.
\end{remark}
}\fi

Recall that a derivation $d$ on an integral domain $A$ extends 
uniquely to a derivation on its fraction field $F(A)$. We will abuse 
notation and also denote it by the same symbol $d$. Recall that $F(A)^{d=0}$ is a field.
\begin{lemma}\label{Lemma : Trivial iff solutions}
Assume that $A$ is an integral domain such that 
$A^{d=0}=F(A)^{d=0}$. Then 
\begin{enumerate}
\item\label{i Lemma : Trivial iff solutions} 
If $\M$ is a $A\langle d\rangle$-module  which is flat 
over $A$, then $j_{\M}$ is injective.
\item\label{ii Lemma : Trivial iff solutions}  
Let $0\to\N\to\M\to\Q\to 0$ be an exact sequence of 
$A\lr{d}$-modules such that $\M$ is trivial and 
$\Q$ is flat as $A$-module. 
Then  $\N$ and $\Q$ are both trivial differential modules. 
In particular, $\M$, $\N$ and $\Q$ are all free 
$A$-modules, the sequence splits and 
$\M\cong\N\oplus\Q$ as $A\langle d\rangle$-modules.
\item\label{iii Lemma : Trivial iff solutions} Assume that the derivation $d:A\to A$ 
is a surjective map.
Let $0\to\N\to\M\to\Q\to 0$ be an exact sequence 
of $A\lr{d}$-modules such that $\N$ and $\Q$ are trivial. 
Then, $\M$ is trivial. 
\end{enumerate}
\end{lemma}
\begin{proof}
If $A=F(A)$, then item \eqref{i Lemma : Trivial iff solutions} is classical (cf. 
\cite[Lemma 5.1.5]{Kedlaya-book-2}). 
If $A\neq F(A)$, we may deduce the injectivity of $j_{\M}$ from the 
injectivity of $j_{\M\otimes_A F(A)}$ as follows.
By functoriality of $j_{\M}$, we have the commutative diagram
\begin{equation}
\xymatrix{
\M\ar[rr]&&\M\otimes_AF(A)\\
\Hdr^0(\M,A)\otimes_{A^{d=0}}A\ar[r]^-{a}\ar[u]^{j_{\M}}&
\Hdr^0(\M,A)\otimes_{A^{d=0}}F(A)\ar[r]^-{b}&
\Hdr^0(\M\otimes_{A}F(A))\otimes_{F(A)^{d=0}}F(A)\ar[u]_{j_{\M\otimes_AF(A)}}\;.}
\end{equation}
It is then enough to prove that 
the composite horizontal map in the bottom 
is injective. Since $\M$ is flat as $A$-module,  
$\Hdr^0(\M,-)$ is left exact by Lemma 
\ref{Lemma : H^0 exact}. 
It follows that we have an inclusion
$\Hdr^0(\M,A)\subseteq\Hdr^0(\M,F(A))
=\Hdr^0(\M\otimes_AF(A))$. Since 
$A^{d=0}=F(A)^{d=0}$ is a field, all the modules are 
flat over $A^{d=0}$ and both the maps 
$a$ and $b$ are injective.
\if{
We deduce that the maps induced by scalar extension
$\Hdr^0(\M,A)\otimes_{A^{d=0}} A\to 
\Hdr^0(\M,A)\otimes_{A^{d=0}} F(A)$ and
$\Hdr^0(\M,A)\otimes_{A^{d=0}} F(A)\to 
\Hdr^0(\M\otimes_AF(A),F(A))\otimes_{A^{d=0}} F(A)$ are injective. 
So the composite map 
$\Hdr^0(\M,A)\otimes_{A^{d=0}} A\to 
\Hdr^0(\M\otimes_AF(A),F(A))\otimes_{A^{d=0}} F(A)$ is injective. 
}\fi

\eqref{ii Lemma : Trivial iff solutions} Denote by 
$E:0\to\N\stackrel{i}{\to}\M\stackrel{p}{\to}\Q\to 0$ 
the exact sequence, 
we consider the morphism of sequences 
$j_{E}:\Hdr^0(E,A)\otimes_{A^{d=0}}A\to E$. 
We know that $\Hdr^0(E,A)$ is left exact (cf. Lemma 
\ref{Lemma : H^0 exact}) and that
$\Hdr^0(E,A)\otimes_{A^{d=0}}A$ remains left exact 
because $A^{d=0}$ is a field and $A$ is flat over 
$A^{d=0}$.
From the surjectivity of $p\circ j_{\M}$ we 
deduce the surjectivity of $j_{\Q}$. 
By \eqref{i Lemma : Trivial iff solutions} one has its injectivity, hence $\Q$ is trivial.
Moreover, this also shows that the map $\Hdr^0(p,A)\otimes\mathrm{Id}_A:\Hdr^0(\M,A)\otimes_{A^{d=0}}A\to\Hdr^0(\Q,A)\otimes_{A^{d=0}}A$ is surjective, because $p$ is, and both $j_{\M}$ and $j_{\Q}$ are isomorphisms.
Then, the snake lemma applied to the diagram $j_{E}:\Hdr^0(E,A)\otimes_{A^{d=0}}A\to E$ shows that 
$\N$ is trivial. Since $A^{d=0}$ is a field, the 
$\Hdr^0(E,A)$ is a sequence of free 
$A^{d=0}$-modules which splits. One sees that 
$\N,\M,\Q$ are free $A$-modules by Remark 
\ref{Remark : Rank H^0 < rank M} and that, for all of 
them, we can consider bases in their $\Hdr^0(-)$. 
The claim follows.

\eqref{iii Lemma : Trivial iff solutions}  
Since $\N$ is trivial, and $d:A\to A$ is surjective, we 
have $\Hdr^1(\N,A)=0$ 
(cf. Remark \ref{rk : trivial implies surjective nabla}). Since 
$-\otimes_{A^{d=0}}A$ is an exact functor, the 
sequence $\Hdr^0(E,A)\otimes_{A^{d=0}}A$ is exact.
Now, the snake lemma applied to the diagram $j_{E}:
\Hdr^0(E,A)\otimes_{A^{d=0}}A\to E$ implies that 
$\M$ is trivial.
\end{proof}

\begin{corollary}\label{Corollary : M_A unique}
Assume that $A$ is an integral domain such that $A^{d=0}=F(A)^{d=0}$. If $\M$ is a differential module over $A$, then
\begin{enumerate}
\item\label{Corollary : M_A unique item i} As an $A$-module, 
$\M_A$  is free of finite type and its rank coincide with the dimension of 
$\Hdr^0(\M,A)$ over $A^{d=0}$:
\begin{equation}\label{eq : dimHdr0(M,A)=rank(M_A)}
\mathrm{dim}_{A^{d=0}}\Hdr^0(\M,A)\;=\;\mathrm{rank}_A(\M_A)\;.
\end{equation}
\item\label{Corollary : M_A unique item ii} If $\M/\M_A$ has no torsion, then  
$\M_A$ is the unique sub-differential module of $\M$ satisfying 
\eqref{eq : Hdr0(M_A,A)=Hdr0(M,A)} and \eqref{eq : dimHdr0(M,A)=rank(M_A)}. In particular this holds if $\M/\M_A$ is projective. 
\hfill$\Box$
\end{enumerate}
\end{corollary}
\begin{proof}
By Lemma \ref{Lemma : Trivial iff solutions} we have $\M_A\cong\Hdr^0(\M,A)\otimes_{A^{d=0}}A$. 
Item \eqref{Corollary : M_A unique item i} follows then immediately. 

Item \eqref{Corollary : M_A unique item ii} follows from Proposition \ref{Proposition : M_A minimal}. 
Indeed, let $\N\subset \M$ be a sub-differential module of $\M$ satisfying 
\eqref{eq : Hdr0(M_A,A)=Hdr0(M,A)} and \eqref{eq : dimHdr0(M,A)=rank(M_A)}. By minimality of $\M_A$ it follows that $\M_A\subset\N$ and by the fact that 
$j_{\N}$ is injective we deduce easily that $\M_A$ and $\N$ have the same rank.
The quotient $\N/\M_A$ is then a torsion submodule of $\M/\M_A$. 
Therefore $\N/\M_A=0$ and the claim follows.
\end{proof}
\if{
\begin{lemma}
Let $A\to A'$ be a morphism commuting with the derivations. Assume 
that all $A\lr{d}$-modules that are finitely presented as $A$-modules 
are also locally free as $A$-modules (i.e. differential modules).
\begin{enumerate}

\item There exists a largest differential sub-module 
$\M_{A'}\subseteq\M$, with the property that a 
differential sub-module of $\M$ is trivialized by $A'$ 
if, and only if, it is contained in $\M_{A'}$. 
In particular $\M_{A'}$ is trivialized by $A'$.

\item The rule $\M\mapsto \M_{A'}$ is a covariant 
left exact functor of the category of differential modules over $A$ into 
its full subcategory of differential modules trivialized by $A'$.
\if{
\item Assume moreover that $A$ has no proper non-zero ideal stable 
by $d$. Then a module $\M$ is trivial if, and only if, its rank equals 
$\mathrm{dim}_{A^{d=0}}\,\Hdr^0(\M,A)$.
}\fi
\end{enumerate}
\end{lemma}
\begin{proof}
......

......

iii) Firstly we notice that ii), together with point iii) of Lemma 
\ref{Lemma :devisage}, imply that the 
sum in $\M$ of two differential sub-modules that are 
trivialized by $A'$ is trivialized by $A'$, so $\M_{A'}$ can be defined as 
the union of all the sub-modules of $\M$ 
trivialized by $A'$. 
Reciprocally if $\N\subseteq \M_A$, then it is trivial by ii).

----

iv) The functor $\M\mapsto\Hdr^0(\M,A')$ is left exact by point i) of 
Lemma \ref{Lemma :devisage}. Let $E:0\to\N\to\M\to \P\to 0$ be an 
exact sequence of differential modules over $A$. Since 
$(A')^{d'=0}$ is a field, $\Hdr^0(E,A')$ is formed by finite 
dimensional vector spaces. Hence 
$\Hdr^0(E,A')\otimes_{(A')^{d'=0}}A'$ remains left exact.

PROBLEMA : Questo non mi dice che $\M\mapsto \M_{A'}$ è esatto, 
ma solo che $\M\mapsto \M_{A}$ è esatto ...
Se fosse $(A')^{d'=0}=A^{d=0}$ allora si potrei concludere ...

----

-----

-----

\end{proof}

\begin{remark}
The inclusion $\Hdr^0(\M_{A'},A')\subseteq 
\Hdr^0(\M,A')$ is rarely an equality. In other words the module 
$\M_{A'}$ does not  control the solutions of $\M$ with 
value in $A'$. Nevertheless if $A=A'$ it is an equality. 
Lemma \ref{Lemma : Trivial iff solutions} will be used in section  
\ref{Proof of Main Th} 
(cf. Prop. \ref{Prop : existence of (F_S,i)_x})
to obtain differential sub-modules of the trivial sub-module $\M_A$. 
In the following way :
if $\M$ comes by scalar extension from a differential 
module $\N$ over a sub-ring $B\subset A$, then 
$\N_B\otimes_{B}A\subseteq \M_{A}$, and $\Hdr^0(N_B,B)=\Hdr^0(N,B)$.
\end{remark}
}\fi

\begin{remark}\label{Remark : example of non direct summand}
The trivial sub-module $\M_A$ is possibly not 
a direct factor of $\M$. We 
provide explicit examples of this in section 
\ref{An explicit counterexample.}.
\end{remark}

\begin{remark}[Duality and trivial sub-modules]\label{Remark : iso trivial dual}
The duality endo-functor $\M\mapsto \M^*$ is an additive and exact
equivalence of the category of differential modules. 
Unfortunately, it is not compatible with the functor $\M\mapsto\M_A$. 
Namely there exists a canonical composite morphism
\begin{equation}
(\M^*)_A\;\to\;\M^*\;\to\;(\M_A)^{*}
\end{equation}
which is often not an isomorphism. 
The dimensions of $(\M_A)^*$ and $(\M^*)_A$ can be different 
(see the counterexamples of section \ref{An explicit counterexample.}).
However, even when the two dimensions are equal, there is no reason to 
have an isomorphism. This means 
that $\Hdr^0(\M^*,A)$ can not be identified to the dual of 
$\Hdr^0(\M,A)$. This is closely related to the assumption of Theorem 
\ref{Thm : 5.7 deco good direct siummand}. 
\end{remark}

\begin{remark}
The dual convention, which is often used in 
literature\footnote{This is indeed
the convention adopted in all the papers by Christol, Dwork, 
Mebkhout, Robba,\ldots and in the theory of $D$-modules.}, consists in defining the  
solutions of an $A\langle d\rangle$-module $\M$ 
with values in $A'$ as the morphisms 
$\mathrm{Hom}_{A\langle d\rangle}(\M,A')=
\mathrm{Hom}_{A'\langle d\rangle}(\M\otimes_AA',A')=\Hdr^0(\M^*,A')$. 
This convention has the privilege of enjoying 
automatically left exactness. 
However, as soon as we deal with differential modules (cf. Definition \ref{Def.diff.mod}),
duality is an equivalence and every statement admits a 
dual statement. 
\end{remark}

\subsubsection{Change of derivation.} 
\label{Sect : Change of derivation.}
Let $d_1,d_2:A\to A$ be two derivations satisfying $d_2=a d_1$ with 
$a$ invertible in $A$.
The two categories of differential modules with respect to $d_1$ and  
to $d_2$ are isomorphic as follows: 
to a $d_1$-module $(\M,\nabla_1)$ one associates the $d_2$-module 
$(\M,\nabla_2)$ with $\nabla_2=a\cdot \nabla_1$. Then an $A$-linear 
morphism commutes with $\nabla_1$ if, and only if, it commutes with 
$\nabla_2$. In other words the functor is the identity on the 
morphisms. 
A differential module is $d_1$-trivial if, and only if, it is $d_2$-trivial.

\subsubsection{Filtrations of cyclic modules, and factorization of 
operators.}\label{Filtrations of cyclic modules, and factorization of 
operators.}
We say that a differential module 
$\M$ is \emph{cyclic} if it is of the form 
$A\lr{d}/A\lr{d}P$, for some unitary 
differential polynomial $P\in A\langle d\rangle$. 
The latter module will be denoted by $\M_P$.
If $A$ is a field, any differential module is cyclic 
(cf. \cite[Ch.II, Lemme 1.3]{Deligne-Reg-Sing}, \cite{Katz-cyclic-vect}). 

If $P$ factorizes in $A\lr{d}$ as $P=P_1\cdot P_2$, then 
the right multiplication  $L\mapsto L\cdot P_2$ by $P_2$  identifies 
$A\lr{d}P_1$ to $A\lr{d}P$ and one has an exact sequence
\begin{equation}\label{eq : notation M_P}
0\to\M_{P_1}\to\M_P\to\M_{P_2}\to 0\;.
\end{equation}
If $A$ is a field, then the converse is also true: given an exact 
sequence of differential modules $0\to\N \to\M_P\to\Q\to 0$, there 
exists a factorization $P=P_1\cdot P_2$ such that 
$\N\cong\M_{P_1}$ and $\Q\cong\M_{P_2}$ (cf. \cite[3.5.6]{Ch} for 
more details).

To end this section, we quote the following lemma by B.Malgrange that gives the equality of the de Rham 
cohomology spaces of $\M_P$ with the kernel and the 
cokernel of $P$ as an endomorphism of $A$.
\begin{lemma}[cf. 
\protect{\cite[Section 3, p.153]{Malgrange-Irreg}}]\label{Lemma : Coh M_P = Coh P}
Let $P\in A\langle d\rangle$. Then
\begin{eqnarray}
\Hdr^0(\M_P,A)&\;\cong\;&\mathrm{Ker}(P:A\to A)\;\\
\qquad\qquad\qquad
\Hdr^1(\M_P,A)&\;\cong\;&\mathrm{Coker}(P:A \to A)\;.\qquad\qquad\qquad\Box
\end{eqnarray}
\end{lemma}

\section{Radii and filtrations by the radii}
\label{Radii and filtrations by the radii}
\begin{setting}\label{Hypothesis : X connected}
In order to simplify some 
statements, from now on until the end of the paper, we will assume 
without loss of generality that the curve $X$ is connected.
\end{setting}

In this section we introduce the radii of convergence of a differential 
equation over the quasi-smooth $K$-analytic curve $X$, and we recall some results of \cite{NP-I} and 
\cite{NP-II}. Without explicit mention of the contrary, we assume everywhere 
that the curve $X$ is endowed with a weak triangulation $S$.

By a \emph{differential equation} or \emph{differential module} over $X$ we mean
a coherent $\O_{X}$-module $\Fs$ endowed with an (integrable) 
connection~
$\nabla:\Fs\to\Fs\otimes\Omega^1_X$. We 
recall that $\nabla$ is by definition a $K$-linear map of 
sheaves such that for all open $U\subset X$, all 
$f\in\O_X(U)$ and all $m\in\Fs(U)$, one has 
$\nabla(f\cdot m)=m\otimes d(f)+f\cdot\nabla(m)$. 
Proposition \ref{Lemma : coherent implies loc free} below shows that
$\Fs$ is automatically a locally free $\O_X$-module of finite rank. 
This is locally a differential module as defined in section 
\ref{section : Differential  modules and trivial submodules}. 
In the sequel of the article, we shall switch freely between the different 
terminologies. 
In particular, we denote by $\Hdr^0(X,\Fs)$ the kernel of the 
connection acting on the global sections $\Fs(X)$.


The following Lemma is closely inspired by 
\cite[II, \S 5.2, Theorem 1]{BouAlgCom}. Recall that $X$ is connected (cf. Setting \ref{Hypothesis : X connected})
\begin{lemma}\label{Lemma : loc free iff stalk free}
Let $\Fs$ be a coherent $\O_X$-module. The following assertions are equivalent. 
\begin{enumerate}
\item\label{Lemma : loc free iff stalk free-i} 
$\Fs$ is locally free over $\O_X$;
\item\label{Lemma : loc free iff stalk free-ii} 
For every $x\in X$, the stalk 
$\Fs_x$ is a free $\O_{X,x}$-module;
\item\label{Lemma : loc free iff stalk free-iii} 
For every rigid point $x\in X$, the stalk 
$\Fs_x$ is a free $\O_{X,x}$-module.
\end{enumerate}
\end{lemma}
\begin{proof}
Clearly $(\ref{Lemma : loc free iff stalk free-i})\Rightarrow 
(\ref{Lemma : loc free iff stalk free-ii})\Rightarrow (\ref{Lemma : loc free iff stalk free-iii})$. Moreover, $(\ref{Lemma : loc free iff stalk free-iii})\Rightarrow (\ref{Lemma : loc free iff stalk free-ii})$ 
because if $x$ is not a rigid point, then $\O_{X,x}$ is a field and 
$\Fs_x$ is automatically free. 

Let us prove that $(\ref{Lemma : loc free iff stalk free-ii})\Rightarrow (\ref{Lemma : loc free iff stalk free-i})$. Let $x\in X$. We need to prove that it admits a neighborhood in~$X$ on which $\Fs$ is free. Let $(y_1,\ldots,y_q)$ be a basis of the stalk $\Fs_x$. Up to shrinking $X$, we 
may assume that $y_1,\ldots,y_q$ are global sections of $\Fs$. 
Let $\phi:\O_X^q\to\Fs$ be the morphism of 
$\O_X$-modules sending the canonical basis of $\O_X^q$ onto the 
sections $(y_1,\ldots,y_q)$. We have an exact sequence
\begin{equation}
0\to\mathcal{R}\to\O_X^q\xrightarrow{\phi}\Fs\to \mathcal{Q}\to 0\;,
\end{equation}
where $\mathcal{R}=\mathrm{ker}(\phi)$ and 
$\mathcal{Q}=\mathrm{coker}(\phi)$. 

Since $\O_{X}$ is coherent, so are $\mathcal{R}$ and $\mathcal{Q}$. By assumption, we have $\mathcal{R}_{x}=0$ and $\mathcal{Q}_{x}=0$. Since the support of a coherent sheaf is closed, the sheaves  $\mathcal{R}$ and $\mathcal{Q}$ are zero is the neighborhood of~$x$, which concludes the proof.
\end{proof}

\begin{proposition}\label{Lemma : coherent implies loc free}
Let $\Fs$ be a coherent $\O_X$-module with a connection $\nabla$. 
Then $\Fs$ is a locally free $\O_X$-module of finite rank.
\end{proposition}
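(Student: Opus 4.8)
The statement is local on $X$, so I would fix a point $x\in X$ and show that the stalk $\Fs_x$ is a free $\O_{X,x}$-module of finite rank; coherence then propagates freeness to a neighbourhood. The key structural input is that $\O_{X,x}$ is a well-behaved local ring for a quasi-smooth curve: when $x$ is a point of type $1$ (a rigid point) the local ring is a field or a discrete valuation ring (in fact $\O_{X,x}$ is a henselian DVR or a field, by the theory of Berkovich/rigid analytic curves), and a finitely generated module over such a ring with no torsion is free. When $x$ is a point of type $2$, $3$ or $4$, the local ring $\O_{X,x}$ is a field (the germ of a curve at such a point is the germ of a "point" with no lower-dimensional strata in the relevant sense), so coherent equals free trivially. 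Thus the only real content is ruling out torsion at the type-$1$ points.

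To rule out torsion, I would use the connection. Suppose $t\in\Fs_x$ is a nonzero torsion element, killed by some nonzero $f\in\O_{X,x}$; without loss of generality, working in the DVR case, $f$ is a power of a uniformizer $T$, and we may take $t$ killed by $T$ but not by a unit, i.e. $t$ generates a nonzero submodule annihilated by $T$. Apply $\nabla$ with respect to the derivation $\d=\partial_T$: from $Tt=0$ we get $0=\nabla(Tt)=t+T\nabla(t)$, hence $t=-T\nabla(t)\in T\Fs_x$. Iterating, $t\in\bigcap_{n\ge 1}T^n\Fs_x$. Since $\Fs_x$ is finitely generated over the noetherian local ring $\O_{X,x}$, Krull's intersection theorem gives $\bigcap_n T^n\Fs_x=0$, so $t=0$, a contradiction. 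Hence $\Fs_x$ is torsion-free, and being finitely generated over a DVR it is free of finite rank. (If $\O_{X,x}$ is a field there is nothing to prove.)

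Finally, I would promote this pointwise freeness to local freeness: for a coherent sheaf on a (locally noetherian) analytic space, the function $x\mapsto\dim_{\kappa(x)}\Fs_x\otimes\kappa(x)$ is upper semicontinuous, and it is locally constant exactly on the locus where $\Fs$ is locally free; the stalkwise computation above shows this dimension equals the generic rank at every point, so the locus of local freeness is all of $X$. Concretely, choosing near $x$ a finite presentation and a collection of $r$ sections whose images form a basis of $\Fs_x\otimes\kappa(x)$, Nakayama's lemma makes them generate $\Fs$ in a neighbourhood, and the kernel of the resulting surjection $\O_X^r\to\Fs$ is coherent with zero stalk at $x$, hence zero near $x$.

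\textbf{Main obstacle.} The only delicate point is the precise description of $\O_{X,x}$ at the various point types and the assertion that it is noetherian (so that Krull's intersection theorem and Nakayama apply); this is where one must invoke the structure theory of quasi-smooth Berkovich curves (e.g. the results of Ducros in \cite{Duc}). Once that local algebra is in place, the torsion-killing argument via $\nabla$ and the semicontinuity/Nakayama globalisation are routine.
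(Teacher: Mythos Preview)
Your proposal is correct and follows the same line as the paper: localize, dispatch the case where $\O_{X,x}$ is a field (types $2$, $3$, $4$), and at type-$1$ points use the connection to show the stalk is torsion-free over the DVR $\O_{X,x}$, hence free. The paper packages the torsion-killing step by citing the classical fact that a differential module over a DVR with no nontrivial $d$-stable ideals has no torsion (see \cite[9.1.2]{Kedlaya-book}, \cite{Andre-diff-non-comm}, \cite{Katz-Nilpotent}), after a Galois descent to reduce to a $K$-rational point; your Krull-intersection argument is an explicit unwinding of the same idea.

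One caution: your ``iterating'' is not simply repeating the identity $t=-T\nabla(t)$, since $\nabla(t)$ need not be annihilated by $T$. The clean induction is: if $t=T^ns$ for some $s$, then $Tt=0$ gives $T^{n+1}s=0$, and applying $\nabla$ yields $(n+1)T^ns+T^{n+1}\nabla(s)=0$, hence $t=-\tfrac{1}{n+1}T^{n+1}\nabla(s)\in T^{n+1}\Fs_x$. This uses characteristic zero (to invert $n+1$), which is where the hypothesis on $K$ enters.
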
 
\begin{proof}
By Lemma \ref{Lemma : loc free iff stalk free}, it is enough to prove that, for 
every rigid point $x\in X$, the stalk $\Fs_x$ is a free $\O_{X,x}$-module. Since $\O_{X,x}$ is a PID, it is enough to prove that $\Fs_{x}$ has no torsion.

Assume by contradiction that $\Fs_{x}$ has torsion. Then, there exists an affinoid neighborhood~$U$ of~$X$ and an element $a \in \O_{X}(U)$ such that the multiplication by $a$ map $\gamma_{a} \colon \Fs(U) \to \Fs(U)$ is not injective. We may assume that $U$ is a virtual closed disk.

Let $L$ be a finite extension of~$K$ such that $U_{L}$ is a disjoint union $\bigcup_{i=1}^n D_{i}$ of closed disks over~$L$. For each $i$, the ring $\O(D_{i})$ is a principal ideal domain, hence, by \cite[Proposition 9.1.2]{Kedlaya-book-2}, $\Fs_{L}(D_{i})$ has no torsion. It follows that $\Fs_{L}(U_{L}) = \prod_{i=1}^n \Fs_{L}(D_{i})$ has no torsion either.

Finally, note that, since $L$ is a finite extension of~$K$, we have $\Fs_{L}(U_{L}) \simeq \Fs(U)\otimes_{K} L$. By faithfull flatness of $L/K$, the map $\gamma_{a} \otimes_{K}L \colon \Fs_{L}(U_{L}) \to \Fs_{L}(U_{L})$ is not injective, which yields a contradiction.
\end{proof}

\if{
\framebox{JéràƒÂƒà'Žme : Another proof, tu préfère laquelle ?}

\begin{proof}
If $\O_{X,x}$ is a field, then $\Fs_x$ is free and finite dimensional, 
and there is nothing to prove. In the other cases $x$ is $L$-rational 
for a finite Galois extension $L/K$. 
A basis of neighborhoods of $x$ is given by the virtual disks $D$. 
Now $D=(D\otimes_KL)/\mathrm{G}$, 
where $\mathrm{G}:=\mathrm{Gal}(L/K)$, and $D\otimes_KL$ is a 
disjoint union of disks. Then $\Fs_{|D}$ is locally free if, and only if, 
$\Fs_{|D\otimes_KL}$ is locally free. So we can assume $K=L$, and 
$D=D_K^-(0,R)$, with $x=0$. Then $\O_{X,x}$ is a discrete 
valuation ring, with ideals generated by $T^n$, 
for some $n\geq 1$. 
None of them is stable by $d/dT$, so $\Fs_x$ is free by 
\cite[9.1.2]{Kedlaya-book-2}.
\end{proof}
\framebox{JéràƒÂƒà'Žme : Another proof, tu préfère laquelle ?}
\begin{proof}
If $\O_{X,x}$ is a field, then $\Fs_x$ is free and finite dimensional, 
and there is nothing to prove. In the other cases $x$ is $L$-rational 
for a finite Galois extension $L/K$. A basis of neighborhoods of $x$ is 
then given by virtual disks $D$. If $T$ is a coordinate of 
$D_{\widehat{K^{\mathrm{alg}}}}$, then $\O_{X,x}$ is a discrete 
valuation ring with maximal ideal generated by the minimal polynomial 
$P(T)$. The derivation $d/dT$ commutes with $\mathrm{Gal}(L/K)$ 
and it acts on $\O_{X,x}$.
\if{

\framebox{NON ESISTONO COORDINATE SU UN DISCO VIRTUALE}

 then all derivation 
$d:\O_{X,x}\to\O_{X,x}$ is of the form $d=f(T)d/dT$, $f\in\O_{X,x}$. 
}\fi
Now $\O_{X,x}$ has no non trivial ideal stable by $d/dT$. 
Indeed %
an ideal $J$ is generated by 
$P^n$, so $(d/dT)^k(P^n)$ lies in $K-\{0\}$ for a convenient 
$k$. So if $d/dT(J)\subseteq J$, then $J=\O_{X,x}$. 
A classical property of such kind of rings is that any 
$\O_{X,x}$-module of finite type with a connection has no torsion (cf. 
\cite[9.1.2]{Kedlaya-book-2}). 
Since $\O_{X,x}$ is a principal ideal domain then $\Fs_{x}$ is free.
\end{proof}
}\fi

\begin{corollary}
The category of locally free of finite rank $\O_X$-modules with 
connection is an abelian category.\hfill$\qed$
\end{corollary}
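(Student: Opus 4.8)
The plan is to deduce this immediately from Proposition \ref{Lemma : coherent implies loc free}. That proposition says that a coherent $\O_X$-module with connection is automatically locally free of finite rank; hence the category in the statement coincides with the category $\mathrm{MIC}(X)$ of \emph{coherent} $\O_X$-modules with connection, and it is enough to prove that $\mathrm{MIC}(X)$ is abelian.

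First I would check the purely formal part. Given a horizontal morphism $\varphi\colon(\Fs,\nabla_\Fs)\to(\Fs',\nabla_{\Fs'})$ of $\O_X$-modules with connection, the $\O_X$-submodule $\ker\varphi\subseteq\Fs$ is stable under $\nabla_\Fs$ (because $\varphi$ is horizontal, so $\varphi(\nabla_\Fs s)=\nabla_{\Fs'}\varphi(s)=0$ for $s\in\ker\varphi$), and $\nabla_{\Fs'}$ induces a connection on $\mathrm{coker}\,\varphi=\Fs'/\mathrm{im}\,\varphi$ for the same reason. One verifies at once that, equipped with these connections, $\ker\varphi$ and $\mathrm{coker}\,\varphi$ satisfy the universal properties of kernel and cokernel inside the category of $\O_X$-modules with connection, that the direct sum with the diagonal connection is a biproduct, and that $\O_X$ with $\mathrm{d}$ is a zero object. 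Moreover the canonical map $\mathrm{coim}\,\varphi\to\mathrm{im}\,\varphi$ is an isomorphism, since it already is at the level of underlying $\O_X$-modules and the connection on each term is uniquely determined by the construction above. So the category of all $\O_X$-modules with connection is abelian.

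Next I would pass to coherence. On a $K$-analytic curve the structure sheaf $\O_X$ is a coherent sheaf of rings, so the kernel and cokernel of a morphism of coherent $\O_X$-modules are again coherent; together with the previous paragraph this shows that $\mathrm{MIC}(X)$ is an abelian full subcategory of the category of $\O_X$-modules with connection, stable under kernels and cokernels. Finally, Proposition \ref{Lemma : coherent implies loc free} identifies $\mathrm{MIC}(X)$ with the category of locally free finite rank $\O_X$-modules with connection, which is therefore abelian. The only point that is not completely formal is precisely that forming kernels and cokernels does not leave the locally free world, i.e. that these coherent subquotients are again locally free; this is exactly where the proposition (and with it the one-dimensionality of $X$, via the discrete valuation ring argument in its proof) is used, so I expect that to be the substantive ingredient, everything else being diagram-chasing and the standard coherence of $\O_X$ on Berkovich curves.
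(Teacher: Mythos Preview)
Your proof is correct and is exactly the argument the paper intends: the corollary is stated with no proof beyond the box, i.e.\ it is meant as an immediate consequence of Proposition~\ref{Lemma : coherent implies loc free}, and you have spelled out precisely that deduction. One small slip: the zero object in the category is the zero sheaf~$0$ with its unique connection, not $(\O_X,\mathrm{d})$; the latter is the unit for the tensor product but is neither initial nor terminal. With that correction your argument is complete.
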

For all $x\in X$, we set
\begin{equation}
\Fs(x)\;:=\;\Fs\otimes_{\O_X}\H(x)\;.
\end{equation}
\begin{proposition}\label{Prop. : iso on a point implies iso global} 
Let $\sigma:\Fs\to\Fs'$ be a morphism 
between two differential equations over $X$. 
If there exists $x\in X$ such that $\sigma(x):\Fs(x)\to\Fs'(x)$ is an 
isomorphism (resp. monomorphism, epimorphism) over $\H(x)$, then 
$\sigma$ is globally an isomorphism (resp. monomorphism, 
epimorphism).
\end{proposition}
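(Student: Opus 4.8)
The statement is essentially the rigidity of morphisms of vector bundles with connection on a connected curve: a morphism that is fiberwise an isomorphism (resp. mono, epi) at one point is so everywhere. The plan is to reduce everything to a statement about the rank of $\sigma$ being locally constant, and then use connectedness of $X$ together with the already-established fact (Proposition \ref{Lemma : coherent implies loc free} and its corollary) that the category of locally free $\O_X$-modules with connection is abelian, so that $\ker\sigma$, $\operatorname{coker}\sigma$, and $\operatorname{im}\sigma$ are again locally free with connection. First I would treat the isomorphism case: $\sigma$ is an isomorphism if and only if $\ker\sigma=0$ and $\operatorname{coker}\sigma=0$; since these are locally free $\O_X$-modules with connection, their ranks are locally constant functions on $X$, hence constant on the connected $X$, so if they vanish at the fiber over $x$ (which is what $\sigma(x)$ iso means, once one checks $\ker\sigma$ and $\operatorname{coker}\sigma$ have fiber at $x$ equal to $\ker\sigma(x)$ and $\operatorname{coker}\sigma(x)$ — true because they are locally free, so base change to $\H(x)$ is exact on the relevant pieces) they vanish everywhere.

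For the mono and epi cases I would argue similarly via $\ker\sigma$ and $\operatorname{coker}\sigma$ separately: $\sigma$ is a monomorphism iff $\ker\sigma=0$, and $\ker\sigma$ is a locally free $\O_X$-module with connection whose rank is constant; it vanishes at $x$ by hypothesis, hence is zero. Dually, $\sigma$ is an epimorphism iff $\operatorname{coker}\sigma=0$, and the same rank argument applies. The only subtlety is the compatibility of forming $\ker$/$\operatorname{coker}$ with passing to the fiber $\H(x)$. Here I would note that since $\operatorname{im}\sigma$ is locally free (being a sub of the locally free $\Fs'$ in an abelian category of such objects — or directly, as the image, which is locally free by Proposition \ref{Lemma : coherent implies loc free}), the short exact sequences $0\to\ker\sigma\to\Fs\to\operatorname{im}\sigma\to 0$ and $0\to\operatorname{im}\sigma\to\Fs'\to\operatorname{coker}\sigma\to 0$ are sequences of locally free modules, hence remain exact after $-\otimes_{\O_{X,x}}\H(x)$; this identifies $(\ker\sigma)(x)$ with $\ker(\sigma(x))$ and $(\operatorname{coker}\sigma)(x)$ with $\operatorname{coker}(\sigma(x))$.

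The main obstacle, and the one step deserving care, is precisely the assertion that a locally free $\O_X$-module with connection has rank that is locally constant — equivalently, that it has no ``jumps'' in rank. Over the generic points of type $2,3,4$ the local ring is a field, so the rank is visibly the dimension of the stalk; over a rational-type point $x$ the local ring $\O_{X,x}$ is a discrete valuation ring (as used in the proof of Proposition \ref{Lemma : coherent implies loc free}), and a finitely generated module over it that is locally free — hence free — has a well-defined rank, which agrees with the rank at nearby generic points because $\Fs$ is locally free in a neighborhood of $x$. Thus the rank function is locally constant on $X$; connectedness of $X$ (Setting \ref{setting}) finishes the argument. I would write this out in a couple of lines, invoking the corollary that the category is abelian so that kernel and cokernel make sense as objects of the same category, and then the rank argument as above; no serious computation is required.
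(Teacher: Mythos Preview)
Your proposal is correct and follows exactly the paper's approach: the paper's proof is the single line ``The ranks of the kernel and the cokernel of $\sigma$ are locally constant functions,'' and you have simply unpacked the implicit details (abelianness of the category, compatibility of $\ker$/$\operatorname{coker}$ with passage to the fiber, connectedness of $X$) that make that line work.
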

\begin{proof}
The ranks of the kernel and the cokernel of $\sigma$ are locally 
constant functions.
%
%
\end{proof}

\subsection{Generic disks}
\label{Generic disks}
In this section we recreate the framework of Dwork generic points and Dwork generic disks, in the context of Berkovich analytic curves. 

\subsubsection{Existence of a big field.}
In this paper, we need a large field $\Omega$ in the following three contexts
\begin{enumerate}
\item in Proposition \ref{Lemma pts of type 4} to describe the structure of $\pi_\Omega^{-1}(x)$; 

\item to prove the independence of the radii 
with respect to the center $t_x$ (cf. Definition \ref{def: Multiradius} and \cite[Section 2.3]{NP-II});
\item when one needs a spherically complete field to make differential 
modules free over the generic disks 
$D(x)$ by Lazard's theorem \cite{Lazard} 
(cf. Section \ref{Section : Dwork generic points and Dwork generic disks}).

\end{enumerate}
\begin{setting}\label{Setting : Omega}
By convenience of notations, 
from now on we fix a large complete value field extension $\Omega/K$ 
such that
\begin{enumerate}
\item\label{Setting : Omega -1} $\Omega$ is algebraically closed and spherically complete;
\item\label{Setting : Omega -2} $|\Omega|=\mathbb{R}_{\geq 0}$;
\item\label{Setting : Omega -3} For every $x\in X$ there exists an isometric $K$-linear embedding 
\begin{equation}
\H(x)\;\subset\;\Omega.
\end{equation}
\end{enumerate}
\end{setting}
Item \eqref{Setting : Omega -3} is possible thanks to 
Proposition \ref{Prop: Bigfield} below. 

The field $\Omega$ does not play any essential role, and this choice is 
made merely for convenience of notations.


\begin{proposition}\label{Prop: Bigfield}
Let~$L$ be a valued field. Let~$I$ be a set and $(L_{i})_{i\in I}$ be a 
family of valued extensions of~$L$. Then there exists a valued 
extension~$M$ of~$L$ such that, for every~$i\in I$, there exists an 
$L$-linear isometric embedding 
$j_{i} \colon L_{i} \hookrightarrow M$.
\end{proposition}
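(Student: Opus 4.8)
The plan is to construct $M$ by a transfinite amalgamation argument, building on the fact that any two valued field extensions of a common valued field can be embedded isometrically into a third. First I would recall (or reprove) the \emph{two-factor case}: given valued extensions $L_1/L$ and $L_2/L$, the tensor product $L_1 \otimes_L L_2$ carries a natural seminorm (the tensor/Gauss seminorm relative to the two absolute values), whose separated completion gives a valued ring; passing to a quotient by a maximal ideal and completing yields a valued field $M_{12}$ receiving isometric $L$-linear embeddings of both $L_1$ and $L_2$. The one delicate point here is checking that the embeddings are genuinely \emph{isometric} and not merely contractive; this follows because the Gauss seminorm restricts to the original absolute value on each factor, and the quotient/completion steps can be taken so as not to shrink these norms (one uses that the seminorm of a pure tensor $a\otimes 1$ equals $|a|$, which survives in a suitable quotient).

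Next I would upgrade this to an \emph{arbitrary family}. One natural route: well-order the index set $I$ and perform a transfinite induction, at each successor stage amalgamating the current field with the next $L_i$ using the two-factor construction, and at each limit stage taking the completion of the union (a directed colimit of valued fields, which is again a valued field after completion). The isometric embeddings $L \hookrightarrow M_\alpha$ at each stage are compatible, so in the end every $L_i$ embeds isometrically into the final $M$. An alternative, perhaps cleaner, route avoids well-ordering $I$: form the (possibly infinite) tensor product $\bigotimes_{i\in I, L} L_i$ as a filtered colimit of finite sub-tensor-products, each equipped with its Gauss seminorm; the colimit carries a seminorm, and its separated completion modulo a maximal ideal gives $M$. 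I would likely present whichever of these the authors find shortest, but the transfinite induction is the more elementary to write carefully.

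The main obstacle is the \emph{isometry} claim at the amalgamation step, i.e. ensuring that when we pass from $L_1\otimes_L L_2$ equipped with its Gauss seminorm to an actual valued field, the restriction to each $L_i$ remains the original absolute value. The seminorm is defined on $\sum a_j \otimes b_j$ by $\inf \max_j |a_j|\,|b_j|$ over all representations; one must verify this is multiplicative-enough (submultiplicative suffices to get a seminormed ring, and then one chooses a \emph{multiplicative} seminorm dominated by it, e.g. via a point of the Berkovich spectrum, which automatically restricts correctly) and, crucially, that $|a\otimes 1| = |a|_{L_1}$ — the nontrivial inequality being $|a\otimes 1| \geq |a|_{L_1}$, which one gets by exhibiting a bounded $L$-linear functional or by using that $L_1$ is a direct summand of $L_1\otimes_L L_2$ as an $L_1$-module when $L_2/L$ has a basis containing $1$. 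Once isometry at one step is secured, the limit stages are routine: a union of an increasing chain of valued fields with compatible isometric structure maps is a valued field, and completing preserves the embeddings.

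A remark on bookkeeping: since $I$ may be a proper class-sized worry only if $I$ is not a set, but the statement assumes $I$ is a set, the transfinite recursion is legitimate and terminates. I would also note that no completeness or algebraic-closedness of $L$ or the $L_i$ is needed for this proposition — those extra properties (spherically complete, algebraically closed) are arranged afterwards, on top of $M$, by a further standard extension, as the surrounding text indicates when it fixes $\Omega$.
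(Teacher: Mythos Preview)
Your proposal is correct and follows essentially the same strategy as the paper: reduce to the two-factor amalgamation (two valued extensions of a common valued field embed isometrically into a third) and then apply a choice principle to handle the whole family. The paper packages the second step as Zorn's lemma on a poset of partial solutions $(M,J,(j_i)_{i\in J})$ with $J\subseteq I$, using a cardinality bound to ensure this is a set; you instead well-order~$I$ and run a transfinite recursion. These are equivalent in content.

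Two minor differences worth noting. First, the paper treats the two-factor case as a black box (``it is known that\dots''), whereas you sketch it via the Gauss seminorm on $L_1\otimes_L L_2$ and a point of the Berkovich spectrum; your sketch is fine, and the isometry on each factor indeed follows because any nonzero multiplicative seminorm on a field bounded above by the given absolute value must equal it. Second, your alternative route through the infinite tensor product $\bigotimes_{i\in I,L} L_i$ is not the paper's argument, but it is also correct and arguably more canonical, since it avoids both the well-ordering and the auxiliary cardinality bookkeeping the paper needs to make its Zorn poset a set.
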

\begin{proof}
For every~$i\in I$, consider an isometric embedding $h_{i} \colon L 
\hookrightarrow L_{i}$. 
Fix a set~$\mathcal{W}$ of cardinality at least 
$\sum_{i\in I} \mathrm{Card}(L_{i}) + \aleph_{0}$.

Consider the collection~$\Ts$ of tuples $(M,\sigma_{M},h_{M},J, (j_{i})_{i\in J})$ such that
\begin{enumerate}
\item $M$ is a subset of~$\mathcal{W}$;
\item $\sigma_{M}$ is a valued field structure on~$M$ (which will be implicit in what follows);
\item $h_{M}$ is an isometric embedding of~$L$ into~$M$;
\item $J$ is a subset of~$I$;
\item for every~$i\in J$, $j_{i}$ is an $L$-linear isometric embedding of~$L_{i}$ into~$M$;
\item the subfield generated by the union of the images of the~$j_{i}$'s is dense in~$M$. 
\end{enumerate}
By the first condition, $\Ts$ is a set.

Remark that, if~$M$ is a valued field that satisfies all the conditions but 
the first one, then there exists a valued field~$M'$, that is isometrically 
isomorphic to~$M$, and satisfy all the condition. Indeed, the last 
condition implies that the cardinality of~$M$ is at most 
$\sum_{i\in I} \mathrm{Card}(L_{i}) + \aleph_{0}$, hence there 
exists a bijection between~$M$ and a subset~$M'$ 
of~$\mathcal{W}$. 
Transporting the structures, we are done. For this reason, from now 
on, we will forget this first condition.



We endow~$\Ts$ with the following order relation: $(M,\sigma_{M},h_{M},J, (j_{i})_{i\in J}) \le (M',\sigma_{M'},h_{M'},J', (j'_{i})_{i\in J'})$ if
\begin{enumerate}
\item there exists an $L$-linear isometric embedding $j_{M',M}$ of~$M$ into~$M'$;
\item $J$ is a subset of~$J'$~;
\item for every~$i\in J$, we have $j'_{i} = j_{M',M} \circ j_{i}$. 
\end{enumerate}


By an inductive limit construction, it is easy to check that every totally ordered subset of~$\Ts$ has an upper-bound. Hence, by Zorn's lemma, there exists a maximal element $(M,J, (j_{i})_{i\in J})$ in~$\Ts$.

We claim that~$J=I$, which proves the lemma. By contradiction, 
assume that $J\subset I$ and choose $k\in I- J$. It is known 
that there exists a valued field~$M'$ that is both an extension of~$M$ 
and~$L_{k}$: we have isometric embeddings 
$j_{M',M}\colon M \to M'$ and $j'_{k}\colon L_{k}\to M'$. Moreover, 
we may assume that $j_{M',M} \circ h_{M} = j'_{k} \circ h_{k}$, and 
that the subfield generated by $j_{M',M}(M) \cup j_{k}(L_{k})$ is 
dense in~$M'$. Set $h_{M'} = j_{M,M'} \circ h_{M}$ and, for every 
$i\in J$, $j'_{i} = j_{M',M} \circ j_{i}$. Then the tuple 
$(M',\sigma_{M'},h_{M'},J\cup\{k\}, (j'_{i})_{i\in J\cup\{k\}})$ 
is an element of~$\Ts$ that is greater than 
$(M,\sigma_{M},h_{M},J, (j_{i})_{i\in J})$.
\end{proof}

\if{\begin{proposition}
Let~$L$ be a valued field. Let~$I$ be a set and $(L_{i})_{i\in I}$ be a 
family of valued extensions of~$L$. Then, there exists a valued 
field~$\Omega$ such that, for every~$i\in I$, there exists an isometric 
$K$-linear embedding $j_{i} \colon L_{i} \hookrightarrow \Omega$.
\end{proposition}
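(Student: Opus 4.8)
The plan is to reduce the general statement to the \emph{binary} case and then amalgamate the extensions one at a time by a Zorn's lemma argument. The binary case asserts that any two valued extensions of a common valued field~$L$ admit a common valued extension together with $L$-linear isometric embeddings; in the present (rank~$\le 1$) setting this is classical --- for instance one may equip $L_{1}\otimes_{L}L_{2}$ with the tensor seminorm, complete it to a Banach $L$-algebra, and pick a point of its non-empty Berkovich spectrum, whose complete residue field receives isometric $L$-embeddings of~$L_{1}$ and~$L_{2}$. I would take this as a known input.

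Granting it, I would argue as follows. Fix an ambient set~$\mathcal{W}$ of cardinality at least $\sum_{i\in I}\mathrm{Card}(L_{i})+\aleph_{0}$; this is needed only to make the collection of ``partial amalgams'' an honest set rather than a proper class. Let~$\Ts$ be the poset of tuples $(M,J,(j_{i})_{i\in J})$ where $M$ is a valued field whose underlying set is contained in~$\mathcal{W}$ and which is equipped with an isometric embedding $L\hookrightarrow M$, where $J\subseteq I$, and where each $j_{i}\colon L_{i}\hookrightarrow M$ is an $L$-linear isometric embedding, subject to the constraint that the subfield of~$M$ generated by~$L$ together with the images $j_{i}(L_{i})$, $i\in J$, is dense in~$M$. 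The density constraint bounds $\mathrm{Card}(M)$ by $\mathrm{Card}(\mathcal{W})$, so that~$\Ts$ is indeed a set. Order~$\Ts$ by declaring $(M,J,(j_{i}))\le(M',J',(j'_{i}))$ when $J\subseteq J'$ and there is an isometric $L$-embedding $M\hookrightarrow M'$ carrying~$j_{i}$ to~$j'_{i}$ for all $i\in J$. Every chain in~$\Ts$ has an upper bound: pass to the directed colimit of the~$M$'s along the coherent system of isometric embeddings (a filtered colimit of valued fields is again a valued field, and the dense-generation condition is preserved), then transport the resulting structure onto a subset of~$\mathcal{W}$ of the appropriate cardinality. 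By Zorn's lemma there is a maximal element $(M,J,(j_{i})_{i\in J})$ in~$\Ts$.

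It remains to see $J=I$. If not, pick $k\in I\setminus J$; by the binary case there is a valued field amalgamating~$M$ and~$L_{k}$ over~$L$ with isometric embeddings, and after passing to the subfield densely generated by the two images and transporting it into~$\mathcal{W}$ one obtains an element of~$\Ts$ strictly above $(M,J,(j_{i}))$, a contradiction. Hence $J=I$, and~$M$ together with $(j_{i})_{i\in I}$ is the desired valued extension. The only genuinely mathematical ingredient is the binary amalgamation statement; the rest is Zorn's-lemma bookkeeping, whose single delicate point is the set-theoretic size issue, which the ambient set~$\mathcal{W}$ and the density condition resolve. (One could equally replace the Zorn argument by a transfinite recursion along a well-ordering of~$I$ --- amalgamating~$L_{\alpha}$ at successor stages and taking colimits at limit stages --- which dispenses with naming~$\mathcal{W}$ explicitly but is otherwise identical in content.)
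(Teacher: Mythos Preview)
Your proof is correct and follows essentially the same route as the paper: fix a large ambient set~$\mathcal{W}$ to control cardinalities, apply Zorn's lemma to the poset of partial amalgams (with the density condition to bound their size), and enlarge a maximal element using the binary amalgamation of valued fields over~$L$. The paper cites the binary case as ``known'' without the Berkovich-spectrum justification you sketch, but the structure of the argument is otherwise identical.
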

\begin{proof}
Consider the class $\Ts$ of tuples $(M,J, (j_{i})_{i\in J})$ such 
that
\begin{enumerate}
\item $M$ is a valued extension of~$K$~;
\item $J$ is a subset of~$I$~;
\item for every~$i\in J$, $j_{i}$ is a $K$-linear isometric embedding of~$L_{i}$ into~$M$~;
\item the algebra generated by the union of the images of the~$j_{i}$'s is dense in~$M$. 
\end{enumerate}

By the last condition, we may bound the cardinality of~$M$ and deduce that~$\Ts$ is a set.

We endow~$\Ts$ with the following order relation~: $(M,J, (j_{i})_{i\in J}) \le (M',J', (j'_{i})_{i\in J'})$ if
\begin{enumerate}
\item there exists an isometric $K$-linear 
embedding $j_{M',M}$ of~$M$ into~$M'$~;
\item $J$ is a subset of~$J'$~;
\item for every~$i\in J$, we have $j'_{i} = j_{M',M} \circ j_{i}$. 
\end{enumerate}

By an inductive limit construction, it is easy to check that every totally ordered subset of~$\Ts$ has an upper-bound. Hence, by Zorn's lemma, there exists a maximal element $(M,J, (j_{i})_{i\in J})$ in~$\Ts$.

We claim that~$J=I$, which proves the lemma. By contradiction, assume that $J\subset I$ and choose $k\in I- J$. It is known that there exists a valued field~$M'$ that is both an extension of~$M$ and~$L_{k}$~: we have isometric embeddings $j_{M',M}\colon M \to M'$ and $j'_{k}\colon L_{k}\to M'$. Moreover, we may assume that $j_{M',M}(M) \cup j_{k}(L_{k})$ is dense in~$M'$. For every $i\in J$, set $j'_{i} = j_{M',M} \circ j_{i}$. Then the tuple $(M',J', (j'_{i})_{i\in J'})$ is an element of~$\Ts$ that is greater than $(M,J, (j_{i})_{i\in J})$.
\end{proof}
}\fi

\subsubsection{Dwork generic points and Dwork generic disks}
\label{Section : Dwork generic points and Dwork generic disks}
For every complete valued field extension $L_1\subset L_2$ denote by 
\begin{equation}\label{eq : def of pi_L/K}
\pi_{L_2/L_1}:X_{L_2}\to X_{L_1}
\end{equation}
the canonical projection, where $X_{L_i}:=X\widehat{\otimes}_KL_i$.
\begin{definition}[Canonical Dwork generic point]\label{Def : Dw gen pt}
Let $x\in X$. The map $\mathscr{M}(\H(x))\to X$ lifts canonically to 
a map $\mathscr{M}(\H(x))\to X_{\H(x)}=X\widehat{\otimes}_K\H(x)$ by the universal property of 
the Cartesian diagram $X_{\H(x)}/\H(x) \to X/K$.
We denote by 
\begin{equation}
t_x\in X_{\H(x)}
\end{equation}
the $\H(x)$-rational point so obtained. 
We refer to $t_x$ as the \emph{canonical Dwork generic point} associated with $x$.
More generally, again by the universal property of the Cartesian diagram, 
for every complete valued extension $L/\H(x)$ there is a unique 
$L$-rational point lifting $t_x$. We will call it again by $t_x$.
\begin{equation}
\xymatrix{X\ar[d]&X_{\H(x)}\ar[l]_-{\pi_{\H(x)/K}}\ar[d]&X_L\ar[l]_-{\pi_{L/\H(x)}}\ar[d]\\
\mathscr{M}(K)&\ar[l]\mathscr{M}(\H(x))\ar[lu]^-{x}\ar@/_{1pc}/[u]_-{t_x}&
\ar[l]\mathscr{M}(L).\ar@/_{1pc}/[u]_-{t_x}}
\end{equation} 
\end{definition}
\begin{definition}[Canonical Dwork generic disk]
Let $x\in X$. The canonical Dwork genric disk for $x$ is the greatest open 
disk contained in $\pi_{\H(x)/K}^{-1}(x)$ centered at the canonical Dwork 
generic point $t_x$. We denote it by $D(x)$.
\end{definition}
The existence of Dwork generic disks was proven by Dwork when $X$ is 
the affine line, and in \cite{NP-II} in general. 
We will clarify in the next section the structure of the fiber 
$\pi_{\H(x)/K}^{-1}(x)$ and the existence of Dwork generic disks.

In the applications it is convenient to allow more freeness for the choice of 
$t_x$ and of the generic disk $D(x)$. Namely, we want to have the 
same base field over which every Dwork generic disk exists.
\begin{definition}[Dwork generic points, cf. \cite{NP-I, NP-II}]\label{Notation: Dwork generic disk and Dwork generic point}
Let $L/K$ be a complete valued field extension containing isometrically 
$\H(x)$. Let $x\in X$. We call \emph{Dwork generic point} for $x$ any 
$L$-rational point in $\pi_{L/K}^{-1}(x)$.
\end{definition}
\begin{definition}[Dwork generic disks, cf. \cite{NP-I, NP-II}]
\label{Def : Dwork gen disk}
Let $x\in X$ be a point of type $2$, $3$, or $4$. 
Let $L/\H(x)$ be a complete valued field extension. 
We call Dwork generic disk for $x$ any open disk $D$ such that
\begin{enumerate}
\item $D$ contains a Dwork generic point $t_x$ for $x$;
\item $D$ is the largest open disk in $\pi_{L/K}^{-1}(x)$ containing $t_x$.
\end{enumerate}
We denote any unspecified Dwork generic disk by 
\begin{equation}\label{eq : D(x)}
D(x)\subset X_L\;.
\end{equation}
If a Dwork generic point $t_x$ for $x$ is chosen, we will always assume that 
\begin{equation}\label{eq : t_x in D(x) dsdfd}
t_x\in D(x)\;.
\end{equation}

We extend this definition to points $x\in X$ of type $1$ by defining 
$D(x):=\{t_x\}$, so that \eqref{eq : t_x in D(x) dsdfd} holds for every $x\in X$.
\end{definition}
The following proposition proves that, up to enlarging $L$, Dwork generic 
disks are all isomorphic by an isomorphism preserving the choice of the 
Dwork generic point in them.
\begin{proposition}[\protect{\cite[Corollary 2.20]{NP-II}}]\label{Prop: Omega acts transitively}
If $L/K$ is algebraically closed and spherically complete, the group 
$\mathrm{Gal}^{\mathrm{cont}}(L/K)$ acts 
transitively on the set of Dwork generic disks in $\pi_{L/K}^{-1}(x)$ 
as well as on the set of $L$-rational points of 
$\pi^{-1}_{L/K}(x)$.\hfill$\qed$
\end{proposition}
Thanks to Proposition \ref{Prop: Omega acts transitively}, all Dwork generic 
disks become isomorphic over a sufficiently large field extension of the 
base field $K$.\footnote{Indeed, by Proposition \ref{Prop: Omega acts transitively}, for every pair of Dwork generic disks $D_1$ and $D_2$ 
there is a continuous Galois automorphism 
$\sigma\in\mathrm{Gal}^{\mathrm{cont}}(\Omega/K)$ 
realizing a $K$-linear isomorphism of Banach rings
$\sigma:\O_\Omega(D_2)\simto\O_\Omega(D_1)$. In order to obtain an 
isomorphism of analytic spaces over $\Omega$ (i.e. an $\Omega$-linear 
isomorphism) it is enough to apply a torsion by Galois on the coefficients 
of the series. More specifically, if $t_1$ is a Dwork generic points for 
$D_1$ and $T_1$ is a coordinate on $D_1$, 
then $t_2:=\sigma(t_1)$ is a Dwork generic point for $D_2$ 
and $T_2:=\sigma\circ T_1$ is a coordinate function for $D_2$.
The Galois automorphism $\sigma$ induces an isomorphism sending 
a series 
$\sum_{n\geq 0} a_n (T_2-t_2)^n\in\O_\Omega(D_2)$ into 
$\sum_{n\geq 0} \sigma(a_n) (T_1-t_1)^n$. 
We may compose this automorphism with the one associating to 
$\sum_{n\geq 0} a_n (T_1-t_1)^n$ the series 
$\sum_{n\geq 0} \sigma^{-1}(a_n) (T_1-t_1)^n$ and we obtain an 
$\Omega$-linear isomorphism.} 
 The choice of the Dwork generic disk and Dwork generic 
point is then irrelevant and we often say 
``\emph{the}'' generic disk to indicate one of them.
Merely by convenience of notation, we are now going to fix once 
for all a Dwork generic point $t_x$ and the corresponding Dwork generic 
disk. Recall Setting \ref{Setting : Omega}.

\begin{notation}[Choice of a Dwork generic point]
From now on, for all $x\in X$, we choose an $\Omega$-rational 
point in $\pi_{\Omega/K}^{-1}(x)$ which will be denoted by 
\begin{equation}
t_x\;\in\;\pi_{\Omega/K}^{-1}(x)\;.
\end{equation}
We also maintain the notations of Definition \ref{Def : Dwork gen disk} : the Dwork 
generic disk containing $t_x$ will be denoted by $D(x)$ and if $x$ is a 
point of type $1$, then we set $D(x)=\{t_x\}$.
\end{notation}

\subsubsection{The structure of the fiber.}
Dwork generic points are interesting in our framework because 
of the important features of the Taylor solutions around them. 
In particular the link with the spectral norm of the connection 
(cf. Sections \ref{Robba-deco} and \ref{Dwork-Robba (local) decomposition} for instance). 
In this section we recall some structural results of \cite{NP-I,NP-II} about 
the nature of the fiber $\pi_{L/K}^{-1}(x)$. In particular, the fact that Dwork 
generic disks do exist. We begin with an algebraically closed base field $K$  
and will remove this assumption in a moment.

\begin{proposition}[\protect{\cite[Corollaire~3.14]{Angie} and \cite[Proposition 2.26]{NP-II}}]\label{Lemma pts of type 4}
Let $x\in X$. Assume that $K$ is algebraically closed and let 
$L$ be a complete valued field extension of $K$. 
The inverse image 
\begin{equation}
\pi_{L/K}^{-1}(x)\;=\;\mathscr{M}(\H(x)\widehat{\otimes}_KL)
\end{equation}
is a connected analytic space over $L$. 
The tensor product norm defining the structure of Banach algebra of 
$\H(x)\widehat{\otimes}_KL$ is multiplicative, 
hence it is an element of the spectrum $\mathscr{M}(\H(x)\widehat{\otimes}_KL)$. Let us denote it by
\begin{equation}
\sigma_{L/K}(x)\;\in\;\pi_{L/K}^{-1}(x)\;.
\end{equation} 
It enjoys the following properties:
\begin{enumerate}
\item $\sigma_{L/K}(x)$ is the unique point 
of $\pi_{L/K}^{-1}(x)$ such that 
$\pi_{L/K}^{-1}(x)-\{\sigma_{L/K}(x)\}$ is a 
(possibly empty) 
disjoint union of virtual open disks $D\subseteq X_L$;
\item $\sigma_{L/K}(x)$ is  the maximum element with respect to 
the natural partial order of $\mathscr{M}(\H(x)\widehat{\otimes}_KL)$.
\footnote{As the 
points of the spectrum are semi-norms, they are real valued functions and 
the partial order is induced by the order of $\mathbb{R}$.} \hfill$\qed$
\end{enumerate}
\end{proposition}
If $x$ is of type 1, then $\pi^{-1}(x)=\{\sigma_{\Omega/K}(x)\}$ and these disks are empty for all $\Omega/K$.\footnote{Indeed, $x$ has a neighborhood isomorphic to a virtual open 
disk, in which case we may apply the results of \cite[Section 1.0.2]{NP-I}.}  

Otherwise, if $x$ is of type $2$, $3$, or $4$, and 
if $\Omega$ is an algebraically closed complete 
valued extension of $K$, then they are not empty 
if, and only if, $\H(x)$ embeds into $\Omega$.\footnote{Indeed, 
if $\H(x)\subseteq\Omega$, the point $t_x$ 
of Definition \ref{Def : Dw gen pt}
is an $\Omega$-rational point, hence different from 
$\sigma_{\Omega/K}(x)$.  On the other hand, if 
$\pi_{\Omega/K}^{-1}(x)-\{\sigma_{\Omega/K}(x)\}$ has a non empty 
connected component $D$, then $D$ is an open disk with an 
$\Omega$-rational point (because $\Omega$ is algebraically closed) 
and hence we have an isometric embedding $\H(x)\subseteq\Omega$.}

If no confusion is possible, we will write $\pi_\Omega=\pi_{\Omega/K}$ and $\sigma_\Omega:=\sigma_{\Omega/K}$.
\begin{definition}\label{Def. sigma_L}
Assume that $K$ is algebraically closed. 
For all field extensions $L/K$ one has a canonical section
\begin{equation}
\sigma_{L/K}\;:\;X\to X_L
\end{equation}
associating to $x$ the the point 
$\sigma_{L/K}(x)\in X_L$ of 
Proposition \ref{Lemma pts of type 4}. 
\end{definition}
\begin{remark}
The map $\sigma_L$ is defined in \cite{Angie}, see also \cite[p.98]{Ber}.
\end{remark}
%
%
%

\label{Generic disks.-1}
If now $K$ is general, then 
$X\cong X_{\widehat{K^{\mathrm{alg}}}}/
\mathrm{Gal}(K^{\mathrm{alg}}/K)$. 
If $L'$ is a complete valued extension of $\wKa$, 
this allows to describe  
$\pi_{L'/K}^{-1}(x)$ as follows. 
Consider the composite arrow 
$\pi_{L'/K}:X_{L'}
\xrightarrow{\quad} 
X_{\wKa}\to X$, where the first projection 
$\pi_{L'/\wKa}$ is that of Proposition 
\ref{Lemma pts of type 4}.

One has a disjoint union 
\begin{equation}\label{eq : piOmega/K}
\pi_{L'/K}^{-1}(x)\;=\;
\bigsqcup_{y \in \pi^{-1}_{\widehat{K^{\mathrm{alg}} } / K }(x)}\;
\pi_{L'/\widehat{K^{\mathrm{alg}}}}^{-1}(y)\;.
\end{equation}
By Proposition \ref{Lemma pts of type 4}, 
\begin{equation}\label{eq: disks disjoint union dwork gen}
\pi_{L'/K}^{-1}(x) - \bigsqcup_{y \in \pi^{-1}_{\wKa / K }(x)} \{\sigma_{L'/\wKa}(y)\}
\;\;=\;\;
\pi_{L'/K}^{-1}(x) - \sigma_{L'/\wKa}(\pi^{-1}_{\wKa/K}(x))
\end{equation}
is a (possibly empty) disjoint union of virtual open $L'$-disks. 

%

Finally, let us consider the general situation where 
$L/K$ is any complete valued field extension, we may 
consider its algebraic closure $\widehat{L^{\mathrm{alg}}}$ and 
combine the above results. Every connected component of 
\begin{equation}\label{eq : dw gen dsk disj union}
\pi_{L/K}^{-1}(x)-\pi_{\widehat{L^{\mathrm{alg}}}/L}(\sigma_{\widehat{L^{\mathrm{alg}}}
/\wKa}(\pi_{\wKa/K}^{-1}(x)))\;.
\end{equation}
is a (possibly empty) virtual open disk.\footnote{Indeed, 
if $D$ is a virtual open disk which is a connected component of 
\eqref{eq: disks disjoint union dwork gen} (with $\Omega=\widehat{L^{\mathrm{alg}}}$), then 
its image in $\pi_{L/K}^{-1}(x)$ is either a point in 
$\pi_{\widehat{L^{\mathrm{alg}}}/L}(\sigma_{\widehat{L^{\mathrm{alg}}}/\wKa}(\pi_{\wKa/K}^{-1}(x)))$, 
or it is a virtual open disk which is a connected component of \eqref{eq : dw gen dsk disj union}.}

\begin{remark}[Dwork generic disks]
Let $x$ is a point of type $2$, $3$ or $4$. Then 
Dwork generic disks are precisely the $L$-rational open disk in 
$\pi_{L/K}^{-1}(x)$ which are connected components of 
\eqref{eq : dw gen dsk disj union} (i.e. the disk 
must contain a Dwork generic point, cf. Definition \ref{Def : Dw gen pt}). 

Again, the set $M:=\pi_{\widehat{L^{\mathrm{alg}}}/L}(\sigma_{\widehat{L^{\mathrm{alg}}}
/\wKa}(\pi_{\wKa/K}^{-1}(x)))$ can be 
characterized as the set of maximal semi-norms in $\pi_{L/K}^{-1}(x)$.
In particular, Dwork generic disks are precisely the connected component 
of  $\pi^{-1}_{L/K}(x)-M$ containing an $L$-rational point. 

Equivalently, a Dwork generic disk is any $L$-rational open 
disk in $\pi^{-1}_{L/K}(x)$ which is maximal with respect to the inclusion 
relation as in Definition \ref{Def : Dwork gen disk}.
\end{remark}

\begin{proposition}[\protect{\cite{NP-II}}]\label{Def : S_L}
Let $S\subset X$ be a weak triangulation of $X$, and let $L/K$ be a 
complete valued field extension. Then 
\begin{equation}\label{eq : S_L}
S_L\;:=\;\pi_{\widehat{L^{\mathrm{alg}}}/L}(\sigma_{\widehat{L^{\mathrm{alg}}}/\wKa}(\pi_{\wKa/K}^{-1}(S)))
\end{equation}
is a weak triangulation of $X_L$.
\end{proposition}

\subsection{Maximal disks}
Recall the choice of $\Omega$ (cf. Setting \ref{Setting : Omega}). 
\begin{definition}\label{Def : can extensions}
Let $Z\subseteq X$ be any non empty subset such that $X-Z$ is a 
disjoint union of virtual open disks or annuli. 
For all $x\in X$ we define 
\begin{equation}\label{eq : def of D(x,Gamma) rpllll}
D(x,Z)\;\subseteq\; X_\Omega-\sigma_\Omega(\pi_{\wKa/K}^{-1}(Z))
\end{equation}
as the largest $\Omega$-rational open disk containing 
$t_x$ and contained in $X_\Omega-\sigma_\Omega(\pi_{\wKa/K}^{-1}(Z))$. 
\end{definition}

\begin{remark}\label{Radius=spectral radius at a point of Gamma}
In several situations we have $D(x,Z)=D(x)$. This is indeed the case if
\begin{enumerate}
\item $x\in X$ is a point of type $1$, because $D(x)=\{t_x\}$;
\item $x\in Z$;
\item $Z=X$.
\end{enumerate}
Notice moreover that $Z$ necessarily contains the analytic skeleton of 
$X$, which is the set of point without neighborhoods isomorphic to virtual 
open disks.
\end{remark}

\begin{definition}[Maximal disks]\label{Maximal disksksks}
Let $x\in X$. We call \emph{maximal disk of $x$} (with respect to the 
weak triangulation $S$) the disk (cf. Definition \ref{Def : can extensions})
\begin{equation}
D(x,S)\;:=\;D(x,\Gamma_S)\;\subseteq\; X_\Omega\;.
\end{equation}
\end{definition}
By definition for all $x\in X$ we have 
\begin{equation}
t_x\;\in\; D(x)\;\subseteq\; D(x,S)\;.
\end{equation}
As for the Dwork generic disks, the choice of the 
maximal disk is irrelevant. Indeed, two different choices of $t_x$ 
produce isomorphic maximal disks.\footnote{Indeed, 
from Proposition \ref{Prop: Omega acts transitively}, 
it is not hard to deduce that 
the set of open disks in $X_\Omega$ 
containing an $\Omega$-rational point in 
$\pi^{-1}_{\Omega/K}(x)$ 
is globally stable under the action of 
$\mathrm{Gal}^{\mathrm{cont}}(\Omega/K)$.} 

\begin{remark}\label{Remark : D(x,S) cases}
There are two possibilities for $D(x,S)$.
If $x$ belongs to the skeleton $\Gamma_S$,
then $D(x,S)$ is contained in $\pi_\Omega^{-1}(x)$, 
and $D(x,S)=D(x)$ 
(cf. Remark \ref{Radius=spectral radius at a point of Gamma}). 
Otherwise, if $x\notin\Gamma_S$, then $D(x,S)$ 
contains $\pi_\Omega^{-1}(x)$, 
and it is strictly larger than the generic disk. 
In this case $D(x,S)$ has a $K^{\mathrm{alg}}$-rational point, and 
its image in $X$ is a virtual open disk containing $x$ 
(cf. \cite[Lemma 1.5]{NP-I}), the image 
is indeed the connected component of 
$X-\Gamma_S$ containing $x$. In this case, with an abuse, 
we identify $D(x,S)$ with its image in $X$, if no confusion is possible. 
\end{remark}

\subsection{Multiradius.}\label{multiradius}
Assume that $\Omega/K$ is algebraically closed and spherically
complete. Choose 
an isomorphism $D(x,S)\simto D^-_\Omega(0,R)$ sending $t_x$ to 
$0$. 
Since $\Omega$ is spherically complete, by a result of M. Lazard (cf. \cite{Lazard} and 
\cite[Ch.II, Section 4.4]{Christol-Book}), the restriction 
$\widetilde{\Fs}$ of $\Fs$ to $D^-_\Omega(0,R)$ is free of rank $r=\mathrm{rank}(\Fs_x)$, 
and it is hence given by a differential module over $\O(D^-_\Omega(0,R))$. 
Denote by 
\begin{equation}\label{eq: R^F DV-Balda}
\R_{S,i}^{\widetilde{\Fs}}(x)\;>\;0
\end{equation}
the radius of the maximal open disk centered at $0$ and contained in 
$D^-_\Omega(0,R)$ on which the connection of $\widetilde{\Fs}$ 
admits at least $r-i+1$ horizontal sections that are 
linearly independent over $\Omega$. 
\begin{definition}[Multiradius]\label{def: Multiradius}
For every $i=1,\ldots,r$, we call \emph{$i$-th radius of convergence} of $\Fs$ at~$x$ relative to $S$ the number
\begin{equation}\label{eq : R_S,i(x,F)+R_i^F(x)/R}
\R_{S,i}(x,\Fs)\;:=\;\R_{S,i}^{\widetilde{\Fs}}(x)/R \;\in\; ]0,1]\;.
\end{equation}
For $i=1,\ldots,r$, we call \emph{$i$-th partial height} 
the function on $X$ defined by the product
\begin{equation}\label{eq : partial height}
H_{S,i}(x,\Fs)\;:=\;\prod_{j=1}^i\R_{S,j}(x,\Fs)\;.
\end{equation}
We call $S$-multiradius of $\Fs$ at $x$ the tuple 
\begin{equation}\label{eq : multiradius}
\RR_{S}(x,\Fs)\;:=\;(\R_{S,1}(x,\Fs),\ldots,\R_{S,r}(x,\Fs))\;.
\end{equation} 
\end{definition}
The definition of the radii and partial heights only depends on $x$ and $(\Fs,\nabla)$ and not on the choice of  $t_x$ (cf. \cite[Section 2.3]{NP-II}). 
Each $\R_{S,i}(x,\Fs)$ is the inverse of the \emph{modulus} 
(cf. Definition \ref{def:modulusannulus}) of a well 
defined sub-disk $D_{S,i}(x,\Fs)\subseteq D(x,S)$, 
centered at $t_x$:
\begin{equation}\label{disks}
\emptyset\neq D_{S,1}(x,\Fs)\;\subseteq\;D_{S,2}(x,\Fs)\;\subseteq\;
\cdots\;\subseteq\; D_{S,r}(x,\Fs)\;\subseteq\;D(x,S)\;.
\end{equation}
For every $0<R\leq 1$, we denote respectively by 
\begin{eqnarray}
D_S(x,R)&\;\subseteq\;&D(x,S)\\
D(x,R)&\;\subseteq\;&D(x)\label{D(x,R)}
\end{eqnarray}
the open sub-disks centered at $t_x$ of modulus equal to $1/R$.
With this convention one has
\begin{equation}\label{eq : D_S,i and D(x, R_S,i)}
D_{S,i}(x,\Fs)\;:=\;D_S(x,\R_{S,i}(x,\Fs))\;.
\end{equation}

The following proposition follows from Section \ref{Generic disks} and the 
definition of the radii.
\begin{proposition}[\protect{\cite{NP-II}}]
\label{Prop: insensitive to scalar ext}
The radii are insensitive to arbitrary scalar extension of $K$. 
Namely, let $L/K$ be a complete valued field extension, then for all 
$i=1,\ldots,\mathrm{rank}(\Fs)$ all $x\in X$ and all 
$y\in\pi_{L/K}^{-1}(x)\in X_L$, we have (cf. \eqref{eq : S_L})
\begin{equation}\label{eq : insensitive to scalar ext}
\phantom{.}\qquad\qquad
\R_{S,i}(x,\Fs)\;=\;\R_{S_L,i}(y,\Fs_L)\;.\qquad\qquad\qed
\end{equation}
\end{proposition}
\begin{remark}\label{changing triang}
Let $S,S'$ be two weak triangulations of $X$. 
If $\Gamma_S=\Gamma_{S'}$, then 
$\RR_{S}(-,\Fs)=\RR_{S'}(-,\Fs)$. 
Indeed the disk $D(x,S)$ only depends on 
$\Gamma_S$.
\end{remark}

\begin{definition}[Convergence Newton polygon]\label{Def : CNP}
We call \emph{convergence Newton polygon} of $\Fs$ at 
$x\in X$ the epigraph of the unique continuous convex 
function $NP_S(x):[0,+\infty[\to\mathbb{R}_{\geq 0}$ 
satisfying
\begin{enumerate}
\item $NP_S(x)(0)=0$, and 
\begin{equation}
NP_S(x)(i)\;=\;
\sum_{j=1}^i\log\R_{S,j}(x,\Fs)
\;=\;
\log\Bigl(H_{S,i}(x,\Fs)\Bigr)\;;
\end{equation}
\item 
For all $i=1,\ldots,r$ the function $t\mapsto NP_S(x)(t)$ 
is affine over $[i-1,i]$, and constant on $[r,\infty[$.
\end{enumerate}
\end{definition}
In other words $NP_S(x):[0,+\infty[\to\mathbb{R}$ is 
the polygon whose slopes are 
$\log\R_{S,1}(x,\Fs)\leq\cdots\leq
\log\R_{S,r}(x,\Fs)$.
Since the radii are by definition in $]0,1]$, the resulting 
polygon is a non-positive convex function defined in 
$[0,+\infty[$, whose initial value is $0$. We may think of 
the convergence Newton polygon $NP_S$ 
as a real-valued function defined on 
$X\times\mathbb{R}_{\geq 0}$ or as a polygon moving 
along $X$. What is interesting to us is the behavior, with respect to $x$, of its 
partial heights $H_{S,i}(x,\Fs)$ (for fixed $i$). Their properties, such as 
continuity and finiteness for instance, correspond to 
analogous properties of the radii 
\cite{NP-I, NP-II, NP-IV}. However, the partial heights enjoy the following integrality property that they do not share with the radii.
\begin{proposition}[Integrality of the partial heights]
\label{Prop : Integrality of the Partial heights}
Let $x\in X$, and let $b$ be a germ of segment out of 
$x$. For all $i=1,\ldots,r$, the slopes $\partial_bH_{S,i}(x,\Fs)$ of $H_{S,i}(-,\Fs)$ 
along $b$ belong to the set
\begin{equation}\label{eq : integrality}
\mathbb{Z}\cup\frac{1}{2}\mathbb{Z}\cup\cdots\cup
\frac{1}{r}\mathbb{Z}\;.
\end{equation} 
Moreover, if $i=r$ or if $\R_{S,i}(x,\Fs) < 
\R_{S,i+1}(x,\Fs)$, 
(i.e. if $i$ is a vertex of the convergence Newton polygon), then 
\begin{equation}
\partial_bH_{S,i}(x,\Fs)\;\in\;\mathbb{Z}\;.
\end{equation}
\end{proposition}
\begin{proof}
The definition of the radii is invariant by base change of 
$K$, so without loss of generality we may assume $K$ 
algebraically closed, spherically complete and 
$|K|=\mathbb{R}_{\geq 0}$.  
Hence, there exists a segment $]x,y[$ representing $b$ 
which is the skeleton of an open annulus $C_b$ 
in $X$ (cf. Definition \ref{Def. skeleton}). 
The localization at $C_b$ truncates the values of some  
radii and Proposition \ref{Prop : Irr local sopp} 
shows that the slope of the partial heights changes by an integer.
\footnote{For expository reasons, we 
placed this statement before Proposition \ref{Prop : Irr local sopp}. 
However the reader may verify that there is no circularity} 
Therefore, we may assume $X=C_b$ and the statement follows 
from \cite[Theorem 3.9]{NP-I}.
\end{proof}

\subsection{Controlling graphs}\label{section controlling graphs}

In \cite{NP-I} and \cite{NP-II} we obtained the following 
result.
\begin{theorem}[\protect{\cite{NP-I},\cite{NP-II}}]
\label{Thm : finiteness}
For all $i=1,\ldots,r$ the functions $x\mapsto\R_{S,i}(x,\Fs)$ are 
continuous. Moreover there exists a locally finite graph 
$\Gamma\subseteq X$ such that for all $i$ the radius 
$\R_{S,i}(-,\Fs)$ is constant on every connected component of 
$X-\Gamma$.
%
\end{theorem}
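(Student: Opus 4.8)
The plan is to reduce both assertions to a purely local analysis near an arbitrary point $x_0\in X$, to control $\R_{S,i}(-,\Fs)$ along each branch out of $x_0$ by a transfer/Newton-polygon argument, to bound the number of ``bad'' branches by a super-harmonicity estimate, and only at the very end to extract the continuity. Since both statements are local on $X$, it suffices to work inside a star-shaped neighborhood $U$ of $x_0$ (Section~\ref{section : star-shaped}) and to describe $\R_{S,i}(-,\Fs)$ on the pointed skeleton of $U$ and on the virtual open disks composing $U-\{x_0\}$ (or $U$ itself if $x_0$ is of type $1$ or $4$). For $x$ of type $2$, $3$ or $4$ the number $\R_{S,i}(x,\Fs)$ is read off the differential module $\widetilde\Fs$ on the maximal disk $D(x,S)\simeq D^-_\Omega(0,R)$: it is the modulus-inverse of the largest sub-disk centered at the generic center $t_x$ carrying at least $r-i+1$ independent horizontal sections. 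So everything is about how these sub-disks vary as $t_x$, hence $x$, moves.

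First I would analyse a germ of segment $b$ out of $x_0$. Choosing a coordinate $T$ identifying a representative of $b$ with the skeleton of an annulus $C^-_\Omega(0;R_1,R_2)$ (or of a punctured virtual disk), I would study $\rho\mapsto \R_{S,i}(x_{0,\rho},\Fs)$. The inputs here are the classical transfer theorems and the structure theory of differential modules on annuli (Dwork, Robba, Christol--Dwork): passing to a cyclic vector, one relates the radii to the Newton polygon of the associated differential operator, and obtains that $\ln \R_{S,i}(x_{0,\rho},\Fs)$ is a \emph{continuous, piecewise-affine} function of $\ln\rho$ with finitely many slopes, all rational with denominators bounded in terms of $r$. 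Dually, on each virtual open disk $D$ that is a connected component of $U-\{x_0\}$ avoiding the skeleton, the maximal disk $D(x,S)$ and the module $\widetilde\Fs$ coincide (up to rescaling) for all $x\in D$, so $\R_{S,i}(-,\Fs)$ is constant on $D$. Consequently $\Gamma_{S,i}(\Fs)\cap U$ sits inside the pointed skeleton of $U$ together with some segments emanating from it.

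The crux is then the local finiteness near $x_0$. Here I would invoke a super-harmonicity estimate for the partial heights of the convergence Newton polygon: the Laplacian at $x_0$ of the functions $h_{x_0}(\cdot)$, equivalently of $\ln H_{S,i}(-,\Fs)$, summed over the branches out of $x_0$, is bounded below by a quantity depending only on $r$ and the local geometry of $X$ at $x_0$ (a variant of Kedlaya's Laplacian formula, cf.\ Thm.~\ref{Prop : Laplacian}, extended to take over-solvable radii into account). Such an inequality forces the partial heights, hence the radii, to be non-increasing along all but finitely many branches out of $x_0$; and on each of the remaining branches the slope analysis of the previous step shows that, after a finite initial segment, each $\R_{S,i}(-,\Fs)$ becomes constant. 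Thus $\Gamma_{S,i}(\Fs)\cap U$ is a finite union of segments and points; gluing over a countable star-shaped cover of $X$ (with no accumulation, thanks to the local finiteness just obtained) yields the desired locally finite graph, e.g.\ $\Gamma:=\Gamma_S\cup\bigcup_{i=1}^r \Gamma_{S,i}(\Fs)$, off whose connected components every $\R_{S,i}(-,\Fs)$ is constant.

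Finally, the continuity. For $i=1$ I would follow Baldassarri: on the spectral locus the smallest radius is controlled by the spectral norm of the connection on the generic disks, which varies continuously, while the over-solvable locus is open and is handled directly via the disk structure. For $i\ge 2$ I would deduce continuity a posteriori from the finiteness: $\R_{S,i}(-,\Fs)$ is locally constant off the locally finite graph $\Gamma$ and piecewise log-affine along $\Gamma$, so the only thing left is continuity at points of $\Gamma$, which follows by matching, branch by branch, the slopes of the partial heights produced by the Newton-polygon analysis. The hard part will be the super-harmonicity/Laplacian estimate together with the uniform bound on the number of slopes: this is precisely where the generic-disk machinery and the Christol--Mebkhout index theorems are needed, and where over-solvable radii create genuine difficulties; Baldassarri's continuity of $\R_{S,1}(-,\Fs)$ in the over-solvable range is an additional delicate point.
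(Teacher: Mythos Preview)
This theorem is not proved in the present paper: it is quoted from \cite{NP-I}, \cite{NP-II}, and the paper only indicates (see Section~\ref{Rk : theoreme de deco de Kedlaya sur un disque...}, Remark~\ref{remark : Linear properties of Gamma'}~iii), and Section~2.9) what the ingredients of that proof are. Your overall architecture --- local analysis in a star-shaped neighborhood, branch-by-branch Newton-polygon control, a Laplacian/super-harmonicity bound to limit the bad branches, Baldassarri for $i=1$, and continuity for $i\ge 2$ deduced \emph{a posteriori} from finiteness --- is indeed the shape of the argument in \cite{NP-I}, \cite{NP-II}.

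There is, however, a genuine gap. Your sentence ``on each virtual open disk $D$ that is a connected component of $U-\{x_0\}$ avoiding the skeleton, the maximal disk $D(x,S)$ and the module $\widetilde{\Fs}$ coincide (up to rescaling) for all $x\in D$, so $\R_{S,i}(-,\Fs)$ is constant on $D$'' is not correct. It is true that for all $x\in D$ the ambient pair $(D(x,S),\widetilde{\Fs})$ is the same, but $\R_{S,i}(x,\Fs)$ also depends on the \emph{center} $t_x$, which moves with $x$; the radii can and do vary on such disks (this is precisely why $\Gamma_{S,i}(\Fs)$ can be strictly larger than $\Gamma_S$). Compare Remark~\ref{remark : Linear properties of Gamma'}~iv): along a radial segment inside such a $D$, $\R_{S,i}(-,\Fs)$ is only known to be log-concave and non-increasing, not constant.

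What fills this gap in \cite{NP-I}, \cite{NP-II} --- and what your sketch is missing --- is Kedlaya's decomposition theorem over a disk \cite[Thm.~12.4.1]{Kedlaya-book}. On a virtual open disk $D$ with $D\cap\Gamma'_{S,i-1}(\Fs)=\emptyset$, that theorem splits $\Fs_{|D}$ so that $\R_{S,i}(-,\Fs)_{|D}=\R_{S,1}(-,(\Fs_{|D})_{\geq i})$ (see Remark~\ref{remark : Linear properties of Gamma'}~iii)). This is exactly what makes the induction on $i$ work: it reduces the $i$-th radius, on the complement of the already-controlled graph $\Gamma'_{S,i-1}(\Fs)$, to a \emph{first} radius, for which your super-harmonicity/transfer argument is valid. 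Without this step, your super-harmonicity bound for $H_{S,i}$ is not available either, since $H_{S,i}$ is only known to be super-harmonic outside $S\cup\C_{S,i}$ (Proposition~\ref{prop:Ei}), and controlling $\C_{S,i}$ already presupposes knowledge of $\Gamma'_{S,i-1}(\Fs)$.

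A minor circularity: you cite Thm.~\ref{Prop : Laplacian} of the present paper for the Laplacian estimate, but that theorem is proved here \emph{after} (and using) Theorem~\ref{Thm : finiteness}. The non-circular input is the affine-line version \cite[Thm.~4.7]{NP-I}, which is established directly on affinoid domains of $\mathbb{A}^{1,\mathrm{an}}_K$ and then transported to curves in \cite{NP-II}.
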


If $S=\emptyset$, then, by definition $X$ is a virtual open disk or 
annulus (recall that $X$ is connected by Setting 
\ref{Hypothesis : X connected}) and $\Gamma_S=\Gamma_{\emptyset}$ is its skeleton. 
On the other hand $\Gamma_S=\emptyset$ if, and only if, 
$X$ is a virtual open disk with empty weak triangulation.  
This is the unique case in which there is no retraction $X\to\Gamma_S$. Any other connected curve $X$ admits a canonical retraction 
\begin{equation}\label{eq : retraction fdyhtdssfkj}
\delta_{\Gamma_S}\;:\;X\to\Gamma_S\;.
\end{equation}
\if{
As a matter of notations, in order to treat at the same time the case of 
a disk with empty triangulation and the general case, we proceed as 
follows. 
If $X=D$ is an open disk with empty weak triangulation we identify $D$ with $D_K(0,R)$, for some $R>0$, 
and we consider the  topological closure $\overline{D}=D\cup\{x_{0,R}\}$ of $D$ in  $\mathbb{A}^{1,\mathrm{an}}_K$. 
We set $\overline{S}=\{x_{0,R}\}$, and we extend $\RR_S(x,\Fs)$ by continuity to a function on the whole $\overline{D}$ with values in 
$\mathbb{R}_{\geq 0}$ (note that $\R_{S,i}(x,\Fs)>0$ for all $x\in D$). 

FALSO !!! NON SI PUO' SEMPRE ESTENDERE PER CONTINUITA'
In this case $\overline{D}$ admits a retraction on 
\begin{equation}
\Gamma_{\overline{S}}\;=\;\{x_{0,R}\}\;.
\end{equation}
}\fi
The map $\delta_{\Gamma_S}$ is the identity on $\Gamma_S$, and it 
associates to $x\notin\Gamma_S$ the boundary in $\Gamma_S$ 
of the maximal disk $D(x,\Gamma_S)=D(x,S)$ (cf. Definition \ref{Maximal disksksks} and Remark \ref{Remark : D(x,S) cases}). This makes sense since 
$X-\Gamma_S$ is disjoint union of virtual open disks.

\begin{definition}\label{Def. :: controlling graphs}
Let $\mathcal{T}$ be a set, and let 
$f:X\to\mathcal{T}$ be a function.
We call $S$-\emph{controlling graph} (or $S$-\emph{skeleton}) 
of $f$ the set $\Gamma_{S}(f)$ of points $x\in X$ that admit no 
neighborhoods\footnote{Note that 
$D(x)$ is not a neighborhood of $x$ in $X$.} $D$ in $X$ such that
\begin{enumerate}
\item $D$ is a virtual open disk;
\item $f$ is constant on $D$;
\item $D\cap \Gamma_S=\emptyset$ (or equivalently 
$D\cap S=\emptyset$).
\end{enumerate}
In particular $\Gamma_S\subseteq\Gamma_S(f)$.
\end{definition}
\begin{remark}\label{Rk : locus of constancy neq graph}
The graph $\Gamma_{S}(f)$ is different from the locus 
defined as the complement of the union of the open subsets  
of $X$ on which $f$ is constant. 
Indeed $f$ can be constant along some segments in 
$\Gamma_{S}(f)$, and hence on the corresponding annulus in $X$. 
This is due to the fact that the definition of $\Gamma(f)$ 
involves only disks on which 
$f$ is constant, and not arbitrary subsets. 
\end{remark}
We denote by $\Gamma_{S,i}(\Fs)$ the controlling graph of the 
function $\R_{S,i}(-,\Fs)$. By definition 
\begin{equation}\label{eq : Gamma_S subset Gamma(F)}
\Gamma_S\;\subseteq\;\Gamma_{S,i}(\Fs)\;
\end{equation} 
and $X-\Gamma_{S,i}(\Fs)$ is a disjoint union of virtual 
open disks on which $\R_{S,i}(-,\Fs)$ is constant. 
If $X=D$ is a virtual open disk with empty weak triangulation, 
and if $\R_{S,i}(-,\Fs)$ is constant on $D$, then 
$\Gamma_{S,i}(\Fs)=\Gamma_S=\emptyset$. 
In all other cases $\Gamma_{S,i}(\Fs)$ is not empty, 
and there is a canonical continuous retraction
\begin{equation}
\delta_{\Gamma_{S,i}(\Fs)}\;:\;X\to \Gamma_{S,i}(\Fs)\;.
\end{equation} 
The \emph{controlling graph} $\Gamma_S(\Fs)$ of $(\Fs,\nabla)$ is 
by definition the union of all the $\Gamma_{S,i}(\Fs)$:
\begin{equation}\label{eq : total controlling graph}
\Gamma_S(\Fs)\;:=\;\bigcup_{i=1}^r\Gamma_{S,i}(\Fs)\;.
\end{equation}
One has $\Gamma_{S}(\Fs)=\emptyset$ if, and only if, $X=D$ is a 
virtual disk with empty weak triangulation, and the multi-radius function $\RR_{S}(-,\Fs)$ (cf. \eqref{eq : multiradius}) 
is a constant function on $D$.
In all other cases $\Gamma_S(\Fs)\neq\emptyset$ 
is the smallest graph containing 
$\Gamma_S$ on which $\RR_S(-,\Fs)$ factors by the canonical 
continuous retraction 
\begin{equation}\label{eq : retraction global}
\delta_{\Gamma_{S}(\Fs)}\;:\;X\to \Gamma_{S}(\Fs)\;.
\end{equation} 
An operative description of the controlling graphs will be given in \cite[Section 2]{NP-IV}.

\subsection{Filtered space of solutions}
We now define the space of solutions of $\Fs$ at a point 
$x\in X$, and its filtration by the radii.
\begin{definition}\label{scale}
We say that a tuple $(d_1,\ldots,d_r)$ is a \emph{scale} if
\begin{enumerate}
\item for all $i\in\{1,\ldots,r\}$ one has $d_i\in\{1,\ldots,r\}$;
\item $d_1=r$ and $d_1\geq d_2\geq \cdots\geq d_r$;
\item If $d_i\neq d_{i-1}$, then $d_i=r-i+1$.
\end{enumerate}
\end{definition}
It follows from the definition that  $ r-i+1\leq d_i\leq r$, 
for all $i=1,\ldots,r$. Below is represented a typical scale 
\begin{equation}
\begin{picture}(100,110)
\put(10,0){\vector(0,1){110}}
\put(150,-3){$i$}
\put(0,10){\vector(1,0){150}}
\put(-3,110){$d_i$}
\qbezier[50](10,100)(55,55)(100,10)
\put(22.5,82.5){$\bullet$}
\put(32.5,82.5){$\bullet$}
\put(42.5,82.5){$\bullet$}
\put(52.5,52.5){$\bullet$}
\put(62.5,52.5){$\bullet$}
\put(72.5,32.5){$\bullet$}
\put(82.5,32.5){$\bullet$}
\put(92.5,32.5){$\bullet$}
\put(102.5,32.5){$\bullet$}
\end{picture}
\end{equation}

\begin{definition}\label{Def: i separates the filtration}
Let $V$ be a vector space of dimension $r$ over a field $\Omega$. 
A \emph{filtration by the radii} of $V$ is a totally ordered family of $r$ 
sub-spaces 
\begin{equation}
V_r\;\subseteq\; V_{r-1}\;\subseteq\;\cdots\;\subseteq\;
V_1\;=\;V
\end{equation}
such that the sequence $(d_1,\ldots,d_r):=(\mathrm{dim}\;V_1,\ldots,
\mathrm{dim}\;V_r)$ is a scale. Note that for $i\in\{2,\ldots,r\}$
\begin{equation}\label{rank = r-i+1}
V_{i}\;\neq\; V_{i-1} \quad 
\textrm{ if, and only if, }\quad \mathrm{dim}\,V_{i}\;=\;r-i+1\;.
\end{equation}
If $i=1$, or if $i$ is an index such that \eqref{rank = r-i+1} holds, 
we say that \emph{the index $i$ separates the filtration}. 
\end{definition}

\subsubsection{Filtration by the radii of the solutions.}
Let $x\in X$, and let $t_x$ and $\Omega/\H(x)$ be as in 
Section \ref{Generic disks.-1}. 
We call \emph{solution of $\Fs$ at $x$} any element in 
the stalk $\Fs_{t_x}:=\Fs_x\otimes_{\O_{X,x}}\O_{X_\Omega,t_x}$ 
which is killed by the connection $\nabla$. Recall that $t_x$ is an $\Omega$-rational point of $X$ and hence $\O_{X_\Omega,t_x}$ is isomorphic to a field of convergent power series $\Omega\{\{T-t_x\}\}:=\{f=\sum_{n\geq 0}a_n(T-t_x)^n,a_n\in\Omega,\exists \varepsilon_f>0, \lim_n|a_n|\varepsilon_f^n=0\}$ and $\Fs_{t_x}\cong\Omega\{\{T-t_x\}\}^r$ is a free $\Omega\{\{T-t_x\}\}$-module with connection.
The kernel of $\nabla$ acting on $\Fs_{t_x}$ will be denoted by
\begin{equation}\label{omega(x,F)}
\Hdr^0(x,\Fs)\;:=\;\Hdr^0(\Fs_{t_x},\O_{X_\Omega,t_x})\;.
\end{equation}
Let $D\subseteq D(x,S)$ be an open disk containing $t_x$. We usually avoid to indicate explicitly the base 
change to $\Omega$. For instance, we set 
$\Hdr^0(D,\Fs)\;:=\;
\Hdr^0(D,\Fs\widehat{\otimes}_K\Omega)$ and 
$\Fs(D):=(\Fs\widehat{\otimes}_K\Omega)(D)$.
We may identify
\begin{equation}\label{eq : omega(D,Fs)}
\Hdr^0(D,\Fs)
\end{equation}
with its image in $\Hdr^0(x,\Fs)$. 
The rule $\Fs\mapsto\Hdr^0(D,\Fs)$ is a left exact functor (cf. Lemma 
\ref{Lemma :devisage}). We then define (cf. \eqref{disks})
\begin{equation}\label{eq : omeaga_S,i and D_S,i}
\omega_{S,i}(x,\Fs)\;:=\;\Hdr^0(D_{S,i}(x,\Fs),\Fs)\;.
\end{equation} 
The $\Omega$-vector space $\Hdr^0(x,\Fs)$ admits a filtration:
\begin{equation}\label{eq : filtration of omega}
0\;\neq \;\omega_{S,r}(x,\Fs)\;\subseteq\;
\omega_{S,r-1}(x,\Fs)\;\subseteq\;\cdots\;\subseteq\;
\omega_{S,1}(x,\Fs)\;=\;\Hdr^0(x,\Fs)\;.
\end{equation}
The following lemma follows directly from the 
definition of the radii $\R_{S,i}(x,\Fs)$.
\begin{lemma}
The filtration \eqref{eq : filtration of omega} is a filtration by the radii.
\hfill$\qed$
\end{lemma}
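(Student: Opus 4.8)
The plan is to verify the two defining conditions: that the spaces $\omega_{S,i}(x,\Fs)$ are nested, and that the tuple $(d_1,\ldots,d_r)$ of their dimensions is a scale in the sense of Definition \ref{scale}. The nesting $\omega_{S,r}(x,\Fs)\subseteq\cdots\subseteq\omega_{S,1}(x,\Fs)$ is immediate from the nesting of disks \eqref{disks}: if $D\subseteq D'$ are open disks containing $t_x$ and both contained in $D(x,S)$, then restriction of horizontal sections gives a map $\omega(D',\Fs)\to\omega(D,\Fs)$; since all these spaces are identified with subspaces of $\omega(x,\Fs)$ via restriction to $t_x$ (which is injective on horizontal sections over any disk, by Lemma \ref{Lemma : Trivial iff solutions}(i) applied to $\O(D)$, whose ring of constants is $\Omega$), this map is the inclusion. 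Hence \eqref{eq : filtration of omega} is a genuine chain of subspaces of $\omega(x,\Fs)$.

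It remains to compute the dimensions. By the very construction of $\R_{S,i}^{\widetilde\Fs}(x)$ in \eqref{eq: R^F DV-Balda}, the disk $D_{S,i}(x,\Fs)=D_S(x,\R_{S,i}(x,\Fs))$ is the largest disk centered at $t_x$ on which $\widetilde\Fs$ admits at least $r-i+1$ horizontal sections linearly independent over $\Omega$; and after passing to the spherically complete $\Omega$, such sections extend to all of the chosen disk by Lazard's theorem, so $\dim_\Omega\omega(D_{S,i}(x,\Fs),\Fs)\ge r-i+1$. Thus $d_i:=\dim\omega_{S,i}(x,\Fs)\ge r-i+1$ for every $i$; in particular $d_r\ge 1$, giving $\omega_{S,r}(x,\Fs)\neq 0$. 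On the other hand, since $D_{S,i}(x,\Fs)\subseteq D(x,S)$ and the full solution space at $x$ has dimension at most $r$, we get $d_i\le r$; and $d_1=r$ because $D_{S,1}(x,\Fs)=D_S(x,\R_{S,1}(x,\Fs))$ is by definition the smallest of the disks and $\R_{S,1}(x,\Fs)$ is the radius on which the maximal number $r$ of independent solutions first appears — so $\omega_{S,1}(x,\Fs)=\omega(x,\Fs)$ has dimension $r$. Monotonicity $d_1\ge d_2\ge\cdots\ge d_r$ is the nesting established above.

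The last point to check is condition (iii) of Definition \ref{scale}: if $d_i\neq d_{i-1}$ then $d_i=r-i+1$. This is where the defining property of the radii is used essentially. Suppose $d_i<d_{i-1}$, i.e. the space of solutions converging on $D_{S,i}(x,\Fs)$ is strictly smaller than that converging on $D_{S,i-1}(x,\Fs)$; since $D_{S,i-1}\subseteq D_{S,i}$ corresponds to $\R_{S,i-1}(x,\Fs)\le\R_{S,i}(x,\Fs)$, a strict drop in dimension forces $\R_{S,i-1}(x,\Fs)<\R_{S,i}(x,\Fs)$, and then the interval $(\R_{S,i-1}(x,\Fs),\R_{S,i}(x,\Fs))$ contains no value of any $\R_{S,j}(x,\Fs)$. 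On $D_{S,i}(x,\Fs)$ precisely the solutions with radius $\ge\R_{S,i}(x,\Fs)$ survive, and by the ordering $\R_{S,1}\le\cdots\le\R_{S,r}$ combined with $d_i\ge r-i+1$, exactly the top $r-i+1$ radii $\R_{S,i}(x,\Fs),\ldots,\R_{S,r}(x,\Fs)$ are $\ge\R_{S,i}(x,\Fs)$ while $\R_{S,i-1}(x,\Fs)$ is strictly smaller; hence $d_i=r-i+1$ exactly. The main obstacle in making this rigorous is bookkeeping the bijection between the jumps of the dimension function $i\mapsto d_i$ and the strict inequalities among the ordered radii $\R_{S,1}(x,\Fs)\le\cdots\le\R_{S,r}(x,\Fs)$; once one fixes the convention that $\R_{S,i}(x,\Fs)$ is recorded with multiplicity equal to the jump it produces, conditions (i)--(iii) of Definition \ref{scale} follow formally, and the lemma is proved. $\Box$
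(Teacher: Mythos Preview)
Your argument is correct, and in fact the paper gives no proof at all: the lemma is stated with a bare $\Box$, so the verification is left to the reader as immediate from the definitions. Your proof supplies exactly that verification.

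One stylistic remark: your final paragraph is more convoluted than necessary. The cleanest way to obtain condition~(iii) of Definition~\ref{scale} is to use the maximality built into the definition of $\R_{S,i-1}^{\widetilde\Fs}(x)$ directly: this is the \emph{maximal} radius on which there are at least $r-i+2$ independent solutions, so any strictly larger open disk carries at most $r-i+1$. When $d_i\neq d_{i-1}$ the disks differ, hence $\R_{S,i}^{\widetilde\Fs}(x)>\R_{S,i-1}^{\widetilde\Fs}(x)$, and the bound $d_i\le r-i+1$ follows immediately; combined with $d_i\ge r-i+1$ this gives equality. Your detour through counting indices $j$ with $\R_{S,j}\ge\R_{S,i}$ arrives at the same place but introduces an extra step (the identification of $d_i$ with that count) which is itself equivalent to what you are proving. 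The closing sentence about ``bookkeeping the bijection'' and ``conventions'' reads as hedging on an argument that is in fact already complete once you invoke maximality.
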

Condition \eqref{rank = r-i+1} is then expressed by the following

\begin{definition}\label{definition : i separates the radii}
Let $i\in\{1,\ldots,r\}$. 
We say that the index $i$ \emph{separates the radii of 
$\Fs$ at $x\in X$} if either $i=1$ or if 
one of the following equivalent conditions holds:
\begin{enumerate}
\item $\R_{S,i-1}(x,\Fs)<\R_{S,i}(x,\Fs)$;
\item $\omega_{S,i-1}(x,\Fs)\supset\omega_{S,i}(x,\Fs)$.
\end{enumerate}
We say that $i$ \emph{separates the radii of 
$\Fs$} if it separates the radii of $\Fs$ at all $x\in X$. 
\end{definition}

\begin{remark}
The index $i$ separates the radii of $\Fs$ at~$x$ if, 
and only if, $\mathrm{dim}_\Omega\;\omega_{S,i}(x,\Fs) = r-i+1$ (cf. \eqref{rank = r-i+1}).

%
\end{remark}

\subsection{Spectral, solvable, and over-solvable radii.}
\begin{definition}\label{Def: spectral index}
We say that the $i$-th radius $\R_{S,i}(x,\Fs)$ is 
\begin{equation}\label{eq : spectral - solvable - oversolvable}
\left\{
\begin{array}{lcl}
\textrm{\emph{spectral}}&\textrm{ if }&D_{S,i}(x,\Fs)\;
\subseteq \;D(x)\;,\\
\textrm{\emph{solvable}}&\textrm{ if }&D_{S,i}(x,\Fs)\;=\;D(x)\;,\\
\textrm{\emph{over-solvable}}&\textrm{ if }&D_{S,i}(x,\Fs)\;
\supset\;D(x)\;.
\end{array}
\right.
\end{equation}
Solvable radii are spectral by definition. 
We also say that \emph{the index $i$}, or the \emph{$i$-th step of the 
filtration $\omega_{S,i}(x,\Fs)$}, or \emph{the disk $D_{S,i}(x,\Fs)$}, 
is spectral, solvable, over-solvable.
\end{definition}
\begin{definition}\label{eq : over-solvable cutoff}
\label{eq : def of m(x).}
We denote by $0\leq i_x^{\mathrm{sp}}\leq \is{x}\leq 
r$ the indexes such that
\begin{enumerate}
\item $\R_{S,i}(x,\Fs)$ is spectral non solvable for 
$i\leq i_x^{\mathrm{sp}}$,

\item $\R_{S,i}(x,\Fs)$ is solvable for 
$i_x^{\mathrm{sp}}< i \leq \is{x}$,

\item $\R_{S,i}(x,\Fs)$ is over-solvable for 
$\is{x}< i$.
\end{enumerate}
We call $i_x^{\mathrm{sp}}$ and $\is{x}$ 
the \emph{spectral} and \emph{over-solvable cutoffs} respectively. 
\end{definition}

If $\is{x}=0$ (resp. $\is{x}=r$), then all 
the radii are over-solvable (resp. spectral). 
If $i_x^{\mathrm{sp}}=0$ (resp. $i_x^{\mathrm{sp}}=r$), 
then all the radii are solvable or over-solvable (resp. spectral non 
solvable).
If $i_x^{\mathrm{sp}}=\is{x}$, then $\Fs$ has no 
solvable radii.

\begin{remark}\label{rk : solvable not in gamma}
\label{Remark : R_i solvable on Gamma_i}
If $x\in\Gamma_{S,i}(\Fs)$, then the indexes $1,\ldots,i$ 
are all spectral at $x$.
Indeed, if $i$ is over-solvable at $x$, by definition the image of 
$D_{S,i}(x,S)\subseteq X_\Omega$ in $X$ by the canonical 
projection $\pi_{\Omega/K}$ is a virtual open disk $D_i$ containing $x$. 
Since the radii are insensitive to scalar extension of $K$, then $D_i$ is a 
neighborhood of $x$ in $X$ on which $\R_{S,i}(-,\Fs)$ is constant 
(cf. \eqref{eq : D^c} below). 
Hence, when $i$ is oversolvable at $x$ we have 
$x\notin\Gamma_{S,i}(\Fs)$. 
The case where $i$ is solvable at $x$ is also particular: 
it is related to the end-points of the controlling graphs 
(cf. \cite[Lemma 6.5]{NP-I} and \cite[Section 2]{NP-VI}).
\end{remark}

\subsubsection{Constancy disks.}\label{Section constancy disk}
We now provide a criterion to test whether a point lies in 
$\Gamma_{S,i}(\Fs)$. 
Define the $i$-th \emph{constancy disk} of $\Fs$ at $x$ as 
the maximal open disk $D^c_{S,i}(x,\Fs)$ centered at $t_x$ 
and contained in $D(x,S)$ on which $\R_{S,i}(-,\Fs)$ is a constant function. 
With the notations of Definition \ref{Def : can extensions} one has 
$D^c_{S,i}(x,\Fs)=D(x,\Gamma_{S,i}(\Fs))$. Then $D(x)$ and $D_{S,i}(x,\Fs)$ 
are both contained in $D_{S,i}^c(x,\Fs)$:\footnote{By definition the radii are 
stable by localization to $D(x,S)$, therefore we may assume that $X$ is the affine line, in which case the inclusion follows from 
\cite[Equation (2.2)]{NP-I}.}
\begin{equation}\label{eq : D^c}
D(x)\;\cup\;D_{S,i}(x,\Fs)\;\subseteq\;D_{S,i}^c(x,\Fs)\;
\subseteq\;D(x,S)
\end{equation}
The following proposition follows immediately from 
Definition \ref{Def. :: controlling graphs} 
(cf. \cite[Proposition 2.5, iv)]{NP-I}):
\begin{proposition}\label{Prop : D^c}
A point $x\in X$ lies in $\Gamma_{S,i}(\Fs)$ if, and only 
if, $D(x)=D_{S,i}^c(x,\Fs)$. Moreover 
\begin{equation}
\Gamma_{S,i}(\Fs)\;=\;
X-\bigcup_{x\in X_{[1]}} D_{S,i}^c(x,\Fs)\;,
\end{equation}
where $X_{[1]}\subset X$ is the subset of points of type 
$1$. \hfill$\qed$
\end{proposition}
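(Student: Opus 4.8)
The plan is to deduce both assertions from the identity $D^c_{S,i}(x,\Fs)=D(x,\Gamma_{S,i}(\Fs))$ recalled just above the statement, together with two elementary facts valid for any subset $Z\subseteq X$ whose complement $X-Z$ is a disjoint union of virtual open disks — here $Z=\Gamma_{S,i}(\Fs)$, for which this holds by Theorem \ref{Thm : finiteness}. Namely: (a) $x\in Z$ if and only if $D(x,Z)=D(x)$; and (b) $X-Z=\bigcup_{x\in X_{[1]}}D(x,Z)$, once each $D(x,Z)$ is identified with its image in $X$. Granting (a) and (b) and taking $Z=\Gamma_{S,i}(\Fs)$ yields exactly the two claims. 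Throughout I would use that $t_x$ is an $\Omega$-rational point of $X_\Omega$ (it is $\H(x)$-rational and $\H(x)\subseteq\Omega$), so that any open disk of $X_\Omega$ containing $t_x$ may be regarded as centered at $t_x$, and that $\pi_\Omega$ restricts to a surjection from each connected component of $\pi_\Omega^{-1}(C)$ onto any virtual open disk $C\subseteq X$ that is disjoint from $Z$; this is essentially the transcription of \cite[Prop. 2.2, iv)]{NP-I} to the present setting.

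For (a): if $x\in Z$, then $D(x,Z)=D(x)$ is precisely Remark \ref{Radius=spectral radius at a point of Gamma}. Conversely, if $x\notin Z$, then $x$ lies in a connected component $C$ of $X-Z$, which is a virtual open disk and hence has more than one point. Since $C\subseteq X-Z$, one has $\pi_\Omega^{-1}(C)\subseteq X_\Omega-\sigma_\Omega(Z)$, and the connected component $\widetilde C$ of $\pi_\Omega^{-1}(C)$ containing $t_x$ is an open disk of $X_\Omega$, centered at $t_x$, contained in $X_\Omega-\sigma_\Omega(Z)$; thus $\widetilde C\subseteq D(x,Z)$. As $\pi_\Omega(\widetilde C)=C$ has more than one point while $\pi_\Omega(D(x))\subseteq\{x\}$, the disk $\widetilde C$, and a fortiori $D(x,Z)$, strictly contains $D(x)$. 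This proves (a), hence the pointwise criterion of the Proposition; the argument is uniform in the type of $x$, the type $1$ case being the degenerate one where $D(x)=\emptyset$.

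For (b): when $x\notin Z$ the disk $D(x,Z)$ equals the component $\widetilde C$ above, whose image in $X$ is the whole component $C$; when $x\in Z$, $D(x,Z)=D(x)$ has image contained in $\{x\}\subseteq Z$, which we discard. So, identifying $D(x,Z)$ with its image, for a type $1$ point $x\notin Z$ the set $D(x,Z)$ is the connected component of $X-Z$ through $x$, and in particular $D(x,Z)\subseteq X-Z$; this gives $\bigcup_{x\in X_{[1]}}D(x,Z)\subseteq X-Z$. Conversely, let $y\in X-Z$ and let $C$ be its component, a virtual open disk; since $X_{[1]}$ meets every virtual open disk, choose $x\in C\cap X_{[1]}$, and then $D(x,Z)=C\ni y$. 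Hence $X-Z=\bigcup_{x\in X_{[1]}}D(x,Z)$, which is the displayed formula for $Z=\Gamma_{S,i}(\Fs)$.

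The only genuinely non-formal ingredients — and the main obstacle — are the base-change statements invoked above: that the connected components of $\pi_\Omega^{-1}(C)$ are open disks, each surjecting onto $C$ (so that $\widetilde C\supsetneq D(x)$); that $\widetilde C$ is maximal among open disks centered at $t_x$ inside $X_\Omega-\sigma_\Omega(Z)$ (so that $\widetilde C=D(x,Z)$); and that $X_{[1]}$ meets every virtual open disk. These rest on the local structure of virtual open disks, on Proposition \ref{Lemma pts of type 4}, and on the description of $D(x,S)$ recalled after Definition \ref{Maximal disksksks}. Once they are in place, the Proposition follows immediately from Definition \ref{Def. :: controlling graphs} and Remark \ref{Radius=spectral radius at a point of Gamma}, as indicated in the text preceding the statement.
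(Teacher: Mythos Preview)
Your proposal is correct and takes essentially the same approach as the paper, which declares the result immediate from Definition~\ref{Def. :: controlling graphs} and gives no proof beyond the reference to \cite[Prop.~2.2, iv)]{NP-I}; your argument is precisely the unpacking of that claim via the identity $D^c_{S,i}(x,\Fs)=D(x,\Gamma_{S,i}(\Fs))$. One minor point: the fact that $X-\Gamma_{S,i}(\Fs)$ is a disjoint union of virtual open disks follows from the inclusion $\Gamma_S\subseteq\Gamma_{S,i}(\Fs)$ (see the line after~\eqref{eq : Gamma_S subset Gamma(F)}), not from Theorem~\ref{Thm : finiteness}.
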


\subsection{Change of triangulation}\label{Change of triangulation}
Here, we discuss how the radii depend on the triangulation. 
Let $S$ and $S'$ be two triangulations of~$X$. From Definition 
\ref{def: Multiradius}, for all $x\in X$, one has
\begin{equation}\label{eq: beh omega by change of tri}
D_{S',i}(x,\Fs)\cap D(x,S)\;=\;D_{S,i}(x,\Fs)\cap D(x,S')\;.
\end{equation}
Indeed, we have either $D(x,S)\subseteq D(x,S')$, or 
$D(x,S')\subseteq D(x,S)$, because they are both disks 
centered at $t_x$. Therefore, we have two possibilities: either the left hand side of 
\eqref{eq: beh omega by change of tri} equals 
$D_{S',i}(x,\Fs)$, or the right hand side equals 
$D_{S,i}(x,\Fs)$ (both conditions can occur simultaneously).
If $\Gamma_S\subseteq\Gamma_{S'}$, then 
$D(x,S')\subseteq D(x,S)$ for all $x\in X$, and we have the following 

\begin{proposition}[\protect{\cite[Equation (2.1)]{NP-II}}]
\label{Prop: A-4 fgh}
Let $S$, $S'$ be two triangulations such that 
$\Gamma_S\subseteq\Gamma_{S'}$. Then, for all $x\in X$ and all 
$i=1,\ldots,r$ one has 
\begin{equation}\label{Prop: A-4 fgh -eq}
D_{S',i}(x,\Fs)\;=\;D_{S,i}(x,\Fs)\cap D(x,S')\;.
\end{equation}
In particular $D_{S',i}(x,\Fs)=D_{S,i}(x,\Fs)$ if $D(x,S)=D(x,S')$. 
Hence for all $i=1,\ldots,r$ one has
\begin{equation}
\R_{S',i}(x,\Fs)\;=\;\min\Bigl(\;1\;,\; 
 f_{S,S'}(x)\cdot \R_{S,i}(x,\Fs)\;
\Bigr)\;,\label{eq : R_S' VS R_S}
\end{equation}
where $f_{S,S'}:X\to [1,+\infty[$ is the function 
associating to $x$ the modulus $f_{S,S'}(x)\geq 1$ of the inclusion of 
disks $D(x,S')\subseteq D(x,S)$ (cf. Definition \ref{def:modulusannulus}).
\hfill $\qed$
\end{proposition}
\begin{proof}
Specializing \eqref{eq: beh omega by change of tri} to the case where 
$\Gamma_S \subset \Gamma_{S'}$ gives the proof.
\end{proof}

\begin{proposition}[\protect{\cite[Lemma 3.14]{NP-II}}]
\label{Prop : A-5 ecp}
Let $S$, $S'$ be two triangulations such that 
$\Gamma_S\subseteq\Gamma_{S'}$. Then  
\begin{equation}\label{eq: change of tri eq 2.32}
\Gamma_{S',i}(\Fs)\;=\; 
\Gamma_{S'}\cup\Gamma_{S,i}(\Fs)\;.
\end{equation}
\end{proposition}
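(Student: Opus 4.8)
The plan is to reduce the whole statement to the identity
\begin{equation}\label{eq:key-plan-A5}
D^c_{S',i}(x,\Fs)\;=\;D^c_{S,i}(x,\Fs)\cap D(x,S')\qquad\text{for every }x\in X,
\end{equation}
where $D^c_{S,i}(x,\Fs)=D(x,\Gamma_{S,i}(\Fs))$ and $D^c_{S',i}(x,\Fs)=D(x,\Gamma_{S',i}(\Fs))$ are the constancy disks of Section \ref{Section constancy disk}. Granting \eqref{eq:key-plan-A5}, the proposition follows at once: the three disks $D^c_{S,i}(x,\Fs)$, $D(x,S')$, $D(x)$ are all centered at $t_x$, hence totally ordered by inclusion, and $D(x)\subseteq D^c_{S,i}(x,\Fs)$ by \eqref{eq : D^c} while $D(x)\subseteq D(x,S')$ because $\Gamma_S\subseteq\Gamma_{S'}$; therefore their intersection equals $D(x)$ if and only if at least one of the two factors does. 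By Proposition \ref{Prop : D^c} the first factor equals $D(x)$ exactly when $x\in\Gamma_{S,i}(\Fs)$, and by Remark \ref{Radius=spectral radius at a point of Gamma} (together with the fact, recalled after Definition \ref{Maximal disksksks}, that $D(x,S')\supsetneq D(x)$ when $x\notin\Gamma_{S'}$) the second factor equals $D(x)$ exactly when $x\in\Gamma_{S'}$. Using Proposition \ref{Prop : D^c} also for $S'$, identity \eqref{eq:key-plan-A5} then reads: $x\in\Gamma_{S',i}(\Fs)$ iff $x\in\Gamma_{S,i}(\Fs)$ or $x\in\Gamma_{S'}$, which is the claim.

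To prove \eqref{eq:key-plan-A5}, I dispose first of the case $x\in\Gamma_{S'}$: then $D(x,S')=D(x)$, the right-hand side is $D^c_{S,i}(x,\Fs)\cap D(x)=D(x)$, and the left-hand side is squeezed between $D(x)$ and $D(x,S')=D(x)$. So assume $x\notin\Gamma_{S'}$; then $D(x,S')\supsetneq D(x)$ and its image in $X$ is a single connected component $U$ of $X-\Gamma_{S'}$. A point I shall use repeatedly is that the modulus function $f_{S,S'}$ of Proposition \ref{Prop: A-4 fgh} is constant on $U$ (for $y\in U$, $D(y,S')$ has image $U$ and $D(y,S)$ has image the component of $X-\Gamma_S$ containing $U$, neither depending on $y$). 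For the inclusion ``$\supseteq$'' in \eqref{eq:key-plan-A5}: the disk $D^c_{S,i}(x,\Fs)\cap D(x,S')$ is contained in $D^c_{S,i}(x,\Fs)$, so $\R_{S,i}(-,\Fs)$ is constant on it, and it is contained in $D(x,S')$, so its image lies in $U$ and $f_{S,S'}$ is constant on it; by the formula $\R_{S',i}(-,\Fs)=\min(1,f_{S,S'}\cdot\R_{S,i}(-,\Fs))$ of Proposition \ref{Prop: A-4 fgh} the function $\R_{S',i}(-,\Fs)$ is then constant on it, and the maximality defining $D^c_{S',i}(x,\Fs)$ gives the inclusion.

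For ``$\subseteq$'' in \eqref{eq:key-plan-A5}, since $D^c_{S',i}(x,\Fs)\subseteq D(x,S')$ by definition it is enough to check $D^c_{S',i}(x,\Fs)\subseteq D^c_{S,i}(x,\Fs)$. Write $c=\R_{S',i}(x,\Fs)$, the constant value of $\R_{S',i}(-,\Fs)$ on $D^c_{S',i}(x,\Fs)$. If $c<1$, then on $D^c_{S',i}(x,\Fs)$ the relation $\min(1,f_{S,S'}\cdot\R_{S,i}(-,\Fs))=c<1$, together with the constancy of $f_{S,S'}$, forces $\R_{S,i}(-,\Fs)=c/f_{S,S'}$ to be constant there, whence $D^c_{S',i}(x,\Fs)\subseteq D^c_{S,i}(x,\Fs)$ by maximality. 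If $c=1$, then $D_{S',i}(x,\Fs)=D(x,S')$, which strictly contains $D(x)$ since $x\notin\Gamma_{S'}$, so the $i$-th radius is over-solvable at $x$ in the sense of \eqref{eq : spectral - solvable - oversolvable}; moreover $D_{S,i}(x,\Fs)\supseteq D_{S,i}(x,\Fs)\cap D(x,S')=D_{S',i}(x,\Fs)$ by Proposition \ref{Prop: A-4 fgh}, hence \eqref{eq : D^c} yields $D^c_{S,i}(x,\Fs)\supseteq D_{S,i}(x,\Fs)\supseteq D(x,S')\supseteq D^c_{S',i}(x,\Fs)$.

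The only genuinely delicate point is the case $c=1$: over-solvable radii are not pinned down by the change-of-triangulation formula (only bounded below by it), so constancy of $f_{S,S'}$ does not by itself transfer constancy of $\R_{S,i}(-,\Fs)$ from $S'$ to $S$; one must instead pass through the geometric description \eqref{eq : D^c} of the constancy disk and the elementary observation, read off Proposition \ref{Prop: A-4 fgh}, that over-solvability with respect to the finer skeleton $\Gamma_{S'}$ persists with respect to $\Gamma_S$. The other ingredient worth spelling out is the local constancy of $f_{S,S'}$ off $\Gamma_{S'}$, which follows from the description of maximal disks recalled after Definition \ref{Maximal disksksks}.
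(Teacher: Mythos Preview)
Your proof is correct. It uses the same ingredients as the paper---the change-of-triangulation formula \eqref{eq : R_S' VS R_S}, the local constancy of $f_{S,S'}$ off $\Gamma_{S'}$, and the constancy-disk description of Proposition~\ref{Prop : D^c}---but organizes them differently: you package everything into the single identity $D^c_{S',i}(x,\Fs)=D^c_{S,i}(x,\Fs)\cap D(x,S')$ and read off both inclusions at once, whereas the paper treats the two inclusions separately (one directly from the definition of the controlling graph, the other via constancy disks for type~$1$ points only). Your explicit case split on $c<1$ versus $c=1$ is genuinely useful: the paper's written argument for $D^c_{S',i}\subseteq D^c_{S,i}$ states the implication in the wrong direction (``$\R_{S,i}$ constant $\Rightarrow$ $\R_{S',i}$ constant'' where the converse is what is needed), and even with that corrected the over-solvable case $c=1$ does not follow from constancy of $f_{S,S'}$ alone---one needs exactly the passage through $D_{S,i}(x,\Fs)\supseteq D(x,S')$ and \eqref{eq : D^c} that you spell out.
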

\begin{proof}
The function $f_{S,S'}$ is determined by the following properties: 
\begin{enumerate}
\item $f_{S,S'}$ is constant on each connected component 
of $X-\Gamma_{S'}$ (which is necessarily a virtual open disk);
\item $f_{S,S'}(x)=1$ for all $x\in\Gamma_S\subseteq\Gamma_{S'}$;
\item Let $x\in\Gamma_{S'}-\Gamma_S$. 
 Let $R$ be the radius of $D(x,S)$ in a given coordinate. 
Then 
$f_{S,S'}(x)=R/r(x)$, where $r(x)\leq R$ 
is the radius of the point $x$ with respect to the chosen coordinate on 
$D(x,S)$. 
\end{enumerate}

Let $D$ be a virtual disk such that 
$D\cap(\Gamma_{S'}\cup\Gamma_{S,i}(\Fs))=\emptyset$. Since 
$D\cap\Gamma_{S,i}(\Fs)=\emptyset$, $\R_{S,i}(-,\Fs)$ is 
constant on $D$. Since $D\cap\Gamma_{S'}=\emptyset$ then 
$f_{S,S'}$ is constant on $D$ too. 
Hence, by \eqref{eq : R_S' VS R_S}, the function 
$\R_{S',i}(-,\Fs)$ is constant on $D$. This proves that 
$\Gamma_{S',i}
(\Fs)\subseteq(\Gamma_{S'}\cup\Gamma_{S,i}(\Fs))$.

Conversely 
$\Gamma_S\subseteq\Gamma_{S'}\subseteq
\Gamma_{S',i}(\Fs)$. 
So it is enough to prove that 
$\Gamma_{S,i}(\Fs)\subseteq\Gamma_{S',i}(\Fs)$.
By Proposition \ref{Prop : D^c}, this amounts to proving that for all 
point $x$ of type $1$ one has 
$D_{S',i}^c(x,\Fs)\subseteq D_{S,i}^c(x,\Fs)$.
By definition $D_{S',i}^c(x,\Fs)\subseteq D(x,S')$, 
so we have to prove that if $\R_{S,i}(-,\Fs)$ is constant on a disk 
$D\subseteq D(x,S')$, then so does  $\R_{S',i}(-,\Fs)$. This follows 
from \eqref{eq : R_S' VS R_S} since $f_{S,S'}$ is constant on $D$.
\if{
So it is enough to show that if 
$x\notin\Gamma_{S'}$ then 
$D(x,S')\cap \Gamma_{S',i}(\Fs)=
D(x,S')\cap\Gamma_{S,i}(\Fs)$. We already have the inclusion 
$\subseteq$, so we can assume that 
$\Gamma_{S,i}\cap D(x,S')\neq\emptyset$. Under this condition 
$\R_{S,i}^{\widetilde{\Fs}}$, and also $\R_{S',i}^{\widetilde{\Fs}}$, 
are stable by localization to $D(x,S')$ (this follows from the definition 
of $\R_{S,i}^{\widetilde{\Fs}}$). 

So we are reduced to prove that 
$\R_{S,i}^{\widetilde{\Fs}}=\R_{S',i}^{\widetilde{\Fs}}$ over 
$D(x,S')$. 
Consider a coordinate in the disk $D(x,S)$ and consider on it the two 
functions $\R^{\widetilde{\Fs}}_{S,i}(x)$ and 
$\R^{\widetilde{\Fs}}_{S',i}(x)$. If $R'$ is the radius of the 
sub-disk $D(x,S')\subseteq D(x,S)$ in the chosen coordinate, 
it is clear that for all $y\in D(x,S)$ one has
\begin{equation}\label{eq : refskjkvvkccdhkhd}
\R^{\widetilde{\Fs}}_{S',i}(y)\;=\;
\min(R',\R^{\widetilde{\Fs}}_{S,i}(y))\;.
\end{equation} 
If for some $y\in D(x,S')$ one has 
$\R^{\widetilde{\Fs}}_{S,i}(y)\geq R'$, then both 
functions $\R^{\widetilde{\Fs}}_{S,i}$ and 
$\R^{\widetilde{\Fs}}_{S',i}$ are constant on $D(x,S')$, 
(because $\R^{\widetilde{\Fs}}_{S,i}$ is constant on 
$D_{S,i}(x,\Fs)\subseteq D^c_{S,i}(x,\Fs)$). 
Otherwise for all $y\in D(x,S')$ one has 
$\R^{\widetilde{\Fs}}_{S,i}(y)=\R^{\widetilde{\Fs}}_{S',i}(y)<R'$ 
by \eqref{eq : refskjkvvkccdhkhd}
(see \cite[Proposition 5.10, and Section 8]{NP-I} for more details). 
%
%
}\fi
\end{proof}

\subsection{Localization}
\label{Localization}
One of the major differences between spectral and over-solvable cases 
is that the spectral terms of the filtration 
are preserved by localization, while 
over-solvable ones result truncated.

The pre-image $\pi_\Omega^{-1}(x)$ is independent on $S$ and $X$ 
(cf. Proof of Proposition \ref{Lemma pts of type 4} and Section \ref{Local nature of spectral radii.}), as well as the generic disk 
$D(x)$, the separating index $\is{x}$, and all the disks 
$D_{S,i}(x,\Fs)$ for $i\leq \is{x}$. 
From this remark we obtain 
\begin{proposition}\label{Prop: Localization}
Let $U\subset X$ be an analytic domain. And let $S_U$ be a 
weak-triangulation of $U$. For all $i=1,\ldots,r$ and all $x\in U$ 
one has
\begin{equation}\label{Localization of sol-1}
D_{S_U,i}(x,\Fs_{|U})\cap D(x,S)\;=\;D_{S,i}(x,\Fs)\cap D(x,S_U)
\;.
\end{equation}
In particular $D_{S_U,i}(x,\Fs_{|U})=D_{S,i}(x,\Fs)$ if 
$D(x,S_U)=D(x,S)$, or if $i\leq \is{x}$.
Then for all $x\in U$ and all 
$i=1,\ldots,\is{x}$ one has
\begin{equation}
\omega_{S,i}(x,\Fs)\;=\;
\omega_{S_U,i}(x,\Fs_{|U})\;.
\end{equation}
For all $i=\is{x}+1,\ldots,r$ one has 
\begin{equation}\label{Localization of sol}
\omega_{S_U,i}(x,\Fs_U)\;=\;\left\{
\begin{array}{rcl}
\omega_{S,i}(x,\Fs)&
\textrm{ if }&D_{S,i}(x,\Fs)\subseteq \;D(x,S_U)\\
\Hdr^0(D(x,S_U),\Fs)&\textrm{ if }&D_{S,i}(x,
\Fs)\supset \;D(x,S_U)\;.
\end{array}
\right.\quad\qed
\end{equation}
\end{proposition}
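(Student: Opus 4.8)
The plan is to reduce everything to the single key assertion \eqref{Localization of sol-1}, from which all the displayed consequences follow by purely formal manipulations with disks centered at $t_x$. First I would recall the mechanism already available: by the discussion of \ref{Generic disks.-1} and Definition \ref{Def : can extensions}, the preimage $\pi_\Omega^{-1}(x)$, the section $\sigma_\Omega$, the generic disk $D(x)$, and the point $t_x$ all depend only on $x$ and on $X$ as an abstract curve, not on the chosen triangulation. Now, given $U\subseteq X$ an analytic domain and $x\in U$, the maximal disk $D(x,S_U)$ is (by Definition \ref{Maximal disksksks}) the largest open disk centered at $t_x$ inside $X_\Omega-\sigma_\Omega(\Gamma_{S_U})$, while $D(x,S)$ is the analogous disk for $\Gamma_S$; and the disks $D_{S,i}(x,\Fs)$, $D_{S_U,i}(x,\Fs_{|U})$ are defined in \ref{multiradius} by restricting $\Fs$ to the relevant maximal disk and looking at the radius of convergence of solutions. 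Because all of these disks are nested disks centered at the same point $t_x$, any two of them are comparable.

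The heart of the matter is \eqref{Localization of sol-1}, which I would prove exactly as \eqref{eq: beh omega by change of tri} was obtained in the ``change of triangulation'' subsection: the differential module $\widetilde{\Fs}$ attached to $D(x,S)$ and the differential module attached to $D(x,S_U)$ are restrictions of one another on the intersection $D(x,S)\cap D(x,S_U)$, and the radius of convergence of the $i$-th solution at $t_x$ is intrinsic to the germ of the module at $t_x$ — it does not see how far the ambient maximal disk extends, as long as the solution disk $D_{S,i}$ fits inside. Concretely, if one fixes a coordinate on $D(x,S)$ identifying it with $D^-_\Omega(0,R)$ and writes $R_U$ for the radius of the subdisk $D(x,S_U)\cap D(x,S)$, then $\R^{\widetilde{\Fs_{|U}}}_{S_U,i}(x)=\min(R_U,\R^{\widetilde{\Fs}}_{S,i}(x))$, which is just the intersection identity \eqref{Localization of sol-1} rewritten. (Symmetrically if $D(x,S)\subseteq D(x,S_U)$.) The only subtlety is that $U$ is a domain rather than all of $X$, so $D(x,S_U)$ could a priori be cut off by the boundary of $U$ and not only by $\Gamma_{S_U}$; but since $U$ is an analytic domain containing $x$, a neighbourhood of $x$ in $U$ agrees with one in $X$, and $D(x,S_U)$ is in any case an honest open disk centered at $t_x$, so the same comparison argument applies verbatim.

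Granting \eqref{Localization of sol-1}, the rest is bookkeeping. If $D(x,S_U)=D(x,S)$, intersecting both sides with this common disk gives $D_{S_U,i}(x,\Fs_{|U})=D_{S,i}(x,\Fs)$ directly. If instead $i\leq\is{x}$, then by Definition \ref{eq : over-solvable cutoff} the disk $D_{S,i}(x,\Fs)$ is spectral, i.e. contained in $D(x)$, and $D(x)$ is in turn contained in both $D(x,S)$ and $D(x,S_U)$ (it equals the maximal disk whenever $x$ lies on the relevant skeleton, and is contained in it otherwise, by the remark following Definition \ref{Maximal disksksks}); hence $D_{S,i}(x,\Fs)$ already lies in $D(x,S_U)$ and the intersection on the right of \eqref{Localization of sol-1} is all of $D_{S,i}(x,\Fs)$, while $D_{S_U,i}(x,\Fs_{|U})\subseteq D(x,S)$ similarly, giving the equality of the two disks; the corresponding identity $\omega_{S,i}(x,\Fs)=\omega_{S_U,i}(x,\Fs_{|U})$ then follows from \eqref{eq : omeaga_S,i and D_S,i} together with the fact that $\omega(D,\Fs)$ for $D$ a disk inside $D(x)$ depends only on $\Fs_x$, not on the ambient curve. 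Finally, for $i>\is{x}$ one distinguishes the two cases in \eqref{Localization of sol}: if $D_{S,i}(x,\Fs)\subseteq D(x,S_U)$ then exactly as above the right-hand side of \eqref{Localization of sol-1} is $D_{S,i}(x,\Fs)$, so $D_{S_U,i}(x,\Fs_{|U})=D_{S,i}(x,\Fs)$ and hence $\omega_{S_U,i}(x,\Fs_{|U})=\omega_{S,i}(x,\Fs)$; if on the contrary $D_{S,i}(x,\Fs)\supset D(x,S_U)$, then $D(x,S_U)\subseteq D(x,S)$, the left-hand side of \eqref{Localization of sol-1} is all of $D_{S_U,i}(x,\Fs_{|U})$, and comparing with the right-hand side forces $D_{S_U,i}(x,\Fs_{|U})=D(x,S_U)$, whence $\omega_{S_U,i}(x,\Fs_{|U})=\omega(D(x,S_U),\Fs)$ by \eqref{eq : omega(D,Fs)}. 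I expect the main obstacle to be purely expository: carefully checking that $D(x,S_U)$ behaves like a maximal disk even though $U$ is only a domain, and getting the nesting alternatives between $D(x,S)$, $D(x,S_U)$, $D(x)$ and the $D_{S,i}$'s organised cleanly enough that the case analysis for \eqref{Localization of sol} is transparent rather than a thicket of inequalities.
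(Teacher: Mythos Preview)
Your proposal is correct and follows exactly the approach the paper intends: the paper gives no detailed proof, only the sentence preceding the proposition (noting that $\pi_\Omega^{-1}(x)$, $D(x)$, $\is{x}$, and the spectral disks $D_{S,i}(x,\Fs)$ for $i\le\is{x}$ are intrinsic to~$x$) followed by a $\Box$. What you have written is a faithful and careful expansion of that implicit argument, including the reduction to \eqref{Localization of sol-1} via the same mechanism as \eqref{eq: beh omega by change of tri}, and the subsequent case analysis is exactly the bookkeeping the paper leaves to the reader.
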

The proof of the following proposition is similar to those of section 
\ref{Change of triangulation}.
\begin{proposition}\label{Prop : immersion}
Let $Y\subseteq X$ be an analytic domain. 
Let $S_X$ and $S_Y$ be triangulations of $X$ and $Y$ 
respectively such that 
$(\Gamma_{S_X}\cap Y)\subseteq\Gamma_{S_Y}$. 
If $y\in Y$, then for all $i=1,\ldots,r$ we have
\begin{equation}\label{Localization of sol-3}
D_{S_Y,i}(x,\Fs_{|Y})\;=\;
D_{S_X,i}(x,\Fs)\cap D(x,S_Y)\;.
\end{equation}
and 
\begin{equation}\label{Feq : Radii truncated by S to S'}
\R_{S_Y,i}(y,\Fs_{|Y})\;=\;\min\Bigl(\;1\;,\; 
 f_{S_X,S_Y}(y)\cdot \R_{S_X,i}(y,\Fs)\;
\Bigr)\;,
\end{equation}
where $f_{S_X,S_Y}:Y\to [1,+\infty[$ is the function 
associating to $y\in Y$ the modulus $ f_{S_X,S_Y}(y)\geq 1$ of the inclusion 
$D(y,S_Y)\subseteq D(y,S_X)$. Hence
\begin{equation}\label{eq : Gamma loc trunc}
\Gamma_{S_Y,i}(\Fs_{|Y})\;=\;(\Gamma_{S_X,i}(\Fs)
\cap Y)\cup\Gamma_{S_Y}\;.
\end{equation}
In particular, if $\Gamma_{S_X}\cap Y=\Gamma_{S_Y}$, then for all $i=1,\ldots,r$ equality \eqref{Feq : Radii truncated by S to S'} becomes 
\begin{equation}\label{eq : Res to Y equal radii f}
\R_{S_Y,i}(y,\Fs_{|Y})\;=\;\R_{S_X,i}(y,\Fs)
\end{equation}
and \eqref{eq : Gamma loc trunc} becomes 
\begin{equation}\label{eq : Res to Y equal graphs f}
\phantom{.}\qquad\qquad
\Gamma_{S_Y,i}(\Fs_{|Y})\;=\;\Gamma_{S_X,i}(\Fs)
\cap Y\;.\qquad\qquad\qed
\end{equation}
\end{proposition}
\if{\begin{proof}
Let $y\in Y$, and let $D:=D(y,S_X)\subseteq X$ be its maximal disk in 
$X$. Since $(\Gamma_{S_X}\cap Y)\subseteq\Gamma_{S_Y}$ both 
radii are stable by localization on $D(y,S)$. So we can assume $X=D$. 
Let $R$ be the radius of $D(y,S_Y)$ in a given coordinate of $D$. 
Then $\R_{}$

\begin{equation}
D(y,S_Y)\;=\; D(y,S_V)\;\subseteq\; 
D(y,S_V^{\mathrm{min}})\;\subseteq\; 
D(y,S_X)\;=\;D\;.
\end{equation}
If $S_V$ is its minimal triangulation $\Gamma_{V}$ denotes its minimal triangulation, 
then $\Gamma_{S_Y,i}(\Fs_{|Y})\;=\;(\Gamma_{S_X,i}(\Fs)
\cap Y)\cup\Gamma_{S_Y}$

Let $D\subseteq X$ be an open disk such that 
$D\cap \Gamma_{S_X}=\emptyset$, and such 
that its boundary $y$ lies in $\Gamma_{S_X}$. 
Then $D\cap Y$
\end{proof}
}\fi

\subsubsection{Localization of the partial heights.}
\label{Sec : Localization of the partial heights.}
Let~$X$ be a quasi-smooth $K$-analytic curve. Let $(\Fs,\nabla)$ be a differential equation on~$X$. The slopes of the partial heights are related to the local 
irregularities (cf. \cite{NP-V}).
Here, we investigate how the Laplacian is modified 
when we localize. 

Recall that, if $K$ is algebraically closed, and if $x$ is a 
point of type $2$, then 
$\H(x)$ is a valued field whose residual field 
$\widetilde{\H(x)}$ 
has transcendence degree $1$ over the residual field 
$\widetilde{K}$ of $K$ and it is the field of functions of a 
projective curve $\mathcal{C}_x$ over $\widetilde{K}$ (cf. \cite[3.3.5.2]{Duc}). 
The genus of the curve $g(\mathcal{C}_x)$ is called the 
genus of the point and it is denoted by $g(x)$. We 
extend this definition to all point of $x$ by $g(x)=0$ if 
the type of $x$ is not $2$.
If $x\notin\partial X$, the $\widetilde{K}$-rational points 
of $\mathcal{C}_x$ are in bijection with the set of 
germs of segments out of $x$. 
In general, the directions out of $x$ describe a Zariski 
open subset of $\mathcal{C}_x$.
In this context, rigid cohomology usually 
considers liftings in $X$ of Zariski open subsets of 
$\mathcal{C}_x$: we call these liftings elementary tubes 
centered at $x$ (see below for the definition). 
The coefficients of rigid cohomology are differential 
equations defined on some unspecified neighborhood in 
$X$ of such a tube and whose radii at $x$ are all 
solvable.

\begin{notation}
Let~$x\in X$ be a point of type~$2$ or~$3$. 

\begin{enumerate}
\item 
Assume that $x\in X-\Gamma_{S}$. Since $X-\Gamma_S$ is a 
disjoint union of virtual open disks, $x$ belongs to one of them and there exists a 
unique virtual closed disk inside $X-\Gamma_{S}$ with 
boundary~$x$. We denote it by~$D_{x}$. There exists 
a unique germ of segment out of~$x$ that does not 
belong to~$D_{x}$. We denote it by~$b_{x,\infty}$, or 
by~$b_{\infty}$ if no confusion may arise. 

\item Let $b$ be a germ of segment out of~$x$.
\begin{enumerate}
\item If $x\in\Gamma_S$, assume that~$b$ does not belong 
to~$\Gamma_{S}$. 
\item If $x\notin\Gamma_S$, assume 
that $b\neq b_\infty$.
\end{enumerate}
Then, the connected component of~$X-\{x\}$ 
containing~$b$ is a virtual open disk. We denote it 
by~$D_{x,b}$, or~$D_{b}$ if no confusion may arise. 
\end{enumerate}
\end{notation}

\begin{definition}\label{Def : tube}
Let~$x\in X$ be a point of type~$2$ or~$3$. 

A connected analytic domain~$V$ 
of~$X$ 
is said to be an \emph{elementary tube centered 
at~$x$} if $V-\{x\}$ is a disjoint union of virtual open 
disks.\footnote{For instance, if $x\in X-\Gamma_{S}$, then $D_{x}$ is an 
elementary tube centered at~$x$.}

It is said to be an elementary tube \emph{adapted 
to~$\Fs$} if, moreover, the radii of~$\Fs$ are constant 
on each of these virtual open disks.
\end{definition}

\begin{notation}\label{Def : D_b -2}
Let~$x\in X$ be a point of type~$2$ or~$3$. Let $x_1,\ldots,x_n$ be the preimages of $x$ in~$X_{\wKa}$. 

\begin{enumerate}

\item For each $i=1,\dotsc,n$, let $g(x_i)$ denote the genus of the point $x_i$. For all $i,j$, we have $g(x_j)=g(x_i)$.
We set $g(x):=ng(x_i)$.

\item Let $V$ be an elementary tube centered at $x\in X$. We denote by $N_V(x)$ the number of germs of segments out of the points $x_1,\ldots,x_n$ that do not belong to $V_{\wKa}$. (For 
instance, if $V$ is a closed virtual annulus with inner radius equal to outer radius and $\partial V=\{x\}$, 
and if 
$V_{\wKa}$ is disjoint union of $n$ flat closed annuli, then 
$N_V(x)=2n$.)



\item If $x\in 
\Gamma_S$, we denote by $N_S(x)$ the number of 
directions out of $x_1,\ldots,x_n$ belonging to 
$\Gamma_{S_{\wKa}}$.

\item Let $V$ be an elementary tube centered at $x\in X$. Let $C_1,\ldots,C_t$ be virtual open annuli such that
\begin{enumerate}
\item for each $j = 1,\dotsc,t$, $C_{j}$ has two distinct boundary points in~$X$, one of which is~$x$ ;
\item $U := V \cup \bigcup_{j=1}^t C_{j}$ is a neighborhood of~$x$ in~$X$.
\end{enumerate}
We set 
\begin{equation}
\chi_c(V)\;:=\;2-2g(x)-N_V(x)=2-2(ng(x_i))-nt\;.
\end{equation}

\item 
For $Y\subseteq X$ any analytic domain in $X$, we set
\begin{equation}\label{eq: hidr little}
\hdr^i(Y,\Fs):=\mathrm{dim}_K\Hdr^i(Y,\Fs_{|Y})\;.
\end{equation}
If $V$ is an elementary tube in $X$, centered at $x\in X$, 
we set 
\begin{equation}
\Hdr^i(V^\dag,\Fs)\;:=\;\varinjlim_{V\subset U}\Hdr^i(U,\Fs_{|U})\;,
\end{equation}
where $U$ runs through the set of open 
neighbourhoods of $V$ in $X$. We set
$\hdr^i(V^\dag,\Fs):=\mathrm{dim}_K
\Hdr^i(V^\dag,\Fs)$.
 
 (For instance, if $x \in X - \Gamma_{S}$, then $\hdr^0(D_{x}^\dag,\Fs)$
is the dimension of the $K$-vector spaces of 
overconvergent solutions of~$\Fs$ on~$D_{x}$, and 
$\Hdr^0(D_x^\dag,\Fs)$ coincides with 
$\varinjlim_{D_x\subset D}\Hdr^0(D,\Fs)$, where $D\subseteq X$ 
runs though the family of virtual open disks containing $D_x$.) 
\end{enumerate}
\end{notation}

\begin{proposition}\label{Prop : Irr local sopp}
Let $x\in X$ be a point of type~2 or~3. Set $r := \rk(\Fs_{x})$. Let $b$ be a germ of segment out of $x$ and let $C_b$ 
be a virtual open annulus whose skeleton represents  $b$. Then, the following equalities hold.
\begin{enumerate}
\item If $x\in\Gamma_{S}$ and $b\subseteq \Gamma_{S}$, then we have 
$\partial_bH_{\emptyset,r}(x,\Fs_{|C_b})=
\partial_bH_{S,r}(x,\Fs)$;
\item If $x\in\Gamma_{S}$ and $b\nsubseteq\Gamma_S$ or if $x\notin\Gamma_{S}$ and $b\ne b_{\infty}$, then we have
\begin{equation}\label{eq : changing radii sptbq99}
\partial_bH_{\emptyset,r}(x,\Fs_{|C_b})\;=\;
\partial_bH_{S,r}(x,\Fs)-\hdr^0(D_b,\Fs)+\deg(b) r\;;
\end{equation}
\item If $x\notin \Gamma_{S}$ and $b = b_{\infty}$, then, we have
\begin{equation}
\partial_{b}H_{\emptyset,r}(x,\Fs_{|C_b})=
\partial_{b}H_{S,r}(x,\Fs)+\hdr^0(D_x^\dag,\Fs)-\deg(b) r\;.
\end{equation}
\end{enumerate}
\end{proposition}
\begin{proof}
All the statements are deduced from 
Proposition \ref{Prop : immersion}.

i) It is immediate. 

ii) Restricting from~$X$ to~$D_b$ leaves the slopes of the radii along~$b$ unchanged. Restricting to~$C_b$ causes the slopes of the radii $\R_{S,i}(-,\Fs)$ that are spectral on~$b$ to increase by~1, whereas it leaves unchanged (and equal to~0) the slopes of the radii that are over-solvable on~$b$. Since the over-solvable radii correspond to the solutions of~$\Fs$ on~$D_{b}$, the result follows.

iii) As above, restricting from~$X$ to~$D_x^\dag$ leaves the slopes of the radii along~$b$ unchanged. The argument continues as before except that the slopes of the spectral radii decrease by~1 this time.
\end{proof}


\begin{corollary}\label{Prop: localization of ddc H to a tube}
Let $x$ be a point of type $2$ or $3$ and let~$V$ be an 
elementary tube centered at~$x$ that is adapted 
to~$\Fs$. Let $S:=\{x\}$ be the weak 
triangulation of $V$. Set $r := \rk(\Fs_{x})$. 
Let $U$ be as in item (iv) of notation 
\ref{Def : D_b -2}. 
Then the following results hold.
\begin{enumerate}
\item If $x\notin\Gamma_S$, then we have
\begin{equation}
dd^cH_{\{x\},r}(x,\Fs_{|U})\;=\;
dd^cH_{S,r}(x,\Fs) 
- r\cdot \chi_{c}(V)
+ \hdr^0(D_x^\dag,\Fs) 
- \sum_{
\sm{b\not\subseteq V\\
b\neq b_\infty}} \hdr^0(D_b,\Fs)\;.  
\end{equation}

\item If $x\in\Gamma_S$, then we have
\begin{equation}
dd^cH_{\{x\},r}(x,\Fs_{|U})\;=\;
dd^cH_{S,r}(x,\Fs) 
+ r\cdot (N_V(x)-N_S(x))
- \sum_{
\sm{b\not\subseteq V\\
b\nsubseteq\Gamma_S}} \hdr^0(D_b,\Fs)\;.
\end{equation}
\end{enumerate}
\hfill$\qed$
\end{corollary}

%

\subsection{Local nature of spectral radii.}
\label{Local nature of spectral radii.}
Let $x\in X$ be a point of type $2$, $3$ or $4$. 
By section \ref{Generic disks.-1}, the pull-back of an 
element $f\in \H(x)$ is a bounded analytic function on $D(x)$. 
More precisely, the real valued semi-norm function $|f(t)|$ is constant on $D(x)$ with value $|f(x)|$ and it 
coincides with the sup-norm $\|f\|_{D(x)}$. This means that for all $\rho\in]0,1]$ the inclusion
\begin{equation}\label{eq : H(x) in B(D(x))}
\H(x)\;\subset\; \mathcal{B}_\Omega(D(x,\rho))
\end{equation}
obtained in this way is isometric (cf. \cite[Lemme 3.1]{Angie}). If we consider the normoid structure (cf. \cite{Banachoid}) of $\O_\Omega(D(x,\rho))$ defined by the family of sup-norms $\{|.|_{t_x,\rho'}\}_{\rho'<\rho}$ (cf. \eqref{norm crho}), then the inclusion
\begin{equation}
\H(x)\subset\O_\Omega(D(x,\rho))
\end{equation}
is an isometry of normoid spaces too: $\forall f\in\H(x)$, $\forall\rho'<\rho$,
\begin{equation}\label{eq : isometry H(x) O}
|f|(x)\;=\;|f|_{t_x,\rho'}\;.
\end{equation}

Let $\M$ be a differential module over $\O_{X,x}$ or $\H(x)$. %
Let $\M_{|D(x)}$ be the pull-back of $\M$ on $D(x)$. Since $\Omega$ is spherically complete (cf. Setting \ref{Setting : Omega}) then, by 
a result of Lazard, $\M_{|D(x)}$ is a free $\O(D(x))$-module with connection (cf. \cite{Lazard}). 
More precisely, one can 
consider $D(x)$ as an $\Omega$-analytic curve with 
empty weak triangulation and $\M_{|D(x)}$ as a differential equation on $D(x)$. Notice that 
the maximal disk here is $D(t_x,\emptyset)=D(x)$.
Definition \ref{def: Multiradius} then applies, 
and it make sense to attribute to $\M_{|D(x)}$ the multiradius 
\begin{equation}
\RR_{\emptyset}(t_x,\M_{|D(x)})\;.
\end{equation}
The vector space of solution $\Hdr^0(t_x,\M_{|D(x)})$ 
is then filtered by 
the sub-spaces $\omega_{\emptyset,i}(t_x,\M_{|D(x)})$, 
corresponding to the disks 
$D_{\emptyset,i}(t_x,\M_{|D(x)})\subseteq D(t_x,\emptyset)=D(x)$ (cf. \eqref{eq : omeaga_S,i and D_S,i}).
The $i$-th radius $\R_{\emptyset,i}(t_x,\M_{|D(x)})$ is the 
inverse of the \emph{modulus} of the inclusion $D_{\emptyset,i}(t_x,
\M_{|D(x)})\subseteq D(x)$.

We now come back to our global sheaf $\Fs$ on $X$.
Since $\Fs_{|D(x)}=\Fs(x)\otimes_{\H(x)}\O(D(x))$, 
the space of convergent solutions around 
$t_x$ only depends on $\Fs(x)$ (cf. \eqref{omega(x,F)}):
\begin{equation}
\Hdr^0(t_x,\Fs_{|D(x)})\;=\;\Hdr^0(x,\Fs)\;.
\end{equation}

By Proposition \ref{Prop: Localization},
 the \emph{spectral} steps of the 
filtration $\omega_{S,i}(x,\Fs)$, $i\leq \is{x}$, 
are intrinsically attached to $x$, 
so that $\Hdr^0(t_x,\Fs_{|D(x)})$ 
carries the \emph{spectral part} of the 
filtration coming from the global definition of the multiradius. More 
precisely 
\if{one can 
consider $D(x)$ as an $\Omega$-rational analytic curve with empty 
triangulation, so that it make sense to attribute to $\Fs(x)$ the 
multiradius 
\begin{equation}
\RR(x,\Fs(x))\;:=\;\RR_\emptyset(t_x,
\Fs(x)\widehat{\otimes}_{\H(x)}\O(D(x)))
\end{equation}
The corresponding $i$-th radius $\R_i(x,\Fs(x))$ is the inverse of the  
modulus of the inclusion $D_{i}(x,\Fs(x))\subseteq D(x)$, where
}\fi
\begin{equation}\label{eq : truncation of disks}
D_{\emptyset,i}(t_x,\Fs_{|D(x)})\;=\;D_{S,i}(x,\Fs)\cap D(x)\;,
\end{equation}
so that the corresponding $i$-th radius results truncated 
(cf. \eqref{eq : D_S,i and D(x, R_S,i)}).
The vector space $\Hdr^0(t_x,\Fs_{|D(x)})$ then results filtered by
\begin{equation}\label{eq : truncation of omega}
\omega_{\emptyset,i}(t_x,\Fs_{|D(x)})\;:=\;\left\{
\begin{array}{rcl}
\omega_{S,i}(x,\Fs)&\textrm{ if }&i\leq \is{x}\\
\omega_{S,\is{x}}(x,\Fs)&\textrm{ if }&i>\is{x}\;.
\end{array}
\right.
\end{equation}
More precisely, for all $i$ we have 
(cf. \eqref{Feq : Radii truncated by S to S'})
\begin{equation}\label{eq : radii from local to global}
\R_{\emptyset,i}(t_x,\Fs_{|D(x)})\;=\;\min\Bigl(\;1\;,\; 
 f_{S_\Omega,\emptyset}(t_x)\cdot \R_{S_\Omega,i}(t_x,\Fs_\Omega)\;
\Bigr)\;.
\end{equation}
and $\R_{S_\Omega,i}(t_x,\Fs_\Omega)=\R_{S,i}(x,\Fs)$ 
(cf. \eqref{eq : insensitive to scalar ext}). 

\begin{remark}\label{Remark : localization explicit}
Let $i=1,\ldots,\mathrm{rank}(\Fs)$. 
Concretely, let us fix a coordinate on the maximal disk $D(x,S)$ and let 
\begin{itemize}
\item[$\bullet$] $R$ be the radius of $D(x,S)$; 
\item[$\bullet$] $R'\leq R$ be the radius of the generic disk 
$D(x)\subseteq D(x,S)$;
\item[$\bullet$] $R_i\leq R$ be the radius of $D_{S,i}(x,\Fs)\subseteq D(x,S)$ (hence $\R_{S,i}(x,\Fs)=R_i/R$);
\item[$\bullet$] $R_i'\leq R'$ be the radius of $D_{\emptyset,i}(t_x,\Fs_{|D(x)})=
D_{S,i}(x,\Fs)\cap D(x)$ (hence $\R_{\emptyset,i}(t_x,\Fs_{|D(x)})=R_i'/R'$);
\end{itemize}
then 
\begin{equation}
R_i'\;=\;\min(R_i,R')\;,\qquad  
f_{S_\Omega,\emptyset}(t_x)\;=\;\frac{R}{R'}
\end{equation}
and
\begin{equation}
\R_{\emptyset,i}(t_x,\Fs_{|D(x)})\;=\;\frac{R_i'}{R'}\;=\;
\min\Bigl(\;1\;,\; 
 \frac{R}{R'}\cdot \frac{R_i}{R}\;
\Bigr)\;.
\;=\;\min\Bigl(\;1\;,\; 
 f_{S_\Omega,\emptyset}(t_x)\cdot \R_{S_\Omega,i}(t_x,\Fs_\Omega)\;
\Bigr)\;.
\end{equation}
In particular, the following hold 
(cf. Definition \ref{Def: spectral index} and 
\eqref{eq : truncation of omega}).
\begin{itemize}
\item[$\bullet$] 
If $i$ is a solvable or an over-solvable index at $x$ for $\Fs$ with respect to $S$, then $\R_{\emptyset,i}(t_x,\Fs_{|D(x)})=1$;
\item[$\bullet$] If $i$ is a spectral non solvable index at $x$ for $\Fs$ with 
respect to $S$, then $\R_{\emptyset,i}(t_x,\Fs_{|D(x)})=\frac{R}{R'}\R_{S,i}(x,\Fs)<1$;
\item[$\bullet$] If $i$ is a spectral (possibly solvable) index at $x$ 
for $\Fs$ with respect to $S$, then the following are equivalent 
\begin{itemize}
\item $i$ separates the $S$-radii of $\Fs$ at $x$;
\item $i$ separates the $S_\Omega$-radii of $\Fs_\Omega$ at $t_x$;
\item $i$ separates the $\emptyset$-radii of $\Fs_{|D(x)}$ at $t_x$. 
\end{itemize}
\end{itemize}
\end{remark}
\begin{corollary}\label{Cor: spectral index separ = intrinsic}
The spectral indexes $i\leq i^{\mathrm{sol}}$ separating the 
$S$-radii of $\Fs$ at $x$ are insensitive to change of weak-triangulation 
nor to localization.\hfill$\qed$
\end{corollary}

\if{
In the remaining of this section we consider a differential module $\M$ 
over $\H(x)$, and we generalize to all points of type $2$, $3$, or $4$, 
of $X$ the decomposition results of Robba (cf. \cite{Ro-I}). Namely we 
decompose $\M$ (\emph{over $\H(x)$}) with respect to the filtration 
$\omega_{\emptyset,i}(t_x,\M_{D(x)})$. 
}\fi

\subsection{Some results about morphisms and duality}
\label{section : morphisms and duality}

\begin{theorem}\label{thm : spectral is exact}
Let $E:0\to\Fs' \to\Fs\to\Fs''\to 0$ be an exact sequence of differential 
modules. Let $D$ be an open disk contained in the generic disk 
$D(x)$. Then the sequence (cf. \eqref{eq : omega(D,Fs)})
\begin{equation}
\Hdr^0(D,E)\;:\;
0\to\Hdr^0(D,\Fs')\to\Hdr^0(D,\Fs)\to\Hdr^0(D,\Fs'')\to 0
\end{equation}
is exact.
\end{theorem}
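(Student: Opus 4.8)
The plan is to reduce the assertion to the vanishing $H^1(\Fs'(D),\O(D))=0$ and to deduce the latter from the analytic theory of $p$-adic differential operators on an open disk.

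First I would note that on $D$ the functor $\omega(D,-)$ coincides with $\ker\nabla$. Indeed, $D$ is a connected open subset of $X_\Omega$ containing the $\Omega$-rational point $t_x$, so the localisation map $\O(D)\to\O_{X_\Omega,t_x}$ is injective (an analytic function on the disk $D$ whose germ at $t_x$ vanishes is $0$); hence for each of $\Fs'$, $\Fs$, $\Fs''$ the natural map from $\ker(\nabla\colon\Fs(D)\to\Fs(D))$ to $\omega(x,\Fs)$ is injective, and $\omega(D,E)$ is identified with the sequence obtained by applying $\ker\nabla$ to the exact sequence of $\O(D)$-differential modules $0\to\Fs'(D)\to\Fs(D)\to\Fs''(D)\to 0$. (The three modules are free over $\O(D)$ by Lazard's theorem, cf. Section~\ref{multiradius}, and the sequence stays exact since $\Fs''(D)$ is free.) As $\ker\nabla$ is left exact, $\omega(D,E)$ is automatically left exact and exact in the middle: an element of $\ker\nabla_{\Fs(D)}$ that dies in $\Fs''(D)$ lies in $\Fs'(D)$, and being $\nabla$-horizontal there it lies in $\ker\nabla_{\Fs'(D)}$.

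The substance is the surjectivity of $\ker\nabla_{\Fs(D)}\to\ker\nabla_{\Fs''(D)}$. Given $\bar s\in\ker\nabla_{\Fs''(D)}$, lift it to $s\in\Fs(D)$ and set $e:=\nabla_{\Fs}(s)$; its image in $\Fs''(D)$ is $\nabla_{\Fs''}(\bar s)=0$, so $e\in\Fs'(D)$, and if $e=\nabla_{\Fs'}(s')$ for some $s'\in\Fs'(D)$ then $s-s'$ is a horizontal lift of $\bar s$. Hence it is enough to show that $\nabla\colon\Fs'(D)\to\Fs'(D)$ is onto, i.e. $H^1(\Fs'(D),\O(D))=0$; equivalently, in the long exact cohomology sequence attached to the two-term complexes given by $\nabla$, that the connecting map $\omega(D,\Fs'')\to H^1(\Fs'(D),\O(D))$ vanishes (which for a trivial quotient $\Fs''$ amounts precisely to the splitting of the extension $E$).

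Here is where the hypothesis $D\subseteq D(x)$ enters, and this is the step I expect to be the main obstacle. Since $D$ lies in the generic disk, its closure $\overline D$ in $X_\Omega$ is contained in $D(x)\cup\{\sigma_\Omega(x)\}$, while $\Fs'$ pulls back to a locally free $\O_{X_\Omega}$-module with connection on all of $X_\Omega$; hence $\Fs'(D)$ extends, as a differential module, to a neighbourhood of the compact set $\overline D$. After an identification $D\simto D^-_\Omega(0,R)$ this says that $\Fs'(D)$ is regular up to, and slightly beyond, the boundary, so that no Liouville exponent is involved there; by the index theory of $p$-adic differential operators on an open disk (Robba; see also \cite{Dw-Robba}, \cite{Ch}, \cite[Ch.~4]{Kedlaya-book}) the connection of such a module is surjective on $\O(D^-_\Omega(0,R))$, i.e. $H^1(\Fs'(D),\O(D))=0$. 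Granting this, the proof is complete. The crux is thus this analytic input: an open disk carries no $H^1$ for a connection that is regular near its boundary — already in rank one this is the statement that $d/dT+\lambda$ is onto $\O(D^-_\Omega(0,R))$ for every $\lambda\in\Omega$, proved by solving the recursion on Taylor coefficients with the correct choice of the free constant.
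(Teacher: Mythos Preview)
Your reduction is exactly right and matches the paper's strategy: left exactness is automatic, and right exactness follows once $\nabla\colon\Fs'(D)\to\Fs'(D)$ is surjective. The paper proves this (in the dual formulation $\mathrm{Ext}^1_{\H(x)\langle d\rangle}(-,\O_\Omega(D(x,\rho)))=0$) as Proposition~\ref{exactness}.

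The gap is in your justification of the key step. You invoke ``regularity up to and slightly beyond the boundary, so that no Liouville exponent is involved'' and appeal to index theory. This is the wrong picture: Liouville exponents belong to the theory of regular singular modules on annuli (Christol--Mebkhout), not to the present situation of a module on an open disk with bounded coefficients. The overconvergence of $\Fs'(D)$ is indeed the crucial input, but turning it into $H^1=0$ is not a black box one can cite; it is the content of the paper's Section~\ref{Robba-deco}. Concretely, the paper uses that $\Fs'(D)$ comes from an $\H(x)$-differential module $\M:=\Fs'(x)$, then applies Robba's decomposition $\M=\M^{\geq\rho}\oplus\M^{<\rho}$ over $\H(x)$: the piece $\M^{\geq\rho}$ is trivialized by $\O_\Omega(D(x,\rho))$, so its connection is obviously surjective; for $\M^{<\rho}$ one picks a cyclic vector $\M^{<\rho}\cong\M_L$ and shows (Lemma~\ref{QL-1 small}) that the operator $L$ is actually \emph{bijective} on $\O_\Omega(D(x,\rho))$. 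The latter is a genuinely analytic argument (approximating $L^{-1}$ by differential polynomials $Q_\varepsilon$ with $\|Q_\varepsilon L-1\|_{op}<\varepsilon$), and it does not reduce to the rank-one constant-coefficient case you sketch. So the architecture of your proof is the paper's, but the engine that drives it---Robba's spectral decomposition over $\H(x)$ together with the operator estimate of Lemma~\ref{QL-1 small}---is precisely what you have left unproved.
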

\begin{proof}
The theorem is placed here for expository reasons, see Proposition
\ref{exactness} for the proof.
\end{proof}
\begin{remark}\label{Remark : an explicit quote to counterexample}
Theorem \ref{thm : spectral is exact} may fails if $D(x)\subset D$ 
(over-solvable case). See Section \ref{section counterexample 
to duality and exactness} for an explicit example.

In general, if $E:0\to\Fs' \to\Fs\to\Fs''$ is an exact sequence and 
$D$ is any open disc in $X_\Omega$, the sequence 
$0\to\Hdr^0(D,\Fs')\to\Hdr^0(D,\Fs)\to\Hdr^0(D,\Fs'')$ 
is exact (cf. Lemma \ref{Lemma :devisage}).

In particular, this means that if $\Fs\to\Fs''$ is any morphism of 
differential equations and if we denote by $\Fs'$ its kernel, 
\emph{the radii of $\Fs$ may only increase in $\Fs''$}. 
More precisely, if $s\in\Hdr^0(D,\Fs)$ is a solution of $\Fs$ over $D$, 
the image of $s$ in 
$\Hdr^0(D,\Fs'')$ is zero if, and only if, $s\in\Hdr^0(D,\Fs')$. 
If we work with differential modules and, in a basis of $\Fs$ over $D$, 
we express $s$ as a vector with coordinates in $\O(D)$, 
then when $s$ is not a solution of $\Fs'$, the image of $s$ in 
$\Fs''$ is a \emph{non zero} vector in $\Hdr^0(D,\Fs'')\subset\Fs''$. 
This means that the image of $s$ in $\Fs''$ has entries with a radius of 
convergence which is larger than or equal to the radius of $D$. 
\end{remark}

\begin{lemma}\label{Lemma : injective then appears with mult}
Let $\Fs'\to\Fs$ be an injective morphism of differential modules. 
Then the radii of $\Fs'$ appears among the radii of $\Fs$. 
Moreover if the radius $R'$ appears with multiplicity $m'$ among the 
radii of $\Fs'$, then it appears with multiplicity $m\geq m'$
among the radii of $\Fs$. 

In particular, the radii of convergence $\R_{S,i}(x,\Fs)$ are invariant by 
isomorphisms of differential equations.
\end{lemma}
\begin{proof} 
For every disk $D\subseteq D(x,S)$ we have 
$\Hdr^0(D,\Fs')\subset\Hdr^0(D,\Fs)$ and the multiplicity of the radii are the dimension of the spaces for $D=D_{S,i}(x,\Fs)$.
\end{proof}

\begin{lemma}\label{Lemma : Hom=0}
Let $\Fs,\Gs$ be two differential equations of ranks $r$, $r'$ respectively. 
Assume that there is a point $x\in X$ such that $\R_{S,1}(x,\Gs)>\R_{S,r}(x,\Fs)$. Then the only morphism $\Gs\to\Fs$ is the zero morphism : 
$\mathrm{Hom}(\Gs,\Fs)=0$.
\end{lemma}
\begin{proof}
Let $\alpha:\Gs\to\Fs$ be a morphism. 
Denote by $\Fs'$ the image of $\alpha$. Then $\R_{S,1}(x,\Fs')\geq
\R_{S,1}(x,\Gs)$. Indeed, $\Gs$ is trivial over the disk $D_{S,1}(x,\Gs)$ and so 
is every of its sub-quotients (cf. item \eqref{ii Lemma : Trivial iff solutions} 
of Lemma \ref{Lemma : Trivial iff solutions}). By Lemma \ref{Lemma : injective then appears with mult} we must have $\Fs'=0$.
\end{proof}

\begin{lemma}\label{Lemma : equality of V_i'+k}
Let $\Fs'\to\Fs$ be an injective morphism of differential modules of 
ranks $r'$ and $r$ respectively. 
Assume that for some $j',j$ one has 
$\omega_{S,j'}(x,\Fs')=\omega_{S,j}(x,\Fs)$. 
Let $i'$ and $i$ be the largest indexes separating the 
filtrations that are smaller than or equal to $j'$ and $j$ respectively (cf. Definition \ref{Def: i separates the filtration}).
Then $r-i=r'-i'$ and for all $k=1,\ldots,r-i$ one has
\begin{equation}
\R_{S,i'+k}(x,\Fs')\;=\;\R_{S,i+k}(x,\Fs)\;,\qquad
\omega_{S,i'+k}(x,\Fs')\;=\;\omega_{S,i+k}(x,\Fs)\;.
\end{equation}
\end{lemma}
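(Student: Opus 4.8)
The plan is to isolate the common vector space $W:=\omega_{S,j}(x,\Fs)$ and to show that the filtration of $W$ induced by the radii of convergence of its \emph{individual} solutions does not depend on whether these solutions are regarded inside $\Fs$ or inside the sub-object $\Fs'$.

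First I would record the elementary observations. Since $i$ (resp.\ $i'$) is the largest index separating the filtration with $i\le j$ (resp.\ $i'\le j'$), and since, by Definition \ref{definition : i separates the radii}, the filtration $\{\omega_{S,m}(x,-)\}_m$ is constant on every interval of indices lying between two consecutive separating ones, we get $\omega_{S,i}(x,\Fs)=\omega_{S,j}(x,\Fs)=\omega_{S,j'}(x,\Fs')=\omega_{S,i'}(x,\Fs')=:W$. Applying \eqref{rank = r-i+1} at the separating indices $i$ and $i'$ gives $\dim_\Omega W=r-i+1=r'-i'+1$, hence $r-i=r'-i'$, which is the first assertion; in particular the two sequences $(\R_{S,i+k}(x,\Fs))_{0\le k\le r-i}$ and $(\R_{S,i'+k}(x,\Fs'))_{0\le k\le r'-i'}$ have the same length.

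The crucial point is that the radius of convergence of a solution is attached to the solution itself, not to the ambient module. Concretely, I would prove that for every open disk $D\subseteq D(x,S)$ centered at $t_x$ one has, inside $\omega(x,\Fs)$ (into which $\omega(x,\Fs')$ embeds, $\Fs'_{t_x}\to\Fs_{t_x}$ being injective since $\Fs/\Fs'$ is locally free; cf.\ Lemma \ref{Lemma :devisage}),
\begin{equation}
\omega(D,\Fs')\;=\;\omega(x,\Fs')\cap\omega(D,\Fs)\;.
\end{equation}
The inclusion $\subseteq$ is functoriality and left exactness of $\omega(D,-)$. For $\supseteq$, a solution $v\in\omega(x,\Fs')$ that extends to a horizontal section $\widetilde v$ of $\Fs$ over $D$ has the property that the image of $\widetilde v$ in the quotient $\Fs/\Fs'$, which is again a locally free $\O_X$-module with connection by Proposition \ref{Lemma : coherent implies loc free}, is a horizontal section of $(\Fs/\Fs')(D)$ vanishing at $t_x$, hence vanishing on all of $D$ (a horizontal section of a differential module over a disk that vanishes at one point vanishes identically); therefore $\widetilde v$ already lies in $\Fs'(D)$, i.e.\ $v\in\omega(D,\Fs')$. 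This is precisely the mechanism behind Lemma \ref{Lemma : injective then appears with mult}, which I would cite; combined with the displayed identity it yields in particular $\R_{S,i}(x,\Fs)=\R_{S,i'}(x,\Fs')$, each side being the smallest radius occurring among the nonzero solutions in $W$, a value which is attained in $W$ because the filtration drops strictly at the next separating index.

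Putting these together: for $m\ge i$ one has $\omega_{S,m}(x,\Fs)\subseteq W\subseteq\omega(x,\Fs')$, so by \eqref{eq : D_S,i and D(x, R_S,i)} and the identity above $\omega_{S,m}(x,\Fs)=W\cap\omega(D_S(x,\R_{S,m}(x,\Fs)),\Fs')$, and symmetrically $\omega_{S,m}(x,\Fs')=W\cap\omega(D_S(x,\R_{S,m}(x,\Fs')),\Fs')$ for $m\ge i'$. Hence both sorted sequences of radii are read off from the jumps of the single non-increasing function $\rho\mapsto\dim_\Omega\bigl(W\cap\omega(D_S(x,\rho),\Fs')\bigr)$, restricted respectively to $\rho\ge\R_{S,i}(x,\Fs)$ and $\rho\ge\R_{S,i'}(x,\Fs')$; since these two thresholds coincide and the sequences have equal length $r-i+1=r'-i'+1$, they agree termwise, $\R_{S,i+k}(x,\Fs)=\R_{S,i'+k}(x,\Fs')$ for $k=0,\dots,r-i$, and then $\omega_{S,i+k}(x,\Fs)=\omega_{S,i'+k}(x,\Fs')$ is immediate from the two descriptions. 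The step I expect to be the only real obstacle is the intrinsic-radius statement of the third paragraph: it is there that one genuinely uses that $\Fs'$ is a \emph{sub-differential-module} (so that $\Fs/\Fs'$ inherits a connection) and that the quotient is locally free, together with uniqueness of horizontal sections over a disk; everything else is bookkeeping with scales and the dictionary between the filtration $\{\omega_{S,m}\}_m$ and the assignment of a radius to each solution.
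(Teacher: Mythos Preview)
Your proof is correct and follows the same approach as the paper's: reduce to the common space $W=\omega_{S,i}(x,\Fs)=\omega_{S,i'}(x,\Fs')$, use \eqref{rank = r-i+1} to get $r-i=r'-i'$, and then observe that the radius of a solution is an invariant of the solution itself, independent of the ambient module. The paper's own proof asserts this last point in a single sentence; you supply the justification via the identity $\omega(D,\Fs')=\omega(x,\Fs')\cap\omega(D,\Fs)$ and the uniqueness of horizontal sections, which is a welcome unpacking of exactly what the paper leaves implicit.
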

\begin{proof}
One has $\omega_{S,i'}(x,\Fs')=\omega_{S,j'}(x,\Fs')=
\omega_{S,j}(x,\Fs)=\omega_{S,i}(x,\Fs)$. 
By \eqref{rank = r-i+1} they satisfy $r'-i'=r-i$. 
A function in this space 
has a well defined radius of convergence which is independent on the 
differential equation of which it is the solution. So the two filtrations of 
$\omega_{S,i'}(x,\Fs')$ coincide.
\end{proof}

\begin{proposition}
\label{Prop. Exact sequence prop separate radii in the right order}
Let $E:0\to\Fs' \to\Fs\to\Fs''\to 0$ be an exact sequence of 
differential equations. Let $r'$, $r$, $r''$ be their respective ranks. 
The following conditions are equivalent:
\begin{enumerate}
\item\label{prop: separates the radii, item i} $\R_{S,r''}(x,\Fs)<\R_{S,1}(x,\Fs')$;
\item\label{prop: separates the radii, item ii} $\R_{S,r''+1}(x,\Fs)=\R_{S,1}(x,\Fs')$ and 
$\R_{S,r''}(x,\Fs)<\R_{S,r''+1}(x,\Fs)$;
\item\label{prop: separates the radii, item iii} $\Hdr^0(x,\Fs')=\omega_{S,r''+1}(x,\Fs)$;
\item\label{prop: separates the radii, item iv} $\R_{S,r''}(x,\Fs'')<\R_{S,1}(x,\Fs')$;
\item\label{prop: separates the radii, item v} $\R_{S,j}(x,\Fs)=\R_{S,j}(x,\Fs'')$ for all $j=1,\ldots,r''$, and 
$\R_{S,r''}(x,\Fs)<\R_{S,r''+1}(x,\Fs)$.
\end{enumerate} 
If one of them holds, the multi-radius of 
$\Fs$ is given by:
\begin{equation}\label{eq : radii ext sep}
\RR_{S}(x,\Fs)\;=\;
(\R_{S,1}(x,\Fs''),\ldots,\R_{S,r''}(x,\Fs''),\R_{S,1}(x,\Fs'),\ldots,
\R_{S,r'}(x,\Fs'))\;.
\end{equation}
Moreover for all disks $D\subseteq D(x,S)$ 
containing $t_x$, the sequence $\Hdr^0(D,E)$ is exact.
\end{proposition}
\begin{proof}
%
(\ref{prop: separates the radii, item i}) $\Rightarrow$ (\ref{prop: separates the radii, item ii}). By Lemma \ref{Lemma : injective then appears with mult} 
the radii $\R_{S,1}(x,\Fs'),\ldots,\R_{S,r'}(x,\Fs')$ of 
$\Fs'$ all appear among those of $\Fs$ and by i) they 
are all larger than or equal to
$\R_{S,r''+1}(x,\Fs)$. Since $r'$ is precisely 
the number of radii of $\Fs$ that are larger than 
or equal to  $\R_{S,r''+1}(x,\Fs)$, we must have
$\R_{S,j}(x,\Fs')=\R_{S,r''+j}(x,\Fs)$, for all $j=1,\ldots,r'$. In particular, $r''+1$ separates the radii of $\Fs$.

(\ref{prop: separates the radii, item ii}) $\Rightarrow$ (\ref{prop: separates the radii, item iii}). The index $r''+1$ separates the radii of $\Fs$.
So $\mathrm{dim}\,\omega_{S,r''+1}(x,\Fs)=r-(r''+1)+1=r'$ by 
\eqref{rank = r-i+1}. 
Let $D:=D_{S,1}(x,\Fs')=D_{S,r''+1}(x,\Fs)$. Then 
$\Hdr^0(x,\Fs')=
\Hdr^0(D,\Fs')\subseteq
\Hdr^0(D,\Fs)=\omega_{S,r''+1}(x,\Fs)$. Since they 
have the same dimension, they coincide.

(\ref{prop: separates the radii, item iii}) $\Rightarrow$ (\ref{prop: separates the radii, item iv}).
One has $\mathrm{dim}\;\omega_{S,r''+1}(x,\Fs)=r'=r-(r''+1)+1$, 
so the index $r''+1$ separates the radii of $\Fs$, since the dimensions 
of the terms of the filtration by the radii form a scale 
(cf. \eqref{rank = r-i+1}).  
So $\R_{S,r''}(x,\Fs)<\R_{S,r''+1}(x,\Fs)=\R_{S,1}(x,\Fs')$. 
Now $\Fs'$ is trivialized by $D:=D_{S,1}(x,\Fs')$, and  
$\Hdr^0(D,\Fs')=\Hdr^0(D,\Fs)$. Then by Lemma
\ref{Lemma :devisage} one has $\Hdr^0(D,\Fs'')=0$, hence 
$\R_{S,r''}(x,\Fs'')<\R_{S,1}(x,\Fs')$.

(\ref{prop: separates the radii, item iv}) $\Rightarrow$ (\ref{prop: separates the radii, item v}) and (\ref{prop: separates the radii, item i}). 
We claim that (\ref{prop: separates the radii, item iv}) implies that for all open disk 
$D\subseteq D(x,S)$ containing $t_x$ the 
sequence $\Hdr^0(D,E)$ is exact. 
Indeed, we have two possible situations : if $D$ strictly 
contains $D_{S,r''}(x,\Fs'')$ one has $\Hdr^0(D,\Fs'')=0$, and hence 
$\Hdr^0(D,\Fs')=\Hdr^0(D,\Fs)$ by left exactness; on the other hand, 
for all $D\subseteq D_{S,1}(x,\Fs')$, the equation $\Fs'$ is trivialized 
by $\O(D)$, hence the exactness follows in this case 
from point  iii) of Lemma \ref{Lemma :devisage}. 
The exactness shows that (\ref{prop: separates the radii, item iv}) implies \eqref{eq : radii ext sep}, hence (\ref{prop: separates the radii, item i}) and (\ref{prop: separates the radii, item v}).

(\ref{prop: separates the radii, item v}) $\Rightarrow$ (\ref{prop: separates the radii, item i}). Let $D$ be an open disk such that 
$D_{S,r''}(x,\Fs)\subset D\subset D_{S,r''+1}(x,\Fs)$ (the inclusions being strict). 
The sequence 
$0\to\Hdr^0(D,\Fs')\to\Hdr^0(D,\Fs)\to\Hdr^0(D,\Fs'')$
verifies $\Hdr^0(D,\Fs'')=0$, hence $\Hdr^0(D,\Fs')=\Hdr^0(D,\Fs)$.
Since $r''+1$ separates the radii of $\Fs$ at $x$, then 
$\mathrm{dim}_\Omega\,\Hdr^0(D,\Fs)=r-(r''+1)+1=r'$ because the 
dimensions of the terms of the filtration by the radii form a scale (cf. \eqref{rank = r-i+1}).
This proves that $\Hdr^0(D,\Fs)=\Hdr^0(x,\Fs')$, so 
$D\subseteq D_{S,1}(x,\Fs')$ and i) holds.
\end{proof}

\begin{proposition}\label{prop : radii of the direct sum}
Let $\Fs=\Fs'\oplus\Fs''$. Then for all $x\in X$  
the set of radii $\R_{S,i}(x,\Fs)$ of $\Fs$ at $x$ with multiplicities 
is the union with multiplicities of the radii of $\Fs'$ and $\Fs''$ at $x$.
In other words if the value $R$ appears $m'$-times in 
$\RR_{S}(x,\Fs')$ and $m''$-times in $\RR_{S}(x,\Fs'')$, then $R$ 
appears $(m'+m'')$-times in $\RR_{S}(x,\Fs)$.
\qed
\end{proposition}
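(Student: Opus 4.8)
The plan is to reduce the statement to an additivity property of the dimensions of solution spaces over nested sub-disks centered at $t_x$, and then to recover the multiset of radii from those dimensions. First I would fix $x\in X$, choose a coordinate identifying the maximal disk $D(x,S)$ with $D^-_\Omega(0,R)$ and $t_x$ with $0$, and restrict the decomposition $\Fs=\Fs'\oplus\Fs''$ to $D(x,S)$; this gives $\widetilde{\Fs}=\widetilde{\Fs'}\oplus\widetilde{\Fs''}$ as free $\O(D^-_\Omega(0,R))$-differential modules, the connection of $\widetilde{\Fs}$ being the direct sum of those of the two summands. In particular $r=\mathrm{rank}(\Fs)=\mathrm{rank}(\Fs')+\mathrm{rank}(\Fs'')$ and $\omega(x,\Fs)=\omega(x,\Fs')\oplus\omega(x,\Fs'')$.

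Next I would establish the elementary additivity: for every open sub-disk $D\subseteq D(x,S)$ centered at $t_x$ one has $\omega(D,\Fs)=\omega(D,\Fs')\oplus\omega(D,\Fs'')$, hence $\dim_\Omega\omega(D,\Fs)=\dim_\Omega\omega(D,\Fs')+\dim_\Omega\omega(D,\Fs'')$. This holds because $\Fs(D)=\Fs'(D)\oplus\Fs''(D)$ as $\O(D)\langle d\rangle$-modules, so the kernel of its connection is the direct sum of the kernels of the connections of the two summands, and passing to the images in $\omega(x,\Fs)=\omega(x,\Fs')\oplus\omega(x,\Fs'')$ respects the decomposition (cf. \eqref{eq : omega(D,Fs)}).

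Then I would specialise to $D=D_S(x,\rho)$ for $\rho\in\,]0,1]$ and set $g_{\Fs}(\rho):=\dim_\Omega\omega(D_S(x,\rho),\Fs)$, so that $g_{\Fs}=g_{\Fs'}+g_{\Fs''}$ on $\,]0,1]$ by the previous step. It then remains to read off the radii from $g_\Fs$. By \eqref{eq : D_S,i and D(x, R_S,i)} and Definition \ref{def: Multiradius}, the disk $D_{S,i}(x,\Fs)=D_S(x,\R_{S,i}(x,\Fs))$ is the largest open disk centered at $t_x$ on which $\widetilde{\Fs}$ admits at least $r-i+1$ linearly independent horizontal sections; since the disks $D_S(x,\rho)$ are totally ordered by inclusion, this forces $g_{\Fs}(\rho)\ge k$ if and only if $\rho\le\R_{S,r-k+1}(x,\Fs)$, and hence, using $\R_{S,1}(x,\Fs)\le\cdots\le\R_{S,r}(x,\Fs)$, that $g_{\Fs}(\rho)=\#\{\,i\mid\R_{S,i}(x,\Fs)\ge\rho\,\}$, and similarly for $\Fs'$ and $\Fs''$. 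A non-decreasing finite tuple in $\,]0,1]$ is uniquely determined by this tail-counting function, so from $g_{\Fs}=g_{\Fs'}+g_{\Fs''}$ I would conclude that $\RR_S(x,\Fs)$, counted with multiplicity, is the union of $\RR_S(x,\Fs')$ and $\RR_S(x,\Fs'')$, which is the claim.

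The only point that needs genuine care — though it is essentially already contained in the definitions of \cite{NP-I} and \cite{NP-II} and in the filtration \eqref{eq : filtration of omega} — is the identity $g_{\Fs}(\rho)=\#\{i\mid\R_{S,i}(x,\Fs)\ge\rho\}$: one must check that $D_{S,i}(x,\Fs)$ itself carries $r-i+1$ independent solutions and is genuinely maximal among disks with this property, so that no off-by-one or open/closed boundary subtlety arises when $\rho$ coincides with one of the $\R_{S,i}(x,\Fs)$. Everything else is formal.
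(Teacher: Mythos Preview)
Your argument is correct and follows the same reduction as the paper: restrict to the maximal disk $D(x,S)$ and work there. The paper's own proof simply performs this reduction and then cites \cite[Prop.~5.5]{NP-I}, whereas you spell out the content of that citation directly --- the additivity $\omega(D,\Fs)=\omega(D,\Fs')\oplus\omega(D,\Fs'')$ and the recovery of the multiset of radii from the tail-counting function $g_{\Fs}(\rho)=\#\{i\mid \R_{S,i}(x,\Fs)\ge\rho\}$. So there is no genuine difference in approach; your version is just self-contained rather than deferring to the earlier paper, and the boundary subtlety you flag is indeed handled by the fact that $D_{S,i}(x,\Fs)$ is defined as the \emph{maximal} open disk carrying at least $r-i+1$ independent solutions.
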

%

Recall that $X$ is connected (cf. Setting 
\ref{Hypothesis : X connected}).
\begin{proposition}\label{Prop : extended by continuity}
Assume that the index $i$ separates the radii of $\Fs$ (at all point of $X$, cf. Definition \ref{definition : i separates the radii}). 
Let $(\Fs',\nabla)\subseteq(\Fs,\nabla)$ 
be a sub-object such that there exists a point $x\in X$ 
satisfying 
\begin{equation}
\omega_{S,1}(x,\Fs')\;=\;\omega_{S,i}(x,\Fs)\;.
\end{equation} 
Then, the rank of $\Fs'$ is $r-i+1$ 
and for all $y\in X$ and all $j=1,\ldots,r-i+1$ we have
\begin{eqnarray}\label{eq : equal at a point implies everywhere}
\omega_{S,j}(y,\Fs')&\;=\;&\omega_{S,j+i-1}(y,\Fs)\;,
\\
\R_{S,j}(y,\Fs')&\;=\;&\R_{S,j+i-1}(y,\Fs)\;.
\label{eq : equal at a point implies everywhere-2}
\end{eqnarray}
\end{proposition}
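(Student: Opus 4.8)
The plan is to prove the statement at the single point~$x$ first, and then to propagate it to the whole of~$X$ by a connectedness argument built on the continuity of the radii (Theorem~\ref{Thm : finiteness}). \emph{Step 1 (the point~$x$).} Being a differential submodule of~$\Fs$, the module~$\Fs'$ is locally free of finite rank~$r'$ by Proposition~\ref{Lemma : coherent implies loc free}, and~$r'$ is well defined since~$X$ is connected (Setting~\ref{Notation : X=connected}). The disk~$D_{S,1}(x,\Fs')$ trivializes~$\Fs'$, so~$\dim_{\Omega}\omega_{S,1}(x,\Fs')=r'$, while~$\dim_{\Omega}\omega_{S,i}(x,\Fs)=r-i+1$ (by~\eqref{rank = r-i+1} when~$i\geq 2$, trivially when~$i=1$). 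The hypothesis~$\omega_{S,1}(x,\Fs')=\omega_{S,i}(x,\Fs)$ thus forces~$r'=r-i+1$. If~$i=1$ then~$r'=r$, so the cokernel of~$\Fs'\hookrightarrow\Fs$ is locally free (Proposition~\ref{Lemma : coherent implies loc free}) of rank~$0$, hence~$\Fs'=\Fs$ and the statement is trivial; assume from now on~$i\geq 2$. Applying Lemma~\ref{Lemma : equality of V_i'+k} to~$\Fs'\hookrightarrow\Fs$ with~$j'=1$ and~$j=i$ (legitimate since~$i$ separates the radii of~$\Fs$ at~$x$, so it is the largest separating index~$\leq i$) gives
\[
\omega_{S,j}(x,\Fs')=\omega_{S,j+i-1}(x,\Fs)\quad\text{and}\quad\R_{S,j}(x,\Fs')=\R_{S,j+i-1}(x,\Fs)\qquad(j=1,\dots,r'),
\]
in particular~$\R_{S,1}(x,\Fs')=\R_{S,i}(x,\Fs)>\R_{S,i-1}(x,\Fs)$.

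\emph{Step 2 (propagation).} The elementary observation driving the argument is the following. At an arbitrary~$y\in X$, Lemma~\ref{Lemma : injective then appears with mult} shows that the multiset~$\{\R_{S,1}(y,\Fs'),\dots,\R_{S,r'}(y,\Fs')\}$ embeds into~$\{\R_{S,1}(y,\Fs),\dots,\R_{S,r}(y,\Fs)\}$; and since~$i$ separates the radii of~$\Fs$ one has~$\R_{S,i-1}(y,\Fs)<\R_{S,i}(y,\Fs)\leq\cdots\leq\R_{S,r}(y,\Fs)$, so that \emph{exactly}~$r'=r-i+1$ radii of~$\Fs$ at~$y$ are~$>\R_{S,i-1}(y,\Fs)$. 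Hence, whenever~$\R_{S,i-1}(y,\Fs)<\R_{S,1}(y,\Fs')$, all~$r'$ radii of~$\Fs'$ at~$y$ lie above~$\R_{S,i-1}(y,\Fs)$ and the embedding must identify them with the~$r'$ largest radii of~$\Fs$ at~$y$, i.e.~$\R_{S,j}(y,\Fs')=\R_{S,j+i-1}(y,\Fs)$ for all~$j$. Consider
\[
Z\;:=\;\{\,y\in X \;:\; \R_{S,i-1}(y,\Fs)<\R_{S,1}(y,\Fs')\,\}.
\]
By continuity of the radii $Z$ is open, and it is nonempty by Step~1. It is also closed: the inequality~$\R_{S,1}(y,\Fs')\leq\R_{S,i}(y,\Fs)$ holds at every~$y\in X$ (again by Lemma~\ref{Lemma : injective then appears with mult}, as~$r-r'=i-1$), while on~$Z$ it is an equality by the observation above; since the locus where~$\R_{S,1}(-,\Fs')$ and~$\R_{S,i}(-,\Fs)$ agree is closed and contains~$Z$, it contains~$\overline{Z}$, so any~$y_{0}\in\overline{Z}$ satisfies~$\R_{S,i-1}(y_{0},\Fs)<\R_{S,i}(y_{0},\Fs)=\R_{S,1}(y_{0},\Fs')$, whence~$y_{0}\in Z$. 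As~$X$ is connected, $Z=X$, and the observation yields~\eqref{eq : equal at a point implies everywhere-2}.

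\emph{Step 3 (solution spaces).} Fix~$y\in X$. By~\eqref{eq : equal at a point implies everywhere-2} the radii of~$\Fs'$ at~$y$ are the~$r'$ largest radii of~$\Fs$ at~$y$; in particular every solution of~$\Fs'$ at~$y$ has radius~$\geq\R_{S,i}(y,\Fs)$, hence~$\omega(y,\Fs')\subseteq\omega_{S,i}(y,\Fs)$, and as both spaces have dimension~$r'$ they coincide. Now~$\omega_{S,j}(y,\Fs')$ is the span of the solutions of~$\Fs'$ at~$y$ of radius~$\geq\R_{S,j}(y,\Fs')$; since the radius of convergence of a solution does not depend on the equation of which it is a solution and~$\R_{S,j}(y,\Fs')=\R_{S,j+i-1}(y,\Fs)$, this subspace of~$\omega(y,\Fs')=\omega_{S,i}(y,\Fs)$ equals the span of the solutions in~$\omega_{S,i}(y,\Fs)$ of radius~$\geq\R_{S,j+i-1}(y,\Fs)$, that is~$\omega_{S,j+i-1}(y,\Fs)$ (using~$j+i-1\geq i$). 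This is~\eqref{eq : equal at a point implies everywhere}.

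The crux is Step~2: one has to find a condition on a point~$y$ that is at once open, so that continuity of the radii can be exploited, and strong enough to force the whole conclusion at~$y$ — the strict separation~$\R_{S,i-1}(y,\Fs)<\R_{S,1}(y,\Fs')$ does both, and it is precisely the \emph{global} hypothesis that~$i$ separates the radii of~$\Fs$ which makes~$Z$ closed. Everything else is bookkeeping with multisets of radii through Lemma~\ref{Lemma : injective then appears with mult} and with the filtration steps~$\omega(D,-)$.
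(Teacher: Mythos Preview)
Your proof is correct and follows essentially the same strategy as the paper's: a connectedness argument showing that the set of ``good'' points is nonempty, open, and closed, using the continuity of the radii. The only differences are cosmetic. The paper defines the good locus $\mathcal{L}$ by the equality $\R_{S,1}(y,\Fs')=\R_{S,i}(y,\Fs)$ and invokes Proposition~\ref{Prop. Exact sequence prop separate radii in the right order} to see that this condition is equivalent to the strict inequality $\R_{S,1}(y,\Fs')>\R_{S,i-1}(y,\Fs)$, which gives openness immediately; you define $Z$ by the strict inequality and recover closedness via the general bound $\R_{S,1}(y,\Fs')\le\R_{S,i}(y,\Fs)$ coming from Lemma~\ref{Lemma : injective then appears with mult}. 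The two sets are the same. Likewise, the paper concludes the $\omega$-identities by a single appeal to Lemma~\ref{Lemma : equality of V_i'+k}, whereas you reprove that lemma's content by hand in Step~3. Your route is slightly more self-contained (you never use Proposition~\ref{Prop. Exact sequence prop separate radii in the right order}), but the underlying argument is the same.
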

\begin{proof}
From \eqref{rank = r-i+1} it follows that the rank of the 
locally free sheaf $\Fs'$ is $r-i+1$. 
By Lemma \ref{Lemma : equality of V_i'+k}  
it is enough to prove that
$\omega_{S,1}(y,\Fs')=\omega_{S,i}(y,\Fs)$ for all $y\in X$. By item ii) of
Proposition 
\ref{Prop. Exact sequence prop separate radii in the right order} this is  
equivalent to $\R_{S,1}(y,\Fs')=\R_{S,i}(y,\Fs)$ for all $y\in X$. 
Let $\mathcal{L}\subseteq X$ be the locus on which the 
equality holds. By assumption $\mathcal{L}$ is not empty because $x\in\mathcal{L}$. By continuity of the radii it is a closed subset of 
$X$. By item i) of Proposition 
\ref{Prop. Exact sequence prop separate radii in the right order} 
the condition $\R_{S,1}(y,\Fs')=\R_{S,i}(y,\Fs)$ is 
equivalent to $\R_{S,1}(y,\Fs')>\R_{S,i-1}(y,\Fs)$, hence 
$\mathcal{L}$ is open. Since $X$ is connected, we deduce that 
$\mathcal{L}=X$.
\end{proof}

\begin{proposition}\label{Prop : 1-th radius and dual}
For all $x\in X$ one has $\R_{S,1}(x,\Fs)=\R_{S,1}(x,\Fs^*)$. \footnote{This holds even if $\R_{S,1}(x,\Fs)$ is over-solvable.} The differential equations $\Fs$ and $\Fs^*$ share the same spectral cutoff $i_x^{\mathrm{sp}}$ (cf. Definition \ref{eq : over-solvable cutoff}).
Moreover for all $i=1,\ldots,i_x^{\mathrm{sp}}$ (spectral non solvable case) one has 
$\R_{S,i}(x,\Fs)=\R_{S,i}(x,\Fs^*)$.
\end{proposition}
\begin{proof}
The assertion about $\R_{S,1}$ is equivalent to ``\emph{$\Fs$ is 
trivial over the generic disk $D(x,\rho)$ if, and only if, $\Fs^*$ is}'', 
which is clearly true. 
The second assertion is well known if $x$ is a point of type $2$ 
or~$3$ of the affine line, and it 
will follow  from Section \ref{Robba-deco} in the general case (cf. \eqref{eq : compatibility with duals}).
\end{proof}

\begin{remark}\label{rk : quote counterex for duality}
The statement of Proposition \ref{Prop : 1-th radius and dual} holds even if $\R_{S,1}(x,\Fs)$ is over-solvable. However, it may fail for solvable and over-solvable higher radii. 
In section \ref{An explicit counterexample.} we 
give an example of equation whose over-solvable radii 
are not stable by duality (they become solvable by duality). 
\end{remark}

\subsection{Notes}
If $i=1$, the radius $\R_{S,1}(x,\Fs)$ admits the following classical 
description. Let $D\subseteq X_\Omega$ be a 
maximal disk. Let $\M$ be the restriction of $\Fs$ to $D$. Fix an 
isomorphism $D\simto D_\Omega^-(0,R)$, and consider the empty 
triangulation on it. The disk $D$ is the maximal disk of all its points, and 
for all $x\in D$ one has $\R_{S,i}(x,\Fs)=\R_{\emptyset,i}(x,\M)$. 
By \eqref{eq : R_S,i(x,F)+R_i^F(x)/R} the function 
$\R_{\emptyset,i}(-,\M)$ is determined by the function 
$\R^{\M}_{\emptyset,i}(x)$ appearing in \eqref{eq: R^F DV-Balda}. 
If $i=1$ this last has the following interpretation involving Taylor 
solutions. Since $\Omega$ is spherically complete,  $\M$ is free over 
$\O(D)$. Let $G\in\M_n(\O(D))$ be the matrix of $\nabla:\M\to\M$ 
with respect to some basis of $\M$. 
Namely the columns of $G$ are the images of the elements of 
a basis of $\M$ by $\nabla$. 
If $x\in D$, then a basis of solutions of $\M$ at $t_x$ is 
given by
\begin{equation}\label{eq: Y(T,t_x)}
Y(T,t_x)\;:=\;\sum_{n\geq 0}G_n(t_x)\frac{(T-t_x)^n}{n!}\;.
\end{equation}
where $G_0=\mathrm{Id}$, $G_1=G$, and inductively 
$G_{n+1}=G_n'+G_nG$. 
For all $x\in D$ we set $\R^Y(x):=\liminf_n(|G_n|(x)/|n!|)^{-1/n}$, 
then
\begin{equation}\label{eq : R^Y}
\R_{\emptyset,1}^{\M}(x)\;=\;\min(R,\R^Y(x))\;,\qquad
\R_{\emptyset,1}(x,\M)=\R_{S,1}(x,\Fs)=\min(1,\R^Y(x)/R)\;.
\end{equation}
Exploiting this formula one can prove the continuity of the individual 
radius $\R_{S,1}(-,\Fs)$. This process is used in \cite{Dv-Balda} for 
affinoid domains of the affine line, in \cite{Balda-Inventiones} (compact curves), 
\cite{Potentiel} (curves without boundary).

For $i\geq 2$ such an explicit expression of  $\R_{S,i}(-,\Fs)$ 
is missing. In the practice $\R_{S,i}(-,\Fs)$ behave as a first radius only 
outside 
$\Gamma_{S,1}(\Fs)\cup
\Gamma_{S,2}(\Fs)\cup\cdots\cup\Gamma_{S,i-1}(\Fs)$ (cf. \cite[Proofs in Section 6]{NP-I}, see also \cite[Remark 
2.1.6]{NP-IV}).

\section{Robba's decomposition by the spectral radii over $\H(x)$}
\label{Robba-deco}
In Sections \ref{Robba-deco} and 
\ref{Dwork-Robba (local) decomposition}, we generalize to curves 
Robba's \cite{Ro-I}, and Dwork-Robba's \cite{Dw-Robba} theorems of 
decomposition by spectral radii.\footnote{Notice that for us $K$ is a general complete valued field, while \cite{Ro-I, Dw-Robba} assume the residual field of positive characteristic. In particular, we allow $K$ to be trivially valued.} 
Such decompositions are proved with different methods 
in \cite{Kedlaya-draft}. Methods of \cite{Kedlaya-draft} and 
\cite{Kedlaya-book-2} make a systematic use of spectral norm of the 
connection, which permits to separate the radii thank to the Hensel 
factorization \cite{Robba-Hensel} and \cite[Lemme 1.4]{Ch-Dw}. 
We show here that the original proofs of 
\cite{Ro-I} and \cite{Dw-Robba} can 
be generalized quite smoothly to curves, up to minor 
implementations. The reason of our choice is that 
the point of view of Dwork's generic disks is more adapted to our 
global definition of radii. 
Working with spectral norms would oblige us to translate 
the terminologies, and losing the evocative image of generic disks.

\begin{hypothesis}
\label{Hyp : K alg clo}
\label{Hyp : K alg clos}
In sections \ref{Robba-deco} and
\ref{Dwork-Robba (local) decomposition}, 
until section \ref{Sec : Descent}, 
we assume that $K$ is algebraically closed.
\end{hypothesis}

In Lemma \ref{Lema : descent of M_xrho} 
we will descend the obtained decomposition to a possibly non algebraically closed field $K$.
The hypothesis is 
due to our use of Theorem \ref{thm:bonvois} below, 
and it is unnecessary if $x$ has a neighborhood which is isomorphic to 
an affinoid domain of the affine line. 

The statements as well as the proofs are similar to  
Dwork and Robba's original ones 
(cf. \cite{Dw-II}, \cite{Ro-I}, and \cite{Dw-Robba}), 
as exposed in the recent book of Christol (cf. \cite[Section 5]{Christol-Book}, 
\cite[Section 5.3]{Ch}). 
For the convenience of the reader, and to permit a complete 
understanding of Section~\ref{Dwork-Robba (local) decomposition}, 
we provide a complete set of proofs. This makes the paper self 
contained.

In Sections \ref{Robba-deco} and 
\ref{Dwork-Robba (local) decomposition}, the following notations will be systematically used :
\begin{itemize}
\item $x\in X$ is a point of type 
$2$, $3$ or $4$;
\item  $\M_x$ is a differential module over $\O_{X,x}$;  
\item  $\M$ is a differential module over $\H(x)$.  
\end{itemize}

\subsection{\'Etale maps.}
\label{Section : a description of Ducros etale pam}

We will define \'etale maps from the curve~$X$ to the 
affine line that are adapted to our needs. To achieve 
this, in the following sections, we assume that~$K$ is 
algebraically closed (cf. Hypothesis \ref{Hyp : K alg clos}). 
\if{
Let $b$ be a branch out of a point $x\in X$. We say 
that the skeleton of an annulus $C$ represents $b$ if $C$ is an open or  
semi-open annulus, not containing $x$, whose closure in $X$ contains $x$, and such that the skeleton of $C$ 
represents $b$. We also assume that the closure of $C$ in $X$ is simply connected (no loops).
}\fi
%
%
Recall that $p$ denotes the characteristic exponent of the residual field of $K$ ($p$ equals either~1 or a prime number, cf. Section \ref{Secn1}). We say that a property holds for almost every element of a set if it holds for all
but finitely many of them. 

\begin{theorem}[\protect{\cite[Theorem 3.12]{NP-II}}]\label{thm:bonvois}
Let~$x$ be a point of~$X$ of type~2. Let~$b_{1},\dotsc,b_{t}$ be 
distinct branches out of~$x$ 
(cf. Notation \ref{Notation branch - germ - direction}). 
There exists a star-shaped affinoid 
neighbourhood~$\wt{Y}$ of~$x$ in~$X$ (cf. Definition \ref{Def : star-shaped}), 
an affinoid domain~$W$ of~$\mathbb{P}^{1,\mathrm{an}}_{K}$ 
and a finite \'etale map $\psi \colon \wt{Y} \to W$ such that
\begin{enumerate}
\item\label{i:deg} the degree $[\H(x):\H(\psi(x))]$ is prime to~$p$;
\item\label{i:pointx} $\psi^{-1}(\psi(x))=\{x\}$;
\item\label{i:compx} almost every connected component of 
$\wt{Y}-\{x\}$ is an open unit disk with boundary~$\{x\}$;
\item\label{i:compfx}  almost every connected component of 
$W-\{\psi(x)\}$ is an open unit disk with 
boundary~$\{\psi(x)\}$;
\item\label{i:compiso} for almost every connected 
component~$\wt{C}$ of $\wt{Y}-\{x\}$, the induced 
morphism $\wt{C} \to \psi(\wt{C})$ is an isomorphism;
\item\label{i:isobi} for every $i = 1,\dotsc,t$, the morphism~$\psi$ 
induces an isomorphism between an annulus $C_{b_i}$ representing~$b_{i}$ and an annulus $C_{\psi(b_i)}$ representing~$\psi(b_{i})$ (cf. Notation \ref{Notation branch - germ - direction}).
\item\label{iii : omegaonefree} The sheaf of continuous differentials $\widehat{\Omega}^1_{Y/K}$ on $Y$ is free.\hfill$\Box$
\end{enumerate}
\end{theorem}
\begin{remark}
Item \eqref{iii : omegaonefree} of the above Theorem is not explicitely stated in \cite{NP-II}. However, it is clear that it can be easily obtained by shrinking $Y$, since the other properties remain unchanged by the shrinking.
\end{remark}


Let~$\Cs$ be the set of connected components~$\wt{C}$ of 
$\wt{Y}-\{x\}$ such that~$\psi$ induces an isomorphism 
$\wt{C} \simto \psi(\wt{C})$. Let~$\Bs$ be the set of branches out 
of~$x$ that have no representative in~$\Cs$. Let~$b \in \Bs$. If there 
exists an open annulus $C_b$ whose skeleton 
represents~$b$ such that~$\psi$ induces an 
isomorphism between $C_b$ and its image, then we may shrink $C_b$ and replace it by a semi-open annulus~$\wt{S}_{b}$ with the same properties. 
Otherwise, if $\psi$ does not realize an isomorphism between $C_b$ and its image, then we remove it and 
we set $\wt{S}_{b} = \emptyset$. Now, 
define
\begin{equation}\label{eq ; vV}
\wt{V} = \bigcup_{\wt{C}\in\Cs} \wt{C} \cup \bigcup_{b\in\Bs} 
\wt{S}_{b} \cup \{x\}.
\end{equation} 
It is an affinoid domain of~$\wt{Y}$ that contains 
almost every branch of $X$ out of~$x$. 

Now, we want to cover an entire affinoid neighborhood of $x$ in $X$ with such affinoid domains.
By property~(\ref{i:isobi}) of Theorem \ref{thm:bonvois}, there exists a finite family 
$\wt{Y}_1,\dotsc,\wt{Y}_n$ of affinoid neighborhoods of~$x$,
$\wt{V}_1,\dotsc,\wt{V}_n$ as in \eqref{eq ; vV}, and 
for every $j=1,\ldots,n$, there are \'etale maps 
\begin{equation}\label{RRRRR}
 V_j\;\subseteq\; \wt{Y}_j\xrightarrow[]{\;\;\;\psi_j\;\;\;} W_j \;\subset\;\mathbb{P}^{1,\mathrm{an}}_K
\end{equation} 
as above such that 
\begin{enumerate}
\item[a)] $\wt{V} = \bigcup_{j=1}^n \wt{V}_j$ is an affinoid neighborhood of $x$ in $X$;
\item[b)] for every $j=1,\ldots,n$ the map $\psi_j$ induces an isomorphism between every connected component of $\wt{V}_{j} - \{x\}$ and its image in $W_j$.
\end{enumerate}
Up to shrinking the $V_j$'s we may also assume that
\begin{enumerate}
\item[c)] For every $i,j=1,\ldots,n$ we have $V_j\subset Y_i$.
\end{enumerate}
In the following lemma we generalize the above 
construction to any point of $x\in X$. 
\begin{lemma}\label{Lemma : H(x) stable by d/dt}
Let $x\in X$. 
\if{Let $n$, $\{Y_j\}_{j=1,\ldots,n}, \{W_j\}_{j=1,\ldots,n}, \{V_j\}_{j=1,\ldots,n}, \{\psi_j:V_j\to W_j\}_{j=1,\ldots,n}$ as above.}\fi 
There exists an integer~$n$ and, 
for every $j=1,\dotsc,n$, an affinoid 
domain~$V_{j}$ of~$X$ 
containing~$x$, an affinoid domain~$W_{j}$ 
of~$\mathbb{P}^{1,\mathrm{an}}_K$, 
and a finite \'etale map $\psi_{j} \colon \wt{V}_{j} \to W_{j}$ such that
\begin{enumerate}
\item\label{i:starshapedY} the union
\begin{equation}
V \;=\; \bigcup_{j=1}^n V_{j}
\end{equation}
is a 
star-shaped affinoid neighborhood of~$x$ in~$X$ 
(cf. Definition \ref{Def : star-shaped}). Moreover,  
if $x$ has type~$2$, then with the notation of item c) after \eqref{RRRRR}, for every $i=1,\ldots,n$ we have 
\begin{equation}
V\subseteq Y_i\;;
\end{equation}
\item\label{i:isopsij} for every $y\in V$, there exists~$j$ such that 
$\psi_{j,\Omega}$ induces an isomorphism 
between the generic disks 
\begin{equation}\label{eq : iso of generic disks}
\psi_{j,\Omega} \;\colon\; D(y)\; \xrightarrow[]{\;\;\;\sim\;\;\;}\; D(\psi_j(y)).
\end{equation}
\item\label{iii:isopsij} in case $y=x$, \eqref{eq : iso of generic disks} is an isomorphism for every $j=1,\dotsc,n$.
\end{enumerate}

\end{lemma}
\begin{proof}
If~$x$ is of type~1, 3 or~4, then it has a neighborhood that is 
isomorphic to an affinoid domain of 
$\mathbb{P}^{1,\mathrm{an}}_K$ and the result is obvious.

Let us assume that~$x$ is of type~2 and proceed as we did at the 
beginning of the section. Property~(\ref{i:starshapedY}) holds by 
construction. Thanks to item b) after \eqref{RRRRR}, 
property~(\ref{i:isopsij}) holds for every 
$y \in V-\{x\}$. For the point~$x$ itself, use property (\ref{i:deg}) of 
Theorem \ref{thm:bonvois} and conclude 
by \cite[Lemma 3.22]{NP-II}.
\end{proof}

\begin{remark}\label{local derivation}
For technical reasons in some proofs we need to work 
with the explicit derivation $d/dT$, where $T$ is a 
coordinate of the generic disk $D(y)$. 
We will need to choose $T$ in order that 
$d/dT:\O_\Omega(D(y))\to\O_\Omega(D(y))$ 
stabilizes the sub-rings $\O_{X,y}$, and $\H(y)$. 
Such a particular coordinate is given by the pull-back by $\psi_j$ 
of a $K$-rational coordinate on $W_j$.
More precisely, for all $j=1,\ldots,n$, we
fix a coordinate~$T_{j}$ on~$W_{j}$ and we call 
$\omega_j$ the image of $1\otimes dT_j$  in 
$\widehat{\Omega}^1_{Y_j/K}(Y_j)$ by the canonical 
isomorphism (recall that 
$\widehat{\Omega}^1_{Y_j/K}$ is free by item 
\eqref{iii : omegaonefree} of Theorem \ref{thm:bonvois})
\begin{equation}
\O_{Y_j} \widehat{\otimes}_{\O_{W_j}} 
\widehat{\Omega}^1_{W_j/K} \;\xrightarrow[]{\;\;\;\sim\;\;\;} \;
\widehat{\Omega}^1_{Y_j/K}\;. 
\end{equation}
Then, we have an isomorphism
\begin{equation}
\nu_j\;:\;\widehat{\Omega}^1_{Y_j/K}
\xrightarrow{\;\;\;\sim\;\;\;}\O_{Y_j}\cdot\omega_j
\xrightarrow{\;\;\;\sim\;\;\;}\O_{Y_j}
\end{equation}
identifying a differential form 
$f\omega_j\in\O_{Y_j}\omega_j$ with $f\in\O_{Y_j}$.
We define~$d_j$ 
as the derivation on~$\O(Y_j)$ defined by the 
composite map $d_j:\O(Y_j)\xrightarrow{\;\;d\;\;}
\widehat{\Omega}^1_{Y_j/K}(Y_j)
\xrightarrow{\;\;\nu_j\;\;}\O(Y_j)$, where $d$ is the 
canonical derivation. 

If $y\in Y_j$ is a point of type $2$, 
$3$, or $4$, we may repeat this construction for 
$\O_{X,y}$, $\H(y)$, $\b_{\Omega}(D(y,\rho))$, and $\O_{\Omega}(D(y,\rho))$, for all  $\rho\in]0,1]$ (cf. \eqref{D(x,R)}). 
We indicate by the same symbol $d_j$ the 
derivation obtained, as before, 
to be the pull-back of $d/dT_j$. 
We then have the following commutative diagram
\begin{equation}\label{eq : d_j acts}
\xymatrix{
\O(Y_j)\ar[d]^{d_j}\ar[r]&
\O_{X,y}\ar[d]^{d_j}\ar[r]&
\H(y)\ar[d]^{d_j}\ar[r]&
\b_\Omega(D(y,\rho))\ar[d]^{d_j}\ar[r]&
\O_\Omega(D(y,\rho))\ar[d]^{d_j}\;\phantom{,}\\
\O(Y_j)\ar[r]&\O_{X,y}\ar[r]&\H(y)\ar[r]&\b_\Omega(D(y,\rho))\ar[r]&\O_\Omega(D(y,\rho))\;.}
\end{equation}
Now, if moreover
\begin{equation}
y\in V_j
\end{equation}
then, thanks to \eqref{eq : iso of generic disks}, 
we can identify  the disks 
$D(y,\rho)$ and $D(\psi_j(y),\rho)$ and $T_j$ with 
$T_j\circ\psi_{\Omega,j}$. With this identification, 
we have
\begin{equation}
d_j\;=\;d/dT_j
\end{equation}
as an operator on $\b_\Omega(D(y,\rho))$ and $\O_\Omega(D(y,\rho))$.

Analogously, in the situation of property
(\ref{iii:isopsij}) of Lemma \ref{Lemma : H(x) stable by d/dt}, let 
$Y\subseteq V$ be an affinoid neighborhood of $x$ in $X$. 
Then, for all $j=1,\ldots,n$, $\omega_j$ is a generator of $\widehat{\Omega}^1_{Y/K}$.
In particular, the rings $\O_\Omega(D(x,\rho))$, $\mathcal{B}_\Omega(D(x),\rho)$, $\O_{X,x}$, 
$\H(x)$, and $\O(Y)$ are stable under all the derivations $d_1,\ldots,d_n$. 
\end{remark}
\begin{definition} \label{def : elementary neigh}
We say that an affinoid neighborhood $Y$ of $x$ in $X$ 
is $\psi$-\emph{admissible} if $Y\subseteq V$, where 
$V$ is the star-shaped affinoid neighborhood of $x$ obtained in Lemma \ref{Lemma : H(x) stable by d/dt}.
\end{definition}

\subsubsection{Derivations.}
\label{Section 3.1.1 : derivations}
Recall that we only consider derivations of 
$\b_\Omega(D(x,\rho))$ 
coming from $\O(D(x,\rho))$ because the space of 
bounded derivations of $\b_\Omega(D(x,\rho))$ is 
possibly too large (cf. Section 
\ref{Section: Bounded Omega 1}).

Endow $\O_L(D(x,\rho))$ 
with the Normoid structure defined by the 
family of norms $\{|.|_{t,\rho'}\}_{\rho'<\rho}$. In the 
framework of Normoid spaces it makes sense to speak of 
bounded operators (cf. \cite{Banachoid}).
We know that any bounded derivation of 
$\O_\Omega(D(x,\rho))$ 
stabilizes $\b_\Omega(D(x,\rho))$ 
and induces on it a bounded derivation (cf. Proposition 
\ref{Prop: cont der on b}). In particular, any two 
coordinates $T,\widetilde{T}$ of $D(x,\rho)$ produce 
bounded $\Omega$-linear
derivations $d/dT$ and $d/d\widetilde{T}$ of 
$\b_\Omega(D(x,\rho))$ such that 
\begin{equation}
\b_\Omega(D(x,\rho))\lr{\frac{d}{dT}}\;=\;
\b_\Omega(D(x,\rho))\lr{\frac{d}{d\widetilde{T}}}\;.
\end{equation}
Analogously, it follows that if $d,\widetilde{d}$ are two derivations 
generating the space of bounded 
$\Omega$-derivations of 
$\O_\Omega(D(x,\rho))$ we have
$\b_\Omega(D(x,\rho))\lr{d}=
\b_\Omega(D(x,\rho))\lr{\widetilde{d}}$. We now prove a similar property for the fields $\O_{X,x}$ and $\H(x)$.
\begin{lemma}\label{Lemma : change of coordinate}
Let $A$ be one of the fields
$\O_{X,x}$, $\H(x)$. 
Let $T$ be a coordinate of $D(x,\rho)$ such that 
$d/dT$ stabilizes $A$ (such a coordinate exists by 
Remark \ref{local derivation}). 
Then, the following hold
\begin{enumerate}
\item\label{i Lemma : change of coordinate} 
 $d/dT$ generates the 
$A$-module of bounded $K$-derivations of $A$.
\item\label{ii Lemma : change of coordinate} 
For any two coordinates $T,\widetilde{T}$ 
of $D(x,\rho)$ with the above property we have $A\lr{\frac{d}{dT}}=A\lr{\frac{d}{d\widetilde{T}}}$.
\item\label{iii Lemma : change of coordinate} 
Let $d:\O_\Omega(D(x,\rho))\to \O_\Omega(D(x,\rho))$ 
be a bounded derivation stabilizing $A$. 
Then $d$ has the form $d=f\frac{d}{dT}$ with $f\in A$.
Moreover, $d$ generates 
the $A$-module of bounded $K$-derivations if, and 
only if, $f$ is non zero. In this case, we have $A\lr{d}=A\lr{\frac{d}{dT}}$.
\end{enumerate}
\end{lemma}
\begin{proof}
\eqref{i Lemma : change of coordinate}
Each derivation 
$d_j=d/dT_j$ of Remark \ref{local derivation} generates
the space of bounded $K$-derivations of $A$, 
which implies $d/dT=hd_j$, 
for some non zero $h\in A$. Since $A$ is 
a field, $h$ is invertible and hence $d/dT$ generates as 
well the space of derivations. 

Item \eqref{ii Lemma : change of coordinate} follows 
immediately from \eqref{i Lemma : change of coordinate}.

\eqref{iii Lemma : change of coordinate} Since $d$ is a 
bounded endomorphism of $\O_\Omega(D(x,\rho))$, its 
restriction to $\b_\Omega(D(x,\rho))$ is bounded too (cf. 
item \eqref{ii Prop: cont der on b} of Proposition 
\ref{Prop: cont der on b}). Since $d$ stabilizes globally 
$\H(x)$ and the map 
$\H(x)\to\b_\Omega(D(x,\rho))$ is an 
isometry, then $d$ is a bounded $K$-derivation of 
$\H(x)$. 
By item \eqref{i Lemma : change of coordinate}, we 
have $d=f\frac{d}{dT}$, for some $f\in\H(x)$. Item 
\eqref{iii Lemma : change of coordinate} for 
$A=\H(x)$ follows. If $A=\O_{X,x}$, we may  
write $d=f\frac{d}{dT}=fh d_j$, where 
$h\in\O_{X,x}$ is the function we have found in item 
\eqref{i Lemma : change of coordinate}. Since $T_j\in A$, we can write $d_j(T_j)=1$ and 
$f h d_j(T_j)=fh\in \O_{X,x}$. Since $h$ is invertible in $\O_{X,x}$, it follows that $f\in\O_{X,x}$ and the claim follows.
\end{proof}

\subsection{Norms on differential operators.}
\begin{definition}\label{eq : op norm on a group}
Let $(G,\|.\|)$ be an abelian 
group endowed with an ultrametric norm $\|.\|$. 
If $\varphi:G\to G$ is a linear map we set 
$\|\varphi\|_{op,G}:=\sup_{g\in G-\{0\}}\frac{\|
\varphi(g)\|}{\|g\|}$.
\end{definition}

The following notation will be adopted from now on 
in Section \ref{Robba-deco}.
\begin{notation}\label{Notation: T=coordinate on D(x)}
We fix a 
\begin{equation}
\rho\in]0,1]\;.
\end{equation}
Recall that we denote by $D(x,\rho)$ the sub-disk of $D(x)$ with 
modulus $\rho^{-1}$ (cf. \eqref{D(x,R)}). Recall that $\b_\Omega(D(x,\rho))$ 
is endowed with the sup-norm $\|.\|_{D(x,\rho)}$ (cf. 
\eqref{eq : sup-norm}).
Let $T$ be a coordinate on $D(x)$ and 
\begin{equation}
d\;:=\;d/dT
\end{equation}
the corresponding derivation. 
We choose the coordinate $T$ such that 
$d/dT$ stabilizes (globally) 
the field $\H(x)\subset\b_\Omega(D(x))$, 
this is possible by Section 
\ref{Section : a description of Ducros etale pam}.

Let $P$ be an element of the Weyl algebra $\b_\Omega(D(x,\rho))\langle d\rangle$ (cf. Section \ref{section : Differential  modules and trivial submodules}). For all $\rho \in]0, 1]$ we denote the operator norm of $P$ as an endomorphism of 
$\b_{\Omega}(D(x,\rho))$ by
\begin{equation}\label{eq: op norm P bDxrho}
\|P\|_{op,D(x,\rho)}\;:=\;
\|P\|_{op,\b_\Omega(D(x,\rho))}\;:=\;
\sup_{0\neq f \in 
\mathcal{B}_\Omega(D(x,\rho))}
\frac{\|P(f)\|_{D(x,\rho)}}{\|f\|_{D(x,\rho)}}\;.
\end{equation}
This definition only depends on $P$ as an endomorphism 
of $\b_{\Omega}(D(x,\rho))$ and not on the chosen 
coordinate, nor on the derivation $d$ 
used to represent $P$. Section \ref{Section 3.1.1 : derivations} 
shows that the choice of another
coordinate gives rise to the same Weyl algebra 
which is, 
in this sense, an intrinsic sub-ring of 
$\mathrm{End}(\b_\Omega(D(x,\rho)))$. 

For all $\rho'\leq \rho$, we may as well identify 
$P$ to an endomorphism of $\b_\Omega(D(x,\rho'))$ 
and consider $\|P\|_{op,D(x,\rho')}$. 
\end{notation}

\begin{remark}\label{Remark : d/dT coord on D(x)}
\label{remark : def op norm d on bounded}
\label{Remark : decreasing norm even though d_1}
Let $r(x)$ be the radius of $D(x)$ with respect to the 
coordinate $T$. Then, the radius of $D(x,\rho)$ is $\rho\cdot r(x)$, 
and the norm of $(d/dT)^k$ is given by 
\begin{equation}
\|(d/dT)^k\|_{op,D(x,\rho)}\;=\;|k!|(\rho\cdot r(x))^{-k}\;=\;
|k!|\cdot\|d/dT\|^{k}_{op,D(x,\rho)}\;.
\end{equation}
More generally, if $P=\sum_{k=0}^nf_k\cdot (d/dT)^k$, with 
$f_k\in\b_\Omega(D(x,\rho))$, then we may proceed for instance as in \cite[Proposition 4.3.1]{Ch} to prove that
\begin{equation}\label{eq : explicit norm operator}
\Bigl\|\sum_{k=0}^nf_k\cdot (d/dT)^k\Bigr\|_{op,D(x,\rho)}\;=\;
\max_{k=0,\ldots,n}|k!|\cdot\|f_k\|_{D(x,\rho)}\cdot 
(\rho\cdot r(x))^{-k}\;.
\end{equation}
This shows in particular 
that the operator norm $\|P\|_{op,D(x,\rho)}$ 
increases when $\rho\in]0,1]$ decreases. This 
increasing property doesn't follow directly from the 
definition \eqref{eq: op norm P bDxrho} since, for 
$\rho'<\rho$, the restriction map $\b_{\Omega}(D(x,
\rho))\to\b_{\Omega}(D(x,\rho'))$ is not isometric.
In the sequel, we use  coordinates 
to obtain explicit computations of intrinsic notions, 
as those just mentioned (cf. \eqref{eq : explicit norm operator}).
\end{remark}

\subsection{Topologies on differential modules}
Denote by  
$\mathcal{T}_{\rho}$ the topology on the Weyl algebra
$\H(x)\langle d\rangle$ induced by the operator norm
$\|.\|_{op,D(x,\rho)}$.
For all $q\geq 1$, we consider $\H(x)\langle d\rangle^q$ as endowed 
with the norm : 
\begin{equation}
\|(P_1,\ldots,P_q)\|_{op,D(x,\rho)} \;:=\;
\max_{i=1,\ldots,q}\|P_i\|_{op,D(x,\rho)}\;.
\end{equation}
Each differential module $\M$ over $\H(x)$ then 
acquires automatically a canonical topology $\mathcal{T}_{\rho}(\M)$  
as follows. If 
\begin{equation}
\pres:\H(x)\langle d\rangle^q\to\M\to 0
\end{equation}
is a presentation of $\M$ (i.e. a surjective morphism of 
$\H(x)\langle d\rangle$-modules), then we endow $\M$ with the 
quotient semi-norm 
\begin{equation}\label{eq: norm-Psi}
\|m\|_{\Psi}:=
\inf_{\Psi(P_1,\ldots,P_q)=m}\|(P_1,\ldots,P_q)\|_{op,D(x,\rho)}\;=\;
\inf_{\Psi(P_1,\ldots,P_q)=m}\;\;
\max_{i=1,\ldots,q}\|P_i\|_{op,D(x,\rho)}\;.
\end{equation}
\begin{proposition}[\protect{\cite[Proposition 7.7]{Christol-Book}}]
The semi-norms on $\M$ relative to two presentations are always 
equivalent. In particular, the topology $\mathcal{T}_{\rho}(\M)$ induced by $\|.\|_\Psi$ is independent on 
$\Psi$. 
\end{proposition}
\begin{proof}
Let $\pres_1:\H(x)\langle d\rangle^{q_1}\to\M\to 0$ and 
$\pres_2:\H(x)\langle d\rangle^{q_2}\to\M\to 0$ be two presentations of  $\M $. Since $\H(x)\langle d\rangle^{q_1}$ is free, we may find an 
$\H(x)\langle d\rangle$-linear map 
$\Psi_{1,2}:\H(x)\langle d\rangle^{q_1}\to\H(x)\langle d\rangle^{q_2}$
such that $\Psi_2\circ\Psi_{1,2}=\Psi_{1}$. The map $\Psi_{1,2}$ is the multiplication by a matrix with coefficients in $\H(x)\langle d\rangle$ and it is therefore bounded, with norm $N_{1,2}$. Hence, for every $m\in \M$ we have
\begin{eqnarray}
\|m\|_{\Psi_2}&\;:=\;&\inf_{\Psi_2(v_2)=m}\|v_2\|_{op,D(x,\rho)} \\
&\leq&
\inf_{\sm{\Psi_2(v_2)=m\\v_2=\Psi_{1,2}(v_1)}}\quad\|\Psi_{1,2}(v_1)\|_{op,D(x,\rho)}\;=\;
\inf_{\Psi_1(v_1)=m}\|\Psi_{1,2}(v_1)\|_{op,D(x,\rho)}
\\
&\leq&\inf_{\Psi_1(v_1)=m}N_{1,2}\|(v_1)\|_{op,D(x,\rho)}\;=\;
N_{1,2}\|m\|_{\Psi_1}
\end{eqnarray}
By exchanging the roles of $\Psi_1$ and $\Psi_2$, we obtain an inequality in the other direction, and thus the equivalence of the two semi-norms.
\end{proof}

\subsection{Decomposition over $\H(x)$ by the bounded solutions on 
$D(x,\rho)$}
The topology $\mathcal{T}_{\rho}(\M)$ on $\M$ is 
possibly not Hausdorff. Denote by $\overline{\M}$ 
(resp. $\M^{[1]}$) 
the Hausdorff quotient of $\M$ (resp. the closure of $0\in\M$). 
The sub-module $\M^{[1]}\subseteq\M$ is a differential sub-module 
because $\|P\cdot m\|_{\Psi}\leq\|P\|_{op,D(x,\rho)}\cdot 
\|m\|_{\Psi}$, 
for all $P\in \H(x)\langle d\rangle$, $m\in \M$. 
One has an exact sequence of differential modules
\begin{equation}\label{eq: Gamma}
0\to\M^{[1]}\to\M\xrightarrow{\gamma}\overline{\M}\to0\;.
\end{equation}

\begin{remark}\label{iterated topologies ... non trivial}
Let 
\begin{equation}
\|\overline{m}\|_{\Psi}^-\;:=\;\min_{\gamma(m)=
\overline{m}}\|m\|_{\Psi}
\end{equation}
be the quotient norm on $\overline{\M}$ 
induced by $\|.\|_\Psi$.
The corresponding topology $\overline{\mathcal{T}_{\rho}(\M)}$ on 
$\overline{\M}$ is the canonical one :
$\overline{\mathcal{T}_{\rho}(\M)}=
\mathcal{T}_{\rho}(\overline{\M})$. This follows
 from the independence of the presentation by choosing 
$\gamma\circ\Psi$ as a presentation of $\overline{\M}$.
On the other hand, the canonical topology $\mathcal{T}_{\rho}(\M^{[1]})$ 
of $\M^{[1]}$ is often \emph{non trivial}, so it differs from the 
(trivial) topology induced  by $\mathcal{T}_{\rho}(\M)$  and 
$\M^{[1]}$ can be different from $(\M^{[1]})^{[1]}$ 
(cf. section \ref{Decomposition over H(x) by the analytic solutions.}). 
This is related to the fact that the functor 
$\M\mapsto\M^{[1]}$ is not exact.
\end{remark}
We also have a dual notion.
Let $\M^*$ be the dual module of $\M$. Define
\begin{equation}
\M^{b}\;:=\;(\overline{\M^*})^*\;,\qquad\quad\M_{[1]}\;:=\;
((\M^*)^{[1]})^*\;,
\end{equation}
in order to have another exact sequence of differential modules
\begin{equation}
0\to\M^{b}\to\M\to\M_{[1]}\to0\;.
\end{equation}

The following proposition shows that the module $\M^b$ 
controls the bounded solutions of $\M$ on $D(x,\rho)$, while $\overline{M}$ controls those of $\M^*$.
\begin{proposition}\label{Prop: M^b and M^bar trivialized by Bounded}
The following hold 
\begin{enumerate}
\item\label{i Prop: M^b and M^bar trivialized by Bounded} \emph{$\M^b$ and $\overline{\M}$ are 
both trivialized by $\b_\Omega(D(x,\rho))$};
\item We have 
\begin{eqnarray}
\Hdr^0(\M,\mathcal{B}_\Omega(D(x,\rho)))&\;=\;&
\Hdr^0(\M^b,\mathcal{B}_\Omega(D(x,\rho)))\;,\label{eq: Sol M = Sol Mbar}\\
\mathrm{Hom}_{\H(x)\langle d\rangle}(\M,\b_\Omega(D(x,\rho)))
&\;=\;&
\mathrm{Hom}_{\H(x)\langle d\rangle}(\overline{\M},
\b_\Omega(D(x,\rho)))\;.\label{eq: Sol M = Sol Mbar-2}
\end{eqnarray} 
\end{enumerate}
\end{proposition}
\begin{proof} 
The assertions \eqref{eq: Sol M = Sol Mbar} and 
\eqref{eq: Sol M = Sol Mbar-2} are equivalent by duality 
\eqref{eq : omega = hom}. We prove \eqref{eq: Sol M = Sol Mbar-2}.
Let $e_1,\ldots,e_q$ denote the canonical basis of $\H(x)\lr{d}^q$. 

If $m=\Psi(\sum_iP_i(d)e_i)\in\M^{[1]}$ and if 
$s\in\mathrm{Hom}_{\H(x)\lr{d}}(\M,\b_\Omega(D(x,\rho)))$ 
one has
\begin{eqnarray}
\|s(m)\|_{D(x,\rho)}&\;=\;&\|s\circ\Psi(\sum P_i(d)e_i)\|_{D(x,\rho)}
\;=\;\|\sum P_i(d)s\circ\Psi(e_i)\|_{D(x,\rho)}\\
&\;\leq\;&
(\max_i\|P_i\|_{op,D(x,\rho)})
\cdot(\max_i\|s\circ \Psi(e_i)\|_{D(x,\rho)}).
\nonumber
\end{eqnarray}
Since 
$0=\|m\|_{\Psi}=\inf_{\Psi(\sum_iP_ie_i)=m}
\max_i\|P_i\|_{op,D(x,\rho)}$ 
one obtains $\|s(m)\|_{D(x,\rho)}=0$,  hence $s(m)=0$. 
So \eqref{eq: Sol M = Sol Mbar-2} holds. 

Now, let us prove that 
$\overline{\M}$ is trivialized by $\b_\Omega(D(x,\rho))$. 
To this aim, let us choose a presentation $\Psi$ such that 
$\{\Psi(e_i)\}_{i=1,\ldots,q}$ is a basis of $\M$ as 
$\H(x)$-vector space and 
the image $\overline{\bs{m}}:=
\{\overline{m}_1,\ldots,\overline{m}_r\}$ in 
$\overline{\M}$ of the first $r$ vectors  
$\{\Psi(e_{1}),\ldots,\Psi(e_r)\}$ is a basis of $\overline{\M}$.
Since $\overline{\M}$ is a vector space 
over the complete valued field $\H(x)$, 
all norms on $\overline{\M}$ are equivalent.  
Therefore, there exists a constant $C\in\mathbb{R}$ such that the sup-norm 
\begin{equation}
\|\sum_if_i\overline{m}_i\|_{\overline{\bs{m}}}
\;:=\;
\max_i|f_i(x)|
\end{equation} 
satisfies 
\begin{equation}
\|\cdot\|_{\overline{\bs{m}}} \;\leq\;C\cdot \|\cdot\|^-_\Psi\;.
\end{equation}
Let us test this inequality on the derivatives of $\overline{\bs{m}}$, by \eqref{eq : explicit norm operator} one finds
\begin{eqnarray}
\Bigl\|\frac{1}{n!}(\frac{d}{dT})^n(\overline{m}_i)\Bigr\|_{\overline{\bs{m}}} &\;\leq\;&
C\cdot\Bigl\|\frac{1}{n!}(\frac{d}{dT})^n(\overline{m}_i)\Bigr\|^-_\Psi\\
&\;=\;&
C\cdot\Bigl(\,\inf_{\gamma(\Psi(\sum_jP_j e_j))=
\frac{1}{n!}(\frac{d}{dT})^n(\overline{m}_i)}\;\;\max_j\|P_j\|_{op,D(x,\rho)}\Bigr)\\
&\;\leq\;& \label{eq : bound for d/dT)^n}
C\cdot\Bigl\|\frac{1}{n!}(\frac{d}{dT})^n\Bigr\|_{op,D(x,\rho)}\;=\;
C\cdot (\rho\cdot r(x))^{-n} \;,
\end{eqnarray}
where the last inequality follows by 
choosing a particular choice of the tuple $(P_k)_k$ as
$P_i=\frac{1}{n!}(\frac{d}{dT})^n$ 
and $P_j=0$ for $j\neq i$. 

Let $\nabla:\overline{\M}\to\overline{\M}$ be the multiplication by $d/dT$ in $\overline{\M}$. 
Let $G_n=(g_{n;i,j})_{i,j}$ be the $r\times r$ square matrix with entries in $\H(x)$ 
whose columns are the coordinates of the images by $\nabla^n$ of the basis 
$\overline{m}_1,\ldots,\overline{m}_r$. 
Then $\|G_n\|_{D(x,\rho)}=
\max_{i,j}\|g_{n;i,j}\|_{D(x,\rho)}$. 
The bound \eqref{eq : bound for d/dT)^n} implies
\begin{equation}
\frac{\|G_n\|_{D(x,\rho)}}{|n!|}=
\max_{i=1,\ldots,r}
\left\|\frac{\nabla^n}{n!}(\overline{m}_i)\right\|_{\overline{\bs{m}}}
=\max_{i=1,\ldots,r}\left\|\frac{1}{n!}(\frac{d}{dT})^n(\overline{m}_i)\right\|_{\overline{\bs{m}}}
\leq C(\rho\cdot r(x))^{-n}\;.
\end{equation} 
Since the entries $g_{n;i,j}$ of $G_n$ lie in $\H(x)$ one has 
$\|g_{n;i,j}\|_{D(x,\rho)}=|g_{n;i,j}(x)|=|g_{n;i,j}(t_x)|_\Omega$.
This proves that the Taylor solution 
$Y(T):=\sum_{n\geq 0}G_n(t_x)(T-t_x)^n/n!\in \Omega[\![T-t_x]\!]$ 
of the dual module $\overline{\M}^*$ belongs to 
$\b_\Omega(D(x,\rho))$. Equivalently $\overline{\M}^*$ is 
trivialized by $\b_\Omega(D(x,\rho))$, and then so is
$\overline{\M}$.
\end{proof}

\subsection{Decomposition over $\H(x)$ by the analytic solutions}
\label{Decomposition over H(x) by the analytic solutions.}
As observed in Remark \ref{iterated topologies ... non trivial} the 
canonical topology of $\M^{[1]}$ is often non-trivial, 
so that one can 
repeat the construction and define inductively 
\begin{eqnarray}
\M^{[i+1]}&:=&(\M^{[i]})^{[1]}\\
\M_{[i+1]}&:=&(\M_{[i]})_{[1]} \\
(\M_{[i]})^*&=&(\M^*)^{[i]}\;.
\end{eqnarray}
The process ends because $\M$ is finite dimensional.
One obtains a finite sequence of surjective maps 
\begin{equation}\label{Sequence of M_[i]}
\M:=\M_{[0]}\to\M_{[1]}\to\cdots \to\M_{[k-1]}\to\M_{[k]}\;,
\end{equation}
where $\M_{[k+1]}=\M_{[k]}$, and $\M_{[i+1]}\neq\M_{[i]}$ for 
all $i=0,\ldots,k-1$. \footnote{It can be shown that  
$\M_{[i]}$ takes into account the solutions 
with \emph{logarithmic growth of order $i-1$} of $\M$, so that it follows that all solutions of 
$\M$ have at most logarithmic growth of order $r=\mathrm{dim}_{\H(x)}\M$ (cf. \cite{Ch}).}

\begin{definition}
Let $k\geq 1$ be the smallest natural number 
such that 
$\M_{[k+1]}=\M_{[k]}$.
Denote by $\M^{< \rho}$ the module 
$\M_{[k]}$, and set
 $\M^{\geq \rho}:=\mathrm{Ker}(\M\to\M_{[k]})$. We have an exact sequence of differential modules
\begin{equation}\label{eq : Robba deco}
0\to\M^{\geq \rho}\to\M\to\M^{< \rho}\to 0\;.
\end{equation}
\end{definition}
\begin{remark}\label{remark : M^rho has no sol in b(t,rho)}
Since $\M_{[k]}=\M_{[k+1]}$, we have $\M^{b}_{[k]}=0$, hence (cf. \eqref{eq: Sol M = Sol Mbar})
\begin{equation}
\Hdr^0(\M_{[k]},\b_\Omega(D(x,\rho)))\;=\;\Hdr^0(\M_{[k]}^b,
\b_\Omega(D(x,\rho)))\;=\;0\;.
\end{equation}
In other words the module $\M^{< \rho}$ 
has no non trivial solutions with values in $\b_\Omega(D(x,\rho))$.
\end{remark}
The following proposition shows that the 
module $\M^{\geq \rho}$ takes 
into account the solutions of $\M$ with values in 
$\O_\Omega(D(x,\rho))$:

\begin{proposition}\label{Prop. deco analytic}
We have 
\begin{enumerate}
\item\label{i Prop. deco analytic} $\Hdr^0(\M,
\O_\Omega(D(x,\rho)))=\Hdr^0(\M^{\geq\rho},
\O_\Omega(D(x,\rho)))$;
\item\label{ii Prop. deco analytic} $\Hdr^0(\M^{<\rho},\O_\Omega(D(x,\rho)))=0$;
\item $\M^{\geq\rho}$ is trivialized by $\O_\Omega(D(x,\rho))$.\label{iii Prop. deco analytic}
\end{enumerate}
\end{proposition}
\begin{proof}
Let us begin with item \eqref{iii Prop. deco analytic}. 
The kernels $K_i:=\mathrm{Ker}(\M\to\M_{[i]})$ 
of the sequence \eqref{Sequence of M_[i]} define a 
filtration $0\subset K_{1}\subset K_{2}\subset\cdots
\subset K_{k}=\M^{\geq \rho}$ of $\M$ 
where every sub-quotient $K_i/K_{i-1}$ is trivialized by 
$\b_\Omega(D(x,\rho))$, hence also by $\O_\Omega(D(x,\rho))$ 
(cf. Proposition \ref{Prop: M^b and M^bar trivialized by Bounded}). 
Item \eqref{iii Lemma :devisage}  of Lemma \ref{Lemma :devisage} then implies that 
$\M^{\geq \rho}$ is trivialized by $\O_\Omega(D(x,\rho))$. 
Indeed, the assumptions of Lemma \ref{Lemma :devisage} are fulfilled 
because $d=d/dT$ is surjective on 
$\O_\Omega(D(x,\rho))$ 
(cf. Notation \ref{Notation: T=coordinate on D(x)}).

\eqref{ii Prop. deco analytic} By Remark \ref{remark : M^rho has no sol in b(t,rho)}, 
$\M^{<\rho}$ has no nontrivial solutions in 
$\b_\Omega(D(x,\rho))$. %
Then, \eqref{ii Prop. deco analytic} follows from Proposition \ref{crucial point} below. 

\eqref{i Prop. deco analytic} now follows from \eqref{ii Prop. deco analytic} using also \eqref{iii Prop. deco analytic} together with 
items \eqref{i Lemma :devisage} and \eqref{ii Lemma :devisage} of Lemma \ref{Lemma :devisage}.
\end{proof}

\begin{remark}\label{Rk :  sol in O imply sol in B .hy}
The following proposition asserts that 
\emph{a differential module having some non trivial analytic 
solutions in $\O_\Omega(D(x,\rho))$ must have at least a non trivial 
bounded solution in $\b_\Omega(D(x,\rho))$}. 
This is a crucial point of the theory, and it is originally due to Dwork 
\cite{Dw-II}.
\end{remark}

\begin{proposition}\label{crucial point}
The following statements hold:
\begin{enumerate}
\item\label{i crucial point} If $\M_{[1]}=\M$, then 
$\Hdr^0(\M,\O_\Omega(D(x,\rho)))=0$.
\item\label{ii crucial point} If $\M^{[1]}=\M$, then 
$\mathrm{Hom}_{\H(x)\lr{d}}(\M,\O_\Omega(D(x,\rho)))=0$.
\end{enumerate}
\end{proposition}
\begin{proof}
The two assertions are equivalent by duality 
(see \eqref{eq : omega = hom}). 
We prove \eqref{ii crucial point}.
Let $m\in\M$, and $s\in \mathrm{Hom}_{\H(x)\lr{d}}(\M,
\O_\Omega(D(x,\rho)))$. 
The assumption implies that 
$0=\|m\|_\Psi=\inf_{\Psi(\sum_iP_ie_i)=m}
\max_i\|P_i\|_{op,D(x,\rho)}$. Let  $P_1,\ldots,P_q$ be such that 
$\max_i\|P_i\|_{op,D(x,\rho)}<1$. 
Now, for all $\rho'<\rho$, 
the restriction of $s$ belongs to 
$\mathrm{Hom}_{\H(x)\lr{d}}(\M,\b_\Omega(D(x,\rho')))$, so 
\begin{eqnarray}
\|s(m)\|_{D(x,\rho')}&\;=\;&
\|\sum_iP_is(\Psi(e_i))\|_{D(x,\rho')}\;\leq\;
\max_i\|P_i\|_{op,D(x,\rho')}\|s(\Psi(e_i))\|_{D(x,\rho')}\\
&\;\leq\;&
\max_i\|P_i\|_{op,D(x,\rho')}\cdot \max_i\|s(\Psi(e_i))\|_{D(x,\rho')}\label{eq : 2.10}
\end{eqnarray}
By \eqref{eq : explicit norm operator} each map 
$\rho'\mapsto\|P_i\|_{op,D(x,\rho')}$ is continuous, 
hence there exists $\rho'<\rho$ such that for all $\rho''\in[\rho',\rho[$ we have
\begin{equation}\label{eq : dempq}
\max_i\|P_i\|_{op,D(x,\rho'')}\;<\;1\;.
\end{equation}
Assume, by contrapositive, that $s\neq 0$. In this case, for all $0\neq \rho'\leq\rho$,
we have $\max_i\|s(\Psi(e_i))\|_{D(x,\rho')}>0$.
In this case, \eqref{eq : dempq} implies that \eqref{eq : 2.10} is a strict inequality. 
This applies in particular to the case where $m$ is equal to the generators $\{\Psi(e_i)\}_{i=1,\ldots,q}$.
For all $i$, there exists $\rho_i<\rho$ such that for all $\rho'\in[\rho_i,\rho[$ we have 
$\|s(\Psi(e_i))\|_{D(x,\rho')}<\max_i\|s(\Psi(e_i))\|_{D(x,\rho')}$. Therefore, we find for 
$\rho_m:=\max_i\rho_i$ the strict inequality
\begin{equation}
\|s(\Psi(e_i))\|_{D(x,\rho_m)} \;<\;
\max_i\|s(\Psi(e_i))\|_{D(x,\rho_m)}\;.
\end{equation}  
This is absurd and hence $s=0$.
\end{proof}

\subsection{Exactness, compatibility with dual, and direct 
sum decomposition}
\label{Exactness, compatibility with dual, and direct 
sum decomposition.}
\begin{proposition}[\protect{\cite{Ro-I}}]\label{exactness}
The functors associating to an $\H(x)$-differential module $\M$ the 
$\Omega$-vector spaces 
$\mathrm{Hom}_{\H(x)\langle d\rangle}(\M,\O_\Omega(D(x,\rho)))$
and $\Hdr^0(D(x,\rho),\M)$ respectively are exact 
for all $\rho\in ]0,1]$.
\end{proposition}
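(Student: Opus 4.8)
The plan is to establish exactness of the two functors simultaneously by duality: since $\mathrm{Hom}_{\H(x)\langle d\rangle}(\M,\O_\Omega(D(x,\rho))) = \omega(D(x,\rho),\M^*)$ by \eqref{eq : omega = hom}, and $\M\mapsto\M^*$ is an exact anti-equivalence of the category of $\H(x)$-differential modules, it suffices to prove that $\M\mapsto\omega(D(x,\rho),\M)$ is exact. Left exactness is already granted by point i) of Lemma \ref{Lemma :devisage} (the functor $\omega(D,-)$ is left exact on any open disk). So the entire content is \emph{right} exactness: given a short exact sequence $E:0\to\Fs'\to\Fs\to\Fs''\to 0$ of $\H(x)$-differential modules, the map $\omega(D(x,\rho),\Fs)\to\omega(D(x,\rho),\Fs'')$ is surjective.

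First I would reduce, using the decomposition \eqref{eq : Robba deco} applied to $\Fs''$ (and functoriality), to the situation where $\Fs''=\Fs''^{\geq\rho}$ is trivialized by $\O_\Omega(D(x,\rho))$, since the part $\Fs''^{<\rho}$ contributes nothing: $\omega(D(x,\rho),\Fs''^{<\rho})=0$ by point ii) of Proposition \ref{Prop. deco analytic}, and one checks that pulling back along $\Fs''^{\geq\rho}\hookrightarrow\Fs''$ does not change the image. So assume $\Fs''$ is trivialized by $A':=\O_\Omega(D(x,\rho))$. Now base-change the sequence $E$ to $A'$; it stays exact because $\Fs''$ is locally free, hence $\mathrm{Tor}_1^{\H(x)}(\Fs'',A')=0$. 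On $A'$ the derivation $d$ is surjective (it equals $f_j d_j$ with $f_j$ invertible, cf. Remark \ref{local derivation}, and $d_j=d/dT_j$ is surjective on $\O_\Omega(D(x,\rho))$), and $\Fs''\otimes_{\H(x)}A'$ is a trivial $A'\langle d\rangle$-module. Therefore, by the snake lemma applied to $\nabla':E\otimes A'\to E\otimes A'$ together with Remark \ref{rk : trivial implies surjective nabla} (the connection on a trivial module over a ring with surjective derivation is surjective), the sequence $\omega(E\otimes A',A')$ is \emph{right} exact; this is precisely point ii) of Lemma \ref{Lemma :devisage} applied over $A'$. But $\omega(\Fs\otimes A',A')=\omega(D(x,\rho),\Fs)$ and similarly for $\Fs',\Fs''$, so we conclude.

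The only genuine obstacle is the reduction step, i.e. controlling what happens when the quotient $\Fs''$ is \emph{not} trivialized by $A'$: one must know that the ``over-convergent'' or higher-logarithmic-growth part $\Fs''^{<\rho}$ of $\Fs''$ really carries no solutions and does not interfere with surjectivity. That is exactly the crucial Dwork-type input of Proposition \ref{crucial point} (analytic solutions force bounded solutions), already used to prove Proposition \ref{Prop. deco analytic}. So in the write-up I would: (a) invoke \eqref{eq : omega = hom} to pass to $\omega(D(x,\rho),-)$; (b) invoke Lemma \ref{Lemma :devisage}(i) for left exactness; (c) use the functorial decomposition $0\to\Fs''^{\geq\rho}\to\Fs''\to\Fs''^{<\rho}\to0$ and Proposition \ref{Prop. deco analytic} to reduce to $\Fs''$ trivialized by $A'$; (d) base-change to $A'$, noting $\mathrm{Tor}_1=0$ and $d$ surjective on $A'$; (e) apply the snake lemma / Lemma \ref{Lemma :devisage}(ii) over $A'$ to finish. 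Finally, the statement for $\mathrm{Hom}_{\H(x)\langle d\rangle}(-,\O_\Omega(D(x,\rho)))$ follows by applying the $\omega(D(x,\rho),-)$ result to the dual sequence $0\to\Fs''^*\to\Fs^*\to\Fs'^*\to0$.
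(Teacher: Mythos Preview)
Your reduction step is fine, but the endgame has a genuine gap. After reducing to the case where $\Fs''$ is trivialized by $A'=\O_\Omega(D(x,\rho))$, you invoke the snake lemma and Lemma~\ref{Lemma :devisage}(ii) to conclude that $\omega(E\otimes A',A')$ is right exact. But Lemma~\ref{Lemma :devisage}(ii) requires the \emph{sub-object} $\Fs'$ to be trivial, not the quotient $\Fs''$. Concretely, the snake lemma applied to $\nabla':E\otimes A'\to E\otimes A'$ gives a connecting map $\delta:\omega(\Fs''\otimes A')\to H^1(\Fs'\otimes A')$, and surjectivity of $\omega(\Fs)\to\omega(\Fs'')$ is equivalent to $\delta=0$. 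Triviality of $\Fs''$ only tells you $H^1(\Fs''\otimes A')=0$, which is at the wrong end of the long exact sequence and says nothing about $\delta$. The obstruction lives in $H^1(\Fs'\otimes A')$, and your argument gives no control over this.

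The paper's proof supplies exactly the missing analytic input. It shows $\mathrm{Ext}^1_{\H(x)\lr{d}}(\M,A')=0$ for \emph{every} $\M$, and the hard case is $\M=\M^{<\rho}$: taking a cyclic presentation $\M\cong\H(x)\lr{d}/\H(x)\lr{d}L$, the vanishing of $\mathrm{Ext}^1$ amounts to surjectivity of $L$ as an operator on $\O_\Omega(D(x,\rho))$, and this is the content of Lemma~\ref{QL-1 small} (an approximate-inverse argument showing $L$ is in fact bijective on $A'$). This is the substantive step your proposal skips. Once $\mathrm{Ext}^1(\M,A')=0$ is known for all $\M$, exactness of $\mathrm{Hom}(-,A')$ is immediate from the long exact sequence, and exactness of $\omega(D(x,\rho),-)$ follows by duality as you indicated.
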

\begin{proof}
The two assertions are equivalent by duality 
\eqref{eq : omega = hom}. To prove the first one, 
it is enough to show that, for all $\H(x)$-differential modules 
$\M$, one has $\mathrm{Ext}^1_{\H(x)\lr{d}}(\M,
\O_\Omega(D(x,\rho)))=0$. Since $\M^{\geq \rho}$ is trivialized by 
$\O_\Omega(D(x,\rho))$ one has (cf. \cite[Section 6.7]{Christol-Book})
\begin{eqnarray}
\mathrm{Ext}^1_{\H(x)\lr{d}}(\M^{\geq\rho},\O_\Omega(D(x,\rho))) 
&\;=\;&
\mathrm{Ext}^1_{\O_\Omega(D(x,\rho))\lr{d}}(\O_\Omega(D(x,
\rho))\otimes_{\H(x)}\M^{\geq\rho},\O_\Omega(D(x,\rho)))\qquad\\
&=&
\mathrm{Ext}^1_{\O_\Omega(D(x,\rho))\lr{d}}(\O_\Omega(D(x,
\rho)),\O_\Omega(D(x,\rho)))^{\mathrm{dim}\M^{\geq 
\rho}}\;=\;0\;,\qquad
\end{eqnarray}
where the last equality follows from Lemmas 
\ref{Lemma : Ext^1=H^1} and \ref{Lemma :devisage}.
Writing 
$0\to\M^{\geq\rho}\to\M\to\M^{<\rho}\to0$ 
we are reduced to proving that 
\begin{equation}\label{...EE...uiguh}
\mathrm{Ext}^1_{\H(x)\lr{d}}(\M^{<\rho},\O_\Omega(D(x,
\rho)))\;=\;0\;. 
\end{equation}
So we may 
assume $\M=\M^{<\rho}$. Let $L\in\H(x)\lr{d}$ be such that 
$\M\cong\M_L$ (cf. Section 
\ref{Filtrations of cyclic modules, and factorization of operators.}). By  
\cite[Sections 6.6 and 6.7]{Christol-Book}, in order to prove 
\eqref{...EE...uiguh} it is enough to prove that $L$ 
is surjective as an operator on $\O_\Omega(D(x,\rho))$. This follows 
from Lemma \ref{QL-1 small} below.
\end{proof}

\begin{remark}\label{Rk : finite number of steps}
For all $\rho'<\rho$ close enough to $\rho$ 
one has $\M^{\geq\rho'}=\M^{\geq \rho}$ and 
$\M^{<\rho'}=\M^{<\rho}$. 
Indeed $\M$ is a finite dimensional 
vector space so the filtration $\{\M^{\geq \rho}\}_{\rho}$ 
has a finite number of steps, 
whose dimensions equal those of the family 
$\{\Hdr^0(\M,\O_\Omega(D(x,\rho)))\}_\rho$ (cf. Proposition 
\ref{Prop. deco analytic}).
\end{remark}

\begin{lemma}\label{QL-1 small}
Let $L\in\H(x)\lr{d}$, and let $\M_L$ be the corresponding 
cyclic differential module over $\H(x)$. 
Assume that $\M_L=\M_L^{<\rho}$ or, equivalently, that $L$ is 
injective on $\O_\Omega(D(x,\rho))$. 
Then for all $\varepsilon>0$ 
there exists $Q_\varepsilon\in\O_{X,x}\lr{d}$ such that 
\begin{enumerate}
\item\label{i QL-1 small} $\|Q_\varepsilon 
L-1\|_{op,\b_\Omega(D(x,\rho))}<\varepsilon$;
\item\label{ii QL-1 small} $Q_\varepsilon$ and $L$ are bijective as endomorphisms of the rings 
$\H(x)$, $\b_\Omega(D(x,\rho))$ and  $\O_\Omega(D(x,\rho))$.
\end{enumerate}
The same same operator $Q_\varepsilon$ satisfies items 
\eqref{i QL-1 small} and \eqref{ii QL-1 small} with 
respect to all $\rho'<\rho$ close enough to $\rho$.
\end{lemma}
\begin{proof}
Let $\varepsilon>0$, we may assume that $\varepsilon<1$. 
Consider the presentation $\Psi:\H(x)\lr{d}\to\M_L\to 0$ 
with kernel $\H(x)\lr{d}L$. Since $\M_L=\M_L^{[1]}$, we have $\|\Psi(-1)\|_\Psi=0$ (cf. \eqref{eq: norm-Psi}). Therefore, there exists $\widetilde{P}\in \H(x)\lr{d}$ such that 
$\|\widetilde{P}\|_{op,\b_\Omega(D(x,\rho))}<\varepsilon$ and $\Psi(\widetilde{P})=\Psi(-1)$. We have $\Psi(\widetilde{P}+1)=0$, so $\widetilde{P}+1\in\H(x)\langle d\rangle L$, that is 
$\widetilde{P}=-1+\widetilde{Q}_\varepsilon L$ for some 
$\widetilde{Q}_\varepsilon\in\H(x)\lr{d}$. 
Let $n$ be the order of $\widetilde{Q}_\varepsilon$, and let 
$\H(x)\lr{d}^{\leq n}$ be the $\H(x)$-vector space of differential 
polynomials of order  $\leq n$. 
Since all norms on $\H(x)\lr{d}^{\leq n}$ are equivalent,  
$\|.\|_{op,\b_{\Omega}(D(x,\rho))}$ is equivalent to the sup-norm 
with respect to the basis $1,d,\ldots,d^n$ : 
$\|\sum_{i=0}^ng_id^i\|_{\H(x),d}:=\max_{i}|g_i|(x)$. We deduce 
that $\O_{X,x}\lr{d}^{\leq n}$ is dense in $\H(x)\lr{d}^{\leq n}$ with 
respect to $\|.\|_{op,\b_{\Omega}(D(x,\rho))}$. 
Hence, there exists $Q_\varepsilon\in\O_{X,x}\lr{d}^{\leq n}$ such 
that $\|Q_\varepsilon L-1\|_{op,\b_\Omega(D(x,\rho))}<\varepsilon$.
Since $\b_\Omega(D(x,\rho))$ is complete, 
$Q_\varepsilon L=1+P$ is invertible 
as an endomorphism of $\b_\Omega(D(x,\rho))$ with inverse 
$U:=\sum_{i\geq 0}(-1)^i P^i$. 
It follows that  $Q_\varepsilon LU=1$ and $UQ_\varepsilon L=1$. In particular,
$Q_\varepsilon$ is surjective (and $L$ is injective) 
as an operator on $\b_\Omega(D(x,\rho))$. 
By Lemma \ref{Lemma : QL and U invertible} below, $Q_\varepsilon$ is invertible as operators on 
$\b_\Omega(D(x,\rho))$ and injective also on $\O_\Omega(D(x,\rho))$ and $\H(x)$. 
We deduce that 
$L$ is invertible too as operator on 
$\b_\Omega(D(x,\rho))$ and again by 
Lemma \ref{Lemma : QL and U invertible}, $L$ is injective as operator on 
$\O_\Omega(D(x,\rho))$ (which is our assumption) and $\H(x)$. 

To deduce that $L$ is also surjective 
on $\O_\Omega(D(x,\rho))$ we may also argue as follows. One considers $\rho_0<\rho$ such that $\M_L^{<\rho_0}=\M_L$ (cf. Remark 
\ref{Rk : finite number of steps}). 
Then $L$ is bijective on 
$\b_\Omega(D(x,\rho'))$ for all $\rho_0\leq\rho'<\rho$. 
If $f\in\O_\Omega(D(x,\rho))$, for all $\rho'$, 
we may find $g_{\rho'}\in \b_\Omega(D(x,\rho'))$ such 
that $L(g_{\rho'})=f$. By injectivity, for all $\rho_0\leq \rho'<\rho''<\rho$ we must have $g_{\rho'}=g_{\rho''}$, 
which shows that $g_{\rho'}$ and $g_{\rho''}$ are both 
restrictions of the same analytic function 
$g\in\O_\Omega(D(x,\rho))=\bigcap_{\rho'<\rho}\b_\Omega(D(x,\rho'))$ satisfying $L(g)=f$. This shows that $L$ is surjective as an endomorphism of $\O_\Omega(D(x,\rho))$.

Notice that, by Lemma \ref{Lemma : QL and U invertible} below, $Q_\varepsilon$ is injective as an 
operator on $\O(D(x,\rho))$, so we can reproduce the same argument to prove the bijectivity of 
$Q_\varepsilon$ on $\O_\Omega(D(x,\rho))$.

Now, the inclusion 
$\H(x)\subset\b_\Omega(D(x,\rho))$ is isometric, so 
$\|.\|_{op,\H(x)}\leq\|.\|_{op,\b_\Omega(D(x,\rho))}$. Hence, as 
above, $Q_\varepsilon L$ is invertible as an endomorphism of $\H(x)$ 
with inverse $U$. As above, this implies that
$Q_\varepsilon$ is surjective (and $L$ is injective) 
as endomorphism on $\H(x)$.
Moreover, we know that 
$Q_\varepsilon$ is injective on $\H(x)$ 
(because it is injective on $\b_\Omega(D(x,\rho))$). 
So both $L$ and $Q_\varepsilon$ are bijective as 
endomorphisms of $\H(x)$.

Now to prove that the same holds for $\rho'<\rho$ close to $\rho$, we 
observe that %
$L$ remains injective in $\O(D(x,\rho'))$ by Remark 
\ref{Rk : finite number of steps}, 
and we can reproduce the proof for $\rho'$. 
Note that $\rho\mapsto
\|Q_\varepsilon L-1\|_{op,\b_\Omega(D(x,\rho))}$ is a continuous 
function of $\rho$ (cf. \eqref{eq : explicit norm operator}), 
so for $\rho'<\rho$ close enough to $\rho$ 
the inequality \eqref{i QL-1 small} is preserved, and the rest of the proof works 
identically.
\end{proof}  

\begin{lemma}[cf. \protect{\cite[15.4]{Christol-Book}}]
\label{Lemma : QL and U invertible}
If $Q\in\H(x)\lr{d}$ is surjective as an endomorphism of 
$\b_\Omega(D(x,\rho))$, then it is injective as an endomorphism of 
$\O_\Omega(D(x,\rho))$, hence also of $\b_\Omega(D(x,\rho))$ and 
of $\H(x)$. 
\end{lemma}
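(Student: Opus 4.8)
The plan is to reduce the statement, through the dictionary between a cyclic module and its $\mathrm{Hom}$ and $\mathrm{Ext}^{1}$ groups, to the single fact that the derivation $d$ has infinite-dimensional cokernel on $\mathcal{B}_{\Omega}(D(x,\rho))$. Since $\H(x)$ is a field, the leading coefficient of $Q$ is a unit of $\H(x)\subseteq\mathcal{B}_{\Omega}(D(x,\rho))\subseteq\O_{\Omega}(D(x,\rho))$; dividing by it, I may assume $Q$ monic of order $n$. Set $\M:=\M_{Q}=\H(x)\langle d\rangle/\H(x)\langle d\rangle Q$, a differential module over $\H(x)$ of rank $n$. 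From the free resolution $0\to\H(x)\langle d\rangle\xrightarrow{\cdot Q}\H(x)\langle d\rangle\to\M\to 0$ one identifies, for every differential $\H(x)$-algebra $A$, the kernel and cokernel of $Q$ acting on $A$ with $\mathrm{Hom}_{\H(x)\langle d\rangle}(\M,A)$ and $\mathrm{Ext}^{1}_{\H(x)\langle d\rangle}(\M,A)$ respectively (cf.\ \cite[Sections 6.6, 6.7]{Christol-Book}). Because $\H(x)\subseteq\mathcal{B}_{\Omega}(D(x,\rho))\subseteq\O_{\Omega}(D(x,\rho))$, injectivity of $Q$ on $\O_{\Omega}(D(x,\rho))$ implies injectivity on the other two rings, so only that one needs proof.

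First I would pass from $\O_{\Omega}(D(x,\rho))$ back to $\mathcal{B}_{\Omega}(D(x,\rho))$. By \eqref{eq: Sol M = Sol Mbar-2} we have $\mathrm{Hom}_{\H(x)\langle d\rangle}(\M,\mathcal{B}_{\Omega}(D(x,\rho)))=\mathrm{Hom}_{\H(x)\langle d\rangle}(\overline{\M},\mathcal{B}_{\Omega}(D(x,\rho)))$, and since $\overline{\M}$ is trivialised by $\mathcal{B}_{\Omega}(D(x,\rho))$ this group vanishes if and only if $\overline{\M}=0$, i.e.\ if and only if $\M^{[1]}=\M$. By Proposition~\ref{crucial point}(ii), $\M^{[1]}=\M$ forces $\mathrm{Hom}_{\H(x)\langle d\rangle}(\M,\O_{\Omega}(D(x,\rho)))=0$, that is, $Q$ injective on $\O_{\Omega}(D(x,\rho))$. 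Hence it suffices to show: if $Q$ is surjective on $\mathcal{B}_{\Omega}(D(x,\rho))$ then $\ker(Q|_{\mathcal{B}_{\Omega}(D(x,\rho))})=0$.

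I would prove the contrapositive. Write $\mathcal{B}:=\mathcal{B}_{\Omega}(D(x,\rho))$ and assume $\ker(Q|_{\mathcal{B}})\neq 0$; then $\overline{\M}\neq 0$ by the previous step. Since $\H(x)$ is a field, the exact sequence $0\to\M^{[1]}\to\M\to\overline{\M}\to 0$ corresponds to a factorisation $Q=Q_{1}Q_{2}$ with $\M_{Q_{1}}\cong\M^{[1]}$ and $\M_{Q_{2}}\cong\overline{\M}$; in particular $r_{2}:=\mathrm{ord}(Q_{2})=\mathrm{rank}\,\overline{\M}\geq 1$ and $\M_{Q_{2}}$ is trivialised by $\mathcal{B}$ (Section~\ref{Filtrations of cyclic modules, and factorization of operators.}). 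The composition $\mathcal{B}\xrightarrow{Q_{2}}\mathcal{B}\xrightarrow{Q_{1}}\mathcal{B}$ yields the exact sequence
\begin{equation}
\ker(Q_{1}|_{\mathcal{B}})\ \longrightarrow\ \mathrm{coker}(Q_{2}|_{\mathcal{B}})\ \longrightarrow\ \mathrm{coker}(Q|_{\mathcal{B}})\ \longrightarrow\ \mathrm{coker}(Q_{1}|_{\mathcal{B}})\ \longrightarrow\ 0 .
\end{equation}
By base change along the flat ring map $\H(x)\to\mathcal{B}$ and Lemma~\ref{Lemma : Ext^1=H^1}, $\mathrm{coker}(Q_{2}|_{\mathcal{B}})=\mathrm{Ext}^{1}_{\H(x)\langle d\rangle}(\M_{Q_{2}},\mathcal{B})=\mathrm{Ext}^{1}_{\mathcal{B}\langle d\rangle}(\M_{Q_{2}}\otimes_{\H(x)}\mathcal{B},\mathcal{B})=H^{1}(\mathcal{B})^{\,r_{2}}$, with $H^{1}(\mathcal{B})=\mathrm{coker}(d\colon\mathcal{B}\to\mathcal{B})$. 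The key input is that $H^{1}(\mathcal{B}_{\Omega}(D(x,\rho)))$ is infinite-dimensional over $\Omega$: this is the $p$-adic phenomenon that a bounded analytic function on an open disk need not admit a bounded primitive (primitives may grow logarithmically), and it occurs for infinitely many independent functions --- compare Remark~\ref{iterated topologies ... non trivial} and \cite[15.4]{Christol-Book}. Hence $\mathrm{coker}(Q_{2}|_{\mathcal{B}})$ is infinite-dimensional while the image of $\ker(Q_{1}|_{\mathcal{B}})$ in it has dimension $\leq\mathrm{ord}(Q_{1})<\infty$; therefore $\mathrm{coker}(Q|_{\mathcal{B}})$ contains the infinite-dimensional quotient of $\mathrm{coker}(Q_{2}|_{\mathcal{B}})$ by that image, and in particular $\mathrm{coker}(Q|_{\mathcal{B}})\neq 0$, i.e.\ $Q$ is not surjective on $\mathcal{B}$. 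Combined with the hypothesis, the injectivity just obtained also gives the bijectivity of $Q$ on $\mathcal{B}_{\Omega}(D(x,\rho))$ that is needed in the proof of Lemma~\ref{QL-1 small}.

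The only genuinely analytic ingredient is the infinite-dimensionality of $H^{1}(\mathcal{B}_{\Omega}(D(x,\rho)))$; everything else is formal bookkeeping with the identifications above and with the decompositions of Section~\ref{Robba-deco} by bounded and by analytic solutions. The subtlety to watch is to carry the argument through the cokernel of a surjection rather than through kernels, so that the non-exact functors $\M\mapsto\M^{[1]}$ and $\M\mapsto\overline{\M}$ are used only in the direction where they behave well.
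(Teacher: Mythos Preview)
Your argument is correct, and it reaches the same contradiction as the paper --- that $\mathrm{coker}(Q|_{\mathcal{B}})$ is infinite-dimensional --- through the same analytic input, namely that $\mathrm{coker}(d\colon\mathcal{B}\to\mathcal{B})$ is infinite-dimensional. The route, however, is genuinely different.

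The paper proceeds by hand: from a nonzero $u\in\ker(Q|_{\mathcal{B}})$ it picks the infinite-dimensional space $V\subseteq\O_\Omega(D(x,\rho))$ with $V\cap\mathcal{B}=0$ and $d(V)\subseteq\mathcal{B}$ (this \emph{is} the infinite-dimensionality of $H^{1}(\mathcal{B})$), and then checks directly via the Leibniz rule that $uV\cap\mathcal{B}=0$ while $Q(uV)\subseteq\mathcal{B}$, forcing $\mathrm{coker}(Q|_{\mathcal{B}})$ to be infinite-dimensional. No factorisation of $Q$ and no $\mathrm{Ext}$-groups are needed; the only module-theoretic input is Proposition~\ref{crucial point}, used (as you do) to pass from $\O$ to $\mathcal{B}$. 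Your approach instead packages everything cohomologically: the factorisation $Q=Q_{1}Q_{2}$ from the filtration $\M^{[1]}\subset\M$, the six-term sequence for a composition, and the identification $\mathrm{Ext}^{1}_{\H(x)\langle d\rangle}(\M_{Q_{2}},\mathcal{B})\cong H^{1}(\mathcal{B})^{r_{2}}$. This is clean and fits the ambient machinery of Section~\ref{Robba-deco} and of Proposition~\ref{exactness}, but it costs the factorisation step and the base-change identification of $\mathrm{Ext}^{1}$ after $\H(x)\to\mathcal{B}$ (which you correctly justify by rewriting $\M_{Q_{2}}\otimes_{\H(x)}\mathcal{B}$ as $\mathcal{B}\langle d\rangle/\mathcal{B}\langle d\rangle Q_{2}$). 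The paper's argument is more elementary; yours makes the reason structural.
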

\begin{proof}
By contrapositive if $Q$ is not injective on $\O_\Omega(D(x,\rho))$, 
then, by Proposition \ref{crucial point} 
(cf. Remark \ref{Rk :  sol in O imply sol in B .hy}), 
it is not on $\b_\Omega(D(x,\rho))$ either. 
Let $u\in\b_\Omega(D(x,\rho))$ be such that $Q(u)=0$.
{Notation: T=coordinate on D(x)}
It is well known that $d$ has an infinite dimensional 
cokernel as an operator on $\b_\Omega(D(x,\rho))$ 
(roughly speaking there are lots of bounded functions 
whose primitive is not bounded).
More precisely, by \cite[15.1]{Christol-Book}, 
one may find an infinite dimensional 
$\Omega$-sub-vector space $V\subseteq\O(D(x,\rho))$ 
such that $V\cap\b_\Omega(D(x,\rho))=0$ and 
$d(V)\subset\b_\Omega(D(x,\rho))$. 
Now, the vector space $uV$ satisfies  
\begin{equation}\label{eq : hh d s tuo ,}
uV\cap\b_\Omega(D(x,\rho))\;=\;0\;,\qquad 
Q(uV)\;\subset\;\b_\Omega(D(x,\rho))\;.
\end{equation}
Indeed, if $uf\in\b_\Omega(D(x,\rho))$, then 
$f\in\b_\Omega(D(x,\rho))$ because 
we may fix $\rho_0<\rho$ and see that, for all 
$\rho_0\leq \rho'<\rho$, one has 
$\|f\|_{D(x,\rho')}=\frac{\|uf\|_{D(x,\rho')}}{\|u\|_{D(x,\rho')}}
\leq\frac{\|uf\|_{D(x,\rho)}}{\|u\|_{D(x,\rho_0)}}$, which shows that $f$ is bounded too. 
We deduce that 
$uV\cap\b_\Omega(D(x,\rho))=0$. 
The inclusion $Q(uV)\subset\b_\Omega(D(x,\rho))$ follows from the 
fact that $d^n(uf)=d^n(u)f+\sum_{i=1}^nb_id^i(f)$ with 
$b_i=\tbinom{n}{i}d^{n-i}(u)$, which implies
\begin{equation}\label{eq: RDF}
Q(uf)\;=\;Q(u)f+Rd(f)\;=\;Rd(f)\;,
\end{equation} 
with $R\in\b_\Omega(D(x,\rho))\lr{d}$. 
This shows that if $f\in V$, by definition 
$d(f)\in\b_\Omega(D(x,\rho))$ and by \eqref{eq: RDF} we have 
$Q(uf)\in\b_\Omega(D(x,\rho))$. 
This proves \eqref{eq : hh d s tuo ,}. 
As a consequence, the dimension of 
$\mathrm{coker}(Q,\b_\Omega(D(x,\rho)))$ is 
infinite, which contradicts the fact that $Q$ is surjective.  
\end{proof}

\if{
If $\varphi:\M\to\N$ is a morphism of $\H(x)$-differential modules, 
and if $s:\N\to\b_\Omega(t,\rho)$ is a bounded solution, 
then $s\phi$ is a solution of $\M$, and hence it annihilates 
$\M^{<\rho}$. 
So $\varphi(\M^{<\rho})$ is annihilated by all bounded solution of 
$\N$ and hence it is contained in 
$\N^{<\rho}$. This proves that $\M\mapsto\M^{\geq \rho}$ is a 
functor.
}\fi

\begin{corollary}\label{Cor : exactness of M-->M^geqrho}
$\M\mapsto \M^{\geq \rho}$ is an additive exact functor.
\end{corollary}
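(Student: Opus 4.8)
The plan is to deduce both functoriality and exactness formally from Proposition \ref{Prop. deco analytic} and Proposition \ref{exactness}, the real content having already been established there. First I would record the following characterization: \emph{$\M^{\geq\rho}$ is the largest differential sub-module of $\M$ that is trivialized by $\O_\Omega(D(x,\rho))$.} It is trivialized by Proposition \ref{Prop. deco analytic}(iii). Conversely, if $\N\subseteq\M$ is trivialized by $\O_\Omega(D(x,\rho))$, its image in $\M^{<\rho}$ is a quotient of $\N$, hence again trivialized by $\O_\Omega(D(x,\rho))$ --- here one uses Lemma \ref{Lemma :devisage} together with the fact that $d$ is surjective on $\O_\Omega(D(x,\rho))$ (Remark \ref{local derivation}). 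But $\M^{<\rho}$ has no nonzero solution with values in $\O_\Omega(D(x,\rho))$ by Proposition \ref{Prop. deco analytic}(ii), while a nonzero trivialized module does; so this image is $0$ and $\N\subseteq\M^{\geq\rho}$.

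From this, functoriality and additivity are immediate. If $\varphi\colon\M\to\N$ is a morphism of $\H(x)$-differential modules, then $\varphi(\M^{\geq\rho})$ is a quotient of $\M^{\geq\rho}$, hence trivialized by $\O_\Omega(D(x,\rho))$, hence contained in $\N^{\geq\rho}$; so $\varphi$ restricts to a map $\M^{\geq\rho}\to\N^{\geq\rho}$, functorially. The restriction of $\varphi_1+\varphi_2$ is the sum of the restrictions, and $(\M\oplus\N)^{\geq\rho}=\M^{\geq\rho}\oplus\N^{\geq\rho}$ (the inclusion $\supseteq$ because a direct sum of trivialized modules is trivialized, the inclusion $\subseteq$ by applying the above to the two projections); hence the functor is additive.

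For exactness, take an exact sequence $0\to\M'\xrightarrow{u}\M\xrightarrow{v}\M''\to0$ of $\H(x)$-differential modules. Functoriality gives a complex $0\to(\M')^{\geq\rho}\to\M^{\geq\rho}\to(\M'')^{\geq\rho}\to0$, in which the left map is injective since $u$ is. Its kernel in the middle is $\M^{\geq\rho}\cap u(\M')$, which, being a sub-module of $u(\M')$ contained in the trivialized module $\M^{\geq\rho}$, is itself trivialized by $\O_\Omega(D(x,\rho))$; by the characterization it equals $u((\M')^{\geq\rho})$, so the complex is exact at the first two spots. Finally, $\omega(-,\O_\Omega(D(x,\rho)))$ is exact by Proposition \ref{exactness}, so
\[
\mathrm{dim}_\Omega\,\omega(\M,\O_\Omega(D(x,\rho)))=\mathrm{dim}_\Omega\,\omega(\M',\O_\Omega(D(x,\rho)))+\mathrm{dim}_\Omega\,\omega(\M'',\O_\Omega(D(x,\rho)));
\]
by Proposition \ref{Prop. deco analytic} the three terms are $\mathrm{rank}\,\M^{\geq\rho}$, $\mathrm{rank}\,(\M')^{\geq\rho}$, $\mathrm{rank}\,(\M'')^{\geq\rho}$, so $\mathrm{rank}\,\M^{\geq\rho}=\mathrm{rank}\,(\M')^{\geq\rho}+\mathrm{rank}\,(\M'')^{\geq\rho}$. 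Combined with exactness at the first two spots, the image of $\M^{\geq\rho}$ in $(\M'')^{\geq\rho}$ has rank $\mathrm{rank}\,(\M'')^{\geq\rho}$; being an $\H(x)$-subspace of $(\M'')^{\geq\rho}$ of full dimension, it is all of it. The only delicate point --- that sub-objects and quotients of modules trivialized by $\O_\Omega(D(x,\rho))$ are again trivialized --- is exactly what Remark \ref{local derivation} and Lemma \ref{Lemma :devisage} provide, and I expect no further obstacle: the rest is a formal diagram chase.
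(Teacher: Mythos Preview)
Your proof is correct, but takes a somewhat different route from the paper's and contains a citation slip worth flagging.

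The paper argues exactness by base-changing to the fraction field $\mathcal{F}(D(x,\rho))$ of $\O_\Omega(D(x,\rho))$: since $\H(x)\to\mathcal{F}(D(x,\rho))$ is faithfully flat, it suffices to check exactness of $E^{\geq\rho}\otimes_{\H(x)}\mathcal{F}(D(x,\rho))$, and this reduces (through $\O_\Omega(D(x,\rho))$) to exactness of $\omega(E,\O_\Omega(D(x,\rho)))$, which is Proposition~\ref{exactness}. You instead first establish the characterization of $\M^{\geq\rho}$ as the \emph{largest} trivialized sub-module (anticipating Corollary~\ref{corollary : uniqueness at H(x)}), then verify exactness directly: the middle exactness via the characterization, and right exactness by a dimension count over~$\H(x)$ using Proposition~\ref{exactness} and Proposition~\ref{Prop. deco analytic}. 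Both approaches hinge on Proposition~\ref{exactness}; yours is more hands-on and avoids the fraction field entirely, while the paper's is slicker once one accepts the faithful-flatness step.

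One correction: the fact that sub-objects and quotients of modules trivialized by $\O_\Omega(D(x,\rho))$ remain trivialized is \emph{not} in Lemma~\ref{Lemma :devisage} (which goes the other way, assembling a trivial module from trivial pieces). What you need is Lemma~\ref{Lemma : Trivial iff solutions}(ii), applied over $A=\O_\Omega(D(x,\rho))$; the hypothesis $A^{d=0}=F(A)^{d=0}=\Omega$ holds there, and tensoring up from $\H(x)$ is harmless since $\H(x)$ is a field. With that citation fixed, the argument is complete.
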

\begin{proof}
Additivity is clear. Let $\mathcal{F}(D(x,\rho))$ be the fraction field of 
$\O_\Omega(D(x,\rho))$. Let $E:0\to\N\to\M\to\P\to0$ be an exact 
sequence. By faithfully flatness of $\mathcal{F}(D(x,\rho))/\H(x)$, 
it is enough to prove that 
$E^{\geq \rho}\otimes_{\H(x)}\mathcal{F}(D(x,\rho))$ is exact.
This follows from the exactness of the sequence 
$E^{\geq \rho}\otimes_{\H(x)}\O_\Omega(D(x,\rho))$. 
Indeed $E^{\geq\rho}$ is constituted by modules trivialized by 
$\O_\Omega(D(x,\rho))$, so the sequence 
$E^{\geq \rho}\otimes_{\H(x)}\O_\Omega(D(x,\rho))$ 
is exact as soon as the sequence
\begin{equation}
\Hdr^0(E^{\geq \rho}\otimes_{\H(x)}\O_\Omega(D(x,\rho)),
\O_\Omega(D(x,\rho)))\;=\;
\Hdr^0(E^{\geq \rho},\O_\Omega(D(x,\rho)))
\;=\;\Hdr^0(E,\O_\Omega(D(x,\rho)))
\end{equation} 
is exact (cf. Lemmas \ref{Lemma :devisage} and 
\ref{Lemma : Trivial iff solutions}). 
The exactness of $\Hdr^0(E,\O_\Omega(D(x,\rho)))$ 
follows from Proposition \ref{exactness}.
\if{
 composed by trivial modules

If $E:0\to\N\to\M\to\P\to0$ is an exact sequence, then 
\begin{equation}
\Hdr^0(E,\O_\Omega(D(x,\rho)))
\;=\;
\Hdr^0(E^{\geq \rho},\O_\Omega(D(x,\rho)))
\;=\;
\Hdr^0(E^{\geq \rho}\otimes_{\H(x)}\O_\Omega(D(x,\rho)),
\O_\Omega(D(x,\rho)))
\end{equation} 
is exact by Prop. \ref{exactness}. 
The sequence $E^{\geq \rho}\otimes_{\H(x)}
\O_\Omega(D(x,\rho))$ is constituted by trivial modules

If $\mathcal{F}(D(x,\rho))$ is the fraction field of 
$\O_\Omega(D(x,\rho))$, 
the sequences $E^{\geq \rho}\otimes_{\H(x)}
\O_\Omega(D(x,\rho))$ and 
$E^{\geq \rho}\otimes_{\H(x)}\mathcal{F}(D(x,\rho))$ are 
constituted by trivial $\O_\Omega(D(x,\rho))$-differential modules, so 
they are exact 
hence they are exact by Lemma \ref{Lemma :devisage}. 
Since $\mathcal{F}(D(x,\rho))/\H(x)$ is faithfully flat 
$E^{\geq \rho}$ is exact.
}\fi
\end{proof}

\begin{lemma}\label{<rho stable by sub}
All sub-quotients $\mathrm{S}$  of $\M^{<\rho}$ satisfy 
$\mathrm{S}=\mathrm{S}^{<\rho}$ 
(i.e. $\mathrm{S}^{\geq \rho}=0$).
All sub-quotients of $\M^{\geq \rho}$ are trivialized by 
$\O_\Omega(D(x,\rho))$.
\end{lemma}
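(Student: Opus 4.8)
The plan is to deduce both assertions from three facts already in hand: the functor $\M\mapsto\M^{\geq\rho}$ is exact (the Corollary just proved), $\M^{\geq\rho}$ is trivialized by $A':=\O_\Omega(D(x,\rho))$ (Proposition \ref{Prop. deco analytic} iii)), and $\omega(\M^{<\rho},A')=0$ (Proposition \ref{Prop. deco analytic} ii)). Here, as recalled in Remark \ref{local derivation}, $A'$ is an integral domain — it embeds into a power series ring — and, for the coordinate $T$ provided there, one has $A'^{\,d=0}=F(A')^{d=0}=\Omega$, an equality unaffected by replacing $d$ by $f\cdot d/dT$ with $f$ invertible, so that $A'$ satisfies the standing hypothesis of Lemma \ref{Lemma : Trivial iff solutions}.

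The first thing I would establish is an \emph{auxiliary stability statement}: if a differential module $\P$ over $\H(x)$ is trivialized by $A'$, then every differential submodule $\N\subseteq\P$ and every quotient $\P/\N$ is trivialized by $A'$. Indeed, since $\H(x)$ is a field, $\P/\N$ is $\H(x)$-free, hence $(\P/\N)\otimes_{\H(x)}A'$ is $A'$-free; and tensoring $0\to\N\to\P\to\P/\N\to 0$ over the flat $\H(x)$-algebra $A'$ keeps it exact, with $\P\otimes_{\H(x)}A'$ trivial over $A'$. Lemma \ref{Lemma : Trivial iff solutions} ii) then forces both $\N\otimes_{\H(x)}A'$ and $(\P/\N)\otimes_{\H(x)}A'$ to be trivial, i.e. $\N$ and $\P/\N$ are trivialized by $A'$. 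Iterating this, every sub-quotient of a module trivialized by $A'$ is again trivialized by $A'$; applied to $\M^{\geq\rho}$ (trivialized by $A'$ by Proposition \ref{Prop. deco analytic} iii)), this proves the second assertion of the lemma.

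For the first assertion I would argue as follows. The module $(\M^{<\rho})^{\geq\rho}$, being of the form $\P^{\geq\rho}$ with $\P=\M^{<\rho}$, is trivialized by $A'$ by Proposition \ref{Prop. deco analytic} iii) (applied to $\M^{<\rho}$ in place of $\M$); if it were nonzero it would carry a nonzero solution with values in $A'$, and by left exactness of $\omega(-,A')$ (Lemma \ref{Lemma :devisage} i)) this would inject into $\omega(\M^{<\rho},A')$, which is $0$ by Proposition \ref{Prop. deco analytic} ii) — a contradiction. Hence $(\M^{<\rho})^{\geq\rho}=0$. Now let $\mathrm{S}=\N/\N'$ be a sub-quotient of $\M^{<\rho}$, with $\N'\subseteq\N\subseteq\M^{<\rho}$ differential submodules. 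Since $\M\mapsto\M^{\geq\rho}$ is exact it sends monomorphisms to monomorphisms and epimorphisms to epimorphisms, so $\mathrm{S}^{\geq\rho}$ is a quotient of a submodule of $(\M^{<\rho})^{\geq\rho}=0$; therefore $\mathrm{S}^{\geq\rho}=0$, i.e. $\mathrm{S}=\mathrm{S}^{<\rho}$.

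The only genuinely delicate point is the auxiliary stability statement, and inside it the verification that $A'=\O_\Omega(D(x,\rho))$ fits the hypotheses of Lemma \ref{Lemma : Trivial iff solutions} (an integral domain with the same ring of constants as its fraction field); the projectivity hypothesis in that lemma is automatic here, every quotient over the field $\H(x)$ being free. Everything else is a formal consequence of exactness of $\M\mapsto\M^{\geq\rho}$ together with the solution-vanishing statements already proved.
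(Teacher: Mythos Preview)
Your proof is correct and follows essentially the same approach as the paper: exactness of $\M\mapsto\M^{\geq\rho}$ together with $(\M^{<\rho})^{\geq\rho}=0$ handles the first assertion, and Lemma~\ref{Lemma : Trivial iff solutions} (applied over $A'=\O_\Omega(D(x,\rho))$ after tensoring up) handles the second. The only minor remark is that your solution-based verification of $(\M^{<\rho})^{\geq\rho}=0$ is slightly more roundabout than necessary: since $\M^{<\rho}=\M_{[k]}$ satisfies $(\M_{[k]})_{[1]}=\M_{[k]}$ by definition, applying the whole construction to $\M^{<\rho}$ stabilizes at the very first step, so $(\M^{<\rho})^{<\rho}=\M^{<\rho}$ tautologically.
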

\begin{proof}
By exactness each sub-module $\N$ and each quotient $\Q$ of 
$\M^{<\rho}$ satisfy $\N^{\geq \rho}=\Q^{\geq \rho}=0$. 
So $\N=\N^{<\rho}$ and $\Q=\Q^{<\rho}$. 
The second assertion follows from Lemma 
\ref{Lemma : Trivial iff solutions}.
\end{proof}

\begin{proposition}[Compatibility with duals]\label{dual}
The composite map 
\begin{equation}
c:(\M^*)^{\geq \rho}\to\M^{*}\to 
(\M^{\geq \rho})^*
\end{equation} 
is an isomorphism.
\end{proposition}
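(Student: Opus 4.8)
The strategy is to establish the isomorphism by a dimension count combined with injectivity, using the duality $\omega(\M,A') = \mathrm{Hom}_{\H(x)\langle d\rangle}(\M^*,A')$ from \eqref{eq : omega = hom}. First I would recall that by Proposition \ref{Prop. deco analytic}, both $(\M^*)^{\geq\rho}$ and $(\M^{\geq\rho})^*$ are trivialized by $\O_\Omega(D(x,\rho))$ (the latter since sub-quotients, hence duals, of trivialized modules are trivialized — cf. Lemma \ref{<rho stable by sub} applied after noting that $\M^{\geq\rho}$ is trivialized and that duality is exact). Next I would compute the rank of each side via its space of analytic solutions: since a module trivialized by $\O_\Omega(D(x,\rho))$ has rank equal to $\dim_\Omega\omega(-,\O_\Omega(D(x,\rho)))$, and by Proposition \ref{Prop. deco analytic}(i) one has $\omega(\M^*,\O_\Omega(D(x,\rho))) = \omega((\M^*)^{\geq\rho},\O_\Omega(D(x,\rho)))$, while $\dim\omega((\M^{\geq\rho})^*,\O_\Omega(D(x,\rho))) = \dim\mathrm{Hom}_{\H(x)\langle d\rangle}(\M^{\geq\rho},\O_\Omega(D(x,\rho)))$ by \eqref{eq : omega = hom}.

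The heart of the argument is therefore to show that $\dim_\Omega\omega(\M^*,\O_\Omega(D(x,\rho))) = \dim_\Omega\mathrm{Hom}_{\H(x)\langle d\rangle}(\M^{\geq\rho},\O_\Omega(D(x,\rho)))$, and that the composite map $c$ is injective; a surjection between spaces (modules) of the same finite rank over the integral domain / after passing to the trivializing extension is then an isomorphism. For injectivity: an element of $(\M^*)^{\geq\rho}$ in the kernel of $c$ maps to zero in $(\M^{\geq\rho})^*$, i.e. its restriction as a functional on $\M^{\geq\rho}\subseteq\M$ vanishes, so it factors through $\M^{<\rho} = \M/\M^{\geq\rho}$; but $(\M^*)^{\geq\rho}$ is trivialized by $\O_\Omega(D(x,\rho))$ whereas the dual of $\M^{<\rho}$ has no such structure compatible with a nonzero sub — more precisely, a nonzero element of $(\M^*)^{\geq\rho}$ generates a sub-differential module of $\M^*$ trivialized by $\O_\Omega(D(x,\rho))$, whose image in $(\M^{<\rho})^*$ would be a nonzero trivialized sub-object, contradicting Lemma \ref{<rho stable by sub} which forces all sub-quotients of $\M^{<\rho}$, hence of $(\M^{<\rho})^*$ by duality, to have no trivialized part. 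For the dimension equality: apply the exact functor $\M\mapsto\mathrm{Hom}_{\H(x)\langle d\rangle}(-,\O_\Omega(D(x,\rho)))$ (exact by Proposition \ref{exactness}) to the sequence $0\to\M^{\geq\rho}\to\M\to\M^{<\rho}\to 0$; since $\M^{<\rho}$ contributes nothing (Remark \ref{remark : M^rho has no sol in b(t,rho)} together with Proposition \ref{crucial point}, giving $\mathrm{Hom}(\M^{<\rho},\O_\Omega(D(x,\rho)))=0$), one gets $\mathrm{Hom}(\M,\O_\Omega(D(x,\rho)))\xrightarrow{\sim}\mathrm{Hom}(\M^{\geq\rho},\O_\Omega(D(x,\rho)))$, and the left-hand side is $\omega(\M^*,\O_\Omega(D(x,\rho)))$ by \eqref{eq : omega = hom}, which equals $\omega((\M^*)^{\geq\rho},\O_\Omega(D(x,\rho)))$ by Proposition \ref{Prop. deco analytic}(i) — exactly the rank of the source of $c$.

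The main obstacle I anticipate is bookkeeping around the distinction between ``trivial'' and ``trivialized by $\O_\Omega(D(x,\rho))$'' and making sure the rank comparison is done over the right ring: the cleanest route is to base-change everything to $\O_\Omega(D(x,\rho))$ (or its fraction field, which is faithfully flat over $\H(x)$), where trivialized modules become genuinely trivial and $c$ becomes a map of free modules of equal rank which is injective, hence an isomorphism; then descend. One should also double-check that $c$ is indeed a morphism of differential modules (it is, being a composition of the natural transformation $(-)^{\geq\rho}\hookrightarrow(-)$ applied to $\M^*$ with the canonical map $\M^*\to(\M^{\geq\rho})^*$ dual to the inclusion $\M^{\geq\rho}\hookrightarrow\M$), so that the preceding functorial statements apply verbatim.
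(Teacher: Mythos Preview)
Your injectivity argument is correct and is in fact the heart of the matter: the kernel of $c$ sits inside $(\M^{<\rho})^*$ as a sub-object trivialized by $\O_\Omega(D(x,\rho))$, and dualizing gives a trivialized quotient of $\M^{<\rho}$, which must vanish by Lemma~\ref{<rho stable by sub}. This is exactly the key step in the paper's proof.

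However, your dimension-count half contains a gap. You claim that Remark~\ref{remark : M^rho has no sol in b(t,rho)} together with Proposition~\ref{crucial point} yield $\mathrm{Hom}(\M^{<\rho},\O_\Omega(D(x,\rho)))=0$. They do not: what those results give is $\omega(\M^{<\rho},\O_\Omega(D(x,\rho)))=0$, equivalently $\mathrm{Hom}\bigl((\M^{<\rho})^*,\O_\Omega(D(x,\rho))\bigr)=0$. The statement you need is the dual one, namely $\omega\bigl((\M^{<\rho})^*,\O_\Omega(D(x,\rho))\bigr)=0$. By Proposition~\ref{Prop. deco analytic}(i) this is the same as $((\M^{<\rho})^*)^{\geq\rho}=0$, which is precisely what has to be proved --- so as written, the dimension count is circular. (To invoke Proposition~\ref{crucial point}(ii) on $\M^{<\rho}$ you would need $(\M^{<\rho})^{[1]}=\M^{<\rho}$, whereas the construction of $\M^{<\rho}$ only gives $(\M^{<\rho})_{[1]}=\M^{<\rho}$; these are different conditions.)

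The fix is already in your hands. Your injectivity argument actually proves that every trivialized sub-object of $(\M^{<\rho})^*$ is zero, hence $((\M^{<\rho})^*)^{\geq\rho}=0$. Once you know that, you do not need a separate dimension count at all: apply the exact functor $(-)^{\geq\rho}$ to the dual sequence $0\to(\M^{<\rho})^*\to\M^*\to(\M^{\geq\rho})^*\to 0$, use that $(\M^{\geq\rho})^*$ is trivialized so equals its own $(-)^{\geq\rho}$, and read off that $c$ is an isomorphism with kernel $((\M^{<\rho})^*)^{\geq\rho}=0$. This is exactly the paper's argument, and it is shorter than splitting into injectivity plus rank comparison.
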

\begin{proof}
Since $\M^{\geq \rho}$ is trivialized by $\O_\Omega(D(x,\rho))$ 
then so does its dual, hence 
$(\M^{\geq \rho})^*=((\M^{\geq \rho})^*)^{\geq \rho}$.
By exactness of the functor $\N\mapsto \N^{\geq \rho}$ 
(cf. Proposition \ref{Cor : exactness of M-->M^geqrho}), the exact sequence $E : 0 \to (\M^{< \rho})^* \to \M^* \to 
(\M^{\geq\rho})^* \to 0$ produces an exact sequence 
\begin{equation}
E^{\geq\rho}:0 \to ((\M^{<\rho})^*)^{\geq \rho} \to 
(\M^*)^{\geq\rho} \stackrel{c}{\to} (\M^{\geq\rho})^* \to 0\;.
\end{equation}
Therefore, in order to prove that the map $c:(\M^*)^{\geq\rho} \to (\M^{\geq\rho})^*$ so obtained is an isomorphism it is enough to prove that 
$((\M^{<\rho})^*)^{\geq\rho}=0$. This module is a 
submodule of $(\M^{<\rho})^*$ trivialized by 
$\O_\Omega(D(x,\rho))$. 
Hence, its dual is a quotient 
$S$ of $(\M^{<\rho})^{**}\cong\M^{<\rho}$ which is trivialized by 
$\O_\Omega(D(x,\rho))$ too. 
This implies that $S=S^{\geq \rho}$ 
and by Lemma \ref{<rho stable by sub} we obtain 
$S=0$.
\end{proof}

\begin{proposition}\label{Prop : direct summand}
For all $\rho\in]0,1]$ one has $\M=\M^{<\rho}\oplus
\M^{\geq \rho}$.
\end{proposition}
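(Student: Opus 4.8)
The plan is to reduce the statement to the compatibility with duals already proved in Proposition~\ref{dual}; once that is available, the splitting is purely formal, so there is essentially no new obstacle to overcome here.

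First I would dualise the exact sequence \eqref{eq : Robba deco}. Since $\M\mapsto\M^*$ is an additive, exact, contravariant self-equivalence of the category of $\H(x)$-differential modules (with $\M^{**}\cong\M$ naturally), applying it to $0\to\M^{\geq\rho}\xrightarrow{\,a\,}\M\xrightarrow{\,b\,}\M^{<\rho}\to 0$ produces an exact sequence
\[
0\longrightarrow (\M^{<\rho})^*\xrightarrow{\ b^*\ }\M^*\xrightarrow{\ a^*\ }(\M^{\geq\rho})^*\longrightarrow 0,
\]
in which $a^*$ is exactly the morphism occurring in Proposition~\ref{dual}. That proposition asserts that the composite $(\M^*)^{\geq\rho}\hookrightarrow\M^*\xrightarrow{\,a^*\,}(\M^{\geq\rho})^*$ is an isomorphism. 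Reading this backwards: the restriction of $a^*$ to the differential submodule $(\M^*)^{\geq\rho}$ is bijective onto $(\M^{\geq\rho})^*\cong\M^*/\ker(a^*)$. Hence $(\M^*)^{\geq\rho}$ meets $\ker(a^*)=b^*\bigl((\M^{<\rho})^*\bigr)$ trivially (injectivity) and, together with it, spans $\M^*$ (surjectivity), so that
\[
\M^*\;=\;b^*\bigl((\M^{<\rho})^*\bigr)\ \oplus\ (\M^*)^{\geq\rho}
\]
as a direct sum of differential submodules. In particular the dualised sequence splits: the inverse of the above isomorphism is a differential section of $a^*$.

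Finally, a short exact sequence of differential modules splits if and only if its dual does, because $\M\mapsto\M^*$ is an exact self-equivalence (apply it once, then again, using reflexivity). Applying this to the sequence just split, I conclude that \eqref{eq : Robba deco} itself splits, i.e. $\M=\M^{<\rho}\oplus\M^{\geq\rho}$. All the substance is hidden in Proposition~\ref{dual} (which rests on the exactness of $\M\mapsto\M^{\geq\rho}$ and Lemma~\ref{<rho stable by sub}), so the main work has already been done. For completeness I note an alternative route bypassing duality: by Lemma~\ref{Lemma : Ext^1=H^1} the extension class of \eqref{eq : Robba deco} lies in $H^1\bigl((\M^{<\rho})^*\otimes\M^{\geq\rho}\bigr)=\mathrm{Ext}^1_{\H(x)\langle d\rangle}(\M^{<\rho},\M^{\geq\rho})$; extending scalars along the faithfully flat extension $\O_\Omega(D(x,\rho))/\H(x)$ and using that $\M^{\geq\rho}$ becomes trivial over $\O_\Omega(D(x,\rho))$ (Proposition~\ref{Prop. deco analytic}), this group embeds into a finite direct sum of copies of $\mathrm{Ext}^1_{\O_\Omega(D(x,\rho))\langle d\rangle}\bigl(\M^{<\rho}\otimes\O_\Omega(D(x,\rho)),\O_\Omega(D(x,\rho))\bigr)$, which vanishes since the cyclic operator attached to $\M^{<\rho}$ is bijective on $\O_\Omega(D(x,\rho))$ by Lemma~\ref{QL-1 small}.
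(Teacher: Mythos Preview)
Your main argument is correct and is essentially the paper's own proof: both reduce the splitting to Proposition~\ref{dual}. The paper applies that proposition to~$\M^*$ (using $\M^{**}\cong\M$) to see directly that $\M^{\geq\rho}\hookrightarrow\M\twoheadrightarrow((\M^*)^{\geq\rho})^*$ is an isomorphism, hence the inclusion $\M^{\geq\rho}\hookrightarrow\M$ admits a retraction; you instead dualize once, split the dual sequence via Proposition~\ref{dual} as written, and dualize back. Same content, different bookkeeping.

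A caution on your alternative route: the step ``this group embeds into'' is not justified. Faithful flatness of $\O_\Omega(D(x,\rho))$ over the field $\H(x)$ does not make the base-change map on $H^1$ of a connection injective, because $\mathrm{coker}(\nabla)$ does not commute with $-\otimes_{\H(x)}\O_\Omega(D(x,\rho))$ (the derivation on the larger ring enters the new connection). Concretely, $d$ is surjective on $\O_\Omega(D(x,\rho))$ but typically not on $\H(x)$, so already for the trivial module the $H^1$ drops to zero under this base change. In Yoneda terms, a splitting over $\O_\Omega(D(x,\rho))$ is a horizontal section of $(\M^{<\rho})^*\otimes E$ with values in $\O_\Omega(D(x,\rho))$, and there is no reason it should descend to one with values in $\H(x)$. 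To make this route honest you would have to argue directly over $\H(x)$ (e.g.\ show $\N:=(\M^{<\rho})^*\otimes\M^{\geq\rho}$ satisfies $\N=\N^{<\rho}$ and invoke the bijectivity in Lemma~\ref{QL-1 small} on $\H(x)$ itself), at which point you are essentially reproving the content behind Proposition~\ref{dual} and the detour buys nothing.
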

\begin{proof}
By Proposition \ref{dual} the composite map 
$\M^{\geq\rho}\subseteq\M\to((\M^*)^{\geq\rho})^*$ 
is an isomorphism. Composing with the inverse of this isomorphism, we obtain a map $\M\to((\M^*)^{\geq \rho})^*\simto\M^{\geq \rho}$ which is the identity on the submodule $\M^{\geq \rho}\subseteq\M$.
In other words, the sequence \eqref{eq : Robba deco} splits.
\if{
and by Lemma 
\ref{<rho stable by sub} one has 
$((\M^*)^{\geq\rho})^*\cap\M^{<\rho}=0$ since 
$((\M^*)^{\geq\rho})^*$ is trivialized by 
$\O_\Omega(D(x,\rho))$.
}\fi
\end{proof}

\begin{corollary}\label{corollary : uniqueness at H(x)}
The module $\M^{\geq \rho}$ is the union of all differential sub-modules of $\M$ 
trivialized by $\O_\Omega(D(x,\rho))$. 
Moreover one has a unique decomposition 
\begin{equation}\label{eq : deco at x Robba eq}
\M\;:=\;\bigoplus_{0<\rho\leq 1}\M^{\rho}
\end{equation}
with the following properties:
\begin{enumerate}
\item\label{i corollary : uniqueness at H(x)} 
$\M^{\rho}$ is trivialized by $\O_\Omega(D(x,\rho))$,
\item\label{ii corollary : uniqueness at H(x)}  
$(\M^{\rho})^{\geq\rho'}=0$, for all 
$\rho<\rho'\leq 1$.
\end{enumerate} 
One has moreover the following properties: 
\begin{enumerate}
\item[iii)] For all $\rho\in]0,1]$ one has  
\begin{equation}\label{eq : compatibility with duals}
(\M^{\rho})^*\;\cong\;(\M^*)^\rho\;.
\end{equation}
\item[iv)] The module $\M^\rho$ satisfies (cf. \eqref{omega(x,F)})
\begin{equation}\label{hom Mrho,Orho'}
\mathrm{Hom}_{\H(x)\lr{d}}(\M^{\rho},
\O_\Omega(D(x,\rho')))\;\cong\;\left\{
\begin{array}{rcl}
\Hdr^0(x,\M^{\rho})^*&\textrm{ if }&\rho'\leq\rho\\
0&\textrm{ if }&\rho'>\rho.
\end{array}
\right.
\end{equation}
\item[v)] If $\M,\N$ are differential modules over $\H(x)$, 
then for all $\rho\neq\rho'$, one has 
\begin{equation}\label{eq : Hom (Mrho,Mrho')=0}
\mathrm{Hom}_{\H(x)\lr{d}}(\M^\rho,\N^{\rho'})\;=\;0\;.
\end{equation}
\end{enumerate}
\end{corollary}
\begin{proof}
If $\N\subseteq\M$ is trivialized by $\O_\Omega(D(x,\rho))$, 
then so is 
$\N+\M^{\geq \rho}$ by item \eqref{iii Lemma :devisage}  of Lemma \ref{Lemma :devisage}. 
By 
Lemma \ref{<rho stable by sub} we have 
$(\N+\M^{\geq \rho})\cap\M^{<\rho}=0$ and 
therefore 
$\N\subseteq\M^{\geq \rho}$ by a dimension's argument. This proves 
that $\M^{\geq\rho}$ is the 
union of all differential sub-modules of $\M$ 
trivialized by $\O_\Omega(D(x,\rho))$.

The existence of decomposition \eqref{eq : deco at x Robba eq} 
follows from Proposition 
\ref{Prop : direct summand} by setting
\begin{equation}\label{eq : Mrho def as a quotient}
\M^{\rho}\;:=\;\frac{\M^{\geq \rho}}{\bigcup_{\rho'>\rho}\M^{\geq \rho'}}\;.
\end{equation}

Compatibility with duals \eqref{eq : compatibility with duals} 
then follows from Proposition \ref{dual}. 

By the usual duality argument \eqref{eq : omega = hom}, property \eqref{hom Mrho,Orho'} is 
 equivalent, 
to 
\begin{equation}\label{eq : qsdfghjuzertv}
\Hdr^0((\M^{\rho})^*,\O_\Omega(D(x,\rho')))\;\cong\;
\left\{
\begin{array}{rcl}
\Hdr^0(x,\M^{\rho})^*&\textrm{ if }&\rho'\leq\rho\\
0&\textrm{ if }&\rho'>\rho.
\end{array}
\right.
\end{equation}
By \eqref{eq : compatibility with duals} one has 
$\Hdr^0((\M^{\rho})^*,\O_\Omega(D(x,\rho')))
=\Hdr^0((\M^{*})^\rho,\O_\Omega(D(x,\rho')))$. 
So \eqref{eq : qsdfghjuzertv} follows from 
Proposition \ref{Prop. deco analytic}, since $\Hdr^0(x,\M^\rho)^*$ 
may be identified to $\Hdr^0(x,(\M^\rho)^*)$, 
and hence to $\Hdr^0(x,(\M^*)^{\rho})$. 

We now prove \eqref{eq : Hom (Mrho,Mrho')=0}. 
Let $\alpha:\M^{\rho}\to\N^{\rho'}$ be an 
$\H(x)$-linear morphism commuting with the 
connections. 
By \eqref{hom Mrho,Orho'}, if $\rho<\rho'$, 
then $s\circ\alpha=0$ 
for all 
$s\in\mathrm{Hom}_{\H(x)\lr{d}}(\N^{\rho'},
\O_\Omega(D(x,\rho')))$. 
Since $\N^{\rho'}$ is trivialized by $\O_\Omega(D(x,\rho'))$, 
then $\bigcap_s\mathrm{Ker}(s)=0$, where $s$ runs in 
$\mathrm{Hom}_{\H(x)\lr{d}}(\N^{\rho'},\O_\Omega(D(x,\rho')))$. 
Hence $\alpha=0$. If $\rho>\rho'$ the same argument proves, by 
duality \eqref{eq : compatibility with duals}, that the dual 
of $\alpha$ is zero, and hence also $\alpha=0$.

Assume now that $\M=\oplus_{\eta\leq 1}\widetilde{\M}^\eta$ 
is another decomposition satisfying \eqref{i corollary : uniqueness at H(x)}  and \eqref{ii corollary : uniqueness at H(x)}. 
By the fact that this graduation has a finite number of 
non zero terms, and by \eqref{ii corollary : uniqueness at H(x)}, one has 
\begin{equation}
\M^{\rho}
\;=\;\frac{\M^{\geq \rho}}{\cup_{\rho'>\rho}\M^{\geq \rho'}}
\;=\;
\frac{(\oplus_{\eta\leq 1}\widetilde{\M}^\eta)^{\geq \rho}}{\cup_{\rho'>\rho}(\oplus_{\eta\leq 1}\widetilde{\M}^\eta)^{\geq \rho'}}
\;=\;
\frac{\oplus_{\eta\in[\rho,1]}\,\widetilde{\M}^\eta}{\cup_{\rho'>\rho}(\oplus_{\eta\in[\rho',1]} \widetilde{\M}^\eta)}
\;=\;\widetilde{\M}^{\rho}\;,
\end{equation}
which concludes the proof.\end{proof}

\section{Dwork-Robba's decomposition by the spectral radii 
over $\O_{X,x}$} 
\label{Dwork-Robba (local) decomposition}
In this section we prove that, if $x$ is a point of type 
$2$, $3$, or $4$, and if $\M_x$ is a differential module 
over $\O_{X,x}$, then Robba's decomposition 
of $\M_x\otimes_{\O_{X,x}}\H(x)$ by the spectral radii 
(cf. \eqref{corollary : uniqueness at H(x)}) 
descends from $\H(x)$ to $\O_{X,x}$ into a 
decomposition of $\M_x$. 
Such a decomposition was obtained by Dwork and Robba 
in \cite[First Thm. of section 4]{Dw-Robba} for a point 
$x\in\mathbb{A}_K^{1,\mathrm{an}}$ of type $2$ and 
a base field with residual field of positive characteristic.

In this section, as in Section \ref{Robba-deco}, 
$K$ is algebraically closed (cf. Hypothesis \ref{Hyp : K alg clo}). We will remove this assumption in the next 
sections (cf.  Lemma \ref{Lema : descent of M_xrho}).

Along Section \ref{Dwork-Robba (local) decomposition} we will need to work with the \'etale 
maps obtained in Section \ref{Section : a description of Ducros etale pam}, we maintain the notations of that 
section and in particular those established in Lemma 
\ref{Lemma : H(x) stable by d/dt} and Remark \ref{local derivation}.

\subsection{Statement of Dwork-Robba's decomposition}
Let $x\in X$ be a point of type $2$, $3$, or $4$. Let $\M_x$ be a 
differential module over $\O_{X,x}$.
\begin{theorem}[\protect{\cite[Theorem in Section 4.1]{Dw-Robba}}]\label{Dw-Robba}
There exists a unique decomposition
\begin{equation}\label{eq : deco of M_x .gdyu}
\M_x\;=\;\bigoplus_{0<\rho\leq 1}\M_x^{\rho}
\end{equation}
such that for all $0<\rho\leq 1$ one has
\begin{equation}\label{eq: M_x^rhooH=M^rho}
\M_x^{\rho}\otimes_{\O_{X,x}}\H(x) \;=\;
(\M_x\otimes_{\O_{X,x}}\H(x))^\rho\;.
\end{equation}
Moreover, if 
\begin{equation}\label{eq : def M^geq rho,sp}
\M_x^{\geq\rho}\;:=\;\oplus_{\rho\leq\rho'}\M_x^{\rho'}\;,
\end{equation}
and if $d:\O_{X,x}\to\O_{X,x}$ is one of the derivations $d_1,\ldots,d_n$ obtained in Remark \ref{local derivation}, then
\begin{enumerate}
\item\label{i eq : deco of M_x .gdyu} The analogous of 
\eqref{hom Mrho,Orho'} holds for $\M_x^\rho$. 

\item\label{ii eq : deco of M_x .gdyu} The canonical composite map 
$(\M_x^*)^{\geq\rho}\to\M_x^*\to(\M_x^{\geq\rho})^*$ 
is an isomorphism, in particular 
$(\M_x^\rho)^*\cong(\M_x^*)^{\rho}$.
\item\label{iii eq : deco of M_x .gdyu} If $\M_x$ and $\N_x$ are differential modules over $\O_{X,x}$, 
and if $\rho\neq \rho'$,  then 
$\mathrm{Hom}_{\O_{X,x}\langle d\rangle}
(\M_x^\rho,\N_x^{\rho'})=0$.
\end{enumerate}
\end{theorem}
\begin{proof}
We firstly prove the uniqueness. 
Let $\M_x=\oplus_{\eta\in]0,1]}\widetilde{\M}_x^\eta$ 
be another decomposition. For all $0<\rho\leq 1$ we have $\M_x^\rho\otimes_{\O_{X,x}}\H(x)=
\widetilde{\M}_x^\rho\otimes_{\O_{X,x}}\H(x)$. 
Then, by \eqref{eq : Hom (Mrho,Mrho')=0}, for all 
$\rho\neq\rho'$ the composite morphism 
$\alpha:\M^\rho_x\subseteq\M_x\to \widetilde{\M}^{\rho'}_x$ is zero 
after scalar extension to $\H(x)$. It follows that  
$\alpha=0$, because $\O_{X,x}\subset\H(x)$ is an inclusion of fields. 
This proves that 
$\M^\rho\subseteq\widetilde{\M}^{\rho}$, and  
the reverse of this argument gives the equality.

To prove the existence of \eqref{eq : deco of M_x .gdyu} it is enough to descend the filtration 
$(\M^{\geq\rho})_{\rho\in]0,1]}$ of $\M$. In other words, it is enough to show the existence, for all $0<\rho\leq 1$, of an $\O_{X,x}$-lattice 
$\M_x^{\geq \rho}$ of $\M^{\geq \rho}$ such that :
\begin{equation}\label{M_x^geqrho}
\textrm{The inclusion 
$\M^{\geq\rho}\subseteq\M$ is the
scalar extension of an 
inclusion of $\O_{X,x}$-lattices $\M_x^{\geq\rho}\subseteq\M_x$.}
\end{equation}	
Claim \eqref{M_x^geqrho} requires some work and 
is proved in Theorem 
\ref{Dwork-Robba} below. 
Let us show that it implies \eqref{eq: M_x^rhooH=M^rho} and the other properties. 

Let us begin by item \eqref{ii eq : deco of M_x .gdyu}.
From \eqref{M_x^geqrho} one shows that
the scalar extension of the map 
$(\M_x^*)^{\geq\rho}\to\M_x^*\to(\M_x^{\geq\rho})^*$ 
 to $\H(x)$ has a non zero 
determinant, so the map itself has a non zero determinant in $\O_{X,x}$. 
This proves the compatibility with duals, and hence also 
that $\M_x^{\geq \rho}$ is a direct summand of 
$\M_x$, as in the proof of Proposition \ref{Prop : direct summand}. Item \eqref{ii eq : deco of M_x .gdyu} follows.

In order to prove \eqref{eq: M_x^rhooH=M^rho} we define 
$\M_x^{\rho} := \M_x^{\geq\rho} / 
(\cup_{\rho' > \rho} \M_x^{\geq \rho'})$, and it is clear that  
$\M_x=\oplus_{\rho\in ]0,1]}\M_x^\rho$, and that $\M_x^\rho$ is a 
lattice of $\M^\rho$.

Item \eqref{i eq : deco of M_x .gdyu} now follows 
easily from equality \eqref{eq: M_x^rhooH=M^rho}.

Finally, to prove \eqref{iii eq : deco of M_x .gdyu} it is enough to observe that the existence of a non 
zero map $\mathrm{Hom}_{\O_{X,x}\langle d\rangle}
(\M_x^\rho,\N_x^{\rho'})$ produces a non zero map in 
$\mathrm{Hom}_{\H(x)\langle d\rangle}
(\M^\rho,\N^{\rho'})$ 
which contradicts \eqref{eq : Hom (Mrho,Mrho')=0}.
\end{proof}

\if{
\begin{remark}
It is possible to derive the proof of Dwork-Robba's decomposition 
\ref{Dw-Robba} over $X$ from the knowledge of the decomposition 
over the affine line as follows. 
Let $Y$ be an affinoid neighborhood of $x$ 
in $X$, and let  $f:Y\to\mathbb{P}^{1,\mathrm{an}}_K$ be an 
àtale map such that $[\H(x):\H(f(x))]$ is prime to the residual characteristic $p$.  
Let $\M_x$ be a differential module over $\O_{X,x}$, and let 
$\M:=\M_x\otimes_{\O_{X,x}}\H(x)$. 
Then the spectral radii of $f_*f^*\M_x$ can be easily computed 
using  \cite[Lemma \ref{lem:tameradii}]{NP-II}. 
By Corollary \ref{corollary : uniqueness at H(x)}, v), the maps 
$\M_x\to f^*f_*\M_x$ and $\M\to f^*f_*\M$ preserve the radii. 
From this it is possible to deduce the decomposition of $\M_x$ by 
expressing it as $\M_x=f^*f_*\M_x\cap\M$ inside $f^*f_*\M_x$. 
Then one shows that the decomposition of $\M$ and of $f^*f_*\M$ 
are compatible, and gives the Dwork-Robba's 
decomposition of $\M_x$.

\comment{Je crois que la on raconte des conneries ! 
La flèche canonique elle va dans l'autre sens $f^*f_*\M\to\M$ non ?}

The fact is that the original proof of Dwork-Raobba \cite{Dw-Robba} 
works on both the affine line, and on $X$ (up to minor 
implementations). 
In order to provide a complete set of proofs, in this section we 
provide the entire proof of Dwork-Robba's theorem directly on $X$.
\end{remark}
}\fi

\subsection{Norms on differential operators}
\label{Norms on differential operators}
Let $x$ be a point of type $2$, $3$, or $4$ of $X$. Following the notations established in Section \ref{Section : a description of Ducros etale pam}, 
we fix a coordinate $T_j$ of $W_j$. In particular, this allow us to consider the radius $r(y)$ of a point $y\in W_j$ (cf. Definition \ref{Radius of a point.}). Recall that we have a nice set of \'etale coordinates 
$\{\psi_j:Y_j\to W_j\}_{j=1,\ldots,n}$ around $x$ (cf. Lemma \ref{Lemma : H(x) stable by d/dt}).
In the sequel $d$ will denote one of the derivations 
$d_1,\ldots,d_n$ corresponding to $\psi_1,\ldots,\psi_n$ 
as in Remark \ref{local derivation}. 
As mentioned in Lemma \ref{Lemma : change of coordinate} and 
Remark \ref{Remark : d/dT coord on D(x)} 
any choice of $d$ produces the same Weyl 
algebras $\H(x)\langle d\rangle$ and 
$\O_{X,x}\langle d\rangle$ which are intrinsic in this 
sense.

\subsubsection{Approximation of operator norms.}
\label{Approximation of operator norms.}
A \emph{star} around $x$ is a subset $\U$ of $X$ such 
that  
\begin{enumerate}
\item For each germ of segment $b$ out of $x$, there 
exists a segment $[x,y]$ contained in $\U$ representing 
$b$;\footnote{Recall that a segment $[x,y]$ is homeomorphic  to a closed interval of $\mathbb{R}$.}
\item\label{ii: star}  
There exists $\varepsilon>0$ such that for all $z\in\U$, and all  
$j=1,\ldots, n$, one has $r(\psi_j(z))>\varepsilon$.
\end{enumerate}

Let $Y$ be a $\psi$-\emph{admissible} neighborhood of $x$ (cf. Definition \ref{def : elementary neigh}) 
and let $\U_Y\subset Y$ be a star around $x$ 
such that the Shilov boundary of $Y$ is contained in 
$\U_Y$. 
For every $\rho\in]0,1]$, we define the 
product of Banach algebras 
\begin{equation}
\Prod(\U_Y,\rho)\;:=\;
\prod_{y\in\U_Y}\b_\Omega(D(y,\rho))
\end{equation}
as the set of tuples 
$(f_y)_{y\in\U_Y}$, such that 
\begin{enumerate}
\item for all $y\in \U_Y$ we 
have $f_y\in \b_\Omega(D(y,\rho))$;
\item the tuple $(f_y)_{y\in\U_Y}$ is bounded in the sense that 
\begin{equation}
\|(f_y)_y\|_{\Prod(\U_Y,\rho)}\;:=\;
\sup_{y\in\U_Y}\|f_y\|_{D(y,\rho)}\;<\;+\infty\;.
\end{equation}
\end{enumerate}
The algebra $\Prod(\U_Y,\rho)$ together with 
the sup-norm $\|.\|_{\Prod(\U_Y,\rho)}$ is a Banach 
algebra. 
It is the product in the category of $K$-Banach 
spaces with bounded $K$-linear maps whose norm is 
bounded by $1$ (i.e. the maps decrease the norms).

Denote by the analogous symbol  
$\Prod(\U_Y):=\prod_{y\in\U_Y}\H(y)$ 
the Banach algebra of bounded 
tuples $(f_y)_{y\in \U_Y}$, where $f_y\in\H(y)$ for every $y\in\U_Y$.
Since $\U_Y$ contains the Shilov boundary of $Y$, the 
natural maps
\begin{equation}\label{eq: isometric O(Y)-H(y)-P}
\O(Y)\;\to\;\Prod(\U_Y)\;\to\;
\Prod(\U_Y,\rho)
\end{equation}
associating to $f\in\O(Y)$ the tuples $(f(y))_{y\in\U_Y}$ 
and $(f_{|D(y,\rho)})_{y\in\U_Y}$ are isometric (cf. 
\eqref{eq : H(x) in B(D(x))}).

Let $d$ be one of the derivations $d_1,\ldots,d_n$ described in Remark
\ref{local derivation} and assume that 
(cf. Lemma \ref{Lemma : H(x) stable by d/dt})
\begin{equation}
Y\;\subseteq\; V\;.
\end{equation}
Recall that $d$ acts on $\H(y)$, $\O_{X,y}$ and 
$\b_\Omega(D(y,\rho))$ for all $y\in\U_Y$ (cf. \eqref{eq : d_j acts}).

A differential operator in 
$\Prod(\U_Y,\rho)\langle d\rangle$ is equivalent to the datum of  a tuple 
\begin{equation}\label{eq : P=(P_y)_y}
(P_y)_{y\in\U_Y}
\end{equation}
of differential
operators $P_y\in\b_\Omega(D(y,\rho))\langle d\rangle$ satisfying the following conditions:
\begin{enumerate}
\item The exists an integer $s\geq 0$ such that, for all 
$y\in\U_Y$, the order of $P_y$ is less than or equal to 
$s$;
\item For all $y\in\U_Y$ write $P_y=\sum_{k=0}^sf_{y,k}d^k\in\b_\Omega(D(x,\rho))\langle d\rangle$. Then, for all 
$k=0,\ldots, s$, the family $(f_{y,k})_{y\in\U_Y}$ belongs to $\Prod(\U_Y,\rho)$.
\end{enumerate}

\begin{lemma}\label{Lemma: op UY=sup op B}
We have (cf. Definition \ref{eq : op norm on a group})
\begin{equation}\label{eq : norm is the sup on generic disks}
\|(P_y)_{y\in\U_Y}\|_{op,\Prod(\U_Y,\rho)}\;=\;
\sup_{y\in\U_Y}\|P_y\|_{op,\b_\Omega(D(y,\rho))}\;. 
\end{equation}
\end{lemma}
\begin{proof}
By definition we have 
$\|(P_y)_{y\in\U_Y}\|_{op,\Prod(\U_Y,\rho)}=\sup_{0\neq(f_y)_{y\in\U_Y}}\frac{
\sup_{y\in\U_Y}\|P_y(f_y)\|_{D(y,\rho)}}{\sup_{y\in\U_Y}\|f_y\|_{D(y,\rho)}}$. For all $y\in\U_Y$ we may consider the set $S_y\subset\Prod(\U_Y,\rho)$ of tuples $\mathbf{f}=(f_{z})_{z\in\U_Y}$ such that $f_z=0$ if $z\neq y$. We see that 
\begin{eqnarray}
\sup_{0\neq\mathbf{f}\in\Prod(\U_Y,\rho)}
\frac{
\sup_{z\in\U_Y}\|P_z(f_z)\|_{D(z,\rho)}}{\sup_{z\in\U_Y}\|f_z\|_{D(z,\rho)}}&\;\geq\;&
\sup_{0\neq\mathbf{f}\in S_y}\frac{
\sup_{z\in\U_Y}\|P_z(f_z)\|_{D(z,\rho)}}{\sup_{z\in\U_Y}\|f_z\|_{D(z,\rho)}}\\
&\;=\;&
\sup_{0\neq f_y\in\b_{\Omega}(D(y,\rho))}\frac{
\|P_y(f_y)\|_{D(y,\rho)}}{\|f_y\|_{D(y,\rho)}}
\;=\;\|P_y\|_{op,\b_\Omega(D(y,\rho))}\qquad
\end{eqnarray}
Which proves the inequality $\|(P_y)_{y\in\U_Y}\|_{op,\Prod(\U_Y,\rho)}\geq\sup_{y\in\U_Y} \|P_y\|_{op,\b_\Omega(D(y,\rho))}$. 

On the other hand, we have
\begin{eqnarray}
\sup_{0\neq\mathbf{f}\in\Prod(\U_Y,\rho)}
\frac{
\sup_{z\in\U_Y}\|P_z(f_z)\|_{D(z,\rho)}}{\sup_{z\in\U_Y}\|f_z\|_{D(z,\rho)}}&\;\leq\;&
\sup_{0\neq\mathbf{f}\in\Prod(\U_Y,\rho)}
\frac{
\sup_{z\in\U_Y}\bigl(\|P_z\|_{op,\b_\Omega(D(y,\rho))}
\cdot\|f_z\|_{D(z,\rho)}\bigr)}{\sup_{z\in\U_Y}\|f_z\|_{D(z,\rho)}}\\
&\;\leq\;&
\sup_{0\neq\mathbf{f}\in\Prod(\U_Y,\rho)}
\frac{
\Bigl(\sup_{z\in\U_Y}\|P_z\|_{op,\b_\Omega(D(y,\rho))}\Bigr)\cdot\Bigl(
\sup_{z\in\U_Y}\|f_z\|_{D(z,\rho)}\Bigr)}{\sup_{z\in\U_Y}\|f_z\|_{D(z,\rho)}}\quad\qquad
\end{eqnarray}
which shows the converse inequality.
\end{proof}

If $P\in\O(Y)\lr{d}$, the isometric inclusions of
\eqref{eq: isometric O(Y)-H(y)-P} imply
\begin{equation}\label{eq : inequalities of norms}
\|P\|_{op,\O(Y)}\;\leq\;\|P\|_{op,\Prod(\U_Y)}
\;\leq\;
\|P\|_{op,\Prod(\U_Y,\rho)}\;=\;\sup_{y\in\U_Y}\|P_y\|_{op,\b_\Omega(D(y,\rho))}\;.
\end{equation}

We now prove that we may 
shrink $Y$ around the point $x\in X$, in order to
 improve this estimate. 
\begin{remark}
Let $Y'\subset Y$ be two affinoid neighborhoods of $x$ 
in $X$. When restricting from $Y$ to $Y'$, 
we should pay attention 
to the fact that the restriction $\O(Y)\to\O(Y')$ is not an 
isometry and that it does not have a dense image in 
general. 
Therefore, the operators norms $\|P\|_{op,\O(Y)}$ and 
$\|P\|_{op,\O(Y')}$ are relatively unrelated each other.
\end{remark}
\begin{proposition}\label{proposition : comparison o norms}
Let $Y$ be an affinoid neighborhood of $x$ in $X$ such that 
$\widehat{\Omega}^1_{Y/K}$ is free, and let $d:\O(Y)\to\O(Y)$ 
be a derivation corresponding to a generator of $\widehat{\Omega}^1_{Y/K}$.  Let $\U_Y\subset Y$ 
be a star around $x$ containing the Shilov boundary of $Y$.

For all $P\in\O(Y)\lr{d}$ and $\varepsilon>0$, 
there exists a basis of $\psi$-\emph{admissible} affinoid neighborhoods 
$Y_{P,\varepsilon}$ of $x$ in $X$ such that, for all $\rho\in]0,1]$,
one has 
\begin{equation}\label{eq : estimation of norms}
\|P\|_{op,\O(Y_{P,\varepsilon})}\;\leq\;\|P\|_{op,
\b_\Omega(D(x,\rho))}+\varepsilon\;.
\end{equation}
More precisely, for all $\rho\in]0,1]$, there exists a $\psi$-\emph{admissible} affinoid 
neighborhood $Y_{P,\varepsilon,\rho}$ of $x$ in $X$ and 
a star $\U_{Y_{P,\varepsilon,\rho}}\subset  Y_{P,\varepsilon,\rho}\cap\U_Y$ containing the Shilov boundary of $Y_{P,\varepsilon,\rho}$,
such that 
\begin{equation}\label{eq : estimation of norms-2}
\|P\|_{op,\O(Y_{P,\varepsilon,\rho})}\;\leq\;
\|P\|_{op,\Prod(\U_{Y_{P,\varepsilon,\rho}},\rho)}
\;\leq\;\|P\|_{op,
\b_\Omega(D(x,\rho))}+\varepsilon\;.
\end{equation}
\end{proposition}
\begin{proof}
The bound \eqref{eq : estimation of norms} follows from 
\eqref{eq : estimation of norms-2}, with $Y_{P,\varepsilon}:=Y_{P,\varepsilon,1}$. Indeed,
the function $\rho \mapsto \|P\|_{op,
\b_\Omega(D(x,\rho))}$ is non increasing by 
\eqref{eq : explicit norm operator}.

Assume that $Y$ is a $\psi$-\emph{admissible}, neighborhood of $x$ 
(cf. Definition \ref{def : elementary neigh}). 
Let $[x,y]\subset\U_Y$ be a segment. 
By Lemma \ref{Lemma : H(x) stable by d/dt}, the segment $[x,y]$ is 
contained in some $V_j$, and the map 
$\psi_j:Y\to W_j$ gives the identification 
$\psi_j:D(z)\simto D(\psi_j(z))$ for all $z\in Y\cap V_j$. 
Let $r_j(z)$ be the 
radius of $D(\psi_j(z))$ with respect to a coordinate $T_j$ on 
$W_j$. The radius of $\psi_j (D(z,\rho))$ is then $\rho\cdot r_j(z)$. 
Let $d_j$ be the corresponding derivation of 
$\O(Y)$ (cf. Remark \ref{local derivation}). 
Then for all $z\in\U_Y\cap V_j$ one has (cf. Remark 
\ref{remark : def op norm d on bounded})
\begin{equation}
\|d_j\|_{op,\b_\Omega( D(z,\rho) )} \;=\; (\rho\cdot r_j(z))^{-1}\;.
\end{equation} 
As explained in Remark \ref{local derivation}, $P$ can be written as 
$P=\sum f_id_j^i$, with $f_i\in\O(Y)$. 
Together with \eqref{eq : explicit norm operator}, this proves that 
$z\mapsto \|P\|_{op,\b_\Omega( D(z,\rho) )}$ is a 
continuous function of $z$ along $[x,y]$. 
Up to restrict $[x,y]$ we may assume that 
$\|P\|_{op,\b_\Omega( D(z,\rho) )}\leq 
\|P\|_{op,\b_\Omega( D(x,\rho) )}+\varepsilon$ for all $z\in[x,y]$. 
As explained in Remark \ref{Remark : decreasing norm even though d_1}, the operator norm is 
intrinsic, 
therefore this bound is independent on the chosen 
maps $\psi_j$ and on the chosen derivation. 

Now, for all germ of segment $b$ out of $x$ chose $[x,y]\in b$ with 
this property, and define the star $\U$ as the union of all those 
segments. Then,
$\sup_{y\in\U}\|P\|_{op,\b_\Omega(D(y,\rho))}
\leq 
\|P\|_{op,\b_\Omega( D(x,\rho) )}+\varepsilon$. 
It follows that for all star-shaped neighborhood 
$Y_{P,\varepsilon,\rho}\subseteq Y$ of $x$, with 
Shilov boundary in $\U$, estimation 
\eqref{eq : estimation of norms-2} follows from 
 \eqref{eq : norm is the sup on generic disks} and 
\eqref{eq : inequalities of norms} with $\U_{Y_{P,\varepsilon,\rho}}:=\U\cap Y_{P,\varepsilon}$.
\end{proof}

\subsubsection{Norms on $A\lr{d}^{\leq n}$.}
Let $(A,\|.\|_A)$ be a normed algebra with a derivation $d:A\to A$. 
Let
\begin{equation}\label{def : norme sup on the coeff}
\|(f_0,\ldots,f_n)\|_A\;:=\;\max_i\|f_i\|_A\;
\end{equation}
be the sup-norm on $A^n$. Denote by
 $A\langle d\rangle^{\leq n}$ be the subset 
of differential operators $\sum_{i=0}^ng_id^i$ of order at most $n$. 
We can identify $A\lr{d}^{\leq n}$ to $A^{n+1}$ by associating to 
$\sum_{i=0}^n f_id^i$ the tuple $(f_0,\ldots,f_n)$. We denote the 
 resulting norm on $A\lr{d}^{\leq n}$ by
\begin{equation}\label{eq : def of sup norm on Ad}
\|\sum f_id^i\|_{A,d}\;:=\;\|(f_0,\ldots,f_n)\|_A\;=\;
\max_i\|f_i\|_A\;.
\end{equation}
\begin{lemma}\label{Lemma : norm sup equiv}
If $d_1$, $d_2$ are two derivations of $A$ generating $A\lr{d}$ (i.e. $A\lr{d_1}= A\lr{d_2}=A\lr{d}$). Then, 
$\|.\|_{A,d_1}$ and $\|.\|_{A,d_2}$ are
equivalent on $A\lr{d}^{\leq n}$.
\end{lemma}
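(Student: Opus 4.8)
The plan is to first reduce to the case treated in the change-of-derivation discussion of Section~\ref{Sect : Change of derivation.}, namely $d_{2} = a\, d_{1}$ for some $a$ invertible in~$A$ (equivalently $d_{1} = a^{-1} d_{2}$). This is harmless: in every situation where the lemma is applied the two derivations come from pull-backs of coordinates along the étale maps of Lemma~\ref{Lemma : H(x) stable by d/dt}, hence are generators of the rank-one module of $K$-derivations of~$A$, and any two such differ by a unit. It is precisely this invertibility that makes $A\lr{d_{1}}^{\leq n}$ and $A\lr{d_{2}}^{\leq n}$ coincide as $A$-submodules of the ring of differential operators, so that the two norms $\|.\|_{A,d_{1}}$ and $\|.\|_{A,d_{2}}$ are genuinely two norms on the same space — the coordinate norms attached to the $A$-bases $\{1, d_{1}, \dots, d_{1}^{n}\}$ and $\{1, d_{2}, \dots, d_{2}^{n}\}$.

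The heart of the matter is the observation that the change-of-basis matrices between these two bases are triangular with coefficients in~$A$. Concretely, I would check by induction on~$i$ that $d_{1}^{i} = \sum_{j=0}^{i} u_{i,j}\, d_{2}^{j}$ and $d_{2}^{i} = \sum_{j=0}^{i} v_{i,j}\, d_{1}^{j}$ with all $u_{i,j}, v_{i,j} \in A$; the inductive step is simply $d_{1}^{i+1} = a^{-1} d_{2}\bigl(\sum_{j} u_{i,j}\, d_{2}^{j}\bigr) = a^{-1}\sum_{j}\bigl(d_{2}(u_{i,j})\, d_{2}^{j} + u_{i,j}\, d_{2}^{j+1}\bigr)$, whose coefficients lie in~$A$ because $a^{-1}\in A$ and $d_{2}$ maps $A$ into~$A$, and symmetrically for $d_{2}^{i+1}$ using $d_{2} = a d_{1}$.

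Granting this, the equivalence drops out. Writing a general $P = \sum_{i=0}^{n} f_{i}\, d_{1}^{i}$ and substituting, one gets $P = \sum_{j}\bigl(\sum_{i} f_{i}\, u_{i,j}\bigr)\, d_{2}^{j}$, so submultiplicativity of $\|.\|_{A}$ (as $A$ is a normed algebra) gives $\|P\|_{A,d_{2}} \leq C_{u}\,\|P\|_{A,d_{1}}$ with $C_{u} := \max_{0\leq j\leq i\leq n}\|u_{i,j}\|_{A}$, a finite constant since each $u_{i,j}$ is an element of the normed space~$A$ and the index set is finite. The opposite inequality $\|P\|_{A,d_{1}} \leq C_{v}\,\|P\|_{A,d_{2}}$ is obtained identically from the $v_{i,j}$, and together they yield the asserted equivalence.

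The only place where anything could go wrong is the reduction at the outset: the argument genuinely needs $a^{-1}\in A$ — having only $a \in A$ would give just one of the two inequalities, and without invertibility the two operator spaces need not even coincide. So the main obstacle, such as it is, is simply to make sure the hypothesis ``two derivations as above'' is read in the way that guarantees $a$ is a unit, which, as noted, is automatic in the intended applications.
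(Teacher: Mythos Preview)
Your proof is correct and follows essentially the same approach as the paper: both argue that $\{1,d_1,\dots,d_1^n\}$ and $\{1,d_2,\dots,d_2^n\}$ are two $A$-bases of the same finite free $A$-module, and that the change-of-basis matrix and its inverse have entries in~$A$, whence the sup-norms are equivalent with constants $\max_{i,j}\|u_{i,j}\|_A$ and $\max_{i,j}\|v_{i,j}\|_A$. The paper simply asserts the existence of $U\in GL_{n+1}(A)$ without constructing it, while you carry out the induction showing the matrices are triangular; your version is more explicit but the idea is identical.
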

\begin{proof}
Equality $A\lr{d_1}= A\lr{d_2}=A\lr{d}$ implies 
$d=ad_1=bd_2$ for some invertible elements $a,b\in A$.

The $A$-module $A\lr{d}^{\leq n}$ is finite and free.  The subsets
$\{1,d_1,\ldots,d_1^n\}$ and $\{1,d_2,\ldots,d_2^n\}$ 
are two basis. If $U\in GL_n(A)$ is the base change matrix, then 
$\|U^{-1}\|^{-1}_{A}\cdot\|.\|_{A,d_2}\leq\|.\|_{A,d_1}\leq 
\|U\|_A \cdot\|.\|_{A,d_1}$, where
$\|(u_{i,j})\|_A:=\max_{i,j}\|u_{i,j}\|_A$.
\end{proof}
\begin{proposition}\label{Prop : equivalent norms on O(Y)}
Let $0<\rho\leq 1$. The following claims hold:

\begin{enumerate}
\item\label{i Prop : equivalent norms on O(Y)}  Let $d:\b_\Omega(D(x,\rho))\to\b_\Omega(D(x,\rho))$ 
be a derivation of the form $d/dT$ where $T$ is a coordinate of $D(x)$ (cf. Sections \ref{Section: Bounded Omega 1} and \ref{Section 3.1.1 : derivations}).
Let $A$ be a normed ring, \emph{isometrically} 
included in 
$\b_\Omega(D(x,\rho))$ and stable under $d$. 
Then $\|.\|_{op,\b_\Omega(D(x,\rho))}$ and 
$\|.\|_{A,d}$ are equivalent as norms on $A\lr{d}^{\leq n}$.
\item\label{ii Prop : equivalent norms on O(Y)}
Let $Y$ be a star-shaped $\psi$-\emph{admissible}, affinoid neighborhood 
of $x$ (cf. Definitions \ref{def:starshapedopen} and  \ref{def : elementary neigh}), and let $d:\O(Y)\to\O(Y)$ 
be a derivation corresponding to a generator of 
$\widehat{\Omega}^1_{Y/K}$.\footnote{Recall that $\widehat{\Omega}^1_{Y/K}$ is free by item \eqref{iii : omegaonefree} of Theorem \ref{thm:bonvois}.}   
Let $\U_Y\subset Y$ be a star around $x$ containing the Shilov 
boundary of $Y$. Then 
$\|.\|_{op,\Prod(\U_Y,\rho)}$ and $\|.\|_{\O(Y),d}$
are equivalent as norms on $\O(Y)\lr{d}^{\leq n}$.
\end{enumerate}
\end{proposition}
\begin{proof}
In \eqref{i Prop : equivalent norms on O(Y)} we can assume $A=\b_\Omega(D(x,\rho))$. 
The radius of $D(x,\rho)$ with respect to $T$ 
is $\rho\cdot r(x)$, where $r(x)$ is the radius of $D(x)$ in this 
coordinate. By \eqref{eq : explicit norm operator} one has 
\begin{equation}
\Bigl\|\sum_{k=0}^nf_k\cdot (d/dT)^k
\Bigr\|_{op,\b_{\Omega}(D(x,\rho))}\;=\;
\max_{k=0,\ldots,n}|k!|\cdot 
|f_k|(x)\cdot 
(\rho\cdot r(x))^{-k}\;.
\end{equation}
Let $C_1:=\min_k|k!|(\rho\cdot r(x))^{-k}$ and 
$C_2:=\max_k|k!|(\rho\cdot r(x))^{-k}$, then
\begin{equation}
C_1\|.\|_{A,d/dT}\;\leq\;\|.\|_{op,\b_{\Omega}(D(x,\rho))}\;\leq\;
C_2\|.\|_{A,d/dT}\;.
\end{equation}
We now prove \eqref{ii Prop : equivalent norms on O(Y)}. 
With the notations of Remark \ref{local derivation} one has 
$Y\subseteq\bigcup_j V_j$, hence (cf. Lemma \ref{Lemma: op UY=sup op B})
\begin{eqnarray}
\|.\|_{op,\Prod(\U_Y,\rho)}&\;=\;&
\max_j\|.\|_{op,\Prod(\U_Y\cap V_j,\rho)}\;,\\
\|.\|_{\O(Y),d}&\;=\;&\max_j\|.\|_{\O(V_j\cap Y),d}\;.
\end{eqnarray}
Therefore, it is enough to prove that, for all $j$, 
the norms $\|.\|_{op,\Prod(\U_Y\cap V_j,\rho)}$ and 
$\|.\|_{\O(V_j\cap Y),d}$ are equivalent.
Now, by Lemma \ref{Lemma : norm sup equiv} we can replace $d$ by 
the derivation $d_j=d/dT_j$ of Remark \ref{local derivation}. 
The reason of this choice is that the map
$\psi_j:V_j\to W_j$ identifies $D(y,\rho)$ with 
$D(\psi_j(y),\rho)$ for all $y\in V_j$. 
So the coordinate $T_j$ 
on $W_j$ is then \emph{simultaneously} a coordinate on $D(y,\rho)$, 
for all $y\in\U_Y\cap V_j$. 
In particular, if $P=\sum_{k=0}^nf_{j,k} d_j^k
\in\O(Y\cap V_j)\lr{d_j}^{\leq n}$, with $f_{j,k}\in\O(Y\cap 
V_j)$, then 
for all $y\in \U_Y\cap V_j$ the image of $P$ in $\b_\Omega(D(y,\rho))
\lr{d_j}^{\leq n}$ is given by (cf. \eqref{eq : P=(P_y)_y})
\begin{equation}
P_y\;=\;\sum_{k=0}^nf_{j,k} \Bigl(\frac{d}{dT_j}\Bigr)^k\;\in\;\b_\Omega(D(y,\rho))
\lr{d_j}^{\leq n}\;.
\end{equation}
As a consequence, we have the explicit formula (cf. Lemma \ref{Lemma: op UY=sup op B} and Remark \ref{Remark : d/dT coord on D(x)})
\begin{equation}\label{eq : explicit formula}
\|\sum_{k=0}^nf_{k,j}d_j^k
\|_{op,\Prod(\U_Y\cap V_j,\rho)} 
\;=\;\sup_{y\in\U_Y\cap V_j}\;\;
\max_{k=0,\ldots,n}|k!|\cdot|f_{k,j}|(y)\cdot
(\rho \cdot r_j(y))^{-k}\;,
\end{equation}
where $r_j(y)$ is the radius of $D(\psi_j(y))$ 
with respect to $T_j$. 
Moreover
\begin{equation}\label{eq : 4.14reh;o}
\sup_{y\in\U_Y\cap V_j}
\max_{k=0,\ldots,n}|k!|\cdot|f_{k,j}|(y)\cdot(\rho \cdot r_j(y))^{-k}
\;\leq\;
\max_{k=0,\ldots,n}|k!|\cdot\|f_{k,j}\|_{V_j\cap Y}
\Bigl(\inf_{y\in\U_Y\cap V_j}\rho\cdot r_j(y)\Bigr)^{-k}\;.
\end{equation}
Now, by definition of star 
(cf. item \eqref{ii: star} at the beginning of Section 
\ref{Approximation of operator norms.}) one has  
$\inf_{y\in\U_Y\cap V_j}r_j(y)>0$. 
If $r^-_j:=\inf_{y\in\U_Y\cap V_j}\rho\cdot r_j(y)$, 
then from \eqref{eq : explicit formula} and \eqref{eq : 4.14reh;o} 
one gets
\begin{equation}
\|\sum_{k=0}^nf_{k,j}d_j^k\|_{op,\Prod(\U_Y\cap V_j,\rho)} 
\;\leq\;
\max_{k=0,\ldots,n}|k!|\cdot\|f_{k,j}\|_{V_j\cap Y}
\cdot (r_j^-)^{-k}\;.
\end{equation}
If $C_2:=\max_{k}|k!|\cdot(r_j^-)^{-k}$, then
$\|.\|_{op,\Prod(\U_Y\cap V_j,\rho)}\leq C_2
\cdot\|.\|_{\O(Y\cap V_j),d_j}$.

On the other hand, by the definition of $V_j$, the Shilov boundary of $V_j\cap Y$ is contained in the union of $\{x\}$ with that of $Y$ (cf. section \ref{Section : a description of Ducros etale pam}). Hence  $\U_Y\cap V_j$ contains the Shilov boundary $S_j$ of $Y\cap V_j$. 
Therefore, by \eqref{eq : explicit formula}, 
it follows that
\begin{eqnarray}
\|\sum_{k=0}^nf_{k,j}d_j^k
\|_{op,\Prod(\U_Y\cap V_j,\rho)} 
&\;\geq\;&
\max_{k=0,\ldots,n}|k!|\cdot
\sup_{y\in S_j}
\Bigl(|f_{k,j}|(y)\cdot
(\rho \cdot r_j(y))^{-k}\Bigr)\;.\label{efds}
\end{eqnarray}
Let $r_j^+:=\sup_{z\in W_j}r(z)$, where $r(z)$ is 
the radius of the point (cf. Definition~\ref{Radius of a point.}).
Then $r_j^+<+\infty$, because $W_j$ is an affinoid domain of 
$\mathbb{A}^{1,\mathrm{an}}_K$. For every 
$y\in V_j$ we have $r(y)\leq r_j^+$, 
because $r(y)=r(\psi_j(y))$. 
Inequality \eqref{efds} then implies
\begin{eqnarray}
\|\sum_{k=0}^nf_{k,j}d_j^k
\|_{op,\Prod(\U_Y\cap V_j,\rho)} 
&\;\geq\;&
\max_{k=0,\ldots,n}|k!|(r_j^+)^{-k}\cdot
\sup_{y\in S_j}|f_{k,j}|(y)\;=\;
\max_{k=0,\ldots,n}|k!|
\|f_{k,j}\|_{V_j\cap Y}(r_j^+)^{-k}\;.\qquad\;\;
\end{eqnarray}
If $C_1:=\min_k|k!|(r_j^+)^{-k}>0$, then  one has 
$\|.\|_{op,\Prod(\U_Y\cap V_j,\rho)}\geq C_1\cdot
\|.\|_{\O(V_j\cap Y),d_j}$ as required.
\end{proof}

\subsection{Descending Robba's 
decomposition to $\O_{X,x}$}\label{D-R liftingsection}
Let $x\in X$ be a point of type $2$, $3$, or $4$. 
Let $d:\O_{X,x}\to\O_{X,x}$ be a derivation 
generating the $\O_{X,x}$-module of $K$-linear 
continuous derivations of $\O_{X,x}$. 
Since $\O_{X,x}$ is a field, all 
differential modules $\M_x$ 
are cyclic. By Section 
\ref{Filtrations of cyclic modules, and factorization of 
operators.}, we may translate the assertion 
\eqref{M_x^geqrho} in the proof of Theorem \ref{Dw-Robba} in terms of differential operators as 
follows.
\begin{theorem}[Dwork-Robba's decomposition]\label{Dwork-Robba}
Let $\rho\leq 1$. Let $P \in 
\O_{X,x}\langle d\rangle$ be a monic differential 
polynomials corresponding to $\M$. 
Let $P=P^{\geq\rho}\cdot P^{<\rho}$ be a 
factorization in $\H(x)\lr{d}$ 
corresponding to the Robba's decomposition 
\eqref{eq : Robba deco}
\begin{equation}
0\to\M^{\geq \rho}\to \M\to\M^{<\rho}\to 0\;.
\end{equation}
That is, following the notations of 
\eqref{eq : notation M_P}, we have $\M\cong\M_P$, $\M^{\geq\rho}\cong\M_{P^{\geq \rho}}$ and
$\M^{<\rho}\cong\M_{P^{< \rho}}$.
Then 
$P^{\geq \rho}$ and $P^{<\rho}$ also belongs to 
$\O_{X,x}\langle d\rangle$.
\end{theorem}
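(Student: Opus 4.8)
The plan is to descend the factorization from $\H(x)\lr{d}$ to $\O(Y)\lr{d}$ for a small enough affinoid neighbourhood $Y$ of $x$, and then to recognise the descended factors as $P^{\geq\rho}$ and $P^{<\rho}$ by a rigidity argument. Write $s$ for the order of $P^{\geq\rho}$, so that $s=\mathrm{rank}\,\M^{\geq\rho}$ and $P^{<\rho}$ has order $r-s$. By Lemma \ref{Lemma : H(x) stable by d/dt} and Definition \ref{def : elementary neigh} we may work inside an \elementary\ affinoid neighbourhood $Y$ of $x$, on which (possibly after an invertible rescaling of $d$) $d$ is one of the distinguished derivations of Remark \ref{local derivation}, so that all the operator-norm estimates of this section apply. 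The goal is to produce a factorization $P=AB$ in $\O(Y)\lr{d}$ with $A$ monic of order $s$, $B$ monic of order $r-s$, and $A$ as close to $P^{\geq\rho}$ as we wish in $\|\cdot\|_{op,\b_\Omega(D(x,\rho))}$.

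First I would build an approximate factorization and correct it by a Newton iteration performed \emph{inside} $\O(Y)\lr{d}$. Since $\O_{X,x}\lr{d}^{\leq n}$ is dense in $\H(x)\lr{d}^{\leq n}$ for $\|\cdot\|_{op,\b_\Omega(D(x,\rho))}$ (as in the proof of Lemma \ref{QL-1 small}), choose monic $A_0\in\O_{X,x}\lr{d}$ of order $s$ and monic $B_0\in\O_{X,x}\lr{d}$ of order $r-s$ with $\|A_0-P^{\geq\rho}\|$ and $\|B_0-P^{<\rho}\|$, hence $\|P-A_0B_0\|$, arbitrarily small. Consider the linearization $\Lambda\colon(\alpha,\beta)\mapsto\alpha B_0+A_0\beta$, from (order $<s$)$\times$(order $<r-s$) operators to order $<r$ operators: a square matrix over $\O_{X,x}$. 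Over $\H(x)$, with $(A_0,B_0)$ replaced by $(P^{\geq\rho},P^{<\rho})$, this is the B\'ezout isomorphism of the coprime pair $(P^{\geq\rho},P^{<\rho})$ — coprimality holds because a non-trivial common right factor would produce a differential module that is both trivialized by $\O_\Omega(D(x,\rho))$, being a quotient of $\M^{\geq\rho}$ (Lemma \ref{Lemma :devisage}), and satisfies $(\cdot)^{\geq\rho}=0$, being a sub-quotient of $\M^{<\rho}$ (Lemma \ref{<rho stable by sub}), hence is zero by Corollary \ref{corollary : uniqueness at H(x)}. Perturbing, $\Lambda$ remains invertible over $\H(x)$; its determinant is then a non-zero element of the \emph{field} $\O_{X,x}$, so it is invertible already in $\O(Y)$ for $Y$ small, and $\Lambda^{-1}$ has coefficients in $\O(Y)$ with a norm that does not increase when $Y$ shrinks. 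Shrinking $Y$ once more using Proposition \ref{proposition : comparison o norms}, I may assume $\|P-A_0B_0\|_{op,\O(Y)}$ is small compared with $\|\Lambda^{-1}\|$; then the map $(\alpha,\beta)\mapsto\Lambda^{-1}(P-A_0B_0-\alpha\beta)$ is a contraction near $0$ and converges in $\O(Y)\lr{d}$ — which is complete for the coefficient sup-norm, equivalent to the operator norm on bounded-order operators by Proposition \ref{Prop : equivalent norms on O(Y)} — to a small pair $(\alpha_\infty,\beta_\infty)$. Setting $A:=A_0+\alpha_\infty$ and $B:=B_0+\beta_\infty$ gives $AB=P$ in $\O(Y)\lr{d}$ with $A$ monic of order $s$ and close to $P^{\geq\rho}$.

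Finally I would identify $A$ with $P^{\geq\rho}$. A monic left factor of $P$ of order $s$ is the same datum as a rank-$s$ sub-differential-module of $\M=\M_P$, the submodules of $\M_P$ being in bijection with its monic right factors. With respect to the decomposition $\M=\M^{\geq\rho}\oplus\M^{<\rho}$ (Proposition \ref{Prop : direct summand}), every rank-$s$ sub-$\H(x)$-vector-space close to $\M^{\geq\rho}$ is the graph of a small $\H(x)$-linear map $\M^{\geq\rho}\to\M^{<\rho}$, and it is $\nabla$-stable exactly when that map is $\H(x)\lr{d}$-linear; but $\mathrm{Hom}_{\H(x)\lr{d}}(\M^{\geq\rho},\M^{<\rho})=0$ by \eqref{eq : Hom (Mrho,Mrho')=0}, so $\M^{\geq\rho}$ is an isolated point of the $\nabla$-stable Grassmannian. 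Since $A\mapsto\M_A\otimes_{\O(Y)}\H(x)$ is continuous, for $A$ close enough to $P^{\geq\rho}$ one gets $\M_A\otimes\H(x)=\M^{\geq\rho}$, hence $A=P^{\geq\rho}$ (both are the monic generator of the same left ideal), and then $B=P^{<\rho}$ by left cancellation in $\H(x)\lr{d}$. Thus $P^{\geq\rho}=A$ and $P^{<\rho}=B$ lie in $\O(Y)\lr{d}\subseteq\O_{X,x}\lr{d}$.

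The hard part is the bookkeeping of the three competing norms: approximation is natural for the operator norm on $\b_\Omega(D(x,\rho))$, convergence of the iteration requires the operator norm on $\Prod(\U_Y,\rho)$ — equivalently the coefficient sup-norm on $\O(Y)$, where completeness lives — and the rigidity step requires closeness for the coefficient norm at $x$. Reconciling them forces the successive shrinkings of $Y$ via Propositions \ref{proposition : comparison o norms} and \ref{Prop : equivalent norms on O(Y)}, and one must check that $\|\Lambda^{-1}\|$ and the norm-equivalence constants remain bounded as $Y\to\{x\}$; they do, because sup-norms over smaller affinoids decrease and the constants in Proposition \ref{Prop : equivalent norms on O(Y)} tend to finite limits.
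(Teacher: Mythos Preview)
Your overall strategy---approximate the factorization in $\O_{X,x}\lr{d}$, correct by a Newton/contraction scheme, identify the limit---is exactly the paper's. But there is a genuine gap at the heart of your argument, in the sentence ``a square matrix over $\O_{X,x}$ \dots\ its determinant is then a non-zero element of the field $\O_{X,x}$''. The linearization $\Lambda:(\alpha,\beta)\mapsto \alpha B_0+A_0\beta$ is \emph{not} $\O_{X,x}$-linear: while $\alpha\mapsto\alpha B_0$ is left $\O_{X,x}$-linear, the map $\beta\mapsto A_0\beta$ is not, because for $c\in\O_{X,x}$ one has $A_0(c\beta)\neq c(A_0\beta)$ whenever $d(c)\neq 0$. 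In coordinates, the coefficients of $A_0\beta$ involve derivatives of the coefficients of $\beta$ up to order $s$. Thus $\Lambda$ is represented by a matrix in $M_r(\O_{X,x}\lr{d})$, not $M_r(\O_{X,x})$; there is no determinant, and injectivity over $\H(x)$ does not give bijectivity by a dimension count, nor does it descend to $\O(Y)$ by inverting an element of $\O_{X,x}$. The paper records this explicitly (Notation~\ref{Not : 4.8}) and spends the bulk of the proof on it: one shows injectivity of $L_{\bs u}$ on $\O_\Omega(D(x,\rho))^n$ by the solution-dimension argument, then uses elementary divisors in the Euclidean ring $\H(x)\lr{d}$ to reduce to the scalar case and produce an approximate inverse $Q_\varepsilon$ with entries in $\O_{X,x}\lr{d}$ satisfying $\|Q_\varepsilon L_{\bs u}-1\|_{op,\b_\Omega(D(x,\rho))}<\varepsilon$ (Lemma~\ref{Lemma : matrix QL-1<e}, Corollary~\ref{Lemma : bij at v}). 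Only with this in hand can the contraction be set up on $\O(Y)^n$.

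Once that gap is closed, your endgame differs from the paper's in an interesting way. The paper runs the same contraction $\phi_{\bs v,A}$ in both $A=\O(Y)$ and $A=\H(x)$ and invokes uniqueness of the fixed point; since $\bs u-\bs v$ is the fixed point over $\H(x)$, it must lie in $\O(Y)^n$. You instead argue rigidity: $\M^{\geq\rho}$ is an isolated point among $\nabla$-stable rank-$s$ subspaces because $\mathrm{Hom}_{\H(x)\lr{d}}(\M^{\geq\rho},\M^{<\rho})=0$, so any factorization close enough to $(P^{\geq\rho},P^{<\rho})$ must equal it. That is a perfectly good alternative and arguably cleaner, but it only kicks in after the hard analytic step above has been carried out.
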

\begin{proof}
\if{By remark \ref{Rk : finite number of steps} we can assume that 
(this will be used in Lemma
\ref{Lemma : zeta works well} and Prop. \ref{Lemma contractive})
\begin{equation}\label{eq: essumption rho<1}
\rho\;<\;1\;.
\end{equation}
}\fi

Write $P=d^n+\sum_{k=0}^{n-1}f_kd^k\in\O_{X,x}\lr{d}$. 
Let $Y$ be a star-shaped $\psi$-\emph{admissible}, affinoid neighborhood of $x$ 
(cf. Definitions \ref{def:starshapedopen} and  \ref{def : elementary neigh}) such that the coefficients $f_k$ of $P$ 
lies in $\O(Y)$. Recall that, by item \eqref{iii : omegaonefree} of Theorem \ref{thm:bonvois}, $\widehat{\Omega}^1_{Y/K}$ 
is free. Choose a derivation $d:\O(Y)\to\O(Y)$ corresponding to a 
generator of $\widehat{\Omega}^1_{Y/K}$. 
Let $A$ be one of the rings 
$\O(Y)$, $\H(x)$, $\b_{\Omega}(D(x,\rho))$. 
Consider the injective maps 
\begin{equation}
\beta_n\;:\;A^n\; \hookrightarrow\;
A\langle d\rangle^{\leq n}\;,\qquad\quad
\widetilde{\beta}_n\;:\;A^n\; \hookrightarrow\;
A\langle d\rangle^{\leq n}
\end{equation}
defined by 
\begin{equation}
\widetilde{\beta}_n(g_0,\ldots,g_{n-1}) 
\;:=\;\sum_{i=0}^{n-1}g_id^i\;,
\end{equation}
and 
\begin{equation}
\beta_n(g_0, \ldots,g_{n-1})
\;:=\;
d^n+\widetilde{\beta}_n(g_0,\ldots,g_{n-1})\;.
\end{equation}
Let $n_{1}$, $n_{2}$, $n=n_1+n_2$ be the orders of 
$P^{\geq \rho}$, $P^{< \rho}$, $P$ respectively. 
The multiplication in $A\langle d\rangle$ provides a diagram
\begin{equation}\label{eq : diagram Dw-Ro}
\xymatrix{
&A\langle d\rangle^{\leq n_1}\times 
A\langle d\rangle^{\leq n_2}\ar[rr]^-{\mathfrak{m}}\ar@{}[rrd]|-{\circlearrowleft}&&A\langle d\rangle^{\leq n}\\
A^n\ar@/_{15pt}/[rrr]_{G_A}
\ar[ur]^-{\gamma}\ar[r]_{\delta}^{\sim}&A^{n_1}\times 
A^{n_2} \ar@{-->}[rr]
\ar@{^(->}[u]_-{\beta_{n_1}\times\beta_{n_2}}
&&A^n\ar@{^(->}[u]_-{\widetilde{\beta}_n}
}
\end{equation}
where 
\begin{enumerate}
\item one has 
\begin{equation}
\mathfrak{m}(Q_1,Q_2)\;:=\;Q_1Q_2-P\;,
\end{equation}
\item $\delta$ identifies $A^n$ with $A^{n_1}\times A^{n_2}$ by 
$(v_1,\ldots,v_n)\mapsto ((v_1,\ldots,v_{n_1}),
(v_{n_1+1},\ldots,v_{n_1+n_2}))$,
\item 
$\gamma:=(\beta_{n_1}\times\beta_{n_2})\circ\delta$ 
sends $(v_1,\ldots,v_n)$ onto the pair 
$(d^{n_1}+\sum_{k=0}^{n_1-1}v_{k+1}d^k,d^{n_2}+
\sum_{k=0}^{n_2-1}v_{n_1+k+1}d^k)$.
\end{enumerate}
The image of $\mathfrak{m}\circ\gamma$ is contained 
in that of $\widetilde{\beta}_n$, because $P$ is monic.
We are then allowed to set  
\begin{equation}
G_A\;:=\;\widetilde{\beta}_n^{-1}\circ \mathfrak{m}\circ\gamma
\end{equation}
the map $A^{n}\to A^n$ obtained in this way.

Let  $\bs{u}\in\H(x)^n$ be such that  
$\gamma(\bs{u})=(P^{\geq \rho},P^{<\rho}) 
\in \H(x)\lr{d}^{\leq n_1}\times \H(x)\lr{d}^{\leq n_2}$. Then 
\begin{equation}\label{eq : G_A(u)=0}
G_{\H(x)}(\bs{u})\;=\;0\;.
\end{equation}
This is actually a non linear system of differential equations on the entries 
of $\bs{u}$ (cf. \eqref{eq : def G_A} below).
We have to prove that there exists an affinoid neighborhood 
$Y$ of $x$ in $X$ such that 
$\bs{u}\in \O(Y)^{n}$ (i.e. the coefficients of $P^{\geq\rho}$ and $P^{<\rho}$ lie 
in $\O(Y)$).
For all $\bs{v}=(v_1,\ldots,v_n)\in A^n$, the vector 
$G_A(\bs{v})\in A^n$ is a finite sum of the form
\begin{equation}\label{eq : def G_A}
G_A(\bs{v})\;=\;\sum_{
\begin{smallmatrix}i,j=1,\ldots,n\\
k= 0,\ldots,n_1
\end{smallmatrix}}\bs{C}_{i,j,k}\cdot v_i \cdot 
d^k(v_j)-\bs{\ell}\;,
\end{equation}
where $\bs{C}_{i,j,k}\in\mathbb{N}^n$ are tuples of natural numbers and 
the vector $\bs{\ell}$ lies in 
$\O(Y)^n$ for some affinoid neighborhood $Y$ of $X$ 
(indeed $\bs{\ell}$ is the tuple associated with the coefficients of $P$). 

We linearize the problem as follows. 
Let 
$\{X_i^{(k)}\}_{i=1,\ldots,n;\;k= 0,\ldots,n_1}$ 
be a family of indeterminates. 
For all $0\leq k\leq n_1$, set 
$\bs{X}^{(k)}:=(X_1^{(k)},\ldots,X_n^{(k)})$ and 
\begin{equation}
\bs{X}\;:=\;(X_1^{(0)},\ldots,X_n^{(0)},X_1^{(1)},\ldots,X_n^{(1)},
\ldots,X_1^{(n_1)},\ldots,X_n^{(n_1)})\;.
\end{equation}
With the notations of \eqref{eq : def G_A} let
\begin{equation}
F(\bs{X})\;=\;F(\bs{X}^{(0)},\ldots,
\bs{X}^{(n_1)})\;:=\;
\sum_{i,j,k}\bs{C}_{i,j,k}\cdot X_i^{(0)} \cdot 
X_j^{(k)}-\bs{\ell}\;.
\end{equation}
This is now a polynomial with coefficients in $\O(Y)^n$, and it defines a map
\begin{equation}
F\;:\;A^{n(n_1+1)}\xrightarrow[]{\qquad}A^n\;.
\end{equation}
Denote by 
\begin{equation}
\zeta\;:\;A^n\;\xrightarrow[]{\quad}\; A^{n\cdot (n_1+1)}
\end{equation}
the $\mathbb{Z}$-linear 
map 
\begin{equation}
\zeta(\bs{v})\;:=\;(\bs{v},d(\bs{v}),\ldots,d^{n_1}(\bs{v}))\;.
\end{equation}
Then, for every $\bs{v}\in A^n$ we have a commutative diagram
\begin{equation}
\xymatrix{
A^n\ar[d]_-{G_A}\ar[r]^-{\zeta}&A^{n(n_1+1)}\ar[dl]^-{F}\\
A^n&
}
\end{equation}
in which
\begin{equation}\label{eq : G_A=F}
G_A(\bs{v})\;=\;F(\zeta(\bs{v}))\;=\;F(\bs{X}^{(0)},\bs{X}^{(1)},
\ldots,
\bs{X}^{(n_1)})_{|X_i^{(k)}=d^k(v_i)}\;.
\end{equation}
The above process of linearization corresponds 
to working on the jet-space 
(tangent space if $n_1=1$).
Since $F$ is a polynomial expression in $\bs{X}$,  
the Taylor formula gives 
\begin{equation}\label{eq : Taylor empq}
F(\bs{X}+\bs{Y})\;=\;
F(\bs{X})+dF_{\bs{X}}({\bs{Y}})+N_{\bs{X}}(\bs{Y})\;.
\end{equation}
where
\begin{equation}\label{eq : trqsdfpv}
dF_{\bs{X}}(\bs{Y}^{(0)},\ldots,\bs{Y}^{(n_1)})\;=\;\sum_{
\begin{smallmatrix}i=1,\ldots,n\\
k= 0,\ldots,n_1
\end{smallmatrix}
}\frac{\partial F}{\partial X_i^{(k)}}(\bs{X})\cdot Y_i^{(k)}\;,
\end{equation}
is the linear part, and $N_{\bs{X}}(\bs{Y})$ is the non linear part. So 
\begin{equation}\label{eq : G=G+L+N}
G_A(\bs{v}+\bs{\xi})\;=\;G_A(\bs{v})+dG_{A,\bs{v}}
(\bs{\xi})+N_{\zeta(\bs{v})}(\zeta(\bs{\xi}))\;,
\end{equation}
where 
\begin{equation}
dG_{A,\bs{v}}
(\bs{\xi})\;=\;
dF_{\zeta(\bs{v})}
(\zeta(\bs{\xi}))\;.
\end{equation} 
Applied to $\bs{u}\in\H(x)^n$ (cf. \eqref{eq : G_A(u)=0}) 
this gives
\begin{equation}\label{eq : explicit fixed point}
0\;=\;G_{\H(x)}(\bs{u})\;=\; 
G_{\H(x)}(\bs{v}) + 
dG_{\H(x),\bs{v}}(\bs{u}-\bs{v})+
N_{\zeta(\bs{v})}(\zeta(\bs{u}-\bs{v}))\;.
\end{equation}
Once we fix $\bs{v}\in A^n$, the condition in this form can be interpreted as an 
equation in the indeterminate $\bs{\xi}=\bs{u}-\bs{v}\in A^n$.
More precisely, in the following sections \ref{section : computation of df}, 
\ref{section : Some estimations.}, \ref{bijectivity}, 
\ref{section : contractive..}, we prove that if $Y$ is a small enough neighborhood of $x\in X$ and if the image of 
$\bs{v}\in\O(Y)^n$ in 
$\H(x)^n$ is close to $\bs{u}$, then  the maps 
$dG_{\H(x),\bs{v}}:\H(x)^n\to \H(x)^n$ and $dG_{\O(Y),\bs{v}}:\O(Y)^n\to \O(Y)^n$ are both bijectives. We then fix such $\bs{v}\in\O(Y)^n$ and prove that, for both 
$A=\H(x)$ and $A=\O(Y)$, the map
\begin{equation}
\phi_{\bs{v},A}(\bs{\xi})\;:=\;-dG_{A,\bs{v}}^{-1}
\Bigl(G_A(\bs{v})+N_{\zeta(\bs{v})}(\zeta(\bs{\xi}))\Bigr)
\end{equation}
is a contraction of a poly-disk 
$\bs{D}_{A}^+(0,q)=\{\bs{\xi}\in A^n\textrm{ such that }\|
\bs{\xi}\|_{A}\leq q\}$, whose radius $q$ satisfies 
$\|\bs{v}-\bs{u}\|_{\H(x)}<q$.
This implies that $\phi_{\bs{v},A}$ has a unique fixed point $\bs{\xi}_{\phi}$ in $\bs{D}_{A}^+(0,q)$. 
Since it is compatible with the inclusion 
$\bs{D}_{\O(Y)}^+(0,q)\subset \bs{D}_{\H(x)}^+(0,q)$, 
the fixed point of 
$\phi_{\bs{v},\H(x)}$ coincides with
that of $\phi_{\bs{v},\O(Y)}$. 
If $A=\H(x)$ the fixed 
point is $\bs{u}-\bs{v}$ by \eqref{eq : explicit fixed point}. 
This proves that $\bs{u}-\bs{v}\in \bs{D}_{\O(Y)}^+(0,q)$. 
In particular, this shows that $\bs{u}\in\O(Y)^n$ as desired.
\end{proof}

\subsubsection{Computation of $dG_{A,\bs{v}}$.}
\label{section : computation of df}
\if{
Let $A$ be equal to $\H(x)$, $\O(Y)$, or $\b_\Omega(D(x,\rho))$.
Consider $(A\lr{d}^{\leq n},\|.\|_{A,d})$ as a normed 
$K$-vector space. It is isomorphic to $(A^n,\|.\|_{A})$ by 
\eqref{def : norme sup on the coeff}.
Define as usual the differential of a map $\varphi:A^n\to A^m$ at 
$\bs{v}_0\in A^n$ as a continuous linear map 
$d\varphi_{\bs{v}_0}:A^n\to A^m$ satisfying 
\begin{equation}
\lim_{\bs{v}\to\bs{v}_0}\frac{\|\varphi(\bs{v})-\varphi(\bs{v}_0)-
d\varphi_{\bs{v}_0}(\bs{v})\|_A}{\|\bs{v}-\bs{v}_0\|_A}\;=\;0\;.
\end{equation}

\framebox{Semplificare tutto qui}

With this definition the 
differential $dG_{A,\bs{v}}$ of $G_A$ at $\bs{v}$ is equal to 
(cf. \eqref{eq : G=G+L+N}) 
\begin{equation}
dG_{A,\bs{v}}(\bs{\xi})\;=\;dF_{\zeta(\bs{v})}(\zeta(\bs{\xi}))
\end{equation} 
because $\zeta$ is $K$-linear and continuous.
We now compute $dG_{A,\bs{v}}$ in another way. 
}\fi
We consider $(A\lr{d}^{\leq n},\|.\|_{A,d})$ as a normed 
$K$-vector space. It is isomorphic to $(A^n,\|.\|_{A})$ by 
\eqref{def : norme sup on the coeff}. 
The definition of differential of a function 
$A^n\to A^m$ then has a meaning.

If $A=\O(Y)$ (resp. $A=\H(x)$, $\b_\Omega(D(x,\rho))$), the product 
$\mathfrak{m}:A\langle d\rangle^{\leq n_1}\times 
A\langle d\rangle^{\leq n_2}\to A\langle d\rangle^{\leq n}$ 
is a $K$-bilinear map which is continuous with respect to 
$\|.\|_{op,\Prod(\U_Y,\rho)}$ (resp. 
$\|.\|_{op,\b_\Omega(D(x,\rho))}$), where $\U_Y$ is a star around $x$ containing the Shilov boundary of $Y$. 
By Proposition \ref{Prop : equivalent norms on O(Y)} the same is true 
with respect to the equivalent norm $\|.\|_A$ on $A^n$.
The differential $dG_{A,\bs{v}}$ is then %
given by 
\begin{equation}\label{eq : L_v(G_A) with mult}
dG_{A,\bs{v}}\;=\;d\mathfrak{m}_{(P_{1,\bs{v}},P_{2,\bs{v}})}
\circ 
(\widetilde{\beta}_{n_1}\times\widetilde{\beta}_{n_2})\circ\delta\;,
\end{equation}
where $(P_{1,\bs{v}},P_{2,\bs{v}}):=\gamma(\bs{v})$. 
Notice here that $\widetilde{\beta}_s=(d\beta_{s})_{\bs{v}}$ and 
$d\delta_{\bs{v}}=\delta$ for all $\bs{v}\in A$, and all $s\geq 1$. 

Now, since 
the multiplication in $A\lr{d}$ is a continuous 
$K$-bilinear map, the differential %
of $\mathfrak{m}$ is
\begin{equation}
d\mathfrak{m}_{(P_{1,\bs{v}},P_{2,\bs{v}})}
(Q_1,Q_2)\;=\;Q_1P_{2,\bs{v}}+P_{1,\bs{v}}Q_2\;.
\end{equation}
\if{
\begin{remark}\label{Rk : image of beta x beta}
By \eqref{eq : L_v(G_A) with mult} the differential  $dG_{A,\bs{v}}$ 
can be interpreted as a map defined in the image of 
$(\widetilde{\beta}_{n_1}\times\widetilde{\beta}_{n_2})$. 
Following this identification one has
\begin{equation}
dG_{A,\bs{v}}(P_1,P_2)\;=\;P_1Q_{2,\bs{v}}+Q_{1,\bs{v}}P_2
\;.
\end{equation}
where $P_i$ is now a differential operator in $A\lr{d}^{\leq n_i-1}$ of 
degree \emph{strictly} less than $n_i$.
\end{remark}
}\fi

\subsubsection{Some estimations.}\label{section : Some estimations.}
We maintain the above notations. Let 
\begin{equation}
\bs{D}^+_A(0,r):=\{\bs{x}\in A^{n\cdot (n_1+1)}\;,\|\bs{x}\|_A
\leq r\}\;,
\end{equation}
 and let 
\begin{equation}
\|F\|_{\bs{D}_A^+(0,r)}\;:=\;\max_{\bs{x}\in A^{n(n_1+1)}\;,\;\|\bs{x}\|_A\leq r}\|F(\bs{x})\|_A\;.
\end{equation}
On the other hand, the map $\zeta:A^{n}\to A^{n(n_1+1)}$ is linear 
and bounded. We denote its operator norm by $\|\zeta\|_{op,A}$, for all $\bs{x}\in A^n$ we have
\begin{equation}\label{zeta opn}
\|\zeta(\bs{x})\|_{A}\;\leq\;\|\zeta\|_{op,A}\cdot\|\bs{x}\|_A\;.
\end{equation}
\begin{lemma}
Let $A$ be one of the rings $\H(x),\O(Y),\b_\Omega(D(x,\rho))$. 
Let $r>0$. The following hold:
\begin{enumerate}
\item\label{LS : i} Let $\bs{u}\in \H(x)^n$ be the zero of $G_{\H(x)}$ 
(cf. \eqref{eq : G_A(u)=0}), and let 
$\bs{v}\in\b_\Omega(D(x,\rho))^n$. 
If 
\begin{equation}
\|\bs{v}\|_{\b_\Omega(D(x,\rho))}\;,\;
\|\bs{u}\|_{\b_\Omega(D(x,\rho))}\;\leq\; 
r\|\zeta\|^{-1}_{op,\b_{\Omega}(D(x,\rho))}\;,
\end{equation} 
then 
\begin{equation}\label{eq : estim G(v)}
\|G_{\b_\Omega(D(x,\rho))}(\bs{v})\|_{\b_\Omega(D(x,\rho))}\;\leq\;
r^{-1}\cdot\|F\|_{\bs{D}_{\b_\Omega(D(x,\rho))}^+(0,r)}\cdot
\|\zeta\|_{op,\b_\Omega(D(x,\rho))}\cdot
\|\bs{v}-\bs{u}\|_{\b_\Omega(D(x,\rho))}\;;
\end{equation}
\item\label{LS : ii} If $\bs{v},\bs{\xi}\in A^n$ and if  $\|\bs{v}\|_A\leq r \|\zeta\|_{op,A}^{-1}$, then 
\begin{equation}
\|dG_{A,\bs{v}}(\bs{\xi})\|_A\;\leq\; 
r^{-1}\cdot 
\|F\|_{\bs{D}_A^+(0,r)}\cdot \|\zeta\|_{op,A}\cdot\|\bs{\xi}\|_A\;;
\end{equation}
\item \label{LS : iii} If $\bs{v}_1,\bs{v}_2,\bs{\xi}\in A^n$ and if 
$\|\bs{v}_1\|_A,\|\bs{v}_2\|_A\leq r\|\zeta\|^{-1}_{op,A}$, then
\begin{equation}\label{eq : norm dG_v1-dG_v2}
\|(dG_{\bs{v}_1}-dG_{\bs{v}_2})(\bs{\xi})\|_A\;\leq\; r^{-2}\cdot
\|F\|_{\bs{D}_A^+(0,r)}\cdot\|\zeta\|_{op,A}^2\cdot
\|\bs{v}_1-\bs{v}_2\|_A
\cdot\|\bs{\xi}\|_A\;;
\end{equation}
\item \label{LS : iv} If $\bs{v},\bs{\xi}\in A^n$ and if  $\|\bs{v}\|_A,\|\bs{\xi}\|_{A}\leq r\|\zeta\|_{op,A}^{-1}$, then 
\begin{equation}\label{eq : estim N_v(xi)}
\|N_{\zeta(\bs{v})}(\zeta(\bs{\xi}))\|_A\leq r^{-2}\cdot
\|F\|_{\bs{D}_A^+(0,r)}\cdot
\|\zeta\|_{op,A}^2\cdot 
\|\bs{\xi}\|_A^2\;;
\end{equation}
\item \label{LS : v} If $\bs{v},\bs{\xi}_1,\bs{\xi}_2\in A^n$ and if $\|\bs{v}\|_A,\|\bs{\xi}_1\|_A,\|\bs{\xi}_2\|_A\leq 
r\|\zeta\|_{op,A}^{-1}$, then
\begin{equation}\label{eq : norm of N_v(xi_1)-N_v(xi_2)}
\!\!\!\!\!
\|N_{\zeta(\bs{v})}(\zeta(\bs{\xi}_1))-
N_{\zeta(\bs{v})}(\zeta(\bs{\xi}_2))\|_A\;\leq\; 
r^{-2}\cdot
\|F\|_{\bs{D}_A^+(0,r)}\cdot
\|\zeta\|_{op,A}^2\cdot
\|\bs{\xi}_1-\bs{\xi}_2\|_A\cdot
\max(\|\bs{\xi}_1\|_A,\|\bs{\xi}_2\|_A)\;.
\end{equation}
\end{enumerate}
\end{lemma}
\begin{proof}
Write Taylor's formula as 
\begin{equation}
F(\bs{X}+\bs{Y})\;=\;
\sum_{\alpha\in\mathbb{N}^{n(n_1+1)}}
D_\alpha(F)(\bs{X})\bs{Y}^\alpha\;,
\end{equation}
where for $\alpha=(\alpha_{i,k})_{i\in\{1,\ldots,n\},k\in\{0,\ldots,n_1\}}
\in\mathbb{N}^{n(n_1+1)}$, we set
\begin{equation}
\bs{Y}^\alpha\;:=\;\prod_{i,k}(Y_i^{(k)})^{\alpha_{i,k}}
\end{equation}
and 
\begin{equation}
D_\alpha\;:=\;\prod_{i=1,\ldots,n;\;k=0,\ldots,n_1}
\frac{1}{\alpha_{i,k}!}\cdot 
\Bigl(\frac{d}{dX_{i}^{(k)}}\Bigr)^{\alpha_{i,k}}\;.
\end{equation}
The assumptions we are going to make will imply that 
$\bs{X},\bs{Y}\in\bs{D}^+_A(0,r)$ 
(i.e. $\|\bs{X}\|_{A},\|\bs{Y}\|_A\leq r$) for some convenient $r$. 
We recall that, if 
\begin{equation}
|\alpha|\;=\;\sum_{i=1,\ldots,n;\;k=0,\ldots,n_1}\alpha_{i,k}\;,
\end{equation}
then for all $\bs{X}\in \bs{D}^+_A(0,r)$ one has 
\begin{equation}\label{eq : bound of the derivation multiv}
\|D_\alpha(F)(\bs{X})\|_A\;
\leq \;
\frac{\|F\|_{\bs{D}_A^+(0,r)}}{r^{|\alpha|}}\;.
\end{equation}

Let us begin with item \eqref{LS : iv}. 
One has $N_{\bs{X}}(\bs{Y})=
\sum_{|\alpha|\geq 2}D_\alpha(F)(\bs{X})\bs{Y}^\alpha$. Therefore,  for 
$\|\bs{X}\|_A,\|\bs{Y}\|_A\leq r$ one finds
\begin{equation}\label{eq : bound for N_X.ert}
\|N_{\bs{X}}(\bs{Y})\|_A\leq
\sup_{|\alpha|\geq 2}\|D_\alpha(F)(\bs{X})\|_A\|\bs{Y}^{\alpha}\|_A
\leq
\sup_{|\alpha|\geq 2}\|F\|_{\bs{D}^+_A(0,r)}
\Bigl(\frac{\|\bs{Y}\|_A}{r}\Bigr)^{|\alpha|}=
\|F\|_{\bs{D}^+_A(0,r)}\Bigl(\frac{\|\bs{Y}\|_A}{r}\Bigr)^2\;.
\end{equation}
Replacing $\bs{X}$ and $\bs{Y}$ by $\zeta(\bs{v})$ and $\zeta(\bs{\xi})$ respectively yields the desired inequality by \eqref{zeta opn}.

Let us prove \eqref{LS : i}. We start from 
\eqref{eq : Taylor empq} and perform the substitution 
$\bs{X}=\zeta(\bs{u})$, and 
$\bs{Y}=\zeta(\bs{v}-\bs{u})$. The claim then follows from the fact 
$F(\zeta(\bs{u}))=G_A(\bs{u})=0$, together with the bound 
\eqref{eq : bound for N_X.ert}, and the following inequality
(cf. \eqref{eq : bound of the derivation multiv})
\begin{eqnarray}\label{eq: frsqmpeeee}
\|dF_{\bs{X}}(\bs{Y})\|_{\bs{D}^+_{A}(0,r)}&\;\leq\;&
r^{-1}\|F\|_{\bs{D}^+_{A}(0,r)}\|\bs{Y}\|_A\;.
\end{eqnarray}

Similarly, \eqref{LS : ii} follows  from \eqref{eq : trqsdfpv}, and 
$\|\frac{\partial F}{\partial X_i^{(k)}}\|_{\bs{D}^+_{A}(0,r)}
\leq r^{-1}\|F\|_{\bs{D}_A^+(0,r)}$.

Let us prove \eqref{LS : iii}. Let $H:\bs{D}^+_A(0,r)\to A^n$ be any power series converging 
on $\bs{D}^+_A(0,r)$. The Taylor expansion of $H(\bs{X}_1)$ around 
$\bs{X}_2$ gives for $\bs{Z}:=\bs{X}_1-\bs{X}_2$
\begin{equation}
H(\bs{X}_1)-H(\bs{X}_2)=H(\bs{X}_2+\bs{Z})-
H(\bs{X}_2)=\sum_{|\alpha|\geq 1}
D_\alpha(H)(\bs{X}_2)\bs{Z}^\alpha\;.
\end{equation}
So for $\|\bs{X}_1\|_A, \|\bs{X}_2\|_A, \|\bs{Z}\|_A\leq r$ one obtains
\begin{equation}
\|H(\bs{X}_1)-H(\bs{X}_2)\|_A\leq 
\sup_{|\alpha|\geq 1}\|D_\alpha(H)(\bs{X}_2)\|_A\cdot
\|\bs{Z}^\alpha\|_A
\leq 
\sup_{|\alpha|\geq 1}\|H\|_{\bs{D}^+_A(0,r)}\cdot
\Bigl(\frac{\|\bs{Z}\|_A}{r}\Bigr)^{|\alpha|}=
\|H\|_{\bs{D}^+_A(0,r)}
\frac{\|\bs{Z}\|_A}{r}
\end{equation}
We apply this to $H(\bs{X}):=dF_{\bs{X}}(\bs{Y})$, together with 
\eqref{eq: frsqmpeeee}, to obtain 
$\|(dF_{\bs{X}_1}-dF_{\bs{X}_2})(\bs{Y})\|_A\leq 
r^{-2}\cdot \|F\|_{\bs{D}^+_A(0,r)}\|\bs{X}_1-\bs{X}_2\|_A
\|\bs{Y}\|_{A}$.

Let us prove \eqref{LS : v} Taylor formula gives
$N_{\bs{X}}(\bs{Y}_1)-N_{\bs{X}}(\bs{Y}_2)=
\sum_{|\alpha|\geq 2}D_\alpha(F)(\bs{X})
(\bs{Y}_1^\alpha-\bs{Y}_2^{\alpha})$. 
Now v) follows as in the above cases using
the inequality
$\|\bs{Y}_1^\alpha-\bs{Y}_2^{\alpha}\|_A\leq
\|\bs{Y}_1-\bs{Y}_2\|_A\cdot
\max(\|\bs{Y}_1\|,\|\bs{Y}_2\|)^{|\alpha|-1}$. 
Indeed
\begin{equation}
\bs{Y}_1^\alpha-\bs{Y}_2^{\alpha}\;=\;
\sum_{|\beta|>1}\tbinom{\alpha}{\beta}\bs{Y}_1^{\alpha-\beta}
(\bs{Y}_1-\bs{Y}_2)^\beta\;=\;
\sum_{i=1}^{n}(Y_{1,i}-Y_{2,i})\cdot 
Q_{i,\alpha}(\bs{Y}_1,\bs{Y}_2)\;.
\end{equation}
This is a sum of monomials of degree $|\alpha|$ so $Q_{i,\alpha}$ is a 
sum of monomials of degree $|\alpha|-1$ with integer coefficients, 
and hence $\|Q_{i,\alpha}(\bs{Y}_1,\bs{Y}_2)\|_A\leq 
\max(\|\bs{Y}_1\|,\|\bs{Y}_2\|)^{|\alpha|-1}$. This proves that 
$\|N_{\bs{X}}(\bs{Y}_1)-N_{\bs{X}}(\bs{Y}_2)\|_A\leq 
r^{-2}\|F\|_{\bs{D}^+_A(0,r)}   \|\bs{Y}_1-\bs{Y}_2\|_A 
\max(\|\bs{Y}_1\|_A,\|\bs{Y}_2\|_A)$.
\end{proof}

\if{
\begin{lemma}\label{Lemma : zeta works well}
One can choose the derivation $d$ so that 
$\|d\|_{op,\b_\Omega(D(x,\rho))}>1$. 
With such a choice one finds 
\begin{equation}
\|\zeta\|_{op,\b_\Omega(D(x,\rho))}\;=\;
\max_{k=1,\ldots,n_1}\|d^k\|_{op,\b_\Omega(D(x,\rho))}\;
\end{equation}
(note that $k\neq 0$ in the $\max$). 
Moreover $\rho\mapsto\|\zeta\|_{op,\b_\Omega(D(x,\rho))}$ is a 
\emph{strictly} decreasing function.
\end{lemma}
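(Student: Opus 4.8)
\emph{Sketch of a proof.} The plan is to split the statement into its three assertions and to dispose of the choice of~$d$ first, as it is the only one carrying any real content. For this I would fix a coordinate~$T$ on the generic disk~$D(x)$ --- the pull-back of a $K$-rational coordinate along one of the étale maps $\psi_j$ --- for which $d/dT$ stabilizes $\O_{X,x}$, $\H(x)$ and the Banach algebras $\b_\Omega(D(x,\rho))$; this is provided by Remarks~\ref{local derivation} and~\ref{Remark : d/dT coord on D(x)}. The same properties hold for the coordinate $\lambda T$ with $\lambda\in K^\times$, and $d/d(\lambda T)=\lambda^{-1}(d/dT)$ is still a generator of the $\O_{X,x}$-module of $K$-linear derivations of~$\O_{X,x}$. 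Since the subdisks $D(x,\rho)\subseteq D(x)$ of \eqref{D(x,R)} are defined through their modulus, they are coordinate-independent, so rescaling~$T$ only changes the radius~$r$ of~$D(x)$ in the coordinate into $|\lambda|\,r$. As $K$ is algebraically closed, $|K^\times|$ is dense in $\mathbb{R}_{>0}$, so we may choose~$\lambda$ with $|\lambda|$ small enough that the new radius satisfies $r<1$; taking $d:=d/dT$ for this coordinate, formula \eqref{eq : explicit norm operator} gives $\|d\|_{op,\b_\Omega(D(x,\rho))}=(\rho\,r)^{-1}\ge r^{-1}>1$ for every $\rho\in]0,1]$, which is the first assertion.

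Next I would compute $\|\zeta\|_{op,\b_\Omega(D(x,\rho))}$, where $\zeta(\bs v)=(\bs v,d\bs v,\ldots,d^{n_1}\bs v)$ is the $\mathbb{Z}$-linear map introduced just before \eqref{eq : G_A=F}, with~$d$ acting componentwise and with the sup-norm $\max_i\|v_i\|_A$ on source and target. The elementary observation is that, for any ultrametric Banach algebra~$A$ stable under~$d$, one has $\|\zeta\|_{op,A}=\max_{0\le k\le n_1}\|d^k\|_{op,A}$: the inequality $\le$ is immediate from $\|d^k\bs v\|_A\le\|d^k\|_{op,A}\|\bs v\|_A$, and the reverse one follows by evaluating~$\zeta$ on vectors supported in a single coordinate. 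Applying this with $A=\b_\Omega(D(x,\rho))$ and using the previous paragraph, we get $\|d^0\|_{op,\b_\Omega(D(x,\rho))}=1<(\rho\,r)^{-1}=\|d\|_{op,\b_\Omega(D(x,\rho))}\le\max_{k=1,\ldots,n_1}\|d^k\|_{op,\b_\Omega(D(x,\rho))}$, so the index $k=0$ never realizes the maximum and the displayed formula follows.

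For the last assertion I would appeal to \eqref{eq : explicit norm operator} once more: each term equals $|k!|\,(\rho\,r)^{-k}$, which for every fixed $k\ge1$ is a strictly decreasing function of $\rho\in]0,1]$, and a pointwise maximum $g=\max_k f_k$ of finitely many strictly decreasing functions is again strictly decreasing --- for $\rho_1<\rho_2$, picking~$k_0$ with $g(\rho_1)=f_{k_0}(\rho_1)$ gives $g(\rho_1)\ge f_k(\rho_1)>f_k(\rho_2)$ for every~$k$, hence $g(\rho_1)>g(\rho_2)$. The only genuine obstacle in the whole argument is the first step: arranging $\|d\|_{op,\b_\Omega(D(x,\rho))}>1$ to hold uniformly in $\rho\le1$ (the worst case being $\rho=1$) while keeping~$d$ an admissible generator of the derivations; this is exactly what the rescaling of the coordinate takes care of, and everything afterwards is formal manipulation of \eqref{eq : explicit norm operator}.
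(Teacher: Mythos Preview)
Your proof is correct and follows the same approach as the paper's: both arrange $\rho\,r(x)<1$ by rescaling the coordinate on the target of one of the maps~$\psi_j$ (the paper composes $\psi_j$ with multiplication by a constant $c\in K$, you replace $T$ by $\lambda T$ --- these are the same operation), after which all three assertions follow from the explicit formula $\|d^k\|_{op,\b_\Omega(D(x,\rho))}=|k!|(\rho\,r)^{-k}$. Your version is in fact slightly cleaner: the paper's proof invokes the standing assumption $\rho<1$ together with $r\le 1$, whereas you force $r<1$ directly and thereby cover $\rho=1$ as well; and you spell out the second and third assertions, which the paper's proof leaves implicit.
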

\begin{proof}
Recall that $\rho<1$ by \eqref{eq: essumption rho<1}.
A possible choice of $d$ is the derivation $d_j=d/dT_j$ of 
Remark \ref{local derivation}. Since $\rho<1$ we have 
$\|d^k\|_{op,\b_\Omega(D(x,\rho))}=
|k!|\cdot (\rho r(\psi_j(x)))^{-k}$. 
Up to replace $\psi_j$ with $\psi_j\circ(\cdot c)$, where $(\cdot c)$ is 
the multiplication by a constant of $K$, we can assume that 
$r(\psi_j(x))\leq 1$.
\end{proof}
}\fi

\subsubsection{Bijectivity of $dG_{\O(Y),\bs{v}}$.}
\label{bijectivity}

\begin{notation}\label{Not : 4.8}
Let $A$ be one of the rings $\O(Y)$, $\O_{X,x}$, $\H(x)$, 
$\b_\Omega(D(x,\rho))$, and let $\bs{v}\in A^n$. 
The action of $dG_{\O(Y),\bs{v}}$ on $A^n$ is represented by a 
$n\times n$ matrix 
\begin{equation}
L_{\bs{v}}
\end{equation}
with coefficients in $A\lr{d}$. 
Remark  that $L_{\bs{v}}$ acts also on each $A\lr{d}$-module 
of the form $B^n$, where $B$ is any $A\lr{d}$-module.
\end{notation}

\begin{proposition}\label{Prop : L_u bijk}
Let $\bs{u}$ be the zero of $G_{\H(x)}$ of \eqref{eq : G_A(u)=0}.
Then
\begin{equation}
dG_{\H(x),\bs{u}}\;:\;B^n\xrightarrow{\;\;\sim\;\;} B^n
\end{equation}
is bijective, for 
$B=\H(x)$, $\b_\Omega(D(x,\rho))$, $\O_\Omega(D(x,\rho))$.
\end{proposition}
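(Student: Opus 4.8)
The plan is to make $dG_{\H(x),\bs u}$ explicit and then prove injectivity and surjectivity of its action on each $B^n$ separately. First I would invoke Section~\ref{section : computation of df}: since $\gamma(\bs u)=(P^{\geq\rho},P^{<\rho})$ (cf. the proof of Theorem~\ref{Dwork-Robba}), the matrix $L_{\bs u}\in M_n(\H(x)\lr d)$ of Notation~\ref{Not : 4.8} represents, on any differential $\H(x)$-algebra $B$, the $\Omega$-linear map
\[
\Phi_B\colon B\lr d^{\leq n_1-1}\oplus B\lr d^{\leq n_2-1}\longrightarrow B\lr d^{\leq n-1},\qquad (Q_1,Q_2)\longmapsto Q_1P^{<\rho}+P^{\geq\rho}Q_2,
\]
where $B\lr d^{\leq m}\cong B^{m+1}$ and the coefficients of $P^{\geq\rho},P^{<\rho}$ lie in $\H(x)\subseteq B$ (a degree count shows the target is as stated). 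So I must show $\Phi_B$ is bijective for $B\in\{\H(x),\b_\Omega(D(x,\rho)),\O_\Omega(D(x,\rho))\}$. Writing $\M^{\geq\rho}_B:=\M^{\geq\rho}\otimes_{\H(x)}B$, etc., I will use that each such $B\lr d$ is an integral domain, that the factorisation exact sequences of Section~\ref{Filtrations of cyclic modules, and factorization of operators.} hold over any such $B\lr d$, and the two structural inputs: $\M^{\geq\rho}$ is trivialised by $\O_\Omega(D(x,\rho))$ while $\M^{<\rho}$ (hence also $(\M^{<\rho})^*=(\M^*)^{<\rho}$) has no solution in $\O_\Omega(D(x,\rho))$ (Proposition~\ref{Prop. deco analytic}); and $\mathrm{Hom}_{\H(x)\lr d}(\M^{\geq\rho},\M^{<\rho})=0=\mathrm{Hom}_{\H(x)\lr d}((\M^*)^{\geq\rho},(\M^*)^{<\rho})$ together with the self-duality $(\M^\rho)^*\cong(\M^*)^\rho$ (Corollary~\ref{corollary : uniqueness at H(x)}, Proposition~\ref{dual}).

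For \emph{injectivity} it suffices to treat $B=\O_\Omega(D(x,\rho))$, the other two $\Phi$'s being its restrictions along the coefficient inclusions $\H(x),\b_\Omega(D(x,\rho))\hookrightarrow\O_\Omega(D(x,\rho))$. Suppose $\Phi_B(Q_1,Q_2)=0$, and put $R:=Q_1P^{<\rho}=-P^{\geq\rho}Q_2$. If $R\neq0$, the factorisations $R=Q_1\cdot P^{<\rho}$ and $R=P^{\geq\rho}\cdot(-Q_2)$ give over $B\lr d$ an embedding $\M^{\geq\rho}_B\hookrightarrow\M_{R,B}$ with image $B$-free of rank $n_1$ (as $P^{\geq\rho}$ is monic), and a surjection $\M_{R,B}\twoheadrightarrow\M^{<\rho}_B$ with kernel $\M_{Q_1,B}$ of generic rank $\deg Q_1\leq n_1-1$. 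The composite $\M^{\geq\rho}_B\hookrightarrow\M_{R,B}\twoheadrightarrow\M^{<\rho}_B$ is a morphism of differential $B$-modules and it vanishes, since $\M^{\geq\rho}_B$ is trivial over $B$ and $\omega(\M^{<\rho}_B,B)=0$; hence $\M^{\geq\rho}_B$ embeds into $\M_{Q_1,B}$, which is impossible on comparing ranks after $-\otimes_B\mathrm{Frac}(B)$. Thus $R=0$, and then $Q_1=Q_2=0$ because $B\lr d$ is a domain.

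For \emph{surjectivity} I would identify the target with $\M_{P,B}=B\lr d/B\lr d P$ by Euclidean division by the monic $P$; the image of $\Phi_B$ then contains the $B$-free rank-$n_1$ submodule $\iota(\M^{\geq\rho}_B)$ (right multiplication by $P^{<\rho}$), with quotient $\M^{<\rho}_B$, and a short computation identifies $\mathrm{coker}\,\Phi_B$ with $\mathrm{coker}\bigl(P^{\geq\rho}{\cdot}\colon\M^{<\rho}_B\to\M^{<\rho}_B\bigr)$. Dualising via the formal adjoint, and using $(\M^{<\rho}_B)^*=(\M^*)^{<\rho}_B$ together with $\M_{(P^{\geq\rho})^t,B}\cong(\M^{\geq\rho}_B)^*=(\M^*)^{\geq\rho}_B$ (Proposition~\ref{dual}, Corollary~\ref{corollary : uniqueness at H(x)}), this cokernel is dual to $\ker\bigl((P^{\geq\rho})^t{\cdot}\colon(\M^*)^{<\rho}_B\to(\M^*)^{<\rho}_B\bigr)$. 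Any element $u$ of the latter generates a cyclic differential submodule of $(\M^*)^{<\rho}_B$ which is a quotient of $(\M^*)^{\geq\rho}_B$: for $B=\H(x)$ such a module vanishes by $\mathrm{Hom}_{\H(x)\lr d}((\M^*)^{\geq\rho},(\M^*)^{<\rho})=0$, while for $B=\O_\Omega(D(x,\rho))$ it is trivial (a quotient of the trivial $(\M^*)^{\geq\rho}_B$) and a submodule of $(\M^*)^{<\rho}_B$, which has no solution in $B$, so again $0$. Hence $\mathrm{coker}\,\Phi_B=0$ and $\Phi_B$ is bijective for $B=\H(x)$ and $B=\O_\Omega(D(x,\rho))$.

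The case $B=\b_\Omega(D(x,\rho))$ is the delicate one and, I expect, the main obstacle: there the derivation is not surjective and $\mathrm{Ext}^1_{\H(x)\lr d}(-,\b_\Omega(D(x,\rho)))$ need not vanish, so the cokernel argument cannot be run intrinsically. Instead I would descend from $\O_\Omega(D(x,\rho))$: injectivity is inherited as above; and since $\mathrm{coker}(P^{\geq\rho}{\cdot}\,)$ commutes with $-\otimes_{\b_\Omega(D(x,\rho))}\O_\Omega(D(x,\rho))$ (right exactness together with flatness of $\O_\Omega(D(x,\rho))$ over $\b_\Omega(D(x,\rho))$) and vanishes after this base change by the previous paragraph, the finitely generated $\b_\Omega(D(x,\rho))$-module $\mathrm{coker}\,\Phi_{\b_\Omega(D(x,\rho))}$ is itself zero by faithfully flat descent, so $\Phi_{\b_\Omega(D(x,\rho))}$ is bijective as well. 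The one further technical point to secure is that the formal-adjoint identifications used above are compatible with the $\rho$-decomposition over each of these rings — which is exactly what Proposition~\ref{dual} and Corollary~\ref{corollary : uniqueness at H(x)} provide.
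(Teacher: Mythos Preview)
Your injectivity argument is essentially the paper's: reduce to $B=\O_\Omega(D(x,\rho))$, use the two factorisations of $R=Q_1P^{<\rho}=-P^{\geq\rho}Q_2$ to obtain a differential morphism $\M^{\geq\rho}_B\to\M^{<\rho}_B$, observe that it vanishes, and derive a rank contradiction against $\M_{Q_1,B}$.

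The surjectivity part has a genuine gap. First, the step ``dualising via the formal adjoint'' is not justified in this setting: the pairing between $\M^{<\rho}_B$ and $(\M^{<\rho}_B)^*$ is $B$-bilinear while left multiplication by $P^{\geq\rho}$ is only $K$-linear, so there is no clean identification of $\mathrm{coker}(P^{\geq\rho}\cdot)$ with any dual of $\ker((P^{\geq\rho})^t\cdot)$; the usual formal-adjoint duality for differential operators rests on pairings (residues, integration) that are not available here. Second, and more decisively, your descent to $B=\b_\Omega(D(x,\rho))$ fails: the connection on $\M^{<\rho}_{B'}$ is $\nabla_B\otimes 1+1\otimes d$, not $\nabla_B\otimes 1$, so $P^{\geq\rho}\cdot$ on $\M^{<\rho}_{B'}$ is \emph{not} the base change of $P^{\geq\rho}\cdot$ on $\M^{<\rho}_B$. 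The cokernel is therefore not a $B$-module, does not commute with $-\otimes_B\O_\Omega(D(x,\rho))$, and faithful flatness is irrelevant.

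The paper proves surjectivity by an entirely different, analytic route: it invokes Lemma~\ref{Lemma : matrix QL-1<e}, which asserts that any matrix $L\in M_n(\H(x)\lr d)$ injective on $\O_\Omega(D(x,\rho))^n$ is automatically bijective on all three $B^n$. That lemma reduces (via elementary divisors over the Euclidean ring $\H(x)\lr d$) to the scalar Lemma~\ref{QL-1 small}, whose heart is the construction of an approximate inverse $Q_\varepsilon$ with $\|Q_\varepsilon L-1\|_{op,\b_\Omega(D(x,\rho))}<\varepsilon$ and the inversion of $Q_\varepsilon L$ by a Neumann series on the Banach algebra $\b_\Omega(D(x,\rho))$. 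This analytic input (combined with Lemma~\ref{Lemma : QL and U invertible}, which trades surjectivity on $\b_\Omega$ for injectivity on $\O_\Omega$) is precisely what handles the $\b_\Omega(D(x,\rho))$ case, and it has no purely algebraic replacement within the paper.
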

\begin{proof}
\emph{Injectivity.} 
Consider diagram \eqref{eq : diagram Dw-Ro} 
replacing $A$ by the ring $B$. The map 
$d(\beta_{n_1}\times\beta_{n_2})_{\bs{u}}=\widetilde{\beta}_{n_1}\times\widetilde{\beta}_{n_2}$ 
is injective, and $\delta$ is bijective, so by 
\eqref{eq : L_v(G_A) with mult} we have to prove 
that the restriction of 
$d\mathfrak{m}_{(P^{\geq \rho},P^{<\rho})}$ to the image
$B\lr{d}^{\leq n_1-1}\times B\lr{d}^{\leq n_2-1}$ 
of 
$\widetilde{\beta}_{n_1}\times\widetilde{\beta}_{n_2}$ is injective. 
Let $(Q_1,Q_2)\in B\lr{d}^{\leq n_1-1}\times
B\lr{d}^{\leq n_2-1}$ 
be such that 
$d\mathfrak{m}_{(P^{\geq \rho},P^{<\rho})}(Q_1,Q_2)=0$. 
This means that 
\begin{equation}\label{eq : PQ_2=Q_1P}
P^{\geq \rho}Q_2\;=\;-Q_1P^{<\rho}\;.
\end{equation}
Assume, by contradiction, that $(Q_1,Q_2)\ne (0,0)$. 
Then $Q_1$ and $Q_2$ are both non-zero and we may assume that 
they are monic.
With the notations of Section \ref{Filtrations of cyclic modules, and 
factorization of operators.} one has two exact sequences 
$0\to\M_{P^{\geq \rho}}\to\M_{P^{\geq \rho}Q_2}\to\M_{Q_2}
\to 0$ and 
$0\to\M_{Q_1}\to\M_{Q_1P^{<\rho}}\to\M_{P^{<\rho}}
\to 0$. Now $\Hdr^0(\M_{P^{<\rho}},\O_\Omega(D(x,\rho)))=0$, so 
$\Hdr^0(\M_{Q_1},\O_\Omega((x,\rho)))\cong
\Hdr^0(\M_{Q_1P^{<\rho}},\O_\Omega(D(x,\rho)))$. 
Moreover $\M_{P^{\geq \rho}Q_2}\cong\M_{Q_1P^{<\rho}}$ by 
\eqref{eq : PQ_2=Q_1P}. Applying the functor 
$\Hdr^0(-,\O(D(x,\rho)))$ one finds
\begin{eqnarray}
\Hdr^0(\M_{P^{\geq\rho}},\O_\Omega(D(x,\rho)))\;\subseteq\;
\Hdr^0(\M_{P^{\geq \rho}Q_2},\O_\Omega(D(x,\rho)))&\;\cong\;&
\Hdr^0(\M_{Q_1P^{<\rho}},\O_\Omega(D(x,\rho)))\nonumber\\
&\;\cong\;&
\Hdr^0(\M_{Q_1},\O_\Omega(D(x,\rho)))\;.\qquad\qquad
\end{eqnarray}
This is a contradiction because $\M_{P^{\geq \rho}}$ is trivialized by 
$\O_{\Omega}(D(x,\rho))$ and hence
\begin{equation}
\mathrm{dim}_\Omega\;\Hdr^0(\M_{P^{\geq\rho}},
\O_\Omega(D(x,\rho)))\;=\;\mathrm{rank}\;
\M_{P^{\geq \rho}}\;>\;
\mathrm{rank}\;\M_{Q_1}\;\geq\;\mathrm{dim}_\Omega\;
\Hdr^0(\M_{Q_1},\O_\Omega(D(x,\rho)))\;.
\end{equation}
\emph{Surjectivity} will follow from Lemma 
\ref{Lemma : matrix QL-1<e} below.
\end{proof}
The following lemma is a generalization of Lemma \ref{QL-1 small}.
If $M=(m_{i,j})$ is a matrix with coefficients in 
$\b_{\Omega}(D(x,\rho))$, we 
denote by $\|M\|_{op,\b_{\Omega}(D(x,\rho))}$ 
the maximum of the norms 
$\|m_{i,j}\|_{op,\b_{\Omega}(D(x,\rho))}$ of the 
coefficients of $M$.
\begin{lemma}\label{Lemma : matrix QL-1<e}
Let $L$ be a square $n\times n$ matrix with coefficients in 
$\H(x)\lr{d}$. 
Assume that $L$ is injective as an endomorphism of 
$\O_\Omega(D(x,\rho))^n$. Then, for all $\varepsilon>0$ 
there exists a $n\times n$ matrix $Q_\varepsilon$ with coefficients in 
$\O_{X,x}\lr{d}$ such that 
\begin{enumerate}
\item \label{Lemma : matrix QL-1<e i}
$\|Q_\varepsilon L-1
\|_{op,\b_{\Omega}(D(x,\rho))}<\varepsilon$.
\item \label{Lemma : matrix QL-1<e ii}
$\Q_\varepsilon$ and $L$ are bijective as endomorphisms of 
$\H(x)$, of $\b_\Omega(D(x,\rho))$, and of $\O_\Omega(D(x,\rho))$.
\end{enumerate}
The same matrix $Q_\varepsilon$ satisfies 
the same properties with respect to every $\rho'$ less 
than or equal to $\rho$ and close enough to $\rho$.
\end{lemma}
\begin{proof}
Since $\H(x)$ is a field, the ring $\H(x)\lr{d}$ is 
(left and right) Euclidean. 
In particular it is a (left and right) principal ideal domain. 
So by the theory of elementary divisors, there exist invertible 
matrices $U,V\in GL_n(\H(x)\lr{d})$ 
such that $\widetilde{L}=ULV$ is diagonal 
$\widetilde{L}=\mathrm{diag}(\widetilde{L}_1,\ldots,
\widetilde{L}_n)$. 
By assumption $L$ is injective on $\O_\Omega(D(x,\rho))^n$, 
then so is $\widetilde{L}$, 
and hence each $\widetilde{L}_i$ is injective on 
$\O_\Omega(D(x,\rho))$. By Lemma \ref{QL-1 small} 
$\widetilde{L}_i$ is bijective on $B$, with  
$B\in\{\H(x),\; \b_\Omega(D(x,\rho)),\;\O_\Omega(D(x,\rho))\}$. Therefore 
$\widetilde{L}$ is bijective on $B^n$, and so is $L$. 
Now by Lemma \ref{QL-1 small}, for all $\varepsilon'>0$, there exists 
$\widetilde{Q}_i\in\H(x)\lr{d}$ such that the matrix 
$\widetilde{Q}:=\mathrm{diag}(\widetilde{Q}_1,\ldots,
\widetilde{Q}_n)$ verifies 
$\|\widetilde{Q}\widetilde{L}-1\|_{op,\b_{\Omega}(
D(x,\rho))}<\varepsilon'$. The matrix $Q_1:=V\widetilde{Q}U$ 
verifies $Q_1L-1=V(\widetilde{Q}\widetilde{L}-1)V^{-1}$. So if 
$\varepsilon'<\varepsilon/(\|V\|_{op,\b_\Omega(D(x,\rho))}\cdot
\|V^{-1}\|_{op,\b_\Omega(D(x,\rho))})$, then $Q_1$ verifies item \eqref{Lemma : matrix QL-1<e i}. 

As in the proof of Lemma \ref{QL-1 small}, 
all the norms are equivalent on $\H(x)\lr{d}^{\leq d}$ and 
$\O_{X,x}\lr{d}^{\leq d}\subset
\H(x)\lr{d}^{\leq d}$ is dense for the norm $\|.\|_{op,
\b_\Omega(D(x,\rho))}$. Therefore, we may find a $n\times n$ matrix
$Q_\varepsilon$  with coefficients in 
$\O_{X,x}\lr{d}$ such that 
$\|Q_{\varepsilon}-Q_1\|_{op,\b_\Omega(D(x,\rho))}$ 
is strictly smaller than 
$\|L\|_{op,\b_\Omega(D(x,\rho))}^{-1}\|Q_1L-1\|_{op,\b_\Omega(D(x,\rho))}$.  
This implies that 
\begin{equation}
\|Q_\varepsilon L-1\|_{op,\b_\Omega(D(x,\rho))}\;=\;
\|Q_1L-1-(Q_1-Q_\varepsilon)L\|_{op,\b_\Omega(D(x,\rho))}\;=\;
\|Q_1L-1\|_{op,\b_\Omega(D(x,\rho))}\;<\;\varepsilon\;.
\end{equation}
Hence $Q_\varepsilon$ verifies item \eqref{Lemma : matrix QL-1<e i}. This implies that $Q_\varepsilon L$ is invertible 
as an endomorphisms of $\b_\Omega(D(x,\rho))^n$, so 
$Q_\varepsilon$ is surjective on it. 

Let now $A,B\in GL_n(\O_{X,x}\lr{d})$ such that 
$AQ_\varepsilon B= 
\mathrm{diag}(Q_{\varepsilon,1}, \ldots,Q_{\varepsilon,n})$. Each 
$Q_{\varepsilon,i}$ is surjective on $\b_\Omega(D(x,\rho))$, so by 
Lemma \ref{Lemma : QL and U invertible} it is injective on 
$\O_\Omega(D(x,\rho))$. This proves that
$Q_\varepsilon$ is injective on $\O_\Omega(D(x,\rho))^n$. 
So we can reproduce the first part of 
this proof replacing $L$ by $Q_\varepsilon$, to prove that 
$Q_\varepsilon$ is bijective  as endomorphisms of 
$\H(x)$, of $\b_\Omega(D(x,\rho))$, and of $\O_\Omega(D(x,\rho))$.

To prove the last statement we notice, 
in analogy with the proof of Lemma \ref{QL-1 small}, 
that the kernel of $L$ on 
$\O(D(x,\rho'))^n$ is a finite dimensional vector space over 
$\Omega$ (this is true if $n=1$, and the case $n>1$ reduces as above to the case
$n=1$ by the theory of elementary divisors). 
The kernel has a filtration by the radii 
of convergence of the entries of its vectors, and we conclude as in 
Lemma \ref{QL-1 small}. 
\end{proof}

Recall that $L_{\bs{v}}$ is the matrix associated with 
 the differential of $G$ at 
$\bs{v}$ (cf. Notation \ref{Not : 4.8}).

\begin{corollary}\label{Lemma : bij at v}
\label{Corollary : cor norm of dG=Q}
Let $\bs{u}\in\H(x)^n$ be the zero of $G_{\H(x)}$ of \eqref{eq : G_A(u)=0}. 
There exists a radius $w>0$ such that
\begin{enumerate}
\item\label{Lemma : bij at v-i} If $\bs{v}\in \b_\Omega(D(x,\rho))^n$ satisfies 
$\|\bs{v}-\bs{u}\|_{\b_\Omega(D(x,\rho))}<w$, then 
$L_{\bs{v}}$ is invertible as endomorphism of 
$\b_\Omega(D(x,\rho))^n$ and of $\O_\Omega(D(x,\rho))^n$.
\item\label{Lemma : bij at v-ii} If moreover $\bs{v}\in\H(x)^n$, then 
$L_{\bs{v}}$ is also invertible as endomorphism of $\H(x)^n$.
\item\label{Lemma : bij at v-iii} If $\bs{v}\in\O_{X,x}^n$, and if it satisfies $\|\bs{v}-\bs{u}
\|_{\b_\Omega(D(x,\rho))}<w$,  
then there exists a basis of $\psi$-\emph{admissible}, 
affinoid neighborhoods $Y$ of 
$x$ in $X$ (cf. Definition \ref{def : elementary neigh}), such that 
$L_{\bs{v}}$ is invertible as endomorphism of each $\O(Y)^n$.
\end{enumerate}

In the situation of \eqref{Lemma : bij at v-i} and 
\eqref{Lemma : bij at v-ii} there exists a square matrix 
$Q_\varepsilon$ with coefficients in $\O_{X,x}\lr{d}$ such that, 
for all $\bs{v}$ verifying 
$\|\bs{v}-\bs{u}\|_{\b_\Omega(D(x,\rho))}<w$, one has 
$\|Q_\varepsilon L_{\bs{v}}-1
\|_{op,\b_{\Omega}(D(x,\rho))}<\varepsilon$,
and also
\begin{equation}\label{eq : norm of dG is that of Q_e}
\|dG_{\b_{\Omega}(D(x,\rho)),\bs{v}}^{-1}
\|_{op,\b_\Omega(D(x,\rho))}
\;=\;
\|Q_\varepsilon\|_{op,\b_\Omega(D(x,\rho))}\;.
\end{equation}
In particular this norm is independent of $\bs{v}$. 

If moreover $\bs{v}\in\O_{X,x}^n$, 
then for all $\varepsilon>0$ there exists a basis of  $\psi$-\emph{admissible}, 
neighborhoods of $x$ in $X$ 
(cf. Definition \ref{def : elementary neigh}) satisfying
\begin{equation}\label{eq : norm of inverse of dg is that of Q_e}
\|dG_{\O(Y),\bs{v}}^{-1}\|_{op,\O(Y)}\;\leq \;
\|Q_\varepsilon\|_{op,\b_\Omega(D(x,\rho))}+\varepsilon\;.
\end{equation}
\end{corollary}
\begin{proof}
Let us prove item \eqref{Lemma : bij at v-i}. By Proposition \ref{Prop : L_u bijk}, 
$L_{\bs{u}}$ is injective on $\O(D(x,\rho))^n$ and the assumptions of 
Lemma \ref{Lemma : matrix QL-1<e} are satisfied. 
Let $\varepsilon>0$, and let $Q_{\varepsilon}$ be the matrix of 
Lemma \ref{Lemma : matrix QL-1<e} such that 
$\|Q_\varepsilon L_{\bs{u}}-1\|_{op,
\b_\Omega(D(x,\rho))^n}<\varepsilon/2$. 
Write 
$Q_\varepsilon L_{\bs{v}}-1=
Q_\varepsilon L_{\bs{u}}-1+Q_\varepsilon(L_{\bs{v}}-L_{\bs{u}})$. 
From the bound \eqref{eq : norm dG_v1-dG_v2} it follows that 
there exists $w$ such that if 
$\|\bs{v}-\bs{u}\|_{\b_\Omega(D(x,\rho))}<w$, then 
$\|Q_\varepsilon L_{\bs{v}}-1\|_{op,\b_{\Omega}(D(x,\rho))}
\;<\;\varepsilon/2$. By \eqref{eq : explicit norm operator}, the map 
$\rho\mapsto 
\|Q_\varepsilon L_{\bs{v}}-1\|_{op,\b_{\Omega}(D(x,\rho))}$ is 
continuous, and for $\rho'$ close enough to $\rho$ we have $\|Q_\varepsilon L_{\bs{v}}-1\|_{op,\b_{\Omega}(D(x,\rho'))}<1$.
Since $\b_\Omega(D(x,\rho'))^n$ is complete, 
$Q_\varepsilon L_{\bs{v}}=1+P_{\bs{v}}$ is invertible 
as an endomorphism of $\b_\Omega(D(x,\rho'))^n$ with inverse 
$U:=\sum_{i\geq 0}(-1)^i P_{\bs{v}}^i$. 
Since $Q_\varepsilon$ is an isomorphism too, so is 
$L_{\bs{v}}$. Since this holds for all $\rho'\leq \rho$ close enough to 
$\rho$, then arguing as in the proof of Lemma \ref{QL-1 small}, we deduce that $L_{\bs{v}}$ is an isomorphism on 
$\O(D(x,\rho))=\cap_{\rho'\leq \rho}\b_\Omega(D(x,\rho'))$. 

Item \eqref{Lemma : bij at v-ii} follows from Lemma \ref{Lemma : matrix QL-1<e}. 

Let us prove Item \eqref{Lemma : bij at v-iii}. 
Assume now that $\bs{v}\in\O_{X,x}^n$. 
By Proposition \ref{proposition : comparison o norms} there is a basis of 
$\psi$-\emph{admissible}, neighborhoods $Y$ of $x$ in $X$ 
(cf. Definition \ref{def : elementary neigh}) such that  
\begin{itemize}
\item[$\bullet$] $\bs{v}\in\O(Y)^n$, 
\item[$\bullet$] the coefficients of $Q_\varepsilon$ lie in $\O(Y)$, 
\item[$\bullet$] $\|Q_\varepsilon L_{\bs{v}}-1\|_{op,\O(Y)}<\varepsilon$.
\end{itemize}
Then $Q_\varepsilon L_{\bs{v}}=1-P_{\bs{v}}$ 
is invertible on $\O(Y)^n$ with inverse 
$\sum_{i\geq 0}P_{\bs{v}}^i$. This implies that $Q_\varepsilon$ 
is surjective on $\O(Y)^n$. 
It is also injective, because it is so as an operator on $\H(x)^n$. Finally,  
$Q_\varepsilon$ is bijective on $\O(Y)^n$, and so is $L_{\bs{v}}$.

Equality \eqref{eq : norm of dG is that of Q_e} follows from the fact that
$L_{\bs{v}}Q_\varepsilon=1-P_{\bs{v}}$ and  
$\|P_{\bs{v}}\|_{op,\b_\Omega(D(x,\rho))}<1$. Indeed, $dG_{\b_{\Omega}(D(x,\rho)),\bs{v}}^{-1}=L_{\bs{v}}^{-1}=Q_\varepsilon(1-P_{\bs{v}})^{-1}$ 
and hence 
\begin{equation}
\|dG_{\b_{\Omega}(D(x,\rho)),\bs{v}}^{-1}
\|_{op,\b_\Omega(D(x,\rho))}=
\|Q_\varepsilon(1-P_{\bs{v}})^{-1}\|_{op,\b_\Omega(D(x,\rho))}
=\|Q_\varepsilon+\sum_{i\geq 1}Q_\varepsilon P_{\bs{v}}^i\|_{op,\b_\Omega(D(x,\rho))}
\end{equation}
 and for all $i\geq 1$ we have $\|Q_\varepsilon P_{\bs{v}}^i\|_{op,\b_\Omega(D(x,\rho))}\leq\|Q_\varepsilon\|_{op,\b_\Omega(D(x,\rho))}\|P_{\bs{v}}\|^i_{op,\b_\Omega(D(x,\rho))}<\|Q_\varepsilon\|_{op,\b_\Omega(D(x,\rho))}$. Equality \eqref{eq : norm of dG is that of Q_e} follows.

Inequality \eqref{eq : norm of inverse of dg is that of Q_e} follows 
from \eqref{eq : estimation of norms}. 
\end{proof}

\if{
\begin{corollary}\label{Corollary : cor norm of dG=Q}
Let $w>0$ be as in Corollary \ref{Lemma : bij at v}.
For all $\bs{v}\in\b_\Omega(D(x,\rho))^n$ such that 
$\|\bs{v}-\bs{u}\|_{\b_\Omega(D(x,\rho))}<w$ one has
\begin{equation}\label{eq : norm of dG is that of Q_e}
\|dG_{\b_{\Omega}(D(x,\rho)),\bs{v}}^{-1}
\|_{op,\b_\Omega(D(x,\rho))}
\;=\;
\|Q_\varepsilon\|_{op,\b_\Omega(D(x,\rho))}\;.
\end{equation}
In particular this norm is independent on $\bs{v}$. Moreover, if 
$\bs{v}\in\O_{X,x}^n$, 
then for all $\varepsilon>0$ there exists a basis formed by 
$\psi$-\emph{admissible}, affinoid 
neighborhoods $Y$ of $x$ in $X$ 
(cf. Def. \ref{def : elementary neigh}), satisfying
\begin{equation}\label{eq : norm of inverse of dg is that of Q_e}
\|dG_{\O(Y),\bs{v}}^{-1}\|_{op,\O(Y)}\;\leq \;
\|Q_\varepsilon\|_{op,\b_\Omega(D(x,\rho))}+\varepsilon\;.
\end{equation}
\end{corollary}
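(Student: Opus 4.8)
The plan is to read this off from the constructions already made in the proof of Corollary~\ref{Lemma : bij at v}, adding only a routine ultrametric geometric-series computation. First I would recall the two ingredients produced there: for every $\bs{v}$ with $\|\bs{v}-\bs{u}\|_{\b_\Omega(D(x,\rho))}<w$ the matrix $L_{\bs{v}}=dG_{\b_\Omega(D(x,\rho)),\bs{v}}$ is an invertible endomorphism of $\b_\Omega(D(x,\rho))^n$ (and of $\O_\Omega(D(x,\rho))^n$, and of $\O(Y)^n$ for suitable $Y$); and there is a \emph{single} matrix $Q_\varepsilon$, with entries in $\O_{X,x}\lr{d}$, obtained by applying Lemma~\ref{Lemma : matrix QL-1<e} to $L_{\bs{u}}$ and then absorbing the difference $L_{\bs{v}}-L_{\bs{u}}$ through the Lipschitz bound \eqref{eq : norm dG_v1-dG_v2}, such that $\|Q_\varepsilon L_{\bs{v}}-1\|_{op,\b_\Omega(D(x,\rho))}<\varepsilon<1$ for all such $\bs{v}$. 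The key point is that $Q_\varepsilon$ depends on $\varepsilon$ and on $\bs{u}$, but not on $\bs{v}$.

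Next I would set $P_{\bs{v}}:=1-Q_\varepsilon L_{\bs{v}}$, so that $\|P_{\bs{v}}\|_{op,\b_\Omega(D(x,\rho))}<\varepsilon<1$. Since $\b_\Omega(D(x,\rho))$ is complete, the Neumann series $\sum_{i\ge 0}P_{\bs{v}}^{\,i}$ converges to a bounded operator $(1-P_{\bs{v}})^{-1}$, and because the operator norm is ultrametric with $\|P_{\bs{v}}^{\,i}\|_{op}<1$ for $i\ge 1$ one has $\|(1-P_{\bs{v}})^{-1}\|_{op}=1=\|1-P_{\bs{v}}\|_{op}$. From $Q_\varepsilon L_{\bs{v}}=1-P_{\bs{v}}$ and the invertibility of $L_{\bs{v}}$ one gets $L_{\bs{v}}^{-1}=(1-P_{\bs{v}})^{-1}Q_\varepsilon$, hence $\|L_{\bs{v}}^{-1}\|_{op,\b_\Omega(D(x,\rho))}\le\|Q_\varepsilon\|_{op,\b_\Omega(D(x,\rho))}$; and conversely $Q_\varepsilon=(1-P_{\bs{v}})L_{\bs{v}}^{-1}$ gives $\|Q_\varepsilon\|_{op}\le\|L_{\bs{v}}^{-1}\|_{op}$. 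This proves the equality \eqref{eq : norm of dG is that of Q_e}, and since its right-hand side is independent of $\bs{v}$, so is $\|dG_{\b_\Omega(D(x,\rho)),\bs{v}}^{-1}\|_{op,\b_\Omega(D(x,\rho))}$ on the ball of radius $w$.

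For the inequality over $\O(Y)$, I would assume $\bs{v}\in\O_{X,x}^n$ and apply Proposition~\ref{proposition : comparison o norms} to each of the finitely many entries of the matrices $Q_\varepsilon$ and $Q_\varepsilon L_{\bs{v}}-1$: this produces a basis of \elementary\ affinoid neighborhoods $Y$ of $x$ on which these entries, together with $\bs{v}$, are defined, and on which $\|\cdot\|_{op,\O(Y)}\le\|\cdot\|_{op,\b_\Omega(D(x,\rho))}+\varepsilon$ by \eqref{eq : estimation of norms}. Then $\|Q_\varepsilon L_{\bs{v}}-1\|_{op,\O(Y)}<2\varepsilon$, which --- shrinking $\varepsilon$ at the outset --- we may take $<1$. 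Running the same geometric-series argument over the complete Banach algebra $\O(Y)$ (and using that $L_{\bs{v}}$ is already known to be bijective on $\O(Y)^n$ by Corollary~\ref{Lemma : bij at v}) yields $\|L_{\bs{v}}^{-1}\|_{op,\O(Y)}\le\|Q_\varepsilon\|_{op,\O(Y)}\le\|Q_\varepsilon\|_{op,\b_\Omega(D(x,\rho))}+\varepsilon$, which is exactly \eqref{eq : norm of inverse of dg is that of Q_e}.

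The only delicate point inside this argument is the left/right asymmetry of inverses in the noncommutative ring of matrix differential operators, and it disappears once one records $\|1-P_{\bs{v}}\|_{op}=\|(1-P_{\bs{v}})^{-1}\|_{op}=1$, which makes the two estimates above symmetric. All the genuine work is upstream and is already done: the \emph{uniform-in-$\bs{v}$} construction of $Q_\varepsilon$ (elementary-divisor reduction of $L_{\bs{u}}$ over the Euclidean ring $\H(x)\lr{d}$, density of $\O_{X,x}\lr{d}^{\le n}$ in $\H(x)\lr{d}^{\le n}$ for $\|\cdot\|_{op,\b_\Omega(D(x,\rho))}$, and the second-order estimate \eqref{eq : norm dG_v1-dG_v2}), and the comparison of the operator norms on $\O(Y)$, $\H(x)$ and $\b_\Omega(D(x,\rho))$ through the \'etale charts $\psi_j$ of Lemma~\ref{Lemma : H(x) stable by d/dt}; granting those, the corollary is pure bookkeeping.
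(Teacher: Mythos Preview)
Your argument is correct and matches the paper's approach: the paper's proof is the one-line remark that \eqref{eq : norm of dG is that of Q_e} follows from $Q_\varepsilon L_{\bs{v}}=1-P_{\bs{v}}$ with $\|P_{\bs{v}}\|_{op,\b_\Omega(D(x,\rho))}<1$, and that \eqref{eq : norm of inverse of dg is that of Q_e} follows from \eqref{eq : estimation of norms}. Your Neumann-series expansion and the ultrametric observation $\|1-P_{\bs{v}}\|_{op}=\|(1-P_{\bs{v}})^{-1}\|_{op}=1$ are exactly what makes that one line work, and you have handled the noncommutativity more carefully than the terse version in the paper.
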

\begin{proof}
Equality \eqref{eq : norm of dG is that of Q_e} follows from 
$L_{\bs{v}}Q_\varepsilon=1-P_{\bs{v}}$ with 
$\|P_{\bs{v}}\|_{op,\b_\Omega(D(x,\rho))}<1$. 
Inequality \eqref{eq : norm of inverse of dg is that of Q_e} follows 
from \eqref{eq : estimation of norms}.
\end{proof}
}\fi
\subsubsection{Contractiveness of $\phi_{\bs{v},A}$.}
\label{section : contractive..}
Let $w>0$ be as in Corollary \ref{Lemma : bij at v}. 
We have proved that for all $\bs{v}\in\O_{X,x}^n$ satisfying
$\|\bs{v}-\bs{u}\|_{\H(x)}<w$, the map 
$dG_{\O(Y),\bs{v}}:\O(Y)^n\to\O(Y)^n$ is invertible for a basis of  affinoid neighborhoods $Y$ of $x$ in $X$. 
In particular, we may choose $Y$ small enough in order to have \eqref{eq : 
norm of inverse of dg is that of Q_e}. 
Then, for $A=\O(Y)$, or $A=\H(x)$, we are allowed to define, for all $\bs{\xi}\in A^n$, the map 
\begin{equation}
\phi_{\bs{v},A}(\bs{\xi})\;:=\; 
-dG^{-1}_{A,\bs{v}}\Bigl(G_{A}(\bs{v})+
N_{\zeta(\bs{v})}(\zeta(\bs{\xi}))\Bigr)\;.
\end{equation}
\begin{proposition}\label{Lemma contractive}
There exist an affinoid neighborhood $Y$ of $x$ in $X$, a 
$\bs{v}\in\O(Y)^n$, and a real number 
$q>\|\bs{v}-\bs{u}\|_{\H(x)}$ such that 
\begin{equation}
\phi_{\bs{v},A}\;:\; A^n\longrightarrow A^n
\end{equation}
is a contractive map on the disk $\bs{D}_A^+(0,q)$, for both 
$A=\H(x)$ and $A=\O(Y)$.
\if{
Up to chose $\bs{v}\in\O_{X,x}^n$ in order to restrict 
$\|\bs{v}-\bs{u}\|_{\H(x)}$, 
then there exist $q> \|\bs{v}-\bs{u}\|_{\H(x)}$ 
such that the map 
$\phi_{\bs{v},\H(x)}:\H(x)^n\to \H(x)^n$ is contractive on the disk 
$\bs{D}^+_{\H(x)}(0,q)$. Moreover with the same value of $q$ there 
exists an affinoid neighborhood $Y$ of $x$ in $X$ such that 
$\phi_{\bs{v},\O(Y)}:\O(Y)^n\to \O(Y)^n$ is contractive on the disk 
$\bs{D}^+_{\O(Y)}(0,q)$.
}\fi
\end{proposition}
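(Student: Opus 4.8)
The plan is to verify the Banach fixed-point criterion for $\phi_{\bs{v},A}$ on a suitable closed polydisk, carefully tracking the two estimates that control a contraction: that $\phi_{\bs{v},A}$ maps $\bs{D}_A^+(0,q)$ into itself, and that it is Lipschitz of constant $<1$ there. First I would fix $\varepsilon>0$ small (to be specified), and use Corollary \ref{Lemma : bij at v} to produce the radius $w>0$ and the operator $Q_\varepsilon\in\O_{X,x}\lr{d}$, so that $\|dG^{-1}_{A,\bs{v}}\|_{op,A}$ is bounded — uniformly in $\bs{v}$ with $\|\bs{v}-\bs{u}\|_{\b_\Omega(D(x,\rho))}<w$ — by a constant $N:=\|Q_\varepsilon\|_{op,\b_\Omega(D(x,\rho))}+\varepsilon$, valid for $A=\H(x)$ and, after shrinking to an elementary affinoid $Y$, for $A=\O(Y)$ as well, by \eqref{eq : norm of dG is that of Q_e} and \eqref{eq : norm of inverse of dg is that of Q_e}.

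Next I would combine this with the quadratic estimates on $N_{\zeta(\bs{v})}$: by \eqref{eq : estim N_v(xi)} one has $\|N_{\zeta(\bs{v})}(\zeta(\bs{\xi}))\|_A\le r^{-2}\|F\|_{\bs{D}_A^+(0,r)}\|\zeta\|_{op,A}^2\|\bs{\xi}\|_A^2$ and by \eqref{eq : norm of N_v(xi_1)-N_v(xi_2)} the corresponding Lipschitz bound, while \eqref{eq : estim G(v)} gives $\|G_A(\bs{v})\|_A\le C\,\|\bs{v}-\bs{u}\|_A$ with a constant $C$ depending only on $r$, $\|F\|$, $\|\zeta\|_{op,A}$. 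Hence for $\bs{\xi},\bs{\xi}_1,\bs{\xi}_2\in\bs{D}_A^+(0,q)$,
\begin{eqnarray*}
\|\phi_{\bs{v},A}(\bs{\xi})\|_A&\le& N\cdot\max\bigl(C\|\bs{v}-\bs{u}\|_A,\ M q^2\bigr),\\
\|\phi_{\bs{v},A}(\bs{\xi}_1)-\phi_{\bs{v},A}(\bs{\xi}_2)\|_A&\le& N M q\,\|\bs{\xi}_1-\bs{\xi}_2\|_A,
\end{eqnarray*}
where $M:=r^{-2}\|F\|_{\bs{D}_A^+(0,r)}\|\zeta\|_{op,A}^2$. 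I would then choose $q$ small enough that $NMq<1$ (this gives contractiveness) and $NMq^2\le q$, i.e. $q\le 1/(NM)$, and finally use Corollary \ref{Lemma : bij at v} once more, together with the fact that $\bs{u}$ is the genuine zero of $G_{\H(x)}$, to pick $\bs{v}\in\O_{X,x}^n$ with $\|\bs{v}-\bs{u}\|_{\H(x)}<w$ so small that $NC\|\bs{v}-\bs{u}\|_{\H(x)}\le q$ and in particular $q>\|\bs{v}-\bs{u}\|_{\H(x)}$. With these choices $\phi_{\bs{v},A}$ sends $\bs{D}_A^+(0,q)$ to itself and is a contraction there, for $A=\H(x)$; then one restricts to an elementary affinoid $Y$ small enough that $\bs{v}\in\O(Y)^n$, the coefficients of $Q_\varepsilon$ lie in $\O(Y)$, and the operator bound \eqref{eq : norm of inverse of dg is that of Q_e} holds, so that the same inequalities run verbatim over $A=\O(Y)$ (the constant $M$ for $\O(Y)$ being controlled by the $\Prod(\U_Y,\rho)$-norm via Prop. \ref{Prop : equivalent norms on O(Y)} and Prop. \ref{proposition : comparison o norms}).

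The one subtlety I would watch is the interplay of the three rings: all estimates must use one common value of $q$ and one common $\bs{v}$, valid simultaneously for $\H(x)$ and for $\O(Y)$, so that in the subsequent argument (the last paragraph of the proof of Theorem \ref{Dwork-Robba}) the fixed point of $\phi_{\bs{v},\H(x)}$ and that of $\phi_{\bs{v},\O(Y)}$ coincide under $\bs{D}_{\O(Y)}^+(0,q)\subset\bs{D}_{\H(x)}^+(0,q)$. This is why the uniformity statement in Corollary \ref{Lemma : bij at v} — that $\|Q_\varepsilon L_{\bs{v}}-1\|_{op,\b_\Omega(D(x,\rho))}<\varepsilon$ for all $\bs{v}$ in the ball, with the \emph{same} $Q_\varepsilon$, and hence the $\bs{v}$-independence of the bound $N$ — is the crucial input; the main obstacle is bookkeeping, making sure $Y$ can be shrunk \emph{after} $q$ and $\bs{v}$ are fixed without invalidating any inequality, which is possible precisely because the operator norms over $\O(Y)$ converge to those over $\b_\Omega(D(x,\rho))$ as $Y$ shrinks to $x$ (Prop. \ref{proposition : comparison o norms}). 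Once $\phi_{\bs{v},A}$ is contractive on a complete metric space $\bs{D}_A^+(0,q)$ — it is closed in the Banach space $A^n$, hence complete — the Banach fixed point theorem concludes.
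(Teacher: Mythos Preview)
Your proposal is correct and follows essentially the same approach as the paper: fix the operator bound $N$ via Corollary~\ref{Lemma : bij at v}, use the quadratic estimates \eqref{eq : estim G(v)}, \eqref{eq : estim N_v(xi)}, \eqref{eq : norm of N_v(xi_1)-N_v(xi_2)} to control $\phi_{\bs{v},A}$, choose $q$ small, then $\bs{v}$ close to $\bs{u}$, then shrink $Y$ so that all $\O(Y)$-norms approximate the $\b_\Omega(D(x,\rho))$-norms (Prop.~\ref{proposition : comparison o norms}). The paper carries out exactly this scheme, with explicit constants (their $\kappa$ and $C'$ playing the role of your $1/(NM)$ and Lipschitz constant).

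One small slip: the inequality $NC\|\bs{v}-\bs{u}\|_{\H(x)}\le q$ does not ``in particular'' give $\|\bs{v}-\bs{u}\|_{\H(x)}<q$ unless $NC\ge 1$; you should impose $\|\bs{v}-\bs{u}\|_{\H(x)}<q$ as a separate (and harmless) constraint, as the paper does explicitly in \eqref{eq : assumption technn-2}.
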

\begin{proof}
Let $r>0$ be a real number such that   
$\|\bs{u}\|_{\H(x)}< r\|\zeta\|_{op,\H(x)}^{-1}$, in order that 
$\zeta(\bs{u})\in\bs{D}^-_{\H(x)}(0,r)$. Notice that the inclusion 
$\H(x)\subset\b_\Omega(D(x,\rho))$ is an isometry (cf. \eqref{eq : H(x) in B(D(x))}), 
therefore we also have
\begin{equation}\label{Eq: ass 4.59 usef}
\|\bs{u}\|_{\b_\Omega(D(x,\rho))}\;=\;
\|\bs{u}\|_{\H(x)}\;<\; 
r\|\zeta\|_{op,\H(x)}^{-1}\;\leq\; 
r\|\zeta\|_{op,\b_\Omega(D(x,\rho))}^{-1}\;.
\end{equation}
Let 
\begin{equation}
\kappa\;:=\; 
r\cdot   
\|\zeta\|_{op,\b_\Omega(D(x,\rho))}^{-1} \cdot 
\|F\|_{\bs{D}^+_{\b_\Omega(D(x,\rho))}(0,r)}^{-1} \cdot
\|Q_\varepsilon\|_{op,\b_\Omega(D(x,\rho))}^{-1}\;.
\end{equation}
We fix an arbitrary real number $0<C<1$, and we choose $q$ such that 
\begin{equation}\label{eq : assumption technn}
0\;<\;q\;<\;C\cdot r\cdot 
\|\zeta\|_{op,\b_\Omega(D(x,\rho))}^{-1}\cdot \min(\kappa,1)\;.
\end{equation}
Then, we choose $\bs{v}\in\O_{X,x}^n$ such that 
\begin{equation}\label{eq : assumption technn-2}
\|\bs{v}-\bs{u}\|_{\H(x)}\;<\;\min\Bigl(q,\;q\cdot \kappa,\;
r\|\zeta\|_{op,\H(x)}^{-1}\Bigr)\;.
\end{equation}
With these choices the following properties hold
\begin{enumerate}
\item\label{Lemma contractive -i} $\phi_{\bs{v},\H(x)}(\bs{D}^+_{\H(x)}(0,q))\subseteq 
\bs{D}^+_{\H(x)}(0,q)$; 
\item\label{Lemma contractive -ii} There exists $Y$ such that 
$\phi_{\bs{v},\O(Y)}(\bs{D}^+_{\O(Y)}(0,q))\subseteq 
\bs{D}^+_{\O(Y)}(0,q)$;
\item\label{Lemma contractive -iii} There exists $0<C''<1$ such that, for all 
$\bs{\xi}_1,\bs{\xi}_2\in \bs{D}^+_{\H(x)}(0,q)$ 
one has 
\begin{equation}
\|\phi_{\bs{v},\H(x)}(\bs{\xi}_1)-
\phi_{\bs{v},\H(x)}(\bs{\xi}_2)\|_{\H(x)}\;\leq\; C''\cdot
\|\bs{\xi}_1-\bs{\xi}_2\|_{\H(x)}\;;
\end{equation}
\item\label{Lemma contractive -iv} There exists $Y$ satisfying ii), and $0<C'<1$, such that 
for all $\bs{\xi}_1,\bs{\xi}_2\in 
\bs{D}^+_{\O(Y)}(0,q)$ 
one has 
\begin{equation}\label{eq : contc}
\|\phi_{\bs{v},\O(Y)}(\bs{\xi}_1)-
\phi_{\bs{v},\O(Y)}(\bs{\xi}_2)\|_{\O(Y)}\;\leq\; C'\cdot
\|\bs{\xi}_1-\bs{\xi}_2\|_{\O(Y)}\;.
\end{equation}
\end{enumerate}

\if{\comm{Controllare}

Therefore, we ship the proof of \eqref{Lemma contractive -i} and we  
only provide the proof of item \eqref{Lemma contractive -ii}.
By \eqref{eq : norm of inverse of dg is that of Q_e} 
we can assume, restricting $Y$, that 
$\|\bs{u}\|_{\O(Y)},
\|\bs{v}-\bs{u}\|_{\O(Y)}< r\|\zeta\|_{op,\O(Y)}^{-1}$, and 
that $\|\zeta\|_{op,\O(Y)}$, 
$\|G_{\O(Y)}(\bs{v})\|_{\O(Y)}$, 
and $\|dG_{\O(Y),\bs{v}}^{-1}\|_{op,\O(Y)}$ are close enough 
to $\|\zeta\|_{op,\b_\Omega(D(x,\rho))}$, 
$\|G_{\O(Y)}(\bs{v})\|_{\b_\Omega(D(x,\rho))}$, and 
$\|Q_\varepsilon\|_{op,\b_\Omega(D(x,\rho))}$ respectively. 

\comm{sbagliato}
}\fi

Let us begin with item \eqref{Lemma contractive -ii}.
First of all, by \eqref{eq : estimation of norms}, 
for all $\varepsilon>0$ we have a basis neighborhoods $Y$ of 
$x$ in $X$ such that
\begin{equation}
\|\zeta\|_{op,\O(Y)}\;\leq\;
\|\zeta\|_{op,\b_\Omega(D(x,\rho))}+\varepsilon\;.
\end{equation}
We may chose $\varepsilon>0$ small enough in order 
to have $q<r\|\zeta\|_{op,\O(Y)}^{-1}$ 
(cf. \eqref{eq : assumption technn}). This guarantee that if 
$\|\bs{\xi}\|_{\O(Y)}\leq q$, then 
$\zeta(\bs{\xi})\in\bs{D}^-_{\O(Y)}(0,r)$. 

By Corollary \ref{Corollary : cor norm of dG=Q} and Proposition 
\ref{proposition : comparison o norms}, for all 
$\bs{\xi}\in\bs{D}^+_{\O(Y)}(0,q)$ and all $\varepsilon_1,\varepsilon_2,\varepsilon_3>0$ we may further shrink $Y$ in order to have
\begin{eqnarray}
\|\phi_{\bs{v},\O(Y)}(\bs{\xi})\|_{\O(Y)}&\;=\;&
\Bigl\|-dG_{\O(Y),\bs{v}}^{-1}\Bigl(G_{\O(Y)}(\bs{v})+
N_{\zeta(\bs{v})}(\zeta(\bs{\xi}))\Bigr)
\Bigr\|_{\O(Y)}\\
&\leq& \|-dG_{\O(Y),\bs{v}}^{-1}\|_{op,\O(Y)}\cdot
\max\Bigl(\|G_{\O(Y)}(\bs{v})\|_{\O(Y)},
\|N_{\zeta(\bs{v})}(\zeta(\bs{\xi}))\|_{\O(Y)}\Bigr)\label{eq : temp -yg}\\
&\leq& 
(\|Q_\varepsilon\|_{op,\b_\Omega(D(x,\rho))}+\varepsilon_1)
\cdot \max\Bigl(\|G_{\O(Y)}(\bs{v})\|_{\b_\Omega(D(x,\rho))}+\varepsilon_2,
\|N_{\zeta(\bs{v})}(\zeta(\bs{\xi}))\|_{\b_\Omega(D(x,
\rho))}+\varepsilon_3\Bigr)\;.\nonumber
\end{eqnarray}
The assumption \eqref{Eq: ass 4.59 usef} ensures that 
\eqref{eq : estim G(v)} holds. By 
\eqref{eq : assumption technn-2}, we have $\|\bs{u}-\bs{v}\|_{\b_\Omega(D(x,\rho))}<q\kappa$, which implies 
\begin{equation}
\|G_{\O(Y)}(\bs{v})\|_{\b_\Omega(D(x,\rho))} \;<\; 
q\|Q_\varepsilon\|^{-1}_{op,\b_\Omega(D(x,\rho))}\;.
\end{equation}
Since this inequality is strict it is possible to chose 
$\varepsilon_1,\varepsilon_2$ in order that 
$(\|Q_\varepsilon\|_{op,\b_\Omega(D(x,\rho))}+\varepsilon_1)
\cdot 
(\|G(\bs{v})\|_{\b_\Omega(D(x,\rho))}
+\varepsilon_2)\leq q$. 

Analogously, if $\|\bs{\xi}\|_{\O(Y)}\leq q$, the inequality \eqref{eq : 
assumption technn} implies that \eqref{eq : estim N_v(xi)} holds, 
and moreover that $\|\bs{\xi}\|_{\O(Y)}^2\leq q^2<q\cdot r\cdot 
\|\zeta\|_{op,\b_\Omega(D(x,\rho))}^{-1}\cdot \kappa$. Therefore 
\begin{equation}
\|N_{\zeta(\bs{v})}(\zeta(\bs{\xi}))\|_{\b_\Omega(D(x,\rho))}\;<\;
q\|Q_\varepsilon\|^{-1}_{op,\b_\Omega(D(x,\rho))}\;.
\end{equation}
As above, we may further restrict $\varepsilon_1,\varepsilon_3$ to have 
$(\|Q_\varepsilon\|_{op,\b_\Omega(D(x,\rho))}+\varepsilon_1)
\cdot 
(\|N_{\zeta(\bs{v})}(\zeta(\bs{\xi}))\|_{\b_\Omega(D(x,\rho))}
+\varepsilon_3)\leq q$. 
We conclude that 
$\|\phi_{\bs{v},\O(Y)}(\bs{\xi})\|_{\O(Y)}\leq q$, and \eqref{Lemma contractive -ii} holds. 

The proof of item \eqref{Lemma contractive -i} follows line by line 
that of item \eqref{Lemma contractive -ii}. It involves the same 
ingredients (i.e. \eqref{eq : estim G(v)}, 
\eqref{eq : estim N_v(xi)} and Corollary 
\ref{Corollary : cor norm of dG=Q}) with the 
simplification that no $\varepsilon_i$ is necessary. 
We let the reader to complete the proof.

\if{
\eqref{eq : estim N_v(xi)} together with 
\eqref{eq : assumption technn} gives\footnote{Here one uses the fact 
that $\|F\|_{\bs{D}^+_{\O(Y)}(0,r)}\leq
\|F\|_{\bs{D}^+_{\b_{\Omega}(D(x,\rho))}(0,r)}$.} 
\begin{equation}\label{eq : result for N_v(xi) cruucial}
\|N_{\zeta(\bs{v})}(\zeta(\bs{\xi}))\|_{\b_\Omega(D(x,
\rho))}+\varepsilon_3\;\leq \;\frac{\|\zeta\|_{op,\O(Y)}^2}{
\|\zeta\|_{op,\b_{\Omega}(D(x,\rho))}^2}\cdot 
q\|Q_\varepsilon\|_{op,\b_\Omega(D(x,\rho))}^{-1} + 
\varepsilon_3\;.
\end{equation}
Now by \eqref{eq : norm of inverse of dg is that of Q_e} for all 
$\varepsilon_4$ one finds $Y$ such that $\|\zeta\|_{op,\O(Y)}\leq 
\|\zeta\|_{op,\b_\Omega(D(x))}+\varepsilon_4$. 
By choosing a convenient derivative $d$, Lemma 
\ref{Lemma : zeta works well} gives  for $\varepsilon_4$ 
small enough $\|\zeta\|_{op,\b_{\Omega}(D(x,\rho))}>
\|\zeta\|_{op,\b_{\Omega}(D(x))}+\varepsilon_4$ 
(indeed we have $\rho<1$, cf. \eqref{eq: essumption rho<1}).
So \eqref{eq : result for N_v(xi) cruucial} becomes 
$\|N_{\zeta(\bs{v})}(\zeta(\bs{\xi}))\|_{\b_\Omega(D(x,
\rho))}+\varepsilon_3\leq 
(q\|Q_\varepsilon\|_{op,\b_\Omega(D(x,\rho))}^{-1} + 
\varepsilon_3)$. So as above one can arrange $\varepsilon_1$ and 
$\varepsilon_3$ in order to obtain 
$\|\phi_{\bs{v},\O(Y)}(\bs{\xi})\|_{\O(Y)}\leq q$. 
}\fi

Let us prove item \eqref{Lemma contractive -iv}. 
Recall that our choice of $Y$ guarantee that if 
$\|\bs{\xi}_i\|_{\O(Y)}\leq q$, we have 
$\zeta(\bs{\xi}_i)\in\bs{D}^-_{\O(Y)}(0,r)$. Arguing as in \eqref{eq : temp -yg}, by Proposition 
\ref{proposition : comparison o norms} we obtain
\begin{eqnarray}
\|\phi_{\bs{v},\O(Y)}(\bs{\xi}_1)-
\phi_{\bs{v},\O(Y)}(\bs{\xi}_2)\|_{\O(Y)}
&\;=\;&\|-dG_{\O(Y),\bs{v}}^{-1}
(N_{\zeta(\bs{v})}(\zeta(\bs{\xi}_1))-
N_{\zeta(\bs{v})}(\zeta(\bs{\xi}_2)))\|_{\O(Y)}\\
&\leq&(\|Q_\varepsilon\|_{op,
\b_\Omega(D(x,\rho))}+\varepsilon_1)\cdot
\|N_{\zeta(\bs{v})}(\zeta(\bs{\xi}_1))-N_{\zeta(\bs{v})}
(\zeta(\bs{\xi}_2))\|_{\O(Y)}\;.\qquad\nonumber
\end{eqnarray}
Estimation \eqref{eq : norm of N_v(xi_1)-N_v(xi_2)} then furnishes
\begin{eqnarray}
\|N_{\zeta(\bs{v})}(\zeta(\bs{\xi}_1))-N_{\zeta(\bs{v})}
(\zeta(\bs{\xi}_2))\|_{\O(Y)}
&\;\leq\;&r^{-2}\|F\|_{\bs{D}^+_{\O(Y)}(0,r)}\cdot
\|\zeta\|^2_{op,\O(Y)}\cdot
\|\bs{\xi}_1-\bs{\xi}_2\|_{\O(Y)}\cdot
\max(\|\bs{\xi}_1\|_{\O(Y)},\|\bs{\xi}_2\|_{\O(Y)})\nonumber\\
&\leq&r^{-2}
(\|F\|_{\bs{D}^+_{\b_\Omega(D(x,\rho))}(0,r)}+\varepsilon_4)\cdot
(\|\zeta\|_{op,\b_\Omega(D(x,\rho))}+\varepsilon_5)^2\cdot
\|\bs{\xi}_1-\bs{\xi}_2\|_{\O(Y)}\cdot q\;.\nonumber
\end{eqnarray}
Now, by \eqref{eq : assumption technn}, we have 
\begin{eqnarray}
q&\;<\;&C\cdot r\cdot\|\zeta\|_{op,\b_\Omega(D(x,\rho))}^{-1}\cdot\kappa\\
&<&C\cdot r^2\cdot\|\zeta\|_{op,\b_\Omega(D(x,\rho))}^{-2}\cdot
\|F\|_{\bs{D}^+_{\b_\Omega(D(x,\rho))}(0,r)}^{-1}\cdot
\|Q_\varepsilon\|^{-1}_{op,\b_\Omega(D(x,\rho))}\;.
\end{eqnarray}
Therefore, inequality \eqref{eq : contc} holds with 
\begin{equation}
C'=C\cdot\Bigl(\frac{\|Q_\varepsilon\|_{op,
\b_\Omega(D(x,\rho))}+\varepsilon_1}{\|Q_\varepsilon\|_{op,
\b_\Omega(D(x,\rho))}}\Bigr)
\Bigl(\frac{
\|F\|_{\bs{D}^+_{\b_\Omega(D(x,\rho))}(0,r)}+\varepsilon_4}{
\|F\|_{\bs{D}^+_{\b_\Omega(D(x,\rho))}(0,r)}}\Bigr)
\Bigl(\frac{\|\zeta\|_{op,\b_\Omega(D(x,\rho))}+\varepsilon_5}{
\|\zeta\|_{op,\b_\Omega(D(x,\rho))}}\Bigr)^2
\end{equation}
Restricting $\varepsilon_1,\varepsilon_4,\varepsilon_5$ 
the last three factors can be made arbitrarily close to 
$1$. Hence, we may find $Y$ such that $0<C'<1$.

Item \eqref{Lemma contractive -iii} follows 
closely item \eqref{Lemma contractive -iv} and it is easier, 
since it does not requires the 
argument with the $\varepsilon_i$. We left it to the reader.

The Proposition follows.
\end{proof}

This concludes our proof of Theorem \ref{Dwork-Robba}.

\subsection{Lifting to $\O_{X,x}$ solutions of non linear differential 
equations with solutions in $\H(x)$.}

With the identical proof one proves the following result. We here do 
not assume that $K$ is algebraically closed.

Let $Y$ be a fixed affinoid neighborhood of $x\in X$.
Let $\bs{Z}:=(Z_1,\ldots,Z_{n\cdot s})$, let $m\geq n$, and let 
\begin{equation}
F\;:\;\O(Y)^{ns}\to\O(Y)^m\;,\qquad
F(\bs{Z})\;:=\;
\sum_{|\alpha|\geq 0} 
\bs{C}_\alpha \bs{Z}^{\alpha}\;,
\end{equation}
where 
$\alpha=(\alpha_1,\ldots,\alpha_{ns})\in\mathbb{N}^{ns}$, 
$|\alpha|=\sum_i\alpha_i$, 
$\bs{Z}^\alpha:=Z_1^{\alpha_1}\cdots Z_{ns}^{\alpha_{ns}}$, 
$\bs{C}_\alpha\in \O(Y)^m$ is a power 
series satisfying $\|\bs{C}_\alpha\|_{\O(Y)}\cdot r^{|\alpha|}\to 0$ as 
$|\alpha|\to\infty$, i.e. $F$ converges in a disk 
$\mathrm{D}^+_{\O(Y)}(0,r)\subseteq \O(Y)^{ns}$. 
For $\bs{X}=(X_1,\ldots,X_n)$ 
consider the non-linear differential equation
\begin{equation}\label{eq : more general non linear}
F(\bs{X},d(\bs{X}),\ldots,d^{s}(\bs{X}))\;=\;0\;.
\end{equation}
Denote in analogy with the above sections 
$\zeta:\O(Y)^n\to\O(Y)^{ns}$, the map 
$\zeta(\bs{x})=(\bs{x},d(\bs{x}),\ldots,d^s(\bs{x}))$.
\begin{theorem}[\protect{\cite[3.1.6]{Dw-Robba}}]
Let $\bs{x}\in\H(x)^n$ be a solution of 
\eqref{eq : more general non linear}.
If the linearized map
\begin{equation}\label{eq : Thm : inj}
dF_{\zeta(\bs{x})}\circ\zeta\;:\;\H(x)^{n}\to\H(x)^m
\end{equation}
is injective, then $\bs{x}\in\O_{X,x}^n$.
\end{theorem}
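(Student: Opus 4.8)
The plan is to copy the proof of Theorem~\ref{Dwork-Robba} essentially word for word, the one new feature being that the linearized operator is now rectangular ($m\ge n$), so that two-sided inverses must everywhere be replaced by bounded \emph{left} inverses. For $A$ one of the rings $\O(Y)$ ($Y$ an \elementary\ affinoid neighbourhood of~$x$), $\H(x)$, $\b_\Omega(D(x,\rho))$, $\O_\Omega(D(x,\rho))$, write $G_A:=F\circ\zeta\colon A^n\to A^m$, where $\zeta(\bs{\xi})=(\bs{\xi},d(\bs{\xi}),\dots,d^s(\bs{\xi}))$; by hypothesis $G_{\H(x)}(\bs{x})=0$. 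First I would approximate: since $\O_{X,x}$ is dense in $\H(x)$ (hence $\O_{X,x}\lr{d}^{\le s}$ is dense in $\H(x)\lr{d}^{\le s}$ for each norm $\|\cdot\|_{op,\b_\Omega(D(x,\rho))}$, as in the proof of Lemma~\ref{QL-1 small}), choose $\bs{v}\in\O_{X,x}^n$ with $\|\bs{v}-\bs{x}\|_{\H(x)}$ as small as desired. Taylor's formula for~$F$ then gives $G_A(\bs{v}+\bs{\xi})=G_A(\bs{v})+dG_{A,\bs{v}}(\bs{\xi})+N_{\zeta(\bs{v})}(\zeta(\bs{\xi}))$, with $dG_{A,\bs{v}}=dF_{\zeta(\bs{v})}\circ\zeta$ represented by an $m\times n$ matrix $L_{\bs{v}}$ over $A\lr{d}^{\le s}$ depending continuously on~$\bs{v}$; the estimates of Section~\ref{section : Some estimations.} for $G_A(\bs{v})$, $dG_{A,\bs{v}}$ and $N_{\zeta(\bs{v})}$ hold verbatim with $m$ in place of~$n$ in the target.

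The core step is to produce, for $\bs{v}$ close enough to $\bs{x}$, a bounded left inverse $P_{\bs{v}}$ of $L_{\bs{v}}$ acting compatibly on $\H(x)^n$, on $\b_\Omega(D(x,\rho'))^n$ for $\rho'\le\rho$ close to~$\rho$, and on $\O(Y)^n$ for a cofinal family of \elementary\ neighbourhoods~$Y$. For this I would run the rectangular analogues of Lemmas~\ref{QL-1 small} and~\ref{Lemma : matrix QL-1<e}: since $\H(x)\lr{d}$ is a left and right principal ideal domain, bring $L_{\bs{x}}$ to a one-sided normal form $UL_{\bs{x}}V=\bigl(\begin{smallmatrix}D\\0\end{smallmatrix}\bigr)$ with $U\in GL_m(\H(x)\lr{d})$, $V\in GL_n(\H(x)\lr{d})$ and $D=\mathrm{diag}(D_1,\dots,D_n)$; the injectivity hypothesis~\eqref{eq : Thm : inj} forces each $D_i$ to be injective, so Lemma~\ref{QL-1 small} applied to the $D_i$ yields a matrix $Q_\varepsilon$ over $\O_{X,x}\lr{d}$ with $\|Q_\varepsilon L_{\bs{v}}-1_n\|_{op,\b_\Omega(D(x,\rho))}<\varepsilon$, uniformly for $\|\bs{v}-\bs{x}\|_{\H(x)}$ small, using \eqref{eq : norm dG_v1-dG_v2}. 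Then $P_{\bs{v}}:=(Q_\varepsilon L_{\bs{v}})^{-1}Q_\varepsilon$ is a left inverse of $L_{\bs{v}}$ whose operator norms are controlled exactly as in Corollary~\ref{Lemma : bij at v}.

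Finally I would set $\phi_{\bs{v},A}(\bs{\xi}):=-P_{\bs{v}}\bigl(G_A(\bs{v})+N_{\zeta(\bs{v})}(\zeta(\bs{\xi}))\bigr)$ and check, exactly as in Proposition~\ref{Lemma contractive}, that for a suitable $q>\|\bs{v}-\bs{x}\|_{\H(x)}$ and a suitable~$Y$ the map $\phi_{\bs{v},A}$ contracts the poly-disk $\bs{D}^+_A(0,q)$, both for $A=\H(x)$ and for $A=\O(Y)$. From $G_{\H(x)}(\bs{x})=0$ together with $P_{\bs{v}}L_{\bs{v}}=\mathrm{id}$ one reads off $\phi_{\bs{v},\H(x)}(\bs{x}-\bs{v})=\bs{x}-\bs{v}$, so $\bs{x}-\bs{v}$ is the unique fixed point of $\phi_{\bs{v},\H(x)}$ in $\bs{D}^+_{\H(x)}(0,q)$; since $\phi_{\bs{v},\O(Y)}$ is the restriction of $\phi_{\bs{v},\H(x)}$ to $\bs{D}^+_{\O(Y)}(0,q)\subset\bs{D}^+_{\H(x)}(0,q)$ and also has a unique fixed point, that fixed point equals $\bs{x}-\bs{v}$; hence $\bs{x}-\bs{v}\in\O(Y)^n$ and $\bs{x}\in\O(Y)^n\subseteq\O_{X,x}^n$.

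The hard part will be precisely this core step. In Theorem~\ref{Dwork-Robba} the linearization was \emph{bijective}, established by a module-theoretic dimension count, whereas here it is merely \emph{injective} into a larger space; one must therefore verify that injectivity of $dF_{\zeta(\bs{x})}\circ\zeta$ over $\H(x)^n$ alone suffices to feed Lemmas~\ref{QL-1 small}, \ref{Lemma : QL and U invertible} and~\ref{Lemma : matrix QL-1<e} — i.e. that the diagonal operators $D_i$ are injective on the Banach rings $\b_\Omega(D(x,\rho'))$ and not merely on the differential field $\H(x)$ — and that the resulting left inverse can be chosen uniform in~$\bs{v}$ near~$\bs{x}$. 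Everything else (density, the Taylor estimates, and the contraction estimates of Proposition~\ref{Lemma contractive}) is formally identical to the square case.
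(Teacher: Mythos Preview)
Your proposal follows the paper's own route. The paper's proof is a three-line sketch: put the $m\times n$ matrix $L_{\bs{x}}$ into diagonal normal form over $\H(x)\lr{d}$ via elementary divisors, discard the $m-n$ zero rows to reduce to the square case $m=n$, and then say ``the proof is then equal to that in the above sections.'' Your construction of a bounded left inverse $P_{\bs{v}}=(Q_\varepsilon L_{\bs{v}})^{-1}Q_\varepsilon$ is exactly what one obtains by inverting that truncated $n\times n$ block and padding by zeros, so the two formulations are equivalent; you have simply unpacked the paper's sentence in more detail.

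Two remarks. First, the subtlety you flag at the end---that injectivity of the diagonal operators $D_i$ on $\H(x)$ does not \emph{a priori} give injectivity on $\O_\Omega(D(x,\rho))$, which is what Lemma~\ref{QL-1 small} actually requires---is genuine, and the paper's sketch passes over it in silence just as you suspected. Second, the theorem is stated without the standing Hypothesis~\ref{Hyp : K alg clo} (the paper explicitly notes ``We here do not assume that $K$ is algebraically closed''), and the paper's proof therefore contains a final Galois-descent step: one observes that $\O_{X,x}=\H(x)\cap\O_{X_{\widehat{K^{\mathrm{alg}}}},x_i}$ for any lift $x_i$ of $x$. Your outline omits this; you should add it, since the machinery of Sections~\ref{Robba-deco}--\ref{Dwork-Robba (local) decomposition} that you invoke assumes $K$ algebraically closed.
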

\begin{proof}
Assume that $K$ is algebraically closed.
As in the above sections $dF_{\zeta(\bs{x})}\circ\zeta$ 
acts by a matrix $L_{\bs{x}}\in M_{m\times n}(\O(Y)\lr{d})$.
By the theory of elementary divisors there exists square 
matrices $U,V$ with coefficients in $\O_{X,x}\lr{d}$ such that
the unique non zeros terms of $UL_{\bs{x}}V$ are in the diagonal.
The terms in the diagonal are either zero or differential 
polynomials in $\O_{X,x}\lr{d}$ that are injective as linear maps  
$\H(x)^n\to\H(x)^m$. If $m>n$ then we can discard $m-s$ 
components and reduce to the case $m=n$. 
It is now possible to reproduce the proof as in the above sections. The claim follows.

Consider now the case of a general field $K$.
To descend the theorem from $K^{\mathrm{alg}}$ to $K$, 
it is enough to notice that if 
$x_1,\ldots,x_n\in X_{\widehat{K^{\mathrm{alg}}}}$
 are the points with image $x\in X$, then for all $x_i$ one has 
$\O_{X,x}=(\H(x)\cap\O_{X_{\widehat{K^{\mathrm{alg}}}},x_i})
\subset\H(x_i)$. 
\end{proof}

\begin{remark}
To prove the Theorem \ref{Dw-Robba} 
of Dwork and Robba another 
possible strategy can be imagined. 
The condition $q^2=q$ in 
$\mathrm{End}_{\O_{X,x}}(\M_x)$ defines a projector. This condition 
is described by a non-linear differential system of the above type. 
Robba's theorem \ref{corollary : uniqueness at H(x)} 
provides a solution of such a system with 
coefficients in $\H(x)$. Dwork and Robba then 
relate the injectivity of \eqref{eq : Thm : inj} to the radii. 
\end{remark}

\subsection{Augmented Dwork-Robba decomposition}\label{sec:augmenteddecomposition}
We maintain the assumption that $K$ is algebraically closed and the notations of Section \ref{Radii and filtrations by the radii} ($\Fs$, $S$, 
$r=\mathrm{rank}(\Fs)$, etc...). 

In this section, we extend the Dwork-Robba 
decomposition (cf. Theorem \ref{Dw-Robba}) by taking into account 
\emph{over-solvable radii}. Recall that these are the radii of the solutions 
of $\Fs$ which converge in some over-solvable disk, that is a disk 
containing strictly the Dwork generic disk $D(x)$. For this reason, the 
over-solvable radii are not intrinsically associated with the localization 
$\Fs_x$ of $\Fs$, indeed base changes in $\Fs_x$ by matrices in $\O_{X,x}$ 
would truncate the oversolvable radii of the solutions (cf. Section 
\ref{Local nature of spectral radii.}). 
More specifically, we are going to add some further terms in the 
decomposition of $\Fs_x$ provided by Theorem \ref{Dw-Robba},
by adding a filtration of the differential sub-module 
$\Fs_x^{\geq 1}$ (cf. notation \eqref{eq : def M^geq rho,sp}). 
These new terms in the filtration of $\Fs_x$ are \emph{generated by the solutions} of $\Fs$ converging in some over-solvable disk. In other word, this augmented filtration of $\Fs_x$ remembers the fact that the local module 
$\Fs_x$ is restriction of a global differential equation $\Fs$ on $X$.

\begin{remark}
Unfortunately, over-solvable radii do not enjoy all the properties of spectral ones. Namely, they are not compatible with duality and items 
\eqref{i eq : deco of M_x .gdyu}, 
\eqref{ii eq : deco of M_x .gdyu}, 
\eqref{iii eq : deco of M_x .gdyu} of Theorem 
\ref{Dw-Robba} fail for over-solvable radii. 
We provide an explicit counterexample in Section 
\ref{An explicit counterexample.}. 
The properties satisfied by over-solvable radii 
are listed in Section \ref{section : morphisms and duality}.
\end{remark}

\begin{proposition}[Augmented decomposition]
\label{Prop : existence of (F_S,i)_x}
Let $x\in X$. 
Assume that $i$ is an index separating the $S$-radii of $\Fs$ 
at the individual point $x$ 
(cf. Definition \ref{definition : i separates the radii}). 
Then, there exists a unique differential sub-module 
 $(\Fs_{\geq i})_x\subseteq\Fs_x$ of rank $r-i+1$ over $\O_{X,x}$ 
such that (cf. notations \eqref{omega(x,F)}and \eqref{eq : omeaga_S,i and D_S,i})
\begin{equation}\label{eq : bijection local}
\Hdr^0(x,(\Fs_{\geq i})_x) \;=\; \omega_{S,i}(x,\Fs)\;.
\end{equation}
Let $1=i_1< \cdots < i_h$ be the indices 
separating the radii of $\Fs$ at $x$. Then, according to 
\eqref{eq : filtration of omega}, we have a corresponding 
decreasing sequence of differential submodules:
\begin{equation}
0\;\neq\;(\Fs_{\geq i_h})_x\;\subset\;(\Fs_{\geq i_{h-1}})_x\;
\subset\;
\cdots\;\subset\;(\Fs_{\geq i_1})_x\;=\;\Fs_x\;.
\end{equation}
\end{proposition}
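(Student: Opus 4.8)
The strategy is to combine the Dwork--Robba decomposition of the stalk $\Fs_x$ over $\O_{X,x}$ by the \emph{spectral} radii with the extra decomposition coming from the \emph{over-solvable} solutions, which are exactly the solutions living in a genuine disk neighborhood of $x$ in $X$. First I would treat the two cases of the point $x$ separately. If $x$ is of type $1$ or $4$, then $\O_{X,x}$ is a discrete valuation ring (resp.\ the stalk of a virtual disk) and all radii at $x$ are over-solvable or solvable; the submodule $(\Fs_{\geq i})_x$ should be taken as the trivial submodule $(\Fs_x)_{\O_{X,x}}$, or more precisely a step of the filtration of $\Fs_x$ by the trivial submodule associated with the disks $D_{S,j}(x,\Fs)$ (these are genuine neighborhoods of $x$ when $j>\is{x}$), and its solutions are computed directly via $j_{\Fs_x}$; here I would invoke Lemma~\ref{Lemma :devisage} and Lemma~\ref{Lemma : Trivial iff solutions} to control ranks and triviality.

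The substantial case is $x$ of type $2$, $3$ or $4$. Write $\M_x:=\Fs_x$ over $\O_{X,x}$, and let $\is{x}$ be the over-solvable cutoff. If $i\leq\is{x}$ (so $D_{S,i}(x,\Fs)\subseteq D(x)$), then by Proposition~\ref{Prop: Localization} the filtration step $\omega_{S,i}(x,\Fs)$ is intrinsic to $x$ and equals $\omega_{\emptyset,i}(t_x,\Fs_{D(x)})$, so I would apply the descended Dwork--Robba decomposition (Theorem~\ref{Dw-Robba} together with Lemma~\ref{Lema : descent of M_xrho}): the assumption that $i$ separates the radii at $x$ gives a factorization $\M_x = \M_x^{\geq\rho}\oplus\M_x^{<\rho}$ with $\rho=\R_{S,i}(x,\Fs)$ more precisely a sum of the summands $\M_x^{\rho'}$ with $\rho'\geq\R_{S,i}(x,\Fs)$ and I set $(\Fs_{\geq i})_x:=\bigoplus_{\rho'\geq\R_{S,i}(x,\Fs)}\M_x^{\rho'}$. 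Its rank is $r-i+1$ because the dimensions of the filtration of $\omega(x,\Fs)$ form a scale (Definition~\ref{scale}), and equation~\eqref{eq : bijection local} holds by Proposition~\ref{Prop. deco analytic}(i) applied after scalar extension to $\H(x)$, using $\omega(D(x,S),\Fs)\cap\H(x)$-rationality and Lemma~\ref{Lemma :devisage}. If instead $i>\is{x}$, then $D_{S,i}(x,\Fs)$ is a disk strictly containing $D(x)$, hence a neighborhood of $x$ in $X$; here I would first localize to that neighborhood and use the trivial submodule of $\Fs$ restricted to $D_{S,i}(x,\Fs)$, i.e.\ the largest submodule trivialized on $D_{S,i}(x,\Fs)$, to produce $(\Fs_{\geq i})_x$, and then check that its solutions are exactly $\omega_{S,i}(x,\Fs)$ by the left exactness of $\omega(D,-)$ and a dimension count via \eqref{rank = r-i+1}. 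Uniqueness in both cases follows from the fact that a sub-$\O_{X,x}\lr{d}$-module of $\Fs_x$ of rank $r-i+1$ whose space of solutions contains $\omega_{S,i}(x,\Fs)$ must, by Lemma~\ref{Lemma : equality of V_i'+k} and Proposition~\ref{Prop. Exact sequence prop separate radii in the right order}, have all its radii equal to $\R_{S,i}(x,\Fs),\dots,\R_{S,r}(x,\Fs)$ and hence be determined; concretely two such submodules $\N,\N'$ have $\N+\N'$ of the same rank, forcing $\N=\N'$.

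Finally, the filtration statement is formal once existence and uniqueness are known: for the sequence $1=i_1<\cdots<i_h$ of separating indices at $x$, applying the construction to each $i_k$ yields $(\Fs_{\geq i_k})_x$ with $\omega(x,(\Fs_{\geq i_k})_x)=\omega_{S,i_k}(x,\Fs)$, and since \eqref{eq : filtration of omega} is a decreasing filtration of $\omega(x,\Fs)$, the inclusions $\omega_{S,i_h}(x,\Fs)\subset\cdots\subset\omega_{S,i_1}(x,\Fs)$ lift to inclusions $(\Fs_{\geq i_h})_x\subset\cdots\subset(\Fs_{\geq i_1})_x$ by uniqueness (the smaller solution space forces containment of the corresponding submodule, again by Lemma~\ref{Lemma : equality of V_i'+k}), with $(\Fs_{\geq i_1})_x=\Fs_x$ since $\omega_{S,1}(x,\Fs)=\omega(x,\Fs)$ and the rank is $r$. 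The main obstacle I anticipate is the bookkeeping at the over-solvable cutoff $\is{x}$: one must carefully glue the Dwork--Robba summand $(\Fs_{\geq\is{x}})_x$ (coming from the spectral theory over $\O_{X,x}\subset\H(x)$) with the trivial-submodule contribution from the over-solvable disks, and verify that the resulting $\O_{X,x}$-lattice is indeed a differential submodule of the correct rank with the correct solution space — this is precisely where the ``augmentation'' happens and where the incompatibility of over-solvable radii with localization (Proposition~\ref{Prop: Localization}, \eqref{eq : truncation of omega}) must be handled with care.
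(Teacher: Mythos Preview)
Your approach is essentially the paper's: split on whether $i$ is spectral at $x$ (use Dwork--Robba over $\O_{X,x}$, Theorem~\ref{Dw-Robba} descended via Lemma~\ref{Lema : descent of M_xrho}) or over-solvable (the disk $D_{S,i}(x,\Fs)$ is then a genuine neighborhood of $x$ in $X$, so $\omega_{S,i}(x,\Fs)\subset\omega(\Fs_x,\O_{X,x})$ and generates a trivial submodule via $j_{\Fs_x}$). Two corrections are needed, however.

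First, the assertion that at points of type~$4$ ``all radii are over-solvable or solvable'' is false: a type~$4$ point has a nonempty generic disk $D(x)$ and admits spectral non-solvable radii just as types~$2$ and~$3$ do. Dwork--Robba's theorem is stated and proved for types~$2$, $3$, and~$4$ uniformly, and that is where type~$4$ belongs (as you in fact acknowledge in your ``substantial case''). Only type~$1$ is genuinely special, and there every index is over-solvable.

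Second, the ``gluing'' obstacle you anticipate at the cutoff $\is{x}$ does not exist. For a \emph{fixed} separating index $i$ there is nothing to glue: if $i\le\is{x}$ one takes $(\Fs_{\geq i})_x=\M_x^{\geq\rho}$ with $\rho=\R_{S,i}(x,\Fs)\cdot R(x)/r(x)\le 1$, and this single Dwork--Robba piece already contains the trivial summand $\M_x^{1}$ (hence all over-solvable contributions); if $i>\is{x}$ one takes the trivial submodule generated by $\omega_{S,i}(x,\Fs)$. The nesting of the filtration across the cutoff is immediate: any trivial $\O_{X,x}$-submodule is trivialized by $\O(D(x))$ and therefore sits inside $\M_x^{1}\subseteq\M_x^{\geq\rho}$ for every $\rho\le1$. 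No additional bookkeeping is required. Uniqueness is also simpler than you make it: in the spectral case it is the uniqueness clause of Theorem~\ref{Dw-Robba}; in the over-solvable case a rank-$(r-i+1)$ submodule whose solution space equals $\omega_{S,i}(x,\Fs)$ (of dimension $r-i+1$) is trivial, hence coincides with the image of $j$ on that solution space.
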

\begin{proof}
Let $i$ be an over-solvable index separating the $S$-radii of $\Fs$, that is the Dwork generic disk $D(x)$ 
is strictly contained in the maximal disk $D_{S,i}(x,\Fs)\subset X_\Omega$. 
It follows that the image of  $D_{S,i}(x,\Fs)$ in comes by scalar 
extension from a 
open disk $D_i$ in $X$ containing $x$ (cf. Remark \ref{Remark : D(x,S) cases}).  
Since de Rham cohomology commutes with scalar extension of $K$, 
we have $\omega_{S,i}(x,\Fs):=\Hdr^0(D_{S,i}(x,\Fs),\Fs)=\Hdr^0(D_{i},\Fs)\otimes_K\Omega$. 
The natural inclusion $\Hdr^0(D_{i},\Fs)\subset\Hdr^0(\Fs_x,\O_{X,x})\subset\Fs_x$ generates, by \eqref{eq : j_M},
a trivial differential sub-module $(\Fs_{\geq i})_{x}$ of $\Fs_x$ 
which is by definition the unique sub-module of $\Fs$ 
whose solutions in $\O_{X,x}$ are identified with $\omega_{S,i}(x,\Fs)$ after 
scalar extension to $\Omega$ (cf. \eqref{def : M_A} and Corollary 
\ref{Corollary : M_A unique}). The claim follows.

Let us now consider spectral indexes, that is indexes $i$ such that 
$D_{S,i}(x,\Fs)\subseteq D(x)$. 
In Section \ref{Local nature of spectral radii.} 
(cf.  \eqref{eq : truncation of omega}) we showed 
the precise relation between the $S$-radii 
$\{\R_{S,i}(x,\Fs)\}_{i=1,\ldots,r}$ and the $\emptyset$-radii  $\{\R_{\emptyset,i}(t_x,\Fs_{|D(x)})\}_{i=1,\ldots,r}$ 
of the restriction 
$\Fs_{|D(x)}$ of $\Fs$ to the Dwork generic disk $D(x)$.
Solvable and over-solvable 
$S$-radii of $\Fs$ (cf. Definition \ref{Def: spectral index}) are truncated and collapse to the $\emptyset$-radius $1$ of $\Fs_{|D(x)}$, while  spectral non solvable $S$-radii of $\Fs$ correspond 
bijectively to spectral non solvable $\emptyset$-radii of $\Fs_{|D(x)}$ by a multiplicative factor.
With this correspondence in mind, we may translate the Dwork-Robba statement in term of indexes separating the radii. 
Namely, if $\rho\in]0,1]$ is given, and if 
$i_\rho$ is the smallest spectral index in $i\in \{1,\ldots,r\}$ such that 
$\rho\leq\R_{\emptyset,i}(t_x,\Fs_{|D(x)})$, then $i_\rho$ separates the (global) $S$-radii of $\Fs$ at $x$, as well as the (local) $\emptyset$-radii of 
$\Fs_{|D(x)}$ at $t_x$. By Dwork-Robba's decomposition 
(cf. Corollary~\ref{Lema : descent of M_xrho}), there exists a unique submodule
$(\Fs_x)^{\geq\rho}\;\subseteq\;\Fs_x$ with the required properties,
which we denote by 
\begin{equation}
(\Fs_{\geq i_\rho})_x\;:=\;(\Fs_x)^{\geq\rho}\;.
\end{equation} 
Reciprocally, for every spectral index $i$ separating the $S$-radii of $\Fs$ at $x$ 
there exists $\rho$ such that $i=i_\rho$. The result is then proved 
for spectral indexes separating spectral radii at $x$.

\end{proof}

The following Lemma will be useful to glue the local decompositions in view of the global decomposition Theorem \ref{MAIN Theorem}.
\begin{lemma}\label{Lemma : localization}
Let $x\in X$.
Let $i$ be an index separating the radii at the individual point $x$
(cf. Definition \ref{definition : i separates the radii}). 
Then there exist an open neighborhood $U_x$ of $x$, together with a 
weak triangulation $S_{U_x}$ of $U_x$ and a subdifferential equation 
$(\Fs_{\geq i})_{|U_x}$ of $\Fs_{|U_x}$ over $U_x$, such that 
\begin{enumerate}
\item\label{Lemma : localization - item i} 
The inclusion $(\Fs_{\geq i})_y\subseteq\Fs_y$ is the restriction of  the 
inclusion $(\Fs_{\geq i})_{|U_x}\subseteq\Fs_{|U_x}$;
\item\label{Lemma : localization - item ii} For all $y\in U_x$ the following conditions hold
\begin{enumerate}
\item\label{Lemma : localization - item ii-a} one has $\omega_{S,i}(y,\Fs)\subset\omega_{S,i-1}(y,\Fs)$ 
(i.e. $i$ separates the global $S$-radii of $\Fs$ over $U_x$);
\item\label{Lemma : localization - item ii-b} the restriction 
$\Hdr^0(y,\Fs)\to\Hdr(y,\Fs_{U_x})$ to $U_x$ induces the equalities
\begin{eqnarray}
\omega_{S,i}(y,\Fs)&\;=\;&\omega_{S_{U_x},i}(y,\Fs_{|U_x})\;, 
\label{eq: localization -1}\\
\omega_{S,i-1}(y,\Fs)&\;=\;&\omega_{S_{U_x},i-1}(y,\Fs_{|U_x})\;. 
\label{eq: localization -2}
\end{eqnarray}
\item\label{Lemma : localization - item ii-c} 
We have a strict inclusion of disks 
\begin{equation}\label{eq : this is what one wants for localization}
D_{S,i-1}(y,\Fs)\;\subsetneq\; D(y,S_{U_x})
\end{equation}
\end{enumerate}
\noindent \hspace{-0.9cm}In particular one has:
\item\label{Lemma : localization - item iii} $\R_{S_{U_x},i}(y,\Fs_{|U_x}) > 
\R_{S_{U_x},i-1}(y,\Fs_{|U_x})$ for all 
$y\in U_x$ (i.e. the index $i$ separates the radii of $\Fs$ 
after localization at $U_x$);
\item\label{Lemma : localization - item iv} $\R_{S_{U_x},1}(y,(\Fs_{\geq i})_{|U_x}) = 
\R_{S_{U_x},i}(y,\Fs_{|U_x})$ for all $y\in U_x$;
\item \label{Lemma : localization - item v}
$\omega_{S_{U_x},1}(y,(\Fs_{\geq i})_{|U_x})=
\omega_{S_{U_x},i}(y,\Fs_{|U_x})=
\omega_{S,i}(y,\Fs)$, for all $y\in U_x$.
\end{enumerate}
\end{lemma}
\begin{proof}
First observe that items \eqref{Lemma : localization - item iv} and \eqref{Lemma : localization - item v} 
are immediate consequences of 
\eqref{Lemma : localization - item i}, 
\eqref{Lemma : localization - item ii}, 
\eqref{Lemma : localization - item iii} by 
Proposition \ref{Prop : extended by continuity}. 
Now observe that \eqref{eq: localization -2} is a consequence of 
\eqref{eq: localization -1} by the rule \eqref{Localization of sol-1}. 
Moreover \eqref{Lemma : localization - item iii} 
is a consequence of \eqref{Lemma : localization - item ii} 
by definitions 
\eqref{eq : D_S,i and D(x, R_S,i)} and 
\eqref{eq : omeaga_S,i and D_S,i}. We now define $U_x$ and 
$S_{U_x}$ satisfying \eqref{Lemma : localization - item i} and \eqref{Lemma : localization - item ii}. 

Let $U$ be the largest connected open 
neighborhood of $x$ such that  the coefficients of the matrix 
of the connection of $(\Fs_{\geq i})_x$ lie in $\O(U)$. 
Then item \eqref{Lemma : localization - item i} holds for $U$. 
By continuity of the radii (cf. Theorem \ref{Thm : finiteness}) 
up to restricting $U$ one has 
$\R_{S,i}(y,\Fs)>\R_{S,i-1}(y,\Fs)$ for all $y\in U$,
hence \eqref{Lemma : localization - item ii-a} holds at each point $y\in U$. 
In order to find $U_x\subset U$ and a weak triangulation of $U_x$  
satisfying \eqref{Lemma : localization - item ii-b}, 
we now proceed as follows.
By Proposition \ref{Prop: Localization}, 
it is enough to define $U_x\subseteq U$ and $S_{U_x}$ satisfying  
\eqref{eq : this is what one wants for localization} for every $y\in U_x$.

First assume that the index $i$ 
is over-solvable at $x$, $D_{S,i}(x,\Fs)$ is a
virtual open disk in $X$ containing $x$. 
Then set $U_x:=D_{S,i}(x,\Fs)$ and 
$S_{U_x}=\emptyset$. 
Since $D_{S,i}(y,\Fs)=D_{S,i}(x,\Fs)=D(y,S_{U_x})$ for all 
$y\in D_{S,i}(x,\Fs)$, this proves 
\eqref{eq : this is what one wants for localization}. 
Indeed by contrapositive the function $\R_{S,i-1}(-,\Fs)$ is constant on 
$D_{S,i-1}(y,\Fs)$ (cf. \eqref{eq : D^c}), so the equality 
$D_{S,i-1}(y,\Fs)=D_{S,i}(x,\Fs)$ implies 
$D_{S,i-1}(x,\Fs)=D_{S,i-1}(y,\Fs)=D_{S,i}(x,\Fs)$ which is absurd 
because the radii of $\Fs$ are separated at $x$.

Assume now that the index $i$ is spectral at $x$. This may only happen 
if $x$ is of 
type $2$, $3$, or $4$. By continuity and finiteness of 
the radii (cf. Theorem \ref{Thm : finiteness}) we can choose 
$U_x\subseteq U$ such that 
\begin{enumerate}
\item[1)] $U_x$ is a star-shaped neighborhood of $x$ endowed 
with its canonical triangulation (cf. Definition~\ref{def:starshapedopen});
\item[2)] One has $\R_{S,i-1}(y,\Fs)<\R_{S,i}(y,\Fs)$, 
for all $y\in U_x$ (this is automatic since $U_x\subseteq U$);
\item[3)] The radius 
$\R_{S,i-1}(-,\Fs)$ remains spectral and \emph{non solvable} 
on the \emph{pointed skeleton} of $U_x$ (cf. Definition~\ref{def:starshapedopen});
\item[4)] Either $\Gamma_S(\Fs)\cap U_x$ is empty, or  
$x\in\Gamma_S(\Fs)$ and $\Gamma_S(\Fs)\cap U_x$ is 
included in the pointed skeleton of $U_x$.
\end{enumerate}
We claim that these properties imply 
\eqref{eq : this is what one wants for localization}. 
By 3) this is clear over the pointed skeleton of $U_x$ by the rule 
\eqref{Localization of sol} (indeed the spectral steps of the filtration of 
the solution are stable by localization, cf. also \eqref{eq : truncation of disks} and 
\eqref{eq : truncation of omega}). 
Now let $y\in U_x$ be outside the pointed skeleton of $U_x$. 
By contrapositive assume that 
$D(y,S_{U_x})\subseteq D_{S,i-1}(y,\Fs)$, 
then $i-1$ is solvable at $y$ with respect to $S_{U_x}$. 
By \eqref{eq : D^c} the radius $\R_{S_{U_x},i-1}(-,\Fs_{|U_x})$ 
is constant on $D(y,S_{U_x})$. 
Hence, by continuity, $\R_{S_{U_x},i-1}(-,\Fs_{|U_x})$
remains solvable  
at the topological boundary $z$ of $D(y,S_{U_x})$. 
Since $z$ lies in the pointed skeleton of $U_x$ this is a contradiction.
\end{proof}

\begin{remark}[Gluing property]\label{Rk: gling propty}
Let us maintain notations as in Lemma \ref{Lemma : localization}. 
For every $y\in U_x$, the sub-object 
$(\Fs_{\geq i})_y\subseteq\Fs_y$ provided by 
Proposition \ref{Prop : existence of (F_S,i)_x}
is the restriction of  the inclusion 
$(\Fs_{\geq i})_{|U_x}\subseteq\Fs_{|U_x}$.
Indeed, by Proposition \ref{Prop : existence of (F_S,i)_x}, 
$(\Fs_{\geq i})_y$ is the \emph{unique} sub-object of 
$\Fs_y$ satisfying \eqref{eq : bijection local} and, 
by Lemma \ref{Lemma : localization}, the localization of  
$(\Fs_{\geq i})_{|U_x}$ at $y$ has the same property 
\end{remark}

\subsection{Descent of the augmented Dwork-Robba filtration.}
\label{Sec : Descent}

We now come back to our general setting.
\begin{hypothesis}\label{hyp : K general}
From now on $K$ is again an arbitrary complete ultrametric valued 
field of characteristic $0$ as in our original setting \ref{Secn1}. 
\end{hypothesis}
In this section we descend the augmented Dwork-Robba decomposition 
(cf. Corollary \ref{Lema : descent of M_xrho}).
The inverse image in $X_{\wKa}$ of a point of type $2$ or $3$ of $X$ 
is always a finite set and the descent is standard. 
However, for points of type $1$ or $4$, the Galois orbit may be infinite. In 
order to reduce to the finite case, we will not work directly with the points 
themselves, but with neighborhoods, which have finite Galois orbits.
\begin{remark}
We will prove that if $x\in X$ is a point of type $2$ or $3$, then 
Robba's decomposition of differential modules over $\H(x)$ 
also descends to 
$K$ (cf. Proposition \ref{rk : descent of Robba, not type 4}). However, for 
points of type $4$, this remains an open problem 
(cf. Remark \ref{rk : descent of Robba, not type 4-}). 
More specifically, the orbit of $x$ might be an infinite 
compact set and the ring of functions over it is 
unknown to us and does not seem to satisfy 
Proposition \ref{Lemma : j_Vmono -1}.
We believe that, instead of performing a descent, another possible 
approach would be to rewrite the results of section 4 directly over $K$.
\end{remark}

\subsubsection{Galois action on local domains around the inverse image of a point.}
\label{subsection : Descent of Dwork-Robba filtration}

In this subsection we provide some results about the action of the Galois group on various ring of functions.

Denote by $\mathrm{G} := \mathrm{Gal}(K^{\mathrm{alg}}/K)$ the absolute Galois group of $K^{\mathrm{alg}}$ over $K$. Its action on $K^{\mathrm{alg}}$ extends to a continuous action on~$\wKa$.

Denote by $\pi \colon X_{\wKa} \to X$ the extension of scalars map. It 
is continuous proper and surjective. The Galois group~$\mathrm{G}$ 
acts continuously on~$X_{\wKa}$ and its action is transitive on each 
fiber of~$\pi$. Those statements are well-known. We point the reader 
to \cite[Corollary~5.6]{PoineauTurchettiVIASMI} for a proof in the 
case of the affine line, which easily extends. 
We begin with three 
lemmas about the action of $G$ on our rings of functions.

\begin{lemma}\label{lem:AKalgG}
Let $U$ be an affinoid domain of~$X$ with algebra~$A$. The Galois group~$\mathrm{G}$ acts on $A \hat\otimes_{K} \wKa$ by acting on the second factor and we have 
\begin{equation}
(A \hat\otimes_{K} \wKa)^\mathrm{G} = A.
\end{equation}
\end{lemma}
\begin{proof}
Let $\|\wc\|$ denote the norm on~$A$. By definition, $A$ is a quotient of a Tate algebra. It follows that it contains a dense subspace whose dimension is 
at most countable. By \cite[Lemma~1.3.8]{Kedlaya-book-2}, there exists a sequence $(m_{i})_{i\in \NN}$ such that any element $a$ in~$A$ may be written in a unique way as a convergent series
\begin{equation}\label{eq:sumaimi}
a = \sum_{i\in \NN} a_{i} \, m_{i},
\end{equation}
with the $a_{i}$'s in~$K$. Moreover, with notation as in \eqref{eq:sumaimi}, the norm on~$A$ defined by
\begin{equation}
\|\wc\|' \colon a = \sum_{i\in \NN} a_{i} \, m_{i} \longmapsto \sup_{i\in \NN} (|a_{i}| \, \|m_{i}\|) \in \ERRE_{\ge 0}
\end{equation} 
is equivalent to the given norm~$\|\wc\|$.

\if{
\comm{Malheureusement le lemme 1.3.8 de Kedlaya ne marche pas en 
valuation triviale. Il a ?crit le bouquin comme si tout marchait en 
valuation triviale (et effectivement je n'ai pas trouv? d'?rreurs avant le 
Theor?me 1.3.6 inclus) mais dans la preuve 
de 1.3.7 et 1.3.8 on voit clairement qu'il abandonne cette hypoth?se 
sans s'en apercevoir et sans le mentionner dans l'nonc?. Par ailleurs, 
Kedlaya dit que son Lemme 1.3.8 vient du livre de Schneider 
(Proposition 10.4) et Schneider fait l'hypoth?se non triviale dans tout le 
bouquin.

Je pense que dans le cas de valuation triviale la preuve devrait ?tre 
essentiellement la m?me. Il suffit de d?montrer l'existence d'une base 
comme celle du Lemme 1.3.8 de Kedlaya.

On pourrait ?galement travailler avec la cohomologie, il faudrait 
travailler avec la suite exacte longue de cohomologie Galoisienne, 
et utilisr le fait que pour les alg?bres de Tate $T$ on a la base du Lemme 1.3.8, ... 
mais le probl?me est l'id?al $I$ de d?finition $0\to I\to T\to A\to 0$ on peut par exemple prendre une presentation de notre alg?bre affinoide $A$ par 
des alg?bres de Tate $\cdots T_{n_k}\to T_{n_{k-1}}\to\cdots\to T_{n_2}\to T_{n_1}\to A\to 0$. C'est une suite exacte longue o? $T_{n_i}$ est 
exacte et v?rifie $\overline{T_{n_k}}^G=T_{n_k}$. Si par exemple on avait une suite courte $0\to T_{n_2}\to T_{n_1}\to A\to 0$ alors la cohomologie 
serait  $0\to T_{n_2}\to T_{n_2}\to \overline{A}^G\to H^1(\overline{T_{n_2}},G)\to H^1(\overline{T_{n_1}},G)\cdots$, 
et je pense qu'on a un Hilbert `90 pour les $T_n$ qui entraine $H^1(\overline{T_{n_2}},G)=0$. Je pense que ?a marche, mais c'est plus compliqu?... 
tu me disais que tu avais la d?scente dans Banachoid... c'est probablement plus simple...
}
}\fi

It follows that any element $b$ in $A \hat\otimes_{K} \wKa$ may be 
written in a unique way as a convergent series
\begin{equation}
b = \sum_{i\in \NN} b_{i} \, m_{i},
\end{equation}
with the $b_{i}$'s in~$\wKa$. 

Let $b = \sum_{i\in \NN} b_{i} \, m_{i} \in (A \hat\otimes_{K} \wKa)^\mathrm{G}$. For each $g \in \mathrm{G}$, we have 
\begin{equation}
g(b) = \sum_{i\in \NN} g(b_{i}) \, m_{i}.
\end{equation}
Since $g(b)=b$, we deduce that $g(b_{i}) = b_{i}$ for all $i\in \NN$, by uniqueness of the coefficients. It then follows from the theorem of Ax-Sen-Tate that  $b_{i} \in K$ for all $i\in \NN$, hence that $b \in A$, as required.
\end{proof}

For all $x\in X$ we set 
\begin{equation} 
\overline{\O}_{x} := \O_{X_{\wKa}}(\pi^{-1}(x)) = 
\varinjlim_{W \supset \pi^{-1}(x)} \O(W)\;=\;
\bigcup_{W \supset \pi^{-1}(x)} \O(W)\;,\footnote{Here and in the whole text, the inductive limits are taken in the category or rings.}
\end{equation}
where $W$ runs through the neighborhoods of~$\pi^{-1}(x)$ in~$X_{\wKa}$. Since~$\pi$ is proper, we obtain a basis of neighborhoods of~$\pi^{-1}(x)$ by taking the preimages of the elements of a basis of neighborhoods of~$x$.\footnote{Let $U_{0}$ be a compact neighborhood of~$x$. Then $\pi^{-1}(U_{0})$ is a compact neighborhood of~$\pi^{-1}(x)$. The complement $K$ of any open neighborhood $Z$ of $\pi^{-1}(x)$ in~$\pi^{-1}(U_{0})$ is compact. Its image $\pi(K)$ is a compact subset of~$U_{0}$ that does not contain~$x$. The set $U_{0} - \pi(K)$ is an open neighborhood of~$x$ whose preimage is contained in~$Z$, by construction.
}
As a consequence, we have 
\begin{equation} 
\overline{\O}_{x} = 
\varinjlim_{U \ni x } \O(\pi^{-1}(U)) =  
\varinjlim_{U \ni x } \big(\O(U) \hat\otimes_{K} \wKa\big),
\end{equation}
where $U$ runs through the affinoid neighborhoods of~$x$ in~$X$. 
In order to simplify notations we set
\begin{equation}\label{eq : OUbar}
\overline{\O(U)}\;:=\;\O(U) \hat\otimes_{K} \wKa\;.
\end{equation}
We deduce the following result from Lemma~\ref{lem:AKalgG}. 
\begin{lemma}\label{lem:barOxG}
We have
\begin{equation}
\overline{\O}_{x}^\mathrm{G}\; =\; 
\varinjlim_{U \ni x}\overline{\O(U)}^{\mathrm{G}}\;=\;
\varinjlim_{U \ni x}\O(U)\;=\;
\O_{x}, 
\end{equation}
where $U$ runs through the family of affinoid neighborhood of $x$ in $X$.
\qed
\end{lemma}
\begin{lemma}\label{lem:RW}
Let $y_{0}\in \pi^{-1}(x)$. For each neighborhood~$W_{0}$ of~$y_{0}$ in~$X_{\wKa}$, there exists an affinoid neighborhood~$W$ of~$y_{0}$ in~$W_{0}$ and a finite subset~$R_{W}$ of~$\mathrm{G}$ containing~$\mathrm{id}_{G}$ such that the $g(W)$ for $g\in R_{W}$ are disjoint and
\begin{equation}
 \bigsqcup_{g \in R_{W}} g(W) \supset \pi^{-1}(x).
\end{equation}
\end{lemma}
\begin{proof}
Let $W_{0}$ be a neighborhood of $y_{0}$ in $X_{\wKa}$.

\noindent $\bullet$ Assume that $x$ is of type~2 or~3. In this case, the fiber of~$\pi$ over~$x$ is finite: $\pi^{-1}(x) = \{y_{0},\dotsc,y_{t}\}$. For each $i\in \{1,\dotsc,t\}$, choose $g_{i} \in \mathrm{G}$ such that $g_{i}(y_{0}) = y_{i}$. Set $g_{0} := \mathrm{id}_{G}$.

Since~$X_{\wKa}$ is Hausdorff, there exist open subsets $U_{0},\dotsc,U_{t}$ of~$X$ that contain respectively $y_{0},\dotsc,y_{t}$ and are pariwise disjoint.

Any affinoid neighborhood~$W$ of~$y_{0}$ contained in
\begin{equation}
 W_{0} \cap \bigcap_{1\le i \le t} g_{i}^{-1}(U_{i})
\end{equation}
now satisfies the properties of the statement with $R_{W} = \{\mathrm{id}_{G},g_{1},\dotsc,g_{t}\}$.

\medbreak

\noindent $\bullet$ Assume that $x$ is of type~$1$ or $4$. 

By \cite[Th\'eor\`eme~4.5.4]{Duc}, the point~$x$ is contained in a virtual open disk, hence we may assume that~$X$ is a virtual open disk. In this case, $X_{\wKa}$ is a disjoint union of finitely many open disks that form an orbit under the action of~$\mathrm{G}$. Let~$D$ be the open disk containing~$y_{0}$. Then, there exists $h_{0},\dotsc,h_{s} \in \mathrm{G}$ with $h_{0} = \mathrm{id}_{G}$ such that 
\begin{equation}
X_{\wKa} \;=\; \bigsqcup_{0\le j\le s} h_{j}(D)\;.
\end{equation}

Let $\mathrm{S}_{D} := \{ g \in \mathrm{G} \,\mid\, g(D) = D\}$. This is a subgroup of~$G$ (of finite index). We claim that it is enough to prove that there exists an affinoid neighborhood~$W$ of~$y_{0}$ in~$D \cap W_{0}$ and $g_{0},\dotsc,g_{r} \in S_{D}$ such that  
\begin{equation}
\bigsqcup_{0\le i \le r} g_{i}(W) \;\supset\; \pi^{-1}(x) \cap D\;.
\end{equation} 
Indeed, the result of the statement then holds with~$W$ and $R_{W} = \{h_{j} g_{i},\ 0\le j\le s, 0\le i\le r\}$.

The point~$y_{0}$ has same type as $x$, hence, by 
\cite[Th\'eor\`eme~4.5.4]{Duc} again, it has a basis of neighborhoods 
consisting of disks. In particular, if $R$ is the radius of $D$ in a 
given coordinate, there exists an element of $z_{0}\in D(\wKa)$ 
which is algebraic over $K$ and $r\in \mathopen{]}0,R\mathclose{[}$ such 
that the closed subdisk $\overline{D}(z_{0},r)$ of~$D$ is contained in~$W_{0}$ and contains $y_0$. 

Set 
\begin{equation}
\mathrm{H}\; :=\; \{ g \in \mathrm{S}_{D} \,\mid\, |g(z_{0}) - z_{0}| \le r\}\;.
\end{equation}
Since the action of~$\mathrm{G}$ on~$\wKa$ is isometric, $\mathrm{H}$ is a subgroup of~$\mathrm{S}_{D}$. Moreover, it contains the stabilizer of $z_0$, which is an open subgroup of~$\mathrm{S}_{D}$, hence $\mathrm{H}$ has finite index. 

Let $g_{1},\dotsc,g_{t}$ be representatives of the non-trivial classes in $\mathrm{S}_{D}/\mathrm{H}$. Set $g_{0} := \mathrm{id}_{G}$.

Thanks to the fact that the action of~$\mathrm{G}$ is isometric, for each $i \in \{0,\dotsc,r\}$, we have $g_{i}(\overline{D}(z_{0},r)) = \overline{D}(g_{i}(z_{0}),r)$, and all the $\overline{D}(g_{i}(z_{0}),r)$ are disjoint, by definition of~$H$. The result follows.
\end{proof}

\subsubsection{Semi-linear representations around the inverse image of a point.}
Let $\overline A$ be a ring endowed with an action of~$\mathrm{G}$. Denote by $A := \overline{A}^{\mathrm{G}}$ the subring of invariant elements. Recall that a \emph{semi-linear action} of $\mathrm{G}$ on an $\overline{A}$-module 
$\overline{V}$ is a map 
\begin{equation}
\mu\colon (g,v) \in \mathrm{G}\times \overline{V}\longmapsto g(v) \in \overline{V}
\end{equation}
 such that, for all $g,g_1,g_2\in \mathrm{G}$, $v,v_1,v_2\in\overline{V}$, 
$a\in\overline{A}$, one has 
\begin{enumerate}
\item $g_1(g_2(v))=(g_1g_2)(v)$, and $\mathrm{id}_{\mathrm{G}}(v)=v$;
\item $g(v_1+v_2)=g(v_1)+g(v_2)$;
\item $g(av)=g(a)g(v)$.
\end{enumerate}
We also say that $\overline{V}$ is a \emph{semi-linear representation} of $\mathrm{G}$ over $\overline{A}$. 

We denote by 
\begin{equation}
\overline{V}^\mathrm{G}\;:=\;\{v\in\overline{V}\;|\; 
\forall\; g\in \mathrm{G},\; g(v)=v\}\;
\end{equation}
the set of elements fixed by $\mathrm{G}$. It is naturally an $A$-module.

For every semi-linear representation~$\overline V$ of $\mathrm{G}$ over $\overline A$, we have a 
natural action of~$\mathrm{G}$ on $\overline{V}^\mathrm{G}\otimes_{A}\overline{A}$ given by 
$g(v\otimes a)=v\otimes g(a)$, for all $v\in \overline{V}^\mathrm{G}$ and 
$a\in \overline{A}$. Moreover, we have a natural
 $\overline{A}$-linear map that commutes with the action of  
$\mathrm{G}$
\begin{equation}\label{eq : j_V def}
\begin{array}{cccccc}
j_{\overline{V}} & \colon & \overline{V}^{\mathrm{G}}\otimes_{A}\overline{A} & \longrightarrow &\overline{V} &.\\
&& v \otimes a & \longmapsto & av &
\end{array}
\end{equation}

\begin{definition}\label{Def : trivial semi-rep}
We say that the action of $\mathrm{G}$ 
is \emph{trivial} on $\overline{V}$ if $j_{\overline{V}}$ is an isomorphism and 
$\overline{V}^{\mathrm{G}}$ is free of finite rank over~$A$. 
\end{definition}
In other words, the action of $\mathrm{G}$ is \emph{trivial} if, and only if, 
there exists an integer~$n$ and an $\overline{A}$-linear isomorphism $\overline{V}\cong\overline{A}^n$ commuting with the action of $\mathrm{G}$, where $\mathrm{G}$ acts on $\overline{A}^n$ componentwise. \bigskip

Let $x\in X$. 
We now consider an affinoid neighborhood  $U_{0}$ of~$x$ in~$X$ and a 
semi-linear representation $\overline{V}$ of $\mathrm{G}$ over $\overline{\O(U_{0})}$. 
In analogy with \eqref{eq : OUbar}, for every affinoid neighborhood 
$U\subseteq U_0$ of $x$ we set 
\begin{equation}\label{eq : notations V, V_x}
\overline{V(U)} \;:=\; 
\overline{V} \otimes_{\overline{\O(U_{0})}} \overline{\O(U)}
\qquad(\textrm{resp.}\quad
\overline{V}_{x} := \overline{V} \otimes_{\overline{\O(U_{0})}} \overline{\O}_{x})\;,
\end{equation}
where the action of $\mathrm{G}$ is given by $g(v\otimes f):=g(v)\otimes g(f)$, $v\in\overline{V}$, $f\in \overline{\O(U)}$ (resp. $f\in \overline{\O}_x$). 
Both are semi-linear representations of $\mathrm{G}$ over $\overline{\O(U)}$ 
and $\overline{\O}_{x}$ respectively. 
Moreover, the action of $\mathrm{G}$ is compatible with the restrictions
$\overline{V(U_1)}\to\overline{V(U_2)}$, for $U_2\subset U_1$, and inductive limits commute with tensor product, hence by Lemma \ref{lem:barOxG} we have a natural identification
\begin{equation}\label{eq: V^G bar_x}
\overline{V}^{\mathrm{G}}_x\;=\;\varinjlim_{x\in U}\overline{V(U)}^{\mathrm{G}}\;.
\end{equation}

\begin{lemma} \label{Lemma: torfree- flat O_x}
Every torsion free $\O_x$-module is flat. 
\end{lemma}
\begin{proof}
The ring $\O_x$ is either a field (if $x$ is not rigid) or a discrete valuation ring (when $x$ is rigid). The claim then follows from \cite[Chapitre VI, \S3, n.6, Lemme1]{Bou-Comm-Alg}.
\end{proof}

\if{\comm{J'ai redigé cette preuve, à corriger soigneusement car je ne maitrise pas bien les choses}
\begin{lemma}\label{Lemma : j_Vmono -1.0}
For every connected affinoid neighborhood $U\subseteq U_0$ of $x$, the map 
\begin{equation}
j_{\overline{V(U)}}\;:\;\overline{V(U)}^{\mathrm{G}}\otimes_{\O(U)}\overline{\O(U)} \xrightarrow{\quad} \overline{V(U)}
\end{equation}
is injective. 
\end{lemma}
\begin{proof}
Assume, by contradiction, that there exists 
$a \in (\overline{V(U)}^{\mathrm{G}}\otimes_{\O(U)}\overline{\O(U)}) \setminus \{0\}$ such that $j_{\overline{V(U)}}(a) = 0$. 
Write $a = \sum_{i=1}^m v_i \otimes f_{i}$, with 
$m\ge 1$, 
$v_{i} \in \overline{V(U)}^\mathrm{G}$ and 
$f_{i} \in \overline{\O(U)}$. 
We may assume that $m$ is minimal among all possible choices of 
$a\ne 0$ in the kernel, and all possible choices of presentation of $a$ as 
above. 

If $U'\subset U$ is another connected affinoid domain, we have a commutative diagram in which the maps commute with the action of 
$\mathrm{G}$
\begin{equation}
\xymatrix{
\overline{V(U)}^{\mathrm{G}}\otimes_{\O(U)}\overline{\O(U)}\ar[d] \ar[rr]^-{j_{\overline{V(U)}}} &&\overline{V(U)}\ar[d]\;\;\\
\overline{V(U')}^{\mathrm{G}}\otimes_{\O(U')}\overline{\O(U')} \ar[rr]^-{j_{\overline{V(U')}}} &&\overline{V(U')}\;.
}
\end{equation}
The restriction map 
$\overline{\O(U)}\to\overline{\O(U')}$ is injective and flat (cf. 
\cite[Proposition 2.2.4]{Ber} 

\comm{verifier reference! trouver une 
reference pour l'injectivité=continuation alaytique}), 
therefore the vertical arrows are injective. This shows that the injectivity of 
$i_{\overline{V(U')}}$ implies the injectivity of $i_{\overline{V(U)}}$.

As a first step, we are going to replace $a$ by another element in 
the kernel, admitting a similar representation 
with same length $m$ and same $v_i$s in which at least one of the $f_i$ is 
invertible in some neighbourhood of $\pi^{-1}(x)$.

\end{proof}
}\fi


\begin{proposition}\label{Lemma : j_Vmono -1}
We maintain the notation as above. 
Then, the map 
\begin{equation}
j_{\overline{V}_{x}}\;:\;
\overline{V}_{x}^{\mathrm{G}}\otimes_{\O_x}\overline{\O}_{x} 
\;\xrightarrow{\quad}\; \overline{V}_{x}
\end{equation}
is injective. 
\end{proposition}
\begin{proof}
\textbf{Step 1.} 
We may assume that $\overline{V}_x$ is finitely generated over 
$\overline{\O}_x$. 

Indeed, if 
$a\in\overline{V}_x^{\mathrm{G}}\otimes_{\O_x}\overline{\O}_x-\{0\}$ is 
a non zero element in the kernel of $j_{\overline{V}_x}$ we may write 
$a=\sum_{i=1}^mv_i\otimes f_i$ with $v_i\in \overline{V}_x^{\mathrm{G}}$ 
and $f_i\in\overline{\O}_x$. Let 
$\overline{V}_x'=\sum_{i=1}^mv_i\overline{\O}_x$ be the 
sub-$\overline{\O}_x$-module of $\overline{V}_x$ generated by 
$\{v_1,\ldots,v_m\}$. It is clearly globally stable by $\mathrm{G}$. 
As $\overline{\O}_x$ is flat over $\O_x$ (cf. Lemma \ref{Lemma: torfree- flat O_x}), the map 
$(\overline{V}_x')^{\mathrm{G}}\otimes_{\O_x}\overline{\O}_x
\to \overline{V}_x^{\mathrm{G}}\otimes_{\O_x}\overline{\O}_x$ is injective. 
Therefore, $j_{\overline{V}_x'}(a)=0$ and $a\neq 0$ in $\overline{V}_x'$.
It follows that the non-injectivity of $j_{\overline{V}_x}$ implies the 
non-injectivity of $j_{\overline{V}_x'}$ for the finite module 
$\overline{V}_x'$. Therefore, it is enough to prove the injectivity of 
$j_{\overline{V}_x}$ for every finitely generated $\overline{V}_x$.

\textbf{Step 2.} We may assume moreover 
that $\overline{V}_x$ is either torsion free as $\O_x$-module or killed by 
a generator $P$ of the maximal ideal of $\O_x$.

If $x$ is not rigid, then $\O_x$ is a field, and the statement is trivial.

If $x$ is rigid, denote by $(\overline{V}_x)_{tor}$ the torsion 
sub-$\O_x$-module 
of $\overline{V}_x$. By definition $v\in (\overline{V}_x)_{tor}$ if, and only if,
$fv=0$, for some $f\in\O_x - \{0\}$. 
It is easy to see that $(\overline{V}_x)_{tor}$ is a 
sub-$\overline{\O}_x$-module stable by the action of 
$\mathrm{G}$ and that we 
have an exact sequence of semi-linear representations 
of $\mathrm{G}$ over $\overline{\O}_x$
\begin{equation}
0\to (\overline{V}_x)_{tor} \to \overline{V}_x \to \overline{V}_x/(\overline{V}_x)_{tor}\to 0
\end{equation}
where the quotient $\overline{V}_x/(\overline{V}_x)_{tor}$ is torsion free as 
$\O_x$-module. This produces a left exact sequence 
$0\to (\overline{V}_x)_{tor}^{\mathrm{G}} \to \overline{V}_x^{\mathrm{G}} \to (\overline{V}_x/(\overline{V}_x)_{tor})^{\mathrm{G}}$ of 
$\O_x$-modules. Since $\overline{\O}_x$ has no torsion as $\O_x$-module, 
it is flat over $\O_x$ (cf. Lemma \ref{Lemma: torfree- flat O_x}) 
and hence we have a left exact sequence 
$0\to (\overline{V}_x)_{tor}^{\mathrm{G}}\otimes_{\O_x}\overline{\O}_x \to 
\overline{V}_x^{\mathrm{G}}\otimes_{\O_x}\overline{\O}_x \to 
(\overline{V}_x/(\overline{V}_x)_{tor})^{\mathrm{G}}
\otimes_{\O_x}\overline{\O}_x$. 
It follows that, the injectivity of $j_{(\overline{V}_x)_{tor}}$ 
and $j_{\overline{V}_x/(\overline{V}_x)_{tor}}$ implies that of 
$j_{\overline{V}_x}$, because we have a left exact sequence of their kernels. 

Therefore, without loss of generality, if $x$ is rigid 
we may assume that $\overline{V}_x$ 
is either torsion, or torsion free as $\O_x$-module. 

If $\overline{V}_x$ is torsion over $\O_x$, by definition every element of 
$\overline{V}_x$ is killed by a power of $P$. If $P\cdot \overline{V}_x = 0$, we are done. Otherwise, since $\overline{V}_x$ is finitely generated 
over $\overline{\O}_x$ (cf. Step 1) there is a power of $P$ killing 
every element of a family of generators. The same power of $P$ 
is easily seen to kill every other element of $\overline{V}_x$. Moreover, 
$P\cdot \overline{V}_x$ is a $\mathrm{G}$-invariant sub-$\overline{\O}_x$-module 
which is killed by a strictly smaller power of $P$.
We have an exact sequence
\begin{equation}
0\to P\cdot\overline{V}_x\to \overline{V}_x\to 
(\overline{V}_x/P\cdot\overline{V}_x) \to 0\;
\end{equation}
in which the left and right terms are killed by strictly smaller powers of $P$. 
As before, the injectivity of $j_{P\overline{V}_x}$ and 
$j_{\overline{V}_x/P\overline{V}_x}$ imply the injectivity of 
$j_{\overline{V}_x}$. Therefore, we may proceed by induction on the 
power of $P$ killing $\overline{V}_x$. It is therefore enoug to prove the 
claim when $\overline{V}_x$ is killed by $P$.

\textbf{Step 3.} Assume, by contradiction, that there exists 
$a \in (\overline{V}_{x}^{\mathrm{G}}
\otimes_{\O_{x}}\overline{\O}_{x}) \setminus \{0\}$ such that 
$j_{\overline{V}_{x}}(a) = 0$. Write $a = \sum_{i=1}^m v_i \otimes f_{i}$, 
with $m\ge 1$, $v_{i} \in \overline{V}_{x}^\mathrm{G}$ and 
$f_{i} \in \overline \O_{x}$. We may assume that $m$ is minimal among all 
possible choices of $a\ne 0$ in the kernel, and all possible choices of 
presentation of $a$ as above. We are going to prove that the existence of 
$a$ leads to a contradiction. 

As a first step, we are going to replace $a$ by another element in the 
kernel, admitting a similar representation 
(with same length $m$ and same set $\{v_i\}_i$) 
for which at least one of the $f_i$ is invertible in some 
neighbourhood of $\pi^{-1}(x)$.

By \eqref{eq: V^G bar_x}, there exists an affinoid neighborhood~$U_{1}$ of~$x$ in~$U_{0}$ such that $v_{i}\in \overline{V(U_1)}^\mathrm{G}$, $f_i\in\overline{O(U_1)}$ and $\sum_{i=1}^m f_{i} v_i = 0$ in $\overline{V(U_1)}$. 
We have a commutative diagram
\begin{equation}
\xymatrix{
\overline{V}_x^{\mathrm{G}}\otimes_{\O_x}\overline{\O}_x\ar[rr]^-{j_{\overline{V}_x}}&&\overline{V}_x\\
\overline{V(U_1)}^{\mathrm{G}}\otimes_{\O(U_1)}\overline{\O(U_1)}\ar[rr]^-{j_{\overline{V(U_1)}}}\ar[u]&&\ar[u]\overline{V(U_1)}
}
\end{equation}
We now specialize this relation at the points of $\pi^{-1}(x)\subset 
\pi^{-1}(U_1)$. 

%
%
%

\begin{lemma}
If $x\in X$ is not rigid, there exists $y_0\in\pi^{-1}(x)$ and $i_{0} \in 
\{1,\dotsc,m\}$ such that the image of $f_{i_{0}}$ in $\O_{y_{0}}$ is 
invertible.

If $x\in X$ is rigid, the same holds up to replace $a$ by another relation 
with the same length and same $v_i$'s.
\end{lemma}
\begin{proof}
Let us first assume that $x$ is not rigid. 
In this case, we may proceed by contrapositive without changing $a$. 
If for all $i$ and all $y\in\pi^{-1}(x)$ the image of $f_i$ in $\O_{y}$ is zero, 
then for all $i$ we may find a neighborhood $U_y$ 
of every $y$ on which $f_i$ is zero. 
The union of those neighborhoods is a neighborhood of $\pi^{-1}(x)$ 
on which every $f_i$ is zero. This implies that the restriction of 
$\sum_iv_i\otimes f_i$ to  some  neighbourhood $U$ of $\pi^{-1}(x)$ 
is zero in $\overline{V(U)}^{\mathrm{G}}\otimes_{\O(U)}\overline{\O(U)}$, 
contradicting the fact that $a\neq 0$. Therefore, we may find $i_0$ and 
$y_0\in\pi^{-1}(x)$ such that the image of $f_{i_0}$ in $\O_{y_0}$ is not 
zero. 

If $x$ has type $2$, $3$, or $4$, then $\O_{y_0}$ is a field and the image of 
$f_{i_0}$ in $\O_{y_0}$ is invertible. The claim is proved in this case.

If $x$ has type $1$ but $x$ is not rigid, then 
the fiber $\pi^{-1}(x)$ is an infinite compact set on which $\mathrm{G}$ 
acts transitively. If $U_{y_0}$ is a connected affinoid neighborhood of $y_0$ 
on which $f_{i_0}$ is defined, then $\bigcup_{g\in\mathrm{G}}\;g(U_{y_0})$ 
covers $\pi^{-1}(x)$ and we may extract from it a finite covering of 
$\pi^{-1}(x)$. It follows that $\pi^{-1}(x)\cap U_{y_0}$ is an infinite set and 
since $f_{i_0}$ has a finite number of zeros on $U_{y_0}$ we can find 
$y_{1}\in U_{y_0}\cap\pi^{-1}(x)$ such that $f_{i_0}(y_1)$ is not zero in 
$\H(y_{1})\cong\wKa$. Hence, the image of $f_{i_0}$  in $\O_{y_1}$ is invertible. 
The claim follows in this case too.

Assume now that $x$ is rigid. In this case 
$\pi^{-1}(x)$ is a 
finite set and $\overline{\O}_{x}=\prod_{\pi(y)=x}\O_{y}$. For each $y \in \pi^{-1}(x)$, let $p_{y}$ be an element of $\overline{\O}_{x}$ whose image in~$\O_{y}$ generates the maximal ideal and whose image in $\O_{y'}$ with $y' \ne y$ is 0. Since $\wKa/K$ is unramified, $P$ generates the maximal ideal of each~$\O_{y}$, hence there exists an invertible element $u$ of $\overline{\O}_{x}$ such that $P = u \prod_{y \in \pi^{-1}(x)} p_{y}$.

Assume that $\overline{V}_x$ is torsion free over $\O_x$. If 
for all $i$ and all $y\in\pi^{-1}(x)$ the image of 
$f_i$ in $\O_{y}$ is not invertible, that is to say divisible by~$p_{y}$, then $P$ divides every $f_i$ in $\overline{\O}_x$. 
In this case, we may replace $a=\sum_i v_i\otimes f_i$ by 
$a':=\sum_i v_i\otimes (P^{-k}\cdot f_i)$, where $k\geq 0$ is 
the maximal natural number such that $P^k$ divides every $f_i$.
There exists $y\in\pi^{-1}(x)$ such that at least one of the $P^{-k}f_i$ 
has an image in $\O_y$ which is invertible. 
Moreover $j_{\overline{V}_x}(a')=P^{-k}\sum_i v_i\cdot f_i$ is zero because 
the multiplication by $P$ in $\overline{V}_x$ is injective by assumption. 
Hence, $a'$ is a relation with the same length $m$ and the same $v_i$'s.

Assume that $\overline{V}_x$ is killed by $P$. Then it is naturally a module 
over the ring $\overline{\O}_x/P\overline{\O}_x=
\prod_{y\in\pi^{-1}(x)}\O_y/p_{y}\O_y$. Note that for every $y\in\pi^{-1}(x)$, 
we have $\O_y/p_{y}\O_y=\wKa$ and that
$\overline{V}_x^{\mathrm{G}}$ is a vector space over the field 
$\O_x/P\O_x$. It follows that we have an isomorphism of $\O_x$-modules
\begin{equation}
\overline{V}_x^{\mathrm{G}}\otimes_{\O_x}\overline{\O}_x
\;\cong\;
\overline{V}_x^{\mathrm{G}}\otimes_{\O_x/P\O_x}\prod_{y\in\pi^{-1}(x)}
(\overline{\O}_x/p_{y}\overline{\O}_x)
\;\cong\;
\prod_{y\in\pi^{-1}(x)}
\overline{V}_x^{\mathrm{G}}\otimes_{\O_x/P\O_x}
(\overline{\O}_x/p_{y}\overline{\O}_x)
\end{equation}
By this isomorphism, 
the element $a=\sum_{i=1}^mv_i\otimes f_i$ can be written 
as $a=(a_y)_y$, where $a_y=\sum_{i=1}^mv_i\otimes f_{i,y}$ and 
$f_i=(f_{i,y})_{y\in\pi^{-1}(x)}\in \prod_{y\in\pi^{-1}(x)}
(\overline{\O}_x/p_{y}\overline{\O}_x)$. If $a\neq 0$, then at least 
one of the components $a_{y_0}$ is non-zero and therefore at least one of 
the $f_{i,y_0}\in \overline{\O}_x/p_{y_{0}}\overline{\O}_x$ is non-zero too. 
Since $\overline{\O}_x/p_{y_{0}}\overline{\O}_x = \wKa$ it follows that $f_{i,y_0}$ is 
invertible and the original element $f_{i}$ has an image in $\O_{y_0}$ 
which is not in the maximal ideal, hence invertible in $\O_{y_0}$. 
The claim follows.
\end{proof}
%
%
%
%
%
%
\emph{Continuation of Proof of Proposition \ref{Lemma : j_Vmono -1}.}

\textbf{Step 4.} Since the image of $f_{i_0}$ is invertible in $\O_{y_0}$, 
there exists a neighborhood~$W_{0}$ of~$y_{0}$ in $\pi^{-1}(U_{1})$ such 
that, for each $z\in W_{0}$, we have $f_{i_{0}}(z)\ne 0$. In particular, the 
image of $f_{i_{0}}$ in~$\O(W_{0})$ is invertible. 
By Lemma~\ref{lem:RW}, $W_{0}$ contains an affinoid neighborhood~$W$ of~$y_{0}$ for which there exists a finite subset~$R_{W}$ of~$\mathrm{G}$ containing~$\mathrm{id}_{G}$ such that $\bigsqcup_{g \in R_{W}} g(W) \supset \pi^{-1}(x)$.


Let $g\in R_{W}$. The action of~$g$ on~$\pi^{-1}(U_{0})$ induces an isomorphism of locally ringed spaces $W \simto g(W)$. We still denote by $g \colon \O(g(W)) \simto \O(W)$ the induced morphism on rings of functions. Applying its inverse to the restrictions~$f_{i,|W}$'s, we get a family $(g^{-1}(f_{1,|W}),\dotsc,g^{-1}(f_{m,|W}))$ of elements of~$\O(g(W))$ such that $g^{-1}(f_{i_{0},|W})$ is invertible in~$\O(g(W))$. Since the~$v_{i}$'s are $\mathrm{G}$-invariant, we have $\sum_{i=1}^m g^{-1}(f_{i,|W})v_{i} = 0$ in~$\O(g(W))$.

For $i \in \{1,\dotsc,m\}$, set
\begin{equation}\label{eq : f_i,R...}
f_{i,R} := (g^{-1}(f_{i,|W}))_{g\in R_{W}} \in \prod_{g\in R_{W}} \O(g(W)) = 
\O\big(  \bigsqcup_{g \in R_{W}} g(W) \big).
\end{equation}
The element $f_{i_{0},R}$ is invertible in $\O(  \sqcup_{g \in R_{W}} g(W))$ 
and we have $\sum_{i=1}^m f_{i,R}v_{i} = 0$ in $\O(  \sqcup_{g \in R_{W}} g(W))
$. Up to multiplying by the inverse of~$f_{i_{0},R}$, we may assume that $f_{i_{0},R} =1$. Note also that the image of $\sum_{i=1}^m v_i \otimes f_{i,R}$ in $\overline{V}_{x}^{\mathrm{G}}\otimes_{\O_{x}}\overline\O_{x}$ is non zero. Indeed, the projection of $\sum_{i=1}^m v_i \otimes f_{i,R}$ onto the factor $\O(W)$ (corresponding to $g = \mathrm{id}_{G}\in R_W$) is $\sum_{i=1}^m v_i \otimes f_{i}$, hence localizing at~$y_{0}$ gives the element $a_{y_{0}}$, which is non-zero by assumption.

Remark that there exists $j \in \{1,\dotsc,m\}$ such that the image of $f_{j,R}$ in~$\overline{\O}_{x}$ does not belong to~$\O_{x}$. Indeed, otherwise the element $\sum_{i=1}^m v_{i} \otimes f_{i,R}$ would belong to $\overline{V}_{x}^{\mathrm{G}}\otimes_{\O_{x}}\O_{x}$, which is sent isomorphically to $\overline{V}_{x}^{\mathrm{G}}$ by~$j_{\overline{V}_{x}}$. 
Since $\sum_{i=1}^m f_{i,R}v_{i} = 0$ and $\sum_{i=1}^m v_{i} \otimes f_{i,R} \ne 0$, we would get a contradiction. 

It now follows from Lemma~\ref{lem:barOxG} that there exists $h \in G$ such that $h(f_{j,R}) \ne f_{j,R}$.
%
%
%


%
This allows us to write
\begin{equation}
0=(h-\mathrm{id})\big(\sum_{i=1}^m f_{i,R} v_i \big)
=
\sum_{i=1}^m (h(f_{i,R})  h(v_i) - f_{i,R} v_{i})
=
\sum_{i=1}^m (h(f_{i,R}) - f_{i,R}) v_{i} \;.
\end{equation}
Since $h(f_{i_{0},R}) - f_{i_{0},R} = h(1) - 1 = 0$ and $h(f_{j,R}) - f_{j,R} \ne 0$, we get a non-trivial relation with at most $m-1$ terms. This contradicts the minimality of~$m$.


The claim follows.
\end{proof}

\begin{proposition}\label{Lema : sub rep trivial}
We maintain notation \eqref{eq : notations V, V_x}. Assume that $\overline{V}_{x}$ is a trivial semi-linear representation of~$\mathrm{G}$ over $\overline{\O}_{x}$. Let $\overline{W}$ be a sub-$\overline{\O(U_{0})}$-module of~$\overline V$ that is stable under~$\mathrm{G}$. Then $\overline{W}_x=\overline{W} \otimes_{\overline{\O(U)}} \overline{\O}_{x}$ is a trivial semi-linear representation of~$\mathrm{G}$ over $\overline{\O}_{x}$.
\end{proposition}
\begin{proof}
%
We have a short exact sequence of semi-linear representations of $\mathrm{G}$ over $\overline{\O(U_{0})}$~:
\begin{equation}
E:0\to\overline{W}\stackrel{i}{\to}\overline{V}\stackrel{p}{\to}\overline{Q}\to 0.
\end{equation}
For all affinoid neighborhood $U$ of $x$ contained in $U_0$ the algebra $\overline{\O(U)}$ is flat over $\O(U_0)$ (cf. \cite[Proposition 2.2.4]{Ber}). Moreover directed colimits are exact. 
We deduce a short exact sequence of semi-linear representations of 
$\mathrm{G}$ over $\overline{\O}_{x}$~:
\begin{equation}
E_{x}:0\to\overline{W}_{x}\stackrel{i_{x}}{\to}\overline{V}_{x}\stackrel{p_{x}}{\to}\overline{Q}_{x}\to 0.
\end{equation}
The sequence of invariants~$E_{x}^\mathrm{G}$ is easily seen to be left 
exact. 

The sequence 
$E_{x}^\mathrm{G}\otimes_{\O_{x}}\overline{\O}_{x}$ 
remains left exact because $\overline{\O}_x$ is flat over $\O_x$ (cf. Lemma 
\ref{Lemma: torfree- flat O_x}).
We then have a commutative diagram with exact rows
\begin{equation}\label{eq: diagram E_x^GoO_x to E_x}
\xymatrix{
0\ar[r]&\overline{W}_{x}^{\mathrm{G}}\otimes_{\O_x}\overline{\O_x}\ar[d]^{j_{\overline{W}_x}}\ar[r]^{i_{x}\otimes 1}&
\overline{V}_{x}^{\mathrm{G}}\otimes_{\O_x}\overline{\O_x}\ar[r]^{p_{x}\otimes 1}\ar[d]^{j_{\overline{V}_x}}&
\overline{Q}_{x}^{\mathrm{G}}\otimes_{\O_x}\overline{\O_x}
\ar[d]^{j_{\overline{Q}_x}} &\\
0\ar[r]&\overline{W}_{x}\ar[r]^{i_{x}}&\overline{V}_{x}\ar[r]^{p_{x}}&
\overline{Q}_{x}\ar[r] &0
}\end{equation}
By assumption, $\overline{V}_x$ is trivial, therefore $j_{\overline{V}_x}$ is an isomorphism. From the surjectivity of $p_{x}\circ j_{\overline{V}_{x}}$, we 
deduce the surjectivity of~$j_{\overline{Q}_{x}}$. Its injectivity follows from 
Proposition \ref{Lemma : j_Vmono -1}. 

This implies that $p_x\otimes1$ is surjective.
%
%
%
We may now apply the snake lemma to the diagram 
\eqref{eq: diagram E_x^GoO_x to E_x} to conclude that 
$j_{\overline{W}_{x}}$ is an 
isomorphism. To conclude we need to prove that 
$\overline{W}_{x}^{\mathrm{G}}$ is finite free over $\O_x$. 
We have an inclusion of $\overline{W}_{x}^{\mathrm{G}}$ in
$\overline{V}_{x}^{\mathrm{G}}$, which is finite free by assumption.  
For all $x\in X$, the ring $\O_x$ is a principal ideal domain, therefore
the claim follows by the structure theorem for finite 
modules over principal ideal domains. 
%
%
\end{proof}

%
%
%
%
%
%
%
%

\subsubsection{Descent of  the augmented Dwork-Robba decomposition.}\

The augmented Dwork-Robba decomposition holds 
on some neighborhood of every point of $X_{\wKa}$ 
(cf. Proposition \ref{Prop : existence of (F_S,i)_x}). 
If $x\in X$, we firstly obtain a general statement that is 
helpful to descend sub-objects defined around the stalk 
$\pi^{-1}(x)\subset X_{\wKa}$ (cf. Proposition 
\ref{Prop: descent of sub-objects}). Then, we prove how to glue the 
augmented Dwork-Robba decompositions at the points of $\pi^{-1}(x)$ to 
give a differential equation on a neighborhood of the stalk which fulfils
the conditions of that descent statement (cf. Corollary \ref{Lema : descent of M_xrho}). 

Let $S$ be a weak triangulation of~$X$. Its preimage $\overline{S} := 
\pi^{-1}(S)$ in $X_{\wKa}$ is a weak triangulation of $X_{\wKa}$ 
(cf. Proposition \ref{Def : S_L}). 
Let us set $\overline{X}:=X_{\wKa}$ and 
denote the pullback of~$\Fs$ to~$\overline{X}$ by 
$\overline{\Fs}=\Fs_{\wKa}=\pi^*\Fs$. 

\begin{lemma}\label{Lemma: galois preserves the radii}
For every differential equation $\Gs$ over $\overline{X}$, every 
$z\in \overline{X}$, 
every $i=1,\ldots,r=\mathrm{rank}(\Gs)$ and every $g\in \mathrm{G}$ 
we have
\begin{equation}
\R_{\overline{S},i}(g^{-1}(z),g^*(\Gs))\;=\;
\R_{\overline{S},i}(z,\Gs)\;.
\end{equation}
\end{lemma}
\begin{proof}
We may see $\wKa$ as a complete valued field extension of itself whose 
structural morphism is $g:\wKa\simto\wKa$. Thus 
$g:\overline{X}\to \overline{X}$ can be interpreted as the scalar extension 
morphism 
(cf. \eqref{eq : def of pi_L/K}). 
The claim is then a direct application of 
Proposition \ref{Prop: insensitive to scalar ext}.
\end{proof}
\if{\begin{proof}
Assume first that $\Gamma_S$ is not empty, 
that is $X$ is not a virtual open 
disk with empty weak weak triangulation 
(cf. Section \ref{section controlling graphs}). In this case, 
we may cut $X$ along $\Gamma_S$ and replace it 
by an affinoid domain with a weak triangulation without 
affecting the radii (cf. \eqref{eq : Res to Y equal radii f}). 
Without loss of generality, we assume that $X$ is affinoid.
Thus, we are now working with differential 
modules over the ring of global sections. 
Let $A$ be the affinoid algebra of $X$, and $\overline{A}=A\widehat{\otimes}_K\wKa$ be that of 
$X_{\wKa}$. 
Let us express $1\otimes g:\overline{A}\xrightarrow{\;\sim\;} \overline{A}$ 
as a composite $1\otimes g=\alpha_g\circ\phi_g$, 
where 
\begin{enumerate}
\item $\phi_g:\overline{A}\to \overline{A}\otimes_{\wKa,g}\wKa$ 
is the canonical  map $\overline{f}\mapsto \overline{f}\otimes 1$, where 
$\wKa$ is seen as an extension of complete valued fields via the structural 
morphism $g:\wKa\to\wKa$;
\item $\alpha_g:\overline{A}\otimes_{\wKa,g}\wKa\to 
\overline{A}$ is the map sending $f\widehat{\otimes} a\otimes b\in A\widehat{\otimes}_K\wKa\otimes_{\wKa,g}\wKa$ into 
$f\widehat{\otimes} g(a)b\in A\widehat{\otimes}_K\wKa$. 
\end{enumerate}
The first morphism $\phi_g$ is just a scalar extension of the ground field 
$\wKa$. Therefore $\phi_g(\overline{S})=\overline{S}$ 
(cf. \eqref{eq : S_L}) and the pull-back $\phi_g^*$ 
preserves the $\overline{S}$-radii. On the other hand, $\alpha_g$ is a $\wKa$-linear bounded 
isomorphism of Banach algebras and therefore it corresponds to a 
($\wKa$-linear) isomorphism of Berkovich curves. 
Moreover $\overline{S}$ is preserved by $\alpha_g$ because $g$ and 
$\phi_g$ do preserve it. It follows that the pull-back 
operation $\alpha_g^*$ also preserve the 
$\overline{S}$-radii. By composition, $g^*$ preserves the 
$\overline{S}$-radii and the claim follows.

The case where $X$ is a virtual open disk with empty weak triangulation 
is similar.
\end{proof}
}\fi

If $Q\subset X$ is a subset, we denote by $\nabla-\text{Germ}(Q)$ the category whose objects are differential equations defined on some unspecified analytic domain $U$ which is a neighborhood of $Q$, and the arrows are morphisms of differential equation defined on some unspecified $U'\subset U$ as above, where both the domain and the target of the arrow are defined. We call \emph{germ of differential equation on $Q$} an object in this category.
If $x\in X$, 
$\nabla-\text{Germ}(\{x\})$ and $\nabla-\text{Germ}(\pi^{-1}(x))$ 
are equivalent to the categories of differential modules over $\O_x$ and 
$\overline{\O}_x$ respectively.

Denote by $(\nabla,\mathrm{G})-\text{Germ}(\pi^{-1}(x))$ the category of germs of differential equations $\Fs$ on the stalk $\pi^{-1}(x)$ endowed with a semi-linear action of $\mathrm{G}$ commuting with $\nabla$, that is, for all $g\in \mathrm{G}$, the diagram
\begin{equation}\label{eq: action of G on connection}
\xymatrix{
\Fs\ar[d]_{g}\ar[r]^-{\nabla}&
\Fs\otimes\Omega^1_{\overline{X}}\ar[d]^-{g\otimes dg}\\
\Fs\ar[r]^-{\nabla}&
\Fs\otimes\Omega^1_{\overline{X}}
}
\end{equation}
commutes (recall that $\Omega^1_{\overline{X}}=\pi^*(\Omega^1_X)$ (cf. \cite[Proposition 3.3.3,(ii)]{bleu}). Morphisms in this category commute with both the connection and the action of $\mathrm{G}$.
\begin{proposition}
The category $\nabla-\text{Germ}(\{x\})$ is equivalent to the full 
subcategory of $(\nabla,\mathrm{G})-\text{Germ}(\pi^{-1}(x))$ formed 
by objects whose action of $\mathrm{G}$ is trivial. 
The pull-back functor $\pi^*$ and the fixed by Galois functor 
$\Fs\mapsto\pi_*(\Fs)^{\mathrm{G}}$ being quasi-inverse each other. 
The rank is preserved by this correspondence.\hfill $\qed$
\end{proposition}

By Proposition \ref{Lema : sub rep trivial} we deduce the following
\begin{proposition}[Descent of sub-objects]\label{Prop: descent of sub-objects}
Let $x\in X$. Let $\Fs\in\nabla-\text{Germ}(\{x\})$ be a germ of differential equation.
Let $\mathcal{G}\subset\pi^*\Fs$ be a sub-differential module globally 
stable by the action of $\mathrm{G}$ on $\pi^*\Fs$, then the action of 
$\mathrm{G}$ is trivial on $\mathcal{G}$ and there exists a sub-differential 
module $\mathcal{G}'\subset\Fs$ such that 
$\pi^*(\mathcal{G}')\cong\mathcal{G}$. 
\hfill$\qed$
\end{proposition}

\if{

....

....

....
 
Let us fix a differential equation $\Fs$ over $X$.
Set $\overline{X} := X_{\wKa}$ and denote by $\overline{\Fs}$ the pullback of~$
\Fs$ to~$\overline{X}$. Recall that $\overline{\Fs}$ is obtained from 
$\nabla:\Fs\to\Fs\otimes\Omega_X^1$ by scalar extension 
$\overline{\nabla}=\nabla\otimes1:\overline{\Fs}\to\overline{\Fs}\otimes_{\O_{\overline{X}}}\Omega^1_{\overline{X}}$ 
(recall that $\Omega^1_{\overline{X}}=\pi^*(\Omega^1_X)$ (cf. \cite[Proposition 3.3.3,(ii)]{bleu}).
In analogy with the previous notations, for every analytic domain $U$ in $X$ we set $\overline{\Fs(U)}:=\overline{\Fs}(\pi^{-1}(U))$ and for every $x\in X$, 
$\overline{\Fs}_x:=\varinjlim_{x\in U}\overline{\Fs(U)}$.
Let $S$ be a weak triangulation of~$X$. Its preimage $
\overline{S} := \pi^{-1}(S)$ in $\overline{X}$ is a weak triangulation of $
\overline{X}$ (cf. \cite{NP-II}). We have a natural semi-linear action of $
\mathrm{G}=\mathrm{Gal}(\wKa/K)$ on $\overline{\Fs}$ commuting with the 
connection, that is, for every $g\in \mathrm{G}$ the following 
diagram commutes
\begin{equation}\label{eq: action of G on connection}
\xymatrix{
\overline{\Fs}\ar[d]_{g}\ar[r]^-{\overline{\nabla}}&
\overline{\Fs}\otimes\Omega^1_{\overline{X}}\ar[d]^-{g\otimes dg}\\
\overline{\Fs}\ar[r]^-{\overline{\nabla}}&
\overline{\Fs}\otimes\Omega^1_{\overline{X}}
}
\end{equation}
For every $x\in X$ the resulting representation 
$\overline{\Fs}_x$ of $\mathrm{G}$ over $\overline{\O}_x$ is trivial.

\begin{proposition}[Descent datum]\label{Lemma : general descent-}
Let $\mathcal{T}$ be the 
category whose objects are triplets 
$(\overline{\Gs},\nabla_{\overline{\Gs}},\rho)$, where 
\begin{enumerate}
\item $\Gs$ is a finite free $\O_{\overline{U}}$-module defined on the inverse image $\pi^{-1}(U)$ for some unspecified affinoid neighborhood $U$ of $x$;
\item $\nabla:\Gs\to\Gs\otimes\Omega^1_{\overline{X}}$ 
is a connection on $\Gs$ over $\overline{U}$;
\item 
$\rho:\mathrm{G}\to Aut(\Gs)$ is a \emph{trivial} 
semi-linear action of $\mathrm{G}$ on $\Gs$ 
commuting with the connection $\nabla$.\footnote{In other words, 
for every affinoid neighborhood $V$ in $\overline{U}$ 
and every $g\in\mathrm{G}$, the action $\rho(g)(V):\Gs(V)\simto \Gs(V)$ commutes with the restrictions of the sheaf $\Gs$ and with the connection. Moreover, this action of $\mathrm{G}$ is trivial in the sense that there 
is $n\in\mathbb{N}$ and an $\O_{\overline{U}}$-linear isomorphism 
$i:\Gs\simto\O_{\overline{U}}^n$ commuting with $\mathrm{G}$, 
where $\mathrm{G}$ acts componentwise  on $\O_{\overline{U}}^n$. 
Notice that $i$ is not required to commute with the connection.}
\end{enumerate}
Morphisms of triplets in $\mathcal{T}$ 
are morphisms of $\overline{\O}_{x}$-modules (that is, morphisms of $\O_{\overline{V}}$-modules, for some unspecified affinoid neighborhood $V$ of $x$) commuting simultaneously with the connection and the action of 
$\mathrm{G}$. 

Then, the category of differential equations over $\O_x$ is equivalent to $\mathcal{T}$. The equivalence sends a differential equation $\Fs_x$ over $\O_x$, defined on some unspecified neighborhood $U$ of $x$, into the triplet
\begin{equation}
T(\Fs_x)\;:=\;(\overline{\Fs(U)},\overline{\nabla},\rho)
\end{equation}
where $\overline{\nabla}$ is the pull-back of the connection of $\Fs$ over $U$ and $\rho$ is the action just 
described (cf. \eqref{eq: action of G on connection}).
The inverse functor takes a triplet 
$(\Gs,\nabla,\rho)\in\mathcal{T}$ into the 
$\O_x$-module $\Fs_x:=(\Gs\otimes_{\overline{\O(U)}}
\overline{\O}_x)^{\mathrm{G}}=\varinjlim_{x\in U}\Gs(\overline{U})^G$ 
endowed with the restriction of $\nabla$.
\end{proposition}
\begin{proof} Let $(\Gs,\nabla,\rho)\in \mathcal{T}$ be a triplet. 
Since the action of $\mathrm{G}$ commutes with $\nabla$,  
$\Gs^{\mathrm{G}}$ carries the restriction of the connection.
Since the action of $\mathrm{G}$ is trivial on $\Gs$, the fixed point 
functor preserves the ranks. It is now straightforward to check that the 
two functors are inverse each other. 
\end{proof}

....

....

....

We now apply the results of the above section 
to descend sub-differential equations around $x$.
\begin{proposition}\label{Proposition : equivalence descent}
Let $\Fs_x$ be a differential module over $\O_x$ and let $U$ be an 
affinoid neighborhood of $x$ on which $\Fs_x$ extends as in \eqref{eq : M(U) choose}.
Let $(\overline{\Fs},\overline{\nabla},\rho)\in\mathcal{T}$ 
be the triplet associates to  $\Fs_x$ by the above proposition.
over $\overline{\O}_x$ furnished by the above equivalence.

Let $S\subseteq\overline{\M}_x$ be a sub-$\overline{\O}_x$-module such 
that 
\begin{itemize}
\item $\overline{\nabla}(S)\subseteq S$;
\item $g(S)\subseteq S$, for all $g\in \mathrm{G}$.
\item There exists a sub-$\overline{\O(U)}$-module $S_U\subseteq 
\overline{\M}_x$ endowed with a semi-linear action of 
$\mathrm{G}$ over $\overline{\O(U)}$ and an 
isomorphism of $\overline{\O}_x$-modules commuting with $\mathrm{G}$
\begin{equation}
S_U\otimes_{\overline{\O(U)}}\overline{\O}_x\;\simto\;S\;.
\end{equation}
\end{itemize}
Then, the action of $\mathrm{G}$ is trivial on $S$ and the triplet 
$(S,\overline{\nabla}_{|S},\rho_{|S})$ is an object of the category 
mentioned in Proposition \ref{Lemma : general descent-}. 
In particular, there exists a differential module 
$\N_x\subseteq\M_x$ such that $D(\N_x)=(S,\overline{\nabla}_{|S},\rho_{|S})$.
\end{proposition}
\begin{proof}
Apply Lemma \ref{Lema : sub rep trivial}.
\end{proof}

\comm{Permet moi de remprendre des notations locales. Dans tout ce qui precede on a fait la déscente pour les representations des espaces vectoriels et des modules sur $\O(U)$, $\O_x$ etc... et non pas des faisceaux. Il est logique d'utiliser des modules plutôt que des faisceaux. D'autre part, l'énoncé de Dwork-Robba est donné pour des modules. Je ferai le lien avec les faisceux à la fin.}
We now show how to apply the above descent result to differential modules defined around $x$. 

Let $U_0$ be an affinoid neighborhood of $x$ in $X$ such that 
$\Omega^1_{U_0/K}$ is free over $\O(U_0)$, generated by a continuous 
$1$-form $\omega\in \Omega^1_{X/K}(U_0)$. 
This choice corresponds to a continuous derivation 
$d:\O(U_0)\to \O(U_0)$ obtained as 
the composite map $\O(U_0)\to\Omega^1_{U_0/K}(U_0)=
\O(U_0)\omega\simto\O(U_0)$, where the first map is the canonical 
derivation, the second one is the $\O(U_0)$-linear isomorphism sending 
$\omega\in\Omega^1_{U_0/K}(U_0)$ onto $1\in\O(U_0)$.
We denote by the same symbol $d$ the restriction of $d$ to every affinoid 
neighborhood $U\subseteq U_0$ of $x$ and also the corresponding 
derivative $d:\O_{x}\to\O_x$. Let us set $\overline{d}:=d\otimes1:\O(U)
\widehat{\otimes}_K\wKa\to \O(U)\widehat{\otimes}_K\wKa$. It is the unique 
$\wKa$-linear derivation on $\overline{\O(U)}=\O(U)\widehat{\otimes}_K\wKa$ 
extending the $K$-linear derivation $d$ on $\O(U)$. Moreover, 
it commutes with the action of $\mathrm{G}$ on $\overline{\O(U)}$, which is 
given by $1\otimes g$, for $g\in\mathrm{G}$. 
The sheaf of differentials $\Omega^1_{X/K}$ is 
compatible with arbitrary complete valued extensions of $K$ (cf. \cite[Proposition 3.3.3,(ii)]{bleu}). 
In particular, for every affinoid neighborhood $U$ of $x\in X$ we have 
\begin{equation}
\Omega^1_{\pi^{-1}(U)/\wKa}\;=\;
\pi^*(\Omega^1_{U/K})\;.
\end{equation}
Therefore, the derivation $\overline{d}$ generates the space of continuous 
derivations of $\overline{\O(U)}$. We denote again by 
$\overline{d}:\overline{\O}_x\to\overline{\O}_x$ the corresponding 
derivation.
 
Let $\M_x$ be a differential module over $\O_x$ with respect to 
$d:\O_x\to\O_x$ and $\nabla:\M_x\to\M_x$ its connection. 
We may find an affinoid neighborhood $U$ of $x$ in $U_0$ and a 
differential module $\M(U)$ over $U$ such that 
$\M_x=\M(U)\otimes_{\O(U)}\O_{X,x}$.\footnote{Indeed, once we 
choose an $\O_x$-linear isomorphism $\alpha:\O_x^r\simto\M_x$, the 
pull-back of the connection of $\M_x$ is given by an operator of the form 
$\alpha^{-1}\circ\nabla\circ\alpha=d-G:\O_x^r\to\O_x^r$, where $G$ is a 
matrix with coefficients in $\O_x$ and $d$ denotes the endomorphism of 
$\O_x^r$ acting as the derivation $d$ component by component. 
The coefficients of the matrix $G$, as well as the matrix of $\alpha$ and its 
inverse, are analytic functions on some affinoid neighborhood 
$U\subseteq U_0$ of $x\in X$ (for which $\Omega^1_{U/K}$ is free). }
 Let us denote by 
\begin{equation}\label{eq : M(U) choose}
\overline{\M(U)}\;:=\;\M(U)\otimes_{\O(U)}\overline{\O(U)}\;,\qquad
\overline{\M}_x\;:=\;\M_x\otimes_{\O_x}\overline{\O}_x\;,
\end{equation}
endowed with the connections $\nabla\otimes 1-1\otimes \overline{d}$ (cf. \eqref{eq : tensor product connection}).

Since $\M_x$ is a finite dimensional vector space over $\O_x$, 
we have 
$\overline{\M}_x^G=\M_x$, because the action of 
$G$ on $\overline{\M}_x$ is trivial and $\O_x=\overline{\O}_x^G$. 
Moreover, the connection 
\begin{equation}
\overline{\nabla}\;:=\;\nabla\otimes 1+1\otimes \overline{d}
\end{equation}
on $\overline{\M}_x$ commutes with $G$.

\begin{proposition}[Descent datum]\label{Lemma : general descent-}
The category of differential modules over $\O_x$ is equivalent to the 
category whose objects are triplets $(D,\overline{\nabla},\rho)$, where 
\begin{itemize}
\item $D$ is a finite free $\overline{\O_x}$-module 
\item $\overline{\nabla}:D\to D$ 
is a connection with respect to $\overline{d}$
\item $\rho:\mathrm{G}\to Aut(D)$ is 
a \emph{trivial} semi-linear action of $\mathrm{G}$ over 
$\overline{\O}_x$ commuting with the connection $\overline{\nabla}$.
\end{itemize}
and where morphisms are $\overline{\O}_x$-linear maps commuting 
simultaneously with the connection and the action of $\mathrm{G}$. 

The equivalence sends a differential module $\M_x$ over $\O_x$ into the 
triplet 
\begin{equation}
D(\M_x)\;:=\;(\overline{\M}_x,\overline{\nabla},\rho)
\end{equation}
defined just before the proposition. The inverse functor takes a triplet 
$(D,\overline{\nabla},\rho)$ into the $\O_x$-module
$\M_x:=D^{\mathrm{G}}$ endowed with the restriction of 
$\overline{\nabla}$.
\end{proposition}
\begin{proof} Let $(D,\overline{\nabla},\rho)$ be an object of this category. Since the action of $\mathrm{G}$ commutes with $\overline{\nabla}$,  
$D^{\mathrm{G}}$ carries the restriction of the connection 
$\nabla:=\overline{\nabla}_{|\M_x}$.
Since the action of $\mathrm{G}$ is 
trivial on $D$, the dimension is preserved by the 
functor. It is now straightforward to check that the 
two functors are inverse each other. 
\end{proof}
We now apply the 
results of the above section to descend sub-differential modules.
\begin{proposition}\label{Proposition : equivalence descent}
Let $\M_x$ be a differential module over $\O_x$ and let $U$ be an 
affinoid neighborhood of $x$ on which $\M_x$ extends as in \eqref{eq : M(U) choose}.
Let $(\overline{\M}_x,\overline{\nabla},\mathrm{G})$ be the object over 
$\overline{\O}_x$ furnished by the above equivalence.

Let $S\subseteq\overline{\M}_x$ be a sub-$\overline{\O}_x$-module such 
that 
\begin{itemize}
\item $\overline{\nabla}(S)\subseteq S$;
\item $g(S)\subseteq S$, for all $g\in \mathrm{G}$.
\item There exists a sub-$\overline{\O(U)}$-module $S_U\subseteq 
\overline{\M}_x$ endowed with a semi-linear action of 
$\mathrm{G}$ over $\overline{\O(U)}$ and an 
isomorphism of $\overline{\O}_x$-modules commuting with $\mathrm{G}$
\begin{equation}
S_U\otimes_{\overline{\O(U)}}\overline{\O}_x\;\simto\;S\;.
\end{equation}
\end{itemize}
Then, the action of $\mathrm{G}$ is trivial on $S$ and the triplet 
$(S,\overline{\nabla}_{|S},\rho_{|S})$ is an object of the category 
mentioned in Proposition \ref{Lemma : general descent-}. 
In particular, there exists a differential module 
$\N_x\subseteq\M_x$ such that $D(\N_x)=(S,\overline{\nabla}_{|S},\rho_{|S})$.
\end{proposition}
\begin{proof}
Apply Lemma \ref{Lema : sub rep trivial}.
\end{proof}

---------

----------

----------
}\fi

\begin{corollary}[Descent of the augmented Dwork-Robba decomposition]\label{Lema : descent of M_xrho}
For all $x\in X$, the statements of Theorem \ref{Dw-Robba}, 
Proposition \ref{Prop : existence of (F_S,i)_x} and 
Lemma \ref{Lemma : localization} descend to $K$.
\end{corollary}
\begin{proof}

First of all, notice that we only need to prove the 
existence statement of Proposition \ref{Prop : existence of (F_S,i)_x}. 
Indeed, the other properties in the three statements 
involve radii and solutions which are 
easily seen to be true once the existence is proved.

As explained in the proof of Proposition \ref{Prop : existence of (F_S,i)_x}, 
if $i$ is an over-solvable index separating the radii of $\Fs$ at $x\in X$, 
the image of the disk $D_{S,i}(x,\Fs)\subset X_\Omega$ in $X$ is 
a virtual open disk $D_i$ in $X-\Gamma_{S}$ 
containing $x$ (cf. Remark \ref{rk : solvable not in gamma}). 
Since $(D_i)_{\wKa}=\pi^{-1}(D_i)$ contains 
the stalk $\pi^{-1}(x)\subset \overline{X}$ we may define 
$(\Fs_{\geq i})_x\cong \Hdr^0(D_i,\Fs)\otimes_{K}\O(D_i)$ as 
the trivial differential sub-module of $\Fs$ generated by the solutions over 
$D_i$ (cf. \eqref{def : M_A}). 
The existence of $(\Fs_{\geq i})_{x}$ 
is then proved for oversolvable radii. 

Let us assume that $i$ is a spectral index separating the $S$-radii of 
$\Fs$ at $x$. 
 By continuity of the radii, we may find an open 
neighborhood $U$ of $x$ such that the index $i$ 
separates the $S$-radii of $\Fs$ at every point $z\in U$. 
Since the radii are insensitive to scalar extension of $K$, the index $i$ 
separates also the radii of $\overline{\Fs}$ at every point of 
$\overline{U}=\pi^{-1}(U)$.  
We now apply Lemma \ref{Lemma : localization} to every 
point $y\in\pi^{-1}(x)$. We obtain an open covering $\{\overline{U}_y\}_{y\in\pi^{-1}(x)}$ of the fiber $\pi^{-1}(x)$ and a family of sub-objects 
$(\overline{\Fs}_{\geq i})_{|\overline{U}_y}\subseteq\overline{\Fs}_{|\overline{U}_y}$. 
By replacing $\overline{U}_y$ with $\overline{U}\cap \overline{U}_y$, we 
may assume that 
$\overline{U}=\bigcup_{y\in\pi^{-1}(x)} \overline{U}_y$.  
By local uniqueness (cf. Proposition \ref{Prop : existence of (F_S,i)_x}), 
the family $\{(\overline{\Fs}_{\geq i})_{|\overline{U}_y}\}_{y\in\pi^{-1}(x)}$ 
glues to a global sub-object $(\overline{\Fs}_{\geq i})_{|\overline{U}}$ (cf. Remark \ref{Rk: gling propty}). 
By Proposition \ref{Prop: descent of sub-objects}, it is enough to prove 
that the sub-object $(\overline{\Fs}_{\geq i})_{|\overline{U}}
\subseteq\overline{\Fs}_{|\overline{U}}$ is stable under the 
action of every $g\in\mathrm{G}$ : 
$g((\overline{\Fs}_{\geq i})_{|\overline{U}})\subseteq (\overline{\Fs}_{\geq i})_{|\overline{U}}$. 
The datum of the $\O_{\overline{U}}$-semi-linear 
automorphism $g$ of $\overline{\Fs}_{|\overline{U}}$ commuting with the 
connection is equivalent to an $\O_{\overline{U}}$-linear isomorphism of 
differential equations
\begin{equation}
\psi_g\;:\;g^*(\overline{\Fs}_{|\overline{U}})\xrightarrow{\;\sim\;}\overline{\Fs}_{|\overline{U}}\;.
\end{equation}
The statement then amounts 
to prove that for every $g\in\mathrm{G}$ the map $\psi_g$ 
realizes an isomorphism between the sub-modules 
$g^*((\overline{\Fs}_{\geq i})_{|\overline{U}})\subseteq 
g^*(\overline{\Fs}_{|\overline{U}})$ and 
$(\overline{\Fs}_{\geq i})_{|\overline{U}}
\subseteq
\overline{\Fs}_{|\overline{U}}$. 

We now want to deal with the radii of $(\Fs_{\geq i})_{|\overline{U}}$, 
which are defined only after localization to $\overline{U}$. 
Recall that, by Corollary 
\ref{Cor: spectral index separ = intrinsic}, the spectral indexes separating the radii are intrinsic. Therefore, we may assume $X=U$, and 
consider an arbitrary weak triangulation $S$ of it 
without affecting the \emph{spectral indexes} separating the 
$S$-radii of $\Fs$ at $x$ (and hence also the $\overline{S}$-radii of 
$\overline{\Fs}$ the points of $\pi^{-1}(x)$). 
By continuity of the radii, up to further shrinking $X$, we may 
also assume that the \emph{same} spectral indexes separate the 
$S$-radii of $\Fs$ at every point of $X$.\footnote{For instance, 
we may consider $X$ to be a star-shaped affinoid domain centered at $x$ 
(cf. Definition \ref{def:starshapedopen}) which is the 
union of an elementary tube $V$ centered at $x$ 
that is adapted to $\Fs$ (cf. Definition \ref{Def : tube}) with 
small annuli with boundary $x$ in the missing directions.}
Thus, we may simplify notation and set 
$\Fs,\overline{\Fs},\overline{\Fs}_{\geq i}$ instead of 
$\Fs_{|U},\overline{\Fs}_{\overline{U}},(\overline{\Fs}_{\geq i})_{|\overline{U}}$. 
By Remark \ref{Rk: gling propty}, for every $z\in \overline{X}$ its 
localization $(\overline{\Fs}_{\geq i})_z$ is the sub-object furnished by 
Proposition \ref{Prop : existence of (F_S,i)_x}.
\begin{lemma}\label{Lemma : radii const on Galois orbits}
For all $z\in \overline{U}$ and all $j=1,\ldots,r$ we have
\begin{equation}
\R_{\overline{S},k}(z,\overline{\Fs})\;=\;
\R_{\overline{S},k}(g(z),\overline{\Fs})\;.
\end{equation}
The same holds for $g^*(\overline{\Fs})$.
\end{lemma}
\begin{proof}
Since $\psi_g:g^*\overline{\Fs}\simto\overline{\Fs}$ 
is an isomorphism of differential equations, then 
$\overline{\Fs}$ and $g^*\overline{\Fs}$ have 
the same radii : for every $z\in \overline{X}$ and every $j=1,\ldots,r$ one has
$\R_{\overline{S},j}(z,g^*\overline{\Fs})=
\R_{\overline{S},j}(z,\overline{\Fs})=
\R_{\overline{S},j}(g^{-1}(z),g^*\overline{\Fs})$
(cf. Lemma \ref{Lemma: galois preserves the radii}). 
In particular, the first and the last term tells us that the radii of $g^*\overline{\Fs}$
(and hence also those of $\overline{\Fs}$) 
are constant on every Galois orbit.
\end{proof}

\emph{Continuation of Proof of Corollary \ref{Lema : descent of M_xrho}.}
Now, $\psi_g$ being an isomorphism,  
$g^*(\overline{\Fs}_{\geq i})$ is isomorphic to its image 
$\psi_g(g^*(\overline{\Fs}_{\geq i}))$. Therefore, they have the same radii. 
Hence, by Lemma \ref{Lemma: galois preserves the radii}, 
for every $z\in \overline{U}$ and every $j=1,\ldots,r-i+1$ we have
\begin{equation}
\R_{\overline{S},j}(z,\psi_g(g^*(\overline{\Fs}_{\geq i})))\;=\;
\R_{\overline{S},j}(z,g^*(\overline{\Fs}_{\geq i}))\;=\;
\R_{\overline{S},j}(g(z),\overline{\Fs}_{\geq i})\;=\;
\R_{\overline{S},j+i-1}(g(z),\overline{\Fs})\;.
\end{equation}
On the other hand, by Lemma \ref{Lemma : radii const on Galois orbits}, we obtain
\begin{equation}
\R_{\overline{S},j+i-1}(g(z),\overline{\Fs})\;=\;
\R_{\overline{S},j+i-1}(z,\overline{\Fs})\;=\;
\R_{\overline{S},j}(z,\overline{\Fs}_{\geq i})\;.
\end{equation}
In other words, $\psi_g(g^*(\overline{\Fs}_{\geq i}))$ and 
$\overline{\Fs}_{\geq i}$ have the same radii at every point of $\overline{X}$. 
By Proposition \ref{Prop. Exact sequence prop separate radii in the right order} 
the radii of the quotient $\overline{\Fs}/\overline{\Fs}_{\geq i}$ are those 
of  $\overline{\Fs}$ that are strictly smaller than 
$\R_{\overline{S},i}(z,\overline{\Fs})$ and by
Lemma \ref{Lemma : Hom=0} it follows that the image of 
$\psi_g(g^*(\overline{\Fs}_{\geq i}))$ is zero in the quotient. In other words,
$\psi_g(g^*(\overline{\Fs}_{\geq i}))\subseteq \overline{\Fs}_{\geq i}$. Since they have same ranks, they coincide and the claim follows.
\end{proof}

We conclude this section with a descent result for Robba's decomposition 
theorem.
\begin{proposition}\label{rk : descent of Robba, not type 4}
Let $x\in X$ be a point of type 2 or 3, then  Corollary 
\ref{corollary : uniqueness at H(x)} descends from $\wKa$ to $K$. 
\end{proposition}
\begin{proof}
The proofs follow closely those of this section and are actually simpler 
because the inverse image $\pi^{-1}(x)$ is a finite set and 
$\H(x)\widehat{\otimes}_K\wKa=\prod_{y\in\pi^{-1}(x)}\H(y)$.
\end{proof}
\begin{remark}\label{rk : descent of Robba, not type 4-}
Let $x\in X$ be a type 4 point. 
If $\pi^{-1}(x)$ is infinite we have not been able to obtain a descent of Robba's decomposition. 
In this case, the orbit $\pi^{-1}(x)$ is a infinite compact set. By 
\cite[Proposition 1.2.3]{Ber} and the Stone-Cech compactification theorem, 
there is a canonical map of the ring of functions $A=\H(x)\otimes_K\wKa$ 
over $\pi^{-1}(x)$ into the product of fields $B:=\prod_{y\in\pi^{-1}(x)}\H(y)$. 
Therefore, we may expected $A$ to be described as 
certain sequences $(f_y)_{y\in\pi^{-1}(x)}\in\prod_{y\in\pi^{-1}(x)}\H(y)$. 
However, we do not know whether the canonical map $A\to B$ is injective, 
and we do not have a description of its image. 
Specifically, the problem in the descent argument 
lies in the proof of Proposition \ref{Lemma : j_Vmono -1}, 
where the analogous of the element $f_{i,R}$ 
(cf. \eqref{eq : f_i,R...}) is supposed to correspond to an infinite sequence
$(f_y)_{y\in\pi^{-1}(x)}$ satisfying $f_{g(y)}=g(f_y)$, $g\in\mathrm{G}$, but 
we do not have arguments to prove that such a sequence belong to $A$.
\end{remark}

\section{
Global decomposition theorem.}\label{Proof of Main Th}

We now come back to the global setting. 
In Section \ref{sec:globaldecomposition} we obtain 
a global decomposition theorem. In the remaining sections we provide 
several properties and conditions satisfied by the decomposition. In 
particular, in Section \ref{Conditions to have a direct sum decomposition} 
we provide conditions ensuring that the decomposition is a direct sum.

Remember that $K$ is a general complete valued field 
(cf. Hypothesis \ref{hyp : K general}).

\if{
\begin{lemma}\label{lem:Fgeiunique}
Let $i \in \{1,\dotsc,r\}$. Assume that $i$ is a solvable index separating the radii of $\Fs$ at $x$. Then there exists at most one differential sub-module 
 $(\Fs_{\geq i})_x\subseteq\Fs_x$ of rank $r-i+1$ over $\O_{X,x}$ such that
 \begin{equation}
 \forall j =1,\dotsc,r-i+1,\ \R_{\emptyset,j}(x,\Fs_{\ge i,\vert D(x)}) = \R_{\emptyset,j+i-1}(x,\Fs_{\vert D(x)}).
 \end{equation}


\end{lemma}
\begin{proof}
\ 

\comment{Cette preuve devrait pouvoir se faire comme la preuve du th\'eor\`eme \ref{MAIN Theorem}, mais je ne la comprends pas. ll faudrait la reprendre et la pr\'eciser. Je la recopie ci-dessous.}

The uniqueness of $\Fs_{\geq i}$ can be seen as follows. 
Let $\Fs_{\geq i}'\subseteq\Fs$ be another sub-object with the same 
properties. Then, the composite map $\Fs_{\geq i}'\subseteq\Fs\to\Fs_{<i}$ must be zero because, by Proposition \ref{exactness} (cf. also item \eqref{ii Lemma :devisage} of Lemma \ref{Lemma :devisage}), the functor associating to a 
differential equation its solutions in a disk $D(x,\rho)$ is exact, therefore 
if $\Fs_{\geq i}'$ were not included in $\Fs_{\geq i}$, then $\Fs_{<i}$ would have some solution with great radius which contradicts the definition of $\Fs_{<i}$. This proves that $\Fs_{\geq i}'\subseteq\Fs_{\geq i}$ and the reverse inclusion follows from a symmetric argument. The claim follows.
\end{proof}

\begin{proposition}\label{prop:Fgeiexistence}
Let $i \in \{1,\dotsc,r\}$. Assume that $i$ is a solvable index separating the radii of $\Fs$ at $x$.
Then there exists a unique differential sub-module 
 $(\Fs_{\geq i})_x\subseteq\Fs_x$ of rank $r-i+1$ over $\O_{X,x}$ such that
  \begin{equation}\label{eq:Rjgei}
 \forall j =1,\dotsc,r-i+1,\ \R_{\emptyset,j}(x,\Fs_{\ge i,\vert D(x)}) = \R_{\emptyset,j+i-1}(x,\Fs_{\vert D(x)}).
 \end{equation}
\end{proposition}
\begin{proof}
As before, denote by $\pi \colon X_{\wKa} \to X$ the extension of scalars map. Set $\overline{\Fs} := \Fs_{\wKa}$. Let $y_{0} \in \pi^{-1}(x)$. 

Set $R_{i} := \R_{\emptyset,i}(x,\Fs_{\vert D(x)}) = \R_{\emptyset,i}(y_{0},\overline\Fs_{\vert D(y_{0})})$. By Dwork-Robba's decomposition (Theorem~\ref{Dw-Robba}), the differential submodule~$M_{i}$ of~$\overline\Fs_{y_{0}}$ defined by $M_i := \bigoplus_{\rho \ge R_{i}} \overline{\Fs}_{x}^\rho$ satisfies \eqref{eq:Rjgei}. 

\comment{Est-il bien clair que le th\'eor\`eme \ref{Dw-Robba} peut se r\'einterpr\'eter comme \c ca ?}

There exists an affinoid neighborhood~$Y_{0}$ of~$y_{0}$ in~$X_{\wKa}$ and a differential submodule~$\overline\Fs_i$ of~$\overline\Fs_{\vert Y_{0}}$ whose fiber at~$y_{0}$ identifies to~$M_{i}$. We may assume that $Y_{0}$ is connected. Using Lemma~\ref{lem:RW}, we may moreover assume that there exists an affinoid neighborhood~$U_{0}$ of~$x$ in~$X$ and a finite subset~$R_{0}$ of~$\mathrm{G}$ containing~$\mathrm{id}_{G}$ such that the $g(W_{0})$ for $g\in R_{0}$ are disjoint and
\[ \bigsqcup_{g \in R_{0}} g(Y_{0}) = \pi^{-1}(U_{0}).\] 

We extend the differential module~$\overline\Fs_{i}$ on~$Y_{0}$ to a differential module on~$\pi^{-1}(U_{0})$ by defining it as $(g^{-1})^\ast \overline\Fs_{i}$ on $g(Y_{0})$ for $g\in R_{0}$. We still denote the resulting module by~$\overline\Fs_{i}$. It is a differential submodule of~$\overline\Fs$.

It remains to prove that the germ of $\overline\Fs_{i}$ around~$\pi^{-1}(x)$ comes from a differential submodule~$\Fs_{\ge i}$ of~$\Fs_{x}$ by scalar extension. By Lemma~\ref{Lema : sub rep trivial}, it is enough to show that $\overline\Fs_{i}$ is stable under~$\mathrm{G}$. It is enough to show that $g^\ast \overline\Fs_{i} = \overline\Fs_{i}$ for $g$ in the stabilizer of~$W_{0}$. Consider such a~$g$. Set $\Gs_{i} :=  g^\ast \overline\Fs_{i} \cap \overline\Fs_{i}$. The quotient $\Fs_{i}/\Gs_{i}$ is a differential module. In particular, it is locally free. It follows from Lemma~\ref{lem:Fgeiunique} and the invariance of the radii by scalar extension that the stalks of~$\Gs_{i}$ and~$\Fs_{i}$ at~$x$ coincide. Since $W_{0}$ is connected, we deduce that $\Fs_{i}/\Gs_{i} = 0$. The result follows.
\end{proof}

\comment{Au final, on n'a le r\'esultat que pour un module local qui provient d'un module global par localisation. Je ne pense pas pouvoir faire mieux. Il faudrait peut-\^etre pr\'eciser \c ca au d\'ebut de la section.}

\comment{Je ne touche \`a rien ci-dessous et passe directement \`a la  section~\ref{sec:augmenteddecomposition}. }

\bigbreak

\comment{Je ne comprends pas bien ce qui se passe ci-dessous, jusqu'\`a la proposition \ref{Lemma : general descent}. On travaille ici avec une d\'erivation fix\'ee, non ? Et on peut prendre n'importe laquelle ? A-t-on vraiment besoin qu'elle engendre toutes les d\'erivations ? En tout cas, pour trouver \c ca, on peut prendre localement un morphisme \'etale vers $A^1$ et tirer en arri\`ere le $d/dT$ de la droite.}

Let us chose a derivation $\overline{d}:\overline{A}\to\overline{A}$ 
generating the 
$\overline{A}$-module of continuous $\wKa$-derivations of 
$\overline{A}$ and commuting with the action of $G$. Let $d$ be the 
restriction of $\overline{d}$ to $A$. Then, $d$ generates the 
$A$-module of continuous $K$-derivations of $A$.

\comm{Il fudrait d?montrer q'une telle derivation existe. Si il est vrai que  
$\overline{A}=A\widehat{\otimes}_K\wKa$ alors, il suffit de prendre 
$\overline{d}=d\otimes 1$, avec $d$ qui engendre les derivations de $A$. \bigskip\\

Mais pour $\O_{X,x}$ je ne crois pas qu'on ait tout ? fait cela, il est 
difficile de donner un sens ? $\widehat{\otimes}$ la compl?tion me 
d?range. 
Peut-?tre que dans les espaces Normoides, ?a a un sens ?\bigskip\\

Berkovich, dans 1.3.5, p.16 il dit qu'on a bien 
$\mathscr{M}(A\widehat{\otimes}_K\wKa)/G=\mathscr{M}(A)$, mais 
sans preuve.

}
If $\M$ is a differential module over $A$ we denote by 
$\overline{\M}:=\M\otimes_A\overline{A}$ its scalar extension 
to $\overline{A}$. $\overline{\M}$ carries a natural 
action of  $G$ given by $g(m\otimes a)=g(a)\otimes m$, for all $g\in G$, $m\in\M$, 
$a\in\overline{A}$. Since $\M$ is finite free over $A$, we have $
\overline{\M}^G=\M$, because the action of $G$ on $\overline{\M}$ is 
trivial and $A=\overline{A}^G$. 
Moreover, the connection (cf. \eqref{eq : tensor product connection})
\begin{equation}
\overline{\nabla}\;:=\;\nabla\otimes 1+1\otimes \overline{d}
\end{equation}
on $\overline{\M}$ commutes with $G$.

\begin{proposition}\label{Lemma : general descent}
The category of differential modules over $A$ is equivalent to the 
category of differential modules over 
$\overline{A}$ 
together with a \emph{trivial} action of $\mathrm{G}$ commuting 
with the connection, and morphisms commuting simultaneously with the connection 
and the action of $\mathrm{G}$. \hfill$\qed$
\end{proposition}

\comment{Ci-dessous, ne serait-il pas mieux d'\'enoncer de nouveau les r\'esultats~? \c Ca m'ennuie de ne pas les avoir sous forme finale dans le texte. Pour utiliser la d\'ecomposition sur un corps quelconque, \c ca oblige \`a citer deux r\'esultats.}

\begin{lemma}
Let $\rho \in \mathopen{]}0,1\mathclose{]}$. There exists a unique sub-$\overline{\O}_{x}$-module~$(\overline{M})^{\ge \rho}$ of~$\overline{M}$ such that, for each $y\in \pi^{-1}(x)$, we have $(\overline{M})^{\ge \rho}_{y} = (\overline{M}_{y})^{\ge \rho}$.
\end{lemma}
\begin{proof}
Uniqueness is clear. Let us prove existence. 
\end{proof}

\begin{lemma}\label{Lema : descent of M_xrho}
Robba's decomposition \eqref{eq : deco at x Robba eq} 
and Dwork-Robba's decompositions  \eqref{eq : deco of M_x .gdyu} 
by the spectral radii of $\overline{\M}$  
over $\widehat{K^{\mathrm{alg}}}$ descend to analogous 
decompositions of $\M$ by its spectral radii over $K$. 

Moreover the statements of Corollary 
\ref{corollary : uniqueness at H(x)}, and of
Theorem \ref{Dw-Robba}, also descend to $K$.
\end{lemma}
\begin{proof}
We use Proposition \ref{Lemma : general descent}. By Lemma \ref{Lema : sub rep trivial} every sub-representation of a trivial semi-
linear representation of $G$ is automatically trivial.
 Therefore, it is enough to prove that the differential submodule $(\overline{\M})^{\geq \rho}$ is stable under the action of $\mathrm{G}$. 

\comm{Erreur ici : $(\overline{\M})^{\geq \rho}$ n'est pas d\'efini, c'est un module d\'efini sur toute l'orbite $O(x)$. Si celle ci est finie, on a le droit de prendre point par point, mais si c'est infinie, c'est plus compliqu\'e... il faut prendre des disques autour du point de type 4...}

For all $g\in G$ we have a $G$-semi-linear bijection 
$g:\overline{\M}\simto\overline{\M}$. Define 
$g^*\overline{\M}:=\overline{\M}\otimes_{\overline{A},g}
\overline{A}$ as the
the scalar extension of 
$\overline{\M}$ by the ring homomorphism
$g:\overline{A}\to \overline{A}$, so 
that for all $g\in G$, $x,y\in\overline{A}$, $m\in\overline{\M}$ one has  
$x(m\otimes y)=(xm)\otimes y=m\otimes g(x)y \in g^*\overline{\M}$. 
There exists a unique $\overline{A}$-linear isomorphism $L_g:g^*\overline{\M}\simto\M$
extending the $G$-semi-linear isomorphism $g:\overline{\M}
\simto\overline{\M}$. 
That is, if $i:\overline{\M}\simto g^*\overline{\M}$ is the map $m\mapsto m\otimes 1$, then the following diagram is commutative
\begin{equation}\label{eq : diag linearization of g}
\xymatrix{g^*\overline{\M}\ar[r]^-{L_g}_{\sim}&\overline{\M}\\
\overline{\M}\ar[u]^-{i}_{\wr}\ar[ur]_-{g}&}
\end{equation}
The connection $\overline{\nabla}$ 
of $\overline{\M}$ extends to a connection 
$g^*\overline{\nabla}:=\overline{\nabla}\otimes 1+1\otimes\overline{d}$ 
on $g^*\overline{\M}$. Since $g:\overline{\M}\to\overline{\M}$ 
commutes with $\overline{\nabla}$, then $L_g\circ 
g^*\overline{\nabla}=\overline{\nabla}\circ L_g$.
Therefore, $L_g$ is an isomorphism of $\overline{A}$-differential 
modules.
It follows that, for all $y\in O(x)$, the radii of $g^*\overline{\M}$ and $\overline{\M}$ at 
$y$ coincide. By definition, the radii are insensitive to scalar extension 
of the ground field $K$, hence the radii of $\overline{\M}$ at $y$ are also 
the same as those of $\M$ at $x$. 

By base change to $X_\Omega$, the group $G$ acts transitively  
on the generic disks $D(y,\rho)$ (cf. Proposition 
\ref{Lemma pts of type 4} and \eqref{eq : piOmega/K}). 
For every $g\in G$ we have $g(D(y,\rho))=D(g(y),\rho)$ 
(cf. \cite{NP-II}). 

\smallcomment{Il faudrait expliquer mieux cette action de $G$ sur les disques g\'en\'eriques} 

We then have an action of $G$ on the disjoint union 
$D_\rho:=\cup_{y\in O(x)}D(y,\rho)$. 
We may restrict the diagram \eqref{eq : diag linearization of g} to $D_\rho$ 
\begin{equation}\label{eq : diag linearization of g-}
\xymatrix{g^*\overline{\M}_{|\O(D_\rho)}\ar[r]^-{L_g}_{\sim}&\overline{\M}_{|\O(D_\rho)}\\
\overline{\M}_{|\O(D_\rho)}\ar[u]^-{i}_{\wr}\ar[ur]_-{g}&}
\end{equation}
Now, Robba's (resp. Dwork-Robba's) theorems imply that $\overline{\M}^{\geq \rho}$ is the inverse image in 
$\overline{\M}$ of the maximal trivial differential sub-module of $\overline{\M}_{|\O(D_\rho)}$ via the restriction map $\overline{\M}\to\overline{\M}_{|\O(D_\rho)}$ (cf. item \eqref{iii Lemma : Trivial iff solutions} Lemma \ref{Lemma : Trivial iff solutions}). It follows that it is enough to show 
that the arrows in diagram \eqref{eq : diag linearization of g} 
preserve the maximal trivial sub-differential modules. 
The map $L_g$ is an isomorphism of differential modules, hence it 
induces an isomorphism on the maximal 
trivial sub-modules of $g^*\overline{\M}_{|\O(D_\rho)}$ and 
$\overline{\M}_{|\O(D_\rho)}$. It is then enough to prove that the map $i:\overline{\M}_{|\O(D_\rho)}\to g^*\overline{\M}_{|\O(D_\rho)}$ 
preserves the maximal trivial sub-modules. This is true because if 
$\overline{\nabla}(s)=0$ in $\overline{\M}$, then $g^*\overline{\nabla}(i(s))=\overline{\nabla}\otimes 1+1\otimes\overline{d}(s\otimes 1)=\overline{\nabla}(s)\otimes 1=0$.
This shows that the decompositions descend to $A$.

The other claims of the Robba and Dwork-Robba's statements 
descends as follows. 

Let us check the compatibility of the decomposition with respect to the 
duality. Let us consider the composite of the canonical morphisms 
$c:(\M^*)^{\geq\rho}\to\M^*\to(\M^{\geq\rho})^*$. By base change 
to $\overline{A}$, it gives the same canonical morphism $c:(\overline{\M}^*)^{\geq\rho}\to\overline{\M}^*\to(\overline{\M}^{\geq\rho})^*$  
 for $\overline{\M}$. 
By \eqref{eq : compatibility with duals} (resp. item \eqref{ii eq : deco of M_x .gdyu} of Theorem \ref{Dw-Robba}), 
it is an isomorphism over 
$\overline{A}$ and it commutes with $\mathrm{G}$. Therefore, 
$c$ itself is an isomorphism. 

Let us check the compatibility with morphisms.
Let $\alpha:\M^{\rho}\to\N^{\rho'}$ be 
a morphism with $\rho\neq\rho'$. By base change to $\overline{A}$, this produces a morphism of the corresponding $\overline{A}$-modules 
commuting with $\mathrm{G}$. By \eqref{eq : Hom (Mrho,Mrho')=0} (resp. item \eqref{iii eq : deco of M_x .gdyu} of Theorem \ref{Dw-Robba}), it is $0$, therefore $\alpha$ itself is $0$.
\end{proof}
}\fi

\subsection{Global decomposition theorem}\label{sec:globaldecomposition}

%
%
%
%
%

The following result is our main theorem.
Recall that $X$ is connected and $r=\mathrm{rank}(\Fs)$.

\begin{theorem}\label{MAIN Theorem}
Assume that the index $i$ separates the radii of $\Fs$ over $X$ 
(cf. Definition \ref{definition : i separates the radii}). 
Then $\Fs$ admits a unique sub-object 
$(\Fs_{\geq i},\nabla_{\geq i})\subset (\Fs,\nabla)$ such that for all 
$x\in X$ one has
\begin{enumerate}
\item\label{MAIN Theorem-i} $\mathrm{rank}\;\Fs_{\geq i}=
\mathrm{dim}_{\Omega}\;\omega_{S,i}(x,\Fs)=r-i+1$. 
\item\label{MAIN Theorem-ii} For all $j=1,\ldots,r-i+1$ the canonical inclusion 
$\Hdr^0(x,\Fs_{\geq i})\subset\Hdr^0(x,\Fs)$ identifies 
\begin{equation}
\omega_{S,j}(x,\Fs_{\geq i})\;=\;\omega_{S,j+i-1}(x,\Fs)\;.
\end{equation}
\end{enumerate}
Set $\Fs_{<i}:=\Fs/\Fs_{\geq i}$ and consider the exact sequence 
\begin{equation}\label{eq : sequence F>i --> F --> F<i}
0\to\Fs_{\geq i}\to\Fs\to\Fs_{<i}\to 0\;.
\end{equation}
Then, for all $x\in X$, one has 
\begin{equation}\label{eq : equality of radii of F_Si}
\R_{S,j}(x,\Fs)\;=\;\left\{\begin{array}{lcl}
\R_{S,j}(x,\Fs_{<i})&\textrm{ if }&j=1,\ldots,i-1\\
\R_{S,j-i+1}(x,\Fs_{\geq i})&\textrm{ if }&j=i,\ldots,r\;.\\
\end{array} \right.
\end{equation}
\end{theorem}
\begin{proof}
By Lemma \ref{Lemma : localization} (cf. Corollary 
\ref{Lema : descent of M_xrho}), for all point $x\in X$ there 
exists a neighborhood $U_x$ of $x$, and a 
unique sub-object  
\begin{equation}\label{eq : local sub-object}
((\Fs_{\geq i})_{|U_x},(\nabla_{\geq i})_{|U_x})\;\subset\;
 (\Fs_{|U_x},\nabla_{|U_x})
\end{equation} 
such that for all $y\in U_x$ one has 
\begin{enumerate}
\item[(A)] $\mathrm{rank}\;(\Fs_{\geq i})_{|U_x}\;=\;
\mathrm{dim}_\Omega\;\omega_{S,i}(y,\Fs)\;=\;r-i+1$;
\item[(B)] $\Hdr^0(y,(\Fs_{\geq i})_{|U_x})\;=\;
\omega_{S,i}(y,\Fs)$.
\end{enumerate}
Now, by local uniqueness (cf. Proposition \ref{Prop : existence of (F_S,i)_x}), 
the family 
$\{((\Fs_{\geq i})_{|U_x},(\nabla_{\geq i})_{|U_x})\}_x$ 
glues to a global sub-object $\Fs_{\geq i}$ (cf. Remark 
\ref{Rk: gling propty}). By \eqref{rank = r-i+1}
this global sub-object satisfies item \eqref{MAIN Theorem-i}. 
Item \eqref{MAIN Theorem-ii} follows from  Proposition 
\ref{Prop : extended by continuity}. 
The other claims follows from Proposition \ref{Prop. Exact 
sequence prop separate radii in the right order}. 

The uniqueness of $\Fs_{\geq i}$ can be seen as follows. 
Let $\Fs_{\geq i}'\subseteq\Fs$ be another sub-object with the same 
properties. Then, the composite map $\Fs_{\geq i}'\subseteq\Fs\to\Fs_{<i}$ 
must be zero by Lemma \ref{Lemma : Hom=0}. 
This proves that $\Fs_{\geq i}'\subseteq\Fs_{\geq i}$ and the reverse 
inclusion follows from a symmetric argument. The claim follows.
%
%
\end{proof}
\begin{remark}
In Section \ref{An explicit counterexample.} we provide an example 
in which \emph{$\Fs_{\geq i}$ is not a direct summand of $\Fs$}. 
In Section \ref{Conditions to have a direct sum decomposition} below
we provide criteria to guarantee that $\Fs_{\geq i}$ is a direct 
summand.
\end{remark}

\begin{proposition}[Independence of $S$]
\label{Prop. : independence on S}
Let $S$ and $S'$ be two weak triangulations. Assume that 
the index $i$ separates the radii of $\Fs$ with respect to both $S$ and 
$S'$ and denote by $\Fs_{S,\geq i}$ and $\Fs_{S',\geq i}$ the 
resulting sub-modules. Then 
$\Fs_{S,\geq i}=\Fs_{S',\geq i}$.
\end{proposition}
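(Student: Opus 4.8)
The statement to prove is the independence of the sub-object $\Fs_{\geq i}$ on the chosen weak triangulation. The natural strategy is a reduction to a single point followed by a local uniqueness argument, exactly parallel to the way $\Fs_{\geq i}$ was constructed in Theorem \ref{MAIN Theorem}. The key remark is that $\Fs_{S,\geq i}$ and $\Fs_{S',\geq i}$ are both differential submodules of $\Fs$ of the \emph{same} rank $r-i+1$ (by point i) of Theorem \ref{MAIN Theorem}, applied to $S$ and to $S'$ respectively, since $i$ separates the radii with respect to both triangulations). To show two submodules of a fixed differential module over a connected curve coincide, by Proposition \ref{Prop. : iso on a point implies iso global} it suffices to show that the inclusion $\Fs_{S,\geq i}\hookrightarrow\Fs$ and $\Fs_{S',\geq i}\hookrightarrow\Fs$ have the same image after passing to the stalk at a single well-chosen point, or better, that $(\Fs_{S,\geq i})_x = (\Fs_{S',\geq i})_x$ inside $\Fs_x$ for one point $x$; the equality then propagates by the rank argument.

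So first I would fix any point $x\in X$ and work locally at $x$. By Proposition \ref{Prop : existence of (F_S,i)_x}, the stalk $(\Fs_{S,\geq i})_x$ is the \emph{unique} differential submodule of $\Fs_x$ of rank $r-i+1$ whose space of solutions at $x$ equals $\omega_{S,i}(x,\Fs)$ (and similarly for $S'$ with $\omega_{S',i}(x,\Fs)$). Hence it is enough to prove that $\omega_{S,i}(x,\Fs) = \omega_{S',i}(x,\Fs)$ inside $\omega(x,\Fs)$. Now comes the case distinction governed by the over-solvable cutoff $\is{x}$ of Definition \ref{eq : over-solvable cutoff}. The disk $D(x)$, the generic disk, is intrinsic and does not depend on $S$; by Proposition \ref{Prop: Localization} (and the discussion in Section \ref{Local nature of spectral radii.}), the spectral steps of the filtration — i.e. $\omega_{S,j}(x,\Fs)$ for $j\le\is{x}$ — are intrinsically attached to $x$ and therefore independent of the triangulation. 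So if $i\le\is{x}$ with respect to $S$ (equivalently with respect to $S'$, since this too is intrinsic), we are immediately done. The remaining case is when $i$ is solvable or over-solvable. Here the relevant disk $D_{S,i}(x,\Fs)$ may genuinely depend on $S$, but the crucial observation is that $D_{S,i}(x,\Fs)$ with $i > \is{x}$ is always the \emph{constancy disk}-type object coming from solutions converging in an honest virtual open disk in $X$ containing $x$; what matters is the set of solutions in $\omega(x,\Fs)$ that extend to such a disk, and this depends only on $\Fs$ near $x$, not on $S$. More precisely: a solution $v\in\omega(x,\Fs)$ lies in $\omega_{S,i}(x,\Fs)$ iff it converges on a disk of the appropriate modulus, and since $i$ separates the radii, $\omega_{S,i}(x,\Fs)$ is exactly the span of the $r-i+1$ "most convergent" solutions; the dimension $r-i+1$ is fixed (it is $i$ separating, for both $S$ and $S'$), so $\omega_{S,i}(x,\Fs)$ and $\omega_{S',i}(x,\Fs)$ are two subspaces of the same dimension, each spanned by solutions ordered by their radii. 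Since the \emph{ordering of solutions by radius of convergence} is intrinsic to $v$ (a solution's radius does not depend on the differential equation it solves, nor on $S$ — cf. the proof of Lemma \ref{Lemma : equality of V_i'+k}), these two subspaces coincide.

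The step I expect to be the genuine obstacle is the over-solvable/solvable case: one must be careful that "$i$ separates the radii with respect to $S$" and "with respect to $S'$" really do pin down the \emph{same} subspace, because changing $S$ rescales the radii by the modulus function $f_{S,S'}$ of Proposition \ref{Prop: A-4 fgh}, and in principle could reorder them or merge/split separating indices. The resolution is that $f_{S,S'}$ is constant on each connected component of $X-\Gamma_{S'}$ (hence in particular it does not affect which solutions are "more convergent than others" locally at a fixed point $x$): by \eqref{eq : R_S' VS R_S}, $\R_{S',j}(x,\Fs) = \min(1, f_{S,S'}(x)\cdot\R_{S,j}(x,\Fs))$, and this is a \emph{monotone} (order-preserving, up to the truncation at $1$) transformation of the tuple $(\R_{S,1}(x,\Fs),\ldots,\R_{S,r}(x,\Fs))$. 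A monotone transformation sends the top $r-i+1$ radii to the top $r-i+1$ radii, so the corresponding spans of solutions are the same; moreover if $i$ separates with respect to $S'$, the strict inequality $\R_{S',i-1}(x,\Fs)<\R_{S',i}(x,\Fs)$ forces (since the truncation at $1$ can only create ties, not strict inequalities where there were none) the same strict inequality to hold before truncation, i.e. with respect to $S$ restricted suitably, so the two filtration steps agree. Once $\omega_{S,i}(x,\Fs)=\omega_{S',i}(x,\Fs)$ is established at the single point $x$, Proposition \ref{Prop : existence of (F_S,i)_x} gives $(\Fs_{S,\geq i})_x=(\Fs_{S',\geq i})_x$, and then Proposition \ref{Prop. : iso on a point implies iso global} (or simply the fact that both are rank-$(r-i+1)$ subsheaves of $\Fs$ agreeing on a stalk over a connected curve) yields $\Fs_{S,\geq i}=\Fs_{S',\geq i}$ globally, which is the claim.
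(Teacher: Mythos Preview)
Your approach is essentially the paper's approach: establish $\omega_{S,i}(x,\Fs)=\omega_{S',i}(x,\Fs)$, invoke the uniqueness in Proposition~\ref{Prop : existence of (F_S,i)_x} to get equality of stalks, and propagate globally via the composite $\Fs_{S',\geq i}\hookrightarrow\Fs\twoheadrightarrow\Fs_{S,<i}$ being zero. The paper does this at \emph{every} point (citing \eqref{eq: beh omega by change of tri} in one line), while you argue at a single point and use a rank argument on the connected curve to globalize; both variants are fine.

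One technical slip: your appeal to Proposition~\ref{Prop: A-4 fgh} and formula~\eqref{eq : R_S' VS R_S} for the ``monotone rescaling'' step presupposes $\Gamma_S\subseteq\Gamma_{S'}$, which need not hold for two arbitrary weak triangulations. The correct reference is the symmetric identity \eqref{eq: beh omega by change of tri}, $D_{S',i}(x,\Fs)\cap D(x,S)=D_{S,i}(x,\Fs)\cap D(x,S')$. At a fixed $x$ one of $D(x,S)\subseteq D(x,S')$ or $D(x,S')\subseteq D(x,S)$ always holds (both are disks centered at $t_x$), so WLOG the latter; then $D_{S',i}(x,\Fs)=D_{S,i}(x,\Fs)\cap D(x,S')\subseteq D_{S,i}(x,\Fs)$, giving $\omega_{S,i}(x,\Fs)\subseteq\omega_{S',i}(x,\Fs)$, and since $i$ separates for both triangulations the two spaces have the same dimension $r-i+1$, forcing equality. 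This replaces your case split and ordering argument by a two-line inclusion-plus-dimension count, and avoids the asymmetric hypothesis.
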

\begin{proof}
Since $i$ separates the radii in both cases one has 
$\omega_{S,j}(x,\Fs)=\omega_{S',j}(x,\Fs)$ for all $j\leq i$, and all 
$x\in X$ (this is a consequence of 
\eqref{eq: beh omega by change of tri}). 
By uniqueness of the augmented Dwork-Robba decomposition one has
$(\Fs_{S,\geq i})_x=(\Fs_{S',\geq i})_x$, for all $x\in X$. 
Hence the composite map 
$\Fs_{S',i}\subset\Fs\to\Fs_{S,<i}$ 
is locally the zero map, so it is globally zero 
by Proposition \ref{Prop. : iso on a point implies iso global}. 
Then $\Fs_{S',\geq i}\subseteq\Fs_{S,\geq i}$, and by a symmetric 
argument $\Fs_{S,\geq i}=\Fs_{S',\geq i}$.
\if{
Hence $\Fs_{S,\geq i}=\Fs_{S',\geq i}$ because 
the global decomposition is obtained by gluing the local 
decompositions. 
}\fi
\end{proof}
\begin{remark}\label{Remark : changing tr F_S,I}
Let $S,S'$ be two weak triangulations of $X$ such that 
$\Gamma_S\subseteq\Gamma_{S'}$. 
Passing from $S$ to $S'$ has the effect that over-solvable radii result 
truncated by the rule \eqref{eq : R_S' VS R_S}. 
Therefore, if the index $i$ 
separates the radii with respect to $S'$, then
 it also separates the radii with respect to $S$. 
This shows that, in order to fulfill the 
assumptions of Theorem \ref{MAIN Theorem}, 
the more convenient choice of triangulation is 
the weak triangulation $S$ with smaller graph $\Gamma_S$.
\end{remark}

\subsection{Conditions to have a direct sum decomposition}
\label{Conditions to have a direct sum decomposition}

In this section we provide criteria to test whether $\Fs_{\geq i}$ is a 
direct summand.

\begin{proposition}\label{Prop : radii= implies Graph=}
The following conditions are equivalent:
\begin{enumerate}
\item\label{i - Prop : radii= implies Graph=} $\Gamma_{S,i}(\Fs)=\Gamma_{S,i}(\Fs^*)$;
\item\label{ii - Prop : radii= implies Graph=} For all $x\in X$ one has 
$\R_{S,i}(x,\Fs)=\R_{S,i}(x,\Fs^*)$. 
\end{enumerate}
\end{proposition}
\begin{proof}
By the definition of controlling graph, 
clearly \eqref{ii - Prop : radii= implies Graph=} implies 
\eqref{i - Prop : radii= implies Graph=}. 
Assume then that \eqref{i - Prop : radii= implies Graph=} holds. 
Set 
$\Gamma:=\Gamma_{S,i}(\Fs)=\Gamma_{S,i}(\Fs^*)$. 

If $\Gamma=\emptyset$, then $X$ is a virtual open disk with empty 
triangulation, and both $\R_{S,i}(-,\Fs)$ and $\R_{S,i}(-,\Fs^*)$ are 
constant on $X$. If they are different, then one of them, say 
$\R_{S,i}(-,\Fs)$, is strictly less than $1$. 
Then, for $x$ approaching the boundary of the disk $X$, 
the radius $\R_{S,i}(x,\Fs)$ is spectral non-solvable. 
This contradicts Proposition \ref{dual}. So \eqref{ii - Prop : radii= implies Graph=} holds in this case.

Assume that $\Gamma\neq\emptyset$. 
Then $X-\Gamma$ is a disjoint union of virtual open disks on which
$\R_{S,i}(x,\Fs)$ and $\R_{S,i}(x,\Fs^*)$ are constant.
Therefore, it is enough to prove that 
$\R_{S,i}(x,\Fs)=\R_{S,i}(x,\Fs^*)$ for all $x\in\Gamma$. By 
Remark \ref{Remark : R_i solvable on Gamma_i}, if $x\in\Gamma$, 
then $\R_{S,i}(x,\Fs)$ and $\R_{S,i}(x,\Fs^*)$ are spectral. 
Hence they coincide by Proposition 
\ref{Prop : 1-th radius and dual}.
\end{proof}

\begin{remark}
See \cite[Proposition 2.4.4]{NP-IV} for another condition implying 
those of Proposition \ref{Prop : radii= implies Graph=}.
\end{remark}

\begin{remark}
Observe that for all $x\in\Gamma_S$ the radii are by definition 
spectral, and therefore $\R_{S,i}(x,\Fs)=\R_{S,i}(x,\Fs^*)$ for all 
$x\in\Gamma_S$ (cf. Propositions \ref{Prop : 1-th radius and dual})). 
Therefore, the locus where they may differ is the 
complement of $\Gamma_S$ in 
$X$, which is a disjoint union of maximal disks 
(cf. Definition \ref{Maximal disksksks}). 
Localization to a maximal disk $D(y,S)$ does not cut the radii 
(cf. Proposition \ref{Prop: Localization}), hence to study 
the differences between $\Fs$ and $\Fs^*$ we are often induced to 
work over an open disk with empty weak-triangulation, as in the 
example of Section \ref{optimality}.
\end{remark}

\begin{remark}\label{Remark : remove maximal disks to have ...}
Up to remove some disks from $X$, we can always obtain the 
equivalent conditions of Proposition \ref{Prop : radii= implies Graph=}. 
Let us show how.
By the main result of \cite{NP-I,NP-II}, both the graphs 
$\Gamma_{S,i}(\Fs)$ and $\Gamma_{S,i}(\Fs^*)$ are locally finite 
and contain $\Gamma_S$.  
On the other hand, $X-\Gamma_S$ is a disjoint 
union of virtual open disks, which are by definition maximal disks (cf. 
Definition \ref{Maximal disksksks} and 
Remark \ref{Remark : D(x,S) cases}). This implies that if we remove 
from $X$ some of them, the radii functions do not change (cf. Proposition \ref{Prop : immersion}). More precisely, let $Y$ be the union of the 
connected components $D$ of $X-\Gamma_S$ such that 
$\Gamma_{S,i}(\Fs)\cap D\neq\Gamma_{S,i}(\Fs')\cap D$. 
Set $X':=X-Y$. By construction we have $\Gamma_S\subset X'$. 
Moreover, by locally finiteness of the graphs, there exists a 
weak-triangulation $S'$ of $X'$ containing $S$ 
such that $\Gamma_{S'}=\Gamma_S$ (we may simply add to $S$ the 
points at the boundary of the disks that we have removed from $X$).
Then, by Proposition  \ref{Prop : immersion}, for all $x\in X'$ we have 
\begin{equation}
\R_{S',i}(x,\Fs_{|X'})\;=\;\R_{S,i}(x,\Fs)
\;=\;
\R_{S',i}(x,\Fs_{|X'}^*)\;=\;\R_{S,i}(x,\Fs^*)
\end{equation}
and 
\begin{equation}
\Gamma_{S',i}(\Fs_{|X'})\;=\;\Gamma_{S,i}(\Fs)\cap X' \;=\;
\Gamma_{S',i}(\Fs_{|X'}^*)\;=\;\Gamma_{S,i}(\Fs^*)\cap X' \;.
\end{equation}
\end{remark}

\begin{proposition}\label{Prop. i sep F and F^* then R=R}
If the index $i$ separates the radii of $\Fs$ and of $\Fs^*$ 
(at each $x\in X$), then for all $x\in X$ one has 
$\R_{S,i}(x,\Fs)=\R_{S,i}(x,\Fs^*)$ and 
$\Gamma_{S,i}(\Fs)=\Gamma_{S,i}(\Fs^*)$. 
\end{proposition}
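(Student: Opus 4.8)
The statement to prove is Proposition \ref{Prop. i sep F and F^* then R=R}: if the index $i$ separates the radii of both $\Fs$ and $\Fs^*$ at every point, then $\R_{S,i}(x,\Fs)=\R_{S,i}(x,\Fs^*)$ for all $x$, and the controlling graphs agree. Given the preceding Proposition (which asserts that $\Gamma_{S,i}(\Fs)=\Gamma_{S,i}(\Fs^*)$ is \emph{equivalent} to the pointwise equality of the $i$-th radii), it suffices to establish only the pointwise equality $\R_{S,i}(x,\Fs)=\R_{S,i}(x,\Fs^*)$; the equality of graphs then follows formally. So the whole task reduces to comparing the $i$-th radius of $\Fs$ with that of $\Fs^*$ under the separation hypothesis.

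The plan is to apply Theorem \ref{MAIN Theorem} twice. Since $i$ separates the radii of $\Fs$, we get the sub-object $\Fs_{\geq i}\subseteq\Fs$ with $\R_{S,1}(x,\Fs_{\geq i})=\R_{S,i}(x,\Fs)$ for all $x$ (from \eqref{eq : equality of radii of F_Si} with $j=i$). Since $i$ also separates the radii of $\Fs^*$, we likewise get $(\Fs^*)_{\geq i}\subseteq\Fs^*$ with $\R_{S,1}(x,(\Fs^*)_{\geq i})=\R_{S,i}(x,\Fs^*)$. By Proposition \ref{Prop : 1-th radius and dual} the first radius is always insensitive to duality: $\R_{S,1}(x,\mathcal{N})=\R_{S,1}(x,\mathcal{N}^*)$ for any $\mathcal{N}$, even in the over-solvable case. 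Hence it is enough to compare $\Fs_{\geq i}$ with $(\Fs^*)_{\geq i}$, and the natural candidate is that $(\Fs^*)_{\geq i}\cong(\Fs_{\geq i})^*$; if this holds then
\begin{equation}
\R_{S,i}(x,\Fs^*)=\R_{S,1}(x,(\Fs^*)_{\geq i})=\R_{S,1}(x,(\Fs_{\geq i})^*)=\R_{S,1}(x,\Fs_{\geq i})=\R_{S,i}(x,\Fs)
\end{equation}
for all $x$, which is exactly what we want.

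So the crux is the identification $(\Fs^*)_{\geq i}\cong(\Fs_{\geq i})^*$ as sub-objects of $\Fs^*$. I would argue this locally, then glue. Fix $x$. On the spectral part this is precisely the compatibility with duals built into Dwork--Robba's decomposition: Theorem \ref{Dw-Robba}(i) (descended to $K$ via Lemma \ref{Lema : descent of M_xrho}) says the composite $(\M_x^*)^{\geq\rho}\to\M_x^*\to(\M_x^{\geq\rho})^*$ is an isomorphism. On the over-solvable part one must check that the trivial sub-module generated by $\omega_{S,i}(x,\Fs^*)$ corresponds under duality to the dual of the trivial sub-module generated by $\omega_{S,i}(x,\Fs)$; here the key is that the separation hypothesis forces the relevant spaces of solutions to have matching dimensions $r-i+1$ on both sides (this is where the hypothesis "separates $\Fs$ \emph{and} $\Fs^*$" is used — it is exactly the condition flagged in the Remark following "Duality and trivial sub-modules" that makes $\omega(\M^*,A)$ identifiable with the dual of $\omega(\M,A)$ in the range we care about). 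Concretely: by Lemma \ref{Lemma : localization}(v) there is a neighborhood $U_x$ on which $\omega_{S_{U_x},1}((\Fs_{\geq i})_{|U_x})=\omega_{S,i}(x,\Fs)$, and dualizing the exact sequence $0\to\Fs_{\geq i}\to\Fs\to\Fs_{<i}\to 0$ gives $0\to(\Fs_{<i})^*\to\Fs^*\to(\Fs_{\geq i})^*\to 0$; one then checks using the augmented decomposition for $\Fs^*$ and the dimension count that $(\Fs^*)_{\geq i}$ maps isomorphically onto $(\Fs_{\geq i})^*$ under $\Fs^*\to(\Fs_{\geq i})^*$, at least stalk-wise. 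Having the isomorphism stalk by stalk, Proposition \ref{Prop. : iso on a point implies iso global} (or the gluing/local-uniqueness argument from the proof of Theorem \ref{MAIN Theorem}) upgrades it to a global isomorphism of sub-objects of $\Fs^*$, finishing the proof.

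\textbf{Anticipated main obstacle.} The delicate point is the over-solvable part of $(\Fs^*)_{\geq i}\cong(\Fs_{\geq i})^*$. Duality is notoriously \emph{not} compatible with the trivial-submodule functor $\M\mapsto\M_A$ in general (as the paper stresses), so one cannot simply invoke functoriality; one must genuinely exploit that $i$ separates the radii on \emph{both} $\Fs$ and $\Fs^*$ to pin down that the over-solvable filtration steps have the correct ranks and that the natural comparison map $(\M^*)_A\to(\M_A)^*$ is an isomorphism precisely in the range $j\le i$. Once the dimensions are controlled, the argument should be a clean dévissage along the spectral/solvable/over-solvable stratification, combining Corollary \ref{corollary : uniqueness at H(x)}(iii), Theorem \ref{Dw-Robba}(i), and Proposition \ref{Prop : 1-th radius and dual}; but getting the over-solvable bookkeeping right, including the descent to non-algebraically-closed $K$ via Lemma \ref{Lemma : general descent}, is where care is needed.
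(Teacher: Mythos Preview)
Your plan is to reduce to proving the global isomorphism $c\colon(\Fs^*)_{\geq i}\to(\Fs_{\geq i})^*$, then pull back $\R_{S,1}$ via duality. This is a natural idea, but it is essentially the content of the \emph{next} theorem in the paper (Theorem~\ref{Thm : 5.7 deco good direct siummand}), and there the isomorphism $c$ is established \emph{using} the present proposition: one first knows $\R_{S,i}(\Fs)=\R_{S,i}(\Fs^*)$, hence $\Gamma_{S,i}(\Fs)=\Gamma_{S,i}(\Fs^*)$, and then picks a point on that common graph where everything is spectral and Dwork--Robba applies. Your proposal runs the dependency in the opposite order, so you must supply an independent argument for~$c$.

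Your stalk-wise sketch for the over-solvable case does not close. Matching the dimensions $r-i+1$ on both sides only tells you that $c_x$ is an isomorphism \emph{iff} it is injective, i.e.\ iff $((\Fs^*)_{\geq i})_x\cap((\Fs_{<i})^*)_x=0$. A nonzero element of that intersection would be a horizontal section of $(\Fs_{<i})^*$ on the disk $D_{S,i}(x,\Fs^*)$; ruling this out requires controlling the over-solvable radii of $(\Fs_{<i})^*$, not merely its first radius. The separation hypothesis for $\Fs^*$ gives you information about the radii of $(\Fs^*)_{<i}$, but $(\Fs^*)_{<i}$ and $(\Fs_{<i})^*$ are \emph{a priori} different objects --- identifying them is again equivalent to knowing~$c$. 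So the ``clean d\'evissage'' you anticipate is in fact the whole difficulty, and the remarks you cite about $\omega(\M^*,A)$ versus $\omega(\M,A)^*$ warn precisely that dimension alone does not force the comparison map to be an isomorphism.

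The paper's proof is completely different and avoids this trap. It is pointwise and argues by contradiction: at a point where both $i$-th radii are solvable or over-solvable, suppose $D_{S,i}(x,\Fs)\subsetneq D_{S,i}(x,\Fs^*)$. One shows $\R_{S,i}(-,\Fs)$ is non-constant but everywhere solvable/over-solvable on $D_{S,i}(x,\Fs^*)$, so $\Gamma_{S,i}(\Fs)$ has an end-point $z_0$ inside that disk. But $\R_{S,i}(-,\Fs)=\R_{S,1}(-,\Fs_{\geq i})$ is a \emph{first} radius, hence super-harmonic on the disk; at $z_0$ the slope toward the boundary is~$+1$ (solvability) while all other slopes vanish, contradicting super-harmonicity. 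The key input you are missing is this super-harmonicity argument for $\R_{S,1}(-,\Fs_{\geq i})$ (cf.\ \cite[Thm.~4.7]{NP-I} or Theorem~\ref{Prop : Laplacian}).
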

\begin{proof} 
By definition of controlling graphs, the equality 
$\Gamma_{S,i}(\Fs)=\Gamma_{S,i}(\Fs^*)$ follows from the equality 
of the radii. Let $x\in X$. 
It is enough to prove that $D_{S,i}(x,\Fs)=D_{S,i}(x,\Fs^*)$ (cf. \eqref{eq : D_S,i and D(x, R_S,i)}). 
If $\R_{S,i}(x,\Fs)$ or $\R_{S,i}(x,\Fs^*)$ is spectral non 
solvable, then they are equal by Proposition 
\ref{Prop : 1-th radius and dual} and the claim follows. 

Assume then that both radii are solvable or over-solvable at $x$. 
By contradiction, assume that $D_{S,i}(x,\Fs)\neq D_{S,i}(x,\Fs^*)$. 
Exchanging the roles of $\Fs$ and $\Fs^*$, without loss of generality 
we may assume  $D_{S,i}(x,\Fs)\subset D_{S,i}(x,\Fs^*)$
(i.e. $\R_{S,i}(x,\Fs)<\R_{S,i}(x,\Fs^*)$). 
We now prove that this is absurd.

Since this  inequality of disks is strict and $\R_{S,i}(x,\Fs)$ is not 
spectral non-solvable at $x$, it follows that $\R_{S,i}(x,\Fs^*)$ is over-solvable at $x$. Therefore, 
there exists a virtual open disk $D\subseteq X$ whose base change to 
$\Omega$ contains $D_{S,i}(x,\Fs^*)$ as a connected component.
By \eqref{eq : D^c}, the radius $\R_{S,i}(-,\Fs^*)$ is constant over 
$D$, and over-solvable at each point of it. 
By compatibility with duals in the spectral case (cf. Proposition \ref{dual}) this implies that 
$\R_{S,i}(z,\Fs)$ is solvable or over-solvable too, for all $z\in D$.
Moreover, $\R_{S,i}(-,\Fs)$ is not constant on $D$. 
\if{Indeed, if $\R_{S,i}(-,\Fs)$ were constant on $D$, 
we would have the inclusion of disks
$D_{S,i}(x,\Fs)\subsetneq D_{S,i}(x,\Fs^*)\subseteq D_{S,i}^c(x,\Fs)\subseteq D(x,S)$ (cf. \eqref{eq : D^c}).
}\fi
Indeed, the inequality $\R_{S,i}(x,\Fs)<\R_{S,i}(x,\Fs^*)$ implies that 
the value of $\R_{S,i}(x,\Fs)$ is strictly less than the ratio between the 
radii of $D_{S,i}(x,\Fs^*)$ and $D(x,S)$. In other words, if we chose a 
coordinate in which the radius of $D(x,S)$ is $1$, then the value of 
$\R_{S,i}(x,\Fs)$ must be strictly less than that of $D$ with respect to that 
coordinate. Hence, the constancy of $\R_{S,i}(-,\Fs)$ over $D$ 
would imply that when $z$ approaches the boundary of $D$ the radius 
$\R_{S,i}(z,\Fs)$ becomes spectral non solvable, which is not the case 
by assumption. 
Hence, we must have $\Gamma_{S,i}(\Fs)\cap D_{S,i}(x,\Fs^*)\neq\emptyset$.

Now, consider an end-point $z_0$ of $\Gamma_{S,i}(\Fs)$ in 
$D_{S,i}(x,\Fs^*)$. 
Since $\R_{S,i}(-,\Fs)=\R_{S,1}(-,\Fs_{\geq i})$,
then  $\R_{S,i}(-,\Fs)$ is a super-harmonic function on 
$D_{S,i}(x,\Fs^*)$ (cf. \cite[Theorems 3.9 and Proposition 4.7]{NP-I}). However, super-harmonicity 
is violated at $z_0$ because the function is constant on each open-disk 
with boundary $z_0$, and solvable along the segment $I$ connecting 
$z_0$ to the boundary of $D_{S,i}(x,\Fs^*)$. 
Indeed this implies that the slope of $\R_{S,1}(-,\Fs_{\geq i})$ 
along $I$ is $1$, while the slope along each other germ of segment 
out of $x$ is zero. Its super-harmonicity is therefore violated at $z_0$. 
Since  
$\R_{S,1}(-,\Fs_{\geq i})$ must be  super-harmonicity at $z_0$, this is 
absurd and the claim follows.
\end{proof}

\subsubsection{A criterion for the direct sum decomposition involving the dual.}

The following Theorem provide conditions on the radii of the dual 
differential equation that guarantee that the sequence \eqref{eq : sequence F>i --> F --> F<i} splits.

Recall that $X$ is connected (cf. Setting \ref{Hypothesis : X connected}).
\begin{theorem}\label{Thm : 5.7 deco good direct siummand}
Assume, as in Proposition \ref{Prop. i sep F and F^* then R=R}, 
that $i$ separates the radii of $\Fs$ and of 
$\Fs^*$. 
Assume moreover that we are in one of 
the following situations:
\begin{enumerate}
\item\label{Thm : 5.7 deco good direct siummand-i} $X$ is not a virtual disk with empty weak triangulation. 
\item\label{Thm : 5.7 deco good direct siummand-ii} $X$ is a virtual disk with empty weak triangulation, and there 
exists $x\in X$ such that one of the radii  
$\R_{\emptyset,i-1}(x,\Fs)$ or $\R_{\emptyset,i-1}(x,\Fs^*)$ 
is spectral non solvable.
\end{enumerate}
Then 
$\Fs_{\geq i}$ and $(\Fs^*)_{\geq i}$ are direct summands of 
$\Fs$ and $\Fs^*$ respectively.
\end{theorem}
\begin{proof}
It is enough to prove that the canonical composite morphism 
\begin{equation}\label{eq : can map iso}
c\;:\;(\Fs^*)_{\geq i}\;\to\;\Fs^*\;\to\;(\Fs_{\geq i})^*
\end{equation} 
is an isomorphism. 
This is a local matter : 
by Proposition \ref{Prop. : iso on a point implies iso global} it is
enough to show that $c$ is an isomorphism at an individual point $x$.
By Proposition \ref{Prop. i sep F and F^* then R=R} 
one has $\R_{S,i}(x,\Fs)=\R_{S,i}(x,\Fs^*)$ for all $x\in X$, 
so the two functions have 
the same controlling graphs $\Gamma :=
\Gamma_{S,i}(\Fs)=\Gamma_{S,i}(\Fs^*)$. 
If we are in the case \eqref{Thm : 5.7 deco good direct siummand-i}, then $\Gamma\neq\emptyset$, and if 
$x\in\Gamma$, the index $i$ is spectral at $x$, and hence $i-1$ is 
spectral non solvable at $x$ (because $i$ separates the radii). 
So $c$ is an isomorphism at $x$ by Proposition
\ref{dual} (cf. Section 
\ref{Local nature of spectral radii.}). 
The same holds in case \eqref{Thm : 5.7 deco good direct siummand-ii} by 
the assumption.
\end{proof}
\begin{remark}
The only pathological case which not contemplated by 
Theorem \ref{Thm : 5.7 deco good direct siummand} is that where $X$ is a virtual open disk, with empty triangulation, 
on which $\R_{S,i}(-,\Fs)$ and $\R_{S,i}(-,\Fs^*)$ 
are both the constant function with value $1$, and such that 
$\R_{S,i-1}(-,\Fs)$ and $\R_{S,i-1}(-,\Fs^*)$ are both solvable or 
over-solvable at all points of $X$. In this case if $i$ separates the radii 
we do not know if $\Fs_{\geq i}$ is a direct factor of $\Fs$. 
Both $(\Fs^*)_{\geq i}$ and $\Fs_{\geq i}$ are the maximum 
trivial submodules of $\Fs^*$ and $\Fs$ respectively and they share the 
same dimension, but $(\Fs_{<i})^*$ might have a non zero trivial 
submodule as in the example in Section \ref{optimality},
and we do not see why the morphism $c$ should be an isomorphism 
(cf. Remark \ref{Remark : iso trivial dual}). The whole problem comes 
from the fact that the functor associating to a differential module 
$\Fs$ on a disk $X$ its solutions on the whole disk 
$X$ is not exact (cf. Lemma \ref{Lemma :devisage}). 
 
However, as soon as one of $\R_{S,i}(-,\Fs)$ and $\R_{S,i}(-,\Fs^*)$ is 
not identically equal to $1$, 
then the index $i-1$ is spectral for $\Fs$, or for $\Fs^*$, 
at some point close to the boundary of the disk 
(because $i$ separates the radii), and the conditions of the 
Theorem \ref{Thm : 5.7 deco good direct siummand} are fulfilled.
\end{remark}

\begin{remark} Assume that $X$ is a virtual open disk.

If $S$ is non empty, and is $i$ separates the $S$-radii of $\Fs$, 
this implies that $\R_{S,i-1}(-,\Fs)$ is spectral non solvable at the 
points of  $S$ (at which $\R_{S,i}(-,\Fs)$ is not over-solvable). 
Therefore, the assumptions of item \eqref{Thm : 5.7 deco good direct siummand-ii} 
of Theorem \ref{Thm : 5.7 deco good direct siummand} are fulfilled. 

In particular, is interesting to notice that
condition \eqref{Thm : 5.7 deco good direct siummand-i} of Theorem 
\ref{Thm : 5.7 deco good direct siummand} is 
automatic if $S$ is a (non weak) triangulation of $X$. 
Indeed, the definition of triangulation 
of \cite{Duc} always prescribes $S\neq \emptyset$.\footnote{Besides, 
we notice that the case of a virtual open disk with empty 
weak-triangulation does not fit in the framework of the semi-stable 
reduction theorem.} 

We might therefore be temped to impose $S\neq\emptyset$ 
in order to satisfy the assumptions of Theorem 
\ref{Thm : 5.7 deco good direct siummand}. However, there are 
situations where $i$ separates the radii with respect to the empty 
triangulation, but not with respect to any non empty one. This is the case of the example in Section \ref{optimality} 
(cf. also Remarks \ref{Remark : changing tr F_S,I} and 
\ref{Remark : bad condition}). 
\end{remark}

The following Corollary shows how to use 
Proposition \ref{Prop : radii= implies Graph=} to control the 
assumptions of Theorem \ref{Thm : 5.7 deco good direct siummand}.
\begin{corollary}
Assume that 
\begin{enumerate}
\item $i$ separates the radii of $\Fs$,
\item $\Gamma_{S,i}(\Fs)=\Gamma_{S,i}(\Fs^*)$ and $\Gamma_{S,i-1}(\Fs)=\Gamma_{S,i-1}(\Fs^*)$. 
\end{enumerate}
Then, by Proposition \ref{Prop : radii= implies Graph=}, we have
$\R_{S,i}(-,\Fs)=\R_{S,i}(-,\Fs^*)$ and 
$\R_{S,i-1}(-,\Fs)=\R_{S,i-1}(-,\Fs^*)$. In particular, the index 
$i$ separates also the radii of $\Fs^*$. 

Assume now that 
we are not in the situation where $X$ is an open disk with 
empty weak triangulation and both $\R_{S,i-1}(-,\Fs)$ and 
$\R_{S,i-1}(-,\Fs^*)$ are solvable or over-solvable at each point of 
the disk $X$. Then, $\Fs$ satisfy at least one of the conditions \eqref{Thm : 5.7 deco good direct siummand-i} or \eqref{Thm : 5.7 deco good direct siummand-ii} of Theorem \ref{Thm : 5.7 deco good direct siummand}, and the sub-objects $\Fs_{\geq i}$ and 
$(\Fs^*)_{\geq i}$ are direct summands of $\Fs$ and $\Fs^*$
respectively. \hfill$\qed$
\end{corollary}

\subsubsection{A topological criterion for the direct sum decomposition  
not involving the dual.}
We now provide a criterion of topological nature on the controlling 
graphs of $\Fs$ that guarantee that sequence \eqref{eq : sequence F>i --> F --> F<i} splits. 
The criterion involves only properties of $\Fs$ and not of its dual 
(cf. Theorem \ref{Thm : criterion self direct sum}). 

\if{MODIFICARE :

The hearth of that criterion is the following 
Proposition \ref{THM : Decomposition on a disk}. Its proof is based 
on the Grothendieck-Ogg-Shafarevich (G.O.S.) formula  
in the form expressed in 
\eqref{GOS-changed}. We apply the G.O.S. formula to a 
differential module over an open disk having all the radii equal and 
constant as functions on the disk. 
So the G.O.S. 
formula does not need any decomposition theorem in that case, 
and the proof of Proposition 
\ref{THM : Decomposition on a disk} is then not circular. 
Note moreover that Proposition \ref{THM : Decomposition on a disk} 
follows from the more general\footnote{As 
explained in section
\ref{Rk : theoreme de deco de Kedlaya sur un disque...} the 
decomposition theorem 
\cite[12.4.1]{Kedlaya-book-2} is more general because it does not 
assume \emph{a priori} that the radii are separated.} 
 result 
\cite[Thm. 12.4.1]{Kedlaya-book-2},
if $D$ is $K$-rational, 
and by a Galois descent from 
$\widehat{K^{\mathrm{alg}}}$ to $K$ otherwise.
}\fi

\begin{lemma}\label{Lemma : const at the wedge implies globally constant-1}
Let $\Fs$ be a differential equation over the open unit disk  $D^-(0,1)$ 
endowed with the empty weak-triangulation. 
Assume that there exist $R,R'<1$ and an index $i$ such that 
for all $x\in]x_{0,R'},x_{0,1}[$ and all $k=1,\ldots,i$ 
we have $\R_{\emptyset,k}(x,\Fs)=R$. Then, 
the equality $\R_{\emptyset,k}(x,\Fs)=R$ holds for all $x\in D$ and all 
$k=1,\ldots,i$.
\end{lemma}
\begin{proof} 
The first radius function $\R_{\emptyset,1}(-,\Fs)$ is super-harmonic 
on $D$. In particular, it is concave along every segment $[c,x_{0,1}[$ 
for every $L$-rational point $c\in D_L$ of the disk belonging to an 
unspecified field extension $L$ of $K$ (cf. \cite[Theorem 3.9]{NP-I}). 
Using the fact that the radii are insensitive to scalar extension, we 
deduce that $\R_{\emptyset,1}(-,\Fs)$ is a constant function on the 
whole $D$. 

We claim that the equality $\R_{\emptyset,i}(x,\Fs)=R$ must hold for 
all $x\in]x_{c,R},X_{0,1}[$. Indeed, let 
$R_*\in [R,1]$ be the largest real number such that the radius 
$\R_{\emptyset,i}(-,\Fs)$ equals $R$. We know that the partial height 
$H_{\emptyset,i}(-,\Fs)$ is concave on $]x_{c,R},x_{0,1}[$. 
If $R_*>R$, its concavity is violated at $x_{c,R_*}$ because the radii 
are all constant on one direction and bounded below by the constant 
function $\R_{\emptyset,1}(x,\Fs)=R$ on the other direction. 
It follows that $R_*=R$ and that $\R_{\emptyset,i}(x,\Fs)=R$ for all $x\in]x_{c,R},x_{0,1}[$.

On the other hand, over the segment $[c,x_{c,R}[$ the radius
$\R_{\emptyset,i}(x,\Fs)$ is over-solvable, hence constant. By 
continuity, we must have $\R_{\emptyset,i}(x,\Fs)=R$ for all 
$x\in[c,x_{0,1}[$. As above, this implies that function 
$\R_{\emptyset,i}(-,\Fs)$ is constant on the whole $D$ and the claim 
follows.
\end{proof}

\begin{proposition}\label{THM : Decomposition on a disk}
Let $X=D$ be a virtual open disk with empty triangulation. 
Let $\Fs$ be a differential equation on $D$ such that, 
for all $k=1,\ldots,i$, the radii
$\R_{\emptyset,k}(-,\Fs)$ are constants functions on $D$.
Let $j\in\{1,\ldots,i\}$ be an index separating the radii of $\Fs$. 
Then, the index $j$ separates also the radii of $\Fs^*$ 
(at every $x\in D$). Moreover, the 
assumptions of Proposition \ref{Prop. i sep F and F^* then R=R} and of item 
\eqref{Thm : 5.7 deco good direct siummand-ii} of 
Theorem \ref{Thm : 5.7 deco good direct siummand} are fulfilled. 
In particular 
\begin{enumerate}
\item\label{THM : Decomposition on a disk-i} $(\Fs_{\geq j},\nabla_{\geq j})$ is a 
direct summand of $(\Fs,\nabla)$;
\item\label{THM : Decomposition on a disk-ii} $\R_{\emptyset,k}(x,\Fs)=\R_{\emptyset,k}(x,\Fs^*)$ for all 
$k=1,\ldots,i$, and all $x\in D$.
\end{enumerate}

\end{proposition}
\begin{proof}
By definition the radii are insensitive to scalar extension of the ground 
field $K$, therefore without loss of generality we may assume that 
$D$ is isomorphic to the open unit disk $D^-(0,1)$. 
We proceed by induction on the rank of $\Fs$. If the rank is $1$, there 
is nothing to show (cf. Proposition \ref{Prop : 1-th radius and dual}).
Assume the assertion proved for all differential equations 
whose rank is strictly less than that of $\Fs$.

\emph{Step 1}. We proceed by induction on $i$. If $i=1$ we have 
$\Fs_{\geq 1}=\Fs$, therefore 
item \eqref{THM : Decomposition on a disk-i} holds (the same is true 
for $\Fs^*$). 
Moreover, by Proposition \ref{Prop : 1-th radius and dual} we 
always have $\R_{\emptyset,1}(x,\Fs)=\R_{\emptyset,1}(x,\Fs^*)$ for all $x\in D$ 
and item \eqref{THM : Decomposition on a disk-ii} holds too. The claim 
is proved for $i=1$.

\if{

If $R=1$, then $\Fs$ is trivial on $D$ and hence $\Fs^*$ is trivial too 
and the function $\R_{\emptyset,1}(-,\Fs^*)$ is constant with value 
$1$. 

If $R<1$, then the function $\R_{\emptyset,1}(-,\Fs)$ is spectral non 
solvable along the segment $]x_{0,R},x_{0,1}[$. Therefore, 
for all $x$ in this segment we have 
$\R_{\emptyset,1}(x,\Fs)=\R_{\emptyset,1}(x,\Fs^*)$ (cf. \eqref{Prop : 1-th radius and dual}).

concave and decreasing (cf. 
\cite[Items (i) and (iii) of Theorem 3.9 and Proposition 2.14]{NP-I}).

...

\cite{Potentiel}.
}\fi

\emph{Step 2.} Assume $i>1$ and that for all $k=1,\ldots,i$ and all 
$x\in D$ we have $\R_{\emptyset,k}(x,\Fs)=\R_{\emptyset,1}(x,\Fs)$. In this case, we necessarily have $j=1$, and the only difference with \emph{Step 1} is item \eqref{THM : Decomposition on a disk-ii}. 
Proposition \ref{Prop : 1-th radius and dual} 
guarantees that 
$\R_{\emptyset,1}(-,\Fs)=\R_{\emptyset,1}(-,\Fs^*)$. Therefore, 
we need to show that for all $k=1,\ldots,i$ and all 
$x\in D$ we have $\R_{\emptyset,k}(x,\Fs^*)=\R_{\emptyset,1}(x,\Fs^*)$.

We know that 
$\R_{\emptyset,1}(-,\Fs)=\R_{\emptyset,1}(-,\Fs^*)$
is a constant function on $D$. 
Let $R:=\R_{\emptyset,1}(-,\Fs)$ be its value. 
If $R=1$, then $\Fs$ is 
trivial over $D=D^-(0,1)$ and so is $\Fs^*$, in this case there is nothing to 
prove. If $R<1$, then for all $x\in]x_{0,R},x_{0,1}[$ the radii 
$\R_{\emptyset,k}(x,\Fs)$ are spectral non solvable for all 
$k=1,\ldots,i$. Hence \eqref{THM : Decomposition on a disk-ii} 
holds over $]x_{0,R},x_{0,1}[$, by Proposition 
\ref{Prop : 1-th radius and dual}. In particular, the 
radii $\R_{\emptyset,k}(x,\Fs^*)$ are constant on 
$]x_{0,R},x_{0,1}[$ with value $R$. By Lemma \ref{Lemma : const at the wedge implies globally constant-1}, 
they are constant on the whole $D$.
\if{
Now, for all $k=1,\ldots,i$, one has by definition 
$\R_{\emptyset,k}(x,\Fs^*)\geq
\R_{\emptyset,1}(x,\Fs^*)=\R_{\emptyset,1}(x,\Fs)$, 
therefore for all  $x\in[0,x_{0,R}[$ the 
radii $\R_{\emptyset,k}(x,\Fs^*)$ are over-solvable (because 
$\R_{\emptyset,1}(x,\Fs)$ is so). In particular, the 
radii $\R_{\emptyset,k}(x,\Fs^*)$ are also constant on $[0,x_{0,R}[$. 
By continuity, they are constant on the whole segment $[0,x_{0,1}[$ 
with value $R$. 
Arguing similarly for every segment $[c,x_{0,1}[$, where $c\in D_L$ 
is an $L$-rational point of the disk belonging to an unspecified 
field extension $L$ of $K$, and using the fact that the radii are insensitive to scalar extension, we deduce that 
for all $k=1,\ldots,i$, one has 
$\R_{\emptyset,k}(x,\Fs^*)=
\R_{\emptyset,1}(x,\Fs^*)$.}\fi 
The Proposition is proved in this case.

\emph{Step 3}. 
Maintain the assumption $i>1$ and assume now that the radii 
$\R_{\emptyset,k}(-,\Fs)$ are not all equal to 
$\R_{\emptyset,1}(-,\Fs)$, which is a constant function with value $R$. 
In particular, notice that in this situation $\Fs$ is not trivial over $D$.
Let $i_1>1$ be the smallest index separating the radii of $\Fs$. Then, 
for all $k=1,\ldots,i_1-1$ the radii $\R_{\emptyset,k}(-,\Fs)$ are 
all constant functions equal to $\R_{\emptyset,1}(-,\Fs)=R$. 
Moreover, we must have $R<1$, because otherwise $\Fs$ is trivial. 
In particular, the radii $\R_{\emptyset,k}(-,\Fs)$ are 
all spectral non solvable over the segment $]x_{0,R},x_{0,1}[$.
By \emph{Step 2}, for all $k=1,\ldots,i_1-1$ and all $x\in D$ 
we have $\R_{\emptyset,k}(x,\Fs)=\R_{\emptyset,k}(x,\Fs^*)=R$. 

Now, by \cite[Theorem 12.4.1]{Kedlaya-book-2} and Proposition 
\ref{Prop. Exact sequence prop separate radii in the right order}, 
$\Fs$ factorizes as $\Fs=\Fs'\oplus\Fs''$ where for all 
$x\in]x_{0,R},x_{0,1}[$ and all $k=1,\ldots,i_1-1$ and 
$k'=1,\ldots,\mathrm{rank}(\Fs)-i_1+1$ we have
\begin{equation}
\R_{\emptyset,k}(x,\Fs')\;=\;
\R_{\emptyset,k}(x,\Fs)\;,\qquad
\R_{\emptyset,k'}(x,\Fs'')\;=\;\R_{\emptyset,i_1-1+k'}(x,\Fs)\;.
\end{equation}
Now, by Remark \ref{Remark : an explicit quote to counterexample}, 
the radii can only increase by morphisms, therefore the image of 
$\Fs_{\geq i_1}\subset\Fs$ by the projection $\Fs\to \Fs'$ is zero. 
This implies that $\Fs_{\geq i_1}\subset\Fs''$ and hence they are equal 
because they have the same rank. This proves that $\Fs_{\geq i_1}$ 
is a direct summand of $\Fs$ and therefore that 
$\Fs_{<i_1}\cong\Fs'$.

By \emph{Step 2} and Lemma 
\ref{Lemma : const at the wedge implies globally constant-1}, for all 
$k=1,\ldots,i_1-1$ and all $x\in D$ 
we have $\R_{\emptyset,k}(x,\Fs_{<i_1})=
\R_{\emptyset,k}(x,\Fs_{<i_1}^*)=R$. The claim is then proved for $\Fs_{<i_1}$.

\emph{Step 4}. By Theorem \ref{MAIN Theorem}, for all 
$k=1,\ldots,\mathrm{rank}(\Fs)-i_1+1$ and all $x\in D$ 
we have $\R_{\emptyset,k}(x,\Fs_{\geq i_1})=
\R_{\emptyset,i_1+k-1}(x,\Fs)>R$.
By induction on the rank, the statement holds for 
$\Fs_{\geq i_1}$.

The claim follows.
\end{proof}

\if{

...

Let us first assume that
$i_1$ separates also the radii of $\Fs^*$ (at all $x\in D$) and prove 
the other assertions. Since $i_1$ separates the radii of $\Fs$,  
the radius $\R_{\emptyset,i_1-1}(-,\Fs)$ is a constant function 
with value $<1$. Therefore, if $x\in D$ is close enough to the open 
boundary of $D$, the index $i_1-1$ is spectral non solvable at $x$.
Hence, Theorem
\ref{Thm : 5.7 deco good direct siummand} applies and
$\Fs_{\geq i_1}\subset\Fs$ is a direct summand of $\Fs$
by item \eqref{Thm : 5.7 deco good direct siummand-ii} 
of Theorem \ref{Thm : 5.7 deco good direct siummand}. 
Moreover, for all $j\leq i_1$ we must prove 
\eqref{THM : Decomposition on a disk-ii}.

 Finally by 
\eqref{eq : equality of radii of F_Si} 
the induction is evident replacing $\Fs$ by $\Fs_{\geq i_1}$.

It remains to prove that $i_1$ separates the radii of $\Fs^*$.
We start by looking to the radii of $(\Fs_{<i_1})^*$. 
By \eqref{eq : equality of radii of F_Si} 
the radii of $\Fs_{<i_1}$ are 
constant functions on $D$ all equal to $R:=\R_{\emptyset,1}(-,\Fs)$. 
Now the same is true for its dual: 
\begin{lemma}\label{Lemma : dualiti if constant on D}
For all $j=1,\ldots,i_1-1=\mathrm{rank}(\Fs_{<i_1})$, 
and all $x\in D$, one has 
\begin{equation}\label{eq : comp dual D D}
\R_{\emptyset,j}(x,(\Fs_{<i_1})^*)\;=\;
\R_{\emptyset,j}(x,\Fs_{<i_1})\;=\;R\;<\;1\;.
\end{equation}  
\end{lemma}
\begin{proof}
Since the radii are insensitive to scalar extension of $K$, we may assume 
without loss of generality that there is 
an isomorphism $D\simto D^-(0,1)$. 
Since $i_1$ separates the radii of $\Fs$, we have 
$R=\R_{\emptyset,1}(-,\Fs)<1$. 
So the indexes $1,\ldots,i_1-1$ are all spectral 
non solvable for $\Fs_{<i_1}$ at each point of the open 
segment $]x_{0,R},x_{0,1}[$. Hence compatibility with the dual 
holds over $]x_{0,R},x_{0,1}[$ by Proposition \ref{dual}. 
Moreover, \eqref{eq : comp dual D D} holds for $j=1$ by Proposition 
\ref{Prop : 1-th radius and dual}, 
hence $\R_{\emptyset,j}(0,\Fs^*)\geq\R_{\emptyset,1}(0,\Fs)=R$. 
Since $\R_{\emptyset,j}(-,(\Fs_{<i_1})^*)$ is constant on 
$[0,x_{0,\R_{\emptyset,j}(0,(\Fs_{<i_1})^*)}[$, 
that contains $[0,x_{0,R}[$, 
this forces $\R_{\emptyset,j}(-,(\Fs_{<i_1})^*)$ 
to be constant on $[0,x_{0,1}[$ with value $R$. 
This is independent on the chosen isomorphism $D\simto D^-(0,1)$, 
so $\R_{\emptyset,j}(-,(\Fs_{<i_1})^*)$ is constant on 
each segment $[z,x_{0,R}[$ of $D$, with $z$ a rational point. 
If $K$ is spherically complete this proves the claim. In general, the 
claim over $K$ is deduced from that over a spherically complete 
extension $\Omega/K$, since the radii are insensitive to scalar 
extension of $K$.
This proves that $\R_{\emptyset,j}(-,(\Fs_{<i_1})^*)$ is constant 
over $D$. 
\end{proof}

In  addition to Lemma \ref{Lemma : dualiti if constant on D}, 
for all $x\in D$, and 
all $j\geq 1$, we have 
\begin{equation}\label{eq : rpopoliuh}
\R_{\emptyset,j}(x,\Fs_{\geq i_1})\;=\;
\R_{\emptyset,j+i_1-1}(x,\Fs)\;.
\end{equation}
Moreover by Prop. \ref{Prop : 1-th radius and dual}, the first radius 
$\R_{\emptyset,1}(-,(\Fs_{\geq i_1})^*)$ is a constant 
function on $D$ and equal to 
$\R_{\emptyset,1}(-,\Fs_{\geq i_1})$.

Unfortunately this is not enough to guarantee that $i_1$ separates the 
radii of $\Fs^*$ because $(\Fs_{\geq i})^*$ is a quotient of 
$\Fs^*$, and the radii of the latter can be different. One has to prove 
that one does not have a pathology as in 
\eqref{eq : picture of radii of p^*}. For this  
Lemma \ref{Lemma : A1} 
below proves that locally at each point $\Fs$ is the direct sum of 
$\Fs_{\geq i_1}$ and $\Fs_{<i}$, so that we have compatibility with duals by 
Prop. \ref{prop : radii of the direct sum}.

Lemma \ref{Lemma : A1} is based on the G.O.S. 
formula \eqref{GOS-changed}.
Since the radii are insensitive to scalar extension of $K$, 
we can assume that $K$ is spherically complete. This guarantee that 
the G.O.S. formula holds. 
\begin{lemma}\label{Lemma : A1}
For all $x\in D$ there exists a spherically complete field extension 
$\Omega_x/K$, 
and a star-shaped open neighborhood $Y_x$ of $x$ in 
$D$ such that 
\begin{enumerate}
\item $Y_x$ contains strictly $D_{\emptyset,i_1-1}(x,\Fs)$. 
In particular one has $\R_{S_{Y_x},i_1-1}(x,\Fs_{|Y_x})<
\R_{S_{Y_x},i_1}(x,\Fs_{|Y_x})$ (the radii remains separated after 
localization).\footnote{Here $S_{Y_x}$ denotes the canonical 
weak triangulation of $Y_x$  (cf. Definition~\ref{def:starshapedopen}),
which is the empty set if $Y_x$ is a virtual open disk.} 
Moreover if $i_1-1$ is solvable or over-solvable at $x$, then we may chose 
$Y_x$ to be a virtual open disk $Y_x=D_x$ such that 
\begin{equation}\label{eq : inequality preserved by loc rty}
D_{\emptyset,i_1-1}(x,\Fs)\;\subset\; D_x\;\subseteq\; 
D_{\emptyset,i_1}(x,\Fs)\;.
\end{equation}
\item The restriction to 
$Y_x\widehat{\otimes}_K\Omega_x$ of the sequence 
\begin{equation}\label{eq : sequence ufsdq }
E\;:\;0\to\Fs_{\geq i_1}\to\Fs\to\Fs_{<i_1}\to 
0\;,
\end{equation}
splits over $Y_x\widehat{\otimes}_K\Omega_x$.
\end{enumerate}
\end{lemma}
\begin{proof}
If $i_1-1$ is spectral non solvable at $x$ the statement follows from 
Dwork-Robba Theorem \ref{Dw-Robba}. 
Assume that $i_1-1$ is solvable or over-solvable at $x$, so that 
$D(x)\subseteq D_{\emptyset,i_1-1}(x,\Fs)$. Note that 
$D_{\emptyset,i_1-1}(x,\Fs)$ is not equal to $D$ because $i_1$ 
separates the radii.
\if{
We claim that 
there exists a disk $D_x$ with strict inclusions
$D_{\emptyset,i_1-1}(x,\Fs)\subset D_x \subset 
D_{\emptyset,i_1}(x,\Fs)$ 
such that \eqref{eq : sequence ufsdq } splits over $D_x$. 
}\fi
Now let $\Omega_x/K$ be a field extension 
such that the connected components of 
$D_{\emptyset,i_1-1}(x,\Fs)\otimes\Omega_x$ 
are isomorphic to $D':=D^-_{\Omega_x}(0,1)$. 
We consider the sequence $E\otimes\O^\dag(D')$, 
and we use the notations of section 
\ref{A remark on the Grothendieck-Ogg-Shafarevich formula}. We now 
prove that the sequence splits over $D'_\varepsilon$ for a convenient 
$\varepsilon>0$. Since by assumption $D(x)\subseteq D'$, then 
$D'_\varepsilon$ comes by scalar extension from a virtual disk 
$D_x:=D'_{\varepsilon,K}$. 

By \eqref{Localization of sol-1}, 
for all $\varepsilon>0$ the radii of 
$(\Fs_{<i_1})_{|D_\varepsilon'}$ and 
$(\Fs_{\geq i_1})_{|D_{\varepsilon}'}$ are constant over 
$D'_\varepsilon$, and the index $i_1$ separates the radii of 
$\Fs_{|D_\varepsilon'}$. 
In particular  $(\Fs_{<i_1})_{|D_\varepsilon'}$ 
does not verify \eqref{eq : negative break of R_i NL}, so the 
Grothendieck-Ogg-Shafarevich \eqref{GOS-changed} 
formula holds.
Hence there exists a virtual open disk $D_{\varepsilon}'$ as 
above such that $\hdr^1((\Fs_{<i_1})|_{D_\varepsilon'},
\O(D_\varepsilon'))=0$ since the slope of 
$-\partial_bH_{\emptyset,i_1-1}(x_{0,1},
(\Fs_{<i_1})|_{D_\varepsilon'})$ is zero. 

\comment{Ce sont des modules NON solubles !!!
Il se passe que c'est vrai quand m?me, mais il faut 
utiliser le th?or?me d'indice sur un disque de Christol-
Mebkhout, et ce dernier utilise la d?composition en 
somme directe sur une couronne et rayons log-affines 
... ?  ce moment la peut-?tre qu'il vaut mieux de 
tronquer la chose et appliquer imm?diatement le 
th?or?me de d?composition de Kedlaya sur un disque ?}
The same happens for the dual 
$((\Fs_{<i_1})|_{D_\varepsilon'})^*$ 
by Lemma \ref{Lemma : dualiti if constant on D}. 
Now, since $(\Fs_{\geq i_1})_{|D'_\varepsilon}$ is trivial, 
then by \eqref{GOS-changed}, one 
has 
\begin{equation}
\Hdr^1(((\Fs_{<i})_{|D'_\varepsilon})^*\otimes 
(\Fs_{\geq i_1})_{|D'_\varepsilon}) 
\;=\; 
\Hdr^1(((\Fs_{<i})_{|D'_\varepsilon})^*)^{r-i_1+1}\;=\;0\;.
\end{equation}
 Now the Yoneda group $\mathrm{Ext}^1(\M,\N)$ 
of extensions $0\to \N\to \P \to \M\to 0$ of differential modules 
can be identified with $\Hdr^1(\M^*\otimes \N)$ 
(cf. Lemma \ref{Lemma : Ext^1=H^1}). So the sequence splits over 
$D_\varepsilon'$.
\end{proof}
The behavior of the radii by localization to $Y_x$ is expressed by 
\eqref{Localization of sol}. 
To show that $i_1$ separates the radii of $\Fs^*$, we then 
prove that $i_1$ separates the radii of $(\Fs_{|Y_x})^*$. 
Since the sequence \eqref{eq : sequence ufsdq } splits over 
$Y_x\widehat{\otimes}_K\Omega_x$, 
then so does the dual sequence 
$0\to (\Fs_{<i_1})^* \to \Fs^*\to 
(\Fs_{\geq i_1})^*\to 0$. 
Now, by Lemma \ref{Lemma : dualiti if constant on D}, the radii of 
$(\Fs_{<i_1})^*$ are all equal to $R=\R_{\emptyset,i_1-1}(-,\Fs)$. 
Hence, by \eqref{eq : rpopoliuh}, 
they are strictly smaller than those of $(\Fs_{\geq i_1})^*$. 
This inequality is preserved by restriction to 
$Y_x\widehat{\otimes}_K\Omega_x$ 
by \eqref{eq : inequality preserved by loc rty}.
We then apply Proposition \ref{prop : radii of the direct sum} 
to prove that the radii of 
$\Fs^*|_{Y_x\widehat{\otimes}_K\Omega_x}$ are 
the union of these two families of radii, hence $i_1$ separates the radii of 
$\Fs^*|_{Y_x\widehat{\otimes}_K\Omega_x}$. 
So the same holds for $\Fs^*$ by \eqref{Localization of sol-1}. 
This concludes the proof of Proposition 
\ref{THM : Decomposition on a disk}.
\if{
...

...

...

We now prove that the sequence splits globally on $X=D$ over $K$.
As a consequence of Lemma \ref{Lemma : A1} one has 
$\R_{\emptyset,i_1}(x,\Fs)=\R_{\emptyset,i_1}(x,
\Fs^*)>\R_{\emptyset,i_1-1}(x,\Fs^*)=\R_{\emptyset,i_1-1}(x,\Fs)$ 
for all $x\in X$. 
Indeed this is true if the $\R_{\emptyset,i_1-1}(x,\Fs^*)$ is 
spectral non solvable (cf. Prop. \ref{Prop : 1-th radius and dual}), and 
in the other cases one uses Lemma \ref{Lemma : A1} together with  
Prop. \ref{prop : radii of the direct sum} and 
\eqref{Localization of sol}.\footnote{The radii are invariant by scalar 
extension of $K$, so we can assume $K$ spherically complete to fulfill 
the assumptions of Lemma \ref{Lemma : A1}.}
So by the first part of the proof of Theorem \ref{MAIN Theorem} 
one has a sequence 
\begin{equation}\label{eq : sequence ufsdq -2}
0\to(\Fs^*)_{\geq i_1}\to\Fs^*
\to(\Fs^*)_{<i_1}\to 0\;.
\end{equation}
It is then enough to check that  
the composite morphism 
$(\Fs^*)_{\geq i_1}\to\Fs^*
\to(\Fs_{\geq i_1})^*$ is an isomorphism. 
One can check this locally around each point. 
By Dwork-Robba 
Thm. \ref{Dw-Robba} this is true if $x$ has type $2$, $3$, or $4$, so 
we can assume that $x$ is of type $1$. 
By Lemma \ref{Lemma : A1} we extend the scalars to a spherically 
complete field $\Omega/K$ in order to find a $\Omega$-rational 
disk $D_{x,\Omega}$ with  
strict inclusions $D_{\emptyset,i_1-1}(x,\Fs)\subset D_{x,\Omega} 
\subset D_{\emptyset,i_1}(x,\Fs)$ on which 
\eqref{eq : sequence ufsdq } splits. Such a disk comes by scalar 
extension from a virtual disk $D_x$ over $K$, because 
it contains $x$ which is a point of type $1$.
Let $det(T)$ be the determinant of the canonical morphism
$(\Fs^*)_{\geq i_1}\to\Fs^*
\to(\Fs_{\geq i_1})^*$ over $D_x$. 
By Prop. \ref{prop : radii of the direct sum} this is an 
isomorphism over $D_{x,\Omega}$. 
So the image of $det(T)$ in $\O(D_{x,\Omega})$ is invertible. 
Then it is a function without zeros. Hence $det(T)$ is
invertible as a function on $\O(D_x)$.
\if{This property descends to $K$ by faithfully flatness. Namely
such a morphism can be identified to a solution of 
$((\Fs^*)_{\geq i_1})^*\otimes 
(\Fs^*)_{<i_1}$. Such solutions descends to $K$ by 
\cite[6.9.1]{Kedlaya-book-2}.
}\fi
\if{
In this case we consider the virtual disk $D_{\emptyset,i_1}(x,\Fs)$ 
as a neighborhood of $x$. Both $(\Fs^*)_{\geq i_1}$ and 
$(\Fs_{\geq i_1})^*$ are trivial over 
$D_{\emptyset,i_1}(x,\Fs)$.

$(\Fs^*)_{\geq i_1}$ is trivial over 
$D_{\emptyset,i_1}(x,\emptyset)$. 

Moreover
we can assume that $K$ is algebraically closed and spherically 
complete, by faithfully flatness of scalar extensions of $K$.
So the claim follows from  \eqref{eq : sequence ufsdq }, 
and again by Prop. \ref{prop : radii of the direct sum}. 
This proves that $\Fs_{<i_1}$ is a direct factor. 
}\fi
The proof goes identically by induction for the other 
indexes separating the radii.
}\fi
\end{proof}
}\fi

The assumptions of the following result only involve the properties of 
$\Fs$. A possible criterion to guarantee condition 
\eqref{eq : condition on graphs included} is discussed in 
\cite[Section 2]{NP-IV} where we provide an operative description of 
the controlling graphs.

\begin{theorem}\label{Thm : criterion self direct sum}
Assume that $i$ separates the radii of $\Fs$, and that 
\begin{equation}\label{eq : condition on graphs included}
\Gamma_{S,1}(\Fs)\cup\cdots\cup\Gamma_{S,i-1}(\Fs)
\;\subseteq\; \Gamma_{S,i}(\Fs)\;.
\end{equation} 
Then 
\begin{enumerate}
\item\label{Thm : criterion self direct sum-i} The index $i$ separates the radii of $\Fs^*$ and all the 
assumptions of Theorem \ref{Thm : 5.7 deco good direct siummand} 
are fulfilled.
In particular $(\Fs_{\geq i},\nabla_{\geq i})$ is a direct summand of 
$(\Fs,\nabla)$. 
\item\label{Thm : criterion self direct sum-ii} For all $j=1,\ldots,i$ and all $x\in X$ 
one has $\R_{S,j}(x,\Fs)=\R_{S,j}(x,\Fs^*)$, hence also
\begin{equation}
\Gamma_{S,j}(\Fs)\;=\;\Gamma_{S,j}(\Fs^*)\;.
\end{equation}
In particular 
$\Gamma_{S,1}(\Fs^*)\cup\cdots\cup\Gamma_{S,i-1}(\Fs^*)
\;\subseteq\; \Gamma_{S,i}(\Fs^*)$.
\end{enumerate}
\end{theorem}
\begin{proof}
If $\Gamma_{S,i}(\Fs)=\emptyset$, then $X$ is a virtual open disk 
with empty triangulation, and the claim reduces to 
Proposition \ref{THM : Decomposition on a disk}.

Assume now that $\Gamma_{S,i}(\Fs)\neq \emptyset$. Then 
$X-\Gamma_{S,i}(\Fs)$ is a disjoint union of virtual open disks. 
We shall prove that the index $i$ separates the radii of $\Fs^*$, and apply 
Theorem \ref{Thm : 5.7 deco good direct siummand}.

Firstly observe that $i$ separates the radii of $\Fs^*$ at the points of 
$\Gamma_{S,i}(\Fs)$. Indeed if $x\in\Gamma_{S,i}(\Fs)$, then the radius 
$\R_{S,i}(x,\Fs)$ is spectral at $x$ 
(cf. Remark \ref{Remark : R_i solvable on Gamma_i}). 
Hence $\R_{S,1}(x,\Fs),\ldots,\R_{S,i-1}(x,\Fs)$ are all 
spectral and \emph{non solvable} at $x$, because $i$ separates the 
radii. So by compatibility with duality in the spectral case 
(cf. 
Prop. \ref{dual}) 
one has
\begin{equation}\label{eq : radii fr equel jh}
\R_{S,j}(x,\Fs)\;=\;\R_{S,j}(x,\Fs^*)\;,\textrm{ for all }j=1,\ldots,i,
\textrm{ and all }x\in\Gamma_{S,i}(\Fs)\;.
\end{equation}
Let now $D$ be a virtual open disk in $X$ with boundary $x\in 
\Gamma_{S,i}(\Fs)$.  
The assumption \eqref{eq : condition on graphs included} implies that 
the radii $\R_{S,1}(-,\Fs),\ldots,\R_{S,i}(-,\Fs)$ are all constant on $D$. 
Now consider the empty weak triangulation on $D$.
The rule \eqref{Localization of sol-3} proves that the radii 
$\R_{\emptyset,1}(-,\Fs_{|D}),\ldots,\R_{\emptyset,i}(-,\Fs_{|D})$ 
are all constants and that 
$\R_{\emptyset,i-1}(-,\Fs_{|D})<\R_{\emptyset,i}(-,\Fs_{|D})$ 
(i.e. the radii remains separated after localization). 
Indeed, the radii are not truncated by localization to $D$.
More precisely, if $z\in D$, we have 
$D= D_{S,i}^c(z,\Fs)$, and hence 
$D_{S,i}(z,\Fs)\subseteq D$ by \eqref{eq : D^c}. Therefore, for all $j=1,\ldots,i-1$ 
one also has 
\begin{equation}\label{eq : bobobobobez}
D_{S,j}(z,\Fs)\;\subset\; D_{S,i}(z,\Fs)\;\subseteq\; D\;.
\end{equation}
Now, by Proposition \ref{THM : Decomposition on a disk} the first $i$ 
radii of $(\Fs_{|D})^*$ are constant functions on $D$ equal to those 
of $\Fs_{|D}$. 
We shall now deduce from this the same equality between the 
radii before localization to $D$. 
The equality of the radii over $D$ shows that the inclusion 
$D_{\emptyset,j}(z,\Fs^*_{|D})\;\subset\; D$ is strict. 
Then, we deduce from \eqref{Localization of sol-3}, 
that for all $j=1,\ldots,i-1$ one has $D_{S,j}(z,\Fs^*)=D_{S,j}(z,\Fs)\subset D$. 
Hence, the radii $\R_{S,1}(-,\Fs^*),\ldots,\R_{S,i-1}(-,\Fs^*)$ (before localization) are constant 
functions on $D$, and are equal to $\R_{S,1}(-,\Fs),\ldots,\R_{S,i-1}(-,\Fs)$ respectively. Together with \eqref{eq : radii fr equel jh} this gives 
$\R_{S,j}(-,\Fs^*)=\R_{S,j}(-,\Fs)$ over the whole $X$ for all $j=1,\ldots,i-1$.

Now, \eqref{Localization of sol-3} does not give information about the 
exact value of $\R_{S,i}(-,\Fs^*)$ because this radius might be 
truncated by localization to $D$. 
However, since we have a strict inclusion
$D_{\emptyset,i-1}(z,\Fs^*_{|D})\subset 
D_{\emptyset,i}(z,\Fs^*_{|D})$, then 
\eqref{Localization of sol-3} implies that 
the inclusion $D_{S,i-1}(z,\Fs^*)\subset D_{S,i}(z,\Fs^*)$ is strict. 
In other words, $i$ separates the $S$-radii of 
$\Fs^*$ at all $z\in D$ (before localization). 
By \eqref{eq : radii fr equel jh}, the index $i$ separates the radii  
of $\Fs^*$ on the whole $X$. 
By Proposition \ref{Prop. i sep F and F^* then R=R}, it follows that 
the $i$-th radius of $\Fs$ and $\Fs^*$ coincide, and 
hence also that $\Gamma_{S,i}(\Fs)=\Gamma_{S,i}(\Fs^*)$. 
Item \eqref{Thm : 5.7 deco good direct siummand-ii} follows.

Item \eqref{Thm : 5.7 deco good direct siummand-i} is now a 
consequence of Theorem \ref{Thm : 5.7 deco good direct siummand}.
\if{

The same holds for all $y\in X-\Gamma_{S,i}(\Fs)$ 
by the following argument. 
Since $\Gamma_S\subseteq\Gamma_{S,i}(\Fs)$, then 
$X-\Gamma_{S,i}(\Fs)$ is a disjoint union of virtual open disks.
Let $D\subseteq X$ be such a virtual open disk, and let 
$\{x\} = \overline{D}-D$ be the point of 
$\Gamma_{S,i}(\Fs)$ in its boundary.
Since $i$ separates the radii, then there exists a 
segment $[z,x]$, connecting a point $z\in D$ to $x$, such that all the 
radii $\R_{S,1}(-,\Fs),\ldots,\R_{S,i-1}(-,\Fs)$ are spectral and 
non solvable over $[z,x]$ (and constant on $D$ by assumption). 

\framebox{
\begin{minipage}{450pt}
ATTENZIONE : Nella dimostrazione del teorema io separo lo studio dei punti di $\Gamma$ dallo studio dei punti non in $\gamma$.

Poi uso il teorema di decomposizione di Kedlaya sui dischi tangenti al 
grafico controllante.

C'e' un problema : siccome questi sono dischi \framebox{virtuali (!!)} 
devo discendere il teorema di Kedlaya ai dischi virtuali. Non credo che ci 
siano problemi, ma bisogna dirlo ...
\end{minipage}
}

\begin{theorem}
Let $X=D$ be a virtual open disk with empty triangulation. 
Let $\Fs$ be a differential equation on $D$ such that 
$\R_{\emptyset,j}(-,\Fs)$ is a constant function on 
$D$ for all $j=1,\ldots,i$.
Let $j\in\{1,\ldots,i\}$ be an index separating the radii. 
Then $\Fs_{\geq j}$ is a 
direct summand of $\Fs$. 
\end{theorem}
\begin{proof}
The claim follows from 
\cite[12.4.1]{Kedlaya-book-2} if $D$ is $K$-rational, 
and by a Galois descent from 
$K^{\mathrm{alg}}$ to $K$ otherwise. 
We provide in Proposition \ref{THM : Decomposition on a disk} another proof 
based on Grothendieck-Ogg-Shafarevich formula 
(cf. \eqref{GOS-changed}). 
\end{proof}

By Proposition \ref{THM : Decomposition on a disk} 
$\Fs_{|D}$ decomposes into a direct sum of sub-objects 
$(\Fs_{|D})^\rho$ satisfying 
$\R_{\emptyset,j}(y,(\Fs_{|D})^\rho)=\rho$ for all 
$j=1,\ldots,\mathrm{rank}((\Fs_{|D})^\rho)$.
So $(\Fs_{\geq i})_{|D}$ is a direct summand of $\Fs_{|D}$. 
From this we obtain that for all $y\in D$, 
and for all $j=1,\ldots,i$, the $j$-th radius 
$\R_{\emptyset,j}(y,\Fs_{|D})$ coincides with the radius 
$\R_{\emptyset,1}(y,\Fs_{|D}')$ where $\Fs_{|D}'$ is a direct 
summand of $\Fs_{|D}$ (see for example 
\cite[Proposition 5.5 and Lemma 7.6]{NP-I}, see also 
Prop. \ref{prop : radii of the direct sum}). Since the duality commutes 
with the $1$-th radius (cf. Prop. \ref{Prop : 1-th radius and dual}), 
one has $(\Fs_{|D}^*)_{\geq j}=
((\Fs_{|D})_{\geq j})^*$ for all $j=1,\ldots,i$. 
This proves that 
$\omega_{S,j}(y,\Fs)^*=\omega_{S,j}(y,\Fs^*)$ for all $j=1,\ldots,i$ 
and all $y\in D$ (cf. \eqref{Localization of sol}). 
Hence the dimension of $\omega_{S,i}(-,\Fs^*)$ is 
constant on $X$ with value $r-i+1$ and $(\Fs^*)_{\geq i}$ 
exists by i) and ii). 
To prove \eqref{eq : can map iso} we observe that the arrow 
$((\Fs^*)_{\geq i})_x\to\Fs^*_x\to(\Fs_{\geq i})^*_x$ 
is an isomorphism for all $x$ for which $i$ is spectral by 
Thm. \ref{Dw-Robba}. In particular it holds for 
$x\in\Gamma_{S,i}(\Fs)$. So the map 
$(\Fs^*)_{\geq i}\to\Fs^*\to(\Fs_{\geq i})^*$ 
is an isomorphism over a 
neighborhood of each point of $\Gamma_{S,i}(\Fs)$. 
This also holds over each $D$ thanks 
to the above decomposition. This concludes the proof of iv).
Equality v) follows from \eqref{eq : compatibility with duals} 
on the points $x\in\Gamma_{S,i}(\Fs)$, since the radius is spectral, 
and by the the above decomposition over each $D$. vi) is a 
consequence of v). This proves iii) as in Prop. 
\ref{Prop : direct summand}.
}\fi
\end{proof}

\begin{remark}\label{Remark : bad condition}
Let $S,S'$ be two weak triangulations of $X$ such that 
$\Gamma_S\subseteq\Gamma_{S'}$. 
The best choice in order to fulfill condition 
\eqref{eq : condition on graphs included} will be $S'$. 
Indeed, by definition we always the inclusion 
$\Gamma_{S'}\subseteq\Gamma_{S',i}(\Fs)$, therefore 
we might be tempted to increase $S'$ in order to have 
$\Gamma_{S,1}(\Fs)\cup\cdots\cup\Gamma_{S,i-1}(\Fs)\subseteq\Gamma_{S'}$.\footnote{This is possible because none of the 
$\Gamma_{S,j}(\Fs)$ contains points of type $1$ or $4$ by 
\cite{Kedlaya-draft}.} By the rule \eqref{eq: change of tri eq 2.32}, this automatically guarantee 
$\Gamma_{S',1}(\Fs)\cup\cdots\cup\Gamma_{S',i-1}(\Fs)
\subseteq\Gamma_{S',i}(\Fs)$. 

Unfortunately, the fact that we increase of $S'$ has the effect to 
truncate the radii (cf. \eqref{Prop: A-4 fgh -eq}). 
Therefore, if $S'$ is too 
large, the index $i$ may not separate the $S'$-radii.
In fact, by Remark \ref{Remark : changing tr F_S,I} we 
know that, in order to guarantee that $i$ separates the radii, 
the best choice of triangulation is $S$. 
The triangulation $S$ is also convenient for Theorem 
\ref{Thm : 5.7 deco good direct siummand} which is for this 
reason more ``\emph{natural}'' than Theorem 
\ref{Thm : criterion self direct sum}.
\end{remark}

\begin{remark}
The statement of 
Theorem \ref{Thm : criterion self direct sum} implies that the 
inclusion of graphs \eqref{eq : condition on graphs included} holds for 
$\Fs$ if, and only if, it holds for $\Fs^*$.
\end{remark}
Thanks to this remark, we may test 
\eqref{eq : condition on graphs included} on $\Fs$ or on $\Fs^*$.
In view of this, the following Corollary is helpful in 
order to check if Theorem \ref{Thm : criterion self direct sum} applies.
\begin{corollary}
\label{Cor. dual compatibility}
For all $i=1,\ldots,\mathrm{rank}(\Fs)$ one always has
\begin{equation}
\Gamma_{S,1}(\Fs)\cup\cdots\cup\Gamma_{S,i}(\Fs)\;=\;
\Gamma_{S,1}(\Fs^*)\cup\cdots\cup\Gamma_{S,i}(\Fs^*)\;.
\end{equation}
\end{corollary}
\begin{proof}
Without loss of generality, we can assume $K$ spherically complete.

Set $\Gamma_{S,i}'(\Fs):=
\Gamma_{S,1}(\Fs)\cup\cdots\cup\Gamma_{S,i}(\Fs)$. 
 By a result of Kedlaya, 
none of the $\Gamma_{S,j}(\Fs)$ contains points of type 
$1$ or $4$ (cf. \cite{Kedlaya-draft}). Therefore, we may 
consider a triangulation $S'$ such that 
$\Gamma_{S'}=\Gamma_{S,i}'(\Fs)$. 
Proposition \ref{Prop : A-5 ecp} we have 
$\Gamma_{S,i}'(\Fs)=\Gamma_{S',i}'(\Fs)$. 

We claim that it is enough to prove that for all $x\notin\Gamma_{S',i}'(\Fs)=\Gamma_{S'}$ the 
radii $\R_{S',j}(-,\Fs^*)$, $j=1,\ldots,i$ are all constant on $D(x,S')$. 
Indeed, by definition of the radii this imply $\Gamma_{S',i}'(\Fs^*)\subseteq\Gamma_{S'}$. On the other hand, 
by Proposition \ref{Prop : A-5 ecp}, we have 
\begin{equation}
\Gamma_{S,i}'(\Fs^*)\cup\Gamma_{S'}\;=\; 
\Gamma_{S',i}'(\Fs^*)\;\subseteq\;
\Gamma_{S'}\;=\;
\Gamma_{S,i}'(\Fs)
\end{equation}
and therefore $\Gamma_{S,i}'(\Fs^*)\subseteq\Gamma_{S,i}'(\Fs)$. 
A dual argument will gives the converse inclusion.

Let us then sow that the, for all $j=1,\ldots,i$ 
the radii $\R_{S',j}(-,\Fs^*)$ are constants outside $\Gamma_{S'}$. 
By their definition, they are not truncated when we localize to a 
connected component $D$ of $X-\Gamma_{S'}$ (cf. \eqref{Localization of sol-3}). Therefore, we can assume that $X$ 
is a virtual open disk with empty triangulation 
such that the first $i$ radii of $\Fs$ are constant functions on it. 
In this case the claim follows by Proposition \ref{THM : 
Decomposition on a disk}.
\end{proof}

\if{
\begin{remark}\label{Rk : another condition?}
We want to know whether the sequence 
$0\to\Fs_{\geq i}\to\Fs\to\Fs_{<i}\to 0$ splits.
This information should be contained in the cokernel
$\mathcal{H}^1(\Fs_{<i}^*\otimes\Fs_{\geq i})$ of the 
connection (cf. Lemma \ref{Lemma : non splitting extension}).
Hence it should be intelligible in term of $\Fs_{<i}^*$ and 
$\Fs_{\geq i}$. Since all the 
extensions $0\to\Fs_{\geq i}\to \mathscr{E}\to\Fs_{<i}\to 0$ have 
the same extrema $\Fs_{<i}^*$ and $\Fs_{\geq i}$, 
the unique way to distinguish the direct sum 
$\Fs_{\geq i}\oplus\Fs_{<i}$ from the other extensions is to 
impose that the direct sum is the unique possible extension. 
This means that the cokernel of the restriction
$(\Fs_{<i}^*\otimes\Fs_{\geq i})_{|D}$ is zero on each disk 
$D\subseteq X$, in relation with the Grothendieck-Ogg-Shafarevich 
formula \eqref{GOS-changed}. 

In section \ref{section counterexample to duality and exactness} we 
provide an example of non splitting sequence 
$0\to\Fs_{\geq i}\to\Fs\to\Fs_{<i}\to 0$. Such a sequence is the 
unique contradicting the assumptions of Theorems 
\ref{Thm : 5.7 deco good direct siummand} and 
\ref{Thm : criterion self direct sum}. This was the 
starting point of this section (cf. Remark 
\ref{remark - radii non separated VS H^1}).
\end{remark}
}\fi

\subsection{Direct sum over annuli.}
\label{Some interesting particular cases (annuli).}

In this section we show how to derive from the above criterion some 
well known decompositions results over open annuli (cf. \cite[section 12]{Kedlaya-book-2}, \cite{Ch-Me-III}). We provide some new informations 
about the nature of the controlling graphs in that context.
\begin{corollary}\label{Cor : linear on annulus}
Let $X$ be a virtual open annulus with empty triangulation 
$S=\emptyset$. Let $I:=\Gamma_X$ be its skeleton 
and let $i\leq r=\mathrm{rank}(\Fs)$. 
Then, the following conditions are equivalent :
\begin{enumerate}
\item\label{Cor : linear on annulus-i} For all 
$j\in\{1,\ldots,i-1\}$, the partial height $H_{\emptyset,j}(-,\Fs)$ 
(cf. \eqref{eq : partial height}) is a $\log$-affine map on $I$;
\item\label{Cor : linear on annulus-ii} For all 
$j\in\{1,\ldots,i-1\}$, the radius $\R_{\emptyset,j}(-,\Fs)$ 
is a $\log$-affine map on $I$.
\end{enumerate}
If they are satisfied, then for all $j=1,\ldots,i-1$ one has 
\begin{equation}
\Gamma_{\emptyset,j}(\Fs)\;=\;I\;.
\end{equation}

Assume that the above conditions are satisfied and, moreover, 
that the index $i$ separates the radii at every point of $I$: 
 $\R_{\emptyset,i-1}(x,\Fs)<\R_{\emptyset,i}(x,\Fs)$ for all 
$x\in I$.

Then,  the index $i$ separates the radii of $\Fs$ globally on $X$, 
the inclusion \eqref{eq : condition on graphs included} holds and 
Theorem \ref{Thm : criterion self direct sum} applies. In particular,
\begin{equation}\label{eq: FS=direct over annulus}
\Fs\;=\;\Fs_{<i}\oplus\Fs_{\geq i}\;.
\end{equation}
\end{corollary}
\begin{proof}
The equivalence between \eqref{Cor : linear on annulus-i} and 
\eqref{Cor : linear on annulus-ii} follows from an easy induction on $j$ 
using Definition \eqref{eq : partial height}.

Let us prove that for all $j=1,\ldots,i-1$ one has 
$\Gamma_{\emptyset,j}(\Fs)=I$. 
For every $z\notin I$ the maximal disk $D(z,\emptyset)$ (cf. Definition \ref{Maximal disksksks}) 
is the connected component of $X-I$ containing $z$. Therefore, the 
localization to $D(z,\emptyset)$ does not change the value of the radii 
nor the total heights (cf. \eqref{Localization of sol-1}). 
It is then enough to prove that for all $j=1,\ldots,i-1$
the $j$-th radius of $\Fs$ is constant on 
every maximal disk $D(z,\emptyset)$ either before or after localization to it.

Since $\R_{\emptyset,j}(-,\Fs)$ is $\log$-affine along $I$, 
we have only two possibilities: 
either $\R_{\emptyset,j}(x,\Fs)$ is spectral non-solvable at every 
$x\in I$, or $\R_{\emptyset,j}(x,\Fs)$ is solvable at every $x\in I$.

Let us first consider the first case, 
the total height $H_{\emptyset,j}(-,\Fs)$ is harmonic 
at every $x\in I$ (cf. \cite[Theorem 3.9]{NP-I}), therefore the $\log$-linearity along 
$I$ implies that for every $x\in I$ and every direction $b$ out of $x$ 
such that $b\not\subset I$ we have 
$\partial_bH_{\emptyset,j}(-,\Fs)=0$, because every $\partial_bH_{\emptyset,j}(-,\Fs)$ is positive. By assumption, this property also holds for every $j'\leq j$ and by definition we 
have $\partial_bH_{\emptyset,j}(-,\Fs)=\sum_{1\leq j'\leq j}\partial_b\R_{\emptyset,j'}(-,\Fs)$. 
Therefore, the radii $\R_{\emptyset,j'}(-,\Fs)$ also satisfy 
$\partial_b\R_{\emptyset,j'}(-,\Fs)=0$. 
As we have seen, this implies that $\partial_b\R_{\emptyset,j'}(-,\Fs_{|D(z,\emptyset)})=0$ for the slopes of the localization of $\Fs$ 
to the maximal disk $D(z,\emptyset)$. 
Proposition \ref{THM : Decomposition on a disk} 
then applies and the radii are all constant on every maximal disk 
$D(z,\emptyset)$ as required.

Let us now consider the second case where $\R_{\emptyset,j}(x,\Fs)$ 
is solvable at every $x\in I$. Let $j_1:=i_x^{\textrm{sol}}$ (cf. 
Definition \ref{eq : over-solvable cutoff}) be the first solvable index 
less than or equal to $j$. Notice that, for every $x'\in I$ we have 
$i_{x'}^{\textrm{sol}}=i_x^{\textrm{sol}}$ because the radii are 
$\log$-affine along it. For all $j'< j_1$ we 
have just proved that 
the radii $\R_{\emptyset,j'}(-,\Fs)$ are all constant on 
every connected component $D(z,\emptyset)$ of $X-I$. Moreover, they 
are not equal to $1$ otherwise they would be solvable at $x$. Arguing 
as in \emph{Step 3} of the proof of Proposition 
\ref{THM : Decomposition on a disk}, we may apply 
\cite[Theorem 12.4.1]{Kedlaya-book-2} and Proposition 
\ref{Prop. Exact sequence prop separate radii in the right order}, 
to prove that $\Fs_{|D(z,\emptyset)}$ factorizes as 
$\Fs_{|D(z,\emptyset)}=\Fs'\oplus\Fs''$ where for all 
$z'\in D(z,\emptyset)$ close enough to $x$ (i.e. close to the boundary of $D(z,\emptyset)$), 
for all $k=1,\ldots,j_1-1$ and 
for all $k'=1,\ldots,\mathrm{rank}(\Fs)-j_1+1$ we have
\begin{equation}
\R_{\emptyset,k}(z',\Fs')\;=\;
\R_{\emptyset,k}(z',\Fs_{|D(z,\emptyset)})\;,\qquad
\R_{\emptyset,k'}(z',\Fs'')\;=\;
\R_{\emptyset,j_1-1+k'}(z',\Fs_{|D(z,\emptyset)})\;.
\end{equation}
This proves in particular that over $D(z,\emptyset)$ 
the $j_1$-th radius of $\Fs$ equals the first radius of $\Fs''$. 
In particular, the function 
$\R_{\emptyset,j_1}(-,\Fs)=\R_{\emptyset,j_1}(-,\Fs_{|D(z,
\emptyset)})$ is decreasing on the segments $[c,x[\subset D(z,
\emptyset)$ (cf. \cite[Theorem 3.9]{NP-I}), 
where $c\in D_L$ is an $L$-rational points for an 
unspecified extension $L/K$. Since the $j_1$-th radius 
is solvable at $x$ (hence equal to $1$), it follows by continuity that 
$\R_{\emptyset,j_1}(-,\Fs)$ is constant along $[c,x[$ with value $1$. 
Since the radii are insensitive to scalar extension of $K$, we deduce 
that $\R_{\emptyset,j_1}(z',\Fs)=1$ for all $z'\in D(z,\emptyset)$. 
Moreover, since for all $j'\geq j_1$ we have $\R_{\emptyset,j'}(z',\Fs)
\geq\R_{\emptyset,j_1}(z',\Fs)$, then $\R_{\emptyset,j'}(-,\Fs)$ is 
constant with value $1$ on every $D(z,\emptyset)$, 
for every $j'\ge j_1$.

To summarize, we have proved that for all $j=1,\ldots,i-1$, 
the function $\R_{\emptyset,j}(-,\Fs)$ is constant on each connected 
component of $X-I$. In particular $\Gamma_{\emptyset,j}(\Fs)=I$.

Let us now assume that 
$\R_{\emptyset,i-1}(x,\Fs)<\R_{\emptyset,i}(x,\Fs)$, for all 
$x\in I$. By definition $I$ is contained in the controlling graph 
$\Gamma_{\emptyset,i}(\Fs)$, therefore we 
have $\Gamma_{\emptyset,1}(\Fs)\cup\cdots\cup\Gamma_{\emptyset,i-1}(\Fs)=I\subset\Gamma_{\emptyset,i}(\Fs)$ and \eqref{eq : condition on graphs included} holds.
As a consequence, if we show that $i$ separates 
the radii of $\Fs$ on the whole $X$, 
then the direct sum decomposition \eqref{eq: FS=direct over annulus} 
will follow from Theorem \ref{Thm : criterion self direct sum}.

Let us prove that $i$ separates the radii on the whole $X$. 
Let $D$ be a connected component of $X-I$ and  $x\in I$ its relative 
boundary in $X$.
Since $i$ separates the radii along $I$, 
for all $j<i$ the $j$-th radius is spectral non-solvable at 
$x\in I$. We know that for every $j<i$ the $j$-th 
radius functions $\R_{\emptyset,j}(-,\Fs)$ is constant on $D$. We can 
therefore apply again \cite[Theorem 12.4.1]{Kedlaya-book-2} to obtain  $\Fs_{|D}=\Fs'\oplus\Fs''$ over 
$D$ and express the $i$-th radius function of $\Fs_{|D}$ as a first 
radius of another differential equation over $D$. 
It follows that the $i$-th radius has the decreasing property as above 
and, as a consequence, the index $i$ separates the radii of $\Fs$ over 
$D$. The claim follows.
\end{proof}

\subsubsection{Solvable equations over the Robba ring.}
\label{Ch-Me-Robba}
The so called Robba ring is the ring  
\begin{equation}\label{eq : def of Robba ring first given}
\mathfrak{R}\;:=\;\bigcup_{\varepsilon>0}\O(C_\varepsilon)\;,
\end{equation} 
where $C_\varepsilon:=C_K^-(0;1-\varepsilon,1)$. 
Following a terminology of Gilles Christol and Zoghman Mebkhout \cite{Ch-Me-III, Ch-Me-IV}, a differential 
module $\M$ over $\mathfrak{R}$ is \emph{solvable} if 
\begin{equation}\label{eq : solvability over the Robba ring}
\lim_{\rho\to 1^-}\R_{\emptyset,1}(x_{0,\rho},\M)\;=\;1\;.
\end{equation} 
By  \cite[Theorem 4.2-1 and Corollary 6.2-6]{Ch-Me-III} and 
\cite[Lemma 12.6.2]{Kedlaya-book-2}, as a consequence of 
the integrality property \eqref{eq : integrality} one shows that,
if $\M$ is solvable, then for all $i=1,\ldots,r=\mathrm{rank}(\M)$, 
there exists $\varepsilon>0$ such that 
$\R_{S,i}(x_{0,\rho},\M)=\rho^{\beta_{r-i+1}}$, 
for all $\rho\in]1-\varepsilon,1[$. 
The numbers $\beta_1\leq\beta_2\leq\cdots\leq\beta_r$ are called the 
\emph{slopes} of $\M$.\footnote{The original terminology of 
Christol and Mebkhout is \emph{$p$-adic slopes}. 
We drop the term $p$-adic because we allow the absolute value of 
$K$ to be the extension of a trivially valued field, or to have a residual 
field of characteristic $0$. 
Indeed, everything works in a more general context.} 
We say that $\M$ is pure of slope 
$\beta$ if $\beta_1=\cdots=\beta_r=\beta$.

As a consequence of Corollary \ref{Cor : linear on annulus} we recover 
Christol-Mebkhout decomposition theorem:
\begin{corollary}[\cite{Ch-Me-III}]\label{Cor : Ch-Me deco}
Any solvable differential module over $\mathfrak{R}$
admits a direct sum decomposition 
\begin{equation}\label{eq : M(beta)}
\M\;:=\;\bigoplus_{\beta \geq 0}\M(\beta)
\end{equation}
into sub-modules $\M(\beta)$ that are pure of slope $\beta$. 
\end{corollary}
\begin{proof}
The radii $\R_{S,i}(-,\Fs)$ are $\log$-affine on the skeleton of a 
conveniently small annulus $C_\varepsilon$. 
Then apply Corollary \ref{Cor : linear on annulus}.
\end{proof}

\begin{definition}[Irregularity]\label{Def : Ch-Me irreg}
Let $\M$ be a solvable module over $\mathfrak{R}$. 
We define the \emph{Christol-Mebkhout Newton polygon} of $\M$ as
the polygon whose slopes are 
$\beta_1\leq\beta_2\leq\cdots\leq\beta_r$. 
It is the epigraph of the convex function 
$h:[0,r]\to\mathbb{R}_{\geq 0}$ such that $h(0)=0$, 
$h(i)=\beta_1+\cdots+\beta_i$ for all $i=1,\ldots,n$, 
and it is affine on each interval $[i,i+1]$.
We define the irregularity of $\M$ as the height 
\begin{equation}
\mathrm{Irr}(\M)\;=\;\sum_{i=1}^r\beta_i\;.
\end{equation}
of the Christol-Mebkhout Newton polygon. 
\end{definition}

\begin{remark}\label{Rk : irr eq slope}
By definition, the irregularity $\mathrm{Irr}(\M)\geq 0$
is the slope of the function $\partial_bH_{\emptyset,r}(-,\M)$,
where $b$ is the germ of segment at the boundary of the open 
unit disk, oriented as outside the disk.
The fact that $\mathrm{Irr}(\M)$ is a non negative integer, is due to 
the fact that the slopes of $H_{\emptyset,r}(-,\M)$ are integers 
numbers (cf. Proposition \ref{Prop : Integrality of the Partial heights}), 
and to the solvability condition. 
\end{remark}

\subsection{Crossing points and filtration outside a locally finite 
set.}
\label{Crossing points and global decomposition.}

In this section we do not assume that the radii of $\Fs$ are separated 
globally on $X$. We investigate the existence of a filtration of $\Fs$ 
over some regions of $X$. A first elementary approach consists in 
localizing on a given region and, if the radii are separated after 
localization, apply the above results to the localized module.
The principal issue in this procedure is that the localization process 
may truncate some radii (cf. Section \ref{Localization}), therefore they may 
not remain separated after 
localization. A possible approach consists in working 
with the original radii before localization, somehow avoiding 
the localization process, and show that a sub-differential equation exists 
on the regions where the radii are separated (cf. Remark 
\ref{Rk : no need of S_i}). However, a more accurate analysis of the 
geometrical locus where the radii are separated is interesting in itself. 
Therefore we begin in Sections \ref{Subsubsection: cr pts} and 
\ref{subsubsection : ith sep sub} by obtaining a precise description
of the nature of the regions where the radii are 
separated and we prove that they admit weak-triangulations for which the 
localization does not truncate the radii.

Let us begin by stating a Lemma which will be useful in this 
section. Recall that $X$, and hence also $\Gamma_S$, are connected 
(cf. Setting \ref{Hypothesis : X connected}).

\begin{lemma}\label{Lemma : cc of X-mathfrak(f)}
Let $\Gamma\subseteq X$ be a non empty, connected, 
locally finite graph in $X$ containing $\Gamma_S$ such that $X-\Gamma$ is a disjoint union of virtual open disks. 
Let $A\subseteq\Gamma$ be any subset. 
Then, a connected component of $X-A$ is necessarily one of the 
following:
\begin{enumerate}
\item\label{Lemma : cc of X-mathfrak(f)-i} a virtual open disk 
coinciding with a connected component of 
$X-\Gamma$ whose relative boundary in $X$ is a point of $A$;
\item\label{Lemma : cc of X-mathfrak(f)-ii} the inverse image in $X$ 
by the canonical retraction 
$X\to\Gamma$ of a connected component of $\Gamma-A$.
\end{enumerate}
\hfill$\qed$
\end{lemma}

\if{\begin{lemma}\label{Lemma : Gamma-A and Gamma-B}
Let $\Gamma_1\subseteq\Gamma_2$ be two non empty, connected, 
closed, locally finite graphs in $X$ containing $\Gamma_S$ such that 
$X-\Gamma_i$ is a disjoint union of virtual open disks. 

Let $A\subseteq \Gamma_1$ be any subset and let 
$B\subseteq\Gamma_2$ be a locally finite subset.

Assume that the relative boundary of $A$ in $\Gamma_1$ 
is contained in $B$. 

\comm{In $\Gamma_1$ or $\Gamma_1\cup\Gamma_2$ ?}

Then, every connected component of $X-B$ is either 
contained in a connected component of $X-A$ or it is 
disjoint from all of them.
\end{lemma}
\begin{proof}
Denote by $\tau_i\;:\;X\to\Gamma_i$ the canonical retractions.
Recall that $X-\Gamma_i$ is a disjoint union of virtual open disks. 
Then, the retraction $\tau_i$ is the identity on $\Gamma_i$ and it 
sends a point $x\in X-\Gamma_i$ into the relative boundary 
$y\in\Gamma_i$ of the connected component of $X-\Gamma_i$ 
containing $x$. It follows that 
$\tau_1=\tau_{1,2}\circ\tau_2$, where $\tau_{1,2}:
\Gamma_2\to\Gamma_1$ is the restriction to $\Gamma_2$ of the 
retraction $\tau_1$ : $\tau_{1,2}=\tau_{1|\Gamma_2}$.

Let $Q$ be a connected component of $X-B$. 
There is a unique connected component $Q'$ of 
$X-(B\cap\Gamma_1)$ containing $Q$. It is enough to 
show that $Q'$ has the claimed property and, therefore, we may 
assume without loss of generality that $\Gamma_1=\Gamma_2$ 
coincide. Let us call $\Gamma$ this graph.

In this case, the connected components of  $X-A$ and $X-B$ are 
described by Lemma \ref{Lemma : cc of X-mathfrak(f)}. 

If $Q'$ satisfies item \eqref{Lemma : cc of X-mathfrak(f)-i} of Lemma 
\ref{Lemma : cc of X-mathfrak(f)}, it is obviously contained in a 
connected component of $X-A$ or none of them. 
The claim is trivial in this case.
 
On the other hand, the set of connected components of $X-B$ (resp. 
$X-A$) that satisfy item \eqref{Lemma : cc of X-mathfrak(f)-ii} 
of Lemma \ref{Lemma : cc of X-mathfrak(f)} are in bijection with the 
connected components of $\Gamma-B$ (resp. $\Gamma-A$). It is then 
enough to show that a connected component of $\Gamma-B$ is either 
contained in a connected component of $\Gamma-A$ or in none of 
them. 

Since $\Gamma$ is closed in $X$, 
the boundary of $A$ in $X$ coincides with the boundary of $A$ in $\Gamma$
	
\comment{C'est vrai \c ca ? Si on a $A=\Gamma$ par exemple, le bord de $A$ dans $\Gamma$ va \^etre vide, mais pas forc\'ement dans $X$.}

\comm{Il y a un problème ici ... je vais y refléchir}
														
 and it is, by assumption, contained in $B$. On the other 
hand, $B$ is the boundary of $\Gamma-B$ in $\Gamma$. This shows 
that the boundary of $\Gamma-B$ contains that of $\Gamma-A$.
It follows that the interior of $\Gamma-B$ is disjoint from the 
boundary of $A$. In particular, every connected component of the 
interior of $\Gamma-B$ is either contained in a connected component 
of the interior of $A$ or of the interior of $\Gamma-A$. In particular, it 
is contained either in a connected component of $\Gamma-A$ or it is 
contained in $A$.

Since $B$ is locally finite, the interior of $\Gamma-B$ equals the 
disjoint union of the connected components of $\Gamma-B$ and the 
claim follows.
\end{proof}
}\fi

\subsubsection{Crossing points.}\label{Subsubsection: cr pts}
We now describe the nature of some regions where the radii cross, or stay 
separated.

Let $I\subseteq\{1,\ldots,r=\mathrm{rank}(\Fs)\}$ be a non empty 
subset. Set
\begin{equation}
\Gamma_{S,I}(\Fs)\;:=\;\bigcup_{i\in I}\Gamma_{S,i}(\Fs)\;.
\end{equation}
\begin{definition}\label{Definition : Sep ne}
An $I$-\emph{separating neighborhood} of $x\in X$ is a open neighborhood 
$U$ of $x$ such that for all $i,j\in I$ one has either 
$\R_{S,i}(y,\Fs)=\R_{S,j}(y,\Fs)$ for all $y\in U$, or 
$\R_{S,i}(y,\Fs)\neq \R_{S,j}(y,\Fs)$ for all  $y\in U$. 
In particular, if $I=\{1,\ldots,r\}$, then each index $i$ separates the radii 
either everywhere or nowhere on $U$. In this case we simply say that $U$ 
is a separating neighborhood.
\end{definition}
\begin{definition}\label{Def : Crossing points}
A point $x\in X$ is called $I$-\emph{crossing point for $\Fs$} if it has no 
$I$-separating neighborhoods. We denote by 
\begin{equation}\label{eq : Cr_S set}
\mathrm{Cr}_S(I,\Fs)
\end{equation}
the subset of $I$-crossing points for $\Fs$ and by 
$\mathrm{Cr}_S(\Fs):=\mathrm{Cr}_S(\{1,\ldots,r\},\Fs)$.
\end{definition}
\begin{lemma}
The subset $\mathrm{Cr}_S(I,\Fs)$ is a locally finite closed subset of $X$ 
contained in $\Gamma_{S,I}(\Fs)$.
\end{lemma}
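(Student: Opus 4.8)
The plan is to identify $\mathrm{Cr}_S(\Fs)$ with a finite union of topological frontiers inside the controlling graph. For $1\le i<j\le r$ put $\Lambda_{i,j}:=\{x\in X : \R_{S,i}(x,\Fs)=\R_{S,j}(x,\Fs)\}$; since $\R_{S,i}(\cdot,\Fs)\le\R_{S,j}(\cdot,\Fs)$ and both functions are continuous (Theorem \ref{Thm : finiteness}), the set $\Lambda_{i,j}$ is closed and $X\setminus\Lambda_{i,j}=\{\R_{S,i}(\cdot,\Fs)<\R_{S,j}(\cdot,\Fs)\}$ is open. An open set $U\ni x$ is a separating neighbourhood precisely when, for every pair $i\neq j$, the truth value of $\R_{S,i}(y,\Fs)=\R_{S,j}(y,\Fs)$ is constant for $y\in U$, i.e. $U\subseteq\Lambda_{i,j}$ or $U\cap\Lambda_{i,j}=\emptyset$. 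As there are only finitely many pairs, such a $U$ exists if and only if $x\notin\partial\Lambda_{i,j}$ for every pair (intersect the finitely many neighbourhoods produced pair by pair). Hence
\begin{equation}
\mathrm{Cr}_S(\Fs)\;=\;\bigcup_{1\le i<j\le r}\partial\Lambda_{i,j}\;.
\end{equation}

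First I would check $\partial\Lambda_{i,j}\subseteq\Gamma_{S,i}(\Fs)\cup\Gamma_{S,j}(\Fs)$, which gives the inclusion $\mathrm{Cr}_S(\Fs)\subseteq\Gamma_S(\Fs)$. If $x\notin\Gamma_{S,i}(\Fs)\cup\Gamma_{S,j}(\Fs)$ then, by Definition \ref{Def. :: controlling graphs}, each of $\R_{S,i}(\cdot,\Fs)$ and $\R_{S,j}(\cdot,\Fs)$ is constant on a virtual open disk neighbourhood of $x$; on the (open) intersection $W$ of these two neighbourhoods both radii are still constant, so $W\subseteq\Lambda_{i,j}$ or $W\cap\Lambda_{i,j}=\emptyset$, and thus $x\notin\partial\Lambda_{i,j}$.

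It remains to prove that each $\partial\Lambda_{i,j}$ is locally finite; since there are finitely many pairs, this finishes the proof. Fix $x\in X$. As $\Gamma_S(\Fs)$ is a locally finite graph, $x$ has a neighbourhood $U$ with $\Gamma_S(\Fs)\cap U$ a finite union of segments and points (if $\Gamma_S(\Fs)\cap U=\emptyset$ we are done, by the previous paragraph). By the finiteness results of \cite{NP-I} and \cite{NP-II} (cf. Theorem \ref{Thm : finiteness}), after shrinking $U$ and subdividing we may assume each $\R_{S,k}(\cdot,\Fs)$ is $\log$-affine on each of the finitely many edges of $\Gamma_S(\Fs)\cap U$; on such an edge $e$ the difference $\log\R_{S,j}(\cdot,\Fs)-\log\R_{S,i}(\cdot,\Fs)$ is affine in the canonical coordinate, so $\Lambda_{i,j}\cap e$ is all of $e$, a single point, or empty, and in particular its topological boundary inside $e$ is finite. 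Now let $z$ be an interior point of an edge $e$ which is not a bifurcation point of $\Gamma_S(\Fs)$: a small section-neighbourhood of $z$ is the union of an open subsegment $J\ni z$ of $e$ with virtual open disks attached along $J$ (Section \ref{section : star-shaped}), and on each such disk every radius $\R_{S,k}(\cdot,\Fs)$ is constant, equal to its value at the point of $J$ where the disk is attached (by continuity of the radii). Consequently, near $z$ the condition $\R_{S,i}(\cdot,\Fs)=\R_{S,j}(\cdot,\Fs)$ factors through the retraction onto $J$, so $z\in\partial\Lambda_{i,j}$ only if $z$ belongs to the boundary of $\Lambda_{i,j}\cap e$ taken inside $e$. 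Therefore $\partial\Lambda_{i,j}\cap U$ is contained in the union of these finitely many points with the (finitely many) bifurcation points and endpoints of $\Gamma_S(\Fs)\cap U$, hence is finite.

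The delicate point is this last reduction: one genuinely needs the piecewise $\log$-affine structure of the functions $\R_{S,k}(\cdot,\Fs)$ along $\Gamma_S(\Fs)$, with locally finitely many break-points, from \cite{NP-I} and \cite{NP-II}; and one must pass carefully from the boundary of $\Lambda_{i,j}$ computed inside an edge to the boundary computed inside $X$, which is exactly where the star/section description of neighbourhoods and the constancy of the radii on the attached virtual disks are used. The inclusion $\mathrm{Cr}_S(\Fs)\subseteq\Gamma_S(\Fs)$ and the identity $\mathrm{Cr}_S(\Fs)=\bigcup_{i<j}\partial\Lambda_{i,j}$ are, by contrast, formal consequences of continuity and of Definition \ref{Def. :: controlling graphs}.
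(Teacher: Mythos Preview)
Your proof is correct and rests on the same two ingredients the paper invokes in its one-line argument: local finiteness of $\Gamma_S(\Fs)$ (Theorem~\ref{Thm : finiteness}) and the piecewise $\log$-affine behavior of the radii along segments with locally finitely many breaks. Your formalization via the identity $\mathrm{Cr}_S(\Fs)=\bigcup_{i<j}\partial\Lambda_{i,j}$ is a clean way to make the paper's sketch rigorous, and the passage from ``boundary inside an edge'' to ``boundary inside $X$'' via constancy on the attached disks is exactly the point that the paper leaves implicit. One minor quibble: the reference to Section~\ref{section : star-shaped} is slightly off, since star-shaped neighborhoods there are centered at a single point rather than a segment; what you actually use is the retraction onto $\Gamma_S(\Fs)$ (cf.~\eqref{eq : retraction fdyhtdssfkj} and the discussion after it), whose preimage over an open subinterval $J$ gives precisely the tube you describe.
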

\begin{proof}
This results by finiteness (cf. Theorem \ref{Thm : finiteness}), 
together with the fact that along a compact segment in $X$
each radius $\R_{S,i}(-,\Fs)$ is a piecewise log-affine function with 
a finite number of breaks (cf. \cite{NP-I,NP-II}).
\end{proof}
\begin{lemma}
Let $\mathfrak{F}\subseteq X$ be a locally finite subset of $X$
containing $\mathrm{Cr}_{S}(I,\Fs)$ and let $Y$ 
be a connected component of $X-\mathfrak{F}$. 
Then $Y$ is an $I$-separating open neighborhood of all its points. In other 
words for all $i,j\in I$ one has either $\R_{S,i}(y,\Fs)<\R_{S,j}(y,\Fs)$ for all 
$y\in Y$, or $\R_{S,i}(y,\Fs)=\R_{S,j}(y,\Fs)$ for all $y\in Y$.
\hfill$\qed$
\end{lemma}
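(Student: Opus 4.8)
The plan is to reduce the statement to the local finiteness of the
crossing locus together with the behaviour of each individual radius
along closed segments, exactly as in the preceding lemma, and then to
upgrade the pointwise comparison of radii on~$Y$ from ``locally constant
comparison pattern'' to ``globally constant comparison pattern'' using
the connectedness of~$Y$. First I would fix a connected component~$Y$ of
$X-\mathfrak{F}$ and two indices $i<j$. Since $\mathfrak{F}$ contains
$\mathrm{Cr}_{S}(\Fs)$, no point of~$Y$ is a crossing point, so every
point~$y\in Y$ has a separating neighbourhood~$U_{y}\subseteq X$; up to
shrinking we may take $U_{y}\subseteq Y$ because $Y$ is open in~$X$. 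On
each~$U_{y}$, by definition of a separating neighbourhood, exactly one of
the two alternatives
\[
\R_{S,i}(-,\Fs)=\R_{S,j}(-,\Fs)\quad\text{on }U_{y},
\qquad
\R_{S,i}(-,\Fs)\neq\R_{S,j}(-,\Fs)\quad\text{on }U_{y},
\]
holds.

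Next I would argue that the two subsets
\[
A\;:=\;\{y\in Y\;:\;\R_{S,i}(y,\Fs)=\R_{S,j}(y,\Fs)\},
\qquad
B\;:=\;\{y\in Y\;:\;\R_{S,i}(y,\Fs)<\R_{S,j}(y,\Fs)\}
\]
are both open in~$Y$. Indeed, if $y\in A$ then the separating
neighbourhood~$U_{y}$ cannot realise the inequality alternative (it would
contradict $y\in A\cap U_{y}$), hence it realises the equality
alternative, so $U_{y}\subseteq A$; similarly $U_{y}\subseteq B$ when
$y\in B$. Here one uses the general fact, recorded just before the
statement, that by finiteness (Theorem~\ref{Thm : finiteness}) and the
finite number of breaks of each $\R_{S,k}(-,\Fs)$ along a closed segment
(\cite[Thm. 4.7 iii)]{NP-I}, \cite{NP-II}), the inequality
$\R_{S,i}(y,\Fs)\leq\R_{S,j}(y,\Fs)$ holds for \emph{all} $y\in X$, so
that $Y=A\sqcup B$ is a disjoint union. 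Since $Y$ is connected and
$A,B$ are open with $A\cup B=Y$, one of them is empty: either
$\R_{S,i}(y,\Fs)=\R_{S,j}(y,\Fs)$ for all $y\in Y$, or
$\R_{S,i}(y,\Fs)<\R_{S,j}(y,\Fs)$ for all $y\in Y$. Running over the
finitely many pairs $i<j$ gives the claim, and in particular $Y$ is a
separating open neighbourhood of each of its points.

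The main obstacle is the justification that $A$ and $B$ are genuinely
open, which rests on two inputs that are already available in the
excerpt: first, that $\mathrm{Cr}_{S}(\Fs)$ is exactly the complement of
the union of all separating neighbourhoods, so that every $y\in Y$ truly
possesses such a $U_{y}$; second, the global inequality
$\R_{S,i}\leq\R_{S,j}$ on all of~$X$ (the radii, ordered by the scale
condition of Definition~\ref{scale}, satisfy
$\R_{S,1}\leq\cdots\leq\R_{S,r}$ pointwise), which turns ``$\neq$'' into
``$<$'' and makes $Y=A\sqcup B$. Once these are in place the connectedness
argument is immediate, and no further computation is needed.
\hfill$\Box$
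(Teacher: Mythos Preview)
Your argument is correct and is precisely the standard connectedness argument the paper is tacitly invoking (the lemma carries only a $\Box$ in the paper, with no proof given). One small misattribution: the pointwise inequality $\R_{S,i}(y,\Fs)\le\R_{S,j}(y,\Fs)$ for $i<j$ does not come from Theorem~\ref{Thm : finiteness} or from the finiteness of breaks along segments; it is immediate from the very definition of the multiradius (see the chain of inclusions \eqref{disks}). The finiteness and break results you cite are what the paper uses to show that $\mathrm{Cr}_{S}(\Fs)$ is locally finite, not to establish the ordering of the radii. With that correction, your proof stands.
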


\begin{remark}\label{Remark : cc of X-Cr}
Assume that $\mathrm{Cr}_S(I,\Fs)$ is not empty.\footnote{In 
particular, this implies $\Gamma_{S,I}(\Fs)\neq\emptyset$ because some of the radii are not constants.}
Let $Q$ be a connected component of $X-\mathrm{Cr}_S(I,\Fs)$. 
Then, by Lemma \ref{Lemma : cc of X-mathfrak(f)}, 
$Q$ coincides with one of the following subsets  
\begin{itemize}
\item a virtual open disk coinciding with a connected component of 
$X-\Gamma_{S,I}(\Fs)$ whose relative boundary in $X$ is a point of $\mathrm{Cr}_S(I,\Fs)$; 
\item the inverse image in $X$ of a connected component of 
$\Gamma_{S,I}(\Fs)-\mathrm{Cr}_S(I,\Fs)$ by the canonical 
retraction (cf. \eqref{eq : retraction global})
\begin{equation}
\delta_{\Gamma_{S,I}(\Fs)}\;:\;X\xrightarrow{\quad}
\Gamma_{S,I}(\Fs)\;.
\end{equation}
\end{itemize}
More generally, let $\mathfrak{F}\subseteq X$ be a locally finite subset 
containing $\mathrm{Cr}_{S}(I,\Fs)$ and $Y$ a connected component 
of $X-\mathfrak{F}$. Then $Y$ is contained in a connected component 
$Q$ of $X-\mathrm{Cr}_S(I,\Fs)$. More precisely, $Y$ is equal to a 
connected component of $Q-\mathfrak{F}$.   
\end{remark}

\subsubsection{The $i$-th separating subset $X_{S,i}(\Fs)$.}
\label{subsubsection : ith sep sub}
\begin{definition}
We denote by 
\begin{equation}\label{Def : eq : X_SiF}
X_{S,i}(\Fs)
\end{equation}
the open subset of $X$ formed by 
the  points $x\in X$ such that the index $i$ separates the radii of $\Fs$ at $x$. We simply write $X_{S,i}$ if $\Fs$ is clear from the context.
\end{definition}
\begin{remark}\label{Rk : Y subset X_S,i}
 By definition we have $X_{S,1}(\Fs)=X$. 

 If $X_{S,i}(\Fs)=X$, then the index $i$ separates globally the radii of 
$\Fs$, and $\Fs$ decomposes by Theorem \ref{MAIN Theorem}.

 If $X_{S,i}(\Fs)=\emptyset$, then $\R_{S,i}(x,\Fs)=
\R_{S,i-1}(x,\Fs)$ for all $x\in X$, and  
$\Gamma_{S,i}(\Fs)=\Gamma_{S,i-1}(\Fs)$. In this case there is no 
regions where a generalization of 
Theorem \ref{MAIN Theorem} may exist.

 If $X_{S,i}(\Fs)\neq \emptyset,X$, 
then $\Gamma_{S,i}(\Fs)\cup\Gamma_{S,i-1}(\Fs)\neq\emptyset
$,\footnote{Indeed, if $\Gamma_{S,i}(\Fs)\cup\Gamma_{S,i-1}(\Fs)=\emptyset$, then $X$ is a virtual open disk with 
empty triangulation on which the $i$-th and the $(i-1)$-th radii are 
constants functions. In this case, the two radii either coincide or are distinct globally on $X$. In the first case we would have $X_{S,i}(\Fs)=\emptyset$, while in the second $X_{S,i}(\Fs)=X$.\label{Footnote : Gamma not empty}} 
and in this case $X_{S,i}(\Fs)$ is the inverse image by the canonical 
retraction 
\begin{equation}\label{eq : retract tau i-1,i}
\tau_{i,i-1}\;:\;X\;\xrightarrow{\quad}\; 
\Gamma_{S,i}(\Fs)\cup\Gamma_{S,i-1}(\Fs)
\end{equation}
of a possibly not connected open sub-graph. More precisely, let us 
denote by 
\begin{equation}
C_{S,i,i-1}(\Fs)\;\;\subseteq\;\; 
\Gamma_{S,i}(\Fs)\cup\Gamma_{S,i-1}(\Fs)
\end{equation}
 the subset of points $x\in\Gamma_{S,i}(\Fs)
\cup\Gamma_{S,i-1}(\Fs)$ such that 
$\R_{S,i-1}(x,\Fs)=\R_{S,i}(x,\Fs)$. 
Since the radii are continuous, and 
$\Gamma_{S,i}(\Fs)\cup\Gamma_{S,i-1}(\Fs)$ is closed and locally 
compact in $X$, the subset
$C_{S,i,i-1}(\Fs)$ is closed and locally compact too. 
Moreover, 
since the radii have a locally finite number of breaks along $\Gamma_{S,i}(\Fs)\cup\Gamma_{S,i-1}(\Fs)$ (cf. Theorem \ref{Thm : finiteness}), the set $C_{S,i,i-1}(\Fs)$ and its complement in $\Gamma_{S,i}(\Fs)\cup\Gamma_{S,i-1}(\Fs)$ both have a locally finite number of connected components. 
Therefore, 
$\Gamma_{S,i}(\Fs)\cup\Gamma_{S,i-1}(\Fs)-C_{S,i,i-1}(\Fs)$ is an 
open subset of the graph $\Gamma_{S,i}(\Fs)\cup\Gamma_{S,i-1}(\Fs)$ with 
locally finite number of connected components and 
\begin{equation}\label{eq : X_S,i = tau-1(gamma-C)}
X_{S,i}(\Fs)\;=\;\tau_{i,i-1}^{-1}\Bigl(\Gamma_{S,i}(\Fs)\cup\Gamma_{S,i-1}(\Fs)-C_{S,i,i-1}(\Fs)\Bigr)\;.
\end{equation}
In particular, the connected components of $X_{S,i}(\Fs)$ are in 
bijection with those of 
$\Gamma_{S,i}(\Fs)\cup\Gamma_{S,i-1}(\Fs)-C_{S,i,i-1}(\Fs)$. 
\end{remark}

\begin{lemma}\label{Lemma : crossing set}
The crossing set $\mathrm{Cr}_{S}(\{i-1,i\},\Fs)$ (cf. \eqref{eq : Cr_S set})
coincides with
the boundary of $C_{S,i,i-1}(\Fs)$ as a subset of the topological space 
$\Gamma_{S,i-1}(\Fs)\cup\Gamma_{S,i}(\Fs)$.\footnote{That is the 
(topological) relative boundary of $C_{S,i,i-1}(\Fs)$ in 
$\Gamma_{S,i-1}(\Fs)\cup\Gamma_{S,i}(\Fs)$.} 
In particular 
\begin{equation}\label{eq : Cr_Si,i in CSii}
\mathrm{Cr}_{S}(\{i-1,i\},\Fs)\;\subseteq \;C_{S,i,i-1}(\Fs)\;.
\end{equation}

%
\end{lemma}
\begin{proof}
Let us denote by $\Gamma:=\Gamma_{S,i-1}(\Fs)\cup\Gamma_{S,i}(\Fs)$.
If $x\in C_{S,i,i-1}(\Fs)$ is point in the interior of $C_{S,i,i-1}(\Fs)$ in $\Gamma$, 
there is an open neighborhood 
$\Gamma_x$ of $x$ in $\Gamma$ on 
which the $(i-1)$-th and the $i$-th radii coincide. The inverse image of 
$\Gamma_x$ by the retraction \eqref{eq : retract tau i-1,i} 
is an open subset $U$ of $X$ on which the $(i-1)$-th and the $i$-th radii 
coincide. By Definition \ref{Definition : Sep ne}, 
$U$ is then an $\{i-1,i\}$-separating neighborhood of $x$ and 
$x\notin \mathrm{Cr}_{S}(\{i-1,i\},\Fs)$. Since $C_{S,i,i-1}(\Fs)$ is 
closed in $\Gamma$, if $x\in \Gamma - C_{S,i,i-1}(\Fs)$ it has an open 
neighborhood in $\Gamma$ on which the radii are separated and we may 
conclude as before that $x\notin \mathrm{Cr}_{S}(\{i-1,i\},\Fs)$. 
This proves that $\mathrm{Cr}_{S}(\{i-1,i\},\Fs)$ is contained 
in the boundary of $C_{S,i,i-1}(\Fs)$ in $\Gamma$.

Reciprocally, if $x$ is a point of the boundary of 
$C_{S,i,i-1}(\Fs)$ in $\Gamma$, by definition it has no neighborhood contained in 
$C_{S,i,i-1}(\Fs)$ nor in $\Gamma-C_{S,i,i-1}(\Fs)$. It follows that in every 
open neighborhood of $x$ in $\Gamma$, there is at least a point on which 
the $(i-1)$-th and the $i$-th radii coincide and another point where 
they differs. It follows that $x\in \mathrm{Cr}_{S}(\{i-1,i\},\Fs)$.
%
%
\end{proof}
\begin{remark}
Pay attention to the fact that the end points of 
$\Gamma_{S,i-1}(\Fs)\cup\Gamma_{S,i}(\Fs)$ which belong to 
$C_{S,i,i-1}(\Fs)$ but are not isolated points of $C_{S,i,i-1}(\Fs)$ 
do not belong to the boundary of $C_{S,i,i-1}(\Fs)$ as a 
subspace of $\Gamma_{S,i-1}(\Fs)\cup\Gamma_{S,i}(\Fs)$, because it is a boundary with respect to the induced topology of this graph.\footnote{By 
definition $x$ is an end point of 
$\Gamma_{S,i-1}(\Fs)\cup\Gamma_{S,i}(\Fs)$ if there exists a 
neighborhood $U_x$ of $x$ in $X$ such that 
$\Gamma_x:=U_x\cap(\Gamma_{S,i-1}(\Fs)\cup\Gamma_{S,i}(\Fs))$ is 
homeomorphic to the interval $[0,1[$ and the homeomorphism send $x$ 
to $0$. Since $C_{S,i,i-1}(\Fs)$ has a locally finite number of connected 
components, if $x$ is not isolated in $C_{S,i,i-1}(\Fs)$, then we may chose 
$\Gamma_x\subset C_{S,i,i-1}(\Fs)$. Therefore, $x$ is an internal point of 
$C_{S,i,i-1}(\Fs)$ in the topology induced by 
$\Gamma_{S,i-1}(\Fs)\cup\Gamma_{S,i}(\Fs)$.}
\end{remark}

%

%
%

\begin{lemma}[Relation with crossing points]\label{Lemma: connected CrF}
Every connected component of $X_{S,i}(\Fs)$ coincides with 
a connected component of $X-\mathrm{Cr}_{S}(\{i-1,i\},\Fs)$.
%
\end{lemma}
\begin{proof}
Let $C$ be a connected component of $X_{S,i}(\Fs)$. 
By  \eqref{eq : X_S,i = tau-1(gamma-C)} and \eqref{eq : Cr_Si,i in CSii}, $C$ does not contain points of $\mathrm{Cr}_{S}(\{i-1,i\},\Fs)$. Hence, $C$ is contained in a connected component of $X-\mathrm{Cr}_{S}(\{i-1,i\},\Fs)$. 

Let now $Q$ be a connected component of
$X-\mathrm{Cr}_{S}(\{i-1,i\},\Fs)$ such that $Q\cap X_{S,i}(\Fs)\neq\emptyset$. 
Denote by $\Gamma:=\Gamma_{S,i}(\Fs)\cup\Gamma_{S,i-1}(\Fs)$.
Recall that $X_{S,i}(\Fs)$ is the inverse image by the retraction $X\to\Gamma$ of 
the set $\Gamma-C_{S,i,i-1}(\Fs)$ (cf. \eqref{eq : X_S,i = tau-1(gamma-C)}). 
By Lemma \ref{Lemma : crossing set} and  
Remark \ref{Remark : cc of X-Cr}, $Q$ is necessarily 
the inverse image in $X$ of a connected component of 
$\Gamma-\mathrm{Cr}_S(\{i-1,i\},\Fs)$ 
 (cf. \eqref{eq : retraction global}), because $Q$ intersects $X_{S,i}(\Fs)$. 
More precisely, 
$\mathrm{Cr}_S(\{i-1,i\},\Fs)$ is the boundary of $C_{S,i,i-1}(\Fs)$ in $\Gamma$ and $C_{S,i,i-1}(\Fs)$ has a locally finite number of connected components. It follows that $Q$ is either the inverse image in $X$ of a connected component of the interior of $C_{S,i,i-1}(\Fs)$ in $\Gamma$, 
or of a connected component of the complement of 
$C_{S,i,i-1}(\Fs)$ in $\Gamma$.
Since $Q$ intersects $X_{S,i}(\Fs)$, then necessarily $Q$ is inverse image of a connected component of the complement of 
$C_{S,i,i-1}(\Fs)$ in $\Gamma$. Hence $Q\subseteq X_{S,i}(\Fs)$. The claim follows.
%
%
\end{proof}

\begin{lemma}\label{Lemma: cross pts are spectral}
If $x\in\mathrm{Cr}_S(\{i-1,i\},\Fs)$, then $i$ is a spectral index of 
$\Fs$ at $x$.
\end{lemma}
\begin{proof}
If $i$ is oversolvable at $x$, the image of 
$D_{S,i}(y,\Fs)$ in $X$ is a virtual 
open disk $D_i$ in $X$ containing $y$ and $\R_{S,i}(-,\Fs)$ is constant on 
$D_i$ (cf. Remark \ref{rk : solvable not in gamma}). 
Since $x\in\mathrm{Cr}_S(\{i-1,i\},\Fs)$, we have 
$\R_{S,i-1}(x,\Fs)=\R_{S,i}(x,\Fs)$, that is $D_{S,i-1}(y,\Fs)=D_{S,i}(y,\Fs)$. This 
implies that $i-1$ is also oversolvable at $x$, and that $\R_{S,i-1}(-,\Fs)$ is 
constant on $D_i$. Therefore the equality $\R_{S,i-1}(-,\Fs)=\R_{S,i}(-,\Fs)$ 
holds at every point of $D_i$, which is then an $\{i-1,i\}$-separating 
neighborhood of $x$. 
This contradicts the fact that $x\in\mathrm{Cr}_S(\{i-1,i\},\Fs)$ (cf. Definition \ref{Def : Crossing points}).
\end{proof}
An immediate consequence of the above lemma gives
\begin{lemma}\label{Lemma: D_i cap Cr = empty}
Assume that $i$ is an oversolvable index of $\Fs$ at $x\in X$. 
Let $D_i\subseteq X$ be the image of $D_{S,i}(x,\Fs)$ in $X$ 
(cf. Remark \ref{rk : solvable not in gamma}). Then
\begin{equation}
\phantom{.}\qquad\qquad
\mathrm{Cr}_S(\{i-1,i\},\Fs)\cap D_i\;=\;\emptyset\;.\qquad\qquad\qed
\end{equation}
\end{lemma}

\begin{lemma}\label{Lemma: D_S,i in X_S,i}
Let $\pi_{\Omega/K}:X_\Omega\to X$ be the canonical projection (cf. 
\eqref{eq : def of pi_L/K}) and $S_{\Omega}$ the canonical weak 
triangulation on $X_\Omega$ associated with $S$ (cf. \eqref{eq : S_L}). 
Then, the following properties hold
\begin{enumerate}
\item\label{Lemma: D_S,i in X_S,i -i}  One has
\begin{equation}
\pi_{\Omega/K}^{-1}(X_{S,i}(\Fs))
\;=\;(X_{S,i}(\Fs))_\Omega\;=\;(X_\Omega)_{S_\Omega,i}(\Fs)\;;
\end{equation}
\item\label{Lemma: D_S,i in X_S,i -ii} For every $y\in X_{S,i}$ we have
\begin{equation}
D_{S,i}(y,\Fs)\;\subseteq\;(X_{S,i}(\Fs))_{\Omega}\;.
\end{equation}
\end{enumerate}
\end{lemma}
\begin{proof}
Item \eqref{Lemma: D_S,i in X_S,i -i} follows from the fact that the radii are 
insensitive to scalar extension of the ground field $K$ (cf. Proposition 
\ref{Prop: insensitive to scalar ext}). 

Let us prove item \eqref{Lemma: D_S,i in X_S,i -ii}. 
By item \eqref{Lemma: D_S,i in X_S,i -i} we are reduced to 
prove that for every $t\in D_{S,i}(y,\Fs)$ we have 
$\R_{S_\Omega,i-1}(t,\Fs_\Omega)<\R_{S_\Omega,i}(t,\Fs_\Omega)$. 

If $i$ is a spectral index, this is obvious. Indeed, since $y\in X_{S,i}$ then $i$ 
separates the radii at $y$ and hence at $t_y$ 
(cf. Proposition \ref{Prop: insensitive to scalar ext}). 
On the other hand, since $i$ is spectral, we have 
$D_{S,i}(y,\Fs)\subset D(y)$ and therefore the radii are constant 
functions on $D(y)$ by \eqref{eq : D^c}. 

If $i$ is over-solvable, the image of $D_{S,i}(y,\Fs)$ in $X$ is a virtual 
open disk $D_i$ containing $y$ (cf. Remark \ref{rk : solvable not in gamma}). In this case, 
since the radii are insensitive to scalar extension it is enough to show that 
$i$ separates the $S$-radii of $\Fs$ at every point of $D_i$. 
We may proceed by contrapositive. Let $z\in D_i$ be a point such that 
$\R_{S,i-1}(z,\Fs)=\R_{S,i}(z,\Fs)$. This implies that  $i-1$ is 
oversolvable at $z$ and $D_{S,i-1}(z,\Fs)=D_{S,i}(z,\Fs)$. 
Again by \eqref{eq : D^c}, both the $(i-1)$-th and the $i$-th $S$-radii 
are constants on $D_i$ and distinct at $y$, hence distinct everywhere on 
$D_i$. This contradicts the equality $\R_{S,i-1}(z,\Fs)=\R_{S,i}(z,\Fs)$. The 
claim follows.
\end{proof}

\begin{proposition}\label{Prop. loc to X_Si triang-hh}
There exists a weak-triangulation $S_i$ of $X_{S,i}$ such that 
$S_i\subset \Gamma_{S,i-1}(\Fs)\cup\Gamma_{S,i}(\Fs)$ and
\begin{equation}
\Gamma_{S_i}\;\subseteq\;
\Gamma_{S_i,i-1}(\Fs_{|X_{S, i}})\cup \Gamma_{S_i,i}(\Fs_{|X_{S, i}})\;=\;
(\Gamma_{S,i-1}(\Fs)\cup\Gamma_{S,i}(\Fs))\cap X_{S,i}\;.
\end{equation}
Moreover, the radii of $\Fs$ may change value by localization 
to $(X_{S,i},S_i)$, however the first $i$ radii are not truncated by the 
change of weak-triangulation. 
In other words, for all $x\in X_{S,i}$ and all $j=1,\ldots,i$ 
we have 
\begin{equation}\label{eq : localization to Gii -XSI}
D_{S_i,j}(x,\Fs_{|X_{S,i}})\;=\;
D_{S,j}(x,\Fs)\;.
\end{equation}
In particular, for all 
$x\in X_{S,i}$  we have 
$\R_{S_i,i-1}(x,\Fs_{|X_{S,i}})<
\R_{S_i,i}(x,\Fs_{|X_{S,i}})$ and, for all $j=1,\ldots,r=\mathrm{rank}(\Fs)$, 
we have
\begin{equation}\label{eq : incl gr afer lox gii -XSI}
(\Gamma_{S,j}(\Fs)\cap X_{S,i})\cup\Gamma_{S_i}\;=\;
\Gamma_{S_i,j}(\Fs_{|X_{S,i}})\;.
\end{equation}
\end{proposition}
\begin{proof}
Since $\mathrm{Cr}_S(\{i-1,i\},\Fs)$ is locally finite and contained in the 
locally finite graph $\Gamma_{S,i-1}(\Fs)\cup\Gamma_{S,i}(\Fs)$, there 
exists a minimum weak-triangulation $S'$ of $X$ containing $S$ and
$\mathrm{Cr}_S(\{i-1,i\},\Fs)$. Indeed, 
we may consider the locally finite graph $\Gamma$ in 
$\Gamma_{S,i-1}(\Fs)\cup\Gamma_{S,i}(\Fs)$ obtained 
by adding to $\Gamma_S$ the segments joining every point of 
$\mathrm{Cr}_S(\{i-1,i\},\Fs)$ to $\Gamma_S$. Since 
the end points of 
$\Gamma_{S,i-1}(\Fs)\cup\Gamma_{S,i}(\Fs)$ are not of
type $1$ (cf. \cite[Theorem 3.9]{NP-I}), nor of type $4$ by a 
result of K.S.Kedlaya (cf. \cite[Theorem~4.5.15]{Kedlaya-draft}), 
then $\Gamma$ has no end points of type $1$ nor $4$ too. 
Therefore, we may obtain 
$S'$ just by adding to $S$ the end points and the bifurcation points of 
$\Gamma$, as well as the points of $\mathrm{Cr}_S(\{i-1,i\},\Fs)$. 
Clearly $\Gamma=\Gamma_{S'}\subseteq
\Gamma_{S,i-1}(\Fs)\cup\Gamma_{S,i}(\Fs)$. 
\begin{lemma}\label{Lemma: intermediate triang S' for X_S,i}
The following properties hold
\begin{enumerate}
\item\label{Lemma: intermediate triang S' for X_S,i - i} 
If $x\in\Gamma_{S'}$, then $i$ is a spectral index for $\Fs$ with respect to 
both $S$ and $S'$.
\item\label{Lemma: intermediate triang S' for X_S,i - ii} 
For all $x\in X$ and all $j\leq i$ we have
\begin{equation}\label{eq : localization to Gii -XSI-bis-bis}
D_{S',j}(x,\Fs)\;=\;D_{S,j}(x,\Fs)\;;
\end{equation}
\item\label{Lemma: intermediate triang S' for X_S,i - iii} 
Let $x\in X$, then $i$ separates the $S$-radii of $\Fs$ at $x$ if, 
and only if, it separates the $S'$-radii of $\Fs$ at $x$; 
\item\label{Lemma: intermediate triang S' for X_S,i - iv} One has
\begin{equation}\label{eq : X_S,i=X_S',i}
X_{S,i}\;=\;X_{S',i}\;,\qquad
C_{S,i,i-1}(\Fs)\;=\;C_{S',i,i-1}(\Fs)\;,\qquad 
\textrm{Cr}_{S}(\{i-1,i\},\Fs)\;=\;\textrm{Cr}_{S'}(\{i-1,i\},\Fs)\;.
\end{equation}
\end{enumerate}
Moreover for all $j=1,\ldots,r$ we have 
$\Gamma_{S',j}(\Fs)=\Gamma_{S,j}(\Fs)\cup\Gamma_{S'}$ 
(cf. \eqref{eq: change of tri eq 2.32}).
\end{lemma}
\begin{proof}
The fact that the $i$-th $S'$-radius (resp. $S$-radius) is spectral at every 
$x\in\Gamma_{S'}$ (resp. $x\in\Gamma_S$) is immediate, 
because $D(x,S')=D(x)$ for all $x\in\Gamma_{S'}$ (resp. $D(x,S)=D(x)$ 
for all $x\in\Gamma_S$).  
Therefore, we only need to prove that if $x\in\Gamma_{S'}-\Gamma_S$, 
then $i$ is spectral with respect to $S$. 
We may proceed by contrapositive. 
Assume that $i$ is oversolvable at $x$ with respect to $S$, 
then the image of $D_{S,i}(x,\Fs)$ in $X$ is  
a virtual open disk $D_i$ containing $x$ and such that 
$D_i\cap\Gamma_S=\emptyset$ 
(cf. Remark \ref{rk : solvable not in gamma}). 
In $D_i$ there are no points of $\mathrm{Cr}_S(\{i-1,i\},\Fs)$ 
(cf. Lemma \ref{Lemma: D_i cap Cr = empty}) and therefore 
$x\notin\Gamma_{S'}$, which is absurd. 
Item \eqref{Lemma: intermediate triang S' for X_S,i - i} follows.

Let us prove item \eqref{Lemma: intermediate triang S' for X_S,i - ii}. 
By \eqref{Prop: A-4 fgh -eq} and \eqref{disks}, 
it is enough to prove it for $j=i$.

If $i$ is spectral with respect to $S$ at $x$, 
then $D_{S,i}(x,\Fs)\subseteq D(x)$ and by 
\eqref{Prop: A-4 fgh -eq} we deduce 
\eqref{eq : localization to Gii -XSI-bis-bis}. 

If $i$ is oversolvable at $x$ with respect to $S$, then 
$x\notin\Gamma_{S'}$ by item 
\eqref{Lemma: intermediate triang S' for X_S,i - i}. 
Let $D_i$ be the image in $X$ of $D_{S,i}(x,\Fs)$. Recall that 
$D_i\cap\Gamma_S=\emptyset$ 
(cf. Remark \ref{rk : solvable not in gamma}).
By Lemma \ref{Lemma: D_i cap Cr = empty}, 
$D_i\cap\mathrm{Cr}_S(\{i-1,i\},\Fs)=\emptyset$.
It follows that $D_i\cap \Gamma_{S'}=\emptyset$ by the 
definition of $\Gamma_{S'}$. 
Therefore, $D_i\subseteq D(x,S')$ and 
\eqref{eq : localization to Gii -XSI-bis-bis} follows from 
\eqref{Prop: A-4 fgh -eq}.
The equalities \eqref{eq : X_S,i=X_S',i} are now direct consequences of the 
definitions.
\end{proof}
\emph{Continuation of Proof of 
Proposition \ref{Prop. loc to X_Si triang-hh}.} 
By Lemma \ref{Lemma: intermediate triang S' for X_S,i} and by the definition of $S'$ we have
\begin{equation}\label{eq : Cr in S}
\textrm{Cr}_{S'}(\{i-1,i\},\Fs)\;\subseteq\; S'\;.
\end{equation}
By \eqref{eq : X_S,i = tau-1(gamma-C)} and 
Lemma \ref{Lemma : crossing set}, every connected 
component of $X_{S',i}$ is a connected component of 
$X -\mathrm{Cr}_{S'}(\{i-1,i\},\Fs)$ 
(cf. Lemma \ref{Lemma: connected CrF}). 
Together with \eqref{eq : Cr in S} this implies that $S_i:=S'\cap X_{S',i}$ 
is a weak-triangulation of $X_{S',i}$ and that 
$\Gamma_{S_i}=\Gamma_{S'}\cap X_{S',i}$.

Now, by \eqref{eq : Res to Y equal radii f} and 
\eqref{eq : Res to Y equal graphs f}, for all $k=1,\ldots,r$ 
and all $x\in X_{S',i}$we have
\begin{equation}
\R_{S',k}(x,\Fs)\;=\;
\R_{S_i,k}(x,\Fs_{|X_{S',i}})\;,\qquad
\Gamma_{S',k}(\Fs)\;=\;
\Gamma_{S_i,k}(\Fs_{|X_{S',i}})\;.
\end{equation}
Since $X_{S,i}=X_{S',i}$ the remaining claims are now straightforward.
\end{proof}

\subsubsection{Existence of $\Fs_{\geq i}$.}\label{subsubsection : existence of fi}
We here study the existence of $\Fs_{\geq i}$ over some regions of 
$X$. 

\begin{proposition}\label{Prop. Existence on X_Si}
The restriction $\Fs_{|X_{S,i}(\Fs)}$ of $\Fs$ to $X_{S,i}=X_{S,i}(\Fs)$ 
admits a unique sub-object 
\begin{equation}
(\Fs_{|X_{S,i}})_{\geq i} \;\subseteq\;\Fs_{|X_{S,i}}
\end{equation}
of constant rank $r-i+1$ such that, for all $y\in X_{S,i}$, one has
\begin{equation}
\Hdr^0(y,(\Fs_{|X_{S,i}})_{\geq i})\;=\;\omega_{S,i}(y,\Fs)\;.
\end{equation}
Let $S_i$ be the weak-triangulation obtained in Proposition 
\ref{Prop. loc to X_Si triang-hh}. Then, for all $k=1,\ldots,i$ and all $y\in X_{S,i}(\Fs)$ we have
\begin{equation}
\omega_{S_i,k}(y,\Fs_{|X_{S,i}})\;=\;\omega_{S,k}(y,\Fs)\;.
\end{equation}
For all $j=1,\ldots,r-i+1$ and all $y\in X_{S,i}$, the canonical inclusion 
$\Hdr^0(y,(\Fs_{|X_{S,i}})_{\geq i})\subset\Hdr^0(y,\Fs_{|X_{S,i}})$ identifies 
\begin{equation}\label{eq: omegaS_i,j=omegaS,j}
\omega_{S_i,j}(y,(\Fs_{|X_{S,i}})_{\geq i})\;=\;
\omega_{S_i,j+i-1}(y,\Fs_{|X_{S,i}})\;.
\end{equation}

Set $(\Fs_{|X_{S,i}})_{<i}:=\Fs_{|X_{S,i}}/(\Fs_{|X_{S,i}})_{\geq i}$.
Then, for all $y\in X_{S,i}$, one has 
\begin{equation}\label{eq : equality of radii of F_Si}
\R_{S_i,j}(y,\Fs_{|X_{S,i}})\;=\;\left\{\begin{array}{lcl}
\R_{S_i,j}(y,(\Fs_{|X_{S,i}})_{<i})&\textrm{ if }&j=1,\ldots,i-1\\
\R_{S_i,j-i+1}(y,(\Fs_{|X_{S,i}})_{\geq i})&\textrm{ if }&j=i,\ldots,r\;.\\
\end{array} \right.
\end{equation}
\end{proposition}
\begin{proof}
By Proposition \ref{Prop. loc to X_Si triang-hh},  the weak-triangulation 
$S_i$ on $X_{S,i}(\Fs)$ is such that the index $i$ separates the $S_i$-radii 
the restriction $\Fs_{|X_{S,i}}$. In particular \eqref{eq: omegaS_i,j=omegaS,j} 
follows from \eqref{eq : localization to Gii -XSI}. The remaining assertions 
are then a direct application of Theorem \ref{MAIN Theorem}. 
\end{proof}

\begin{remark}\label{Rk : no need of S_i}
The existence of $S_i$ is interesting in itself, however it is not necessary for 
the existence and uniqueness of $(\Fs_{|X_{S,i}(\Fs)})_{\geq i}$.
Indeed, the (global) radii $\{\R_{S,i}(-,\Fs)\}_i$ induce a filtration 
$\{\omega_{S,i}(x,\Fs)\}_i$ on $\Hdr^0(x,\Fs)$. 
From that filtration we have defined in 
Proposition \ref{Prop : existence of (F_S,i)_x} the augmented 
Dwork-Robba filtration of $\Fs_x$ for all $x\in X$.  
Now  $X_{S,i}(\Fs)$ is the region where 
the $i$-th radius stays separated from the $(i-1)$-th one. 
So the gluing process of the proof of Theorem \ref{MAIN Theorem} 
works over such $X_{S,i}(\Fs)$, and it gives 
a sub-object of $\Fs_{|X_{S,i}}$. 
Notice that the radii considered here are those of $\Fs$ before 
localization to $X_{S,i}$. 

In this case, the global uniqueness of $(\Fs_{|X_{S,i}})_{\geq i}$ follows from 
the local uniqueness (cf. Proposition \ref{Prop : existence of (F_S,i)_x}). 
Indeed, if $(\Fs_{|X_{S,i}})_{\geq i}'$ is another sub-object of $\Fs_{|X_{S,i}}$ 
with the same properties, it is enough to prove that the composite map 
$(\Fs_{|X_{S,i}})_{\geq i}'\to\Fs_{|X_{S,i}}/(\Fs_{|X_{S,i}})_{\geq i}$ is zero. This 
is a local matter which follows from the fact that we locally have 
$(\Fs_{|X_{S,i}})_{\geq i}=(\Fs_{|X_{S,i}})_{\geq i}'$.
\end{remark}

\begin{remark}\label{remark : }
Let $1\leq i_1<i_2<\cdots<i_h\leq r=\mathrm{rank}(\Fs)$ 
be a set of indices.
Let $Y\subseteq X$ be a connected analytic domain such that 
for all $k=1,\ldots,h$ the index $i_k$ separates the $S$-radii of $\Fs$ at 
every point of $Y$. Then, $Y$ is contained in a connected component of 
$X-\bigcup_{k=1}^h\mathrm{Cr}_S(\{i_{k-1},i_k\},\Fs)$. 
Indeed, by the definition of the $i$-th separating open $X_{S,i}(\Fs)$, we 
have $Y\subseteq X_{S,i_1}\cap X_{S,i_2}\cap \cdots\cap X_{S,i_h}$. More 
precisely, for every $k=1,\ldots,h$, $Y$ is contained in a connected 
component of $X_{S,i_k}$, which coincides with a connected component of 
$X-\mathrm{Cr}_S(\{i_k-1,i_k\},\Fs)$ by Lemma \ref{Lemma: connected CrF}.

On the other hand, every connected component $Q$ of $X-\bigcup_{k=1}^h\mathrm{Cr}_S(\{i_{k-1},i_k\},\Fs)$ is an open subset of $X$ such that 
for all $k=1,\ldots,h$ one of the two conditions hold: 
\begin{itemize}
\item For all $y\in Q$ we have $\R_{S,i_k-1}(y,\Fs)<\R_{S,i_k}(y,\Fs)$;
\item For all $y\in Q$ we have $\R_{S,i_k-1}(y,\Fs)=\R_{S,i_k}(y,\Fs)$.
\end{itemize}
\end{remark}
\begin{theorem}\label{Prop : Crossing points shsh-1}
Let $1\leq i_1<i_2<\cdots<i_h\leq r=\mathrm{rank}(\Fs)$ 
be a set of indices.
Let $Y\subseteq X$ be a connected analytic domain such that 
for all $k=1,\ldots,h$ the index $i_k$ separates the $S$-radii of $\Fs$ at 
every point of $Y$.
The restriction $\Fs_{|Y}$ of $\Fs$ to $Y$ admits a unique filtration 
\begin{equation}\label{eq : (5.20)}
0\;\neq\; (\Fs_{|Y})_{\geq i_h}\; 
\subsetneq\; (\Fs_{|Y})_{\geq i_h-1}\; 
\subsetneq\; \cdots\; 
\subsetneq\; (\Fs_{|Y})_{\geq i_1}
\;=\; \Fs_{|Y}
\end{equation} 
such that for all $y\in Y$ and all $k=1,\ldots,h$ the rank of 
$(\Fs_{|Y})_{\geq i_k}$ is $r-i_k+1$ and one has
\begin{equation}
\Hdr^0(y,(\Fs_{|Y})_{\geq i_k})\;=\;\omega_{S,i_k}(y,\Fs)\;.
\end{equation}
\end{theorem}
\begin{proof}
\if{
Let $i$ be any index. 
Every connected components of $X-\mathfrak{F}$ is 
contained in a connected component of $X-\mathrm{Cr}_S(\{i-1,i\},\Fs)$.
By Lemma 
\ref{Lemma: connected CrF}, $Y$ is either contained in a connected 
component of $X_{S,i}(\Fs)$ or $Y\cap X_{S,i}(\Fs)=\emptyset$.
By assumption, $Y$ is contained in 
$X_{S,i_1}(\Fs)\cap X_{S,i_2}(\Fs)\cap\cdots\cap X_{S,i_h}(\Fs)$. 
}\fi
The claim then follows directly 
form Proposition \ref{Prop. Existence on X_Si}.
\end{proof}
\begin{corollary}\label{Prop : Crossing points shsh}
Let $\mathfrak{F}$ be a locally finite subset of $X$ containing 
$\mathrm{Cr}_S(\Fs)$ (cf. Definition \ref{Def : Crossing points}). 
Let $Y$ be a connected analytic domain of $X$
not intersecting $\mathfrak{F}$. Then, for every index $i\in\{1,\ldots,\mathrm{rank}(\Fs)\}$, one of the two conditions hold : 
\begin{itemize}
\item For all $y\in Y$ we have $\R_{S,i-1}(y,\Fs)<\R_{S,i}(y,\Fs)$;
\item For all $y\in Y$ we have $\R_{S,i-1}(y,\Fs)=\R_{S,i}(y,\Fs)$.
\end{itemize}
Let $1\leq i_1<i_2<\cdots<i_h\leq\mathrm{rank}(\Fs)$ be the indexes separating the $S$-radii of $\Fs$ 
over $Y$. Then Theorem 
\ref{Prop : Crossing points shsh-1} applies to $\Fs_{|Y}$.\hfill\qed
\end{corollary}
\subsubsection{Direct sum decomposition: 
condition on $\Fs$ and $\Fs^*$.}
Here we provide conditions to have a direct sum decomposition. 
We firstly consider the open subset 
\begin{equation}\label{eq : XSiFF*}
X_{S,i}(\Fs,\Fs^*) \;:=\; X_{S,i}(\Fs)\cap X_{S,i}(\Fs^*)
\end{equation} 
over which $i$ separates the radii of both $\Fs$ and $\Fs^*$. Set 
\begin{equation}
\Gamma\;:=\;
\Gamma_{S,i}(\Fs)\cup\Gamma_{S,i}(\Fs^*)\cup 
\Gamma_{S,i-1}(\Fs)\cup\Gamma_{S,i-1}(\Fs^*)\;.
\end{equation} 

\begin{remark}
By definition we have $X_{S,1}(\Fs,\Fs^*)=X$. 

If $X_{S,i}(\Fs,\Fs^*)=X$, then the index $i$ separates globally on $X$ 
the radii of $\Fs$ and $\Fs^*$, and 
Theorem \ref{Thm : 5.7 deco good direct siummand} applies.

If $X_{S,i}(\Fs,\Fs^*)=\emptyset$, there is no 
regions where Theorem \ref{Thm : 5.7 deco good direct siummand} 
applies, even though we may have a non empty region 
$X_{S,i}(\Fs)$ where $\Fs$ decomposes.

 If $X_{S,i}(\Fs,\Fs^*)\neq \emptyset,X$, 
then arguing as in Remark \ref{Rk : Y subset X_S,i} we have 
$\Gamma\neq\emptyset$,
and in this case $X_{S,i}(\Fs,\Fs^*)$ is the inverse image by the canonical retraction 
\begin{equation}
\tau_{\Gamma}\;:\;X\;\xrightarrow{\quad}\; 
\Gamma
\end{equation}
of a possibly not connected open sub-graph. More precisely, let us 
denote by 
\begin{equation}
C_{S,i,i-1}(\Fs,\Fs^*)\;\;\subseteq\;\; 
\Gamma
\end{equation}
 the subset of points $x\in\Gamma$ such that we have either
$\R_{S,i-1}(x,\Fs)=\R_{S,i}(x,\Fs)$ or $\R_{S,i-1}(x,\Fs)=\R_{S,i}(x,\Fs)$ or both. If  $f:X\to\mathbb{R}^2_{\geq 0}$ is the function associating to $x$ the pair $f(x)=(\R_{S,i}(x,\Fs)-\R_{S,i-1}(x,\Fs),\R_{S,i}(x,\Fs^*)-\R_{S,i-1}(x,\Fs^*))$, then $C_{S,i,i-1}(\Fs,\Fs^*)$ is the inverse image of the union $(\mathbb{R}_{\geq 0}\times \{0\})\cup(\{0\}\times\mathbb{R}_{\geq 0})$. By 
definition we have
\begin{equation}
C_{S,i,i-1}(\Fs,\Fs^*)\;=\;C_{S,i,i-1}(\Fs)\bigcup C_{S,i,i-1}(\Fs^*)\;.
\end{equation}
Arguing as in Remark \ref{Rk : Y subset X_S,i}, 
we deduce that $C_{S,i,i-1}(\Fs,\Fs^*)$ is closed in $\Gamma$, locally 
compact, and it has a locally finite number of connected components. 
Therefore, 
$\Gamma-C_{S,i,i-1}(\Fs,\Fs^*)$ is an 
open subset of the graph $\Gamma$ and we obviously have 
\begin{equation}
X_{S,i}(\Fs,\Fs^*)\;=\;\tau_{\Gamma}^{-1}\Bigl(\Gamma-C_{S,i,i-1}(\Fs,\Fs^*)\Bigr)\;.
\end{equation}
In particular, the connected components of $X_{S,i}(\Fs,\Fs^*)$ are in 
bijection with those of 
$\Gamma-C_{S,i,i-1}(\Fs,\Fs^*)$.
\end{remark}
The following Proposition gives a condition to have direct sum 
decomposition. The dichotomy of the statement reflects 
that of 
Theorem \ref{Thm : 5.7 deco good direct siummand}.

\begin{proposition}\label{Prop : direct sum by duality liocal}
Let $\Fs$ be a differential equation over $X$, and let 
$Z$ be a connected component of $X_{S,i}(\Fs,\Fs^*)$. 
Assume that $Z$ satisfies one of the following 
\begin{enumerate}
\item\label{Prop : direct sum by duality liocal-i} $Z\cap\bigl(\Gamma_{S,i}(\Fs)\cup 
\Gamma_{S,i}(\Fs^*)\bigr)\neq\emptyset$;
\item\label{Prop : direct sum by duality liocal-ii} $Z\cap\bigl(\Gamma_{S,i}(\Fs)\cup 
\Gamma_{S,i}(\Fs^*)\bigr)=\emptyset$, and there exists 
$x\in Z$ such that $i-1$ is spectral non solvable at $x$
either for $\Fs$, or for $\Fs^*$.
\end{enumerate}
Then $(\Fs_{|Z})_{\geq i}$ is a direct summand of $\Fs_{|Z}$.
\end{proposition}
\begin{proof}
By Proposition \ref{Prop. Existence on X_Si}, both 
$(\Fs_{|Z})_{\geq i}$ and $(\Fs_{|Z}^*)_{\geq i}$ exist over $Z$.
It is enough to prove that the canonical 
composite morphism $c:(\Fs_{|Z}^*)_{\geq i}\subseteq 
\Fs_{|Z}^*\to((\Fs_{|Z})_{\geq i})^*$ is an isomorphism. 
As in the proof of Theorem 
\ref{Thm : 5.7 deco good direct siummand}, 
we only need to find a point $x\in Z$ at which the index $i-1$ is 
spectral non solvable for $\Fs$ or $\Fs^*$ 
(cf. Proposition \ref{Prop. : iso on a point implies iso global}).   
In the case \eqref{Prop : direct sum by duality liocal-ii}, such a point 
exists by assumption.

In case \eqref{Prop : direct sum by duality liocal-i} such a point also 
exists because, by Remark \ref{Remark : R_i solvable on Gamma_i}, 
the radius $\R_{S,i}(x,\Fs)$ is spectral for $\Fs$ (resp. $\Fs^*$) at 
each point $x$ of $\Gamma_{S,i}(\Fs)$ (resp. $\Gamma_{S,i}(\Fs^*)$) and hence $\R_{S,i-1}(x,\Fs)$ is spectral non solvable at $x$, 
because $i$ separates the radii over $Z$. 
\end{proof}
\begin{theorem}\label{Prop : direct sum local- finite-1}
Let $1\leq i_1<i_2<\cdots<i_h\leq r=\mathrm{rank}(\Fs)$ 
be a set of indices.
Let $Y\subseteq X$ be a connected analytic domain such that 
for all $k=1,\ldots,h$ the index $i_k$ separates the $S$-radii of $\Fs$ and 
$\Fs^*$ at every point of $Y$. Assume that for every $k=1,\ldots,h$ 
one of the conditions of Proposition \ref{Prop : direct sum by duality liocal} 
are satisfied by $Z=Y$. 
Then, the terms of the filtration \eqref{eq : (5.20)} at the indexes 
$\{i_{n_k}\}_{k=1,\ldots,s}$ define direct summands of 
$\Fs_{|Y}$:
\begin{equation}
\Fs_{|Y}\;=\;
(\Fs_{|Y})_{\geq i_{n_k}}
\bigoplus
(\Fs_{|Y})_{<i_{n_k}}\;,\qquad k=1,\ldots,s\;
\end{equation}
and we have
\begin{equation}
\phantom{.}\qquad\qquad
\Fs_{|Y}\;=\;
\frac{\Fs_{|Y}}{(\Fs_{|Y})_{<i_{n_s}}}
\oplus
\frac{(\Fs_{|Y})_{<i_{n_s}}}{(\Fs_{|Y})_{<i_{n_{s-1}}}}
\oplus
\cdots
\oplus
\frac{(\Fs_{|Y})_{<i_{n_2}}}{(\Fs_{|Y})_{<i_{n_1}}}
\oplus
(\Fs_{|Y})_{<i_{n_1}}\;.\qquad\qed
\end{equation}
\end{theorem}

\begin{theorem}\label{Prop : direct sum local- finite}
Let $\mathfrak{F}$ be a locally finite subset of $X$ containing 
$\mathrm{Cr}_S(\Fs)\cup\mathrm{Cr}_S(\Fs^*)$.
The statement of Corollary \ref{Prop : Crossing points shsh} holds for 
$(\Fs,\mathfrak{F})$.

Let $Y$ be a connected component of 
$X-\mathfrak{F}$.

For every given index $i\in\{1,\ldots,\mathrm{rank}(\Fs)\}$, at least 
one of the following conditions hold:
\begin{enumerate}
\item $i$ separates both the radii of $\Fs$ and $\Fs^*$ at every point 
of $Y$;
\item for every $x\in Y$ we have 
$\R_{S,i-1}(x,\Fs)=\R_{S,i}(x,\Fs)$; 
\item for every $x\in Y$ we have  
$\R_{S,i-1}(x,\Fs^*)=\R_{S,i}(x,\Fs^*)$.
\end{enumerate}

Let $1=i_1<i_2<\ldots<i_h=\mathrm{rank}(\Fs)$ be the 
indexes separating the radii 
$\{\R_{S,i}(-,\Fs)\}_i$ over $Y$ (cf. \eqref{eq : (5.20)}) 
and let 
$1=i_{n_1}<i_{n_2}<\ldots<i_{n_s}$ be the subset of indexes 
such that for every $k=1,\ldots,s$ both the following conditions hold
\begin{itemize}
\item the index $i_{n_k}$ separates both the radii of $\Fs$ and of $\Fs^*$ over $Y$;
\item  at least one of the two conditions of Proposition \ref{Prop : direct sum by duality liocal} holds for  $Z=Y$.
\end{itemize}
Then, Theorem \ref{Prop : direct sum local- finite-1} applies.
\end{theorem}
\begin{proof}
By Lemma
\ref{Lemma: connected CrF}, for a given index $i$, $Y$ is either contained in a connected component of $X_{S,i}(\Fs,\Fs^*)$ or 
it is disjoint from all of them.

Assume that there exists a connected component $Z$  of 
$X_{S,i}(\Fs,\Fs^*)$ containing $Y$. 
If $Z$ verifies one of the conditions of Proposition 
\ref{Prop : direct sum by duality liocal}, then the term 
$(\Fs_{|Y})_{\geq i}$ of the filtration 
\eqref{eq : (5.20)} of $\Fs$ over $Y$ is a direct 
summands of $\Fs_{|Y}$.
\end{proof}

\subsubsection{Direct sum decomposition: conditions on the 
controlling graphs.}
In analogy with Theorem \ref{Thm : criterion self direct sum}, 
we now provide conditions on the controlling graphs that guarantee the 
direct sum decomposition on some regions of $X$. We proceed in 
analogy with Remark \ref{Remark : remove maximal disks to have ...}
to obtain the maximal analytic domain in $X$ where 
\eqref{eq : condition on graphs included} is satisfied.

Denote by 
\begin{equation}
X_{\langle i \rangle}\subseteq X\;.
\end{equation}
the analytic domain obtained by removing from $X$ 
the connected components of $X-\Gamma_{S,i}(\Fs)$ 
intersecting some $\Gamma_{S,j}(\Fs)$ with $j=1,\ldots,i-1$.
By the finiteness theorem of \cite{NP-I,NP-II}, such connected 
components form a locally finite family of virtual open disks. 

\begin{proposition}\label{Prop. loc to X_langle i rangle}
There exists a weak-triangulation $S_{\langle i\rangle}$ of 
$X_{\langle i\rangle}$ such that 
\begin{equation}
\Gamma_{S_{\langle i\rangle}}=\Gamma_{S_{\langle i\rangle},i}(\Fs_{|X_{\langle i\rangle}})=\Gamma_{S,i}(\Fs)\;.
\end{equation}
Moreover, the radii of $\Fs$ may change value by localization 
to $X_{\langle i\rangle}$, however the first $i$-radii are not truncated 
by the localization process. 
In other words, for all $x\in X_{\langle i\rangle}$ and all $j=1,\ldots,i$ 
we have 
\begin{equation}\label{eq : localization to Gii}
D_{S_{\langle i\rangle},j}(x,\Fs_{|X_{\langle i\rangle}})\;=\;
D_{S,j}(x,\Fs)\;.
\end{equation}
In particular, for all 
$x\in X_{\langle i\rangle}$  we have 
$\R_{S,i}(x,\Fs)>\R_{S,i-1}(x,\Fs)$ if, and only if,
$\R_{S_{\langle i\rangle},i}(x,\Fs_{|X_{\langle i\rangle}})>
\R_{S_{\langle i\rangle},i-1}(x,\Fs_{|X_{\langle i\rangle}})$.

Therefore, for all $j=1,\ldots,i$, we have
\begin{equation}\label{eq : incl gr afer lox gii}
\Gamma_{S,j}(\Fs)\cap X_{\langle i\rangle}\;=\;
\Gamma_{S_{\langle i\rangle},j}(\Fs_{|X_{\langle i\rangle}})
\;\subseteq\;
\Gamma_{S_{\langle i\rangle},i}(\Fs_{|X_{\langle i\rangle}})\;=\;
\Gamma_{S,i}(\Fs)\;.
\end{equation}
\end{proposition}
\begin{proof}
$X-S$ is a disjoint union of virtual open disks and virtual open annuli. 
By Propositions \ref{Prop: Localization} and \ref{Prop : immersion}, the localization to one of these 
disks or annuli $C$ does not change the value of the radii and 
$\Gamma_{S,i}(\Fs)\cap C=\Gamma_{\emptyset,i}(\Fs_{|C})$. 
Moreover, by \cite{NP-I,NP-II}, $\Gamma_{S,i}(\Fs)$ is a locally finite 
graph in $X$. 
This implies that, if we increase $S$ by adding 
a finite number of points in 
every connected component of $X-S$ intersecting 
$\Gamma_{S,i}(\Fs)$, the set so obtained will be locally finite.
Therefore, without loss of generality, 
we may assume that $X$ is a virtual open annulus or disk with empty 
weak triangulation.
It is then enough to define $S_{\langle i\rangle}$ as 
the set of points of $X$ that are either end points of 
$\Gamma_{S,i}(\Fs)$ or bifurcation points of it. 
It is a locally finite set 
because $\Gamma_{S,i}(\Fs)$ is locally finite. The end points are not 
type $1$ (cf. \cite[Theorem 3.9]{NP-I}), nor type $4$ points by a 
result of K.S.Kedlaya (cf. \cite[Theorem~4.5.15]{Kedlaya-draft}). The existence of $S_{\langle i\rangle}$ 
follows.
Equality \eqref{eq : localization to Gii} is now a consequence 
of \eqref{eq : D^c} and \eqref{eq : incl gr afer lox gii} 
is a consequence of Proposition \ref{Prop : D^c}. The claim follows.
\end{proof}
It follows from Proposition \ref{Prop. loc to X_langle i rangle} 
and the definition of $X_{S,i}(\Fs)$ (cf. \eqref{Def : eq : X_SiF})
that
\begin{equation}
X_{S,i}(\Fs)\cap X_{\langle i\rangle}\;=\;
(X_{\langle i\rangle})_{S_{\langle i\rangle},i}(\Fs_{|X_{\langle i\rangle}})\;.
\end{equation}
That is, the locus in $X$ where the index $i$ separates the $S$-radii 
intersected with $X_{\langle i\rangle}$ coincides with the locus in 
$X_{\langle i\rangle}$ where the index $i$ separates the 
$S_{\langle i\rangle}$-radii. 

\begin{proposition}\label{Prop. deco graph dir dsum}
Let $Y$ be an analytic domain in $X$ contained in 
$X_{S,i}(\Fs)\cap X_{\langle i\rangle}$.
The sub-object $(\Fs_{|Y})_{\geq i}$ obtained in 
Corollary \ref{Prop : Crossing points shsh} 
(using the radii $\{\R_{S,k}(-,\Fs)\}_{k}$) is a direct summand of 
$\Fs_{|Y}$.
\end{proposition}
\begin{proof}
We may first restrict $\Fs$ to $X_{\langle i\rangle}$ and then to $Y$.
Proposition \ref{Prop. loc to X_langle i rangle} ensures that the first $i$ 
radii remain intact by localization to $X_{\langle i\rangle}$ and the 
index $i$ separates the $S_{\langle i\rangle}$-radii of 
$\Fs_{X_{\langle i\rangle}}$ over 
$X_{\langle i\rangle}\cap X_{S,i}(\Fs)$. Moreover, by local uniqueness, 
the object $(\Fs_{|(X_{\langle i\rangle})_{S_{\langle i\rangle},i}(\Fs_{|X_{\langle i\rangle}})})_{\geq i}$ 
obtained from $\Fs_{|X_{\langle i\rangle}}$ 
applying Corollary \ref{Prop : Crossing points shsh} 
to the $S_{\langle i\rangle}$-radii of $\Fs_{|X_{\langle i\rangle}}$ 
coincides with the restriction of $(\Fs_{|X_{S,i}(\Fs)})_{\geq i}$ to 
$X_{\langle i\rangle}\cap X_{S,i}(\Fs)$. Indeed, they are obtained by 
gluing the same local modules (cf. Remark \ref{Prop : Crossing points shsh}).

It follows that, without loss of generality, we can assume 
$X=X_{\langle i\rangle}$. That is, we can assume that $\Fs$ is a 
differential equation on $X$ satisfying 
\begin{equation}
\Gamma_S=\Gamma_{S,j}(\Fs)\;,\qquad\textrm{for all }j=1,\ldots,i\;.
\end{equation}
In particular the inclusion of graphs 
\eqref{eq : condition on graphs included} is realized.\footnote{However, 
Theorem \ref{Thm : criterion self direct sum} does not apply directly because $i$ does not 
necessarily separate the radii globally on $X$.} 
For such an equation we want to show that 
the sub-object $(\Fs_{|X_{S,i}(\Fs)})_{\geq i}$ furnished by 
Proposition \ref{Prop. Existence on X_Si} 
is a direct factor of $\Fs_{|X_{S,i}(\Fs)}$.

By Corollary \ref{Cor. dual compatibility}, one has
\begin{equation}
\Gamma_S=\Gamma_{S,j}(\Fs^*)\;,\qquad
\textrm{for all }j=1,\ldots,i\;.
\end{equation}
Since, by definition, the radii are always spectral on $\Gamma_S$, the 
radii of $\Fs$ and $\Fs^*$ coincide along $\Gamma_S$. On the other 
hand, the first $i$ radii of $\Fs$ and $\Fs^*$ are constant on every 
connected component $D$ of $X-\Gamma_S$ (which is necessarily a 
maximal disk, cf. Remark \eqref{Remark : D(x,S) cases}). 
It follows 
that for all $j=1,\ldots,i$ and all $x\in X$ we have
\begin{equation}\label{eq : equality radii }
\R_{S,j}(x,\Fs)\;=\;
\R_{S,j}(x,\Fs^*)\;.
\end{equation}
In particular, for all $j=1,\ldots,i$ one has (cf. \eqref{eq : XSiFF*})
\begin{equation}
X_{S,j}(\Fs)\;=\;X_{S,j}(\Fs^*)\;=\;X_{S,j}(\Fs,\Fs^*)\;.
\end{equation}
If $X_{S,i}(\Fs)=\emptyset$, there is nothing to prove.

If $X_{S,i}(\Fs)=X$, the claim reduces to 
Theorem \ref{Thm : criterion self direct sum}.

Let us assume that $X_{S,i}(\Fs)\neq \emptyset,X$. In this case, we 
necessarily have $\Gamma_S\neq \emptyset$ as in 
Footnote \ref{Footnote : Gamma not empty} of 
Remark \ref{Rk : Y subset X_S,i}.

Let $Z$ be connected component of $X_{S,i}(\Fs)$. Since the radii are 
locally constant outside $\Gamma_S$, it follows that $Z$ is necessarily 
the inverse image by the retraction $\tau:X\to\Gamma_S$ of an open 
connected sub-graph of $\Gamma_S$ 
(cf. Remark \ref{Rk : Y subset X_S,i}). In particular 
$Z\cap\Gamma_S\neq\emptyset$. Then $(\Fs_{|Z})_{\geq i}$ is a 
direct factor of $\Fs_{|Z}$ by item 
\eqref{Prop : direct sum by duality liocal-i} of Proposition 
\ref{Prop : direct sum by duality liocal}.
\end{proof}

\if{

If $X_{S,i}(\Fs)=\emptyset$, the index $i$ separates nowhere the radii 
(cf. Remark \ref{Rk : Y subset X_S,i}). 
Let us assume that $X_{S,i}(\Fs)\neq \emptyset$.

We denote the retraction by
\begin{equation}
\tau_{S,i}\;:\;X\;\to\;\Gamma_{S,i}(\Fs)\;.
\end{equation}
By construction $X_{|\Gamma_{S,i}(\Fs)}$ also retracts onto 
$\Gamma_{S,i}(\Fs)$, and the retraction coincides with $\tau_{S,i}$.

Let $\Gamma_{S,i}^{\mathrm{sep}}(\Fs)\subseteq 
\Gamma_{S,i}(\Fs)$ the possibly not 
connected open sub-graph formed by the points 
$x\in\Gamma_{S,i}(\Fs)$ satisfying 
$\R_{S,i-1}(x,\Fs)<\R_{S,i}(x,\Fs)$. 
Then 
\begin{equation}
\tau_{S,i}^{-1}(\Gamma_{S,i}^{\mathrm{sep}}(\Fs))\cap 
X_{|\Gamma_{S,i}(\Fs)}
\;=\;
X_{S,i}(\Fs)\cap X_{|\Gamma_{S,i}(\Fs)}\;.
\end{equation}
\begin{proposition}\label{Prop. deco graph dir dsum}
Let $Y:=X_{S,i}(\Fs)\cap 
X_{|\Gamma_{S,i}(\Fs)}$. 
Then $(\Fs_{|Y})_{\geq i}$ is a direct summand of $\Fs_{|Y}$.
\end{proposition}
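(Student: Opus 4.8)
The plan is to carry out over the analytic domain $Y$ the same argument as in the proof of Theorem \ref{Thm : criterion self direct sum}, producing the splitting of the sequence $0\to(\Fs_{|Y})_{\geq i}\to\Fs_{|Y}\to(\Fs_{|Y})_{<i}\to 0$ by gluing local splittings. Since $Y\subseteq X_{S,i}(\Fs)$, Proposition \ref{Prop. Existence on X_Si} already provides the sub-object $(\Fs_{|Y})_{\geq i}$ of rank $r-i+1$ with $\omega(y,(\Fs_{|Y})_{\geq i})=\omega_{S,i}(y,\Fs)$ for all $y\in Y$, so it is enough to construct a global horizontal $\O$-linear section of $\Fs_{|Y}\to(\Fs_{|Y})_{<i}$. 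I would obtain it by gluing local sections, which is legitimate because such a section is unique wherever it exists: two of them differ by a horizontal homomorphism $(\Fs_{|Y})_{<i}\to(\Fs_{|Y})_{\geq i}$, and by \eqref{eq : equality of radii of F_Si}, together with the fact that $i$ separates the radii, at each point of $Y$ all the radii of the source are strictly smaller than all the radii of the target; such a homomorphism then vanishes, by Theorem \ref{Dw-Robba}(ii) (resp. Corollary \ref{corollary : uniqueness at H(x)}(v)) at the points of $\Gamma_{S,i}(\Fs)$, and over the disks below $\Gamma_{S,i}(\Fs)$ because its image would be simultaneously a sub-object of $(\Fs_{|Y})_{\geq i}$ and a quotient of $(\Fs_{|Y})_{<i}$, which is impossible by Lemma \ref{Lemma : injective then appears with mult} and Proposition \ref{THM : Decomposition on a disk}(ii).

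For the local existence of a section one distinguishes two cases according to the position of $x\in Y$. If $x\in\Gamma_{S,i}(\Fs)\cap Y$, then by Remark \ref{Remark : R_i solvable on Gamma_i} the indices $1,\dots,i$ are spectral at $x$; since spectral disks are intrinsic (Proposition \ref{Prop: Localization}), the augmented sub-object $(\Fs_{\geq i})_x$ is, by the spectral case of Proposition \ref{Prop : existence of (F_S,i)_x}, the Dwork--Robba piece $\M_x^{\geq\rho_i}$ of the decomposition $\Fs_x=\bigoplus_{\rho}\M_x^{\rho}$, which is a direct summand of $\Fs_x$ by Theorem \ref{Dw-Robba} (descended to $K$ by Lemma \ref{Lema : descent of M_xrho}); this yields a local section at $x$. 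If $x\notin\Gamma_{S,i}(\Fs)$, then $x$ lies in a connected component $D$ of $X-\Gamma_{S,i}(\Fs)$, which is a virtual open disk; since $x\in X_{|\Gamma_{S,i}(\Fs)}$ the disk $D$ was not removed, so $D$ meets no $\Gamma_{S,j}(\Fs)$ with $j<i$, and using that $X_{S,i}(\Fs)$ is saturated for the retraction onto $\Gamma_{S,i}(\Fs)\cup\Gamma_{S,i-1}(\Fs)$ (Remark \ref{Rk : Y subset X_S,i}) one checks $D\subseteq Y$. On $D$ with the empty triangulation one has $D^c_{S,i}(z,\Fs)=D$ for $z\in D$, hence $D_{S,j}(z,\Fs)\subseteq D_{S,i}(z,\Fs)\subseteq D$ for $j\le i$, strictly for $j<i$; by \eqref{Localization of sol} this means that the radii $\R_{\emptyset,1}(-,\Fs_{|D}),\dots,\R_{\emptyset,i}(-,\Fs_{|D})$ are constant on $D$, that $i$ still separates them, and that $(\Fs_{|D})_{\geq i}$ coincides with the restriction of $(\Fs_{|Y})_{\geq i}$ to $D$. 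Proposition \ref{THM : Decomposition on a disk} then shows $(\Fs_{|D})_{\geq i}$ is a direct summand of $\Fs_{|D}$, which provides a local section over $D$, in particular at $x$.

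Gluing the (unique) local sections produces a global horizontal section of $\Fs_{|Y}\to(\Fs_{|Y})_{<i}$, so $(\Fs_{|Y})_{\geq i}$ is a direct summand of $\Fs_{|Y}$. The main obstacle, as everywhere in this paper, is the bookkeeping of spectral versus solvable and over-solvable radii: one must check at each point that the radii entering the local decomposition theorems (Dwork--Robba over $\O_{X,x}$ at the points of $\Gamma_{S,i}(\Fs)$, Proposition \ref{THM : Decomposition on a disk} over the disks $D\subseteq Y$) are the spectral part of the global filtration $\{\omega_{S,j}(-,\Fs)\}_j$, so that the local decompositions genuinely agree and glue. The definition of $Y$ through $X_{|\Gamma_{S,i}(\Fs)}$ is tailored precisely so that the disks below $\Gamma_{S,i}(\Fs)$ meeting $Y$ are exactly those on which the first $i$ radii are constant, which is what makes Proposition \ref{THM : Decomposition on a disk} applicable there; once this is in place the gluing is formal. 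Alternatively, one could choose a weak triangulation $S_Y$ of $Y$ with $\Gamma_{S,i}(\Fs)\cap Y\subseteq\Gamma_{S_Y}$ and $\Gamma_S\cap Y\subseteq\Gamma_{S_Y}$, verify via \eqref{Localization of sol-1} and the above that $i$ separates the radii of $\Fs_{|Y}$ and that $\bigcup_{j<i}\Gamma_{S_Y,j}(\Fs_{|Y})\subseteq\Gamma_{S_Y,i}(\Fs_{|Y})$, and then invoke Theorem \ref{Thm : criterion self direct sum} directly.
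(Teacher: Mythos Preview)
Your main argument (gluing local splittings) is correct and takes a different, more direct route than the paper. The paper instead passes to the affinoid domain $X':=X_{|\Gamma_{S,i}(\Fs)}$, endows it with a weak triangulation $S'$ whose skeleton lies between $\Gamma_S$ and $\Gamma_{S,i}(\Fs)$, checks via Proposition~\ref{Prop : immersion} that $\Gamma_{S',j}(\Fs_{|X'})\subseteq\Gamma_{S',i}(\Fs_{|X'})$ for all $j\le i$ and that separation at index~$i$ is the same for $S$ and $S'$, and then reruns the proof of Theorem~\ref{Thm : criterion self direct sum} to deduce that $i$ separates the radii of $\Fs^*$ over $X_{S,i}(\Fs)$; the splitting then comes from Proposition~\ref{Prop : direct sum by duality liocal} (i.e.\ from the isomorphism $(\Fs^*)_{\geq i}\simto(\Fs_{\geq i})^*$ checked at a spectral point). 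Your alternative at the end is essentially this argument, phrased on $Y$ rather than on $X'$; since $i$ separates globally on $Y$, you may indeed invoke Theorem~\ref{Thm : criterion self direct sum} directly, which is even a bit cleaner.

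Your gluing approach buys you a proof that avoids passing through the dual; the paper's approach buys compatibility with duals as a byproduct. One small imprecision: in the uniqueness step over a disk $D$, Lemma~\ref{Lemma : injective then appears with mult} only controls sub-objects, not quotients, so saying the image is ``simultaneously a sub and a quotient'' does not immediately give a contradiction. What does work is either (a) pick $x\in D$ close to the boundary so that $\R_{\emptyset,1},\dots,\R_{\emptyset,i-1}$ are spectral non-solvable at $x$, and apply Theorem~\ref{Dw-Robba}(ii) at the stalk to get $\alpha_x=0$, hence $\alpha=0$ on $D$ by Proposition~\ref{Prop. : iso on a point implies iso global}; or (b) dualize: by Proposition~\ref{THM : Decomposition on a disk} the dual of the image is a sub-object of $((\Fs_{|D})_{<i})^*=((\Fs_{|D})^*)_{<i}$, so its first radius is $<\R_{\emptyset,i}$, while the image itself is a sub-object of $(\Fs_{|D})_{\geq i}$ with first radius $\ge\R_{\emptyset,i}$, contradicting Proposition~\ref{Prop : 1-th radius and dual}. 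Your citation of Proposition~\ref{THM : Decomposition on a disk}(ii) suggests you had (b) in mind; just make the use of Proposition~\ref{Prop : 1-th radius and dual} explicit.
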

\begin{proof}
The existence of $(\Fs_{|Y})_{\geq i}$ follows from 
Proposition \ref{Prop. Existence on X_Si}. 

Let $Z$ be a connected component of $Y$. 
Then $Z$ is the inverse image in $X_{|\Gamma_{S,i}(\Fs)}$ 
by $\tau_{S,i}$ of a connected component $\Gamma_Z$ of 
$\Gamma$.

Each connected component of $Z-\Gamma_Z$ is a disk $D$ 
on which the radii $\R_{S,j}(-,\Fs)$, $j=1,\ldots,i$ are all constant.
So we have a direct sum decomposition 
$\Fs_{|D}=(\Fs_{|D})_{\geq i}\oplus (\Fs_{|D})_{<i}$ by 
Proposition \ref{THM : Decomposition on a disk}.\footnote{Indeed 
Proposition \ref{THM : Decomposition on a disk} 
applies since $D_{S,i}(x,\Fs)\subseteq\tau_{S,i}(\Gamma_Z)$, 
so $i$ separates the radii $\R_{\emptyset,i}(-,\Fs_{|D})$ after 
localization to $D$. 
Moreover $(\Fs_{|D})_{\geq i}=((\Fs_{|Y})_{\geq i})_{|D}$.}

Let $x\in \Gamma_Z$ be the boundary of $D$. Since $i$ is spectral 
along $\Gamma_{S,i}(\Fs)$ we also have the Dwork-Robba 
decomposition $\Fs_x = (\Fs_x)_{\geq i} \oplus (\Fs_x)_{<i}$, where 
$\Fs_{x}:=\Fs\widehat{\otimes}\O_{X_{\Gamma_{S,i}(\Fs)},x}$.

These two decompositions show that $(\Fs_{|Y})_{\geq i}$ is 
\emph{locally} a direct summand of $\Fs_{|Y}$. 
This guarantee that $\R_{S,j}(x,\Fs)=\R_{S,j}(x,\Fs^*)$ for all 
$x\in Y$ and all $j=1,\ldots,i$. 

Hence $Y\subseteq X_{S,i}(\Fs,\Fs^*)$, and each connected 
component $Z$ of $Y$ verifies $Z\cap\Gamma_{S,i}\neq\emptyset$. 
We then apply Proposition \ref{Prop : direct sum by duality liocal}.
\if{

\smallcomment{}

By definition 
$\Gamma_S\subseteq\Gamma_{S,i}(\Fs)\subseteq 
X_{|\Gamma_{S,i}(\Fs)}$. 
One sees that there exists a smallest weak triangulation $S'$ of 
$X_{|\Gamma_{S,i}(\Fs)}$ containing $S$ and the boundaries 
of the disks removed. 
The set $S'$ is a locally finite set in $\Gamma_{S,i}(\Fs)$ satisfying $S'$ verifies $\Gamma_S 
\subseteq\Gamma_{S'}
\subseteq \Gamma_{S,i}(\Fs)$. 
By Prop. \ref{Prop : immersion}, for all $j=1,\ldots,i$ one has 
\begin{equation}
\Gamma_{S',j}(\Fs_{|X'})\;=\;
(\Gamma_{S,j}(\Fs)\cap X')\cup 
\Gamma_{S'}\;\subseteq\;
\Gamma_{S',i}(\Fs_{|X'})\;=\;
\Gamma_{S,i}(\Fs)\;,
\end{equation}
where $X':=X_{|\Gamma_{S,i}(\Fs)}$. Moreover the first $i$ 
$S$-radii are all spectral along $\Gamma_{S,i}(\Fs)$ by Remark 
\ref{Remark : R_i solvable on Gamma_i}. 
So by \eqref{Feq : Radii truncated by S to S'}, the index $i$ 
separates the radii at $y\in X_{|\Gamma_{S,i}(\Fs)}$ with respect to 
$S$ if, and only if, it separates the radii with respect to $S'$.
This proves that 
$X_{S,i}(\Fs)\cap X_{|\Gamma_{S,i}(\Fs)}$ is the 
inverse image by the retraction 
$X_{|\Gamma_{S,i}(\Fs)}\to 
\Gamma_{S',i}(\Fs_{|X_{|\Gamma_{S,i}(\Fs))}})$ of the subset of 
points on which $i$ separates the radii. In other words 
$X_{S,i}(\Fs)\cap X' = X'_{S',i}(\Fs_{|X'})$, where $X':=X_{|
\Gamma_{S,i}(\Fs)}$. 
Hence, replacing $X$ by $X_{|\Gamma_{S,i}(\Fs)}$, 
we can assume $\Gamma_{S,j}(\Fs)\subseteq\Gamma_{S,i}(\Fs)$, for 
all $j=1,\ldots,i$. 

Now with the same proof as Thm. \ref{Thm : criterion self direct sum} 
one shows that $i$ separates the radii of $\Fs^*$ at each point of 
$X_{S,i}(\Fs)$ (i.e. $X_{S,i}(\Fs)=X_{S,i}(\Fs,\Fs^*)$), and the 
assumptions of Prop. \ref{Prop : direct sum by duality liocal} 
are fulfilled.
}\fi
\end{proof}
}\fi
The following Corollary provides a decomposition with respect to the original radii, before localization.
\begin{theorem}\label{Cor : deco dirs outside Cr}
Let $Y$ be a connected analytic domain of $X-\mathrm{Cr}_S(\Fs)$. 
Let $1=i_1<\cdots<i_h$ be the indexes separating the radii 
$\{\R_{S,i}(-,\Fs)\}_i$ over $Y$. 
If
\begin{equation}\label{eq : condition for split Y cap jjj}
Y\;\subseteq\; 
X_{\langle i_1\rangle}\cap 
X_{\langle i_2\rangle}\cap\cdots\cap 
X_{\langle i_h\rangle}
\end{equation}
then $(\Fs_{|Y})_{\geq i_k}$ is a direct summand of $\Fs_{|Y}$ for all 
$k$.
\hfill$\qed$
\end{theorem}
\if{\begin{proof}
The sub-object $(\Fs_{|Y})_{\geq i_k}$ exists by Corollary
\ref{Prop : Crossing points shsh}. The claim is a consequence of
Proposition \ref{Prop. deco graph dir dsum}.
\end{proof}
}\fi

\subsubsection{Clean decomposition.}
Until now, we have restricted the module~$\Fs$ to a subset~$Y$ but 
have always computed its radii with respect to the original weak 
triangulation~$S$, which is a weak triangulation on~$X$. Now, we will 
allow us to change the weak triangulation as well and compute the radii 
with respect to a weak triangulation of~$Y$. In the process, the radii 
may be truncated, which would lead to a decomposition that is less 
precise. On the other hand, we will show that such a decomposition 
always exists.

Recall that, by~\cite[Theorem~4.5.15]{Kedlaya-draft}, every radius of 
convergence is constant in the neighborhood of a point of type~$4$. 
In particular, every graph~$\Gamma_{S,j}(\Fs)$ is the skeleton of a 
weak triangulation of $X$ (cf. for instance the proof of Proposition \ref{Prop. loc to X_langle i rangle}).

\begin{theorem}[Clean decomposition]\label{clean deco}
Set $r=\mathrm{rank}(\Fs)$. There exists a 
weak triangulation~$S_{d}$ of~$X$ containing~$S$ such that the 
following holds:
\begin{enumerate}
\item $\Gamma_{S_{d}} = 
\Gamma_{S}(\Fs)=\Gamma_{S}(\Fs^*)$ 
(cf. \eqref{eq : total controlling graph});

\item for every~$x\in X$ and every $j\in\{1,\dotsc,r\}$, we have 
$\Rc_{S_{d},j}(x,\Fs) = \Rc_{S_{d},j}(x,\Fs^*)$;

\item every connected component of~$X- S_{d}$ is a 
separating neighborhood of all its points,
 for both~$\Fs$ and~$\Fs^*$, with respect to the weak 
triangulation~$S_{d}$ (cf. Definition \ref{Definition : Sep ne}).
\end{enumerate}


Moreover, let~$C$ be a connected component of~$X- S_{d}$ 
(necessarily a virtual open disk or annulus) and endow it with the 
empty weak triangulation. Let $1=i_1< i_2< \dotsb <i_{h}$ be the 
indexes separating the radii of~$\Fs_{|C}$. Then, we have a direct 
sum decomposition
\begin{equation}
\Fs_{|C} \;=\; \bigoplus_{1\le m\le h} (\Fs_{|C})_{i_{m}} 
\end{equation}
such that, for every $m\in \{1,\dotsc,h\}$, every $j\in\{1,
\mathrm{rank}((\Fs_{|C})_{i_{m}})\}$ and every $x\in C$, we have
\begin{equation}
\Rc_{\emptyset,j}(x,(\Fs_{|C})_{i_{m}})\; =\;  
\Rc_{\emptyset,i_{m}}(x,\Fs_{|C}) \;=\; 
\Rc_{S_{d},i_{m}}(x,\Fs)\;.
\end{equation}
The same result hold for~$\Fs^*$ and, for every 
$m\in \{1,\dotsc,h\}$, we have 
$(\Fs_{|C})_{i_{m}}^* = (\Fs_{|C}^*)_{i_{m}}$.
\end{theorem}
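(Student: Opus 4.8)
The strategy is to build~$S_d$ by enlarging~$S$ just enough so that the controlling graphs of~$\Fs$ and of~$\Fs^*$ become skeleta of the weak triangulation, and so that the radii of the two modules agree everywhere. The starting point is the fact (quoted from~\cite{Kedlaya-draft}) that each~$\Gamma_{S,j}(\Fs)$ and each~$\Gamma_{S,j}(\Fs^*)$ is the skeleton of a weak triangulation, hence so is the finite union $\Gamma := \Gamma_S(\Fs)\cup\Gamma_S(\Fs^*)$. First I would choose~$S_d$ to be a weak triangulation with $\Gamma_{S_d} = \Gamma$ and containing~$S$: take $S_d$ to consist of~$S$, of the bifurcation points of~$\Gamma$, and of enough points of type~$2$ so that $X-S_d$ is a disjoint union of virtual open disks and annuli and $\Gamma_{S_d}=\Gamma$. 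This gives item~i). For item~ii), note that by Proposition~\ref{Prop : A-5 ecp} one has $\Gamma_{S_d,j}(\Fs)=\Gamma_{S_d}\cup\Gamma_{S,j}(\Fs)=\Gamma_{S_d}$ for all~$j$, and similarly $\Gamma_{S_d,j}(\Fs^*)=\Gamma_{S_d}$; hence both families of radii are constant on every connected component of $X-\Gamma_{S_d}$, and it suffices to prove $\Rc_{S_d,j}(x,\Fs)=\Rc_{S_d,j}(x,\Fs^*)$ for $x\in\Gamma_{S_d}$. But at such a point all the radii are spectral (Remark~\ref{Remark : R_i solvable on Gamma_i} applied with the triangulation~$S_d$, since $x\in\Gamma_{S_d}\subseteq\Gamma_{S_d,j}(\Fs)$ for every~$j$), so the equality follows from Proposition~\ref{Prop : 1-th radius and dual}. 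Item~iii) is then immediate: on a connected component~$C$ of $X-S_d$ the radii of~$\Fs$ (and of~$\Fs^*$) are locally constant off $\Gamma_{S_d}\cap C$ and, along the skeleton of~$C$, by the finiteness theorem they have finitely many breaks; since $\Gamma_{S_d}$ already contains the whole controlling graph, in fact each $\Rc_{S_d,j}(-,\Fs)$ is locally constant on all of~$C$ — wait, more precisely one checks directly from $\Gamma_{S_d,j}(\Fs)=\Gamma_{S_d}$ and $C\cap\Gamma_{S_d}=\emptyset$ that every $\Rc_{S_d,j}(-,\Fs)$ is constant on~$C$, so that $C$ is trivially a separating neighborhood of each of its points.

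\textbf{The decomposition over a component.} Fix a connected component~$C$ of $X-S_d$, endowed with the empty weak triangulation; it is a virtual open disk or annulus. The radii $\Rc_{\emptyset,j}(-,\Fs_{|C})$ need not be constant on~$C$ (they are truncations of the constant functions $\Rc_{S_d,j}(-,\Fs)$ by the rule~\eqref{Feq : Radii truncated by S to S'}), but they are constant on~$C$ in the case of a disk and $\log$-affine on the skeleton in the case of an annulus, because the truncation factor $f_{S_d,\emptyset}$ is itself $\log$-affine. Hence each index~$i_m$ separating the radii of~$\Fs_{|C}$ separates them globally on~$C$: for a disk this is immediate from constancy, for an annulus it follows from Corollary~\ref{Cor : linear on annulus} (whose hypothesis~i), the $\log$-affineness of the lower partial heights, holds here). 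I would then apply Theorem~\ref{MAIN Theorem} iteratively, or directly invoke Corollary~\ref{Cor : linear on annulus} / Theorem~\ref{Thm : criterion self direct sum}, to get sub-objects $(\Fs_{|C})_{\geq i_m}$, and set $(\Fs_{|C})_{i_m} := (\Fs_{|C})_{\geq i_m}/(\Fs_{|C})_{\geq i_{m+1}}$. The equalities of radii $\Rc_{\emptyset,j}(x,(\Fs_{|C})_{i_m})=\Rc_{\emptyset,i_m}(x,\Fs_{|C})=\Rc_{S_d,i_m}(x,\Fs)$ come respectively from~\eqref{eq : equality of radii of F_Si} (combined with the fact that $(\Fs_{|C})_{i_m}$ has all radii equal, by construction as a graded piece) and from the localization formula, using item~ii) of the theorem together with Proposition~\ref{Prop : immersion}.

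\textbf{Direct sum-ness and compatibility with duality.} That the filtration actually splits into a direct sum on~$C$ is where I would use that we have arranged $\Rc_{S_d,j}(-,\Fs)=\Rc_{S_d,j}(-,\Fs^*)$: on~$C$ the disk or annulus has all relevant radii constant (resp. $\log$-affine), so $i_m$ separates the radii of $(\Fs_{|C})^*$ as well, and Proposition~\ref{THM : Decomposition on a disk} (for a disk) or Corollary~\ref{Cor : linear on annulus} together with Theorem~\ref{Thm : 5.7 deco good direct siummand} (for an annulus) yields that each $(\Fs_{|C})_{\geq i_m}$ is a direct summand. The same statements applied to~$\Fs^*_{|C}$, which has the same radii and hence the same separating indexes, give the analogous decomposition of~$\Fs^*_{|C}$. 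Finally the compatibility $(\Fs_{|C})_{i_m}^* = (\Fs_{|C}^*)_{i_m}$ follows from the compatibility of the sub-objects with duality — the canonical map $(\Fs_{|C}^*)_{\geq i_m}\to((\Fs_{|C})_{\geq i_m})^*$ is an isomorphism by Theorem~\ref{Thm : 5.7 deco good direct siummand} (resp. Proposition~\ref{THM : Decomposition on a disk}) — passing to the graded quotients. The main obstacle I anticipate is the bookkeeping needed to make item~iii) and the statement ``every index separating the radii of~$\Fs_{|C}$ separates them globally on~$C$'' genuinely rigorous in the annulus case, i.e. verifying carefully that truncation by the $\log$-affine function $f_{S_d,\emptyset}$ cannot create a new crossing of radii in the interior of~$C$; this is precisely the content hidden in the appeal to Corollary~\ref{Cor : linear on annulus}, and it relies on the concavity/superharmonicity properties of the radii recalled in Remark~\ref{remark : Linear properties of Gamma'}.
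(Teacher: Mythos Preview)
Your argument for item~iii) contains a genuine error. You write that ``$C\cap\Gamma_{S_d}=\emptyset$'' for a connected component~$C$ of $X\setminus S_d$, and conclude that each $\Rc_{S_d,j}(-,\Fs)$ is constant on~$C$. But if~$C$ is a virtual open annulus, its skeleton lies in~$\Gamma_{S_d}$ by the very definition of the skeleton of a weak triangulation, so $C\cap\Gamma_{S_d}$ is precisely this skeleton, not empty. Along that skeleton the radii $\Rc_{S_d,j}(-,\Fs)$ are spectral and can vary (indeed, you yourself say later that they are $\log$-affine there), and two of them may well cross. At such a crossing point the component~$C$ fails to be a separating neighborhood. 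You even flag this concern at the end, but misdiagnose it as ``truncation creating new crossings'': the crossings are already present in the $S_d$-radii before any localization.

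The paper's fix is exactly the step you omit: after choosing a weak triangulation~$S'$ with $\Gamma_{S'}=\Gamma_S(\Fs)\cup\Gamma_S(\Fs^*)$, one sets $S_d:=S'\cup\mathrm{Cr}_{S'}(\Fs)$, i.e.\ one \emph{adds all crossing points} (a locally finite subset of~$\Gamma_{S'}$). This does not change the skeleton, so item~i) still holds, and since $\mathrm{Cr}_{S'}(\Fs)=\mathrm{Cr}_{S'}(\Fs^*)$ (because the $S'$-radii of~$\Fs$ and~$\Fs^*$ already agree by your correct argument for~ii)), item~iii) now follows: on each edge of~$\Gamma_{S_d}$ two radii are either always equal or always different, by construction. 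Once this is done, the decomposition part follows directly from Proposition~\ref{Prop. deco graph dir dsum} and Corollary~\ref{Prop : direct sum local- finite}; your alternative route via Corollary~\ref{Cor : linear on annulus} and Theorem~\ref{Thm : criterion self direct sum} would also go through, since its hypothesis~ii) (separation on the skeleton) is now guaranteed.
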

\begin{proof}
We may proceed similarly as in the proof of Proposition 
\ref{Prop. loc to X_langle i rangle} to 
find a weak triangulation~$S'$ such that
$\Gamma_{S'} = \Gamma_{S}(\Fs)$. 
By Corollary \ref{Cor. dual compatibility} we also have 
$\Gamma_{S'} = \Gamma_{S}(\Fs^*)$. 
By Proposition \ref{Prop: A-4 fgh} this implies 
$\Gamma_{S'} = \Gamma_{S'}(\Fs)= \Gamma_{S'}(\Fs^*)$.
In analogy with \eqref{eq : equality radii }, 
the $S'$-radii are always spectral along 
$\Gamma_{S'}$ (hence compatible with duals) and the above 
equality of controlling graphs implies 
that they are locally constant outside it.
Therefore, one has $\R_{S',i}(-,\Fs)=\R_{S',i}(-,\Fs^*)$ for all $i$. 
Hence 
$\mathrm{Cr}_{S'}(\Fs^*)=
\mathrm{Cr}_{S'}(\Fs)\subset\Gamma_{S'}$. Define 
$S_d:=S'\cup\mathrm{Cr}_{S'}(\Fs)$. 
Proposition \ref{Prop: A-4 fgh} then ensures that the $S'$-radii 
coincides with the $S_d$-radii. Hence on an edge of 
$\Gamma_{S_d}$ (i.e. on the interior of each connected component 
of $\Gamma_{S_d}-S_d$)
two $S_d$-radii of~$\Fs$ (resp. $\Fs^*$) are either always equal or 
always different. One sees that, with this triangulation we have 
$X_{\langle i\rangle}=X$, for all $i$. 
The statement is now a consequence of 
Theorems \ref{Prop : direct sum local- finite} and 
\ref{Cor : deco dirs outside Cr}.
\if{

\begin{enumerate}
\item $\Gamma_{S'} = \Gamma_{S}(\Fs)\cup \Gamma_{S}(\Fs^*)$;
\item on every edge of~$\Gamma_{S'}$, two radii of~$\Fs$ (resp. 
$\Fs^*$), 
with respect to the weak triangulation~$S$, are either always equal 
or always different.
\end{enumerate}

We may now choose a weak triangulation~$S_{d}$ such that 
$\Gamma_{S_{d}}=\Gamma_{S'}$ (by Prop. \ref{Prop: A-4 fgh} 
this implies $\RR_{S'}=\RR_{S_d}$) in order that on 
every edge of~$\Gamma_{S_{d}}$, two radii of~$\Fs$ 
(resp. $\Fs^*$), with respect to the weak triangulation $S'$, are 
either always equal or always different. 

Since $\Gamma_{S_{d}} = 
\Gamma_{S}(\Fs)\cup \Gamma_{S}(\Fs^*)$, 
the radii of~$\Fs$ and~$\Fs^*$, with respect to~$S$, are locally 
constant outside $\Gamma_{S_{d}}$. 
The same holds with respect to~$S_{d}$ by Prop. \ref{Prop: A-4 fgh}.

Let~$C$ be a connected component of $X- S_{d}$. Let us 
first assume that~$C$ is a virtual open disk. Endow~$C$ with the 
empty weak triangulation. The radii of~$\Fs$ and~$\Fs^*$ are 
constant on~$C$, hence those of~$\Fs_{|C}$ and~$\Fs^*_{|C}$ too. 
Let~$x\in C$. Let $j\in\{1,\dotsc,r\}$ such that 
$\Rc_{\emptyset,j}(x,\Fs_{|C}) < 1$. At a point~$z$ that is close 
enough to the boundary of~$C$, since 
$\Rc_{\emptyset,j}(z,\Fs_{|C}) = \Rc_{\emptyset,j}(x,\Fs_{|C}) < 1$, 
this radius becomes spectral and non-solvable. Hence, by~\ref{} ,we 
have 
\begin{equation}
\Rc_{\emptyset,j}(x,\Fs_{|C}^*) = \Rc_{\emptyset,j}(z,\Fs_{|C}^*) 
= \Rc_{\emptyset,j}(z,\Fs_{|C}) = \Rc_{\emptyset,j}(x,\Fs_{|C}).
\end{equation}
The same equality holds if we assume that 
$\Rc_{\emptyset,j}(x,\Fs_{|C}^*) < 1$. If none of the previous 
hypotheses is satisfied, then we have 
$\Rc_{\emptyset,j}(x,\Fs_{|C}^*) = 
\Rc_{\emptyset,j}(x,\Fs_{|C}) = 1$.

From those equalities and the constancy of the radii on~$C$, we deduce that~$C$ is a separating neighborhood of all its points for~$\Fs$ and~$\Fs^*$ and that condition~(ii) of Theorem~\ref{Thm : 5.7 deco good direct siummand} is satisfied for every separating radius~$i$. The decomposition result follows.

Let us now assume that~$C$ is a virtual annulus. Denote by~$\Gamma_{C}$ its skeleton. Remark that the radii of~$\Fs$ and~$\Fs^*$, with respect to the weak triangulation~$S'$, are all spectral at the points of $\Gamma_{S'}=\Gamma_{S_{d}}$. Hence they remain unchanged if we compute them with respect to~$S'$ or~$S_{d}$. In particular, on $\Gamma_{C}$, two radii of~$\Fs$ (resp. $\Fs^*$), with respect to the weak triangulation~$S_{d}$, are either always equal or always different. Since they are locally constant outside~$\Gamma_{C}$, we deduce that~$C$ is a separating neighborhood of all its points. Note that, on~$C$, it is equivalent to compute the radii of~$\Fs$ and~$\Fs^*$ with respect to~$S_{d}$ or the radii of~$\Fs_{|C}$ and~$\Fs^*_{|C}$ with respect to the empty weak triangulation. The result now follows from Theorem~\ref{Thm : 5.7 deco good direct siummand}, whose condition~(i) is satisfied.
}\fi
\if{\comment{J\'er\^ome,
Je pense qu'on peut \'eviter d'utiliser Kedlaya:
\begin{enumerate}
\item Il faut decomposer apr\`es extension des scalaires ?  $\Omega$ 
o\`u il n'y a pas de points de type 4.
\item Ensuite descendre la decomposition ?  $K$ par Galois... (le galois 
stabilise la decomposition par unicit\'e de cette derni\`ere).
\item Reste quand m\^eme ?  comprendre le type de partition qu'on 
obtient sur $X$ (et non pas $X_\Omega$). 
Je crois qu'on est bien 
oblig\'e d'introduire des pseudo-triangulations. Ca revient plus ou 
moins au Remark \ref{Rk : clean sans Kedlaya} ... ?
\end{enumerate}
Il faudrait arriver \`a \'ecrire cette partie ...
}
}\fi
\end{proof}

\begin{remark}\label{Rk : clean sans Kedlaya}
Without using Kedlaya's result, it is still possible to prove a slightly 
weaker statement by replacing~$X$ by $X- T$, where~$T$ 
is the locally finite subset of~$X$ formed by the type~$4$ 
points in $\Gamma_{S}(\Fs)$. 
Then~$S_{d}$ is a weak triangulation of~$X- T$ whose 
skeleton is $\Gamma_{S_d}=\Gamma_{S}(\Fs)- T$. The 
other properties hold unchanged.
\end{remark}

\begin{remark}\label{Remark : defofcanonical triangulation}
An alternative proof of the global decomposition 
Theorem \ref{MAIN Theorem} is possible by 
using some ideas from the proof of 
Theorem \ref{clean deco}. Namely, let $\SF$ be a 
weak triangulation such that $\Gamma_{\SF}=\Gamma_S(\Fs)$ 
satisfying
\begin{enumerate}
\item\label{Remark : defofcanonical triangulation-i} Each connected component of $X-\SF$ is either an open virtual 
disk or annulus, which is a separating neighborhood of all its points 
with respect to the original $S$-radii $\{\R_{S,i}(-,\Fs)\}_i$; 
\item\label{Remark : defofcanonical triangulation-ii} The $S$-radii $\{\R_{S,i}(-,\Fs)\}_i$ 
are all $\log$-affine on each edge of 
$\Gamma_{\SF}$ (i.e. on the interior of each connected component 
of $\Gamma_{\SF}-\SF$).
\end{enumerate}
By Proposition \ref{Prop: A-4 fgh}, it follows that items \eqref{Remark : defofcanonical triangulation-i} and \eqref{Remark : defofcanonical triangulation-ii} also hold for the $\SF$-radii 
$\{\R_{\SF,i}(-,\Fs)\}_i$. 
Let $C$ be a connected component of $X-\SF$. We now define an 
augmented decomposition of $\Fs$ over $C$ with respect to the 
original radii $\{\R_{S,i}(-,\Fs)\}_i$ (i.e. a decomposition taking in 
account over-convergent radii).
Since by assumption the radii $\R_{\SF,i}(-,\Fs)$ 
are separated over $\Gamma_C$, then,
by Corollary \ref{Cor : linear on annulus} 
(see also \cite[Theorem 12.5.2]{Kedlaya-book-2}), $\Fs_{|C}$ can be 
decomposed by the spectral $\SF$-radii. Since for the spectral indexes 
the radii are not truncated by the change of triangulation 
(cf. \eqref{eq : D^c} and Proposition \ref{Prop: A-4 fgh}), 
then this is 
also a decomposition by the spectral $S$-radii. 
Now, the over-convergent $S$-radii over $C$ can exists if, and only if, 
$C\subseteq D(x,S)$ for some $x$, that is, if $C\cap\Gamma_S=\emptyset$. In this case over-solvable 
$S$-radii generate a trivial submodule of $\Fs_{|C}$, and this 
defines the augmented filtration.
This augmented filtration glues with the augmented Dwork-Robba 
decomposition (with respect to the $S$-radii) at the points of $\SF$, 
and we recover the result of Theorem \ref{MAIN Theorem}.
%
\end{remark}

\subsection{Formal differential equations}
\label{Formal differential equations}
Assume that $K$ is trivially valued. 
This is a somehow degenerate situation since the punctured disk 
$D^-_K(0,1)-\{0\}$ coincides with the open segment $]0,x_{0,1}[$ 
and it is topologically homeomorphic to the real open interval 
$]0,1[\subset\mathbb{R}$. 
\begin{remark}\label{rk : f_rho=rho^vt(f)}
Let $K((T))$ be the field of formal power series with coefficients in 
$K$. If $f=\sum_{i\geq n} a_iT^i\in K((T))$, then 
$|f|_{0,\rho}=\sup_{i\geq n}|a_i|\rho^i$, 
and $|a_i|$ is either equal to $0$ or $1$. 
Hence for all $0<\rho<1$ one has 
\begin{equation}
|f|_{0,\rho}\;=\;\rho^{v_T(f)}\;,
\end{equation}
 where 
\begin{equation}
v_T(f)=\min \{i\;|\; a_i\neq 0\}
\end{equation} is the 
$T$-adic valuation of $f$.
\end{remark}

The Remark shows that we have equalities of rings (without norms or topologies)
\begin{equation}
K((T))\;=\;\left\{
\begin{array}{ll}
\H(x_{0,\rho})&\textrm{for all }\rho\in]0,1[;\\
\O(C)&\textrm{for all annulus }C\subseteq D^-_K(0,1) 
\textrm{ centered at }0;\\
\mathfrak{R}:=\bigcup_{\varepsilon>0}\O(C_\varepsilon)&
\textrm{where }C_\varepsilon:=C_K^-(0;1-\varepsilon,1)\textrm{ (this 
is the Robba ring)}.\\
\end{array}
\right.
\end{equation}
A differential module $\M$ over $K((T))$ then have $3$ kind of 
decompositions:
\begin{enumerate}
\item If $\M$ is viewed as a module over $\H(x_{0,\rho})$, 
one has the Robba's decomposition 
\eqref{eq : deco at x Robba eq};
\item If $\M$ is viewed as a module over $K((T))$, one has
the decomposition \cite[p. 97-107]{Correspondance-Malgrange-Ramis} 
(see below) by the slopes of its formal Newton polygon of B. Malgrange and 
J. P. Ramis (cf. \cite{Ramis-Devissage-Gevrey});
\item If $\M$ is viewed as a module over the Robba's ring 
$\mathfrak{R}$, one has the decomposition by the 
slopes of Christol-Mebkhout  of section \ref{Ch-Me-Robba}
(in fact we prove that such a module is always solvable in the sense of their terminology \eqref{eq : solvability over the Robba ring});
\end{enumerate}
\begin{definition}[Formal Newton polygon]
Let  
\begin{equation}
P\;:=\;\sum_{k=0}^rg_k(T)(\frac{d}{dT})^k
\;\;\in\;\;
K((T))\lr{\frac{d}{dT}}\;,
\end{equation} 
with $g_r=1$, be a differential operator 
corresponding to a cyclic basis of $\M$. 
For $(u,v)\in\mathbb{R}^2$ we define the quadrant with vertex at $(u,v)$ as 
\begin{equation}
\mathcal{Q}(u,v)\;=\;\{(x,y)\in\mathrm{R}^2\;|\; x\leq u,y\geq v\}\;.
\end{equation} 
Then, the \emph{formal Newton polygon} is the convex hull in $\mathbb{R}^2$ of 
the family of quadrants  
\begin{equation}
\{\;\mathcal{Q}(k,v_T(g_k)+r-k)\;\}_{k=0,\ldots,r}\;.
\end{equation} 
The numbers 
\begin{eqnarray}
\mathrm{Irr}_{\mathrm{Formal}}(P)&:=&\max_{0\leq k\leq
r}\{k-v_T(g_k)\}-(r-v_T(g_r))\;,\\
\mu_{\mathrm{max}}(P)&\;:=\;&
\max\Bigl(\;0\;,\;\max_{k=0,\ldots,r-1}\frac{v_T(g_k)}{k-r}-1\;\Bigr)\;.
\end{eqnarray}
is called the Formal irregularity and the Poincaré-Katz rank of $\M$ 
respectively. These are height and the largest slope of the formal 
Newton polygon respectively:
\begin{equation}
\begin{picture}(200,100) %
\put(40,0){\vector(0,1){90}} %
\put(0,70){\vector(1,0){200}} %

\put(0,10){\line(1,0){60}} %
\put(60,10){\line(3,1){30}} %
\put(90,20){\line(1,1){20}}
\put(110,40){\line(1,3){10}} %
\put(120,70){\line(0,1){30}} %

\put(117,60){\circle{10}} %
\put(122,60){\line(2,-1){30}} %
\put(155,40){$\mu_{\textrm{max}}(P)$}

\put(118,68){\begin{tiny}$\bullet$\end{tiny}}
\put(122,72){\begin{tiny}$(r,0)$\end{tiny}}

\qbezier[30](60,10)(60,40)(60,70)   
\put(58,8){\begin{tiny}$\bullet$\end{tiny}}
\qbezier[25](90,20)(90,45)(90,70)    
\put(88,18){\begin{tiny}$\bullet$\end{tiny}}
\qbezier[15](110,40)(110,55)(110,70) 
\put(108,38){\begin{tiny}$\bullet$\end{tiny}}

\qbezier[10](100,50)(100,60)(100,70) 
\put(98,48){\begin{tiny}$\bullet$\end{tiny}}

\qbezier[5](80,70)(80,75)(80,80) 
\put(78,78){\begin{tiny}$\bullet$\end{tiny}}

\qbezier[20](70,70)(70,50)(70,30) 
\put(68,28){\begin{tiny}$\bullet$\end{tiny}}

\qbezier[10](50,70)(50,70)(50,70) 
\put(48,68){\begin{tiny}$\bullet$\end{tiny}}

\put(38,38){\begin{tiny}$\bullet$\end{tiny}}

\qbezier[10](100,50)(100,60)(100,70) 
\put(98,48){\begin{tiny}$\bullet$\end{tiny}}

\put(-80,37){$\mathrm{Irr}_{\mathrm{Formal}}(P)$}
\put(-20,37){$\left\{
\begin{smallmatrix}
\\\\\\\\\\\\\\\\\\\\\\
\end{smallmatrix}\right.$} 
%
\end{picture}.
\end{equation}
It is a classic fact that the formal Newton polygon is independent of the 
chosen cyclic basis of $\M$ (cf. \cite{VS}).
\end{definition}
In order to be coherent with the rest of the paper, by convention we 
say that the formal Newton polygon has precisely 
$r=\mathrm{rank}(\M)$ slopes 
$\mu_1\leq\cdots\leq\mu_r$ defined as $\mu_i:=h(i)-h(i-1)$, 
where $h:[-\infty,r]\to\mathbb{R}$ is the function whose 
epigraph is the formal Newton polygon. Moreover, with a little abuse, 
we identify the polygon with the multi-set of its slopes $\{\mu_1,\ldots,\mu_r\}$.

The module $\M$ is said pure of formal slope $\mu$ if 
$\mu_1=\cdots=\mu_r=\mu$. 
\begin{theorem}[\protect{\cite[p. 97-107]{Correspondance-Malgrange-Ramis}}]\label{Thm. deco formal slopes}
Any solvable differential module over $K((T))$
admits a direct sum decomposition 
\begin{equation}
\M\;:=\;\bigoplus_{\mu \geq 0}\M(\mu)
\end{equation}
into sub-modules $\M(\mu)$ that are pure of formal 
slope $\mu$. Moreover, as a multiset of slopes, 
the formal Newton polygon of $\M$ is the union of 
those of the $\M(\mu)$ appearing in this decomposition.
\hfill$\qed$
\end{theorem}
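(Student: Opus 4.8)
The statement to prove is Theorem \ref{Thm. deco formal slopes}: a solvable differential module $\M$ over $K((T))$ (with $K$ trivially valued) decomposes as a direct sum $\bigoplus_{\mu\ge 0}\M(\mu)$ of sub-modules purely of formal slope $\mu$. The idea is to identify this formal decomposition with the Christol--Mebkhout decomposition already obtained in Corollary \ref{Cor : Ch-Me deco}, via the ring identifications collected in the excerpt: $K((T))=\mathfrak{R}=\O(C)=\H(x_{0,\rho})$ for all $0<\rho<1$ and all annuli $C\subseteq D^-_K(0,1)$ centered at $0$. Thus a differential module over $K((T))$ is simultaneously a module over the Robba ring $\mathfrak{R}$, and the first step is to check that any such $\M$ is automatically \emph{solvable} in the sense of Section \ref{Ch-Me-Robba}, i.e. $\lim_{\rho\to 1}\R_{\emptyset,1}(x_{0,\rho},\M)=1$. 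This follows from Remark \ref{rk : f_rho=rho^vt(f)}: the entries of a connection matrix $G\in M_r(K((T)))$ satisfy $|g_{i,j}|_{0,\rho}=\rho^{v_T(g_{i,j})}$, which tends to a value $\le 1$ as $\rho\to 1$; feeding this into the Taylor-series formula \eqref{eq : R^Y} for $\R^Y$ and using that $K$ is trivially valued (so $|n!|=1$) gives $\R_{\emptyset,1}(x_{0,\rho},\M)\to 1$.

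\textbf{Second step.} Once solvability is established, Corollary \ref{Cor : Ch-Me deco} applies and yields a direct sum decomposition $\M=\bigoplus_{\beta\ge 0}\M(\beta)$ into sub-modules purely of Christol--Mebkhout slope $\beta$. It then remains to match the Christol--Mebkhout slopes $\beta_j$ with the formal slopes $\mu_j$. The plan is to compare the two Newton polygons directly on a cyclic vector. By Section \ref{Filtrations of cyclic modules, and factorization of operators.}, $\M\cong\M_P$ for a monic $P=\sum_{k=0}^{n}g_k(T)(d/dT)^k\in K((T))\lr{d/dT}$; the formal Newton polygon is the convex hull of the quadrants $\mathcal{Q}(k,v_T(g_k)+n-k)$, and its slope along the edge between abscissae $k$ and $n$ is essentially $\max_k\bigl(v_T(g_k)/(k-n)-1\bigr)$ as in the definition of $\mu_{\mathrm{max}}(P)$. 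On the other hand, for $\rho$ close to $1$ the spectral radii satisfy $\R_{\emptyset,i}(x_{0,\rho},\M)=\rho^{\beta_{r-i+1}}$ by the discussion in Section \ref{Ch-Me-Robba}. The key is to compute $\R_{\emptyset,1}(x_{0,\rho},\M_P)$ from the operator norm estimate \eqref{eq : explicit norm operator}: using $\|g_k\|_{D(x_{0,\rho})}=\rho^{v_T(g_k)}$ and that the smallest radius of $\M_P$ is controlled by $\|P\|_{op}$ up to the solvable normalization, one reads off that $-\log_\rho\R_{\emptyset,1}(x_{0,\rho},\M_P)$ coincides with the largest slope of the formal Newton polygon, i.e. $\beta_r=\mu_r$. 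Applying the same comparison to the exterior powers $\wedge^j\M$ (whose partial heights encode $\beta_1+\cdots+\beta_j$ and $\mu_1+\cdots+\mu_j$ respectively) gives $\beta_j=\mu_j$ for all $j$, hence the two polygons coincide and $\M(\beta)$ is purely of formal slope $\mu=\beta$.

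\textbf{Third step.} Finally, with the slopes identified, the decomposition $\M=\bigoplus_{\beta\ge 0}\M(\beta)$ of Corollary \ref{Cor : Ch-Me deco} is literally the asserted decomposition, upon relabelling $\M(\mu):=\M(\beta)$ for $\mu=\beta$; one checks that each $\M(\mu)$ is pure of formal slope $\mu$ because all its Christol--Mebkhout slopes equal $\mu$ and the previous step turns this into equality of formal slopes. It is also worth noting the degeneracy pointed out in the excerpt: over a trivially valued field the punctured disk $D^-_K(0,1)-\{0\}$ is just the segment $]0,x_{0,1}[$, so there is no subtlety coming from non-rational points, and the Galois-descent issues of Lemma \ref{Lemma : general descent} are vacuous here (or handled exactly as there if one insists on base-changing $K$).

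\textbf{Main obstacle.} The delicate point is the second step: matching the two Newton polygons slope-by-slope. The inequality in one direction (formal irregularity bounds the $p$-adic, or here the generic, radii) is a classical transfer estimate and follows from \eqref{eq : explicit norm operator} together with Remark \ref{rk : f_rho=rho^vt(f)}; the reverse inequality is the substantive content and is where solvability is genuinely used — it is exactly the statement that for a solvable module the generic radius of convergence ``sees'' the whole formal irregularity. I would extract this from the Christol--Mebkhout machinery already invoked for Corollary \ref{Cor : Ch-Me deco} rather than reprove it, pointing to \cite{Ch-Me-III}; the only new work is the bookkeeping translating ``$p$-adic slope $\beta$'' into ``formal slope $\mu$'' via the explicit norms on $K((T))$ in Remark \ref{rk : f_rho=rho^vt(f)}, which is routine but must be done carefully on exterior powers to get all slopes and not merely the largest one.
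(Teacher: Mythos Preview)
The paper does not prove this theorem at all: the $\Box$ at the end of the statement signals that it is quoted from the literature (\cite[p.~97--107]{Correspondance-Malgrange-Ramis}) as a classical result. The paper then uses it as an \emph{input} in the proof of the subsequent Proposition~\ref{Prop : 3 deco coincide}, where all three decompositions (Robba, Christol--Mebkhout, formal) are applied simultaneously to reduce to the case of a single slope in each sense, after which the identification $\R_{S,1}(x_{0,\rho},\M)=\rho^{\mu_{\max}(P)}$ (quoted from \cite[Thm.~6.2]{Astx}) finishes the job.

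Your approach is genuinely different and, in a sense, more in the spirit of the paper: you \emph{derive} the formal decomposition from Corollary~\ref{Cor : Ch-Me deco}, which is itself proved in the paper from the global decomposition theorem. Since $K((T))=\mathfrak{R}$ in the trivially valued setting, the Christol--Mebkhout pieces $\M(\beta)$ are already sub-modules over $K((T))$, and it only remains to check that each is pure of formal slope $\beta$. Your exterior-power bookkeeping for this is correct. So what the paper takes as a classical black box, you recover internally; and what the paper does in Proposition~\ref{Prop : 3 deco coincide} (matching the polygons) you fold into the proof of the theorem itself. This has the pleasant consequence, noted in the Remark following Proposition~\ref{Prop : 3 deco coincide}, that the Malgrange--Ramis decomposition is \emph{a posteriori} a special case of Robba's.

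One correction to your ``main obstacle'' paragraph: the exact formula $\R_{S,1}(x_{0,\rho},\M)=\rho^{\mu_{\max}(P)}$ is not something to be extracted from \cite{Ch-Me-III}; it is a direct spectral-norm computation for operators over $K((T))$ with $K$ trivially valued, and the paper cites \cite[Thm.~6.2]{Astx} for it. The Christol--Mebkhout machinery is about the $p$-adic setting and does not supply this particular identity. Once you have this formula (for $\M$ and for its exterior powers), your argument goes through without further input.
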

In this section we prove the following:
\begin{proposition}\label{Prop : 3 deco coincide}
The above three decompositions of $\M$ coincide. More precisely 
let $S$ be a weak triangulation of $D_K^-(0,1)-\{0\}$.\footnote{ In 
this degenerate situation we automatically have 
$\Gamma_S=]0,x_{0,1}[$, and $S$ is a discrete set of points 
$]0,x_{0,1}[$ whose unique accumulation point in 
$\mathbb{R}$ is $0$.} 
Let 
$\mu_1\leq\mu_2\leq\cdots\leq \mu_r$ be the slopes of the formal 
Newton polygon of $\M$. 
Then 
\begin{enumerate}
\item\label{Prop : 3 deco coincide-i} For all $i=1,\ldots,r=\mathrm{rank}(\M)$ and all $\rho\in]0,1[$ 
one has 
\begin{equation}\label{eq : R=rho^mu}
\R_{S,i}(x_{0,\rho},\M)\;=\;\rho^{\mu_{r-i+1}}\;.
\end{equation}
In particular $\M$ viewed as a differential module over $\mathfrak{R}$ 
is solvable following the terminology of Christol-Mebkhout 
(cf. \eqref{eq : solvability over the Robba ring}). 
\item\label{Prop : 3 deco coincide-ii} One has $\Gamma_{S,i}(\M)
=\Gamma_S=]0,x_{0,1}[=D^-_K(0,1)-\{0\}$. 
The radii of $\M$ are separated over $D^-_K(0,1)-\{0\}$, and there 
exists a global decomposition by the radii over 
$K((T))=\O(D_K^-(0,1)-\{0\})$.

\item\label{Prop : 3 deco coincide-iii} The formal slopes $\mu_1\leq\cdots\leq \mu_r$ 
coincide with the slopes $\beta_1\leq\cdots\leq\beta_r$ of 
Christol-Mebkhout. In particular one has
\begin{eqnarray}
\partial_b\R_{S,1}(x_{0,1},\M)&\;=\;& \mu_r=\beta_r=\mu_{max}(P) 
\;,\\
\partial_bH_{S,r}(x_{0,1},\M)&=& \mathrm{Irr}_{Formal}(P)\;,
\end{eqnarray}
where $H_{S,r}(x,\M)=\prod_{i=1}^n\R_{S,i}(x,\M)$ is the highest partial 
height of $\Fs$ (cf. \eqref{eq : partial height}), 
and $b=]x_{0,1-\varepsilon},x_{0,1}[$ is a germ of 
segment oriented as out of $x_{0,1-\varepsilon}$ (i.e. 
towards $+\infty$).
\end{enumerate}

We sum up these facts by saying that the formal Newton polygon 
equals the Christol-Mebkhout Newton polygon, and it coincides with 
the \emph{derivative} of the convergence Newton polygon.
\end{proposition} 
\begin{proof}
Decomposing the module with respect to all decomposition results 
Corollary \ref{corollary : uniqueness at H(x)}, 
Corollary \ref{Cor : Ch-Me deco}, and
Theorem \ref{Thm. deco formal slopes} we can assume that the three 
newton polygons of $\M$ all have an individual slope with multiplicity 
$\mathrm{rank}(\M)$. 
All the statements will 
then follows from \eqref{eq : R=rho^mu} for $i=1$, which we now 
prove.

With the notations of
 \eqref{eq: Y(T,t_x)} one has 
\begin{equation}
\R_{S,1}(x_{0,\rho},\M)\;=\;\min(1,\R^Y(x_{0,\rho})/\rho)\;,
\end{equation}
where $\R^Y(x_{0,\rho})=\liminf_n(|G_n|_{0,\rho}/|n!|)^{-1/n}$.
By Remark \ref{rk : f_rho=rho^vt(f)}, 
the functions $\log(\rho)\mapsto \log(|G_n|_{0,\rho})$ 
are all lines passing through the origin. Hence the same happens for 
the  functions $\log(\rho) \mapsto\log\R^Y(x_{0,\rho})$ and 
$\log(\rho) \mapsto\log\R_{S,1}(x_{0,\rho},\M)$. 
In particular one has 
\begin{equation}
\lim_{\rho\to 1^-}\R_{S,1}(x_{0,\rho},\M)\;=\;1\;.
\end{equation}
Now, by \cite[Theorem 6.2]{Astx} (cf. P.Young's result \cite{Young}) we obtain for all $\rho\in]0,1[$
\begin{equation}
\R_{S,1}(x_{0,\rho},\M)\;=\;\rho^{\mu_{\mathrm{max}}(P)}\;.
\end{equation}
The claim follows.
\end{proof}
\begin{remark}
It is interesting to notice (\emph{a posteriori}) that 
the decomposition by the radii of P.Robba 
\cite{Robba-Hensel-original} exposed in section 
\ref{Robba-deco} is prior to the formal decomposition result of  
\cite[p. 97-107]{Correspondance-Malgrange-Ramis} and coincides with 
it.
\end{remark}

\subsection{Notes.}
\label{Rk : theoreme de deco de Kedlaya sur un disque...}
The decomposition Theorem \ref{MAIN Theorem} is not a simple 
consequence of Robba's and Dwork-Robba's decompositions by the 
spectral radii (cf. Corollary \ref{corollary : uniqueness at H(x)}, 
and Theorem \ref{Dw-Robba}). 
Indeed the proof of Theorem \ref{MAIN Theorem} uses the continuity 
of all the radii (cf. Proposition \ref{Prop : extended by continuity}), 
which is a consequence of the local finiteness of $\Gamma_{S}(\Fs)$. 
The proof of the finiteness involves again Robba's decomposition, 
several results of \cite{Kedlaya-book-2} and, in particular, 
another decomposition result due to Kedlaya 
\cite[Theorem 12.4.1]{Kedlaya-book-2}.
The  \cite[Theorem 12.4.1]{Kedlaya-book-2}  
\emph{does not assume} that the radii are separated, 
but \emph{it implies} the separateness condition on some regions.
In fact the decomposition \cite[Theorem 12.4.1]{Kedlaya-book-2} 
is actually used in \cite{NP-I} and \cite{Kedlaya-draft} to prove that the 
radii are separated and constant on certain regions (see 
\cite[Proposition 6.2]{NP-I}, \cite[item iii) of Remark 2.1.6]{NP-IV} 
and \cite[Lemma 4.3.12]{Kedlaya-draft}),
this is somehow the heart of both proofs of the finiteness result.

\section{Some counterexamples.} 
\label{An explicit counterexample.}
In this section we provide the following counterexamples:
\begin{enumerate}
\item Non compatibility of solvable and over-solvable 
radii with duals (cf. section 
\ref{section counterexample to duality and exactness});
\item Uncontrolled behavior of solvable and over-solvable 
radii by exact sequences (cf. section 
\ref{section counterexample to duality and exactness});
\item An explicit example of differential module over a 
disk for which $\Fs_{\geq i}$ is not a direct summand (cf. section 
\ref{optimality});
\item Some basic relations between the 
Grothendieck-Ogg-Shafarevich formula and 
super-harmonicity of partial heights 
(cf. Section \ref{A remark on the Grothendieck-Ogg-Shafarevich formula}). In particular, a link between 
the failure of super-harmonicity and the presence of $p$-adic Liouville 
exponents in our differential module 
(cf. Remark \ref{rk : super-harmonicity}).
\end{enumerate}
All the examples involve a differential module over an open disk with 
empty triangulation. Indeed, by definition all the radii are spectral on
$\Gamma_S$, and hence 
compatible with duality and spectral sequences 
(cf. Proposition \ref{Prop : 1-th radius and dual} and 
\eqref{eq : compatibility with duals}). Therefore, 
any possible counterexample to the above situations is reduced 
to the case of an open disk with empty weak-triangulation 
by localizing to a maximal disk $D(x,S)$. 

Concerning item iv), the potential failure of super-harmonicity at some 
points was one of the crucial difficulties of \cite{NP-I}, we here relate 
this to the presence of Liouville numbers in the exponents.

\subsection{Setting.}\label{section : setting examples ..htfhg}
Let $D:=D^-(0,1)$ be the open unit disk, and let $T$ be its coordinate.
In this section all differential module will be defined over the ring 
\begin{equation}
\O^\dag(D)\;:=\;\cup_{\varepsilon>0}\O(D_\varepsilon)\;,
\end{equation}
where 
\begin{equation}
D_\varepsilon\;:=\;D^-(0,1+\varepsilon)\;.
\end{equation}
Namely $\O^\dag(D)$ 
is formed by power series $f(T)=\sum_{i\geq 0}a_iT^i$ satisfying 
$\lim_i|a_i|\rho^i=0$, for some unspecified $\rho>1$.
The data of a differential module $\M$ 
over $\O^\dag(D)$ is equivalent to 
that of a differential module $\M_\varepsilon$ over 
$\O(D_\varepsilon)$ for some 
unspecified $\varepsilon>0$. The triangulation on 
$D_\varepsilon$ will always be the empty one. Moreover the 
radii are assumed to be \emph{all solvable or over-solvable} at the 
boundary $x_{0,1}$ of $D$. By \cite[Proposition 4.2 or 
Lemma 6.5]{NP-I}, 
this implies that the restriction of $\M$ to 
all sub-disks of $D_\varepsilon$ with boundary $x_{0,1}$ is trivial. 

Since we are going to make use of the index formula, 
we assume that $K$ is a finite extension the field of $p$-adic numbers 
$\mathbb{Q}_p$.\footnote{cf. \cite{NP-V}  for a generalization of that formula to any complete valued base field, including trivial valuation.} 
All differential modules will have the property $\bs{\mathrm{NL}}$ 
of non-Liouville exponents at the boundary of the disks 
$D_\varepsilon$, along the annuli
\begin{equation}\label{eq : notation C_e}
C_\varepsilon:=\{|T(x)|\in]1,1+\varepsilon[\}\;.
\end{equation} 
In the sequel, we refer to this property simply as 
``\emph{the $\bs{\mathrm{NL}}$ property of $\M$}''.
Under this condition 
$\Hdr^1(\M,\O^\dag(D)):=\textrm{Coker}(\nabla:\M\to\M)$ 
has finite dimension (cf. \cite{Ch-Me-IV}, see also \cite{NP-V,NP-VI} 
for a more extensive treatment). 
More precisely, one shows that the natural 
restrictions maps $\Hdr^i(\M_\varepsilon,\O(D_\varepsilon))
\to\Hdr^i(\M_{\varepsilon'},\O(D_{\varepsilon'}))$ are isomorphisms 
for all $0<\varepsilon'<\varepsilon$ close enough to $0$ and coincide 
with $\Hdr^i(\M,\O^\dag(D))$. We set (cf. \eqref{eq: hidr little})
\begin{equation}
\hdr^i(\M,\O^\dag(D))\;:=\;
\mathrm{dim}_{K}\Hdr^i(\M,\O^\dag(D))\;=\;
\lim_{\varepsilon\to 0^+}\mathrm{dim}_{K}
\Hdr^i(\M_\varepsilon,\O(D_\varepsilon))\;.
\end{equation}

The $\bs{\mathrm{NL}}$ condition is quite implicit. 
The effective way to ensure it is to assume either that the restriction of 
$\M$ to the annulus $C_\varepsilon$
has a Frobenius structure, or that 
none of the radii $\R_{\emptyset,i}(-,\M)$ of 
$\M$ verifies the following Robba condition along 
$b=]x_{0,1},x_{0,1+\varepsilon}[$:
\begin{equation}\label{eq : negative break of R_i NL}
\R_{\emptyset,i}(-,\M)\textrm{ is solvable at $x_{0,1}$, 
and $\partial_b\R_{\emptyset,i}(x_{0,1},\M)=1$}\;.
\end{equation} 
\begin{remark}\label{Rk : NL condition restr}
Condition \eqref{eq : negative break of R_i NL} concerns $\M$ before 
localization to $C_\varepsilon$ and 
differs from the same condition for the radii of $\M_{|C_\varepsilon}$. 
Indeed, over-solvable radii of $\M_\varepsilon$ at 
$x_{0,1}$ are truncated by localization to $C_\varepsilon$ 
and become solvable (cf. Section \ref{Localization}). 
The existence of over-solvable radii at $x_{0,1}$ 
corresponds to the existence of trivial 
submodules of $\M_\varepsilon$ that of course satisfy the condition 
$\bs{\mathrm{NL}}$. Therefore, in this particular case, 
the $\bs{\mathrm{NL}}$ condition really 
arises from the presence of a break of some 
$\R_{\emptyset,i}(-,\M)$ at $x_{0,1}$, as in 
\eqref{eq : negative break of R_i NL}, before localization.
\end{remark}
\if{
will have an \emph{unspecified} Frobenius 
structure, i.e. an isomorphism of differential modules 
$(\varphi^n)^*\M\simto\M$, for an unspecified $n\geq 1$, 
where the first module is the pull-back by the Frobenius map 
$\varphi^n:\O^\dag(D)\to\O^\dag(D)$ associating to 
$\sum_{i\geq 0}a_iT^i$ the series 
$\sum_{i\geq 0}\sigma^n(a_i)T^{ip^n}$, where 
$\sigma:K\to K$ is a lifting of the $p$-th power map 
$x\mapsto x^p$ of the residual field of $K$.
}\fi

\subsection{Grothendieck-Ogg-Shafarevich formula and 
super-harmonicity}
\label{A remark on the Grothendieck-Ogg-Shafarevich formula}

We maintain the above notation: 
 $\M$ is a differential module over $\O^\dag(D)$ of 
rank $r=\mathrm{rank}(\M)$, and $b=]x_{0,1},x_{0,1+\varepsilon}[$ 
is a germ of segment oriented as out of $x_{0,1}$.
\begin{definition}[\cite{Ch-Me-III},\cite{Ch-Me-IV}]
One defines the $p$-adic irregularity of $\M$ 
at $\infty$ as
\begin{equation}\label{Eq : def of irririr}
\mathrm{Irr}_{\infty}(\M)\;:=\;
-\partial_bH_{\emptyset,r}(x_{0,1},\M_{|C_\varepsilon})\;=\; 
-\sum_{i=0}^r\partial_b\R_{\emptyset,i}(x_{0,1},
\M_{|C_\varepsilon})\;.
\end{equation}
\end{definition}
Notice that we restrict $\M$ to $C_\varepsilon$, endowed with the 
empty weak-triangulation and we compute the radii after localization to 
$C_\varepsilon$ (cf. Section \ref{Localization}). This definition 
corresponds to that of G.Christol and Z.Mebkhout 
(cf. Definition \ref{Def : Ch-Me irreg}, \cite{Ch-Me-III, Ch-Me-IV}, see \cite{NP-V} for a more extensive treatment).

The radii of $\M$ that are over-solvable at $x_{0,1}$ 
correspond to solutions of $\M$ on some 
$D_\varepsilon$. Their number equals then $\hdr^0(\M,\O^\dag(D))$. 
These radii are truncated by localization to 
$C_\varepsilon$. As a result their slope remains zero, but the 
localization to $C_{\varepsilon}$ adds $-1$ to the slope of 
each other radius (i.e. to the spectral radii, 
cf. Proposition \ref{Prop : immersion}). 
One finds $\partial_bH_{\emptyset,r}(x_{0,1},\M_{|C_\varepsilon})=
\partial_bH_{\emptyset,r}(x_{0,1},\M)-r+\hdr^0(\M,\O^\dag(D))$:
\begin{equation}\label{eq :IIR as intrinsic def}
\mathrm{Irr}_\infty(\M)\;=\;
\mathrm{rank}(\M)-\partial_bH_{\emptyset,r}(x_{0,1},\M_\varepsilon)- 
\hdr^0(\M,\O^\dag(D))\;.
\end{equation}
Assume now that $\M$ satisfies 
the $\bs{\mathrm{NL}}$ property, and 
that $K$ is spherically complete. Then,
one has the Grothendieck-Ogg-Shafarevich formula 
(often called Euler-Poincaré formula, or simply index formula):
\begin{equation}\label{eq : GOS}
\hdr^0(\M,\O^\dag(D))-\hdr^1(\M,\O^\dag(D))\;=\;
\mathrm{rank}(\M)-\mathrm{Irr}_\infty(\M)\;.
\end{equation}
By \eqref{eq :IIR as intrinsic def} this formula can be written as :
\begin{equation}\label{GOS-changed}
\hdr^1(\M,\O^\dag(D))\;=\;-\partial_bH_{\emptyset,r}(x_{0,1},\M_\varepsilon) \;.
\end{equation}

\begin{proposition}\label{Prop : super-harm everywhere}
Let $\M_\varepsilon$ be a differential module over a disk 
$D_\varepsilon$, 
such that $\R_{\emptyset,1}(-,\M_\varepsilon)$ 
is solvable or over-solvable at 
$x_{0,1}$, and such that $\M_\varepsilon$ 
has the $\bs{\mathrm{NL}}$ property.
Then, up to shrink $\varepsilon>0$, the following properties hold
\begin{enumerate}
\item\label{Prop : super-harm everywhere-i} If $D\subseteq D_\varepsilon$ is a virtual open disk with boundary $x_{0,1}$, then 
$(\M_\varepsilon)_{|D}$ is a trivial differential module on $D$. In particular, for every germ of segment $b_D\neq b$ out of $x_{0,1}$ and all $i=1,\ldots,r$, we have $\partial_{b_D}H_{\emptyset,i}(-,\M_\varepsilon)= 0$; 
\item\label{Prop : super-harm everywhere-ii} For all $i=1,\ldots,r$ we have $\Gamma_{\emptyset,i}(\M_\varepsilon)\subseteq[x_{0,1},x_{0,1+\varepsilon}[$;
\item\label{Prop : super-harm everywhere-iii} for all $i=1,\ldots,r=\mathrm{rank}(\M_\varepsilon)$ 
the $i$-th partial height 
$H_{\emptyset,i}(-,\M_\varepsilon)$ 
satisfies $\partial_bH_{\emptyset,i}(-,\M_\varepsilon)\leq 0$;
\item\label{Prop : super-harm everywhere-iiii} For all $i=1,\ldots,r$ and all $x\in D_\varepsilon$, the partial 
height $H_{\emptyset,i}(-,\M_\varepsilon)$ is super-harmonic at $x$, 
that is (cf. Notatition \ref{Notation branch - germ - direction})
\begin{equation}
dd^cH_{\emptyset,i}(x,\M_\varepsilon)\;=\;\partial_bH_{\emptyset,i}(x,\M_\varepsilon)+
\sum_{D}
\mathrm{deg}(b_D)\partial_{b_D}H_{\emptyset,i}(x,\M_\varepsilon)
\leq 0\;,
\end{equation}
where $D$ runs in the family of virtual open disks with 
boundary $x_{0,1}$ and $\mathrm{deg}(b_D)$ is the number of 
open disks over the algebraic closure of $K$ over $D$ (cf. \cite{NP-IV} 
for more details).
\end{enumerate}
\end{proposition}
\begin{proof}
All the properties are insensitive to scalar extensions of $K$. 
So we can assume that  $K$ is algebraically closed and 
spherically complete.  
If $\R_{\emptyset,1}(-,\M_\varepsilon)$ 
is over-solvable at $x_{0,1}$, then 
$D_{\emptyset,1}(x_{0,1},\M_\varepsilon)=D_\varepsilon$ for some 
$\varepsilon>0$ 
(cf. Remark \ref{Remark : R_i solvable on Gamma_i}), therefore 
$\M_\varepsilon$ is trivial over $D_\varepsilon$ 
for some $\varepsilon$, and there is nothing to prove. Let us assume that $\R_{\emptyset,1}(-,\M_\varepsilon)$ 
is over-solvable at $x_{0,1}$, that is 
$\R_{\emptyset,1}(x_{0,1},\M_\varepsilon)=\frac{1}{1+\varepsilon}$.

\eqref{Prop : super-harm everywhere-i}. If $i=1$, the super-harmonicity follows from \cite[Theorem 3.9]{NP-I}. 
Moreover, $\R_{\emptyset,1}(-,\M_\varepsilon)$ is decreasing on the 
segments of every virtual open disks in $D_\varepsilon$, 
when these segments are oriented as towards outside the disk 
(cf. concavity and transfer \cite[Proposition 4.2]{NP-I}). 
Since by assumption 
$\R_{\emptyset,1}(x_{0,1},\M_\varepsilon)$ is solvable, 
it follows that it has to be a constant function on every virtual open 
disk $D$ with boundary $x_{0,1}$. In other words, if for every $z\in D$ we have $\R_{\emptyset,1}(z,\M_\varepsilon)\geq
\R_{\emptyset,1}(x_{0,1},\M_\varepsilon)$, which implies 
$D\subseteq D_{\emptyset,1}(z,\M_\varepsilon)$  
(cf. see for instance the proof of Corollary 
\ref{Cor : linear on annulus} for a similar reasoning). In particular, 
$\M_\varepsilon$ has a basis of Taylor solutions over ever such disk 
$D$ and it is trivial on it. 

\eqref{Prop : super-harm everywhere-ii}. In particular, item \eqref{Prop : super-harm everywhere-i} also implies that for every 
such virtual open disk 
$D$ with boundary $x_{0,1}$ all the radii are solvable or over-solvable at every point of $D$. In particular they are constant on $D$ and, for all $i=1,\ldots,r$ we have 
$\Gamma_{\emptyset,i}(\M_\varepsilon)\cap D=\emptyset$. By the 
finiteness result of \cite[Theorem 3.9]{NP-I}, the graphs are locally 
finite around $x_{0,1}$ and therefore, up to shrink $\varepsilon$, 
we have
$\Gamma_{\emptyset,i}(\M_\varepsilon)\subseteq[x_{0,1},x_{0,1+\varepsilon}[$.

\eqref{Prop : super-harm everywhere-iii} and
\eqref{Prop : super-harm everywhere-iiii}. 
Over-solvable radii do not contribute to super-harmonicity. 
Moreover, for $\varepsilon>0$ small enough, they are constant of 
value $1$ on the whole $D_\varepsilon$. 
On the other hand, the other radii are by definition 
spectral at $x_{0,1}$. Hence, 
they can not be equal to $1$ at any point $y\in D_\varepsilon$, 
because this would mean by definition that 
$D_{\emptyset,i}(y,\M_\varepsilon)=D_\varepsilon$ and 
that they would be over-solvable. 
In other words, the index $i_{x_{0,1}}^{\mathrm{sol}}+1$ (cf. 
\eqref{eq : over-solvable cutoff}) separates the 
radii of $\M_\varepsilon$ globally on $D_\varepsilon$. 
Therefore, by Theorem \ref{MAIN Theorem}, we can 
decompose $\M_\varepsilon$ accordingly, over the whole 
$D_\varepsilon$, into its larger trivial sub-module and a quotient 
whose radii at $x_{0,1}$ are all spectral. 
By Proposition 
\ref{Prop. Exact sequence prop separate radii in the right order}, the 
first $i_{x_{0,1}}^{\mathrm{sol}}$ radii of $\M_\varepsilon$ coincide
 with those of its quotient at every point of $D_\varepsilon$. 
Therefore, we can assume that $\M_\varepsilon$ has no 
trivial submodules over $D_\varepsilon$, i.e. 
$\R_{\emptyset,i}(x_{0,1},\M_\varepsilon)=
\frac{1}{1+\varepsilon}$ for all $i$.\footnote{That is, the radii are 
solvable at $x_{0,1}$ (not over-solvable, nor spectral non-solvable).}
By \eqref{GOS-changed}, $H_{\emptyset,r}(-,\M_\varepsilon)$ is 
concave at $x_{0,1}$, and by item \eqref{Prop : super-harm everywhere-ii} it is constant on $D^+(0,1)$ 
and on each connected component of 
$C_\varepsilon-[x_{0,1},x_{0,1+\varepsilon}[$, 
hence $H_{\emptyset,r}(-,\M_\varepsilon)$ is super-harmonic on 
$D_\varepsilon$.\footnote{Notice that, 
at the moment we write this 
paper, the G.O.S. formula \eqref{eq : GOS} is proved over a discretely 
valued field $K$ (cf. \cite{Ch-Me-IV}). 
We quote \cite{NP-V,NP-VI} for the general proof and a more 
extensive treatment of the index theory. 
On the other hand it is possible, although not very practical, to allow 
only unspecified discretely valued field extensions $L/K$ 
(this is the approach of \cite{Ch-Me-IV}).}
The assertion for $H_{\emptyset,i}(-,\M_\varepsilon)$ 
is deduced from that of $H_{\emptyset,r}(-,\M_\varepsilon)$ by 
interpolation. Namely by convexity of the 
convergence Newton polygon one has 
$H_{\emptyset,i}(-,\M_\varepsilon)\leq 
\frac{i}{r}H_{\emptyset,r}(-,\M_\varepsilon)$, 
moreover these two functions coincide at $x_{0,1}$. This proves that 
$H_{\emptyset,i}(-,\M_\varepsilon)$ is super-harmonic. 
\end{proof}
\begin{remark}\label{rk : super-harmonicity}
In \cite[Theorem 3.9]{NP-I} one proves that 
$H_{\emptyset,i}(-,\M_\varepsilon)$ 
are super-harmonic over $D_\varepsilon$ 
outside a certain \emph{finite} set $\mathscr{C}_i$ and that 
$H_{\emptyset,1}(-,\M_\varepsilon)$ 
is always super-harmonic on the whole disk.  
Now, under the conditions of 
Proposition \ref{Prop : super-harm everywhere} all the 
$H_{\emptyset,i}(-,\M_\varepsilon)$ are super-harmonic at every point 
of $D_\varepsilon$. 
This shows that, if $H_{\emptyset,i}(-,\M_\varepsilon)$ 
is not super-harmonic, and if $\R_{\emptyset,1}(x_{0,1},
\M_\varepsilon)$ is solvable or over-solvable,
then the $\bs{\mathrm{NL}}$ assumption needs to fail for 
$\M_\varepsilon$. 
\if{
\framebox{
\begin{minipage}{430pt}
QUI COMPLETARE :

1) decomporre con Dwork-Robba separare il caso solubile dal caso non 
solubile. 

2) Guardare la formula di GOS in Robba IV nel caso spettrale non 
soluble. E dimostrare che la formula vale nel caso non solubile.

3) Mostrare che il caso solubile è la formula di GOS fatta sopra 
(risciverla con un numero arbitrario di buchi surconvergenti). 

4) Dedurre che quando GOS funziona abbiamo super-harmonicità .

....

....

The above example then proves then that the super-harmonicity holds 
if the Grothendieck-Ogg-Shafarevich formula holds. Unfortunately the 
assumptions on $\M$ for this formula are Non Liouville exponents ... 
completare !!!! .......

.........

........

\end{minipage}}
}\fi

\end{remark}

\begin{remark}
Since $\mathrm{Irr}_\infty(\M)$ involves spectral radii, these are 
stable by duality and one has 
$\mathrm{Irr}_\infty(\M)=\mathrm{Irr}_\infty(\M^*)$. 
Hence $\hdr^0(\M,\O^\dag(D))-\hdr^1(\M,\O^\dag(D))=\hdr^0(\M^*,\O^\dag(D))-\hdr^1(\M^*,\O^\dag(D))$ as soon as Grothendieck-Ogg-Shafarevich formula holds.
\end{remark}

\subsection{An example where $\Fs_{\geq i}$ is not a direct 
summand.}\label{optimality}

The Yoneda group $\mathrm{Ext}^1(\M,\N)$ 
of extensions 
of differential modules 
can be identified with $\Hdr^1(\M^*\otimes \N)$ 
(cf. Lemma \ref{Lemma : Ext^1=H^1}). 
\begin{lemma}\label{Lemma : non splitting extension}
Assume that $\M$ has the property $\bs{\mathrm{NL}}$.
If $\partial_bH_{\emptyset,r}(x_{0,1},\M_\varepsilon)\leq -1$ there 
exists a non splitting exact sequence 
$0 \to \M_\varepsilon \to \P_\varepsilon \to \O(D_\varepsilon) \to 0$.
\end{lemma}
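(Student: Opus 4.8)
The plan is to translate the inequality $\partial_b H_{\emptyset,r}(x_{0,1},\M_\varepsilon)\leq -1$ into the non-vanishing of a suitable $H^1$, and then use the identification of $H^1$ with the Yoneda $\mathrm{Ext}^1$ group. Concretely, consider the module $\M$ itself (with its $\bs{\mathrm{NL}}$ property). First I would apply the Grothendieck--Ogg--Shafarevich formula in the form \eqref{GOS-changed}, which gives
\begin{equation}
h^1(\M)\;=\;-\partial_b H_{\emptyset,r}(x_{0,1},\M_\varepsilon)\;\geq\;1\;,
\end{equation}
so in particular $H^1(\M,\O^\dag(D))\neq 0$. Here one should be a little careful: \eqref{GOS-changed} requires $K$ to be spherically complete, so the first step is to replace $K$ by a spherically complete extension $\Omega$, observing that the slopes of the radii, and hence the quantity $\partial_b H_{\emptyset,r}(x_{0,1},\M_\varepsilon)$, are insensitive to scalar extension of $K$, and that the $\bs{\mathrm{NL}}$ property is preserved (the exponents do not change under extension of the base field). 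One also needs that the finiteness of $h^1$ over $\Omega$ descends to finiteness over $K$, or alternatively one can work directly over $\Omega$ and descend the resulting extension at the end.

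Next I would identify $H^1(\M,\O^\dag(D))$ with a $\mathrm{Ext}^1$ group. By Lemma~\ref{Lemma : Ext^1=H^1}, for any differential module $\M$ projective over $A:=\O^\dag(D)$ one has $\mathrm{Ext}^1(\M',\N)\simto H^1((\M')^*\otimes\N)$. Taking $\M' = \O(D_\varepsilon)$ the trivial module (which plays the role of $\O^\dag(D)$ up to the implicit $\varepsilon$) and $\N=\M_\varepsilon$, we get $(\M')^*\otimes\N \cong \M_\varepsilon$, so $\mathrm{Ext}^1(\O(D_\varepsilon),\M_\varepsilon)\cong H^1(\M_\varepsilon)$. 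Since this group is nonzero by the previous paragraph, there exists a non-split extension
\begin{equation}
0\to\M_\varepsilon\to\P_\varepsilon\to\O(D_\varepsilon)\to 0\;,
\end{equation}
which is exactly the assertion. The $\bs{\mathrm{NL}}$ hypothesis is used precisely to guarantee that $h^1(\M)$ is finite and, more importantly, that the G.O.S. formula \eqref{eq : GOS} is valid so that $h^1$ can be computed by \eqref{GOS-changed}; without it the Laplacian formula and the index theorem would not apply.

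The main obstacle, as I see it, is bookkeeping around the implicit $\varepsilon$ and the descent from a spherically complete field. One must check that a non-split extension constructed over $\Omega$ can be taken to be defined already over $\O^\dag_K(D)$: this follows because $H^1(\M,\O^\dag_K(D))\otimes_K\Omega \hookrightarrow H^1(\M\widehat\otimes\Omega,\O^\dag_\Omega(D))$ by flatness, and the left-hand side is nonzero iff the right-hand side is (they have the same dimension, both being computed by the slope formula which is extension-insensitive); hence $H^1(\M,\O^\dag_K(D))\neq 0$ directly, and one extracts a non-split class there. Alternatively, if one is content to state the example over a spherically complete $K$ (as is done throughout this counterexamples section), this subtlety disappears entirely and the proof is just the two displayed lines: apply \eqref{GOS-changed}, then apply Lemma~\ref{Lemma : Ext^1=H^1}. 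The reason this lemma matters for the paper is that such a non-split sequence, combined with the setup of Section~\ref{section : setting examples ..htfhg} (all radii solvable or over-solvable at $x_{0,1}$), produces exactly a situation where $\Fs_{\geq i}$ exists by Theorem~\ref{MAIN Theorem} but fails to be a direct summand, violating the conclusion of Theorem~\ref{Thm : 5.7 deco good direct siummand} and showing its hypotheses cannot be dropped.
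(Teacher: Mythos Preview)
Your proposal is correct and follows essentially the same two-line argument as the paper: apply \eqref{GOS-changed} to get $h^1(\M)=-\partial_bH_{\emptyset,r}(x_{0,1},\M_\varepsilon)\geq 1$, then invoke Lemma~\ref{Lemma : Ext^1=H^1} to identify $\mathrm{Ext}^1(\O^\dag(D),\M)\cong H^1(\M\otimes\O^\dag(D))=H^1(\M)\neq 0$. The paper's proof is exactly this, without your additional remarks on spherical completeness and descent, which it leaves implicit.
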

\begin{proof}
One has 
$\hdr^1(\M, \O^\dag(D))=-\partial_bH_{\emptyset,r}(x_{0,1},\M)
\geq 1$. Therefore,
\begin{equation}
\mathrm{Ext}^1(\O^\dag(D),\M) 
=\Hdr^1(\M\otimes\O^\dag(D),\O^\dag(D))=\Hdr^1(\M,\O^\dag(D))\neq 0\;.
\end{equation}
\end{proof}

Let $\M$ be a rank one differential module over $\O^\dag(D)$ such 
that $\R_{\emptyset,1}(x_{0,1},\M_\varepsilon)$ 
is a solvable radius at $x_{0,1}\in D_\varepsilon$, 
and $\mathrm {Irr}_\infty(\M)\geq 2$ 
(i.e. $\partial_bH_{\emptyset,r}(x_{0,1},\M_\varepsilon)\leq -1$).
Such differential modules have been classified in \cite{Rk1}. 
Consider the dual of the non splitting sequence of  Lemma 
\ref{Lemma : non splitting extension}
\begin{equation}
0\to\O(D_\varepsilon)\to\P_\varepsilon^{*}\to\M_\varepsilon^{*}\to 
0\;.
\end{equation} 
By Proposition \ref{Prop. Exact sequence prop separate radii in the right 
order} the radii of $\P_\varepsilon^*$ are the union of those of 
$\M_\varepsilon^*$ and $\O(D_\varepsilon)$:
\begin{equation}\label{eq : picture of radii of p^*}
\begin{picture}(400,100)
\put(0,10){
\framebox{\begin{picture}(90,90)
\put(45,0){\vector(0,1){90}}
\put(0,75){\vector(1,0){90}}
\qbezier[45](0,0)(45,45)(90,90)
\put(73.5,73.5){\begin{tiny}$\bullet$\end{tiny}}
\put(92,58){\vector(-1,1){15}}
\put(94,55){\begin{tiny}$\log(1+\varepsilon)$\end{tiny}}
\put(0,75){\linethickness{2pt}\line(1,0){75}}

\end{picture}}}
\put(10,-5){Radius of $\O(D_\varepsilon)$}

\put(150,10){
\framebox{\begin{picture}(90,90)
\put(45,0){\vector(0,1){90}}
\put(0,75){\vector(1,0){90}}
\qbezier[45](0,0)(45,45)(90,90)
\put(73.5,73.5){\begin{tiny}$\bullet$\end{tiny}}
\put(92,58){\vector(-1,1){15}}
\put(94,55){\begin{tiny}$\log(1+\varepsilon)$\end{tiny}}
\put(0,75){\linethickness{2pt}\line(1,0){75}}

\put(0,45){\linethickness{2pt}\line(1,0){45}}
\put(45,45){\linethickness{2pt}\line(1,-1){30}}
\end{picture}}}
\put(175,-5){Radii of $\P^*_\varepsilon$}

\put(300,10){
\framebox{\begin{picture}(90,90)
\put(45,0){\vector(0,1){90}}
\put(0,75){\vector(1,0){90}}
\qbezier[45](0,0)(45,45)(90,90)
\put(73.5,73.5){\begin{tiny}$\bullet$\end{tiny}}
\put(92,58){\vector(-1,1){15}}
\put(94,55){\begin{tiny}$\log(1+\varepsilon)$\end{tiny}}

\put(0,45){\linethickness{2pt}\line(1,0){45}}
\put(45,45){\linethickness{2pt}\line(1,-1){30}}
\end{picture}}}
\put(320,-5){Radius of $\M^*_\varepsilon$}

\end{picture}
\end{equation}
The functions of the pictures are precisely
\begin{equation}
\widetilde{\rho}\;\mapsto\;
\log(\R_{\emptyset,i}(x_{0,\exp(\widetilde{\rho})},\bullet))\;,
\end{equation}
where $\varepsilon$ is \emph{unspecified}, in order to avoid 
complicate behavior of the radius of $\M_\varepsilon^*$.
The dotted line denotes the region of solvability. At the left hand side 
of this line one has over-solvable radii 
(which are always constant functions), 
and on its right hand side one has spectral non solvable radii.
The following proposition provides a example of differential equation 
such that  $\Fs_{\geq i}$ is not a direct factor. 
By Proposition \ref{Prop : super-harm everywhere}, one sees that
\begin{equation}
\Gamma_{\emptyset,1}(\M^*_\varepsilon)=
\Gamma_{\emptyset,1}(\P^*_\varepsilon)=[x_{0,1},x_{0,1+\varepsilon}[\;,\qquad
\qquad
\Gamma_{\emptyset,2}(\P^*_\varepsilon)=
\Gamma_{\emptyset,1}(\O(D_\varepsilon))=\emptyset\;.
\end{equation}
The radii of $\P^*_\varepsilon$ are then separated on the whole disk 
$D_\varepsilon$, if $\varepsilon>0$ is small enough. In this case 
$\O(D_\varepsilon)=(\P^*_\varepsilon)_{\geq 2}$. 
Moreover, by construction, $\P^*_\varepsilon$ in not isomorphic to the 
direct sum of 
$\M_\varepsilon^*=(\P_\varepsilon^*)_{< 2}$ and 
$(\P_\varepsilon^*)_{\geq 2}=\O(D_\varepsilon)$.
\subsection{Non compatibility of solvable or over-solvable radii with 
duality and with exactness.}\label{section counterexample 
to duality and exactness}
Consider now the sequence of Lemma 
\ref{Lemma : non splitting extension}
\begin{equation}\label{eq : exact seq of MPO}
0\to\M_\varepsilon\to\P_\varepsilon\to \O(D_\varepsilon)\to 0\;.
\end{equation}
We now show that over-solvable radii of this sequence do not 
behave as spectral one because the $\Hdr^0(D_\varepsilon,
\M_{\varepsilon})$ functor is not exact on this sequence 
(cf. Theorem \ref{thm : spectral is exact}). More precisely, we claim 
that for every $\varepsilon>0$ one has 
(cf. Remark \ref{Remark : an explicit quote to counterexample})
\begin{equation}\label{eq : Hdr=0 example section 6}
\Hdr^0(D_\varepsilon,\M_\varepsilon)\;=\; 
\Hdr^0(D_\varepsilon,\P_\varepsilon)\;=\;0\;.
\end{equation}
By contrapositive, assume that we a non-zero solution of $\P_\varepsilon$ over $D_\varepsilon$. That solution generates 
a trivial submodule of $\P_\varepsilon$ whose intersection with 
$\M_\varepsilon$ is zero because $\M_\varepsilon$ is not trivial. 
Therefore, $\P_\varepsilon$ is 
direct sum of $\M_\varepsilon$ with that trivial submodule.
This is absurd because the sequence \eqref{eq : exact seq of MPO} 
does not split by its definition. Therefore \eqref{eq : Hdr=0 example section 6} holds.

We now deduce from this that $\P_\varepsilon$ can not have 
over-solvable radii at $x_{0,1}$, because these radii would 
generate a trivial sub-module over $D_\varepsilon$. 
This proves that \emph{the over-solvable radii of 
$\P_\varepsilon^*$ are not stable by duality}. On the other hand, 
Proposition 
\ref{Prop : 1-th radius and dual} guarantees that 
its spectral non-solvable radii are stable by duality  
so the pictures of the radii of \eqref{eq : exact seq of MPO} 
coincides with \eqref{eq : picture of radii of p^*} under the doted 
lines. By continuity of the radii functions, one finds:
\begin{equation}\label{eq : picture of radii of p^*}
\begin{picture}(400,100)

\put(0,10){
\framebox{\begin{picture}(90,90)
\put(45,0){\vector(0,1){90}}
\put(0,75){\vector(1,0){90}}
\qbezier[45](0,0)(45,45)(90,90)
\put(73.5,73.5){\begin{tiny}$\bullet$\end{tiny}}
\put(92,58){\vector(-1,1){15}}
\put(94,55){\begin{tiny}$\log(1+\varepsilon)$\end{tiny}}

\put(0,45){\linethickness{2pt}\line(1,0){45}}
\put(45,45){\linethickness{2pt}\line(1,-1){30}}
\end{picture}}}
\put(20,-5){Radius of $\M_\varepsilon$}

\put(150,10){
\framebox{\begin{picture}(90,90)
\put(45,0){\vector(0,1){90}}
\put(0,75){\vector(1,0){90}}
\qbezier[45](0,0)(45,45)(90,90)
\put(73.5,73.5){\begin{tiny}$\bullet$\end{tiny}}
\put(92,58){\vector(-1,1){15}}
\put(94,55){\begin{tiny}$\log(1+\varepsilon)$\end{tiny}}


\put(0,46){\linethickness{2pt}\line(1,0){45}}
\put(45,45){\linethickness{2pt}\line(1,1){30}}

\put(0,45){\linethickness{2pt}\line(1,0){45}}
\put(45,45){\linethickness{2pt}\line(1,-1){30}}
\end{picture}}}
\put(175,-5){Radii of $\P_\varepsilon$}

\put(300,10){
\framebox{\begin{picture}(90,90)
\put(45,0){\vector(0,1){90}}
\put(0,75){\vector(1,0){90}}
\qbezier[45](0,0)(45,45)(90,90)
\put(73.5,73.5){\begin{tiny}$\bullet$\end{tiny}}
\put(92,58){\vector(-1,1){15}}
\put(94,55){\begin{tiny}$\log(1+\varepsilon)$\end{tiny}}
\put(0,75){\linethickness{2pt}\line(1,0){75}}

\end{picture}}}
\put(315,-5){Radius of $\O(D_\varepsilon)$}

\end{picture}
\end{equation}
In particular, we have
\begin{equation}
\Gamma_{\emptyset,1}(\M_\varepsilon)=
\Gamma_{\emptyset,1}(\P_\varepsilon)=
\Gamma_{\emptyset,2}(\P_\varepsilon)=
[x_{0,1},x_{0,1+\varepsilon}[\;.
\end{equation}
This shows that over-solvable radii of $\P_\varepsilon$ 
(nor their controlling graphs) are not preserved by duality (cf. Remark 
\ref{rk : quote counterex for duality} and 
Proposition \ref{Prop : 1-th radius and dual}). 

\begin{remark}\label{remark - radii non separated VS H^1}
Among all the extensions of $\O(D_\varepsilon)$ by 
$\M_\varepsilon^*$, 
the unique non trivial extension $\P_\varepsilon^*$ is also the unique 
one for which the radii are not separated. And its controlling graphs 
contradicts the assumption of Theorem 
\ref{Thm : criterion self direct sum}. Notice that that information is 
written in the radii of $\P_\varepsilon$ and $\P^*_\varepsilon$, while 
the radii and the controlling graphs of $\M_\varepsilon$, 
$\O(D_\varepsilon)$, and their duals, are stable by duality.
\end{remark}

\begin{remark}\label{Remark : computation of C_i}
The differential module $\P_\varepsilon$ 
satisfies $\mathscr{C}_1=\mathscr{C}_2=\emptyset$ 
(cf. \cite[Theorem 3.9]{NP-I}). Indeed 
$H_{\emptyset,2}(-,P_\varepsilon)$ is a constant function on 
$D_\varepsilon$, 
and $\Gamma(H_{\emptyset,2}(-,\P_\varepsilon))=\emptyset$ 
(cf. \cite[item (iv-c) of Theorem 3.9]{NP-I}).
Also $\P_\varepsilon^*$ verifies 
$\mathscr{C}_1=\mathscr{C}_2=\emptyset$, 
but the reason is that 
$\R_{\emptyset,2}(-,\P_\varepsilon^*)$ is constant and 
$\Gamma_{\emptyset,2}(\P_\varepsilon^*)=\emptyset$.
\end{remark}

\appendix

\section{A note about the definition of the radius}
\label{Comparison with the general definition of radii}
As already observed the radii only depends on the skeleton 
$\Gamma_S$, in the sense that if 
$\Gamma_{S}=\Gamma_{S'}$ then $\R_{S,i}(-,-)=\R_{S',i}(-,-)$ (cf.  
Remark \ref{changing triang}).

\if{In this section we consider a more general definition of the radii 
based on the idea that the datum defining the radii is a graph 
$\Gamma_S$ instead of a triangulation. So, we define the radii starting 
from a graph which is not necessarily the skeleton of a weak 
triangulation.
We here show that such a point of view is not a real generalization, in 
the sense that the main theorems (finiteness, 
continuity, and decomposition) in this new context can be deduced 
from the same results in the framework of weak triangulations. For this 
reason the framework of weak triangulations seems to us the optimal 
one.
}\fi

In this section, we explore a broader definition of the radii by 
introducing the concept of a graph as the basis for defining these radii, 
rather than relying solely on a triangulation. This means that the radii 
will be derived from a graph that might not necessarily equals the 
skeleton of a weak triangulation.

However, upon closer examination, we prove that this approach does not 
significantly generalize the concept. Not really surprisingly, the 
main theorems concerning finiteness, continuity, and decomposition in 
this new context can be deduced from the same results obtained within 
the framework of weak triangulations. In other words, the crucial 
results that we seek to achieve through this alternative perspective can 
already be derived using the existing approach of weak triangulations.

\subsection{Definition of the radii}\label{Definition of the radii-appendix}
In order to define the radii we need the notion of 
maximal disks. For this we proceed as follow. 
Let $\Fs$ be a differential equation over $X$ of rank $r$, and let $x\in X$.

A graph $\GS\subseteq X$ is called a 
\emph{weakly admissible graph} if 
$X-\GS$ is a disjoint union of virtual open disks.\footnote{This 
generalizes a terminology of Ducros \cite[(5.1.3)]{Duc}, where one 
defines an \emph{analytically admissible graph} 
as a weakly admissible graph such that the disks that are connected 
components of $X-\GS$ are relatively compact in $X$. In 
analogy with the definition of weak triangulations, 
we allow the empty set of a virtual open disk 
to be a weakly admissible graph.} 

For all $x\in X$ we call maximal disk the $\Omega$-rational disk 
$D(x,\GS)$ (cf. Definition \ref{Def : can extensions}). 
The disk $D(x,\GS)$ is empty if, and only if, 
$x\in\GS$ and if $x$ is a point of type $1$. 
In this case set $\R_{\GS,i}(x,\Fs):=1$, for all $i=1,\ldots,r$.

Otherwise, imitating section \ref{multiradius}, 
choose an isomorphism 
$D(x,\GS)\simto D_{\Omega}^-(0,R)$ 
sending $t_x$ into $0$. 
Consider the restriction 
$\widetilde{\Fs}$ of $\Fs$ to $D^-_\Omega(0,R)$. 
And then define 
$\R_{\GS,i}^{\widetilde{\Fs}}(x)$
 as the radius of the largest open disk 
$D$ centered at $0$, contained in $D^-_\Omega(0,R)$, such that 
$\widetilde{\Fs}$ has at least $r-i+1$ linearly 
independent solutions on $D$. 
Now, for all $i=1,\ldots,r$, we set
\begin{equation}
\R_{\GS,i}(x,\Fs)\;:=\;
\R_{\GS,i}^{\widetilde{\Fs}}(x)/R\;.
\end{equation}
We call $\GS$-multiradius of $\Fs$ the tuple 
$(\R_{\GS,1}(x,\Fs),\ldots,\R_{\GS,r}(x,\Fs))$.

The following definition coincides with Definition
\ref{Def. :: controlling graphs} replacing $\Gamma_S$ with 
$\GS$.
\begin{definition}
We call $\GS$-controlling graph, or $\GS$-skeletons, 
of $\R_{\GS,i}(-,\Fs)$ the set of points $x\in X$ that do not 
admit as a neighborhood in $X$ a virtual open disk $D$ on which 
$\R_{\GS,i}(-,\Fs)$ is constant and such that $D\cap\GS=\emptyset$. We denote it by $\Gamma_{\GS,i}(\Fs)$.
\end{definition}
It follows by the definition that 
$\GS\subseteq\Gamma_{\GS,i}(\Fs)$.

\subsection{Properties}
In this section we prove that the radii $\R_{\GS,i}(x,\Fs)$ are 
subjected to analogous properties of the radii $\R_{S,i}(x,\Fs)$ 
attached to a weak triangulation $S$. 
Indeed their restriction to a maximal 
disk $D(x,\GS)$ is determined by 
the functions 
$\R_{\GS,i}^{\widetilde{\Fs}}(x)$
 in the spirit of \cite{NP-I}. 
Below, one shows how to reduce the study of the 
$\GS$-radii to the case of the radii defined by a weak 
triangulation.

\begin{proposition}[\protect{\cite[(1.3.15.2)]{Duc}}]
A weakly admissible graph 
is a closed connected subset of $X$, hence it is a graph 
in the sense of \cite[Definition 1.1.1]{NP-IV}.
\end{proposition}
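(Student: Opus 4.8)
The plan is to establish successively that $\GS$ is closed, that it is connected, and then to obtain the graph structure from Ducros' description of the topology of analytic curves.

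First I would show that $X-\GS$ is open. By definition $X-\GS$ is a disjoint union of virtual open disks, and each such disk is an open subset of $X$; a union of open sets being open, $X-\GS$ is open and therefore $\GS$ is closed in $X$. As a by-product: the virtual open disks occurring in this decomposition, being connected, disjoint and open, are exactly the connected components of $X-\GS$. Let $D$ be one of them. Since $X$ is connected (cf. Setting~\ref{setting}), if $\GS\neq\emptyset$ then $D\subsetneq X$, so $D$ is not closed in $X$ and its topological boundary $\partial D=\overline{D}\setminus D$ is non-empty. By the structure of virtual open disks inside a quasi-smooth curve (an open disk hangs off at a single point, and one descends from $\widehat{K^{\mathrm{alg}}}$ by the isometric Galois action; cf. the discussion of branches and germs of segments in \ref{section branch} and \cite{Duc}), $\partial D$ is a single point $x_{D}\in\GS$.

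Next I would prove that $\GS$ is connected. If $\GS$ is empty or reduced to a point there is nothing to prove (the empty set and a point are graphs, hence connected; cf. \ref{Section : Graphs.}). Otherwise choose two distinct points $x,y\in\GS$. Since $X$ is a connected quasi-smooth curve it is uniquely arcwise connected (cf. \cite{Duc}); let $\gamma\colon[0,1]\to X$ be the injective path with $\gamma(0)=x$ and $\gamma(1)=y$, and let $[x,y]$ be its image. I claim $[x,y]\subseteq\GS$. If not, then $\gamma(s)\in D$ for some $s\in{]0,1[}$ and some connected component $D$ of $X-\GS$, which is a virtual open disk with $\partial D=\{x_{D}\}$ by the previous step (note $D\subsetneq X$ since $\GS\neq\emptyset$). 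The set $\gamma^{-1}(D)$ is open in $[0,1]$, contains $s$, and contains neither $0$ nor $1$ because $x,y\in\GS$ are not in $D$; let $(a,b)$ be the connected component of $\gamma^{-1}(D)$ containing $s$. By maximality of the component together with $x,y\notin D$ one has $\gamma(a),\gamma(b)\notin D$, while both $\gamma(a)$ and $\gamma(b)$ are limits of points of $D$, hence lie in $\partial D=\{x_{D}\}$. Thus $\gamma(a)=x_{D}=\gamma(b)$ with $a\neq b$, contradicting the injectivity of $\gamma$. Therefore $[x,y]\subseteq\GS$, so $\GS$ is arcwise connected, in particular connected.

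Finally, a closed connected subset of $X$ whose complement is a disjoint union of virtual open disks is a graph in the sense of \cite[1.3.1]{Duc}; this is precisely \cite[(1.3.15.2)]{Duc}, the local finiteness and the local graph structure near each point being part of Ducros' analysis of the canonical topology of analytic curves. The one step that genuinely requires care is the fact that the virtual open disks appearing as connected components of $X-\GS$ have one-point topological boundary: this is what drives the arc argument for connectedness, and once it is granted the remainder of the proof is formal.
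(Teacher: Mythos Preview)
The paper does not give its own proof of this proposition; it simply attributes the statement to Ducros \cite[(1.3.15.2)]{Duc}. So there is nothing to compare against, and your write-up is essentially filling in what the paper leaves as a citation.

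Your argument is sound in outline, but one claim is wrong as stated: $X$ is \emph{not} uniquely arcwise connected in general. A Tate curve, or any Mumford curve of positive genus, has a skeleton containing a loop, so between two points on that loop there are two distinct injective paths. Fortunately your proof does not actually use uniqueness anywhere: you only need that some injective path from $x$ to $y$ exists, and then your boundary argument (any entry into a component disk $D$ forces the arc to hit $\partial D=\{x_D\}$ twice) shows that \emph{this particular} arc stays inside~$\GS$. Since Berkovich curves carry a real-graph structure (as recalled in Section~\ref{section branch}), they are arc-connected, and that is all you need. Replace ``uniquely arcwise connected'' and ``the injective path'' by ``arc-connected'' and ``an injective path'', and the proof goes through unchanged. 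The one-point boundary fact for virtual open disks that you flag as the delicate step is indeed the key input, and it is used elsewhere in the paper (e.g.\ Proposition~\ref{Lemma pts of type 4}).
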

\if{\begin{proof}
By assumption the connected components $D$ of $X-\GS$ 
are all virtual open disks with boundary $x_D\in\GS$.
The claim is then equivalent to say that the complement $C$ in $X$ of 
a disjoint family of virtual open sub-disks of $X$ is a connected subset. 
Now the complement $C(D)$ of a disk is a connected subset, because 
$X$ is a connected tree. We have to prove that the intersection 
$\cap_D C(D)$ of the complements of such disks is connected.
The existence of a triangulation of 
$X$ implies that we can assume that $X$ is a virtual open annulus or 
disk. In this case the claim is true.
\end{proof}

\comment{JerÃÂŽme: help ! Je ne vois plus pourquoi les deux dernières 
lignes de la preuve sont vraies ...}
}\fi

\begin{proposition}
Let $\GS$ be a weakly admissible graph. 
Then there exists a weak 
triangulation $S$ of $X$ such that $\Gamma_S\subseteq\GS$.
\end{proposition}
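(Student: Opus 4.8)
The plan is to construct the desired weak triangulation $S$ from the data of $\GS$ by adding finitely many well-chosen points of type $2$ or $3$, locally on $X$. Recall that a weak triangulation is a locally finite set of points of type $2$ and $3$ whose complement is a disjoint union of virtual open disks and virtual open annuli, and its skeleton $\Gamma_S$ is the union of $S$ with the skeletons of those annuli. So the task is: given a weakly admissible graph $\GS$ (closed, connected, with $X-\GS$ a disjoint union of virtual open disks), produce a locally finite $S\subseteq X$ of type $2$--$3$ points with $\Gamma_S\subseteq\GS$ and $X-\Gamma_S$ a disjoint union of virtual disks and annuli.

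First I would reduce to the case where $\GS$ is a locally finite graph. Since $X-\GS$ is already a disjoint union of virtual open disks, condition (2) in the definition of locally finite graph (Section \ref{Section : Graphs.}) is automatic; it remains to check the local finiteness condition (1), namely that each $x\in\GS$ has a neighborhood $U$ with $\GS\cap U$ a finite union of segments and points. This is where I expect the main work to lie: a priori a weakly admissible graph could have infinitely many branches accumulating at a point. However, $\GS$ being closed with virtual-disk complement forces a rather rigid local structure — near a point $x$ of type $2$ the branches of $\GS$ out of $x$ correspond to a set of closed points of the residue curve $\Cs_x$, and near points of type $3$, $1$, or $4$ there are at most two (resp. one) branches. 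One shows using the description of the curve's graph structure (cf. \cite{Duc}, and the analysis in Section \ref{section branch}) that a closed connected subset with virtual-disk complement is automatically locally finite; alternatively one invokes \cite[(1.3.15.2)]{Duc} cited just above, which already gives that $\GS$ is a graph in the sense of \cite{Duc}, together with the fact that $X$ itself is a locally finite graph in the relevant sense.

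Next, with $\GS$ a locally finite graph, I would choose $S$ as follows. Let $S_0$ be the set of endpoints and bifurcation points of $\GS$ that are of type $2$ or $3$ (a locally finite set by local finiteness of $\GS$). The complement $\GS - S_0$ is then a disjoint union of open segments, each of which is the skeleton of a virtual open annulus in $X$ (using that interior points of such segments are of type $2$ or $3$, and the standard identification of a germ of segment out of a type $2$ or $3$ point with the skeleton of a virtual annulus, cf. Section \ref{section branch}). One must be slightly careful with edges of $\GS$ that are loops or that run between two points of type $1$ or $4$: in the latter case $\GS$ would contain a segment $]x,y[$ with $x,y$ of type $1$ or $4$, which is impossible since $X-\GS$ must then contain a punctured disk rather than a full disk, contradicting weak admissibility. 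So every bounded edge of $\GS$ whose closure contains a type $1$ or $4$ endpoint actually has that endpoint \emph{not} in $\GS$, and such an edge together with the disk beyond it is a virtual open disk. After possibly enlarging $S_0$ by one interior point on each loop-edge of $\GS$, set $S := S_0$. Then $\Gamma_S$ is exactly $\GS$ minus the half-open or open segments leading to type $1$/$4$ ends, hence $\Gamma_S\subseteq\GS$, and $X-\Gamma_S$ is a disjoint union of virtual open disks (the components of $X-\GS$, plus the disks attached along the removed segments) and virtual open annuli (the components of $\Gamma_S-S$); one checks $S$ is locally finite since $\GS$ is. This completes the construction.

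The main obstacle, as indicated, is the local finiteness step: ruling out pathological weakly admissible graphs with infinitely many branches accumulating somewhere. I would handle this by working locally — every point $x\in X$ has a neighborhood that is either a virtual open disk (types $1$, $4$) or a star-shaped neighborhood $U$ with $\{x\}$ a weak triangulation of $U$ (types $2$, $3$; cf. Section \ref{section : star-shaped}) — and in each such model the weakly admissible graphs have an evident finite combinatorial description relative to the finitely many ``directions'' recorded by $\Cs_x$ (for type $2$) or the two germs of segment (for type $3$). Patching these local pictures, using that $X$ is a curve and $\GS$ is closed, yields local finiteness, after which the construction above goes through routinely.
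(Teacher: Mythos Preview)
Your approach has a genuine gap at the first reduction: weakly admissible graphs are \emph{not} automatically locally finite. At a type-$2$ point $x$ there are infinitely many branches (one per closed point of $\Cs_x$), and nothing forbids $\GS$ from containing a segment along each of infinitely many of them. For a concrete example, take $X=\mathbb{A}^{1,\mathrm{an}}_K$ with $K$ algebraically closed, let $x_0=x_{0,1}$ be the Gauss point, pick an infinite sequence of distinct open unit disks $D_1,D_2,\ldots$ with boundary $x_0$, choose in each $D_n$ a type-$2$ point $y_n$, and let $\GS$ be the half-line $\{x_{0,\rho}:\rho\ge 1\}$ together with all the segments $[x_0,y_n]$. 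Then $X-\GS$ is a disjoint union of open disks (the unused residue disks, the disks hanging off each $[x_0,y_n]$, and those hanging off the half-line), so $\GS$ is weakly admissible; but $\GS$ has infinitely many branches at $x_0$ and hence fails condition~(i) of Section~\ref{Section : Graphs.}. Your appeal to ``$X$ itself is a locally finite graph'' does not help, since $X$ is precisely not locally finite at type-$2$ points. That $\GS$ may fail to be locally finite is in fact the whole point of Lemma~\ref{Prop. : A6} and of the sentence preceding Proposition~\ref{Prop : A5 ecp}.

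The paper's argument bypasses this issue with a one-line trick: choose \emph{any} weak triangulation $S'$ and set $\Gamma:=\GS\cap\Gamma_{S'}$. This is automatically locally finite because it sits inside the locally finite graph $\Gamma_{S'}$; it is contained in $\GS$ by construction; and $X-\Gamma=(X-\GS)\cup(X-\Gamma_{S'})$ is again a disjoint union of virtual open disks since both pieces of the union already are. A structural result of Ducros then shows that such a $\Gamma$ is the skeleton of some weak triangulation $S$. No direct analysis of the (possibly infinite) branching of $\GS$ is needed.
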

\begin{proof}
Let $S'$ be an arbitrary triangulation. The intersection 
$\Gamma:=\GS\cap\Gamma_{S'}$ is a locally finite graph 
whose complement $X-\Gamma$ is a disjoint union of virtual disks.  
Since 
$\Gamma$ is contained in $\Gamma_{S'}$, then $\Gamma$ 
is the skeleton of a weak triangulation $S$ by \cite[5.2.2.3]{Duc}.
\if{Let $D$ be a virtual 
disk in $X$ whose boundary $s$ lies in $\Gamma$ and 
such that $D\cap\Gamma=\emptyset$. 
If $\Gamma_{S'}$ intersects $D$, then one sees that the set 
$S'' := (S'-(S'\cap D))\cup\{s\}$ is a triangulation such that 
$\Gamma_{S''}=\Gamma_{S'}-(\Gamma_{S'}\cap D)$, 
and such that $\Gamma\subseteq \Gamma_{S''}$.
In other words we can remove from $\Gamma_{S'}$ the segments 
that do not lie in $\GS$. 
Now $\Gamma$ is a locally finite graph because it is contained in 
$\Gamma_{S'}$. This proves that $\Gamma$ is the skeleton of a 
triangulation $S$.
}\fi
\end{proof}

\begin{proposition}\label{Prop: A4 fgh}
Let $\Gamma_S\subseteq\GS$ be the skeleton of a 
triangulation contained in the weakly admissible graph 
$\GS$. Then for all 
$i=1,\ldots,r$ one has
\begin{equation}\label{eq : R_Gamma VS R_S}
\R_{\GS,i}(x,\Fs)\;=\;\min\Bigl(\;1\;,\; 
f_{S,\GS}(x)\cdot\R_{S,i}(x,\Fs)\;
\Bigr)
\end{equation}
where $f_{S,\GS}:X\to[1,+\infty[$ is the modulus 
of the inclusion of 
disks $D(x,\GS)\subseteq D(x,S)$ 
(cf. Definition \eqref{def:modulusannulus}).
\hfill $\Box$
\end{proposition}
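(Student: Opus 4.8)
The plan is to reduce the comparison between $\R_{\GS,i}(x,\Fs)$ and $\R_{S,i}(x,\Fs)$ to the purely disk-theoretic comparison of the two maximal disks $D(x,\GS)$ and $D(x,S)$, both of which are $\Omega$-rational open disks centered at $t_x$. First I would fix $x\in X$ and observe that, since $\Gamma_S\subseteq\GS$, for every $y\in X$ one has $D(y,\GS)\subseteq D(y,S)$; indeed $X-\GS\subseteq X-\Gamma_S$, and both disks are the largest open disks centered at $t_y$ avoiding the respective set $\sigma_\Omega(\GS)$, resp.\ $\sigma_\Omega(\Gamma_S)$, inside $X_\Omega$ (Def.\ \ref{Def : can extensions}). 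Thus $D(x,\GS)\subseteq D(x,S)$ is an inclusion of $\Omega$-rational disks centered at the same point $t_x$, and $f_{S,\GS}(x)\ge 1$ is by definition the inverse of the modulus of this inclusion (cf.\ \ref{MODULUS}).

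Next I would handle the degenerate cases. If $x\in\GS$ is of type $1$, then $D(x,\GS)=\emptyset$ and by convention $\R_{\GS,i}(x,\Fs)=1$; on the other hand $f_{S,\GS}(x)=+\infty$ in the natural convention (the inclusion of $\emptyset$ into $D(x,S)$), and $\min(1,+\infty\cdot\R_{S,i}(x,\Fs))=1$, so the formula holds. Otherwise $D(x,\GS)$ is a nonempty open disk, and I choose an isomorphism $D(x,S)\simto D^-_\Omega(0,R)$ sending $t_x$ to $0$; under this isomorphism $D(x,\GS)$ becomes $D^-_\Omega(0,R')$ for some $0<R'\le R$, and $f_{S,\GS}(x)=R/R'$. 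Now the key point is that $\R_{S,i}^{\widetilde\Fs}(x)$ and $\R_{\GS,i}^{\widetilde\Fs}(x)$ are defined by the \emph{same} intrinsic datum attached to $\widetilde\Fs$ on $D^-_\Omega(0,R)$: namely, $\R_{S,i}^{\widetilde\Fs}(x)$ (resp.\ $\R_{\GS,i}^{\widetilde\Fs}(x)$) is the radius of the largest disk $D$ centered at $0$, contained in $D^-_\Omega(0,R)$ (resp.\ in $D^-_\Omega(0,R')$), on which $\widetilde\Fs$ has at least $r-i+1$ independent horizontal sections. Since the family of disks centered at $0$ on which $\widetilde\Fs$ has $\ge r-i+1$ independent solutions is totally ordered and its supremum is $\R_{S,i}^{\widetilde\Fs}(x)$, intersecting this family with $D^-_\Omega(0,R')$ simply caps the radius at $R'$; hence $\R_{\GS,i}^{\widetilde\Fs}(x)=\min(R',\R_{S,i}^{\widetilde\Fs}(x))$. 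Dividing by the respective ambient radius, $\R_{\GS,i}(x,\Fs)=\R_{\GS,i}^{\widetilde\Fs}(x)/R'=\min(R',\R_{S,i}^{\widetilde\Fs}(x))/R'=\min\bigl(1,\R_{S,i}^{\widetilde\Fs}(x)/R'\bigr)=\min\bigl(1,(R/R')\cdot\R_{S,i}(x,\Fs)\bigr)=\min\bigl(1,f_{S,\GS}(x)\cdot\R_{S,i}(x,\Fs)\bigr)$, which is \eqref{eq : R_Gamma VS R_S}.

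The one subtlety worth checking is the independence of the whole construction on the chosen center $t_x$ and on the chosen field $\Omega$: as in Remark \ref{Rk: independence on t_x}, all centers of $D(x)$ — and hence all centers of the maximal disks $D(x,S)$, $D(x,\GS)$ — are permuted by $\mathrm{Gal}^{\mathrm{cont}}(\Omega/K)$, which acts isometrically and preserves moduli of disk inclusions; so both sides of \eqref{eq : R_Gamma VS R_S} are well-defined $K$-rational quantities and the identity, once established over a big $\Omega$, descends. I do not expect any real obstacle here; the only mildly delicate point is the bookkeeping of the degenerate case $x\in\GS$ of type $1$ and the convention $f_{S,\GS}(x)=+\infty$, which should be spelled out so that the formula is literally correct there. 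This is exactly parallel to the proof of Proposition \ref{Prop: A-4 fgh}, replacing the second weak triangulation $S'$ by the weakly admissible graph $\GS$.
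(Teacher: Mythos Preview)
Your argument is correct and is precisely the definitional unwinding the paper intends: the proposition is stated with a \hfill$\Box$ and no proof, as the direct analogue of Proposition~\ref{Prop: A-4 fgh} (itself imported from \cite{NP-II} without proof), and your computation $\R_{\GS,i}^{\widetilde\Fs}(x)=\min(R',\R_{S,i}^{\widetilde\Fs}(x))$ followed by normalization is exactly the content of that result. The one cosmetic point is that your convention $f_{S,\GS}(x)=+\infty$ for $x\in\GS$ of type~$1$ clashes with the paper's stated codomain $[1,+\infty[$, but that is a minor inconsistency in the paper's conventions rather than a defect in your argument.
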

The following  Proposition \ref{Prop : A5 ecp} together with 
Theorem \ref{Thm : finiteness} shows that 
the $\GS$-controlling graph $\Gamma_{\GS,i}(\Fs)$ 
is locally finite if, and only if, so $\GS$ is. 
\begin{proposition}\label{Prop : A5 ecp}
One has 
$\Gamma_{\GS,i}(\Fs)=\GS\cup\Gamma_{S,i}(\Fs)$.
\end{proposition}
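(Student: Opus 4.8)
The statement to prove is Proposition \ref{Prop : A5 ecp}: under the hypothesis $\Gamma_S \subseteq \GS$ (skeleton of a weak triangulation contained in a weakly admissible graph), one has $\Gamma_{\GS,i}(\Fs) = \GS \cup \Gamma_{S,i}(\Fs)$.

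\textbf{Overall approach.} The plan is to run the exact same argument as in the proof of Proposition \ref{Prop : A-5 ecp} (the analogous statement for two weak triangulations $S \subseteq S'$), replacing $\Gamma_{S'}$ by $\GS$ throughout. The key input is Proposition \ref{Prop: A4 fgh}, which gives $\R_{\GS,i}(x,\Fs) = \min\bigl(1, f_{S,\GS}(x)\cdot \R_{S,i}(x,\Fs)\bigr)$, where $f_{S,\GS}\colon X \to [1,+\infty[$ is the modulus of the inclusion of disks $D(x,\GS) \subseteq D(x,S)$. As in the proof of Proposition \ref{Prop : A-5 ecp}, the function $f_{S,\GS}$ is determined by: (1) it is constant on each connected component of $X - \GS$ (a virtual open disk); (2) $f_{S,\GS}(x) = 1$ for $x \in \Gamma_S \subseteq \GS$; (3) on $\GS - \Gamma_S$ it is $R/r(x)$ in a suitable coordinate. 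In particular $f_{S,\GS}$ is constant on any virtual disk $D$ with $D \cap \GS = \emptyset$.

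\textbf{Step 1: the inclusion $\Gamma_{\GS,i}(\Fs) \subseteq \GS \cup \Gamma_{S,i}(\Fs)$.} Let $D$ be a virtual disk with $D \cap (\GS \cup \Gamma_{S,i}(\Fs)) = \emptyset$. Then $\R_{S,i}(-,\Fs)$ is constant on $D$ (since $D \cap \Gamma_{S,i}(\Fs) = \emptyset$) and $f_{S,\GS}$ is constant on $D$ (since $D \cap \GS = \emptyset$), so by \eqref{eq : R_Gamma VS R_S} the function $\R_{\GS,i}(-,\Fs)$ is constant on $D$. Since $D \cap \GS = \emptyset$ this witnesses that no point of $D$ lies in $\Gamma_{\GS,i}(\Fs)$; hence $\Gamma_{\GS,i}(\Fs) \subseteq \GS \cup \Gamma_{S,i}(\Fs)$.

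\textbf{Step 2: the reverse inclusion.} We have $\Gamma_S \subseteq \GS \subseteq \Gamma_{\GS,i}(\Fs)$ by definition of the $\GS$-controlling graph, so it remains to show $\Gamma_{S,i}(\Fs) \subseteq \Gamma_{\GS,i}(\Fs)$. By Proposition \ref{Prop : D^c} this amounts to showing that for every point $x$ of type $1$ one has $D^c_{\GS,i}(x,\Fs) \subseteq D^c_{S,i}(x,\Fs)$, i.e. that if $\R_{S,i}(-,\Fs)$ is constant on a disk $D \subseteq D(x,\GS)$ then so is $\R_{\GS,i}(-,\Fs)$. This follows immediately from \eqref{eq : R_Gamma VS R_S} because $D \subseteq D(x,\GS)$ forces $D \cap \GS = \emptyset$, hence $f_{S,\GS}$ is constant on $D$, and a product of two functions constant on $D$ (truncated by $1$) is constant on $D$. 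This completes the proof. I expect no genuine obstacle here: the whole content is packaged into Proposition \ref{Prop: A4 fgh}, and the argument is a verbatim transcription of the proof of Proposition \ref{Prop : A-5 ecp}; the only thing to double-check is that the characterization of $f_{S,\GS}$ (constancy on components of $X-\GS$) really holds in the weakly-admissible setting, which it does since $X - \GS$ is a disjoint union of virtual open disks exactly as in the triangulation case.
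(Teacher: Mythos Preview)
Your proof is correct and takes exactly the same approach as the paper: the paper's own proof of this proposition consists of the single sentence ``The proof coincides with that of Proposition~\ref{Prop : A-5 ecp} (replacing $S'$ with $\GS$)'', and you have faithfully carried out that transcription, including the use of Proposition~\ref{Prop: A4 fgh} for the truncation formula and Proposition~\ref{Prop : D^c} for the characterization via constancy disks.
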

\begin{proof}
The proof coincides with that of Proposition \ref{Prop : A-5 ecp} 
(replacing $S'$ with $\GS$).
\end{proof}

\begin{lemma}\label{Prop. : A6}
The function 
$x\mapsto f_{S,\GS}(x)$ is continuous 
if, and only if, $\GS$ is a locally finite graph.\hfill$\Box$
\end{lemma}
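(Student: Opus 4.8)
This final statement, Lemma \ref{Prop. : A6}, asserts that the modulus function $x \mapsto f_{S,\GS}(x)$ of the inclusion $D(x,\GS) \subseteq D(x,S)$ is continuous if and only if $\GS$ is locally finite. The plan is to analyze both directions separately, using the explicit description of such modulus functions already recorded in the proof of Proposition \ref{Prop : A-5 ecp}.

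First I would recall the structural description: since $\Gamma_S \subseteq \GS$, the function $f_{S,\GS}$ is constant on each connected component of $X - \GS$ (a virtual open disk), equals $1$ on $\Gamma_S$, and on a point $x \in \GS - \Gamma_S$ it is computed as $R/r(x)$ where $R$ is the radius of $D(x,S)$ in a coordinate on that disk and $r(x)$ is the radius of the point. For the direction ``$\GS$ locally finite $\Rightarrow$ $f_{S,\GS}$ continuous'', I would argue that when $\GS$ is a locally finite graph, $X - \GS$ is a locally finite union of virtual open disks, and along the graph $\GS$ itself the modulus varies continuously because it is piecewise $\log$-affine with respect to the gauge on $\GS$ (the modulus of an inclusion of disks behaves continuously as the boundary point moves along a segment, by \cite[Prop. 3.4.19]{Duc}); continuity at points of $\GS$ in the directions pointing into the complementary disks follows from the matching of boundary values, since $f_{S,\GS}$ restricted to a complementary disk $D_b$ with boundary $x$ has the constant value $f_{S,\GS}(x)$ exactly because $D(z,\GS) = D(z,S) \cap (\text{that disk})$ is compatible. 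This is essentially the content of the proof of Proposition \ref{Prop : A-5 ecp} run in reverse.

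For the converse direction, ``$f_{S,\GS}$ continuous $\Rightarrow$ $\GS$ locally finite'', I would argue by contraposition. Suppose $\GS$ is not locally finite. Since $\GS$ is a weakly admissible graph, $X - \GS$ is still a disjoint union of virtual open disks, but failure of local finiteness means there is a point $x_0 \in X$ every neighborhood of which meets infinitely many of these complementary disks (or meets $\GS$ in infinitely many segments). I would then localize near $x_0$ to a star-shaped neighborhood and use that $f_{S,\GS}$ takes distinct constant values $R_n/r(y_n) > 1$ on infinitely many complementary disks $D_n$ accumulating at $x_0$ (with the values bounded away from $1$, since each $D_n$ is a proper sub-disk of the corresponding maximal disk $D(\cdot,S)$ whose radius is controlled by the local geometry), producing a sequence converging to $x_0$ along which $f_{S,\GS}$ does not converge to $f_{S,\GS}(x_0) = 1$. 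The key input is that the modulus of a nontrivial inclusion is $> 1$ and that the accumulation forces a jump; this contradicts continuity.

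The main obstacle I expect is the converse direction: one must extract, from the mere failure of local finiteness of $\GS$, a concrete sequence witnessing discontinuity, and control that the moduli on the accumulating disks do not all tend to $1$. This requires a careful case analysis of how $\GS$ can fail to be locally finite — either infinitely many branches of $\GS$ accumulate at a point, or the segments of $\GS$ accumulate — and in each case relating the modulus $f_{S,\GS}$ to the distance (in the canonical gauge) between $\GS$ and $\Gamma_S$, showing this distance cannot shrink to $0$ uniformly along an accumulating family without contradicting the fact that $\Gamma_S$ is itself locally finite and $\GS \supseteq \Gamma_S$. I would expect to invoke the gauge/``toise'' of \cite[Prop. 3.4.19]{Duc} to make these continuity and accumulation estimates precise.
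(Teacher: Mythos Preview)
Your treatment of the forward implication is correct and matches the paper's (commented-out) sketch: $f_{S,\GS}$ is locally constant outside $\GS$, and on a locally finite $\GS$ it is piecewise log-affine along each edge, so continuity follows from checking it on edges.

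For the converse you correctly isolate the obstacle --- at a point $x\in\GS$ with infinitely many $\GS$-branches $b_n$ into distinct disks $D_n$ complementary to $\Gamma_S$, one must show that $\sup_{D_n} f_{S,\GS}$ stays bounded away from $f_{S,\GS}(x)$ for infinitely many $n$ --- but your proposed resolution via local finiteness of $\Gamma_S$ does not work. Take $X=D^-_K(0,1)$ with $S=\emptyset$ (so $\Gamma_S=\emptyset$), and let $\GS$ consist of the ray $[x_{0,1/2},x_{0,1})$ together with segments $[x_{0,1/2},x_{a_n,r_n}]$ into infinitely many distinct residue disks $D^-(a_n,1/2)$, with $r_n\to 1/2$. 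This $\GS$ is closed, connected, weakly admissible, and not locally finite at $x_{0,1/2}$; yet $\sup_{D^-(a_n,1/2)} f_{S,\GS}=1/r_n\to 2=f_{S,\GS}(x_{0,1/2})$, and one checks that $f_{S,\GS}$ is continuous on all of $X$ (for any $\varepsilon>0$ only finitely many $D^-(a_n,1/2)$ contain points with $f\ge 2+\varepsilon$, so $\{|f-2|<\varepsilon\}$ is a genuine neighborhood of $x_{0,1/2}$). Nothing about $\Gamma_S$ prevents the extra branches of $\GS$ from being arbitrarily short. The paper's own sketch --- which argues that a level set $\{f>R-\varepsilon\}$ ``cuts'' infinitely many branches and is therefore not open --- has the same gap (and an apparent sign issue): it tacitly assumes the branches are long enough for the level set to cut them. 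In short, the ``only if'' direction as literally stated appears to be false; what survives, and what suffices for the Remark following the Lemma, is the weaker assertion that for \emph{some} non-locally-finite $\GS$ (e.g.\ with branch lengths bounded below) the function $f_{S,\GS}$ is discontinuous.
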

\if{\begin{proof}
La fonction est localement constante en dehors de $\S$. Si le graphe 
$\S$ est localement fini, il suffit donc de tester ça sur les arêtes de 
$\S$ et ça se fait.

    Si le graphe n'est pas localement fini, on considère un point $x$ de 
$\S$ d'où partent une infinité de branches de $\S$. Supposons qu'on a 
$f(x)=R$. La fonction $f$ est strictement décroissante sur une infinité 
de branches de $\S$. Pour $\eps>0$ assez petit, la partie 
$|f| > R-\eps$ va couper une infinité de branches hors de $x$. Ce n'est 
donc pas un ouvert.
\end{proof}
}\fi
\begin{theorem}
If $\GS$ is a locally finite graph, the functions 
$\R_{\GS,i}(-,\Fs)$ are continuous. 
\end{theorem}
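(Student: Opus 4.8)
The plan is to reduce the statement to the continuity of the radii attached to a weak triangulation, which is already known by Theorem~\ref{Thm : finiteness}. First I would invoke the preceding proposition (the existence of a weak triangulation $S$ with $\Gamma_S\subseteq\GS$) to fix such an $S$. Then Proposition~\ref{Prop: A4 fgh} expresses the $\GS$-radii in terms of the $S$-radii and the modulus function:
\begin{equation}
\R_{\GS,i}(x,\Fs)\;=\;\min\Bigl(\,1\,,\;f_{S,\GS}(x)\cdot\R_{S,i}(x,\Fs)\,\Bigr)\;.
\end{equation}
So it suffices to prove that the right-hand side is a continuous function of $x$ for each $i=1,\ldots,r$.

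The next step is to check continuity of each factor. By Theorem~\ref{Thm : finiteness}, the function $x\mapsto\R_{S,i}(x,\Fs)$ is continuous on $X$. By Lemma~\ref{Prop. : A6}, since we are assuming $\GS$ is a locally finite graph, the modulus function $f_{S,\GS}\colon X\to[1,+\infty[$ is continuous. Hence the product $x\mapsto f_{S,\GS}(x)\cdot\R_{S,i}(x,\Fs)$ is continuous, and composing with the continuous map $t\mapsto\min(1,t)$ on $\Rbb$ yields that $x\mapsto\R_{\GS,i}(x,\Fs)$ is continuous. This handles all points $x$ for which $D(x,\GS)$ is non-empty; for points $x\in\GS$ of type~$1$, where by convention $\R_{\GS,i}(x,\Fs)=1$, one notes that the formula above still gives value $1$ at such points (there $D(x,\GS)=D(x)$, and one can also see directly that these are isolated-type points contributing no discontinuity), so no separate argument is needed beyond observing the formula of Proposition~\ref{Prop: A4 fgh} remains valid.

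I do not expect a genuine obstacle here: the content of the theorem is entirely packaged in the three cited results, and the only care needed is bookkeeping. The one point worth stating cleanly is that the reduction is legitimate for \emph{every} index $i$ simultaneously, i.e.\ that the $S$ produced is independent of $i$ (it is, since it depends only on $\GS$ and an auxiliary triangulation $S'$), and that the exceptional locus of type~$1$ points where the definition was made by fiat is consistent with the formula. The mild subtlety — if one insists on calling it that — is simply that the convention $\R_{\GS,i}(x,\Fs)=1$ at type~$1$ points of $\GS$ must be seen to agree with the limit behaviour dictated by Proposition~\ref{Prop: A4 fgh}, which follows because at such a point $x\in\Gamma_S$ one has $D(x,S)=D(x,\GS)=D(x)$, forcing $f_{S,\GS}(x)=1$ and $\R_{S,i}(x,\Fs)$ spectral, hence the minimum equals $1$.
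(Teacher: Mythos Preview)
Your proposal is correct and follows essentially the same approach as the paper: the paper's proof is a single sentence invoking equation~\eqref{eq : R_Gamma VS R_S} together with the continuity of $\R_{S,i}(-,\Fs)$ and of $f_{S,\GS}$, which is exactly your reduction via Proposition~\ref{Prop: A4 fgh}, Theorem~\ref{Thm : finiteness}, and Lemma~\ref{Prop. : A6}. Your additional discussion of type~$1$ points is unnecessary (the paper does not treat them separately) and slightly off---such points never lie in $\Gamma_S$, since $S$ consists only of type~$2$ and~$3$ points---but this does not affect the argument.
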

\begin{proof}
This follows directly from \eqref{eq : R_Gamma VS R_S} 
and the continuity of both 
$\R_{S,i}(-,\Fs)$ and $f_{S,\GS}$.
\end{proof}

\begin{remark}
The trivial equation $(\O_X,d)$ satisfies 
$\R_{\GS,1}(x,\Fs)=1$ for all $x\in X$ and all weakly admissible graph
$\GS$. So it is always continuous.
\end{remark}

\begin{remark}
If $\GS$ is not locally finite, then there are differential equation with 
non continuous radii. Here we provide a basic explicit  example.

Let $X$ be the the disk $\mathrm{D}^-_K(0,1)$ 
with empty weak triangulation. 
Denote by $\Fs_a$ the rank-one differential equation over $X$, 
given by $y'=a\cdot y$, with $a\in K$.
A direct computation of the Taylor expansion of its solution 
shows that  for all $x\in X$ the function
$\R_{\emptyset,1}(x,\Fs_a)$ is constant on $X$ with value 
$R_a:=\min(1,|p|^{\frac{1}{p-1}}/ |a|)$. 
If the valuation of $K$ is non trivial, it is then possible to have 
differential equations having arbitrarily 
small and constant radii on a disk. 

By \eqref{eq : R_Gamma VS R_S}, it follows that, 
for all bifurcation point $x\in \GS$, 
there exists $a\in K$, such that $\R_{\GS,1}(y,\Fs_a)=
f_{S,\GS}(y)\cdot R_a$ for all $y$ close enough to $x$.
This proves that 
$\Gamma_{\GS,1}(\Fs_a)=\GS$ around $x$. 

If $\GS$ is not locally finite, then, by Lemma \ref{Prop. : A6}, 
the function $\R_{\GS,1}(y,\Fs_a)$ is not continuous. 
\end{remark}


\begin{theorem}
If $\R_{\GS,i-1}(x,\Fs)<\R_{\GS,i}(x,\Fs)$
for all $x\in X$, then there exists a sub-object 
$\Fs_{\geq i}$ of $\Fs$ satisfying analogous 
properties to Theorem \ref{MAIN Theorem}, 
and of section \ref{Conditions to have a direct sum decomposition}, 
replacing everywhere the index $S$ by $\GS$. Moreover 
$\Fs_{\geq i}$ is independent on the choice of $\GS$.
\end{theorem}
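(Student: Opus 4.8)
The plan is to reduce everything to the already-established theory of weak triangulations via formula \eqref{eq : R_Gamma VS R_S} of Proposition \ref{Prop: A4 fgh}. First I would fix a weak triangulation $S$ with $\Gamma_S\subseteq\GS$, which exists by the previous proposition, and use \eqref{eq : R_Gamma VS R_S}, namely $\R_{\GS,i}(x,\Fs)=\min(1,f_{S,\GS}(x)\cdot\R_{S,i}(x,\Fs))$, together with the fact that $f_{S,\GS}(x)\geq 1$. The key observation is that the separation hypothesis over $\GS$ forces the separation hypothesis over $S$: if $\R_{\GS,i-1}(x,\Fs)<\R_{\GS,i}(x,\Fs)$ for all $x$, then in particular $\R_{\GS,i-1}(x,\Fs)<1$, so $f_{S,\GS}(x)\cdot\R_{S,i-1}(x,\Fs)<1$ and hence $\R_{\GS,i-1}(x,\Fs)=f_{S,\GS}(x)\cdot\R_{S,i-1}(x,\Fs)$ (the min is not attained at $1$), which gives $f_{S,\GS}(x)\cdot\R_{S,i-1}(x,\Fs)<\R_{\GS,i}(x,\Fs)\leq f_{S,\GS}(x)\cdot\R_{S,i}(x,\Fs)$, i.e. $\R_{S,i-1}(x,\Fs)<\R_{S,i}(x,\Fs)$. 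So the index $i$ separates the radii of $\Fs$ with respect to $S$ in the sense of Definition \ref{definition : i separates the radii}.

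Then I would invoke Theorem \ref{MAIN Theorem} to obtain the sub-object $(\Fs_{\geq i},\nabla_{\geq i})\subseteq(\Fs,\nabla)$ of rank $r-i+1$ with $\omega_{S,j}(x,\Fs_{\geq i})=\omega_{S,j+i-1}(x,\Fs)$ for all $x$ and $j$, and the exact sequence $0\to\Fs_{\geq i}\to\Fs\to\Fs_{<i}\to 0$ satisfying \eqref{eq : equality of radii of F_Si}. It remains to check that $\Fs_{\geq i}$ also has the expected behaviour for the $\GS$-radii. Here I would argue that the ``spectral part'' of the filtration is intrinsic (Section \ref{Localization}, Proposition \ref{Prop: Localization}): for all $j\leq\is{x}$ the disk $D_{S,j}(x,\Fs)$ depends only on $x$ and $\Fs$, not on the graph, and since $i-1$ is spectral at every $x$ (because $\R_{\GS,i-1}(x,\Fs)<1$ forces $D_{\GS,i-1}(x,\Fs)\subseteq D(x)$), one has $\omega_{S,i}(x,\Fs)=\omega_{\GS,i}(x,\Fs)$ and $D_{S,i-1}(x,\Fs)=D_{\GS,i-1}(x,\Fs)$. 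Consequently the augmented Dwork--Robba sub-object $(\Fs_{\geq i})_x$ constructed from $S$ coincides with the one constructed from $\GS$ at every point, so the globally glued $\Fs_{\geq i}$ serves both purposes; the analogues of \eqref{eq : equality of radii of F_Si} for $\R_{\GS,\bullet}$ follow by applying \eqref{eq : R_Gamma VS R_S} to $\Fs$, to $\Fs_{\geq i}$, and to $\Fs_{<i}$ separately and comparing, using Proposition \ref{prop : radii of the direct sum}-style bookkeeping.

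For the independence of $\Fs_{\geq i}$ on $\GS$, I would argue exactly as in Proposition \ref{Prop. : independence on S}: given two weakly admissible graphs $\GS,\GS'$ both making $i$ a separating index, the spectral steps $\omega_{\GS,j}(x,\Fs)=\omega_{\GS',j}(x,\Fs)$ agree for $j\leq i$ at every $x$ (since $i-1$ is spectral, these equal the intrinsic $\omega_{S,j}(x,\Fs)$), hence by uniqueness of the augmented Dwork--Robba decomposition (Proposition \ref{Prop : existence of (F_S,i)_x}) the local sub-objects coincide, and so the composite $\Fs_{\GS',\geq i}\subseteq\Fs\to\Fs_{\GS,<i}$ is locally zero, hence globally zero by Proposition \ref{Prop. : iso on a point implies iso global}; a symmetric argument gives equality. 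The criteria for $\Fs_{\geq i}$ to be a direct summand (Theorems \ref{Thm : 5.7 deco good direct siummand} and \ref{Thm : criterion self direct sum}) transfer verbatim, replacing $\Gamma_{S,j}(\Fs)$ by $\Gamma_{\GS,j}(\Fs)$, using Proposition \ref{Prop : A5 ecp} which identifies $\Gamma_{\GS,j}(\Fs)=\GS\cup\Gamma_{S,j}(\Fs)$. The main obstacle is purely bookkeeping: one must carefully track, point by point, that the over-solvable tail of the filtration (for $j\geq i$) is the \emph{same} truncation phenomenon whether measured against $\GS$ or $S$, which is exactly the content of \eqref{eq : truncation of disks}--\eqref{eq : truncation of omega} combined with \eqref{eq : R_Gamma VS R_S}; once that identification is in place the result is a formal consequence of Theorem \ref{MAIN Theorem} and Proposition \ref{Prop. : independence on S}.
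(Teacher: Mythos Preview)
Your overall strategy is exactly the paper's: choose a weak triangulation $S$ with $\Gamma_S\subseteq\GS$, deduce from the separation hypothesis for $\GS$ that $i$ separates the $S$-radii, apply Theorem~\ref{MAIN Theorem}, and then identify the resulting $\Fs_{\geq i}$ with the $\GS$-object via the equality $\omega_{S,j}(x,\Fs)=\omega_{\GS,j}(x,\Fs)$ for $j\le i$, invoking Proposition~\ref{Prop. : independence on S} for the independence. Your first step (separation for $S$ via the formula \eqref{eq : R_Gamma VS R_S}) is correct and more explicit than the paper's one-line claim.

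There is, however, a slip in your justification of $\omega_{S,i}(x,\Fs)=\omega_{\GS,i}(x,\Fs)$. You write that ``$i-1$ is spectral at every $x$ (because $\R_{\GS,i-1}(x,\Fs)<1$ forces $D_{\GS,i-1}(x,\Fs)\subseteq D(x)$)''. This is false: $\R_{\GS,i-1}(x,\Fs)<1$ only gives $D_{\GS,i-1}(x,\Fs)\subsetneq D(x,\GS)$, and for $x\notin\GS$ one has $D(x)\subsetneq D(x,\GS)$, so the index $i-1$ may well be over-solvable at $x$. The correct argument (which is the paper's) avoids spectrality altogether: from $D(x,\GS)\subseteq D(x,S)$ and the intersection formula $D_{\GS,j}(x,\Fs)=D_{S,j}(x,\Fs)\cap D(x,\GS)$ (the analog of Proposition~\ref{Prop: A-4 fgh}), the strict inclusion $D_{\GS,j}(x,\Fs)\subsetneq D(x,\GS)$ for $j\le i-1$ forces $D_{S,j}(x,\Fs)=D_{\GS,j}(x,\Fs)$; for $j=i$, either the same holds, or both $\omega_{S,i}$ and $\omega_{\GS,i}$ have dimension $r-i+1$ (since $i$ separates for both) and one is contained in the other, hence they coincide. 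With this correction your proof goes through and matches the paper's.
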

\begin{proof}
Let $S$ be a weak triangulation such that 
$\Gamma_S\subseteq\GS$. 
For all $x\in X$ one has $D(x,\GS)\subseteq D(x,S)$. 
So $\omega_{S,j}(x,\Fs)=\omega_{\GS,j}(x,\Fs)$ for all 
$j\leq i$. the index $i$ also separates the $\Gamma_S$-radii. 
By Theorem \ref{MAIN Theorem} we have the existence of 
a submodule $\Fs_{S,\geq i}$ separating the $S$-radii. 
As in Remark \ref{Remark : changing tr F_S,I}, and Proposition 
\ref{Prop. : independence on S} 
one shows that $\Fs_{S,\geq i}$ also separates 
the $\GS$-radii : $\Fs_{S,\geq i}=\Fs_{\GS,\geq i}$.
\end{proof}
\begin{remark}
According to Remark \ref{Remark : bad condition}, to prove that 
$\Fs_{\geq i}$ is a direct factor, it is 
convenient to chose $\GS$ as small as possible.
This can be done by replacing $\GS$ by a convenient weak 
triangulation $S$ such that $\Gamma_S\subseteq\GS$.
\end{remark}


\bibliographystyle{amsalpha}
\bibliography{deco}

\end{document}